\def\Z{\mathbf{Z}}
\def\C{\mathbf{C}}
\newcommand{\OC}{{\mathcal O}{\mathcal C}}
\DeclareMathOperator{\non}{nc}
\DeclareMathOperator{\Cone}{Cn}
\DeclareMathOperator{\Brd}{Brd}
\DeclareMathOperator{\Adj}{Adj}
\DeclareMathOperator{\Tang}{Tang}
\DeclareMathOperator{\ZS}{\Phi}
\DeclareMathOperator{\Gr}{Gr}
\newcommand{\vect}{\vec}
\DeclareMathOperator{\Alg}{Alg}
\DeclareMathOperator{\CAlg}{CAlg}
\DeclareMathOperator{\frun}{ff}
\DeclareMathOperator{\Bimod}{Bimod}
\DeclareMathOperator{\Groth}{Groth}
\newcommand{\h}[1]{\rm{h} \! #1}
\newcommand{\htt}[1]{\rm{h}_2 \! #1}
\DeclareMathOperator{\Fam}{Fam}
\DeclareMathOperator{\MTSO}{MTSO}
\DeclareMathOperator{\MTO}{MTO}
\DeclareMathOperator{\BSO}{BSO}
\DeclareMathOperator{\Loc}{Loc}
\DeclareMathOperator{\Emb}{Emb}
\DeclareMathOperator{\Diff}{Diff}
\DeclareMathOperator{\EO}{EO}
\DeclareMathOperator{\BO}{BO}
\DeclareMathOperator{\BDiff}{BDiff}
\DeclareMathOperator{\EDiff}{EDiff}
\DeclareMathOperator{\Sub}{Sub}
\DeclareMathOperator{\fd}{fd}
\DeclareMathOperator{\MSO}{MSO}
\DeclareMathOperator{\Nerve}{N}
\newcommand{\Cob}[3]{{\text {\bf Cob}}^{#3}_{#1}(#2)}
\DeclareMathOperator{\Vect}{{\bf Vect}}
\DeclareMathOperator{\ev}{ev}
\DeclareMathOperator{\HochCoh}{HH}
\DeclareMathOperator{\coev}{coev}
\DeclareMathOperator{\End}{End}
\DeclareMathOperator{\fr}{fr}
\DeclareMathOperator{\tr}{tr}
\DeclareMathOperator{\OHom}{\bHom}
\DeclareMathOperator{\nCob}{{ \bf nCob}}
\DeclareMathOperator{\gVect}{grVect}
\DeclareMathOperator{\Chain}{{ \bf Chain}}
\DeclareMathOperator{\Bord}{{\bf Bord}}
\DeclareMathOperator{\untCob}{{\bf SemiCob}}
\DeclareMathOperator{\SuntCob}{{ \bf PreCob}}
\DeclareMathOperator{\OSuntCob}{{ \bf PreCob}^{\degree}}
\DeclareMathOperator{\unBord}{{\bf PreBord}}
\DeclareMathOperator{\untBord}{{\bf PBord}}
\DeclareMathOperator{\tCob}{{\bf Cob}_{t}}
\DeclareMathOperator{\bdCob}{{\bf Cob}^{un}_{\bd}}
\DeclareMathOperator{\tunCob}{{\bf Cob}_{t}^{un}}
\DeclareMathOperator{\ori}{or}
\DeclareMathOperator{\tChain}{{\bf Chain}_{t}}
\newcommand{\bfA}{{\mathbf A}}
\newcommand{\bfS}{{\mathbf S}}
\newcommand{\degree}{\text{o}}
\newcommand{\bigdot}{\bullet}
\DeclareMathOperator{\CP}{ { \mathbf CP} }
\DeclareMathOperator{\PL}{PL}
\DeclareMathOperator{\bHom}{{Map}}
\DeclareMathOperator{\Cat}{\mathcal{C}at}
\DeclareMathOperator{\Q}{\mathbf{Q}}
\DeclareMathOperator{\OO}{O}
\DeclareMathOperator{\SO}{SO}
\DeclareMathOperator{\R}{\mathbb R}
\DeclareMathOperator{\Sing}{Sing}
\DeclareMathOperator{\cDelta}{{\bf \Delta}}
\DeclareMathOperator{\Top}{Top}
\DeclareMathOperator{\bd}{\partial}
\DeclareMathOperator{\calA}{\mathcal{A}}
\DeclareMathOperator{\calE}{\mathcal{E}}
\DeclareMathOperator{\calZ}{\mathcal{Z}}
\DeclareMathOperator{\calB}{\mathcal{B}}
\DeclareMathOperator{\calF}{\mathcal{F}}
\newcommand{\hn}[1]{{\text h}_{n} \! #1}
\DeclareMathOperator{\Hom}{Hom} 
\DeclareMathOperator{\HH}{H} 
\DeclareMathOperator{\id}{id} \DeclareMathOperator{\Fun}{Fun}
\DeclareMathOperator{\calC}{\mathcal{C}}
\DeclareMathOperator{\op}{op}
\DeclareMathOperator{\calD}{\mathcal{D}}
\DeclareMathOperator{\calO}{\mathcal{O}}
\DeclareMathOperator{\FrFun}{Fun^{\fr}}
\newtheorem{theorem}{Theorem}[subsection]
\newtheorem{lemma}[theorem]{Lemma}
\newtheorem{claim}[theorem]{Claim}
\newtheorem{proposition}[theorem]{Proposition}
\newtheorem{corollary}[theorem]{Corollary}
\newtheorem{thesis}[theorem]{Thesis}
\theoremstyle{definition}
\newtheorem{definition}[theorem]{Definition}
\newtheorem{construction}[theorem]{Construction}
\newtheorem{variant}[theorem]{Variant}
\newtheorem{protodefinition}[theorem]{Definition Sketch}
\newtheorem{idefinition}[theorem]{Incorrect Definition}
\newtheorem{convention}[theorem]{Convention}
\newtheorem{example}[theorem]{Example}
\newtheorem{notation}[theorem]{Notation}
\newtheorem{remark}[theorem]{Remark}
\newtheorem{warning}[theorem]{Warning}
\newtheorem{exercise}[theorem]{Exercise}
\begin{document}

\title{On the Classification of Topological Field Theories (Draft)}
\author{Jacob Lurie}
\maketitle

Our goal in this article is to give an expository account of some recent work on the classification of topological field theories. More specifically, we will outline the proof of a version of the {\it cobordism hypothesis} conjectured by Baez and Dolan in \cite{baezdolan}.

\tableofcontents

\section*{}

\subsection*{Terminology}

Unless otherwise specified, we will use the word {\it manifold} to refer to a compact smooth manifold $M$, possibly with boundary (or with corners). If $M$ is a manifold, we will denote its boundary
by $\bd M$. We will say that $M$ is {\it closed} if the boundary $\bd M$ is empty. For
a brief description of how the ideas of this paper generalize to manifolds which are not smooth, we refer the reader to Remark \ref{skilj}.

Throughout this paper, we will make informal use of the language of higher category theory.
We will always use the term {\it $n$-category} to refer to what is sometimes called a 
{\it weak} $n$-category: that is, a collection of objects $\{ X, Y, Z, \ldots \}$ together
with an $(n-1)$-category $\OHom(X,Y)$ for every pair of objects $X$ and $Y$, which
are equipped with a notion of composition which is associative up to coherent isomorphism.
We refer the reader to \ref{higt} for an informal discussion and \ref{bigseg} for the outline of a more precise definition.

If $\calC$ is a category (or a higher category) equipped with an associative and unital tensor product
$\otimes$, we will let ${\bf 1}$ denote the unit object of $\calC$.

Let $V$ be a finite-dimensional real vector space. By an {\it inner product} on $V$ we will mean a symmetric bilinear form $b: V \times V \rightarrow \R$ which is positive-definite
(so that $b(v,v) > 0$ for $v \neq 0$). More generally, if $X$ is a topological
space and $\zeta$ is a real vector bundle on $X$, then by an {\it inner product} on
$\zeta$ we will mean an inner product on each fiber $\zeta_{x}$, which depends continuously
on the point $x \in X$.

\subsection*{Disclaimer}
Our objective in this paper is to give an informal account of some ideas relating to the classification of topological field theories. In many instances, we have not attempted to give precise definitions, let alone careful proofs. A more detailed account of the ideas and methods described in this paper will appear elsewhere.

\subsection*{Acknowledgements}

I would like to thank David Ben-Zvi, Kevin Costello, Chris Douglas, Dan Freed, Dennis Gaitsgory, Soren Galatius, Andre Henriques, Kiyoshi Igusa, David Nadler, Chris Schommer-Pries, Stefan Stolz, Peter Teichner, Constantin Teleman, and Ulrike Tillmann for helpful conversations about the subject matter of this paper. I would also like to acknowledge a great intellectual debt to the work of John Baez and James Dolan (who proposed the cobordism and tangle hypotheses in \cite{baezdolan} and thereby
brought into focus the ideas we discuss here) and the work of Kevin Costello (whose beautiful
paper \cite{costello} inspired the present work). Special thanks are due to Mike Hopkins: he has been a collaborator through various stages of the project described here, though he has declined to be named as a coauthor. Finally, I would like to thank the American Institute of Mathematics for supporting me during the time this paper was written.

\section{Topological Field Theories and Higher Categories}\label{glob1}

The starting point for this paper is Atiyah's definition of a {\it topological field theory}, which we will
review in \S \ref{kob}. This notion is fairly concrete, and it is not difficult to explicitly classify
topological field theories of dimensions $\leq 2$. In \S \ref{secex}, we will discuss some of the difficulties that we encounter when attempting to generalize this classification to the higher-dimensional setting.
To address these difficulties, we will introduce the notion of an {\it extended} topological field theory.
We will then formulate a version of the {\it Baez-Dolan cobordism hypothesis} (Theorem \ref{swisher}), which provides an elegant classification of extended topological field theories.

The notion of an extended topological field theory and the cobordism hypothesis itself are most naturally expressed using the language of higher category theory, which we will review informally in \S \ref{higt}. 
This language can also be used to introduce a more refined version of topological field theory
which takes into account the homotopy types of diffeomorphism groups of manifolds; we will discuss this definition in \S \ref{sugar}, and formulate an appropriate generalization of the cobordism hyothesis
(Theorem \ref{swisher2}).

\subsection{Classical Definitions}\label{kob}

In this section, we will review the notion of a topological field theory as axiomatized by
Atiyah; for details, we refer the reader to \cite{atiyah}.

\begin{definition}\label{slovak}
Let $n$ be a positive integer. We define a category $\Cob{}{n}{}$ as follows:
\begin{itemize}
\item[$(1)$] An object of $\Cob{}{n}{}$ is a closed oriented $(n-1)$-manifold $M$.

\item[$(2)$] Given a pair of objects $M, N \in \Cob{}{n}{}$, a morphism from $M$ to $N$ in
$\Cob{}{n}{}$ is a bordism from $M$ to $N$: that is, an oriented $n$-dimensional manifold
$B$ equipped with an orientation-preserving diffeomorphism $\bd B \simeq \overline{M} \coprod N$.
Here $\overline{M}$ denotes the manifold $M$ equipped with the opposite orientation.
We regard two bordisms $B$ and $B'$ as defining the same morphism in $\Cob{}{n}{}$ if there
is an orientation-preserving diffeomorphism $B \simeq B'$ which extends the evident diffeomorphism
$\bd B \simeq \overline{M} \coprod N \simeq \bd B'$ between their boundaries.

\item[$(3)$] For any object $M \in \Cob{}{n}{}$, the identity map $\id_{M}$ is represented by the product bordism $B = M \times [0,1]$. 

\item[$(4)$] Composition of morphisms in $\Cob{}{n}{}$ is given by gluing bordisms together. More precisely,
suppose we are given a triple of objects $M, M', M'' \in \Cob{}{n}{}$, and a pair of bordisms
$B: M \rightarrow M'$, $B': M' \rightarrow M''$, the composition $B' \circ B$ is defined to be the morphism represented by the manifold $B \coprod_{M'} B'$.
\end{itemize}
\end{definition}

\begin{remark}\label{jirik}
The composition law for bordisms described in Definition \ref{slovak} is potentially ambiguous, because we did not explain how to endow the manifold $B \coprod_{M'} B'$ with a smooth structure. To do so, we need to make some auxiliary choices (for example, the choice of a smooth collar around $M'$
inside of $B$ and $B'$), which ultimately turn out to be irrelevant (different choices of collar lead to different smooth structures on $B \coprod_{M'} B'$, but the resulting bordisms are nevertheless diffeomorphic). We will not press this technical point any further here; later, we will introduce more elaborate versions of Definition \ref{slovak} in which the issue does not arise.
\end{remark}

Recall that a {\it symmetric monoidal} category is a category $\calC$ equipped with a functor
$\otimes: \calC \times \calC \rightarrow \calC$ and a unit object ${\bf 1}_{\calC} \in \calC$, together with
isomorphisms
$$ C \otimes {\bf 1}_{\calC} \simeq C$$
$$ C \otimes D \simeq D \otimes C$$
$$ C \otimes (D \otimes E) \simeq (C \otimes D) \otimes E.$$
which express the idea that $\otimes$ is a commutative and associative product on $\calC$
(with unit by ${\bf 1}_{\calC}$). These isomorphisms should be required to satisfy a list of coherence conditions which we do not recall here; see \cite{maclane} for a complete definition.

\begin{example}
For each $n > 0$, the category $\Cob{}{n}{}$ can be endowed with the structure of a symmetric monoidal category, where the tensor product operation $\otimes: \Cob{}{n}{} \times \Cob{}{n}{} \rightarrow \Cob{}{n}{}$ is
given by the disjoint union of manifolds. The unit object of $\Cob{}{n}{}$ is the empty set (regarded as a manifold of dimension $(n-1)$). 
\end{example}

\begin{example}
Let $k$ be a field. Then the category $\Vect(k)$ of vector spaces over $k$ can be regarded as a symmetric monoidal category with respect to the usual tensor product functor $\otimes: \Vect(k) \times \Vect(k) \rightarrow \Vect(k)$. The unit object of $\Vect(k)$ is the vector space $k$ itself.
\end{example}

Given a pair of symmetric monoidal categories $\calC$ and $\calD$, a {\it symmetric monoidal
functor} from $\calC$ to $\calD$ is a functor $F: \calC \rightarrow \calD$ together with
a collection of isomorphisms
$$ F(C \otimes C') \simeq F(C) \otimes F(C') \quad \quad F( {\bf 1}_{\calC} ) \simeq { \bf 1}_{\calD}.$$
These isomorphisms are required to be compatible with the commutativity and associativity
constraints on the tensor products in $\calC$ and $\calD$; we refer the reader again
to \cite{maclane} for a more complete discussion.

\begin{definition}[Atiyah]\label{atdef}
Let $k$ be a field. A {\it topological field theory} of dimension $n$ is a symmetric monoidal
functor $Z: \Cob{}{n}{} \rightarrow \Vect(k)$.
\end{definition}

Unwinding Definition \ref{atdef}, we see that a topological field theory $Z$ 
of dimension $n$ is given by the following data:

\begin{itemize}
\item[$(a)$] For every oriented closed manifold $M$ of dimension $(n-1)$, a vector
space $Z(M)$.
\item[$(b)$] For every oriented bordism $B$ from an $(n-1)$-manifold $M$ to another
$(n-1)$-manifold $N$, a linear map of vector spaces $Z(B): Z(M) \rightarrow Z(N)$.
\item[$(c)$] A collection of isomorphisms 
$$Z(\emptyset) \simeq k \quad \quad Z(M \coprod N) \simeq Z(M) \otimes Z(N).$$
\end{itemize}

Moreover, these data are required to satisfy a number of natural coherence properties which
we will not make explicit.

\begin{remark}\label{getnum}
Let $M$ be a closed oriented manifold of dimension $n$. Then we can regard $M$ as a bordism from the empty $(n-1)$-manifold to itself. In this way, $M$ determines a morphism $\emptyset \rightarrow \emptyset$ in the category $\Cob{}{n}{}$. If $Z$ is a topological field theory of dimension $n$,
then $M$ determines a map $Z(M): Z(\emptyset) \rightarrow Z(\emptyset)$. Since $Z$ is a
tensor functor, it preserves unit objects: that is, $Z(\emptyset)$ is {\em canonically} isomorphic
to the ground field $k$. Consequently, we can think of $Z(M)$ as an element of the endomorphism ring
$\Hom_{ \Vect(k)}(k,k)$: that is, as an element of $k$. In other words, the functor $Z$ assigns a 
{\em number} to every closed oriented manifold of dimension $n$.
\end{remark}

\begin{remark}
Let $B$ be an oriented $n$-manifold with boundary $\bd B$. Then $B$ can usually be interpreted
as a morphism in $\Cob{}{n}{}$ in many different ways: one for every decomposition of the boundary
$\bd B$ as a disjoint union of two components.

For example, let us suppose that $M$ is a closed oriented manifold of dimension $(n-1)$, and let
$\overline{M}$ denote the same manifold with the opposite orientation. The product manifold $M \times [0,1]$ has boundary $\overline{M} \coprod M$. It therefore determines a bordism from $M$ to itself: when so regarded, it represents the {\em identity} map $\id_{M}$ in the category $\Cob{}{n}{}$. However,
there are several other ways to view $M \times [0,1]$ as a morphism in $\Cob{}{n}{}$, corresponding
to other decompositions of the boundary $\bd (M \times [0,1])$. For example:
\begin{itemize}
\item[$(a)$] We can regard $M \times [0,1]$ as a bordism from $\overline{M}$ to itself; it then
represents the identity map $\id_{ \overline{M} }$ in the category $\Cob{}{n}{}$.
\item[$(b)$] We can regard $M \times [0,1]$ as a bordism from $\overline{M} \coprod M$ to the
empty set. In this case, $M \times [0,1]$ represents a morphism
$\overline{M} \coprod M \rightarrow \emptyset$ in $\Cob{}{n}{}$, which we will denote by
$\ev_{M}$ and refer to as the {\it evaluation map for $M$}.
\item[$(c)$] We can regard $M \times [0,1]$ as a bordism from the empty set to
$M \coprod \overline{M}$. It then represents a morphism $\emptyset \rightarrow M \coprod \overline{M}$
in the category $\Cob{}{n}{}$, which we will denote by $\coev_{M}$ and refer to as the
{\it coevaluation map for $M$}.
\end{itemize}
\end{remark}

Suppose now that $Z$ is a topological field theory of dimension $n$, and let
$M$ be a closed oriented $(n-1)$-manifold. Applying the functor $Z$ to the evaluation map
$\ev_{M}$, we obtain a map of vector spaces
$$ Z( \overline{M} ) \otimes Z(M) \simeq Z( \overline{M} \coprod M) \stackrel{ Z(\ev_M)}{\rightarrow} Z(\emptyset) \simeq k.$$
In other words, there is a canonical bilinear pairing of $Z(M)$ with $Z( \overline{M})$.

\begin{proposition}\label{scunnel}
Let $Z$ be a topological field theory of dimension $n$. Then for every
closed $(n-1)$-manifold $M$, the vector space $Z(M)$ is finite dimensional, and the
pairing $Z( \overline{M}) \otimes Z(M) \rightarrow k$ is perfect: that is, it induces an isomorphism
$\alpha$ from $Z( \overline{M} )$ to the dual space of $Z(M)$.
\end{proposition}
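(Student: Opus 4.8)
The plan is to exhibit explicit "zig-zag" identities coming from the geometry of the cobordism category, which force the pairing $Z(\overline{M})\otimes Z(M)\to k$ to be nondegenerate and simultaneously bound the dimension of $Z(M)$. The key observation is that the evaluation bordism $\ev_M$ and the coevaluation bordism $\coev_M$ are not independent: the manifold $M\times[0,1]$, viewed appropriately, satisfies a composition law that recovers the identity. Concretely, I would first verify the identity in $\Cob{}{n}{}$
\[
\id_M = (\ev_M \otimes \id_M)\circ(\id_M \otimes \coev_M),
\]
where the right-hand side is the composite $M \xrightarrow{\id_M\otimes\coev_M} M\coprod M\coprod\overline{M}\xrightarrow{\ev_M\otimes\id_M} M$ (with a symmetry isomorphism inserted so that $\ev_M$ consumes the first $M$ and the $\overline{M}$). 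This is a purely geometric claim: both sides are represented by bordisms obtained from copies of $M\times[0,1]$ glued along pieces of their boundaries, and one checks directly that gluing three such cylinders in the indicated pattern yields a manifold diffeomorphic to $M\times[0,1]$ (a "straightening the S-bend" picture). Symmetrically, one has $\id_{\overline M} = (\id_{\overline M}\otimes\ev_M)\circ(\coev_M\otimes\id_{\overline M})$.

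Next I would apply the symmetric monoidal functor $Z$ to these identities. Writing $V = Z(M)$ and $W = Z(\overline{M})$, and using the structural isomorphisms $Z(\emptyset)\simeq k$ and $Z(M\coprod N)\simeq Z(M)\otimes Z(N)$, the bordism $\ev_M$ gives a linear map $e\colon W\otimes V\to k$ and $\coev_M$ gives a linear map $c\colon k\to V\otimes W$. The identities above become the statement that the composites
\[
V \xrightarrow{\ \id_V\otimes c\ } V\otimes W\otimes V \xrightarrow{\ e'\otimes \id_V\ } V,
\qquad
W \xrightarrow{\ c'\otimes\id_W\ } W\otimes V\otimes W \xrightarrow{\ \id_W\otimes e\ } W
\]
are the respective identity maps (here $e'$, $c'$ are $e$, $c$ composed with the relevant symmetry of $\Vect(k)$, and the functoriality of $Z$ under composition and tensor product is exactly what delivers these equations). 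In other words, $V$ and $W$ are dual objects in the symmetric monoidal category $\Vect(k)$ with unit $e$ and counit $c$.

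Finally I would run the standard linear-algebra argument: from $c\colon k\to V\otimes W$ pick the image $c(1) = \sum_{i=1}^{r} v_i\otimes w_i$ of $1\in k$, a \emph{finite} sum. The first zig-zag identity says that for every $v\in V$ one has $v = \sum_i v_i\, e'(w_i\otimes v)$, so $\{v_1,\dots,v_r\}$ spans $V$; hence $V=Z(M)$ is finite-dimensional. The same identity shows the map $\beta\colon V\to W^{\vee}$, $v\mapsto e'(-\otimes v)$, has a one-sided inverse given by $\phi\mapsto\sum_i v_i\,\phi(w_i)$, and the second zig-zag identity shows this is a two-sided inverse; dualizing, $\alpha\colon W = Z(\overline M)\to V^{\vee} = Z(M)^{\vee}$ sending $w\mapsto e(w\otimes -)$ is an isomorphism, which is precisely the assertion that the pairing is perfect. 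The main obstacle is Step 1: one must be careful about the smooth-structure and collar ambiguities flagged in Remark \ref{jirik}, and about bookkeeping the symmetry isomorphisms of $\Cob{}{n}{}$ correctly so that the gluing genuinely produces $M\times[0,1]$ rather than $M$ times some other interval-with-corners; once the geometry is pinned down, everything downstream is formal.
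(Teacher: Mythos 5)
Your argument is correct and follows essentially the same route as the paper: both use the evaluation and coevaluation bordisms, appeal to the zig-zag (triangle) identities coming from the "S-bend straightening" diffeomorphism of $M \times [0,1]$, and then extract finite-dimensionality from the fact that the coevaluation $c(1)$ is a finite tensor. The paper phrases the verification of the triangle identities as "judiciously applying the axioms for a topological field theory" and argues finite-dimensionality by observing that the constructed inverse $\beta$ of $\alpha$ has finite-dimensional image; you make the zig-zag identities explicit and argue via a finite spanning set. These are the same idea in slightly different clothing, and your more explicit account is a reasonable unpacking of the paper's terse sketch. One small bookkeeping slip: the middle term in your first zig-zag composite should be $V \otimes W \otimes V$ reached via $\id_V \otimes c'$ (with $c'$ the swapped coevaluation) rather than $\id_V \otimes c$, and the resulting scalar should read $e'(v \otimes w_i)$ (equivalently $e(w_i \otimes v)$) rather than $e'(w_i \otimes v)$ — but this does not affect the conclusion.
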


The proof is completely formal: we can use the coevaluation map of $M$ to explicitly construct an inverse to $\alpha$. More precisely, let $Z(M)^{\vee}$ denote the dual space to $Z(M)$. Applying
$Z$ to the coevaluation map $\coev_{M}$, we obtain a map
$$ k \simeq Z(\emptyset) \stackrel{ Z( \coev_{M})}{\longrightarrow} Z( M \coprod \overline{M} ) \simeq 
Z(M) \otimes Z( \overline{M}).$$
Tensoring this map with $Z(M)^{\vee}$ and composing with the natural pairing of $Z(M)^{\vee}$ with
$Z(M)$, we get a map
$$ \beta: Z(M)^{\vee} \rightarrow Z(M)^{\vee} \otimes Z(M) \otimes Z(\overline{M}) \rightarrow Z(\overline{M}).$$
By judiciously applying the axioms for a topological field theory, one can deduce that $\beta$
is an inverse to $\alpha$: this proves that $\alpha$ is an isomorphism. Because every element
in the tensor product $Z(M) \otimes Z(\overline{M})$ belongs to $Z(M) \otimes V$ for some finite
dimensional subspace $V$ of $Z( \overline{M})$, the image of the map $\beta$ is necessarily finite dimensional; since $\beta$ is an isomorphism, we conclude that $Z(M)^{\vee}$ is finite dimensional
(so that $Z(M)$ is also finite dimensional).

In low dimensions, it is possible to describe topological field theories very explicitly.

\begin{example}[Field Theories in Dimension 1]\label{1dim}
Let $Z$ be a $1$-dimensional topological field theory.
Then $Z$ assigns a vector space $Z(M)$ to every closed oriented $0$-manifold $M$.
A zero dimensional manifold $M$ is simply a finite set of points. An orientation of $M$
determines a decomposition $M = M_{+} \coprod M_{-}$ of $M$ into ``positively oriented''
and ``negatively oriented'' points. In particular, there are {\em two} oriented manifolds which
consist of only a single point, up to orientation-preserving diffeomorphism. Let us denote these manifolds by $P$ and $Q$. Applying the functor $Z$, we obtain vector spaces $Z(P)$ and $Z(Q)$.
However, these vector spaces are related to one another: according to Proposition \ref{scunnel}, we can write $Z(P) = V$ and $Z(Q) = V^{\vee}$, for some finite-dimensional vector space $V$.

Once we have specified $V$, the remainder of the field theory is uniquely determined (up to isomorphism). For example, the value of $Z$ on any oriented $0$-manifold $M$
is canonically isomorphic to the tensor product
$$ (\bigotimes_{x \in M_{+}} V) \otimes ( \bigotimes_{y \in M_{-}} V^{\vee}).$$
Of course, this does not yet determine $Z$: we must also specify the behavior of $Z$ on
$1$-manifolds $B$ with boundary. However, since $Z$ is a symmetric monoidal functor,
it suffices to specify $Z(B)$ when $B$ is {\em connected}. In this case, the $1$-manifold $B$ is diffeomorphic either to a closed interval $[0,1]$ or to a circle $S^1$. There are five cases
to consider, depending on how we decompose $\bd B$ into ``incoming'' and ``outgoing'' pieces:
\begin{itemize}
\item[$(a)$] Suppose that $B = [0,1]$, regarded as a bordism from $P$ to itself. Then
$Z(B)$ coincides with the identity map $\id: V \rightarrow V$.
\item[$(b)$] Suppose that $B = [0,1]$, regarded as a bordism from $Q$ to itself. Then
$Z(B)$ coincides with the identity map $\id: V^{\vee} \rightarrow V^{\vee}$.
\item[$(c)$] Suppose that $B= [0,1]$, regarded as a bordism from $P \coprod Q$ to the empty set.
Then $Z(B)$ is a linear map from $V \otimes V^{\vee}$ into the ground field $k$: namely, the evaluation
map $(v, \lambda) \mapsto \lambda(v)$.
\item[$(d)$] Suppose that $B = [0,1]$, regarded as a bordism from the empty set to $P \coprod Q$.
Then $Z(B)$ is a linear map from $k$ to $Z(P \coprod Q) \simeq V \otimes V^{\vee}$. Under
the canonical isomorphism $V \otimes V^{\vee} \simeq \End(V)$, this linear map is given by
$x \mapsto x \id_{V}$.
\item[$(e)$] Suppose that $B = S^1$, regarded as a bordism from the empty set to itself.
Then $Z(B)$ is a linear map from $k$ to itself, which we can identify with an element of $k$
(Remark \ref{getnum}). To compute this element, it is convenient to decompose the circle
$S^1 \simeq \{ z \in \C: |z| = 1 \}$ into two intervals
$$ S^{1}_{-} = \{ z \in \C: (|z| = 1) \wedge {\text Im}(z) \leq 0 \} \quad \quad S^{1}_{+} = \{ z \in \C: (|z| = 1) \wedge {\text Im}(z) \geq 0 \},$$ 
meeting in the subset 
$$S^{1}_{-} \cap S^{1}_{+} = \{ \pm 1 \} \subseteq S^1.$$
It follows that $Z(S^1)$ is given as the composition of the maps
$$ k \simeq Z( \emptyset) \stackrel{ Z( S^{1}_{-})}{\longrightarrow} Z( \pm 1)
\stackrel{ Z(S^{1}_{+})}{\longrightarrow} Z( \emptyset) \simeq k.$$
These maps were described by $(c)$ and $(d)$ above. Under the identification of
$Z( \pm 1)$ with $V \otimes V^{\vee} \simeq \End(V)$, the map $Z( S^{1}_{-}): k \rightarrow \End(V)$ is given by $x \mapsto x \id_{V}$, while $Z(S^{1}_{+}): \End(V) \rightarrow k$ is given by 
$A \mapsto \tr(A)$. Consequently, $Z(S^1)$ is given by the trace of the identity map from $V$ to itself: in other words, the dimension of $V$.
\end{itemize}
\end{example}

\begin{remark}
Example \ref{1dim} illustrates some central themes which will reappear (in a more sophisticated form) later in this paper. {\it A priori}, the specification of a topological field theory $Z$ involves
a large quantity of data: one must give a vector space $Z(M)$ for every closed oriented manifold $M$ of the appropriate dimension, together with a number of linear maps satisfying various conditions. However, the field theory $Z$ is often determined by only a tiny fragment of this data, given
by evaluating $Z$ on a small class of manifolds (in Example \ref{1dim}, the entire field theory $Z$ is determined by the single vector space $V = Z(P)$). 
Nevertheless, it can be interesting to consider the values of $Z$ on arbitrary manifolds. In Example \ref{1dim}, the value $Z( S^1)$ recovers the {\em dimension} of the vector space $V$.
This is the most important numerical invariant of $V$, and is in some sense the only invariant: any two vector spaces with the same (integer) dimension are isomorphic to one another.
\end{remark}

\begin{example}[Field Theories in Dimension 2]\label{coming}
Let $Z$ be a $2$-dimensional topological field theory. Then $Z$ assigns a vector space $Z(M)$ to every closed, oriented $1$-manifold $M$. In particular, $Z$ determines a vector space $A = Z(S^1)$.
Since $Z$ is a symmetric monoidal functor, the values of $Z$ on objects are determined by $A$:
every closed $1$-manifold $M$ is a disjoint union of $n$ circles for some $n \geq 0$, so that
$Z(M) \simeq A^{\otimes n}$.

Evaluating the field theory $Z$ on bordisms between $1$-manifolds, we obtain some algebraic structure on the vector space $A$. For example, let $B$ denote a pair of pants, regarded as a bordism
from two copies of $S^1$ to a third copy of $S^1$. Then $Z(B)$ determines a linear map
$$A \otimes A \simeq Z(S^1 \coprod S^1) \stackrel{Z(B)}{\longrightarrow} Z(S^1) = A$$
which we will denote by $m$. We can view $m$ as endowing $A$ with a bilinear multiplication.
It follows easily from the definition that this multiplication is commutative and associative: for example,
the commutativity results from the observation that there is a diffeomorphism of $B$ which
permutes the two ``incoming'' boundary circles and restricts to the identity on the third.

There is also a unit for the multiplication on $A$: namely, the image of $1 \in k$ under the linear map
$$Z(D): k \simeq Z(\emptyset) \rightarrow Z(S^1) = A,$$ where we regard
the disk $D^2 = \{ z \in \C: |z| \leq 1 \}$ as a bordism from the empty set to the boundary circle
$S^1 = \bd D^2 = \{ z \in \C: |z| = 1 \}$. We can also interpret $D^2$ as a bordism from $S^1$ to
the empty set, in which case it determines a linear map $\tr: A \rightarrow k$. The composition
$$ A \otimes A \stackrel{m}{\rightarrow} A \stackrel{\tr}{\rightarrow} k$$
is the linear map associated to the cylinder $S^1 \times [0,1]$, and therefore determines a perfect
pairing of $A$ with itself (Proposition \ref{scunnel}). (Note that the $1$-sphere $S^1$ admits an orientation-reversing diffeomorphism, so that $S^1 \simeq \overline{S^1}$.)

It is convenient to summarize the analysis up to this point by introducing a definition.

\begin{definition}
Let $k$ be a field. A {\it commutative Frobenius algebra} over $k$ is a finite dimensional commutative
$k$-algebra $A$, together with a linear map $\tr: A \rightarrow k$ such that the 
the bilinear form $(a,b) \mapsto \tr(ab)$ is nondegenerate.
\end{definition}

The above analysis shows that if $Z$ is a $2$-dimensional topological field theory, then
the vector space $A = Z(S^1)$ is naturally endowed with the structure of a commutative Frobenius algebra over $k$. In fact, the converse is true as well: given a commutative Frobenius algebra $A$,
one can construct a $2$-dimensional topological field theory $Z$ such that $A = Z(S^1)$
and the multiplication and trace on $A$ are given by evaluating $Z$ on a pair of pants and a disk, respectively. Moreover, $Z$ is determined up to unique isomorphism: in other words, the category
of $2$-dimensional topological field theories is {\em equivalent} to the category of commutative Frobenius algebras.
\end{example}

\subsection{Extending Down: Lower Dimensional Manifolds}\label{secex}

In \S \ref{kob}, we analyzed the structure of an $n$-dimensional topological field theory
$Z$ for $n=1$ and $n=2$. In both cases, we accomplished this by emphasizing the value of the field theory $Z$ on closed manifolds of dimension $n-1$. However, it is possible to proceed differently:
according to Remark \ref{getnum}, for every closed oriented manifold $M$ of dimension $n$ we can identify the value $Z(M)$ with an element of the ground field $k$. In other words, a topological field theory gives rise to a {\em diffeomorphism invariant} for closed manifolds of dimension $n$. Suppose we take the point of view that these diffeomorphism invariants are the
main objects of interest. Of course, a topological field theory provides more data: we can evaluate $Z$ not only on closed manifolds of dimension $n$, but also on manifolds with boundary
and manifolds of dimension $(n-1)$. This data can be viewed as supplying a set of rules which allow
us to compute the invariant $Z(M)$ associated to a closed manifold $M$ by breaking $M$ up into pieces.

\begin{example}\label{cugh}
Let $A$ be a commutative Frobenius algebra over a field $k$. According to Example \ref{coming}, 
the algebra $A$ determines a $2$-dimensional topological field theory $Z$. In particular,
we can evaluate $Z$ on closed oriented $2$-manifolds $M$. Such manifolds 
are classified (up to orientation-preserving diffeomorphism) by a single invariant $g$, the {\it genus}, which ranges over the nonnegative integers. Consequently, for each $g \geq 0$, we can evaluate
$Z$ on a closed surface $\Sigma_g$ of genus $g$, to obtain an element $Z( \Sigma_{g}) \in k$.
Let us compute the value of $Z( \Sigma_{g})$ for small values of $g$.

\begin{itemize}
\item Suppose that $g =0$. In this case, $\Sigma_{g}$ is diffeomorphic to a $2$-sphere $S^2$, which we can
view as obtained by gluing together hemispheres $S^2_{+}$ and $S^{2}_{-}$ along the
equator $S^{2}_{+} \cap S^{2}_{-} \simeq S^1$. Consequently, $Z(S^2)$ is obtained by composing the linear maps
$$ k \simeq Z(\emptyset) \stackrel{ Z(S^2_{-})}{\longrightarrow} Z(S^1) \stackrel{ Z(S^2_{+})}{\longrightarrow} Z(\emptyset) \simeq k.$$
Note that $Z(S^1)$ coincides with the Frobenius algebra $A$, $Z(S^{2}_{-}): k \rightarrow A$ corresponds to the inclusion of the identity element of $A$, and $Z(S^2_{+}): A \rightarrow k$ is the trace map
$\tr$. It follows that the invariant $Z( \Sigma_g )$ is given by $\tr(1) \in k$.

\item Suppose that $g=1$. In this case, $\Sigma_{g}$ is diffeomorphic to a torus $S^1 \times S^1$, which we
can decompose into cylinders $S^1_{+} \times S^1$ and $S^1_{-} \times S^1$ meeting
in the pair of circles 
$$(S^{1}_{+} \times S^1) \cap (S^1_{-} \times S^1) \simeq (S^1_{+} \cap S^1_{-}) \times S^1
\simeq \{ \pm 1 \} \times S^1.$$
It follows that $Z(\Sigma_g)$ is given by composing the linear maps
$$ k \simeq Z(\emptyset) \stackrel{ Z( S^{1}_{-} \times S^1)}{\rightarrow}
Z( \{ \pm 1\} \times S^1) \stackrel{ Z(S^1_{+}) \times S^1)}{\rightarrow} Z(\emptyset) \simeq k.$$
As in Example \ref{1dim}, we can identify $Z( \{ \pm 1\} \times S^1)$ with the tensor product
$Z( S^1) \otimes Z( \overline{S}^1) \simeq A \otimes A^{\vee} = \End(A)$ (of course,
$A$ is isomorphic to its dual, since the trace pairing $(a,b) \mapsto \tr(ab)$ is a nondegenerate bilinear form on $A$, but we will not use this observation). In terms of this identification, 
the map $Z( S^{1}_{-}): k \rightarrow \End(A)$ corresponds to the inclusion of the identity element $\id_A$, while $Z(S^1_{+}): \End(A) \rightarrow k$ is given by the trace (on the matrix ring $\End(A)$, which
is unrelated to the trace on $A$). It follows that $Z( \Sigma_{g} )$ is equal to the trace of
$\id_{A}$: in other words, the dimension of the Frobenius algebra $A$.
\end{itemize}

It is possible to continue this analysis, and to compute all of the invariants $\{ Z( \Sigma_{g} ) \}_{g \geq 0}$ in terms of the structure constants for the multiplication and trace on $A$; we leave the details to the interested reader.
\end{example}

We can attempt to use the reasoning of Example \ref{cugh} in any dimension. Suppose
that $Z$ is a topological field theory of dimension $n$. For every oriented $n$-manifold
$M$, we can regard $M$ as a bordism from the empty set to $\bd M$, so that
$Z(M): Z(\emptyset) \rightarrow Z(\bd M)$ can be regarded as an element of the vector space
$Z( \bd M)$. The requirement that $Z$ be a {\em functor} can be translated as follows:
suppose that we are given a closed $(n-1)$-dimensional submanifold $N \subseteq M$ which partitions
$M$ into two pieces $M_0$ and $M_1$. Then
$Z(M)$ is the image of 
$$Z(M_0) \otimes Z(M_1) \in Z( \bd M_0) \otimes Z(\bd M_1) \simeq
Z( \bd M) \otimes Z(N) \otimes Z( \overline{N})$$
under the map $Z(\bd M) \otimes Z(N) \otimes Z( \overline{N}) \rightarrow Z(\bd M)$ induced
by the perfect pairing $Z(N) \otimes Z( \overline{N}) \rightarrow k$ of Proposition \ref{scunnel}.
In other words, Definition \ref{atdef} provides a rule for computing the invariant $Z(M)$ in terms of any decomposition $M = M_0 \coprod_{N} M_1$ along a {\em closed} submanifold $N$ of codimension $1$.
We might now ask: is it possible to break $M$ up into ``simple'' pieces by means of the above procedure?
To put the question another way, is it possible to specify a short list of ``simple'' $n$-manifolds with boundary $\{ M_{\alpha} \}$, such that {\em any} $n$-manifold can be assembled by gluing
together manifolds appearing in the list $\{ M_{\alpha} \}$ along components of their boundaries?
When $n=2$, this question has an affirmative answer: every oriented surface $\Sigma$ can be obtained by gluing together disks, cylinders, and pairs of pants. 

Unfortunately, the above method becomes increasingly inadequate as the dimension $n$ grows. If $M$ is a manifold of large dimension, then it is generally not possible to simplify $M$ very much by cutting along closed submanifolds of codimension $1$ (and these submanifolds
are {\em themselves} very complicated objects when $n \gg 0$). What we would really like to do is to chop $M$ up into very small pieces, say, by choosing a triangulation of $M$. We might then hope to somehow recover the invariant $Z(M)$ in terms of the combinatorics of the triangulation. A triangulation of an $n$-manifold $M$ allows us to write $M$ as a union $\bigcup_{\alpha} \Delta^n_{\alpha}$ of finitely many $n$-simplices, which we can regard as a {\em very} simple type of $n$-manifold with boundary. In other words, $M$ can be obtained by gluing together a collection of simplices. However, the nature of the gluing is somewhat more complicated in this case: in general, we must allow ourselves to glue along submanifolds which
are not closed, but which themselves have boundary. Definition \ref{atdef} makes no provision for
this sort of generalized gluing, which requires us to contemplate not only manifolds of dimension
$n$ and $n-1$, but also manifolds of lower dimension. For this reason, various authors have proposed refinements of Definition \ref{atdef}, such as the following:

\begin{protodefinition}\label{prono}
Let $k$ be a field. A topological field theory $Z$ gives rise to the following data:
\begin{itemize}
\item[$(a)$] For every closed oriented $n$-manifold $M$, an element $Z(M) \in k$.

\item[$(b)$] For every closed oriented $(n-1)$-manifold $M$, a $k$-vector space $Z(M)$.
When $M$ is empty, the vector space $Z(M)$ is canonically isomorphic to $k$.

\item[$(c)$] For every oriented $n$-manifold $M$, an element $Z(M)$ of the vector space
$Z( \bd M)$. In the special case where $M$ is closed, this should coincide with the element
specified by $(a)$ under the isomorphism $Z(\bd M) = Z(\emptyset) \simeq k$ of $(b)$.
\end{itemize}
A {\it $2$-extended topological field theory} consists of data $(a)$ through $(c)$ as above, together with the following:
\begin{itemize}
\item[$(d)$] For every closed oriented $(n-2)$-manifold $M$, a {\it $k$-linear category}
$Z(M)$. That is, $Z(M)$ is a category such that for every pair of objects $x,y \in Z(M)$, the
set of morphisms $\Hom_{Z(M)}(x,y)$ has the structure of a $k$-vector space, and composition
of morphisms is given by bilinear maps. Furthermore, when $M$ is empty, the category
$Z(M)$ should be (canonically equivalent to) the category $\Vect(k)$ of vector spaces over $k$.
\item[$(e)$] For every oriented $(n-1)$-manifold $M$, an object $Z(M)$ of the
$k$-linear category $Z( \bd M)$. In the special case where $M$ is closed, $Z(M)$ should coincide
with the vector space specified by $(b)$ under the equivalence $Z(\bd M) = Z(\emptyset) \simeq \Vect(k)$ of $(d)$.
\end{itemize}
\end{protodefinition}


Definition \ref{prono} is very much incomplete: it specifies a large number of invariants, but does not say very much about how they should be related to one another. For example, if $Z$ is an ordinary topological field theory and $B$ is a bordism from an oriented $(n-1)$-manifold $M$
to another oriented $(n-1)$-manifold $N$, then $Z$ associates a linear map
$Z(B): Z(M) \rightarrow Z(N)$; clause $(c)$ of Definition \ref{prono} describes only the case where
$M$ is assumed to be empty. Similarly, we should demand that if $B$ is a bordism from
an oriented $(n-2)$-manifold $M$ to another oriented $(n-2)$-manifold $N$, then
$B$ determines a functor $Z(B): Z(M) \rightarrow Z(N)$ (the data described in $(e)$ is reduces to the
special case where $M$ is empty, so that $Z(M) \simeq \Vect(k)$, and we evaluate the functor
on the vector space $k \in \Vect(k)$). Of course, this is only the tip of the iceberg: we should also demand coherence conditions which describe the behavior of the invariant $Z(B)$ when $B$ is obtained by gluing bordisms together, or as a disjoint union of bordisms, or varies by a bordism between
$(n-1)$-manifolds with boundary (to properly formulate the relevant structure, we need to contemplate $n$-manifolds {\em with corners}). In the non-extended case, the language of category theory allowed us to summarize all of this data in a very succinct way: a topological field theory is simply a symmetric monoidal functor from the category $\Cob{}{n}{}$ to the category $\Vect(k)$. There is an analogous picture in the $2$-extended situation, but it requires us to introduce the language of $2$-categories.

\begin{definition}\label{caper}
A {\it strict $2$-category} is a category enriched over categories. In other words, a strict $2$-category $\calC$ consists of the following data:
\begin{itemize}
\item A collection of objects, denoted by $X, Y, Z, \ldots$
\item For every pair of objects $X,Y \in \calC$, a category $\OHom_{\calC}(X,Y)$.
\item For every object $X \in \calC$, a distinguished object $\id_{X} \in \OHom_{\calC}(X,Y)$.
\item For every triple of objects $X,Y, Z \in \calC$, a {\it composition functor}
$$ \OHom_{\calC}(X,Y) \times \OHom_{\calC}(Y,Z) \rightarrow \OHom_{\calC}(X,Z).$$
\item The objects $\{ \id_{X} \}_{X \in \calC}$ are units with respect to composition:
in other words, for every pair of objects $X,Y \in \calC$, the functors
$$ \OHom_{\calC}(X,Y) \rightarrow \OHom_{\calC}(X,Y) \quad \quad \OHom_{\calC}(Y,X)
\rightarrow \OHom_{\calC}(Y,X)$$
given by composition with $\id_{X}$ are the identity.
\item Composition is strictly associative: that is, for every quadruple of objects
$W,X,Y,Z \in \calC$, the diagram of functors
$$ \xymatrix{ \OHom_{\calC}(W,X) \times \OHom_{\calC}(X,Y) \times
\OHom_{\calC}(Y,Z) \ar[r] \ar[d] & \OHom_{\calC}(W,Y) \times \OHom_{\calC}(Y,Z) \ar[d] \\
\OHom_{\calC}(W,X) \times \OHom_{\calC}(X,Z) \ar[r] & \OHom_{\calC}(W,Z)}$$
is commutative.
\end{itemize}
\end{definition}

\begin{example}\label{cabble}
Let $k$ be a field. There is a strict $2$-category $\Vect_{2}(k)$, which may be described as follows:
\begin{itemize}
\item The objects of $\Vect_{2}(k)$ are {\em cocomplete} $k$-linear categories: that is,
$k$-linear categories $\calC$ which are closed under the formation of direct sums
and cokernels.
\item Given a pair of objects $\calC, \calD \in \Vect_2(k)$, we define
$\OHom_{\Vect_2(k)}(\calC, \calD)$ to be the category of {\em cocontinuous $k$-linear functors} from
$\calC$ to $\calD$: that is, functors $F: \calC \rightarrow \calD$ which preserve cokernels and direct sums, and such that for every pair of objects
$x,y \in \calC$, the induced map $\Hom_{\calC}(x,y) \rightarrow \Hom_{\calD}(Fx, Fy)$ is
$k$-linear.
\item Composition and identity morphisms in $\Vect_{2}(k)$ are defined in the obvious way.
\end{itemize}
\end{example}

\begin{example}\label{cobble}
For every nonnegative integer $n \geq 2$, we can attempt to define a strict
$2$-category $\Cob{2}{n}{}$ as follows:
\begin{itemize}
\item The objects of $\Cob{2}{n}{}$ are closed oriented manifolds of dimension $(n-2)$.
\item Given a pair of objects $M,N \in \Cob{2}{n}{}$, we define a category
$\calC = \OHom_{ \Cob{2}{n}{}}(M,N)$ as follows. The {\em objects} of $\calC$ are
bordisms from $M$ to $N$: that is, oriented $(n-1)$-manifolds $B$ equipped with a
diffeomorphism $\bd B \simeq \overline{M} \coprod N$. Given a pair of objects
$B, B' \in \calC$, we let $\Hom_{\calC}(B,B')$ denote the collection of all (oriented) diffeomorphism
classes of (oriented) bordisms $X$ from $B$ to $B'$. Here we require that $X$ reduce to the
trivial bordism along the common boundary $\bd B \simeq \overline{M} \coprod N \simeq \bd B'$, so that
$X$ can be regarded as an $n$-manifold with boundary
$$\overline{B} \coprod_{ M \coprod \overline{N} } (( \overline{M} \coprod N) \times [0,1])
\coprod_{ \overline{M} \coprod N} B'.$$
\end{itemize}
Unfortunately, this definition does not (immediately) yield a strict $2$-category, because it is difficult to define a strictly associative composition law
$$ c: \OHom_{ \Cob{2}{n}{}}( M, M') \times \OHom_{ \Cob{2}{n}{}}(M', M'') \rightarrow
\OHom_{ \Cob{2}{n}{}}(M, M'').$$
Roughly speaking, given a bordism $B$ from $M$ to $M'$ and another bordism
$B'$ from $M'$ to $M''$, we would like to define $c(B,B')$ to be the bordism obtained
by gluing $B$ to $B'$ along $M'$. We encounter two difficulties:
\begin{itemize}
\item[$(i)$] In order to endow $c(B,B')$ with a smooth structure, we need to make
some additional choices, such as a smooth collar neighborhood of $M'$ in both $B$ and $B'$.
These choices were irrelevant in Definition \ref{slovak}, because we were only interested in
the bordism $c(B,B')$ up to diffeomorphism. However, to fit into the mold of Definition \ref{caper}, we need $c(B,B')$ to be defined on the nose.
\item[$(ii)$] Given a triple of composable bordisms
$B: M \rightarrow M'$, $B': M' \rightarrow M''$, and $B'': M'' \rightarrow M'''$, the associative law
of Definition \ref{caper} requires that we have an {\em equality} of bordisms
$$(B \coprod_{M'} B') \coprod_{M''} B'' = B \coprod_{M'} (B' \coprod_{M''} B'').$$ This may be difficult
to arrange: what we see in practice is a canonical {\em homeomorphism} between the right and left hand sides (which can be promoted to a diffeomorphism, provided that we have correctly dealt with
problem $(i)$).
\end{itemize}
\end{example}

For the purposes of studying $2$-extended topological field theories, it is vitally
important that our categorical formalism should incorporate Example \ref{cobble}. 
There are two possible means by which we might accomplish this:
\begin{itemize}
\item[$(a)$] Adjust the definition of $\Cob{2}{n}{}$ so that issues $(i)$ and $(ii)$ do not arise. For example, we can address problem $(i)$ by introducing a more complicated notion of bordism
which keeps track of collar neighborhoods of the boundary. Issue $(ii)$ is a bigger nuisance: though it is possible to ``rectify'' the composition law on $\Cob{2}{n}{}$ to make it strictly associative, it is somewhat
painful and technically inconvenient to do so.

\item[$(b)$] Adjust Definition \ref{caper} so that it incorporates Example \ref{cobble} more easily.
This can be accomplished by introducing the definition of a (nonstrict) {\it $2$-category}
(also called a {\it weak $2$-category} or a {\it bicategory}), where we do not require composition
to be associative on the nose but only up to coherent isomorphism.
\end{itemize}

We will adopt approach $(b)$, and work with the $2$-categories rather than strict $2$-categories.
The advantage of this approach is that it more easily accomodates Example \ref{cobble} and variations thereof. The disadvantage is that the definition of a $2$-category is more complicated than Definition \ref{caper}, because we need to define ``up to coherent isomorphism'' precisely. We will defer a more precise discussion until \S \ref{bigseg}; for the moment, we will simply take for granted that there is a good theory of $2$-categories which incorporates the examples above and use it to give a more complete formulation of Definition \ref{prono}:

\begin{definition}\label{prono2}
Let $k$ be a field. An {\it $2$-extended topological field theory} of dimension $n$ is a symmetric monoidal functor $Z: \Cob{2}{n}{} \rightarrow \Vect_{2}(k)$ between $2$-categories. 
\end{definition}

\begin{remark}
In order to make sense of Definition \ref{prono2}, we need to understand $\Cob{2}{n}{}$ and
$\Vect_{2}(k)$ not only as $2$-categories, but as {\it symmetric monoidal} $2$-categories.
In the case of $\Cob{2}{n}{}$, this is straightforward: the tensor product operation is simply given by disjoint unions of manifolds, just as for $\Cob{}{n}{}$. The tensor product on $\Vect_{2}(k)$ is a bit more subtle.
To describe it, let us first recall how to define the tensor product of a pair of vector spaces $U$ and $V$ over $k$. Given a third $k$-vector space $W$, we can define the notion of a {\it bilinear map}
from $U \times V$ into $W$: this is a map $b: U \times V \rightarrow W$ which is linear separately in each variable (in other words, we require that for each $u \in U$ the map $v \mapsto b(u,v)$ is linear,
and similarly for each $v \in V$ the map $u \mapsto b(u,v)$ is linear). The tensor product
$U \otimes V$ is defined to be the recipient of a {\em universal} bilinear map
$U \times V \rightarrow U \otimes V$. In other words, $U \otimes V$ is characterized by the following universal property: giving a linear map from $U \otimes V$ into another $k$-vector space $W$ is
equivalent to giving a bilinear map $U \times V \rightarrow W$.

We can apply the same reasoning to define a tensor product operation in the setting of
(cocomplete) $k$-linear categories. We begin by defining the analogue of the notion of a bilinear map:
given a triple of cocomplete $k$-linear categories $\calC$, $\calD$, and $\calE$, we will say that a functor $F: \calC \times \calD \rightarrow \calE$ is {\it $k$-bilinear} if for every object $C \in \calC$
the functor $D \mapsto F(C,D)$ is cocontinuous and $k$-linear, and for every object $D \in \calD$
the functor $C \mapsto F(C,D)$ is cocontinuous and $k$-linear. We can then attempt to
define a tensor product $\calC \otimes \calD$ by demanding the following universal property:
for every cocomplete $k$-linear category $\calE$, there is an equivalence between the category
of cocontinuous $k$-linear functors $\calC \otimes \calD \rightarrow \calE$ with the category
of $k$-bilinear functors $\calC \times \calD \rightarrow \calE$. Of course, it takes some effort to prove that
the tensor product $\calC \otimes \calD$ exists (and a bit more work to show that it is associative); we will not dwell on this point, since the strict $2$-category $\Vect_{2}(k)$ will soon disappear from our discussion of topological field theories.
\end{remark}

\begin{remark}
Definition \ref{prono2} should be regarded as a more elaborate version of Definition \ref{atdef}.
To explain this, we note that if $\calC$ is an arbitrary symmetric monoidal $2$-category,
then we can extract a symmetric monoidal category $\Omega \calC = \bHom_{ \calC}( {\bf 1}, {\bf 1})$
of morphisms from the unit object to itself in $\calC$. Applying this construction
in the situations of Example \ref{cabble} and \ref{cobble}, we obtain equivalences
$$ \Omega \Vect_{2}(k) \simeq \Vect(k) \quad \quad \Omega \Cob{2}{n}{} \simeq \Cob{}{n}{}.$$
Consequently, any $2$-extended field theory $Z: \Cob{2}{n}{} \rightarrow \Vect_{2}(k)$
determines a symmetric monoidal functor $\Omega Z: \Cob{}{n}{} \rightarrow \Vect(k)$, which
we can regard as an $n$-dimensional topological field theory in the sense of Definition \ref{atdef}.
\end{remark}

In general, a $2$-extended topological field theory $Z: \Cob{2}{n}{} \rightarrow \Vect_{2}(k)$
contains a great deal more information than its underlying topological field theory
$\Omega Z$, which can be useful in performing calculations. For example,
suppose that we wish to compute $Z(M) = (\Omega Z)(M)$, where $M$ is a closed oriented
$n$-manifold. Knowing that $Z(M)$ is the value of a topological field theory $\Omega Z$ on
$M$ allows us to compute $Z(M)$ by cutting $M$ along closed submanifolds of codimension
$1$. The $2$-extended field theory $Z$ itself gives us more flexibility: we can cut $M$ along $(n-1)$-manifolds with boundary. However, this freedom is still somewhat limited: given a decomposition $M = M_0 \coprod_{N} M_1$,
we can try to reconstruct $Z(M)$ in terms of the constituents $Z(M_0)$, $Z(M_1)$, and $Z(N)$. 
In particular, we need to understand $Z(N)$, where $N$ has dimension $(n-1)$. If $n$ is large,
we should expect $N$ to be quite complicated. It is therefore natural to try to break $N$ into simpler pieces. Definition \ref{prono2} gives us a limited amount of freedom to do so: given a decomposition
$N = N_0 \coprod_{P} N_1$, where $P$ is a {\em closed} $(n-2)$-manifold, we can compute
$Z(N)$ in terms of $Z(N_0)$, $Z(P)$, and $Z(N_1)$. However, we cannot generally simplify
$N$ very much by cutting along closed submanifolds: it is again necessary to allow cutting along
$(n-2)$-manifolds with boundary. For this, we need to consider topological field theories which are even more ``extended''. To make sense of these ideas, we need to introduce a bit more terminology.

\begin{definition}\label{sncat}
Let $n$ be a nonnegative integer. We define the notion of a {\it strict $n$-category} by induction on $n$:
\begin{itemize}
\item[$(a)$] If $n=0$, then a strict $n$-category is a set.
\item[$(b)$] If $n > 0$, then a strict $n$-category is a category enriched over
strict $(n-1)$-categories. In other words, a strict $n$-category $\calC$ consists of the following data:
\begin{itemize}
\item[$(i)$] A collection of objects $X,Y, Z, \ldots$
\item[$(ii)$] For every pair of objects $X,Y \in \calC$, a strict $(n-1)$-category
$\OHom_{\calC}(X,Y)$.
\item[$(iii)$] Identity objects $\id_{X} \in \OHom_{\calC}(X,X)$ and composition maps
$$\OHom_{\calC}(X,Y) \times \OHom_{\calC}(Y,Z) \rightarrow \OHom_{\calC}(X,Z),$$ satisfying
the usual unit and associativity conditions.
\end{itemize}
\end{itemize}
\end{definition}

Let us take a moment to unwind Definition \ref{sncat}. A strict $n$-category $\calC$ is
a mathematical structure in which one has:
\begin{itemize}
\item A collection of objects $X,Y, Z, \ldots$
\item For every pair of objects $X,Y \in \calC$, a collection of morphisms from $X$ to $Y$,
called {\it $1$-morphisms}.
\item For every pair of objects $X$ and $Y$ and every pair of morphisms $f,g: X \rightarrow Y$,
a collection of morphisms from $f$ to $g$, called {\it $2$-morphisms}.
\item For every pair of objects $X$ and $Y$, every pair of morphisms $f,g: X \rightarrow Y$, and
every pair of $2$-morphisms $\alpha, \beta: f \rightarrow g$, a collection of
morphisms from $\alpha$ to $\beta$, called {\it $3$-morphisms}.
\item \ldots
\end{itemize}
Moreover, these morphisms are equipped with various notions of composition, which are strictly associative at every level.

\begin{warning}
When $n=1$, Definition \ref{sncat} recovers the usual notion of category. When
$n=2$, it reduces to Definition \ref{caper}. For $n > 2$, Definition \ref{sncat} is poorly behaved.
However, there is a related notion of {\it $n$-category} (or {\it weak $n$-category}), where one requires composition to be associative only up to isomorphism, rather than ``on the nose''. Most of the examples of $n$-categories which arise naturally (such as the example we will discuss below) are {\em not} equivalent to strict $n$-categories. We will review the theory of $n$-categories in \S \ref{higt}, and
sketch a more precise definition in \S \ref{bigseg}.
\end{warning}

\begin{example}\label{slipwell}
Suppose given a pair of nonnegative integers $k \leq n$. Then there
exists a $k$-category which we will denote by $\Cob{k}{n}{}$, which can
be described informally as follows:
\begin{itemize}
\item The objects of $\Cob{k}{n}{}$ are closed oriented $(n-k)$-manifolds
\item Given a pair of objects $M,N \in \Cob{k}{n}{}$, a $1$-morphism from $M$ to $N$
is a bordism from $M$ to $N$: that is, a $(n-k+1)$-manifold $B$ equipped with a diffeomorphism
$\bd B \simeq \overline{M} \coprod N$.
\item Given a pair of objects $M, N \in \Cob{k}{n}{}$ and a pair of bordisms $B,B': M \rightarrow N$,
a $2$-morphism from $B$ to $B'$ is a bordism from $B$ to $B'$, which is required to be trivial along the boundary: in other words, a manifold with boundary 
$$\overline{B} \coprod_{ M \coprod \overline{N} } (( \overline{M} \coprod N) \times [0,1])
\coprod_{ \overline{M} \coprod N} B'.$$
\item \ldots
\item A $k$-morphism in $\Cob{k}{n}{}$ is an $n$-manifold $X$ with corners, where
the structure of $\bd X$ is determined by the source and target of the morphism.
Two $n$-manifolds with (specified) corners $X$ and $Y$ determine the same $n$-morphism
in $\Cob{k}{n}{}$ if they differ by an orientation-preserving diffeomorphism, relative to their boundaries.
\item Composition of morphisms (at all levels) in $\Cob{k}{n}{}$ is given by gluing of bordisms.
\end{itemize}
Here we encounter the same issues as in Definition \ref{cobble}, but they are somewhat more serious. For $n > 2$, it is {\em not} possible to massage the above definition to produce a strict $n$-category:
gluing of bordisms is, at best, associative up to diffeomorphism. Nevertheless, $\Cob{k}{n}{}$ is
a perfectly respectable example of a (nonstrict) $n$-category: we will sketch a more precise definition of it
in \S \ref{swugg}. 
\end{example}

\begin{remark}
When $k=1$, the category $\Cob{k}{n}{}$ described in Example \ref{slipwell}
is just the usual bordism category $\Cob{}{n}{}$ of Definition \ref{slovak}. When
$k=0$, we can identify $\Cob{k}{n}{}$ with the {\em set} of diffeomorphism
classes of closed, oriented $n$-manifolds.
\end{remark}

We might now attempt to define an extended topological field theory to be a symmetric monoidal functor from the $n$-category $\Cob{n}{n}{}$ of Example \ref{slipwell} into a suitable
$n$-categorical generalization of $\Vect(k)$. Of course, there are many possible candidates for such a generalization. We will skirt the issue by adopting the following more general definition:

\begin{definition}\label{exdef}
Let $\calC$ be a symmetric monoidal $n$-category. An {\it extended $\calC$-valued topological field theory of dimension $n$} is a symmetric monoidal functor
$$ Z: \Cob{n}{n}{} \rightarrow \calC.$$
\end{definition}

At a first glance, Definition \ref{exdef} appears much more complicated than its more classical
counterpart, Definition \ref{atdef}. First of all, it is phrased in the language of $n$-categories,
which we have not yet introduced. Second, an extended topological field theory supplies a great deal more data than
that of an ordinary topological field theory: we can evaluate an extended field theory $Z$ on manifolds (with corners) of arbitrary dimension, rather than simply on closed $(n-1)$-manifolds and $n$-manifolds with boundary. Finally, the values of $Z$ on manifolds of low dimension are typically invariants of a very abstract and higher-categorical nature.

Nevertheless, one can argue that the notion an of extended field theory $Z$ should be quite a bit {\em simpler} than its non-extended counterpart.
Optimistically, one might hope to interpret the statement that $Z$ is a functor between $n$-categories
as saying that we have a complete toolkit which will allow us to compute the value $Z(M)$ given any decomposition of $M$ into pieces. While a closed $n$-manifold $M$ might look very complicated {\em globally}, it is {\em locally} very simple: by definition, every point $x \in M$ has a neighborhood which is diffeomorphic to Euclidean space $\R^n$. Consequently, we might hope to compute $Z(M)$ by breaking
$M$ up into elemental bits of manifold such as points, disks, or simplices.
If this were possible, then $Z$ would be determined by a very small amount of data. Indeed, we have already seen that this is exactly what happens in the case $n=1$: a $1$-dimensional topological field theory $Z$ is completely determined by a single vector space, given by evaluating $Z$ at a point (Example \ref{1dim}). (Note that when $n=1$, the distinction between extended topological field theories and ordinary topological field theories evaporates.)

Motivated by the $1$-dimensional case, we might hope to prove in general that an extended topological field theory $Z$ is determined by its value on a single point: in other words, that extended topological field theories with values in $\calC$ can be identified with objects of $\calC$. This hope turns out to be a bit too naive, for two reasons:

\begin{itemize}
\item[$(1)$] As we explained above, if $M$ is a closed manifold of dimension $n$, then for every
point $x \in M$ there is a diffeomorphism of $\R^{n}$ with an open neighborhood
of $x$ in $M$. However, this diffeomorphism is not unique. More canonically, we can say that
$x$ admits a neighborhood which is diffeomorphic to an open ball in the tangent space $T_{M,x}$ of $M$ at $x$. (This diffeomorphism is still not uniquely determined, but is determined up to a contractible space of choices if we require its derivative at $x$ to reduce to the identity map from $T_{M,x}$ to itself. Alternatively, if we choose a Riemannian metric on $M$, we can obtain a canonical diffeomorphism using the spray associated to the exponential flow on the tangent bundle $T_M$.) If $n=1$, then an orientation on $M$ allows us to trivialize the tangent bundle $T_M$, so the issue of noncanonicality does not arise. However, for $n > 1$ the potential
nontriviality of $T_M$ will play an important role in the classification of (extended) topological field theories.
\item[$(2)$] Even in the case where $n=1$ and $\calC = \Vect(k)$, it is not true that giving
of a $\calC$-valued topological field theory is equivalent to giving an object of $\calC$.
In Example \ref{1dim}, we saw that a $1$-dimensional topological field theory
$Z$ was uniquely determined by a single vector space $V = Z( \ast)$, which was required to
be finite dimensional (Proposition \ref{scunnel}). In the general case, the best we can expect is that $\calC$-valued extended topological field theories should be classified by objects of $\calC$
which satisfy suitable finiteness conditions, which generalize the condition that a vector space be finite-dimensional.
\end{itemize}

To address objection $(1)$, it is convenient to replace the oriented bordism $n$-category $\nCob$ by its framed analogue:

\begin{variant}\label{barvar}
Let $M$ be an $m$-manifold. A {\it framing} of $M$ is trivialization of the tangent bundle of $M$:
that is, an isomorphism $T_M \simeq \underline{\R}^m$ of vector bundles over $M$; here
$\underline{\R}^m$ denotes the trivial bundle with fiber $\R^m$. More generally,
if $m \leq n$, we define an {\it $n$-framing of $M$} to be a trivialization of the stabilized tangent bundle
$T_M \oplus \underline{\R}^{n-m}$. 

The {\it framed bordism $n$-category} $\Cob{n}{n}{\fr}$ is defined in the same way as $\Cob{n}{n}{}$
(see Example \ref{slipwell}), except that we require that all manifolds be equipped with an $n$-framing.
If $\calC$ is a symmetric monoidal $n$-category, then a {\it framed extended topological field theory} with values in $\calC$ is a symmetric monoidal functor of $n$-categories $\Cob{n}{n}{\fr} \rightarrow \calC$.
\end{variant}

\begin{remark}
There is an evident functor $\Cob{n}{n}{\fr} \rightarrow \Cob{n}{n}{}$, which discards framings and retains only the underlying orientations. By composing with this forgetful functor, every extended topological field theory determines a framed extended topological field theory. We will give a more precise account of the relationship between the framed and oriented field theories in \S \ref{ONACT}.
\end{remark}

To address objection $(2)$, we need to introduce the notion of a {\em fully dualizable} object
of a symmetric monoidal $n$-category $\calC$. We will defer the precise definition until
\S \ref{capertown}: for the moment, we note only that full dualizability is a natural finiteness condition in the $n$-categorical setting. Moreover, when $\calC$ is the category $\Vect(k)$ of vector spaces
over a field $k$ (so that $n=1$), an object $V \in \calC$ is fully dualizable if and only if it is a finite dimensional vector space.

The main objective of this paper is to sketch a proof of the following result (and various generalizations thereof):

\begin{theorem}[Baez-Dolan Cobordism Hypothesis]\label{swisher}
Let $\calC$ be a symmetric monoidal $n$-category. Then the evaluation functor
$$Z \mapsto Z(\ast)$$ determines a bijective correspondence between $($isomorphism classes of$)$
framed extended $\calC$-valued topological field theories and
$($isomorphism classes of$)$ fully dualizable objects of $\calC$.
\end{theorem}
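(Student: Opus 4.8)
The plan is to prove the cobordism hypothesis by an inductive strategy on the dimension $n$, combined with a careful analysis of how the framed bordism $n$-category $\Cob{n}{n}{\fr}$ is built up from lower-dimensional data. The first and most important reduction is to obtain a \emph{generators and relations} presentation of $\Cob{n}{n}{\fr}$ as a symmetric monoidal $n$-category. The heuristic from Example \ref{1dim} is that the point $\ast$ (with its framing) generates everything: given a closed framed $n$-manifold $M$, a handle decomposition (or triangulation) exhibits $M$ as built from disks $D^k \times D^{n-k}$ glued along their boundaries, and each such handle is itself a morphism in $\Cob{n}{n}{\fr}$ obtained from the point by the basic bordism operations (evaluation and coevaluation maps, as in the $1$-dimensional case). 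So the first step is to make precise the assertion that $\Cob{n}{n}{\fr}$ is the \emph{free} symmetric monoidal $n$-category with duals on a single generator; equivalently, that for any symmetric monoidal $n$-category $\calC$, symmetric monoidal functors $\Cob{n}{n}{\fr} \to \calC$ are the same as fully dualizable objects of $\calC$. One direction is easy: given $Z$, the object $Z(\ast)$ is fully dualizable because the bordisms exhibiting duality data for $\ast$ in $\Cob{n}{n}{\fr}$ get carried by $Z$ to the corresponding duality data in $\calC$ (this is the $n$-categorical analogue of Proposition \ref{scunnel}).

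The substantive direction is to show that a fully dualizable object $X \in \calC$ extends to a field theory, and uniquely. The plan here is to induct on $n$ using a relative or "parametrized" version of the statement. One first proves the analogous classification for bordism categories of manifolds with boundary and corners whose deepest stratum is fixed, reducing the extension problem across a handle of index $k$ to a statement about $(n-k)$-dimensional field theories with values in an appropriate looping $\Omega^{k}\calC$. Morse theory (or the theory of triangulations, as in the discussion preceding Definition \ref{prono}) provides the decomposition; the full dualizability hypothesis is exactly what is needed to define $Z$ on each elementary handle attachment, and the Cerf-theoretic analysis of moves between handle decompositions (handle slides, birth-death singularities) provides the relations that must be checked to see that $Z$ is well-defined independent of the decomposition. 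Uniqueness follows because any two extensions must agree on generators (the point) and on the generating morphisms, hence everywhere.

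The hard part will be the step where one verifies that the relations arising from Cerf theory are automatically satisfied, i.e.\ that the construction of $Z$ on handles is consistent. This is where full dualizability does its real work: a cancelling handle pair corresponds to a composite of evaluation and coevaluation morphisms that must reduce to the identity, which is precisely a \emph{duality axiom} (a triangle identity, or its higher analogue), and the coherence needed to trivialize families of such moves is governed by the connectivity of the relevant spaces of framed manifolds. One must identify, for each codimension, the space of "ways to cancel" with a space that is sufficiently connected given that $X$ is fully dualizable (this is ultimately a statement about the homotopy type of diffeomorphism groups and framings, and is where the $n$-framing in Variant \ref{barvar} is essential — it kills the tangent-bundle nontriviality noted in objection $(1)$). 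Technically, the cleanest route is to reformulate the entire problem in the language of $(\infty,n)$-categories and to deduce the presentation of $\Cob{n}{n}{\fr}$ from a form of the Galatius--Madsen--Tillmann--Weiss theorem, which computes the classifying space of the bordism category and thereby pins down $\Cob{n}{n}{\fr}$ up to the information needed. I expect the bulk of the work, and the most delicate point, to be organizing this inductive/coherence argument so that the Cerf-theoretic moves at all codimensions simultaneously match the higher duality data of a fully dualizable object.

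Finally, one deduces the \emph{oriented} version (Theorem \ref{swisher}'s analogue for $\nCob$, and more generally $G$-structured versions) from the framed case by a homotopy fixed-point argument: the orthogonal group $\OO(n)$ acts on $\Cob{n}{n}{\fr}$ by rotating framings, hence on the space of framed field theories, i.e.\ on the space of fully dualizable objects of $\calC$; oriented field theories are then identified with homotopy fixed points for the induced $\SO(n)$-action, and $G$-structured field theories with homotopy fixed points for the pullback action along $G \to \OO(n)$. This last step, while conceptually clean, depends on having set up the framed statement in a sufficiently structured (space-level, not just set-level) form, which is another reason the $(\infty,n)$-categorical formulation is the right framework; the bijection on isomorphism classes in the statement as given is then the $\pi_0$ of this equivalence of spaces.
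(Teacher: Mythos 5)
Your high-level architecture — handle decompositions for generators, Cerf-type moves for relations, induction on $n$, full dualizability supplying the handle cancellations, GMTW as a computational input, and homotopy fixed points to pass from framed to $G$-structured manifolds — matches the paper's strategy quite closely. The gap is in the mechanism you invoke for the step you correctly flag as the hardest one: making the Cerf-theoretic coherence precise at \emph{all} categorical levels simultaneously.

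Two specific issues. First, ordinary Cerf theory (one-parameter families of Morse functions, birth-death singularities) only organizes the relations up to $2$-morphisms; to control a genuine $(\infty,n)$-category you need the analogue for families of arbitrary dimension, and "sufficiently connected given that $X$ is fully dualizable" isn't the right shape. The paper's device is Igusa's theory of \emph{framed generalized Morse functions}: one replaces $\Bord_n$ by a category $\Bord_n^{\frun}$ in which every top-dimensional bordism carries a framed function, obtains a generators-and-relations description of $\Bord_n^{\frun}$ by filtering by Morse index (index-$0$ handles adjoin the generating $n$-morphism; index-$1$ handles impose the nondegeneracy/counit relation; higher-index handles are then adjoined \emph{freely} once $\calC$ has duals, via a purely categorical "Exchange Principle" relating left and right adjoints of the unit and counit), and then must show that the forgetful functor $\Bord_n^{\frun}\to\Bord_n$ is an equivalence. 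Crucially, the connectivity input is unconditional and geometric — Igusa proved the space of framed functions on $B$ is $(n-1)$-connected — and has nothing to do with dualizability of $X$; dualizability is what lets you \emph{define} $Z$ on handle cancellations, while Igusa's connectivity is what makes the definition independent of the framed function. Second, Igusa's theorem gives high connectivity but not contractibility, so one more step is needed: a Postnikov-tower/obstruction-theoretic argument for $(\infty,n)$-categories, in which the deviation from an equivalence is detected by relative multiplicative cohomology classes, and the vanishing of those classes is a cohomological calculation that \emph{generalizes} GMTW (it is equivalent to the cofiber sequence $\Sigma^{n-1}\MTO(n-1)\to\Sigma^n\MTO(n)\to\Sigma^{\infty+n}_{+}\BO(n)$). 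So GMTW enters not by "pinning down" $\Cob{n}{n}{\fr}$ directly (the classifying space forgets too much, since group-completion inverts all morphisms), but as the computation that closes the obstruction-theory argument. Your proposal correctly identifies where the difficulty lives but lacks both of these concrete mechanisms.
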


Theorem \ref{swisher} asserts that for every fully dualizable object $C$ of a symmetric monoidal $n$-category $\calC$,
there is an essentially unique symmetric monoidal functor $Z_C: \Cob{n}{n}{\fr} \rightarrow \calC$
such that $Z_C(\ast) \simeq C$. In other words, the symmetric monoidal $\Cob{n}{n}{\fr}$ is {\em freely generated} by a single fully dualizable object: namely, the object consisting of a single point.

\begin{remark}
A version of Theorem \ref{swisher} was originally conjectured by Baez and
Dolan; we refer the reader to \cite{baezdolan} for the original statement, which differs
in some respects from the formulation presented here. For a proof of Theorem \ref{swisher} and some variations in the case $n=2$, we refer the reader to \cite{csp}.
\end{remark}

\subsection{Higher Category Theory}\label{higt}

In \S \ref{secex}, we introduced the notion of an {\it extended} topological field theory, and argued that this notion is best described using the language of higher category theory.
Our objective in this section is to give a brief informal introduction to higher categories; we will give a more precise account in \S \ref{bigseg}.

Roughly speaking, we would like to obtain the theory of $n$-categories by means of the inductive description given in Definition \ref{sncat}: an $n$-category $\calC$ consists of a set of objects
$X,Y, Z, \ldots$, together with an $(n-1)$-category $\OHom_{\calC}(X,Y)$ for every pair of
objects $X,Y \in \calC$. These $(n-1)$-categories should be equipped with an associative and
unital composition law. Definition \ref{sncat} requires that composition be associative on
the nose: that is, for every quadruple of objects $W,X,Y,Z \in \calC$, the diagram
$$ \xymatrix{ \OHom_{\calC}(W,X) \times \OHom_{\calC}(X,Y) \times
\OHom_{\calC}(Y,Z) \ar[r]^-{c_{W,X,Y}} \ar[d]^-{c_{X,Y,Z}} & \OHom_{\calC}(W,Y) \times \OHom_{\calC}(Y,Z) \ar[d]^{c_{W,Y,Z} }\\
\OHom_{\calC}(W,X) \times \OHom_{\calC}(X,Z) \ar[r]^-{c_{W,X,Z}} & \OHom_{\calC}(W,Z)}$$
is required to commute. We have already met examples (arising from the theory of bordisms
between manifolds) which {\em almost} fit this pattern: however, the above diagram is only commutative up to isomorphism. To accomodate these examples, it is natural to replace the
commutativity requirement by the assumption that there exists an isomorphism
$$\alpha_{W,X,Y,Z}: c_{W,X,Z} \circ (\id \times c_{X,Y,Z}) \simeq
c_{W,Y,Z} \circ (c_{W,X,Y} \times \id).$$
Moreover, we should not merely assume that this isomorphism exist: we should take it
as part of the data defining our $n$-category $\calC$. Furthermore, the isomorphisms
$\{ \alpha_{W,X,Y,Z} \}_{W,X,Y,Z \in \calC}$ must themselves be required to satisfy appropriate
``associativity'' conditions, at least up to isomorphism. These isomorphisms should themselves be specified, and subject to further associativity conditions. To properly spell out all of
the relevant structure is no small feat: it is possible to do this directly for small values of $n$, but
even for $n=3$ the definition is prohibitively complicated (see \cite{tricat}).

\begin{example}\label{tableop}
Let $X$ be a topological space. We can define a category $\pi_{\leq 1} X$, the
{\it fundamental groupoid} of $X$, as follows:
\begin{itemize}
\item The objects of $\pi_{\leq 1} X$ are the points of $X$.
\item Given a pair of points $x,y \in X$, a morphism from $x$ to $y$ in $\pi_{\leq 1} X$ is
a homotopy class of paths in $X$ which start at $x$ and end at $y$.
\end{itemize}
The fundamental groupoid is a basic invariant of the topological space $X$: note that it
determines the set $\pi_0 X$ of path components of $X$ (these are precisely the isomorphism classes of objects in $\pi_{\leq 1} X$), and also the fundamental group $\pi_1 (X,x)$ of
$X$ at each point $x \in X$ (this is the automorphism group of the object $x$ in
$\pi_{\leq 1}(X)$). 

The fundamental groupoid $\pi_{\leq 1} X$ does not retain any
other information about the homotopy type of $X$, such as the higher homotopy groups
$\{ \pi_{n}(X,x) \}_{n \geq 2}$. We can attempt to remedy the situation using higher
category theory. For each $n \geq 0$, one can define an $n$-category $\pi_{\leq n} X$, called
the {\it fundamental $n$-groupoid of $X$}. Informally, this $n$-category can be described as follows:
\begin{itemize}
\item The objects of $\pi_{\leq n} X$ are the points of $X$.
\item Given a pair of objects $x,y \in X$, a $1$-morphism in $\pi_{ \leq n} X$ from $x$ to $y$
is a path in $X$ from $x$ to $y$.
\item Given a pair of objects $x,y \in X$ and a pair of $1$-morphisms $f,g: x \rightarrow y$,
a $2$-morphism from $f$ to $g$ in $\pi_{\leq n} X$ is a homotopy of paths in $X$
(which is required to be fixed at the common endpoints $x$ and $y$).
\item \ldots
\item An $n$-morphism in $\pi_{\leq n} X$ is given by a homotopy between homotopies between \ldots between paths between points of $X$. Two such homotopies determined the same
$n$-morphism in $\pi_{\leq n} X$ if they are homotopic to one another (via a homotopy which is fixed on the common boundaries).
\end{itemize}
\end{example}

We say that $\pi_{\leq n} X$ is an {\it $n$-groupoid} because all of its $k$-morphisms are
invertible for $1 \leq k \leq n$. For example, every $1$-morphism $f: x \rightarrow y$ in $\pi_{\leq n} X$ is given by a path $p: [0,1] \rightarrow X$ such that $p(0) = x$ and $p(1) = y$. The path
$t \mapsto p(1-t)$ then determines a morphism from $y$ to $x$, which can be regarded as
an inverse to $f$ (at least up to isomorphism). 

There is a converse to Example \ref{tableop}, which is a generally-accepted principle of higher category theory:

\begin{thesis}\label{cope1}
Let $\calC$ be an {\it $n$-groupoid} $($that is, an $n$-category in which every $k$-morphism
is assumed to be invertible, for $0 < k \leq n${}$)$. Then $\calC$ is equivalent to
$\pi_{\leq n} X$ for some topological space $X$.
\end{thesis}

Of course, the topological space $X$ is not at all unique: for example, any two simply-connected spaces have equivalent fundamental groupoids. To eliminate this ambiguity, we recall
the following definition from classical homotopy theory:

\begin{definition}
A topological space $X$ is called an {\it $n$-type} if the homotopy groups
$\pi_k(X,x)$ vanish for all $x \in X$ and all $k > n$.
\end{definition}

For every topological $X$, one can construct an $n$-type $Y$ and a map
$f: X \rightarrow Y$ which is an isomorphism on homotopy groups in degrees
$\leq n$; the construction proceeds by attaching cells to ``kill'' the homotopy groups
of $X$ in degrees larger than $n$. The space $Y$ is uniquely determined up
to (weak) homotopy equivalence, and the induced map on fundamental $n$-groupoids
$\pi_{\leq n} X \rightarrow \pi_{\leq n} Y$ is an equivalence of $n$-categories.
Consequently, if we are only interested in studying the fundamental $n$-groupoids of topological spaces, there is no loss of generality in assuming that the spaces are $n$-types.
We can now formulate a refinement of Thesis \ref{cope1}:

\begin{thesis}\label{cope2}
The construction $X \mapsto \pi_{\leq n} X$ establishes a bijective correspondence between
$n$-types $($up to weak homotopy equivalence$)$ and $n$-groupoids $($up to equivalence$)$.
\end{thesis}

We refer to this assertion as a thesis, rather than a theorem, because we have not yet defined the notion of an $n$-category. Thesis \ref{cope2} should be regarded as a basic requirement that any 
reasonable definition of $n$-category must satisfy: when we restrict our attention to $n$-categories in which all morphisms are assumed to be invertible, then we should recover the classical homotopy theory of $n$-types. This makes $n$-groupoids much easier to work with than $n$-categories in
general: we can describe them in reasonably concrete terms without giving an inductive
description in the style of Definition \ref{sncat}, and without ever contemplating any ``higher associativity'' conditions.

Between the theory of $n$-categories in general (which are difficult to describe) and the
theory of $n$-groupoids (which are easy to describe) there are various intermediate levels of complexity.

\begin{definition}\label{spaid}
Suppose we are given a pair of nonnegative integers $m \leq n$. An
{\it $(n,m)$-category} is an $n$-category in which all $k$-morphisms are assumed to be invertible, for $m < k \leq n$.
\end{definition}

\begin{example}
An $(n,0)$-category is an $n$-groupoid; an
$(n,n)$-category is an $n$-category.
\end{example}

\begin{variant}
In Definition \ref{spaid}, it is convenient to allow the case $n = \infty$: in this case,
an $(n,m)$-category has morphisms of all orders, but all $k$-morphisms are assumed to be
invertible for $k > m$. It is possible to allow $m = \infty$ as well, but this case
will play no role in this paper.
\end{variant}

Taking $n$ to $\infty$ in the formulation of Thesis \ref{cope2}, we obtain the following:

\begin{thesis}\label{cope3}
There is a construction $X \mapsto \pi_{\leq \infty} X$ which establishes a bijection between
topological spaces $($up to weak homotopy equivalence$)$ and $(\infty,0)$-categories $($up to equivalence$)$.
\end{thesis}

One approach to the theory of higher categories is to turn Thesis \ref{cope3} into a definition:

\begin{definition}\label{splut}
An $(\infty,0)$-category is a topological space.
\end{definition}

\begin{convention}
Throughout the remainder of this paper, we will use Definition \ref{splut} implicitly and will often
not distinguish between the notions of $(\infty,0)$-category and topological space. In particular,
we will view topological spaces $X$ as special kinds of higher categories, so that it makes
sense to talk about functors $X \rightarrow \calC$, where $\calC$ is an $(\infty,n)$-category.
\end{convention}

We can now try to mimic the recursion of Definition \ref{sncat}, starting with
$(\infty,0)$-categories rather than sets.

\begin{protodefinition}\label{splut2}
For $n > 0$, an $(\infty,n)$-category $\calC$ consists of the following data:
\begin{itemize}
\item[$(1)$] A collection of objects $X, Y, Z, \ldots$
\item[$(2)$] For every pair of objects $X,Y \in \calC$, an
$(\infty,n-1)$-category $\OHom_{\calC}(X,Y)$ of $1$-morphisms.
\item[$(3)$] An composition law for $1$-morphisms which is associative
(and unital) up to coherent isomorphism.
\end{itemize}
\end{protodefinition}

At a first glance, Definition \ref{splut2} seems to suffer from the same defects of
Definition \ref{sncat}. Do we require the composition of morphisms to be associative
on the nose, or only up to isomorphism? If the latter, what sorts of coherence conditions do we need to require? However, it is slightly easier to address these questions in the
case of $(\infty,n)$-categories than in the case of ordinary $n$-categories:

\begin{itemize}
\item[$(a)$] Let $n=1$. If we require strict associativity in Definition \ref{splut}, then we recover
the notion of a {\em topological category}: that is, a category $\calC$ in which all morphism spaces $\Hom_{\calC}(X,Y)$ are equipped with topologies, and all of the composition maps
$c_{X,Y,Z}: \Hom_{\calC}(X,Y) \times \Hom_{\calC}(Y,Z) \rightarrow \Hom_{\calC}(X,Z)$ are
continuous. In this case, it is also possible to demand only a weak form of associativity,
in which the diagrams
$$ \xymatrix{ \Hom_{\calC}(W,X) \times \Hom_{\calC}(X,Y) \times
\Hom_{\calC}(Y,Z) \ar[r]^-{c_{W,X,Y}} \ar[d]^{c_{X,Y,Z}} & \Hom_{\calC}(W,Y) \times \Hom_{\calC}(Y,Z) \ar[d]^{c_{W,Y,Z}} \\
\Hom_{\calC}(W,X) \times \Hom_{\calC}(X,Z) \ar[r]^-{c_{W,X,Z}} & \Hom_{\calC}(W,Z)}$$
are required to commute only up to (specified) homotopy. However, this turns out to be unnecessary: 
every composition law which is associative ``up to coherent homotopy'' can be replaced by an equivalent composition law which is strictly associative. Consequently, the theory of
topological categories can be regarded as a version of the theory of $(\infty,1)$-categories.
However, this version is sometimes inconvenient, as we will see in \S \ref{sugar}; we will present
a more useful definition in \S \ref{bigseg}.

\item[$(b)$] One of the main obstacles to formulating a weak version of
Definition \ref{sncat} is that the notion of ``associative up to isomorphism'' is itself
a higher-categorical idea. In the case $n=2$, we are forced to 
consider diagrams of categories
$$ \xymatrix{ \OHom_{\calC}(W,X) \times \OHom_{\calC}(X,Y) \times
\OHom_{\calC}(Y,Z) \ar[r]^-{c_{W,X,Y}} \ar[d]^{c_{X,Y,Z}} & \OHom_{\calC}(W,Y) \times \OHom_{\calC}(Y,Z) \ar[d]^{c_{W,Y,Z}} \\
\OHom_{\calC}(W,X) \times \OHom_{\calC}(X,Z) \ar[r]^-{c_{W,X,Z}} & \OHom_{\calC}(W,Z)}$$
which should be required to commute up a natural isomorphism
$\alpha_{W,X,Y,Z}$. In this diagram,
each corner represents a category, and each arrow represents a functor. In other words,
we can regard the above as determining a diagram in the category
$\Cat$ of categories. However, if we want to formulate the idea that this diagram
commutes up to {\em isomorphism} (rather than ``on the nose''), then it is not enough
to think about $\Cat$ as a category: we need to contemplate not just categories and functors, but also natural transformations. In other words, we need to think of $\Cat$ as a $2$-category.
This skirts dangerously close to circular reasoning: we are trying to introduce the definition
in a $2$-category, so we should probably avoid giving a definition which already presupposes that we understand the $2$-category $\Cat$. 

However, we do not need to understand {\em all} natural transformations in order to
contemplate diagrams of categories which commute up to isomorphism: we only need
to consider {\em invertible} natural transformations. In other words, we need to think about
$\Cat$ as a $(2,1)$-category, where the objects are categories, the $1$-morphisms are functors,
and the $2$-morphisms are invertible natural transformations. We can therefore avoid
circularity provided that we have a good theory of $(2,1)$-categories (which is
a special case of the theory of $(\infty,1)$-categories provided by $(a)$). For one
implementation of this strategy we refer the reader to \cite{bicat}.
\end{itemize}

\subsection{Extending Up: Diffeomorphism Groups}\label{sugar}

In \S \ref{secex}, we saw that the language of higher category theory is useful for formulating
the notion of an {\it extended} topological field theory. Our goal in this section is to describe a different application of higher-categorical ideas to the study of topological field theories. We begin by discussing an example.

\begin{example}\label{trug}
Let $X$ be a smooth projective algebraic variety defined over a field $k$.
We can naturally associate to $X$ a graded algebra over
the field $k$, called the {\it Hochschild cohomology} of $X$ and denoted by
$\HochCoh^{\ast}(X)$. The algebra $\HochCoh^{\ast}(X)$ is {\it commutative in the graded sense}:
that is, for every pair of homogeneous elements $x \in \HochCoh^{p}(X)$,
$y \in \HochCoh^{q}(X)$, we have $xy = (-1)^{pq} yx$. In other words, 
$\HochCoh^{\ast}(X)$ is a commutative algebra in the category $\gVect(k)$ of
{\em $\Z/ 2 \Z$-graded} vector spaces over $k$. 

In the special case where $X$ is a Calabi-Yau variety of even dimension (that is, when the canonical bundle $\Omega^{\dim X}_{X}$ is trivial), there is a canonical trace map $\HochCoh^{\ast}(X) \rightarrow k$ (here we require $X$ to have even dimension in order to guarantee that this map
does not shift degree; this hypothesis is not really important). This trace is nondegenerate, and therefore endows $\HochCoh^{\ast}(X)$ with the structure of a (graded) commutative Frobenius algebra. A graded analogue of the reasoning
described in Example \ref{coming} will then tell us that $\HochCoh^{\ast}(X)$ determines a
$2$-dimension topological field theory $Z_X$ (taking values in graded $k$-vector spaces)
such that $Z_X(S^1) = \HochCoh^{\ast}(X)$; this field theory is sometimes called the {\it $B$-model with target $X$}. 
\end{example}

Example \ref{trug} illustrates a feature which is common to many examples of topological field theories $Z$: the vector spaces $Z(M)$ are naturally given as some kind of homology or cohomology. In other words, there is a more basic invariant $\overline{Z}(M)$, which
takes values not in vector spaces but in {\em chain complexes} of vector spaces, such that
$Z(M)$ is obtained from $\overline{Z}(M)$ by passing to homology. We can attempt to axiomatize the situation by introducing the notion of a {\it chain-complex valued} topological field theory:

\begin{idefinition}\label{spout}
Let $k$ be a field. A {\it chain-complex valued topological field theory} of dimension
$n$ is a symmetric monoidal functor $\overline{Z}: \Cob{}{n}{} \rightarrow \Chain(k)$, where
$\Chain(k)$ denotes the category of chain complexes of $k$-vector spaces
$$ \ldots V_2 \rightarrow V_1 \rightarrow V_0 \rightarrow V_{-1} \rightarrow V_{-2} \rightarrow \ldots$$
\end{idefinition}

To get a feeling for why Definition \ref{spout} is unreasonable, let us suppose that we begin with
an $n$-dimensional topological field theory $Z$ which takes values in graded vector spaces, such as the $B$-model described in Example \ref{trug}. We might then ask if it is possible to promote $Z$ to a symmetric monoidal functor $\overline{Z}: \Cob{}{n}{} \rightarrow \Chain(k)$.
In particular, this would mean the following:
\begin{itemize}
\item[$(i)$] For every closed oriented manifold $M$ of dimension $(n-1)$, the graded vector
space $Z(M)$ is obtained as the homology of a chain complex $\overline{Z}(M)$.
\item[$(ii)$] For every oriented bordism $B$ from a closed $(n-1)$-manifold $M$ to
another closed $(n-1)$-manifold $N$, the map $Z(B): Z(M) \rightarrow Z(N)$ is
obtained from a map of chain complexes $\overline{Z}(B): \overline{Z}(M) \rightarrow
\overline{Z}(N)$ by passing to homology.
\item[$(iii)$] Suppose that $B$ and $B'$ are two oriented bordisms from one closed
$(n-1)$-manifold $M$ to another closed $(n-1)$-manifold $N$. If there exists an orientation-preserving diffeomorphism $\phi: B \rightarrow B'$ which reduces to the identity on $\bd B \simeq \overline{M} \coprod N \simeq \bd B'$, then $\overline{Z}(B) = \overline{Z}(B')$.
\end{itemize}

Conditions $(i)$ and $(ii)$ are generally satisfied in practice, but $(iii)$ is not.
For example, the chain map $\overline{Z}(B)$ might be defined only after choosing some
additional data on $B$, like a Riemannian metric, which is not diffeomorphism invariant.
However, all is not lost: because $Z$ is assumed to be a topological field theory in the usual sense, we know that $Z(B) = Z(B')$, so that the maps $\overline{Z}(B)$ and $\overline{Z}(B')$ induce the same map after passing to homology. In fact, this generally happens for a reason: for example, because there exists a chain homotopy $h$ relating $\overline{Z}(B)$ and $\overline{Z}(B')$.
This homotopy $h$ is generally nonzero, and depends on a choice of diffeomorphism
$\phi: B \rightarrow B'$. To discuss the situation more systematically, it is useful to introduce some terminology.

\begin{notation}
Let $M$ and $N$ be closed, oriented $(n-1)$-manifolds. We let
$\calB(M,N)$ denote a {\em classifying space} for bordisms from $M$ to $N$.
More precisely, consider the category $\calC$ whose objects are oriented bordisms $B$
from $M$ to $N$, where the morphisms are given by (orientation-preserving) diffeomorphisms
which reduce to the identity on $M$ and $N$. This is naturally a {\em topological} category:
that is, for every pair of bordisms $B$ and $B'$, the collection of diffeomorphisms
$\Hom_{\calC}(B,B')$ has a topology (the topology of uniform convergence of all derivatives)
such that the composition maps are continuous. We can then define $\calB(M,N)$ to be the
{\it classifying space} $B \calC$. 

Alternatively, we can characterize the space $\calB(M,N)$ up to homotopy equivalence
by the following property: there exists a fiber bundle $p: E \rightarrow \calB(M,N)$ whose fibers
are (smooth) bordisms from $M$ to $N$. This fiber bundle is universal in the following sense:
for any reasonable space $S$, pullback of $E$ determines a bijective correspondence between
homotopy classes of maps from $S$ into $\calB(M,N)$ and fiber bundles
$E' \rightarrow S$ whose fibers are (smooth) bordisms from $M$ to $N$.
In particular (taking $S$ to consist of a single point), we deduce that the set of path components
$\pi_0 \calB(M,N)$ can be identified with the collection of {\em diffeomorphism classes}
of bordisms from $M$ to $N$. In other words, we have a bijection
$\pi_0 \calB(M,N) \simeq \Hom_{ \Cob{}{n}{}}(M,N)$.
\end{notation}

Let us now return to our analysis of Definition \ref{spout}. Suppose that
$Z$ is an $n$-dimensional topological field theory and that we are attempting
to lift $Z$ to a chain-complex valued field theory $\overline{Z}$ which satisfies $(i)$ and
$(ii)$. It is not reasonable to demand condition $(iii)$ as stated, but we expect that it least
holds up to homotopy: that is, that $\overline{Z}$ determines a well-defined map
$$ \alpha: \pi_0 \calB(M,N) = \Hom_{ \Cob{}{n}{}}(M,N) \rightarrow [ \overline{Z}(M), \overline{Z}(N) ].$$
Here $[ \overline{Z}(M), \overline{Z}(N) ]$ denotes the collection of chain homotopy
classes of maps from $\overline{Z}(M)$ to $\overline{Z}(N)$. Because
$[ \overline{Z}(M), \overline{Z}(N) ]$ has the structure of a vector space over $k$, the map
$\alpha$ determines a $k$-linear map $\HH_0( \calB(M,N); k) \rightarrow [ \overline{Z}(M), \overline{Z}(N) ]$ (here we invoke the fact that the homology group $\HH_0( \calB(M,N); k)$ can be
identified with the free $k$-vector space generated by the set $\pi_0 \calB(M,N)$).
Note that $[ \overline{Z}(M), \overline{Z}(N) ]$ can itself be identified with $0$th homology
group of a certain chain complex: namely, the chain complex
$\bHom( \overline{Z}(M), \overline{Z}(N) )$ described by the formula
$$ \bHom( \overline{Z}(M), \overline{Z}(N) )_{i} = \prod_{n} \Hom( \overline{Z}(M)_n,
\overline{Z}(N)_{n+i}).$$
It is therefore natural to propose the following replacement for conditions $(ii)$ and $(iii)$:
\begin{itemize}
\item[$(iii')$] For every pair of closed oriented $(n-1)$-manifolds $M$ and $N$, there
is a map of chain complexes
$$ \gamma: C_{\ast}( \calB(M,N); k) \rightarrow \bHom( \overline{Z}(M), \overline{Z}(N) ).$$
Here $C_{\ast}( \calB(M,N); k)$ denotes the complex of singular $k$-valued chains
on the topological space $\calB(M,N)$.
\end{itemize}

Let us take a moment to unwind the structure described by $(iii')$. 
First of all, we get a map at the level of $0$-chains
$$ C_0( \calB(M,N); k) \rightarrow \bHom( \overline{Z}(M), \overline{Z}(N) )_0.$$
On the left hand side, every $0$-chain is automatically a $0$-cycle (since there
are no nonzero $(-1)$-chains); on the right hand side, the $0$-cycles are precisely the
{\em chain maps} from $\overline{Z}(M)$ to $\overline{Z}(N)$. We therefore obtain a map
$\gamma_0: C_0( \calB(M,N); k) \rightarrow \Hom( \overline{Z}(M), \overline{Z}(N) )$.
The left hand side can be identified with the free vector space generated by the points
in the classifying space $\calB(M,N)$. We may therefore interpret the map $\gamma$
as associating to every point $x \in \calB(M,N)$ a map of chain complexes
$\gamma_0(x): \overline{Z}(M) \rightarrow \overline{Z}(N)$. Since giving a point of
the classifying space $\calB(M,N)$ is essentially the same thing as giving a bordism
from $M$ to $N$, this is equivalent to the data described in $(ii)$.

Let us now consider the induced map at the level of $1$-chains
$$ \gamma_1: C_1( \calB(M,N); k ) \rightarrow \Hom( \overline{Z}(M), \overline{Z}(N) ).$$
The domain of $\gamma_1$ can be identified with the free $k$-vector space generated
by the set of paths $p: [0,1] \rightarrow \calB(M,N)$. Every such path begins at a point
$x = p(0)$ and ends at a point $y = p(1)$. The requirement that $\gamma$ be a map of chain complexes translates into the assertion that $\gamma_1(p)$ is a {\em chain homotopy}
between the chain maps $\gamma_0(x), \gamma_0(y): \overline{Z}(M) \rightarrow \overline{Z}(N)$.
Giving a path $p$ from $x$ to $y$ is essentially the same data as giving a {\em diffeomorphism}
between the bordisms determined by the points $x$ and $y$. Consequently, we can
regard the map $\gamma$ at the level of $1$-chains as an efficient way of encoding the structure described earlier: diffeomorphic bordisms from $M$ to $N$ give rise to chain homotopic maps
from $\overline{Z}(M)$ to $\overline{Z}(N)$, via a chain homotopy which depends on a choice of
diffeomorphism $\phi$. The requirement that $\gamma$ to be defined also in higher degrees translates into the requirement that this dependence is in some sense continuous in $\phi$.

\begin{remark}
Giving a chain map $\gamma: C_{\ast}( \calB(M,N); k) \rightarrow
\bHom( \overline{Z}(M), \overline{Z}(N) )$ is equivalent to giving a chain map
$$\delta: C_{\ast}( \calB(M,N); k) \otimes \overline{Z}(M) \rightarrow \overline{Z}(N).$$
Passing to the level of homology, we get a $k$-linear map $\HH_{\ast}( \calB(M,N); k) \otimes Z(M) \rightarrow Z(N).$ If we restrict our attention to the $0$-dimensional homology of
$\calB(M,N)$, we obtain a map $\HH_0( \calB(M,N); k) \otimes Z(M) \rightarrow Z(N)$: this
simply encodes the fact that every oriented bordism $B$ from $M$ to $N$ determines
a map $Z(B): Z(M) \rightarrow Z(N)$. However, $\gamma$ also determines maps
$\HH_{n}( \calB(M,N); k) \otimes Z(M) \rightarrow Z(N)$ for $n > 0$, which
are not determined by the original topological field theory $Z$. This can
be interesting from multiple points of view. For example, if we are primarily interested
in understanding the topological field theory $Z$, then every lifting
$\overline{Z}$ of $Z$ satisfying $(i)$ and $(iii')$ gives rise to additional operations
on the vector spaces $Z(M)$, which are parametrized by the (higher) homology
of the classifying spaces $\calB(M,N)$. Alternatively, can use these operations
as means to investigate the structure of the classifying spaces $\calB(M,N)$ themselves.
\end{remark}

We would like to give another description of the data posited by assumption $(iii')$.
For this, we need to embark on a mild digression. Suppose we are given a topological space $X$ and a chain complex $V_{\ast}$; we would like to better understand the collection of chain maps from
$C_{\ast}(X;k)$ to $V_{\ast}$. In practice, we are interested in the case where
$X$ is a classifying space $\calB(M,N)$ and $V_{\ast} = \bHom( \overline{Z}(M), \overline{Z}(N) )$. However, as a warm-up exercise, let us first consider the simplest nontrivial case where
$$ V_{m} = \begin{cases} k & \text{if } m = n \\
0 & \text{if } m \neq n. \end{cases}$$
In this case, a chain map from $C_{\ast}(X;k)$ into $V_{\ast}$ can be identified with
a $k$-valued {\it $n$-cocycle} on $X$, and two such chain maps are homotopic if and only if
they differ by a coboundary. The set of chain homotopy classes of maps from
$C_{\ast}(X;k)$ into $V_{\ast}$ can therefore be identified with the cohomology group
$\HH^{n}(X;k)$.

If $X$ is a sufficiently nice topological space, then the cohomology group
$\HH^{n}(X;k)$ can be described in another way: it is the set of homotopy classes of maps
$[ X, K(k,n) ]$. Here $K(k,n)$ denotes an {\it Eilenberg-MacLane space}: it is characterized
up to homotopy equivalence by its homotopy groups, which are given by
$$ \pi_{m} K(k,n) = \begin{cases} k & \text{if } m=n \\
0 & \text{otherwise.} \end{cases}$$

It is possible to give a similar description in the general case: for any chain complex
$V_{\ast}$, the set of chain homotopy classes of maps from $C_{\ast}(X;k)$ into
$V_{\ast}$ can be identified with the set of homotopy classes of maps from
$X$ into a certain topological space $K(V_{\ast})$, at least provided that $X$ is sufficiently
nice. Here $K(V_{\ast})$ denotes a {\em generalized} Eilenberg-MacLane space whose homotopy groups are given by the formula $\pi_{m} K(V_{\ast}) \simeq \HH_{m}( V_{\ast} )$. 
The space $K(V_{\ast})$ is generally not characterized by this formula, but it is determined
up to homotopy equivalence by the universal property stated above.

We can now reformulate assumption $(iii')$ as follows:
\begin{itemize}
\item[$(iii'')$] For every pair of closed oriented $(n-1)$-manifolds $M$ and $N$,
there is a map of topological spaces
$$ \gamma_{M,N}: \calB(M,N) \rightarrow K( \bHom( \overline{Z}(M), \overline{Z}(N) )).$$
\end{itemize}

Of course, we do not want to stop with $(iii'')$. It is not enough to specify
the maps $\gamma_{M,N}$ separately for every pair of manifolds $M,N \in \Cob{}{n}{}$:
we should also say how these maps are related to one another. This leads us to propose the following revised version of Definition \ref{spout}:

\begin{idefinition}\label{spout2}
Let $k$ be a field. A {\it chain-complex valued topological field theory}
of dimension $n$ is a continuous symmetric monoidal functor 
$$ \overline{Z}: \tCob(n) \rightarrow \tChain(k)$$ between topological categories.
Here the topological categories $\tCob(n)$ and $\tChain(n)$ can be described as follows:
\begin{itemize}
\item The objects of $\tCob(n)$ are closed oriented manifolds of dimension $(n-1)$.
\item Given a pair of objects $M, N \in \tCob(n)$, we let
$\Hom_{ \tCob(n)}(M,N)$ denote the classifying space $\calB(M,N)$ of bordisms from $M$ to $N$.
\item The objects of $\tChain(k)$ are chain complexes of $k$-vector spaces.
\item Given a pair of chain complexes $V_{\ast}$ and $W_{\ast}$, we define
$\Hom_{ \tChain(k)}( V_{\ast}, W_{\ast} )$ to be the generalized
Eilenberg-MacLane space $K( \bHom(V_{\ast}, W_{\ast}))$. 
\end{itemize}
\end{idefinition}

Definition \ref{spout2} is a vast improvement over Definition \ref{spout}, but still not quite
adequate:

\begin{itemize}
\item[$(a)$] Our definition of $\tCob(n)$ is incomplete because we
did not explain how to compose morphisms. Unwinding the definitions,
we see that $\Hom_{ \tCob(n) }( M,M')$ is the classifying of the (topological) category
$\calC_{M,M'}$ of oriented bordisms from $M$ to $M'$, where the morphisms
are given by diffeomorphisms. We would like to say that for a triple of
objects $M, M', M'' \in \tCob(n)$, the composition law
$$\Hom_{ \tCob(n)}(M,M') \times \Hom_{ \tCob(n)}(M', M'') \rightarrow \Hom_{ \tCob(n)}(M,M'')$$
is induced by a functor $\calC_{M,M'} \times \calC_{M',M''} \rightarrow \calC_{M, M''}$
given by ``gluing along $M'$''. We encounter a minor technicality having to do with
smoothness: given a pair of bordisms $B: M \rightarrow M'$ and $B': M' \rightarrow M''$, the
coproduct $B \coprod_{M'} B'$ does not inherit a smooth structure. However, this problem
can be avoided by giving more careful definitions: namely, we should require every
bordism from $M$ to $M'$ to come equipped with distinguished smooth collars
near $M$ and $M'$. 

Another issue is that the coproduct $B \coprod_{M'} B'$ is only well-defined up to isomorphism. This does not prevent us from defining a gluing functor
$\calC_{M,M'} \times \calC_{M',M''} \rightarrow \calC_{M, M''}$, but it does
mean that this functor is only well-defined up to isomorphism. Consequently, the diagram
which encodes the associativity of composition
$$ \xymatrix{ \calC_{M,M'} \times \calC_{M',M''} \times \calC_{M'',M'''} \ar[r] \ar[d] & \calC_{M, M''} \times \calC_{M'', M'''} \ar[d] \\
\calC_{M,M'} \times \calC_{M', M'''} \ar[r] & \calC_{M,M'''} }$$
can only be expected to commute up to isomorphism, so that induced diagram
of classifying spaces will commute only up to homotopy.
This problem can again be avoided in an ad hoc way by giving sufficiently careful definitions. We will not pursue the details any further, since these difficulties will disappear when we use the more sophisticated formalism of \S \ref{bigseg}. 

\item[$(b)$] Our definition of $\tChain(k)$ is also incomplete. According to Definition \ref{spout2}, if
$V_{\ast}$ and $W_{\ast}$ are chain complexes of vector spaces, then the space of morphisms
$\Hom_{ \tChain(k)}( V_{\ast}, W_{\ast} )$ is a generalized Eilenberg-MacLane space
$K( \bHom(V_{\ast}, W_{\ast}) )$ associated to the mapping complex $\bHom( V_{\ast}, W_{\ast})$.
The discussion above shows that the generalized Eilenberg-MacLane space 
$K( \bHom(V_{\ast}, W_{\ast}) )$ is well-defined up to homotopy equivalence. In order to define the topological category, we need to choose a specific construction
for the generalized Eilenberg-MacLane space $K( U_{\ast} )$ associated to a complex $U_{\ast}$.
Moreover, we need this construction to be functorial in $U_{\ast}$ and to behave well with respect to tensor products. This is again possible (and not very difficult), but we do not want to dwell on the details here.

\item[$(c)$] Even if we take the trouble to construct $\tCob(n)$ and $\tChain(k)$ as topological categories, the notion of {\em continuous functor} appearing in Definition \ref{spout2} is 
too strict. For example, suppose that $\overline{Z}: \tCob(n) \rightarrow
\tChain(k)$ is a continuous functor, and that we are given a triple of objects
$M, M', M'' \in \tCob(n)$. Functoriality guarantees us that the diagram of topological spaces
$$ \xymatrix{ \Hom_{ \tCob(n)}(M,M') \times \Hom_{ \tCob(n)}(M', M'') \ar[r] \ar[d] &
\Hom_{ \tCob(n)}(M, M'') \ar[d] \\
\Hom_{ \tChain(k)}( \overline{Z}(M), \overline{Z}(M') ) \times
\Hom_{ \tChain(k)}( \overline{Z}(M'), \overline{Z}(M'') ) \ar[r] &\Hom_{ \tChain(k) }( \overline{Z}(M), \overline{Z}(M'')) }$$
is commutative. All of the spaces in this diagram are (products of) classifying spaces of manifolds and
generalized Eilenberg-MacLane spaces: in other words, they are characterized up to homotopy equivalence by some universal property. In this context, it is somewhat unnatural to demand
such a diagram to be commutative: one should instead require that it commute up to a specified homotopy.
\end{itemize}

To address these objections, we recall from \S \ref{higt} that the theory of topological categories
can be regarded as one approach to the study of $(\infty,1)$-categories: that is, higher categories
in which all $k$-morphisms are assumed to invertible for $k > 1$. This approach is conceptually very simple (it is very easy to describe what a topological category is) but technically very inconvenient, essentially because of difficulties like those described above. We can circumvent them
by reformulating Definition \ref{spout2} in terms of a better theory of $(\infty,1)$-categories,
which we will present in \S \ref{bigseg}.

To close this section, let us make a few remarks about how the higher-categorical issues
of this section relate to those described in \S \ref{secex}. The topological category
$\tCob(n)$ of Definition \ref{spout2} should really be regarded
as an $(\infty,1)$-category, which may be described more informally as follows:
\begin{itemize}
\item The objects of $\tCob(n)$ are closed, oriented $(n-1)$-manifolds.
\item The $1$-morphisms of $\tCob(n)$ are oriented bordisms.
\item The $2$-morphisms of $\tCob(n)$ are orientation-preserving diffeomorphisms.
\item The $3$-morphisms of $\tCob(n)$ are isotopies between diffeomorphisms.
\item \ldots 
\end{itemize}
Like the $n$-category $\Cob{n}{n}{}$ of Example \ref{slipwell}, we can regard $\tCob(n)$
as a higher-categorical version of the usual bordism category $\Cob{}{n}{}$. However,
these versions are related to $\Cob{}{n}{}$ in different ways:
\begin{itemize}
\item[$(1)$] Objects and morphisms of $\Cob{}{n}{}$ can be regarded as
$(n-1)$-morphisms and $n$-morphisms of $\Cob{n}{n}{}$. We may therefore regard $\Cob{n}{n}{}$ as
an elaboration of $\Cob{}{n}{}$ obtained by considering also ``lower'' morphisms
corresponding to manifolds of dimension $< n-1$.
\item[$(2)$] The objects of $\Cob{}{n}{}$ and $\tCob(n)$ are the same, and morphisms
in $\Cob{}{n}{}$ are simply the isomorphism classes of $1$-morphisms in $\tCob(n)$.
We may therefore regard $\tCob(n)$ as an elaboration of $\Cob{}{n}{}$ obtained by
allowing higher morphisms which keep track of the diffeomorphism groups of $n$-manifolds, rather than simply identifying diffeomorphic $n$-manifolds.
\end{itemize}
These variations on the definition of $\Cob{}{n}{}$ are logically independent of one another, but
the formalism of higher category theory allows us to combine them in a natural way:

\begin{protodefinition}\label{cabber2}
Let $n$ be a nonnegative integer. The $(\infty,n)$-category $\Bord_{n}$ is described informally as follows:
\begin{itemize}
\item The objects of $\Bord_{n}$ are $0$-manifolds.
\item The $1$-morphisms of $\Bord_{n}$ are bordisms between $0$-manifolds.
\item The $2$-morphisms of $\Bord_{n}$ are bordisms between bordisms between
$0$-manifolds.
\item \ldots
\item The $n$-morphisms of $\Bord_{n}$ are bordisms between bordisms between
$\ldots$ between bordisms between $0$-manifolds (in other words, $n$-manifolds with corners).
\item The $(n+1)$-morphisms of $\Bord_{n}$ are diffeomorphisms
(which reduce to the identity on the boundaries of the relevant manifolds).
\item The $(n+2)$-morphisms of $\Bord_{n}$ are isotopies of diffeomorphisms.
\item \ldots
\end{itemize}
\end{protodefinition}

\begin{remark}
The $(\infty,n)$-category $\Bord_{n}$ is endowed with a symmetric monoidal structure, given by disjoint unions of manifolds.
\end{remark}

\begin{variant}
In Definition \ref{cabber2}, we can consider manifolds equipped with various
structures such as orientations and $n$-framings (see Variant \ref{barvar}); in these cases we obtain
variants on the $(\infty,n)$-category $\Bord_{n}$ which we will denote by
$\Bord_{n}^{\ori}$ and $\Bord_{n}^{\fr}$. We will discuss other variations on this theme in \S \ref{ONACT}.
\end{variant}

We now formulate an $(\infty,n)$-categorical version of the cobordism hypothesis:

\begin{theorem}[Cobordism Hypothesis: $(\infty,n)$-Categorical Version]\label{swisher2}
Let $\calC$ be a symmetric monoidal $(\infty,n)$-category. The evaluation functor
$Z \mapsto Z(\ast)$ determines a bijection between $($isomorphism classes of$)$
symmetric monoidal functors $\Bord_{n}^{\fr} \rightarrow \calC$ and $($isomorphism classes of$)$
fully dualizable objects of $\calC$.
\end{theorem}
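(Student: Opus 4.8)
The plan is to deduce Theorem~\ref{swisher2} from the stronger assertion that $\Bord_{n}^{\fr}$ is the \emph{free} symmetric monoidal $(\infty,n)$-category equipped with duals on a single generator. First one reduces to the case in which $\calC$ ``has duals'': the full sub-$(\infty,n)$-category $\calC^{\mathrm{fd}} \subseteq \calC$ spanned by the fully dualizable objects is again symmetric monoidal (the unit is self-dual, and a tensor product of fully dualizable objects is fully dualizable), and full dualizability is precisely the condition guaranteeing that $\calC^{\mathrm{fd}}$ admits duals for objects and adjoints for $k$-morphisms at every level; since a symmetric monoidal functor out of $\Bord_{n}^{\fr}$ automatically factors through $\calC^{\mathrm{fd}}$, we may as well assume $\calC = \calC^{\mathrm{fd}}$. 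In this situation the theorem becomes the statement that restriction along the inclusion of the point $\ast$ induces an equivalence from the space of symmetric monoidal functors $\Bord_{n}^{\fr} \to \calC$ to the space of objects of $\calC$; the form stated above is recovered by passing to $\pi_{0}$. One direction is formal: $\Bord_{n}^{\fr}$ has duals, since a framed point and its orientation-reverse $\overline{\ast}$ are exhibited as duals by the elbow bordisms $\ast \times [0,1]$, the resulting triangle identities hold up to a specified $2$-bordism, those $2$-bordisms satisfy their own coherences up to $3$-bordisms, and so on, the tower terminating because top-dimensional bordisms are already identified up to diffeomorphism. Hence $Z(\ast)$ is always fully dualizable and the evaluation map has the asserted target; the content is its bijectivity.

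The technical engine is a presentation of $\Bord_{n}^{\fr}$ by generators and relations extracted from Morse theory. A framed cobordism, regarded as a $k$-morphism, can be sliced along the regular level sets of a generic Morse function into a composite of \emph{elementary} cobordisms, each attaching a single handle and hence determined up to framed diffeomorphism by discrete index data; two such decompositions are connected by the moves of Cerf theory (birth and death of critical points, handle slides) together with their higher-dimensional analogues. The crucial input that turns this picture into a statement precise enough to compute mapping spaces out of the $(\infty,n)$-category is that the space of framed Morse functions on a fixed framed manifold is highly connected --- this is Igusa's theorem on framed functions --- so that the relations, and all the higher coherences among them, are governed by combinatorics rather than by further geometric choices. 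In the same vein, the theorem of Galatius--Madsen--Tillmann--Weiss identifies the ``all-invertible shadow'' $|\Bord_{n}^{\fr}|$ with the infinite loop space of a Thom spectrum, which anchors the base of the induction (for $n=1$ this is the Pontryagin--Thom identification of framed cobordism with the stable homotopy groups of spheres) and provides a consistency check throughout.

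Granting such a presentation, the bijection follows by a double induction, on $n$ and, for fixed $n$, on the categorical level $k \le n$. A symmetric monoidal functor $Z \colon \Bord_{n}^{\fr} \to \calC$ is determined by its values on generating morphisms; but each elementary handle cobordism is built from $\ast$ by iterated formation of evaluation and coevaluation morphisms and passage to adjoints, so once $Z(\ast) = C$ is fixed every such value is forced --- here one invokes that adjoints in an $(\infty,n)$-category, when they exist, are unique up to a contractible space of choices, which simultaneously pins down the values on generators and shows that the Cerf relations (the triangle identities and their higher versions) are automatically satisfied whenever $\calC$ has duals. Conversely, given a fully dualizable $C$, one constructs $Z_{C}$ by extending over the generators one categorical level at a time, the extension existing and being essentially unique because the relevant spaces of choices are contractible --- this is precisely where the full dualizability of $C$ enters at each stage. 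The passage from $n-1$ to $n$ is a relative form of the same argument: one identifies symmetric monoidal functors out of $\Bord_{n}^{\fr}$ with appropriate functors out of $\Bord_{n-1}^{\fr}$ valued in an $(\infty,n-1)$-category manufactured from $\calC$ that records the top-dimensional duality data, thereby reducing dimension $n$ to dimension $n-1$.

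The main obstacle is the Morse-theoretic presentation of $\Bord_{n}^{\fr}$ itself. Turning ``every manifold is built from handles, and any two handle decompositions are related by Cerf moves'' into a structure rigid enough to control the mapping spaces of the $(\infty,n)$-category requires assembling framed manifolds, their framed Morse functions, the induced stratified decompositions, and all Cerf-type moves into a single coherent homotopical package, with sufficiently uniform control over the homotopy type of the spaces of framed functions. Everything downstream --- the uniqueness-of-adjoints bookkeeping, the collapse of the relations to triangle identities, and the induction on $n$ --- is comparatively formal once this geometric input is in place.
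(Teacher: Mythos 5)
Your proposal captures the main geometric and categorical ideas that drive the paper's argument: reduce to $\calC = \calC^{\fd}$; observe that $\Bord_n^{\fr}$ has duals; present $\Bord_n^{\fr}$ via handle decompositions and Cerf moves using Morse theory; treat the tower of coherences via Igusa's theorem on framed functions; exploit uniqueness of adjoints; and induct on $n$ by a relative version of the argument. In outline this is the same route the paper follows.

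There is, however, a real gap in the step where you pass from Igusa's theorem to a ``presentation of $\Bord_n^{\fr}$ by generators and relations.'' Igusa's theorem gives only that the space of framed functions on an $n$-manifold is $(n-1)$-connected, \emph{not} contractible, and this connectivity yields only that the comparison functor $\Bord_n^{\frun} \to \Bord_n$ (between the framed-function-decorated bordism category and the honest one) is $(n+2)$-connective, i.e., induces an equivalence after $(n+1)$-truncation. To promote this to an equivalence of $(\infty,n)$-categories --- which is precisely what is needed for the relations and higher coherences to ``be governed by combinatorics'' as you assert --- the paper runs an obstruction-theoretic argument (Postnikov towers for $(\infty,n)$-categories, with local systems and multiplicative cohomology groups) and feeds into it a genuine cohomological computation, a generalization of the Galatius--Madsen--Tillmann--Weiss theorem to local coefficients. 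Your proposal, by contrast, casts GMTW merely as ``anchoring the base of the induction'' and ``providing a consistency check throughout,'' which misstates its role: it is a load-bearing ingredient in the comparison $\Bord_n^{\frun} \simeq \Bord_n$, not a sanity check. Without this step your argument would prove the cobordism hypothesis only for targets that are $(n+1)$-truncated as $(\infty,n)$-categories, and the Morse-theoretic presentation you rely on would not actually exist in the form you describe. So the missing idea is precisely the obstruction-theory-plus-GMTW argument that upgrades ``highly connected'' to ``contractible''; everything else you sketch is an accurate compression of the paper.
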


\begin{remark}\label{camblis}
If $\calD$ is any $(\infty,n)$-category, then we can define an $n$-category
$\hn{\calD}$ as follows:
\begin{itemize}
\item[$(i)$] For $k < n$, the $k$-morphisms of $\hn{\calD}$ are the $k$-morphisms
of $\calD$.
\item[$(ii)$] The $n$-morphisms of $\hn{\calD}$ are given by isomorphism classes
of $n$-morphisms in $\calD$. 
\end{itemize}
This construction can be characterized by the following universal property:
let $\calC$ be an $n$-category, which we can regard as an $(\infty,n)$-category which has only identity $k$-morphisms for $k > n$. Then functors (of $n$-categories) from
$\hn{\calD}$ to $\calC$ can be identified with functors (of $(\infty,n)$-categories) from
$\calD$ to $\calC$. We call $\hn{\calD}$ the {\it homotopy $n$-category} of $\calD$.

If $\calD = \Bord_{n}^{\fr}$, then the homotopy $n$-categry $\hn{\calD}$ can be
identified with the $n$-category $\Cob{n}{n}{\fr}$ described in Variant \ref{barvar}. It follows from
the above universal property that our original formulation of the cobordism hypothesis (Theorem \ref{swisher}) is equivalent to a special case of Theorem \ref{swisher2}: namely, the special case in which we assume that $\calC$ is an ordinary $n$-category.
\end{remark}

\begin{remark}\label{cultis}
Though the original formulation of the cobordism hypothesis (Theorem \ref{swisher}) may appear
to be simpler than Theorem \ref{swisher2}, it is actually essential to our proof that we work in the
the more general setting of $(\infty,n)$-categories. This is because the proof proceeds by induction on $n$: in order to understand the $(n+1)$-morphisms in $\Bord_{n+1}$, we will need to understand
the $(n+1)$-morphisms in $\Bord^{\fr}_{n}$, which are forgotten by passing from
$\Bord^{\fr}_{n}$ to the $n$-category $\Cob{n}{n}{\fr}$.
\end{remark}

\section{Formulation of the Cobordism Hypothesis}\label{glob2}

In \S \ref{glob1}, we gave an informal introduction to the language of
higher category theory and used that language to formulate a version of the Baez-Dolan cobordism hypothesis (Theorem \ref{swisher2}) which posits a classification of extended topological field theories. Before we can describe this classification in precise mathematical terms, we need to answer a number of questions:

\begin{itemize}
\item[$(a)$] What is an $(\infty,n)$-category?
\item[$(b)$] What is a functor between $(\infty,n)$-categories?
\item[$(c)$] What is a symmetric monoidal structure on an $(\infty,n)$-category, and what
does it mean for a functor to be symmetric monoidal?
\item[$(d)$] What is the $(\infty,n)$-category $\Bord_{n}^{\fr}$?
\item[$(e)$] What does it mean for an object of a symmetric monoidal $(\infty,n)$-category to be fully dualizable?
\end{itemize}

To properly address all of these questions would require a more thorough discussion than we have space to give here. Nevertheless, we would like to convey some of the flavor of the mathematics that provides the answers (and to dispel any sense that the basic objects of higher category theory
are ill-defined). We will therefore devote \S \ref{bigseg} to describing a rigorous approach
to the study of $(\infty,n)$-categories, using Rezk's theory of
{\it complete Segal spaces} (and its higher-dimensional analogue, due to Barwick). In
\S \ref{swugg}, we will address $(d)$ by giving a construction of $\Bord_{n}^{\fr}$ using
the language of complete Segal spaces. These sections are somewhat technical, and can safely be omitted by the reader who wishes to avoid the details: once we have given a precise definition for the notion of an $(\infty,n)$-category, we will promptly ignore it and return to the somewhat informal approach of \S \ref{glob1}. In the interest of space, we will gloss over $(b)$ and
$(c)$ (for an extensive discussion of $(c)$ in the case $n=1$ we refer the reader to \cite{dag3}). 

In \S \ref{capertown}, we will address question $(e)$ by studying various finiteness conditions in the setting of higher category theory. This will allow us to reformulate Theorem \ref{swisher2} as follows:
$\Bord_{n}^{\fr}$ is the free symmetric monoidal $(\infty,n)$-category with duals generated by a single object. In \S \ref{ONACT} we will present this formulation, deduce some of its consequences, and
explain how it can be generalized to the case of manifolds which are not framed. In the special
case of topological field theories taking values in a Picard $\infty$-groupoid, this generalization
reduces to a homotopy-theoretic statement which was proven by Galatius, Madsen, Tillmann, and Weiss. In \S \ref{mumm} we will briefly review their work and its connection with the cobordism hypothesis presented here.

\subsection{Complete Segal Spaces}\label{bigseg}

In \S \ref{bigseg}, we argued that it is most natural to describe topological field theories using the language of higher category theory. This theory has a reputation for being a thorny and technical subject. This is largely due to the fact that it is very
easy to give definitions which are incorrect or poorly behaved (some of which have already appeared earlier in this paper). However, there are many (equivalent) ways to give reasonable definitions which generate a well-behaved theory. Our first goal in this section is to describe such an approach in
the setting of $(\infty,1)$-categories: Rezk's theory of {\it complete Segal spaces}. We are interested in this approach primarily for two reasons:

\begin{itemize}
\item[$(1)$] The $(\infty,1)$-category $\tCob(n)$, which we
struggled to describe as a topological category in Definition \ref{spout2}, arises
much more naturally in the language of complete Segal spaces, as we will see in \S \ref{swugg}. 
\item[$(2)$] The notion of a complete Segal space can be generalized to produce a good theory
of $(\infty,n)$-categories for each $n \geq 0$. This generalization is due originally to Barwick, and will be sketched below; for more details we refer the reader to \cite{bicat}.
\end{itemize}

To explain the basic idea, let us pretend for the moment that we already
have a good theory of $(\infty,1)$-categories, and that we would like to describe
this theory in concrete terms.
According to Thesis \ref{cope3}, the theory of $(\infty,0)$-categories is ``easy'': it is equivalent to the homotopy theory of topological spaces. The general case is more complicated, because a general $(\infty,1)$-category $\calC$ might contain noninvertible $1$-morphisms. However, we can always simplify $\calC$ by throwing those $1$-morphisms away.
Namely, we can extract an $(\infty,0)$-category $\calC_0$, which
can be described roughly as follows:
\begin{itemize}
\item The objects of $\calC_0$ are the object of $\calC$.
\item The $1$-morphisms of $\calC_0$ are the invertible $1$-morphisms of $\calC$.
\item The $2$-morphisms of $\calC_0$ are the $2$-morphisms between invertible $1$-morphisms of $\calC$.
\item $\ldots$
\end{itemize}
Since all of the morphisms in $\calC_0$ are invertible, Thesis \ref{cope3} allows us to identify
$\calC_0$ with a topological space $X_0$. The space $X_0$
can be regarded as an invariant of $\calC$: we will sometimes refer to it as a {\it classifying space} for objects
of $\calC$ (for example, the path components of $X_0$ are in bijection with
the isomorphism classes of objects of $\calC$).

If $\calC$ is an $(\infty,0)$-category, then $\calC$ is determined (up to equivalence) by the topological space $X_0$. However, it generally is not: the space $X_0$ does not contain any
information about the noninvertible $1$-morphisms in $\calC$. We can think of
a morphism in $\calC$ as a functor $[1] \rightarrow \calC$, where $[1]$ denotes the ordinary
category associated to the linearly ordered set $\{ 0 < 1 \}$. The collection of all such functors
is naturally organized into another $(\infty,1)$-category, which we will denote by $\Fun( [1], \calC)$.
We can now repeat the process described above: let $\calC_1$ denote the $(\infty,0)$-category
obtained from $\Fun( [1], \calC)$ by discarding the noninvertible $1$-morphisms. According
to Thesis \ref{cope3}, we should be able to identify $\calC_1$ with the fundamental $\infty$-groupoid of another topological space, which we will denote by $X_1$. We can think of $X_1$ as a {\it classifying space} for $1$-morphisms in $\calC$.

The topological space $X_1$ remembers a little bit more about the $(\infty,1)$-category $\calC$:
namely, the class of $1$-morphisms in $\calC$. But we still do not have enough information to reconstruct $\calC$, because $X_0$ and $X_1$ do not remember anything about {\em compositions} between noninvertible $1$-morphisms in $\calC$. To correct this defect, let us consider the
collection of all {\em pairs} of composable $1$-morphisms $X \stackrel{f}{\rightarrow} Y \stackrel{g}{\rightarrow} Z$ in $\calC$. Such a pair of morphisms can be identified with a functor
$[2] \rightarrow \calC$, where $[2]$ denotes the (ordinary) category associated to the linearly
ordered set $\{ 0 < 1 < 2 \}$. More generally, we can consider for every nonnegative integer $n$ the linearly ordered set $[n] = \{ 0 < 1 < \ldots < n \}$, which we regard as an ordinary category. The collection of all functors $[n] \rightarrow \calC$ is naturally organized into an $\infty$-category
$\Fun( [n], \calC)$. We can then discard the noninvertible $1$-morphisms to obtain
an $(\infty,0)$-category $\calC_n$, which we can identify with the fundamental $\infty$-groupoid of a topological space $X_n$. 

We might now ask the following questions:
\begin{itemize}
\item[$(Q1)$] How are the spaces $X_n$ related to one another? In other words, what
sort of mathematical object is the totality $\{ X_n \}_{n \geq 0}$?
\item[$(Q2)$] What special features, if any, does this mathematical object possess?
\item[$(Q3)$] To what extent does the sequence $\{ X_n \}_{n \geq 0}$ determine
the original $(\infty,1)$-category $\calC$?
\end{itemize}

The short answers to these questions can be summarized as follows:

\begin{thesis}\label{cope5}

\begin{itemize}
\item[$(A1)$] The topological spaces $\{ X_n \}_{n \geq 0}$ are naturally organized
into a {\em simplicial space}.
\item[$(A2)$] The simplicial space $\{ X_{n} \}_{n \geq 0}$ associated to an
$(\infty,1)$-category is always a {\it complete Segal space} $($see Definitions
\ref{psabat} and \ref{psbbat} below$)$.
\item[$(A3)$] An $(\infty,1)$-category $\calC$ is determined $($up to equivalence$)$ by the
complete Segal space $\{ X_n \}_{n \geq 0}$. Moreover, every complete Segal space
arises from an $(\infty,1)$-category in this way.
\end{itemize}
\end{thesis}

\begin{remark}
We refer to Thesis \ref{cope5} as a thesis, rather than a theorem, because it should really be regarded as a test that any definition of $(\infty,1)$-category must pass in order to be considered reasonable. In other words, it should become a theorem as soon as a suitable definition has been given. 
Alternatively, we can use Thesis \ref{cope5} to prescribe a definition which passes this test automatically: that is, we can {\em define} an $(\infty,1)$-category to be a complete Segal space.
\end{remark}

Our next goal is to explain answers $(A1)$ through
$(A3)$ in more detail. We begin with a brief review of the formalism of {\it simplicial objects}.

\begin{definition}\label{cle}
The category $\cDelta$ of {\it combinatorial simplices} is defined as follows:
\begin{itemize}
\item The objects of $\cDelta$ are the nonnegative integers. For each $n \geq 0$, we let
$[n]$ denote the corresponding object of $\cDelta$.
\item Given a pair of integers $m,n \geq 0$, we define
$\Hom_{ \cDelta}( [m], [n] )$ to be the set of nonstrictly increasing maps
$f: \{ 0 < 1 < \ldots < m \} \rightarrow \{ 0 < 1 < \ldots < n \}$.
\end{itemize}
Let $\bfA$ be an arbitrary category. A {\it simplicial object} of $\bfA$ is a 
functor from $\cDelta^{op}$ into $\bfA$.
\end{definition}

\begin{remark}
We will typically let $A_{\bigdot}$ denote a simplicial object of a category $\bfA$, and
$A_{n}$ the value of the functor $A_{\bigdot}$ when evaluated at the object $[n] \in \cDelta$.
\end{remark}

The most important special case of Definition \ref{cle} is the following:

\begin{definition}
A {\it simplicial set} is a simplicial object in the category of sets.
\end{definition}

\begin{remark}\label{aplet}
The theory of simplicial sets was originally introduced as tool for investigating the homotopy theory of topological spaces using combinatorial means. To every topological space $X$, one can associate a simplicial
set $\Sing_{\bigdot} X$ called the {\it singular complex} of $X$, by means of the formula
$$ \Sing_{n} X = \Hom( \Delta^n, X),$$
where $\Delta^n$ denotes the topological $n$-simplex $\{ x_0, x_1, \ldots, x_n \in \R : x_0 + \ldots + x_n = 1 \}$. The functor $X \mapsto \Sing_{n} X$ has a left adjoint $A_{\bigdot} \mapsto | A_{\bigdot} |$, called
the {\it geometric realization} functor. For every topological space $X$, the counit map
$| \Sing_{\bigdot} X| \rightarrow X$ is a weak homotopy equivalence. Consequently, passing from
a topological space $X$ to its singular complex $\Sing_{\bigdot} X$ entails no loss of ``homotopy
invariant'' information. In fact, it is possible to develop the theory of algebraic topology in an entirely combinatorial way, using simplicial sets as surrogates for topological spaces. 
\end{remark}

There is a close relationship between the theory of simplicial sets and classical category
theory. To every ordinary category $\calC$, we can associate a simplicial set
$\Nerve(\calC)_{\bigdot}$, called the {\it nerve} of $\calC$, by setting $\Nerve(\calC)_{n} = \Fun( [n], \calC)$. More concretely, we let $\Nerve(\calC)_{n}$ denote the set of all composable sequences of
morphisms
$$ C_0 \stackrel{f_1}{\rightarrow} C_1 \stackrel{f_2}{\rightarrow} \ldots \stackrel{f_n}{\rightarrow} C_n$$ in $\calC$ having length $n$.

The nerve $\Nerve(\calC)_{\bigdot}$ of a category $\calC$ determines $\calC$ up to isomorphism.
Indeed, the objects of $\calC$ are the elements of $\Nerve(\calC)_0$, and the morphisms
of $\calC$ are the elements of $\Nerve(\calC)_1$. To recover the composition law for morphisms
in $\calC$ from the nerve $\Nerve(\calC)_{\bigdot}$, we need to introduce a bit of terminology.
For every sequence $0 \leq i_0 \leq i_1 \leq \ldots \leq i_m \leq n$, let $p_{i_0, i_1, \ldots, i_m}$ denote
the corresponding map from $\{ 0 < 1 < \ldots < m \}$ to $\{ 0 < 1 < \ldots < n \}$, and
$p_{i_0, i_1, \ldots, i_m}^{\ast}: \Nerve(\calC)_{n} \rightarrow \Nerve(\calC)_{m}$ the
associated map. We have a pullback diagram of sets
$$ \xymatrix{ \Nerve(\calC)_{2} \ar[r]^{p_{0,1}^{\ast}} \ar[d]^{ p_{1,2}^{\ast}} &
\Nerve(\calC)_{1} \ar[d]^{ p_{1}^{\ast}} \\
\Nerve(\calC)_{1} \ar[r]^{ p_{0}^{\ast} }& \Nerve(\calC)_0. }$$
This diagram determines an isomorphism of sets
$$q: \Nerve(\calC)_2 \rightarrow \Nerve(\calC)_1 \times_{ \Nerve(\calC)_0} \Nerve(\calC)_1.$$
A pair of composable morphisms $f: C \rightarrow D$ and $g: D \rightarrow E$
in $\calC$ can be identified with an element $(f,g)$ in the fiber product
$\Nerve(\calC)_{1} \times_{ \Nerve(\calC)_{0} } \Nerve(\calC)_1$.
The composition $g \circ f$ is then given by $p_{0,2}^{\ast} q^{-1} (f,g) \in \Nerve(\calC)_{1}$:
in particular, it can be described entirely in terms of the structure of $\Nerve(\calC)_{\bigdot}$
as a simplicial set. 

Of course, not every simplicial set arises as the nerve of a category. Given an arbitrary
simplicial set $X_{\bigdot}$, we might attempt to recover a category whose objects
are the elements of $X_0$ and whose morphisms are elements of $X_1$. However,
we encounter difficulties when trying to define a composition law on morphisms.
As above, we have a commutative diagram
$$ \xymatrix{ X_{2} \ar[r]^{p_{0,1}^{\ast}} \ar[d]^{ p_{1,2}^{\ast}} &
X_1 \ar[d]^{ p_{1}^{\ast}} \\
X_{1} \ar[r]^{ p_{0}^{\ast}} & X_0 }$$
which induces a map $q: X_2 \rightarrow X_1 \times_{X_0} X_1$.
However, the diagram is not necessarily a pullback square, so that $q$ is not
necessarily an isomorphism. It turns out that this is essentially the only problem:

\begin{exercise}\label{fif}
Let $X_{\bigdot}$ be a simplicial set. Then $X$ is isomorphic to the nerve of
a category $\calC$ if and only if, for every pair of integers $m,n \geq 0$, the diagram
$$ \xymatrix{ X_{m+n} \ar[r]^{p_{0,1,\ldots,m}^{\ast}} \ar[d]_{p_{m,m+1,\ldots,m+n}^{\ast}}
& X_{m} \ar[d]^{ p_{m}^{\ast} } \\
X_{n} \ar[r]^{ p_0^{\ast}} & X_0 }$$
is a pullback square; in other words, if and only if the canonical map
$X_{m+n} \rightarrow X_{m} \times_{X_0} X_{n}$ is bijective.
\end{exercise}

Let us now return to assertion $(A1)$ of Thesis \ref{cope5}.
Let $\calC$ be an $(\infty,1)$-category. For each $n \geq 0$, we
let $X_{n}$ denote a space whose fundamental $\infty$-groupoid
coincides with the $(\infty,0)$-category obtained from
$\Fun( [n], \calC)$ by discarding the noninvertible $1$-morphism.
Observe that $X_{n}$ depends functorially on the linearly
ordered set $\{ 0 < 1 < \ldots < n \}$: given a nonstrictly increasing function $f: \{ 0 < 1 < \ldots < m \} \rightarrow \{ 0 < 1 < \ldots < n \}$, composition with $f$ determines  functor
from $\Fun( [n], \calC)$ to $\Fun( [m], \calC)$, which should (after discarding noninvertible $1$-morphisms) give rise to a map of topological spaces $X_{n} \rightarrow X_{m}$. To describe
the situation more systematically, let us consider another special case of Definition \ref{cle}:

\begin{definition}
A {\it simplicial space} is a simplicial object of the category of topological spaces.
\end{definition}

We can now summarize the above discussion as follows: given an
$(\infty,1)$-category $\calC$, the collection of topological spaces
$\{ X_n \}_{n \geq 0}$ should be organized into a simplicial space $X_{\bigdot}$.

\begin{warning}\label{culper}
The construction $\calC \mapsto X_{\bigdot}$ (which we have described
informally when $\calC$ is an $(\infty,1)$-category) is quite similar to the construction
$\calC \mapsto \Nerve(\calC)_{\bigdot}$ (which we have defined precisely when
$\calC$ is an ordinary category). In both cases, the $n$th term of the relevant simplicial object
parametrizes functors from $[n]$ into $\calC$. However, these constructions do not agree
when $\calC$ is an ordinary category. For example, the topological space $X_0$
is a classifying space for the underlying groupoid of $\calC$; in particular, the connected components of $X_0$ are in bijection with isomorphism classes of objects in $\calC$. On the other hand,
$\Nerve(\calC)_0$ is defined to be the (discrete) set of objects of $\calC$; in particular, it 
takes no account of whether or not two objects in $\calC$ are isomorphic.
\end{warning}

In spite of Warning \ref{culper}, the simplicial space $X_{\bigdot}$ extracted
from an $(\infty,1)$-category $\calC$ behaves much like the nerve of an ordinary category.
In particular, it is natural to expect that it should satisfy some analogue of
condition described in Exercise \ref{fif}. To formulate this condition, we need
to recall a bit of homotopy theory.

\begin{definition}\label{silman}
Let $f: X \rightarrow Z$ and $g: Y \rightarrow Z$ be continuous maps of topological spaces.
The {\it homotopy fiber product} of $X \times^{R}_{Z} Y$ is the topological space
$$ X \times_{Z} Z^{[0,1]} \times_{Z} Y$$
whose points consist of triples $(x,y,p)$, where $x \in X$, $y \in Y$, and
$p: [0,1] \rightarrow Z$ is a continuous path from $p(0) = f(x)$ to $p(1) = g(y)$.
\end{definition}

\begin{remark}\label{tabler}
The construction of Definition \ref{silman} should be regarded as a homotopy-theoretic
(or right derived) version of the ordinary fiber product. It has the feature of being a
{\it homotopy invariant}: given a commutative diagram of topological spaces
$$ \xymatrix{ X \ar[r] \ar[d] & Z \ar[d] & Y \ar[l] \ar[d] \\
X' \ar[r] & Z' & Y' \ar[l] }$$
in which the vertical maps are weak homotopy equivalences, the induced map
$$ X \times^{R}_{Z} Y \rightarrow X' \times^{R}_{Z'} Y'$$ is
again a weak homotopy equivalence. Moreover, the weak homotopy type
of a homotopy fiber product $X \times^{R}_{Z} Y$ does not change if we
replace the continuous maps $f: X \rightarrow Z$ and $g: Y \rightarrow Z$ by
homotopic maps. Both of these assertions fail dramatically if we replace
the homotopy fiber product $X \times^{R}_{Z} Y$ with the ordinary fiber product
$X \times_{Z} Y$.
\end{remark}

\begin{remark}
For any pair of continuous maps $f: X \rightarrow Z$, $g: Y \rightarrow Z$, there is
a canonical map from the ordinary fiber product $X \times_{Z} Y$ to the homotopy
fiber product $X \times_{Z}^{R} Y$; it carries a point $(x,y) \in X \times_{Z} Y$ to
the point $(x,y,p) \in X \times_{Z}^{R} Y$, where $p: [0,1] \rightarrow Z$ is the constant
path from $f(x) = g(y) \in Z$ to itself.
\end{remark}

\begin{definition}\label{cmale}
Suppose given a commutative diagram of topological spaces
$$ \xymatrix{ W \ar[r] \ar[d] & X \ar[d] \\
Y \ar[r] & Z. }$$
We say that this diagram is a {\it homotopy pullback square} (or a {\it homotopy Cartesian diagram})
if the composite map
$$ W \rightarrow X \times_{Z} Y \rightarrow X \times_{Z}^{R} Y$$
is a weak homotopy equivalence.
\end{definition}

\begin{remark}
Suppose we are given a commutative diagram of topological spaces
$$ \xymatrix{ W \ar[r] \ar[d] & X \ar[d]^{f} \\
Y \ar[r]^{g} & Z. }$$
In general, the condition that this diagram be a pullback square and the condition that
it be a homotopy pullback square are {\em independent}: neither implies the other.
Suppose, however, that we can somehow guarantee that the inclusion
$X \times_{Z} Y \rightarrow X \times_{Z}^{R} Y$ is a weak homotopy equivalence
(this is always true if $f$ or $g$ is a Serre fibration, for example). In this
case, if the above diagram is a pullback square, then it is a homotopy pullback square.
\end{remark}

We are now ready to formulate the homotopy-theoretic counterpart
to the condition of Exercise \ref{fif}:

\begin{definition}\label{psabat}
Let $X_{\bigdot}$ be a simplicial space. We say that $X_{\bigdot}$ is a
{\it Segal space} if the following condition is satisfied:
\begin{itemize}
\item[$(\ast)$] For every pair of integers $m, n \geq 0$, the diagram
$$ \xymatrix{ X_{m+n} \ar[r] \ar[d]
& X_{m} \ar[d] \\
X_{n} \ar[r] & X_0 }$$
is a homotopy pullback square.
\end{itemize}
\end{definition}

\begin{warning}
Definition \ref{psabat} is not completely standard. Some authors impose the additional
requirement that the simplicial space $X_{\bigdot}$ be {\it Reedy fibrant}: this is a harmless technical condition
which guarantees, among other things, that each of the maps in the diagram
$$ \xymatrix{ X_{n+m} \ar[r] \ar[d]
& X_{m} \ar[d] \\
X_{n} \ar[r] & X_0 }$$
is a Serre fibration of topological spaces. If we assume this condition, then
$X_{\bigdot}$ is a Segal space if and only if each of the maps
$X_{n+m} \rightarrow X_{n} \times_{X_0} X_{m}$ is a weak homotopy equivalence.
\end{warning}

Returning now to our discussion of Thesis \ref{cope5}, we observe that if
$\calC$ is an $(\infty,1)$-category, then it is natural to suppose that the associated
simplicial space $X_{\bigdot}$ is a Segal space. This simply encodes the idea that
giving a chain of composable morphisms
$$ C_0 \stackrel{f_1}{\rightarrow} C_1 \stackrel{f_2}{\rightarrow} \ldots \stackrel{f_n}{\rightarrow} C_n$$
is equivalent to giving a pair of chains
$$ C_0 \stackrel{f_1}{\rightarrow} \ldots \stackrel{f_m}{\rightarrow} C_m
\quad \quad C_m \stackrel{f_{m+1}}{\rightarrow} \ldots \stackrel{f_n}{\rightarrow} C_n$$
such that the final term of the first chain (the object $C_m \in \calC$) agrees with the initial
term of the second chain.

In fact, even more is true: according to Thesis \ref{cope5}, the $(\infty,1)$-category
$\calC$ is {\em determined} up to equivalence by the associated Segal space
$X_{\bigdot}$. Indeed, we can attempt recover $\calC$ from $X_{\bigdot}$ as follows:

\begin{construction}\label{jumpa}
Let $X_{\bigdot}$ be a Segal space. We can extract from $X_{\bigdot}$ an
$(\infty,1)$-category $\calC$ which may be described informally as follows:
\begin{itemize}
\item The objects of $\calC$ can be identified with points of the topological
space $X_0$.
\item Given a pair of points $x,y \in X_0$, the mapping space
$\OHom_{\calC}(x,y)$ is defined to be the iterated homotopy fiber product $\{x\} \times^{R}_{X_0} X_1 \times^{R}_{X_0} \{y\}$. 
\item Given a triple of points $x,y,z \in X_0$, the composition law
$$\OHom_{\calC}(x,y) \times \OHom_{\calC}(y,z) \rightarrow \OHom_{\calC}(x,z)$$
can be recovered as the composition
\begin{eqnarray*} ( \{x\} \times^{R}_{X_0} X_1 \times^{R}_{X_0} \{y\} )
\times ( \{ y\} \times^{R}_{X_0} X_1 \times^{R}_{X_0} \{z\}) & \rightarrow
& \{x\} \times^{R}_{X_0} X_1 \times^{R}_{X_0} X_1 \times^{R}_{X_0} \{z\} \\
& \stackrel{\phi}{\simeq} & \{x\} \times^{R}_{X_0} X_2 \times_{X_0}^{R} \{z\} \\
& \rightarrow & \{x\} \times^{R}_{X_0} X_2 \times^{R}_{X_0} \{z\}.
\end{eqnarray*}
Here the map $\phi$ really goes in the opposite direction, but our assumption that
$X_{\bigdot}$ is a Segal space implies that $\phi$ is a weak homotopy equivalence,
and is therefore invertible in the homotopy category.
\item The remaining data of the simplicial space $X_{\bigdot}$ $($and other Segal conditions$)$
guarantees that the above composition law is associative up to $($coherent$)$ homotopy.
\end{itemize}
\end{construction}

We have now sketched constructions in both directions which relate the (as yet undefined)
notion of $(\infty,1)$-category with the (well-defined) notion of a Segal space. However,
these constructions are not quite inverse to one another.

\begin{example}\label{sunny}
Let $\calC$ be an ordinary category. We can regard the nerve
$\Nerve(\calC)_{\bigdot}$ as a simplicial space, in which each set
$\Nerve(\calC)_{n}$ is endowed with the discrete topology. This
simplicial space is a Segal space. Moreover, if we apply the above
construction to $\Nerve(\calC)_{n}$, we recover the original category
$\calC$. However, the Segal space $X_{\bigdot}$ associated to $\calC$
(viewed as an $(\infty,1)$-category) does {\em not} coincide with
$\Nerve(\calC)_{\bigdot}$, as we have already seen in Warning \ref{culper}:
the space $X_0$ is usually not discrete (even up to homotopy), since its
fundamental groupoid is equivalent to the underlying groupoid of $\calC$.
\end{example}

\begin{remark}
Let $X_{\bigdot}$ be a Segal space. We can modify Construction \ref{jumpa} to
extract a more concrete invariant of $X_{\bigdot}$: an ordinary category which
we call the {\it homotopy category} of $X_{\bigdot}$ and denote by $\h{X_{\bigdot}}$.
This category can be described informally as follows:
\begin{itemize}
\item[$(1)$] The objects of $\h{X_{\bigdot}}$ are the points of the space $X_0$.
\item[$(2)$] Given a pair of points $x,y \in X_0$, we let $\Hom_{ \h{X_{\bigdot}}}( x,y)$
be the set of path components $$\pi_0( \{x\} \times_{X_0}^{R} X_1 \times_{X_0}^{R} \{y\}).$$
\end{itemize}
\end{remark}

The correspondence between
Segal spaces and $(\infty,1)$-categories is generally {\em many-to-one}: 
a given $(\infty,1)$-category $\calC$ can be obtained from many
different Segal spaces via Construction \ref{jumpa}. 
Example \ref{sunny} illustrates the origin of this difficulty. 
Suppose that we begin with a Segal space $Y_{\bigdot}$,
and use it to construct an $(\infty,1)$-category $\calC$. We can then
extract from $\calC$ a new Segal space $X_{\bigdot}$.
We can then think of (the fundamental $\infty$-groupoid of) $X_0$
as the $(\infty,0)$-category obtained from $\calC$ by discarding the
noninvertible $1$-morphisms. This $(\infty,0)$-category receives a map
from (the fundamental $\infty$-groupoid of) $Y_0$, but this map is not
necessarily an equivalence: for example, there could be
invertible $1$-morphisms in $\calC$ which do not arise from paths
in the space $Y_0$. We can rule out this phenomenon by
introducing an additional assumption on the Segal space $Y_{\bigdot}$.

\begin{definition}
Let $X_{\bigdot}$ be a Segal space, and let $f \in X_1$ be a point.
Let $x = p_0^{\ast}(f)$ and $y = p_{1}^{\ast}(f)$, so that the points
$x,y \in X_0$ can be identified with objects of the homotopy
category $\h{ X_{\bigdot} }$. The composite map
$$ \{ f \} \rightarrow \{x\} \times_{X_0} X_1 \times_{X_0} \{y\}
\rightarrow \{x\} \times_{X_0}^{R} X_1 \times_{X_0}^{R} \{y\} $$
determines a morphism
$$[f] \in \Hom_{ \h{X_{\bigdot}}}(x,y) = \pi_0( \{x\} \times_{X_0}^{R} X_1 \times_{X_0}^{R} \{y\} ).$$
We will say that $f$ is {\it invertible} if $[f]$ is an isomorphism in the homotopy category
$\h{ X_{\bigdot} }$.
\end{definition}

\begin{example}\label{hughlick}
Let $X_{\bigdot}$ be a Segal space, and let $\delta: X_0 \rightarrow X_1$ be the
``degeneracy map'' induced by the unique nondecreasing functor $\{0,1\} \rightarrow \{0\}$.
For every point $x$ in $X_0$, the morphism $[ \delta(x) ]$ in the homotopy
category $\h{ X_{\bigdot} }$ coincides with the identity map $\id_{x}: x \rightarrow x$.
In particular, $\delta(x)$ is invertible for each $x \in X_0$.
\end{example}

\begin{definition}\label{psbbat}
Let $X_{\bigdot}$ be a Segal space, and let $Z \subseteq X_1$ denote the subset
consisting of the invertible elements (this is a union of path components in $X_1$;
we will consider $Z$ as endowed with the subspace topology). We will say
that $X_{\bigdot}$ is {\it complete} if the map $\delta: X_0 \rightarrow Z$
of Example \ref{hughlick} is a weak homotopy equivalence.
\end{definition}

Roughly speaking, a Segal space $Y_{\bigdot}$ is complete if every isomorphism in
the associated $(\infty,1)$-category $\calC$ arises from an essentially unique path in the space $Y_0$.
This allows us to identify the fundamental $\infty$-groupoid of $Y_0$ with the
$(\infty,0)$-category obtained by discarding the noninvertible $1$-morphisms in $\calC$.
In fact, it allows us to identify the fundamental $\infty$-groupoid of each $Y_n$ with
the underlying $(\infty,0)$-category of $\Fun( [n], \calC)$. In other words, Construction
\ref{jumpa} should establish an equivalence between the theory of complete Segal spaces
and the theory of $(\infty,1)$-categories. We can take this as a heuristic justification for the following
definition:

\begin{definition}\label{swish}
An {\it $(\infty,1)$-category} is a complete Segal space.
\end{definition}

\begin{remark}\label{compus}
Let $Y_{\bigdot}$ be a Segal space which is not complete. 
Then there exists a map $Y_{\bigdot} \rightarrow X_{\bigdot}$ in 
the homotopy category of simplicial spaces which is {\em universal} among maps from $Y_{\bigdot}$ to complete Segal spaces. In this case, we will say that $X_{\bigdot}$ is a {\it completion} of $Y_{\bigdot}$.
Informally, we can think of $X_{\bigdot}$ as the complete Segal space corresponding
to the $(\infty,1)$-category obtained from $Y_{\bigdot}$ via Construction \ref{jumpa}.
This construction will play an important role in what follows, because the higher
categories which arise in the bordism theory of manifolds are naturally obtained
from Segal categories which are {\em not} complete (see Warning \ref{sablin}).
\end{remark}

\begin{remark}
There are many alternatives to Definition \ref{swish} which give rise to essentially the
same theory. Were we to adopt such an alternative,
Thesis \ref{cope5} could be formulated as a theorem, which would be proved by
giving a precise implementation of Construction \ref{jumpa}. For a more detailed discussion of
the various models of the theory of $(\infty,1)$-categories, we refer the reader to \cite{bergner2}.
\end{remark}

\begin{remark}
Using a more rigorous version of the above arguments, To\"{e}n has proven a version
of Thesis \ref{cope5}. More precisely, he has proven that any homotopy theory satisfying
a short list of reasonable axioms is equivalent to the theory of complete Segal spaces (\cite{toenchar}).
\end{remark}

In the next section, we will need a variant of the theory of complete Segal spaces.

\begin{definition}
Let $\cDelta_0$ denote the subcategory of $\cDelta$ with the same objects,
where the morphisms from $[m]$ to $[n]$ are given by {\em strictly} increasing
maps of linearly ordered sets $\{ 0 < 1 < \ldots < m \} \rightarrow \{ 0 < 1 < \ldots < n \}$.
A {\it semisimplicial object} of a category $\bfA$ is a functor from $\cDelta_0^{op}$ into
$\bfA$. 
\end{definition}

\begin{definition}
Let $X_{\bigdot}$ be a semisimplicial space. We will say that $X_{\bigdot}$ is a 
{\it semiSegal space} if the following condition is satisfied:
\begin{itemize}
\item[$(\ast)$] For every pair of integers $m, n \geq 0$, the diagram
$$ \xymatrix{ X_{m+n} \ar[r] \ar[d]
& X_{m} \ar[d] \\
X_{n} \ar[r] & X_0 }$$
is a homotopy pullback square.
\end{itemize}
\end{definition}

\begin{example}
Every simplicial object of a category $\bfA$ determines a semisimplicial object of $\bfA$
by restriction. In particular, every simplicial space $X_{\bigdot}$ determines a semisimplicial
space $X'_{\bigdot}$; we observe that $X_{\bigdot}$ is a Segal space if and only if 
$X'_{\bigdot}$ is a semiSegal space.
\end{example}

\begin{example}
A {\it nonunital category} $\calC$ consists of the following data:
\begin{itemize}
\item[$(1)$] A collection of objects $X, Y, Z, \ldots \in \calC$.
\item[$(2)$] For every pair of objects $X,Y \in \calC$, a set $\Hom_{\calC}(X,Y)$.
\item[$(3)$] For every triple of objects $X,Y,Z \in \calC$, a composition map
$$ \Hom_{\calC}(X,Y) \times \Hom_{\calC}(Y,Z) \rightarrow \Hom_{\calC}(X,Z).$$
These composition maps are required to be associative in the obvious sense.
\end{itemize}
In other words, a nonunital category is like a category, except that we do not require
the existence of identity morphisms. Every category determines an underlying nonunital category, simply by forgetting the identity morphisms. 

To every nonunital category $\calC$, we can associate a semisimplicial set $\Nerve(\calC)$, the 
{\it nerve} of $\calC$: we let $\Nerve(\calC)_{n}$ denote the collection of all $n$-tuples
$$ x_0 \stackrel{f_1}{\rightarrow} x_1 \stackrel{f_2}{\rightarrow} \cdots \stackrel{f_n}{\rightarrow} x_n$$
of morphisms in $\calC$. A nonunital category is determined up to isomorphism by its nerve, and it is not difficult to characterize those semisimplicial sets which arise as nerves of nonunital categories
as in Exercise \ref{fif}.
\end{example}

If $X_{\bigdot}$ is a semiSegal space, then one can attempt to apply Construction \ref{jumpa}
to build an $(\infty,1)$-category. In general, this does not succeed: one can extract a
collection of objects, a topological space of morphisms between every pair of objects, and a coherently associative composition law, but there is no natural candidate for identity morphisms.
In other words, we can think of a semiSegal space $X_{\bigdot}$ as encoding a 
{\em nonunital $(\infty,1)$-category}. Just as in ordinary category theory, the existence of units
is merely a condition to be assumed: identity morphisms are unique (up to canonical isomorphism) when they exist. Formally, this translates into the following assertion:

\begin{claim}\label{amax}
Let $Y_{\bigdot}$ be a semiSegal space. Suppose that there exists a simplicial space
$X_{\bigdot}$ and a weak homotopy equivalence $X_{\bigdot} \rightarrow Y_{\bigdot}$ of
semisimplicial spaces. Then $X_{\bigdot}$ is a Segal space, and is uniquely determined
up to weak homotopy equivalence.
\end{claim}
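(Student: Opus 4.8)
The plan is to prove the two assertions in turn: first that $X_{\bigdot}$ is a Segal space, and then that it is essentially unique. The first part is a formal consequence of the homotopy-invariance of homotopy pullback squares. Since $X_{\bigdot} \to Y_{\bigdot}$ is a weak homotopy equivalence of semisimplicial spaces (that is, levelwise a weak equivalence), for every pair $m,n \geq 0$ the square
$$ \xymatrix{ X_{m+n} \ar[r] \ar[d] & X_m \ar[d] \\ X_n \ar[r] & X_0 } $$
admits a levelwise weak equivalence to the corresponding square built from $Y_{\bigdot}$. By Remark \ref{tabler} the induced map $X_m \times^{R}_{X_0} X_n \to Y_m \times^{R}_{Y_0} Y_n$ is a weak equivalence, and $X_{m+n} \to Y_{m+n}$ is a weak equivalence; since the comparison map $Y_{m+n} \to Y_m \times^{R}_{Y_0} Y_n$ is a weak equivalence (because $Y_{\bigdot}$ is a semiSegal space), the two-out-of-three property forces $X_{m+n} \to X_m \times^{R}_{X_0} X_n$ to be a weak equivalence as well. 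Hence the $X$-square is a homotopy pullback square in the sense of Definition \ref{cmale}, so the underlying semisimplicial space of $X_{\bigdot}$ is a semiSegal space, which (as noted in the example relating simplicial and semisimplicial Segal conditions above) says precisely that $X_{\bigdot}$ is a Segal space.

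For the uniqueness, the strategy is to show that the degeneracy operators of a Segal space are determined, up to coherent homotopy, by its underlying semisimplicial space — in other words, that ``having units'' is a property and not extra structure. Let $\calC$ denote the (a priori nonunital) $(\infty,1)$-category obtained from the semiSegal space underlying $X_{\bigdot}$ by the recipe of Construction \ref{jumpa}; it has mapping spaces $\OHom_{\calC}(x,y)$ and a coherently associative composition law, all of which refer only to the semisimplicial data. Call a point $e \in X_1$ a \emph{weak unit} if composition with $e$, on either side, induces self-maps of the mapping spaces $\OHom_{\calC}(z,x)$ and $\OHom_{\calC}(x,z)$ that are homotopic to the respective identity maps (this in particular forces the two endpoints of $e$ to coincide, say at $x$). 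This notion is manifestly homotopy-invariant and intrinsic to $Y_{\bigdot}$, so the subspace $U \subseteq X_1$ of weak units is already visible in $Y_{\bigdot}$. Now the simplicial identities in $X_{\bigdot}$ show, first, that $s_0(x)$ is a weak unit for every $x$ (so $s_0$ factors through $U$), and second, that every morphism $h$ satisfies $h \circ s_0(x) \simeq h$; combining the latter (applied to $h = e$) with the defining property of a weak unit $e$ (applied to the morphism $s_0(x)$) gives $e \simeq e \circ s_0(x) \simeq s_0(x)$. Assembling these equivalences coherently in $e$, one concludes that $s_0 \colon X_0 \to U$ is a weak homotopy equivalence, with homotopy inverse the restriction of $p_0^{\ast}$. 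The higher degeneracies $s_i \colon X_n \to X_{n+1}$ are then forced as well: under the Segal equivalence $X_{n+1} \simeq X_1 \times^{R}_{X_0} \cdots \times^{R}_{X_0} X_1$, the operator $s_i$ inserts a weak unit into one factor, and weak units are pinned down by the previous sentence.

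It then remains to assemble the observations. Given two simplicial spaces $X_{\bigdot}$ and $X'_{\bigdot}$ equipped with weak equivalences $X_{\bigdot} \to Y_{\bigdot} \leftarrow X'_{\bigdot}$ of semisimplicial spaces, the preceding paragraph recovers the degeneracy $s_0$ of each as the composite $X_0 \xrightarrow{\ \sim\ } Y_0 \xleftarrow{\ \sim\ } U \hookrightarrow Y_1 \xleftarrow{\ \sim\ } X_1$, and similarly for the $s_i$; hence the zigzag of semisimplicial equivalences is automatically compatible with all the degeneracy operators up to coherent homotopy, and therefore upgrades to a weak equivalence $X_{\bigdot} \simeq X'_{\bigdot}$ of simplicial spaces.

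The hard part will be this last upgrading step: turning ``$s_0$ is characterized intrinsically'' into a genuine equivalence of simplicial objects requires controlling the whole infinite hierarchy of coherence homotopies among the degeneracies, which the informal argument above only gestures at. The clean way to make it rigorous is to work in the Reedy model structures on simplicial and semisimplicial spaces and to prove that the restriction functor, after one restricts to Reedy-fibrant Segal spaces, is homotopically fully faithful with essential image exactly the semiSegal spaces admitting a simplicial lift; the content of the two middle paragraphs is precisely the input that makes this go through. One should also take some care with the precise formulation of ``weak unit'' so that the left and right conditions interact correctly and so that the retraction of $U$ onto the image of $s_0$ is visibly homotopy-coherent — routine, but fiddly.
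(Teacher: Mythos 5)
The paper states Claim \ref{amax} without proof --- the sentence immediately following it reads ``We will not pursue the matter in any further detail'' --- so there is no authorial argument to compare against, and I can only assess your attempt on its own terms.

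Your first paragraph is correct and complete: the Segal condition of Definition \ref{psabat} is phrased in terms of homotopy pullback squares, which by Remark \ref{tabler} are invariant under levelwise weak equivalences, so the semiSegal condition transfers from $Y_{\bigdot}$ to the underlying semisimplicial space of $X_{\bigdot}$, and by the example just before Claim \ref{amax} this is equivalent to $X_{\bigdot}$ being a Segal space.

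For uniqueness you have identified the correct governing idea --- that ``being a unit'' is a condition detectable from the semisimplicial data alone, and that units are homotopically unique by the classical $e \simeq e \circ s_0(x) \simeq s_0(x)$ argument --- but the proposal then meets the real difficulty and dismisses it as ``routine, but fiddly,'' and that is where it falls short. After your middle paragraphs, what you actually have is that each degeneracy $s_0 \colon X_0 \to X_1$ is pinned down up to a \emph{non-coherent} homotopy by the semisimplicial structure, and likewise for the higher $s_i$. To conclude that two lifts $X_{\bigdot}$ and $X'_{\bigdot}$ are weakly equivalent as simplicial spaces, you must produce a zigzag of maps that commute with all degeneracies coherently --- an infinite tower of compatibility homotopies --- and nothing in your argument manufactures that tower; the assertion that the zigzag ``automatically'' upgrades is precisely what has to be proved. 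The Reedy reformulation you propose (that restriction along $\cDelta_0^{op} \subseteq \cDelta^{op}$ is homotopically fully faithful on Segal spaces) is the theorem, not a reduction of it, and pointwise uniqueness of $s_0$ is a necessary observation but far from a proof of that full faithfulness. A working proof typically shows that the homotopy fiber of the restriction functor over $Y_{\bigdot}$ is either empty or contractible, and contractibility requires building the coherences inductively over the skeletal filtration of $\cDelta$; this inductive step is the substance, and it is missing from the proposal exactly at the point you yourself flag.
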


In fact, one can be more precise: a semiSegal space $Y_{\bigdot}$ is equivalent to the
restriction of a Segal space if and only if it has ``identity morphisms up to homotopy''
in an appropriate sense. We will not pursue the matter in any further detail.

We conclude this section by sketching how the above definitions can be generalized to
the setting of $(\infty,n)$-categories for $n > 0$. Roughly speaking, one can think of an $(\infty,n)$-category $\calC$ has having an underlying $\infty$-groupoid $X_0$ (obtained by discarding all noninvertible $k$-morphisms in $\calC$ for $1 \leq k \leq n$), which we view as an $(\infty,n-1)$-category. For every pair of objects
$x,y \in X_0$, there is an $(\infty,n-1)$-category $\OHom_{\calC}(x,y)$ of $1$-morphisms
$f: x \rightarrow y$. We can organize the collection of all triples $(x,y,f)$ into an
$(\infty,n-1)$-category $X_1$ which is equipped with a pair of forgetful functors
$X_1 \rightarrow X_0$. Proceeding in this manner, we can encode the entirety of the structure
of $\calC$ into a {\it simplicial} $(\infty,n-1)$-category $X_{\bigdot}$. To describe the mathematical structures which arise via this procedure, we need to introduce a definition.

\begin{definition}
Let $n \geq 0$ an integer, and let $\bfA$ be a category. An {\it $n$-fold} simplicial object
of $\bfA$ is a functor 
$$ \cDelta^{op} \times \ldots \times \cDelta^{op} \rightarrow \bfA,$$
where the product on the left hand side has $n$ factors.
\end{definition}

\begin{example}
If $n=0$, then an $n$-fold simplicial object of $\bfA$ is just an object of $\bfA$.
If $n=1$, then an $n$-fold simplicial object of $\bfA$ is a simplicial object of
$\bfA$ in the sense of Definition \ref{cle}. In general, an $n$-fold simplicial
object of $\bfA$ consists of a collection of objects
$X_{ k_1, \ldots, k_n} \in \bfA$ indexed by $n$-tuples of nonnegative
integers $k_1, \ldots, k_n \geq 0$, which are related by a variety of ``face'' and ``degeneracy'' maps.
\end{example}

\begin{notation}
If $\bfA$ is a category, we will let $\bfA^{ (n)}$ denote the category of
$n$-fold simplicial objects of $\bfA$. For $m, n \geq 0$, we have an evident equivalence
of categories
$$ (\bfA^{(m)})^{(n)} \simeq \bfA^{(m+n)}.$$
In particular, we can identify $n$-fold simplicial objects of $\bfA$ with
simplicial objects $X_{\bigdot}$ of $\bfA^{(n-1)}$.
\end{notation}

We now specialize to the case where the target category $\bfA$ is the category
of topological spaces.

\begin{definition}
An {\it $n$-fold simplicial space} is an $n$-fold simplicial object in the category
of topological spaces and continuous maps.

We will say that a map $X \rightarrow Y$ of $n$-fold simplicial spaces is a 
{\it weak homotopy equivalence} if the induced map
$X_{k_1, \ldots, k_n} \rightarrow Y_{k_1, \ldots, k_n}$ is a weak homotopy equivalence
of topological spaces, for every sequence of nonnegative integers $k_1, \ldots, k_n \geq 0$.
A diagram
$$ \xymatrix{ X \ar[r] \ar[d] & Y \ar[d] \\
X' \ar[r] & Y' }$$
of $n$-fold simplicial spaces is a {\it homotopy pullback square} if,
for every sequence of nonnegative integers $k_1, \ldots, k_n \geq 0$, the 
induced square
$$ \xymatrix{ X_{k_1, \ldots, k_n} \ar[r] \ar[d] & Y_{k_1, \ldots, k_n} \ar[d] \\
X'_{k_1, \ldots, k_n} \ar[r] & Y'_{k_1, \ldots, k_n} }$$
is a homotopy pullback square of topological spaces (see Definition \ref{cmale}).

We will say that an $n$-fold simplicial space $X$ is {\it essentially constant} 
if there exists a weak homotopy equivalence of $n$-fold simplicial spaces $X' \rightarrow X$, 
where $X'$ is a constant functor.
\end{definition}

\begin{remark}
An $n$-fold simplicial space $X$ is constant if and only if, for every sequence
$k_1, \ldots, k_n \geq 0$, the canonical map $X_{0,\ldots, 0} \rightarrow
X_{k_1, \ldots, k_n}$ is a weak homotopy equivalence; in this case,
$X$ is weakly equivalent to the constant $n$-fold simplicial space associated
to $X_{0,\ldots, 0}$. 
\end{remark}

\begin{definition}\label{multiseg}
Let $n > 0$, and let $X$ be an $n$-fold simplicial space. We will regard
$X$ as a simplicial object $X_{\bigdot}$ in the category of $(n-1)$-fold simplicial spaces.
We will say that $X$ is an {\it $n$-fold Segal space} if the following conditions
are satisfied:
\begin{itemize}
\item[$(A1)$] For every $0 \leq k \leq m$, the diagram
$$ \xymatrix{ X_{m} \ar[r] \ar[d] & X_{k} \ar[d] \\
X_{m-k} \ar[r] & X_0}$$
of Definition \ref{psabat} is a homotopy pullback square (of $(n-1)$-fold simplicial spaces).
\item[$(A2)$] The $(n-1)$-fold simplicial space $X_0$ is essentially constant.
\item[$(A3)$] Each of the $(n-1)$-uple simplicial spaces $X_{k}$
is an $(n-1)$-dimensional Segal space.
\end{itemize}

We will say that an $n$-fold Segal space $X_{\bigdot}$ is {\it complete} if it
satisfies the following additional conditions:
\begin{itemize}
\item[$(A4)$] Each of the $(n-1)$-dimensional Segal spaces $X_{n}$ is complete
(we regard this condition as vacuous when $n=1$).
\item[$(A5)$] Let $Y_{\bigdot}$ be the simplicial space described by the formula
$Y_{k} = X_{k,0,\ldots,0}$; note that condition $(A1)$ guarantees that $Y_{\bigdot}$ is a
Segal space. Then $Y_{\bigdot}$ is complete.
\end{itemize}
\end{definition}

We now have the corresponding analogue of Definition \ref{swish}:

\begin{definition}\label{swish2}
An $(\infty,n)$-category is an $n$-fold complete Segal space.
\end{definition}

\begin{remark}
There are other reasonable approaches to the theory of $(\infty,n)$-categories which are equivalent to Definition \ref{swish2}. We refer the reader to \cite{bicat} for a discussion in the case $n=2$.
\end{remark}

\begin{remark}\label{kisster}
If $X$ is an $n$-fold Segal space, then there is a universal example of a map
$X \rightarrow X'$ in the homotopy category of $n$-fold simplicial spaces, such that
$X'$ is an $n$-fold complete Segal space. In this case, we will refer to $X'$ as the {\it completion} of $X$.
We can regard $X'$ as an $(\infty,n)$-category (Definition \ref{swish2}) whose structure is determined by $X$.
\end{remark}

\begin{remark}
There is an evident action of the symmetric group $\Sigma_n$ on the category
of $n$-fold simplicial spaces. Definition \ref{multiseg} is {\em not} invariant under
this action. For example, when $n=2$, the axioms demand that the simplicial space
$X_{0, \bigdot}$ be essentially constant, but there is no corresponding demand
on the simplicial space $X_{\bigdot, 0}$.
\end{remark}

\subsection{Bordism Categories as Segal Spaces}\label{swugg}

Let $n$ be a positive integer, which we regard as fixed throughout this section.
In \S \ref{sugar}, we argued that it is natural to replace the ordinary bordism
category $\Cob{}{n}{}$ with an $(\infty,1)$-category $\tCob(n)$, which encodes information about the homotopy types of diffeomorphism groups of $n$-manifolds. In \S \ref{bigseg}, we introduced the notion
of a {\it Segal space}, and argued that complete Segal spaces can be regarded as representatives
for $(\infty,1)$-categories. Our goal in this section is to unite these two lines of thought,
giving an explicit construction of $\tCob(n)$ in the language of Segal spaces. 
To simplify the exposition, we consider the {\em unoriented} version of the bordism
category, which we will denote by $\tunCob(n)$. At the end of this section, we will explain how the
construction of $\tunCob(n)$ can be generalized to the setting of $n$-fold Segal spaces to give
a precise definition of the $(\infty,n)$-category $\Bord_{n}$ described informally in Definition \ref{cabber2}.

Let us first outline the rough idea of the construction. We would like to produce a Segal
space $\SuntCob(n)_{\bigdot}$ which encodes the structure of the $(\infty,1)$-category $\tunCob(n)$.
Roughly speaking, we would like $\SuntCob(n)_{k}$ to be a classifying space for composable chains of
bordisms
$$ M_0 \stackrel{B_1}{\rightarrow} M_1 \stackrel{B_2}{\rightarrow} \cdots \stackrel{B_k}{\rightarrow} 
M_k$$
of length $k$: here each $M_i$ is a closed manifold of dimension $(n-1)$ and each
$B_i$ is a bordism from $M_{i-1}$ to $M_i$. There are two considerations to bear in mind:

\begin{itemize}
\item[$(a)$] As noted in Remark \ref{jirik}, composition of bordisms is not quite well-defined
without making some auxiliary choices. Fortunately, the formalism of Segal spaces comes to our rescue: we do not need the space $\SuntCob(n)_{k}$ to coincide with the iterated fiber product 
$$\SuntCob(n)_1 \times_{\SuntCob(n)_0}
\cdots \times_{\SuntCob(n)_0} \SuntCob(n)_1;$$ we only need $X_k$ to be weakly equivalent to the corresponding homotopy fiber product. We can therefore allow points of $X_k$ to encode more information than just
the chain of composable bordisms $\{ B_i \}_{1 \leq i \leq k}$ (for example, a smooth structure
on $B = B_1 \coprod_{ M_1} \cdots \coprod_{M_{k-1}} B_{k-1}$) so long as the inclusion of this information does not change the relevant homotopy type.

\item[$(b)$] The collection of all composable chains of bordisms as above is naturally
organized into a (topological) groupoid, where the morphisms are given by diffeomorphisms.
We would like $\SuntCob(n)_k$ to be a classifying space for this groupoid. To construct such a classifying space explicitly, we will choose some auxiliary data: namely, an embedding of the manifold $B$
into $V \times \R$, where $V$ is a real vector space of large dimension. As the dimension of
$V$ grows, the relevant space of embeddings becomes highly connected (by general position arguments), and in the limit we can identify the relevant classifying space with the collection
of embedded submanifolds.
\end{itemize}

\begin{notation}
Let $V$ be a real vector space of finite dimension $d$. We let
$\Sub_0(V)$ denote the collection of all smooth 
closed submanifolds $M \subseteq V$ of dimension $n-1$, and
$\Sub(V)$ the collection of all smooth compact $n$-manifolds
properly embedded in $V \times [0,1]$ (we say that an embedding
$M \hookrightarrow V \times [0,1]$ is {\it proper} if $\bd M = M \cap (V \times \{0,1\})$).
\end{notation}

\begin{remark}\label{sing}
For every finite dimensional real vector space $V$, the spaces $\Sub_0(V)$ and
$\Sub(V)$ admit topologies. We will describe this topology in the case
for $\Sub(V)$; the case of $\Sub_0(V)$ is similar but slightly easier.
Given an abstract $n$-manifold $M$, we can define a topological space $\Emb(M, V \times [0,1])$ of
smooth proper embeddings of $M$ into $V \times [0,1]$. The space
$\Emb(M, V \times [0,1])$ carries an action of the diffeomorphism group $\Diff(M)$, and
we have a canonical bijection
$$ \coprod_{M} \Emb(M, V \times [0,1] ) / \Diff(M) \rightarrow \Sub(V)$$
where the coproduct is taken over all diffeomorphism classes of $n$-manifolds.
We endow $\Sub(V)$ with the quotient topology:
a subset $U \subseteq \Sub(V)$ is open if and only if its
inverse image in $\Emb(M, V \times [0,1])$ is open, for every $n$-manifold $M$.
With respect to this topology, each of the quotient maps $\Emb(M, V \times [0,1]) \rightarrow \Sub(V)$ exhibits $\Emb(M, V \times [a,b])$ as a principal $\Diff(M)$-bundle over a suitable summand of $\Sub(V)$. 
\end{remark}

\begin{definition}
For each $k \geq 0$, let
$\untCob(n)^{V}_{k}$ denote the set of all pairs
$(t_0 < t_1 < \cdots < t_k; M)$ where $\{ t_i \}_{0 \leq i \leq k}$ is a
strictly increasing sequence of real numbers
and $M \subseteq V \times [t_0, t_k]$ is either a smooth submanifold
of dimension $(n-1)$ (if $k=0$) or a properly embedded submanifold of
dimension $n$ (if $k > 0$) which intersects each of the submanifolds
$V \times \{t_i\} \subseteq V \times [t_0, t_k]$ transversely. 
If $k=0$ we can identify $\untCob(n)_{k}^{V}$ with $\R \times \Sub_{0}(V)$
and if $k > 0$ we can identify $\untCob(n)_{k}^{V}$ with an open subset of
$\Sub(V) \times \{ t_0, \ldots t_k \in \R: t_0 < \cdots < t_k \}$ (using a linear change of coordinates to identify $V \times [t_0, t_k]$ with $V \times [0,1]$. In either case, the relevant identification endows
$\untCob(n)_{k}^{V}$ with the structure of a topological space.

Given a strictly increasing map $f: \{ 0 < 1 < \cdots < k \} \rightarrow
\{ 0 < 1 < \cdots < k' \}$, we obtain a continuous map of topological spaces
$f^{\ast}: \untCob(n)_{k'}^{V} \rightarrow \untCob(n)_{k}^{V}$, given
by the formula
$$ f^{\ast}( t_0 < \ldots < t_{k'}; M) =
( t_{f(0)} < \ldots < t_{f(k)}; M \cap (V \times [t_{f(0)}, t_{f(k)}]).$$
In this way, the collection of topological spaces $\{ \untCob(n)_{k}^{V} \}_{k \geq 0}$
can be organized into a semisimplicial space, which we will denote by
$\untCob(n)_{\bigdot}^{V}$. 

Let $\R^{\infty}$ denote an infinite dimensional real vector space.
We define the semisimplicial space
$\untCob(n)_{\bigdot}$ to be the direct limit
$\varinjlim \untCob(n)_{\bigdot}^{V}$, where $V$ ranges over the
collection of all finite dimensional subspaces of $\R^{\infty}$.
\end{definition}

\begin{remark}
Up to homotopy equivalence, the semisimplicial space
$\untCob(n)_{\bigdot}$ does not depend on the choice
of infinite dimensional real vector space $\R^{\infty}$. In fact,
if we require that $\R^{\infty}$ have countable dimension, then
$\untCob(n)_{\bigdot}$ is well-defined up to homeomorphism.
\end{remark}

\begin{claim}\label{ilmer}
The semisimplicial space $\untCob(n)_{\bigdot}$ is a semiSegal space.
\end{claim}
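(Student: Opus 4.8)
The plan is to verify the Segal condition $(\ast)$ for each pair of integers $m, \ell \ge 0$ by a geometric analysis of how an embedded bordism is cut along an interior slice and how two compatible pieces are re-glued. Since $\untCob(n)_{\bigdot} = \varinjlim_{V} \untCob(n)^{V}_{\bigdot}$ is a filtered colimit of semisimplicial spaces along closed inclusions, and filtered homotopy colimits of spaces commute with the formation of homotopy pullbacks and preserve weak homotopy equivalences, it suffices to prove that for every finite-dimensional subspace $V \subseteq \R^{\infty}$ and every $m, \ell \ge 0$ the square
$$ \xymatrix{ \untCob(n)^{V}_{m+\ell} \ar[r] \ar[d] & \untCob(n)^{V}_{m} \ar[d] \\ \untCob(n)^{V}_{\ell} \ar[r] & \untCob(n)^{V}_{0} } $$
is a homotopy pullback square. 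Here the horizontal maps are the restrictions to, respectively, the first $m+1$ and the last $\ell+1$ parameters (cutting the manifold accordingly), and both vertical maps record the slice $M \cap (V \times \{t_m\})$. When $m = 0$ or $\ell = 0$ two edges of the square are identities and it is trivially homotopy cartesian, so assume $m, \ell \ge 1$.

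First I would show that the right vertical map $\untCob(n)^{V}_{m} \to \untCob(n)^{V}_{0}$, $(t_0 < \cdots < t_m; M) \mapsto (t_m,\, M \cap (V \times \{t_m\}))$, is a Serre fibration. This is a routine application of the parametrized isotopy extension theorem: a family of closed $(n-1)$-submanifolds $N_u \subseteq V$ together with endpoints $\tau_u \in \R$ and a lift $M_0$ over $u = 0$ is propagated by dragging $M_0$ along an ambient isotopy of $V \times \R$ that carries $N_0$ to $N_u$, reparametrizing the interior levels $t_0 < \cdots < t_{m-1}$ as needed and keeping transversality to every slice. Granting this, factor the comparison map of the Segal condition as
$$ \untCob(n)^{V}_{m+\ell} \xrightarrow{\ \Psi\ } \untCob(n)^{V}_{m} \times_{\untCob(n)^{V}_{0}} \untCob(n)^{V}_{\ell} \xrightarrow{\ \iota\ } \untCob(n)^{V}_{m} \times^{R}_{\untCob(n)^{V}_{0}} \untCob(n)^{V}_{\ell}, $$
where $\Psi$ sends $(t_\bullet; M)$ to the pair of halves $M \cap (V \times [t_0, t_m])$ and $M \cap (V \times [t_m, t_{m+\ell}])$. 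By the remark following Definition \ref{cmale}, the Serre fibration property makes $\iota$ a weak homotopy equivalence, so it remains to prove that $\Psi$ is a weak homotopy equivalence.

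The map $\Psi$ is a homeomorphism onto the subspace of the strict fiber product consisting of those compatible pairs $(M_1, M_2)$ whose union $M_1 \cup M_2 \subseteq V \times [t_0, t_{m+\ell}]$ is a \emph{smooth} properly embedded submanifold, so the content of the claim is that the inclusion of this ``smoothly glueable'' locus into the whole strict fiber product is a weak homotopy equivalence. The mechanism is collar straightening at the interface slice $V \times \{t_m\}$. Transversality of $M_1$ to this slice provides, canonically up to a contractible space of choices (for instance by flowing along gradient-like vector fields for the projection $p\colon V \times \R \to \R$, which form a convex set), a collar $N \times (t_m - \delta,\, t_m] \hookrightarrow M_1$ compatible with $p$ and restricting to the identity on $N$ over $t_m$; likewise for $M_2$ on $[t_m,\, t_m + \delta')$. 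Using these collars one builds, continuously and compatibly in families, an isotopy of the whole configuration, supported near $V \times \{t_m\}$ and fixing that slice pointwise, after which $M_1$ and $M_2$ are honest products near $t_m$ and their union is visibly smooth. Since the collars, hence the isotopies, are drawn from contractible parameter spaces, this gives a homotopy inverse to the inclusion.

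Combining the two steps, $\iota \circ \Psi$---the comparison map of the Segal condition at level $V$---is a weak homotopy equivalence for all $m, \ell \ge 0$; passing to the filtered colimit over $V$ (using the homotopy-invariance of homotopy pullbacks from Remark \ref{tabler} and the first reduction above) shows that $\untCob(n)_{\bigdot}$ satisfies $(\ast)$, i.e. is a semiSegal space. I expect the real obstacle to be the collar-straightening step: one must genuinely produce the straightening isotopies as a continuous and suitably natural function of the point of the fiber product---packaging the ``contractible space of choices'' into an honest family-wise deformation rather than a pointwise choice---whereas the Serre fibration property and the formal reductions are essentially standard.
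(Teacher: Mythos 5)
The paper states Claim \ref{ilmer} without proof; the accompanying remark --- that gluing is ``well-defined up to a contractible space of choices'' --- is the only hint offered, and your argument is the natural formalization of it. Your reduction is sound: factor the Segal comparison map through the strict fiber product, use the Serre-fibration property of the slice map $\untCob(n)^{V}_{m}\to\untCob(n)^{V}_{0}$ to dispose of the comparison between the strict and homotopy fiber products, and handle the remaining discrepancy by straightening collars at the cut. You are right that the parametric collar-straightening is where the real work lives. The Serre-fibration step, by contrast, is somewhat easier than you suggest: an ambient isotopy of the form $\phi_u\times\mathrm{id}_{\R}$ --- produced by lifting a path in $\Sub_0(V)$ through the principal $\Diff(N)$-bundle $\Emb(N,V)\to\Sub_0(V)$ of Remark \ref{sing} and then applying parametrized isotopy extension in $V$ --- automatically preserves transversality to every horizontal slice, so the interior levels $t_0<\cdots<t_{m-1}$ require no reparametrization (only the top level $t_m$ need move, by a vertical shift). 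A small slip in your description of the Segal square: the map you describe as restriction to ``the last $\ell+1$ parameters'' is the \emph{left vertical} $\untCob(n)^{V}_{m+\ell}\to\untCob(n)^{V}_{\ell}$, not a horizontal one, and both maps landing in $\untCob(n)^{V}_{0}$ (the right vertical and the bottom horizontal) are slice maps at the shared endpoint; the argument you run is consistent with this correct reading, but the sentence as written swaps the roles. Finally, the assertion that $\Psi$ is a homeomorphism onto the smoothly-glueable locus implicitly uses that the gluing map $(M_1,M_2)\mapsto M_1\cup M_2$ is continuous for the quotient topology of Remark \ref{sing}; this is true (a map into $\Sub(V)$ is the same as a fiber bundle with a fiberwise embedding, and such bundles glue), but it is worth recording.
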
 

Claim \ref{ilmer} expresses the idea that we can glue pairs of bordisms together, and the
result is well-defined up to a contractible space of choices. As we explained in
\S \ref{bigseg}, this allows us to view $\untCob(n)_{\bigdot}$ as encoding the structure of
a not necessarily unital $(\infty,1)$-category. According to Claim \ref{amax}, if
$\untCob(n)_{\bigdot}$ is weakly equivalent to the restriction of a Segal space, then
that Segal space is uniquely determined up to weak homotopy equivalence. The existence of such a Segal space can be deduced
formally from the fact that $\untCob(n)_{\bigdot}$ admits units ``up to homotopy'' (given an object
of $\untCob(n)_{\bigdot}^{V}$ represented by a submanifold $M \subseteq V$, the identity map
from this object to itself can be represented by the product $M \times [0,1] \subseteq V \times [0,1]$).
However, it is possible to give a direct construction of such a Segal space.

\begin{protodefinition}\label{slamm}
Let $V$ be a finite dimensional real vector space. For every nonnegative integer $k$, let
$\SuntCob(n)^{V}_{k}$ denote the collection of all pairs $(M, \{ t_0 \leq t_1\leq \cdots \leq t_k \})$, where the $t_i$ are real numbers, $M \subseteq V \times \R$ is a (possibly noncompact) $n$-dimensional submanifold, and the projection $M \rightarrow \R$ is a proper map whose critical values are
disjoint from $\{ t_0 \leq \cdots \leq t_k \}$. The set $\SuntCob(n)^{V}_{k}$ can be endowed with a topology. This topology generalizes that of Remark \ref{sing}, but is more complicated because we allow noncompact manifolds; we refer the reader to the appendix of \cite{soren2} for a definition. We can use
these topologies to endow $\SuntCob(n)^{V}_{\bigdot}$ with the structure of a simplicial space.
\end{protodefinition}

For each $k \geq 0$, let $\OSuntCob(n)^{V}_{k}$ denote the open subset of
$\SuntCob(n)^{V}_{k}$ consisting of pairs $(M, \{ t_0 \leq \ldots \leq t_k \})$ such that
$t_0 < \cdots < t_k$. We can regard $\OSuntCob(n)^{V}_{\bigdot}$ as a semisimplicial space
equipped with an evident inclusion (of semisimplicial spaces) $\OSuntCob(n)^{V}_{\bigdot}
\subseteq \SuntCob(n)^{V}_{\bigdot}$. There is also a natural map
$f: \OSuntCob(n)^{V}_{\bigdot} \rightarrow \untCob(n)^{V}_{\bigdot}$, which carries a pair
$(M, \{ t_0 < \cdots < t_k \})$ to the pair $(M \cap (V \times [t_0, t_k]), \{t_0 < \cdots < t_k \})$.
The topology on each $\SuntCob(n)^{V}_{k}$ is defined so that $f$ induces a weak homotopy
equivalence: roughly speaking, there is a canonical path joining any
two points $(M, \{ t_0 < \cdots < t_k \} ), (N, \{t_0 < \cdots < t_k \})$ such that
$M \cap (V \times [t_0, t_k]) = N \cap (V \times [t_0, t_k ])$, which is given by ``stretching to
infinity'' the parts of $M$ and $N$ which do not lie in $V \times [t_0, t_k]$. 

Let $\SuntCob(n)_{\bigdot}$ denote the simplicial space
$\varinjlim \SuntCob(n)^{V}_{\bigdot},$ where the direct limit is taken over all finite dimensional
subspaces of $\R^{\infty}$. Then the underlying semisimplicial
space of $\SuntCob(n)_{\bigdot}$ is weakly equivalent to
$\untCob(n)_{\bigdot}$. In particular, $\SuntCob(n)_{\bigdot}$ is a Segal space.

\begin{definition}
We let $\tunCob(n)$ denote the $(\infty,1)$-category associated to the Segal space
$\SuntCob(n)_{\bigdot}$ by Construction \ref{jumpa}. If we adopt Definition \ref{swish}, we can
be more precise: we let $\tunCob(n)$ denote the complete Segal space obtained by completing
$\SuntCob(n)_{\bigdot}$ (see Remark \ref{compus}).
\end{definition}

\begin{warning}\label{sablin}
The Segal space $\SuntCob(n)_{\bigdot}$ is usually {\em not} complete if
$n$ is large. To see why this is, we observe that
$$\SuntCob(n)_0 \simeq \varinjlim \SuntCob(n)_0^{V}
\simeq \varinjlim \R \times \Sub_{0}(V)$$ is a classifying space
for closed manifolds of dimension $(n-1)$ (this follows from general position
arguments: as the dimension of the vector space $V$ grows, the embedding
spaces $\Emb(M, V)$ of Remark \ref{sing} become highly connected, so the
homotopy type of the quotients $\Emb(M,V)/ \Diff(M)$ become good approximations
to the classifying spaces $\BDiff(M)$).
Consequently, we can think of paths in 
$\untCob(n)_{0}$ as corresponding to diffeomorphisms between
$(n-1)$-manifolds. By contrast, invertible $1$-morphisms in the
homotopy category $\h{\SuntCob(n)_{\bigdot}}$ are given by invertible
bordisms between $(n-1)$-manifolds. An invertible bordism $B: M \rightarrow N$ arises from a diffeomorphism of $M$ with $N$ if and only if $B$ is diffeomorphic to
a product $M \times [0,1]$. If $n \geq 6$, the {\it s-cobordism theorem} asserts that this is equivalent
to the vanishing of a certain algebraic obstruction, called the {\it Whitehead torsion} of $B$.
Since there exist bordisms with nontrivial Whitehead torsion, the Segal space
$\SuntCob(n)_{\bigdot}$ is not complete for $n \geq 6$.

The failure of the Segal space $\SuntCob(n)_{\bigdot}$ to be complete is not really problematic;
we can always pass to its completion using Construction \ref{jumpa}. For our purposes, this
is not even necessary: in this paper, we are interested in studying topological field theories,
which are given by functors from $\SuntCob(n)_{\bigdot}$ into other Segal spaces
$\calC_{\bigdot}$. If we assume that $\calC_{\bigdot}$ is complete to begin with, then
the classification of such maps does not change if we replace $\SuntCob(n)_{\bigdot}$
by its completion. In some cases, we can even obtain more refined information by
{\em not} passing to the completion.
\end{warning}

We can regard the above discussion as providing a precise definition of
$\tCob(n)$, which appeared more informally earlier (in its oriented incarnation) in
Definition \ref{spout2}. We can employ the same ideas to define the $(\infty,n)$-category
$\Bord_{n}$ using the language of $n$-fold Segal spaces.

\begin{definition}\label{cabinn}
Let $V$ be a vector space. For every $n$-tuple $k_1, \ldots, k_n$ of nonnegative integers, we
let $(\unBord_{n}^{V})_{k_1, \ldots, k_n}$ denote the collection of tuples
$(M, \{ t^1_{0} \leq \ldots \leq t^1_{k_1} \}, \ldots, \{ t^n_{0} \leq \ldots \leq t^{n}_{k_n} \} )$
where 
\begin{itemize}
\item[$(i)$] $M$ is a closed submanifold of $V \times \R^{n}$ of dimension $n$ (not necessarily compact).
\item[$(ii)$] The projection $M \rightarrow \R^{n}$ is a proper map.
\item[$(iii)$] For every subset $S \subseteq \{1, \ldots, n \}$ and every collection of integers
$\{ 0 \leq j_i \leq k_i \}_{i \in S}$, the projection $M \rightarrow \R^{n} \rightarrow \R^{S}$
does not have $( t_{j_i} )_{i \in S}$ as a critical value.
\end{itemize}
As in Definition \ref{slamm}, the set $(\unBord_{n}^{V})_{k_1, \ldots, k_n}$ can be endowed
with a topology (see \cite{soren2}) so that $\unBord_{n}^{V}$ becomes an $n$-fold simplicial space.
We let $\unBord_{n}$ denote the limit $\varinjlim \unBord_{n}^{V}$, as
$V$ ranges over the finite dimensional subspaces of $\R^{\infty}$.
\end{definition}

The $n$-fold simplicial space $\unBord_{n}$ of Definition \ref{cabinn} is {\em not} an
$n$-fold Segal space in the sense of Definition \ref{multiseg}, because we have not guaranteed that
the $(n-1)$-fold simplicial spaces $(\unBord_{n}^{V})_{0, \bigdot, \ldots, \bigdot}$ are
essentially constant. To remedy the situation, we can modify Definition \ref{cabinn}
by adding the following condition to $(i)$, $(ii)$, and $(iii)$:

\begin{itemize}
\item[$(iv)$] The projection map $M \rightarrow \R^{ \{i+1, \ldots, n \} }$ is submersive
at every point $x \in M$ whose image in $\R^{ \{i\} }$ belongs to the subset
$\{ t_{i_0}, t_{i_1}, \ldots, t_{i_{k_i}} \}$.
\end{itemize}

With this modification, we obtain an $n$-fold Segal space which we will denote by
$\untBord_{n}$. This $n$-fold Segal space is generally not complete (Warning \ref{sablin}),
but nevertheless determines an $(\infty,n)$-category:

\begin{definition}\label{que}
We let $\Bord_{n}$ denote the $(\infty,n)$-category associated to the
$n$-fold Segal space $\untBord_{n}$ defined above. More precisely, we
define $\Bord_{n}$ to be an $n$-fold complete Segal space which is obtained
from $\untBord_{n}$ by completion (see Remark \ref{kisster}).
\end{definition}

\begin{variant}
In the above discussion, we could impose the additional requirement that all manifolds
be endowed with some additional structure, such as an orientation or an $n$-framing.
In these cases, we obtain $n$-fold complete Segal spaces which we will denote by $\Bord_{n}^{\ori}$
and $\Bord_{n}^{\fr}$. 
\end{variant}

\begin{remark}
The $(\infty,n)$-category $\Bord_{n}$ produced by the construction outlined above is naturally written as a direct limit of $(\infty,n)$-categories $\Bord_{n}^{V}$, where $V$ ranges over finite dimensional subspaces of an infinite-dimensional vector space $\R^{\infty}$. The characterization of
$\Bord_{n}$ provided by the cobordism hypothesis has an unstable analogue for
the $(\infty,n)$-categories $\Bord_{n}^{V}$ (the {\it Baez-Dolan tangle hypothesis}).
We will return to the study of these embedded bordism categories in \S \ref{tangus}.
\end{remark}

\subsection{Fully Dualizable Objects}\label{capertown}

Let $\calC$ be a symmetric monoidal $(\infty,n)$-category with duals.
According to the cobordism hypothesis (Theorem \ref{swisher2}), a symmetric
monoidal functor $Z: \Bord_{n}^{\fr} \rightarrow \calC$ is determined (up to canonical isomorphism)
by the object $Z(\ast) \in \calC$. However, not every object of $\calC$ need arise in this way:
as we saw in Example \ref{1dim}, an object $V \in \Vect$ determines a $1$-dimensional topological field theory if and only if $V$ is finite dimensional. Our goal in this section is to formulate
an analogous finiteness condition in the setting of an arbitrary symmetric monoidal
$(\infty,n)$-category $\calC$. 

We begin by reformulating the condition that a vector space $V$ (over a field $k$) be finite-dimensional
in purely categorical terms. As we saw in Example \ref{1dim}, the essential feature
of finite-dimensional vector spaces is that they have a well-behaved duality theory.
If we let $V^{\vee}$ denote the dual space of $V$, then we have a canonical map
$\ev_{V}: V \otimes V^{\vee} \rightarrow k$. For any pair of vector spaces $W$ and $W'$, the map
$\ev_{V}$ determines a map
$$ \Hom( W, W' \otimes V) \rightarrow
\Hom( W \otimes V^{\vee}, W' \otimes V \otimes V^{\vee}) \rightarrow \Hom(W \otimes V^{\vee}, W').$$
If $V$ is finite-dimensional, then this map is an isomorphism. In fact, it has an inverse given by the composition
$$ \Hom( W \otimes V^{\vee}, W') \rightarrow \Hom( W \otimes V^{\vee} \otimes V, W' \otimes V)
\rightarrow \Hom(W, W' \otimes V),$$
where the second map is given by composition with the a {\it coevaluation}
$\coev_{V}: k \rightarrow V^{\vee} \otimes V$ (which is well-defined whenever $V$ is finite-dimensional).
The assertion that these constructions are inverse to one another rests on a compatibility between
the maps $\ev_{V}$ and $\coev_{V}$ which can be described axiomatically as follows:

\begin{definition}\label{frogman}
Let $\calC$ be a monoidal category: that is, a category equipped with a tensor product operation
$\otimes: \calC \times \calC \rightarrow \calC$ which is unital and associative (but not necessarily commutative) up to coherent isomorphism. Let $V$ be an object of $\calC$. We will say that
an object $V^{\vee}$ is a {\it right dual} of $V$ if there exist maps
$$\ev_{V}: V \otimes V^{\vee} \rightarrow {\bf 1} \quad \quad \coev_{V}: {\bf 1} \rightarrow V^{\vee} \otimes V$$
such that the compositions
$$ V \stackrel{ \id_{V} \otimes \coev_{V} }{\longrightarrow} V \otimes V^{\vee} \otimes V
\stackrel{\ev_{V} \otimes \id_{V}}{\longrightarrow} V$$
$$ V^{\vee}  \stackrel{ \coev_{V} \otimes \id_{V^{\vee}}}{\longrightarrow} V^{\vee} \otimes V \otimes V^{\vee} \stackrel{ \id_{V^{\vee}} \otimes \ev_{V}}{\rightarrow} V^{\vee}$$
coincide with $\id_{V}$ and $\id_{V^{\vee}}$, respectively. 
In this case, we will also say that $V$ is a {\it left dual} of $V^{\vee}$.
\end{definition}

\begin{remark}
If $\calC$ is a symmetric monoidal category, then the relationship described in
Definition \ref{frogman} is symmetric in $V$ and $V^{\vee}$; in this case, we will simply say that
$V^{\vee}$ is a {\it dual} of $V$.
\end{remark}

\begin{remark}
Let $V$ be an object of a monoidal category $\calC$. Then left and right duals of $V$ are uniquely
determined up to (unique) isomorphism if they exist. This is a consequence of a more general
assertion regarding adjoint morphisms in a $2$-category which we will explain below (see Example \ref{kist}).
\end{remark}

\begin{example}
An object $V \in \Vect(k)$ has a dual (in the sense of Definition \ref{frogman}) if and only if $V$
is finite-dimensional. For any vector space $V$, we can define a dual space $V^{\vee}$ and
an evaluation map $\ev_{V}: V \otimes V^{\vee} \rightarrow k$, but a compatible coevaluation
$\coev_{V}: k \rightarrow V^{\vee} \otimes V$ can only be defined in the finite-dimensional case.
\end{example}

\begin{definition}\label{dua}
Let $\calC$ be a symmetric monoidal $(\infty,n)$-category. We will say that
an object $C \in \calC$ is {\it dualizable} if it admits a dual when regarded as
an object of the homotopy category $\h{\calC}$.
\end{definition}

The requirement that an object $C$ of a symmetric monoidal $(\infty,n)$-category be dualizable is a natural finiteness condition which can be formulated in completely categorical terms. Moreover, it is obviously
a necessary condition for the existence of a field theory $Z: \Bord^{\fr}_{n} \rightarrow \calC$ with
$Z(\ast) = C$ (a dual of $C$ can be given by evaluating $Z$ on a point with a different orientation,
as we saw in \S \ref{kob}). If $n=1$, this condition is also sufficient: when $\calC$ is an ordinary category, this follows from the argument sketched in Example \ref{1dim} (the general case is more difficult and has interesting consequences, as we will explain in \S \ref{cost}). For $n > 1$, we will need a stronger condition on $C$ to guarantee the existence of $Z$. This condition has a similar flavor but is higher-categorical in nature, involving a demand for duals not only of objects in $\calC$ but also for morphisms.
Before we can formulate it, we need to embark on a brief digression.

One of the most basic examples of a $2$-category is the 
$2$-category $\Cat$ of (small) categories: the objects of
$\Cat$ are categories, the $1$-morphisms of $\Cat$ are functors, and
the $2$-morphisms of $\Cat$ are natural transformations between functors.
We can regard classical category theory as the study of the $2$-category $\Cat$.
It turns out that many of the fundamental concepts of category theory can be generalized
to arbitrary $2$-categories. We now describe one example of this phenomenon, coming from the theory of adjoint functors.

Let $\calC$ and $\calD$ be categories, and let $F: \calC \rightarrow \calD$
and $G: \calD \rightarrow \calC$ be functors. An {\it adjunction} between $F$
and $G$ is a collection of bijections
$$ \phi_{C,D}: \Hom_{\calD}( F(C), D) \simeq \Hom_{\calD}( C, G(D) ),$$
which depend functorially on $C \in \calC$ and $D \in \calD$. In this situation, we say that
$F$ is {\it left adjoint} to $G$ and that $G$ is {\it right adjoint} to $F$; by Yoneda's lemma, 
either $F$ or $G$ determines the other up to canonical isomorphism.

Suppose we are given an adjunction $\{ \phi_{C,D} \}_{C \in \calC, D \in \calD}$ between
$F$ and $G$. Taking $D = F(C)$ and applying
$\phi_{C,D}$ to the identity map from $D$ to itself, we get a canonical map
$u_C: C \rightarrow (G \circ F)(C)$, which depends functorially on $C$: we can
regard the collection of maps $\{ u_C \}_{C \in \calC}$ as a natural transformation
of functors $u: \id_{\calC} \rightarrow G \circ F$. Conversely, if we are given
a natural transformation $u: \id_{\calC} \rightarrow G \circ F$, we get a canonical map
$$ \Hom_{\calD}(F(C), D) \rightarrow
\Hom_{\calC}( (G \circ F)(C), G(D) ) \stackrel{ \circ u_{C}}{\rightarrow} 
\Hom_{\calC}(C, G(D) ).$$
for every pair of objects $C \in \calC$, $D \in \calD$. If each of these maps is bijective, then
we obtain an adjunction between $F$ and $G$; in this case, we will say that $u$ is
the {\it unit} of the adjunction.

We also have the dual notion of a {\it counit} for an adjunction: given a pair of functors
$F: \calC \rightarrow \calD$ and $G: \calD \rightarrow \calC$, a natural transformation
$v: F \circ G \rightarrow \id_{\calD}$ determines a map
$$ \Hom_{\calC}(C,G(D)) \rightarrow \Hom_{\calD}(F(C), (F \circ G)(D))
\stackrel{v_{D} \circ}{\rightarrow} \Hom_{\calD}(F(C), D)$$
for every pair of objects $C \in \calC$, $D \in \calD$. If each of these maps is bijective,
then we obtain an adjunction between 
between a pair of functors $F: \calC \rightarrow \calD$ and $G: \calD \rightarrow \calC$
determines a natural transformation $v: F \circ G \rightarrow \id_{\calD}$. In this case,
we say that $v$ is the {\it counit} of the corresponding adjunction.

If we are given an adjunction between a pair of functors $F: \calC \rightarrow \calD$
and $G: \calD \rightarrow \calC$, then the unit $u: \id_{\calC} \rightarrow G \circ F$
and $v: F \circ G \rightarrow \id_{\calC}$ are compatible in the following sense:
\begin{itemize}
\item[$(\ast)$]
The composite transformations
$$ F = F \circ \id_{\calC} \stackrel{ \id \times u}{\longrightarrow} F \circ G \circ F \stackrel{v \times \id}{\longrightarrow}
\id_{\calD} \circ F = F$$
$$ G = \id_{\calC} \circ G \stackrel{u \times \id}{\longrightarrow}
G \circ F \circ G \stackrel{\id \times v}{\longrightarrow} G \circ \id_{\calD} = G$$
coincide with the respective identity maps on $F$ and $G$.
\end{itemize} 
Conversely, if we are given an arbitrary
pair of natural transformations $u: \id_{\calC} \rightarrow G \circ F$,
$v: F \circ G \rightarrow \id_{\calD}$ satisfying $(\ast)$, then the maps
$$ \Hom_{\calD}(F(C), D) \rightarrow \Hom_{\calC}( C, G(D)) \quad \quad
\Hom_{\calC}(C,G(D)) \rightarrow \Hom_{\calD}( F(C), D)$$
are mutually inverse. Consequently, $u$ is the unit of an adjunction between
$F$ and $G$, and $v$ is the counit of the same adjunction. 

This motivates the following definition:

\begin{definition}\label{cughli}
Let $\calE$ be an arbitrary $2$-category. Suppose we are given a pair of objects
$X,Y \in \calE$ and a pair of $1$-morphisms $f: X \rightarrow Y$ and $g: Y \rightarrow X$.
We will say that a $2$-morphism $u: \id_{X} \rightarrow g \circ f$ is the
{\it unit of an adjunction} between $f$ and $g$ if there exists another $2$-morphism
$v: f \circ g \rightarrow \id_Y$ such that the compositions
$$ f \simeq f \circ \id_{X} \stackrel{ \id \times u}{\longrightarrow} f \circ g \circ f \stackrel{v \times \id}{\longrightarrow} 
\id_{Y} \circ f \simeq f$$
$$ g \simeq \id_{\calC} \circ g \stackrel{u \times \id}{\longrightarrow}
g \circ f \circ g \stackrel{\id \times v}{\longrightarrow} g \circ \id_{\calD} \simeq g$$
both coincide with the identity. In this case, we will also say that $v$ is the
{\it counit of an adjunction}, and that either $u$ or $v$ {\it exhibit $f$ as a left adjoint to
$g$} and {\it exhibit $g$ as a right adjoint to $f$}.
\end{definition}

To get a feeling for the meaning of Definition \ref{cughli}, it is helpful to consider some
examples of $2$-categories other than $\Cat$:

\begin{example}\label{kist}
Recall that a category with a single object is essentially the same thing as a monoid. This
observation can be generalized to higher category theory. For example, suppose that $\calC$ is a monoidal category. We can associate to $\calC$ a $2$-category $B \calC$ as follows:
\begin{itemize}
\item The $2$-category $B \calC$ has only a single object $\ast$.
\item The category of $1$-morphisms $\OHom_{B \calC}( \ast, \ast)$ is $\calC$.
\item The composition law 
$$\OHom_{B \calC}(\ast, \ast) \times \OHom_{B \calC}(\ast, \ast) \rightarrow \OHom_{B \calC}(\ast, \ast)$$
is given by the tensor product on $\calC$.
\end{itemize}
Conversely, if $\calE$ is {\em any} $2$-category with a distinguished object $\ast$, then
$\calC = \OHom_{\calE}(\ast, \ast)$ has the structure of a monoidal category, and there
is a canonical functor $B \calC \rightarrow \calE$, which is an equivalence of $2$-categories
if and only if every object of $\calE$ is equivalent to $\ast$. We may informally summarize this
discussion as follows: a monoidal category is essentially the same thing as a $2$-category
with a single (distinguished) object.

The above construction sets up a dictionary which allows us to translate concepts from the
theory of $2$-categories into concepts in the theory of monoidal categories. In particular,
we note that an object $X \in \calC$ is right dual to an object $Y \in \calC$ (in the sense of Definition \ref{frogman}) if and only if $X$ is right adjoint to $Y$ when both are viewed as $1$-morphisms in $B \calC$ (in the sense of Definition \ref{cughli}).
\end{example}

Suppose we are given a pair of $1$-morphisms $f: X \rightarrow Y$ and
$g: Y \rightarrow X$ in a $2$-category $\calC$, together with a $2$-morphism $u: \id_{X} \rightarrow g \circ f$. If $u$ is the unit of an adjunction between $f$ and $g$, then a compatible counit
$v: f \circ g \rightarrow \id_{Y}$ is uniquely determined. In fact, it is uniquely determined by either
one of the compatibilities demanded by Definition \ref{cughli}. To see this, it is convenient
to break Definition \ref{cughli} into two parts. We will say that a 
$2$-morphism $v: f \circ g \rightarrow \id_{Y}$ is {\it upper compatible} with
$u$ if the composition
$$ f \simeq f \circ \id_{X} \stackrel{ \id \times u}{\longrightarrow} f \circ g \circ f \stackrel{v \times \id}{\longrightarrow}
\id_{Y} \circ f \simeq f$$
coincides with $\id_{f}$. Similarly, we will say that $v$ is {\it lower compatible} 
with $u$ if the composition
$$ g \simeq \id_{\calC} \circ g \stackrel{u \times \id}{\longrightarrow}
g \circ f \circ g \stackrel{\id \times v}{\longrightarrow} g \circ \id_{\calD} \simeq g$$
coincides with $\id_{g}$. We then have the following result, which we will use in \S \ref{kuji}:

\begin{lemma}\label{simptom}
Let $f: X \rightarrow Y$ and $g: Y \rightarrow X$ be $1$-morphisms in a $2$-category
$\calC$, and let $u: \id_{X} \rightarrow g \circ f$ be a $2$-morphism. Suppose that
there exist a $2$-morphism $v: f \circ g \rightarrow \id_{Y}$ which is upper compatible with
$u$, and another $2$-morphism $v': f \circ g \rightarrow \id_{Y}$ which is lower compatible with
$u$. Then $v = v'$, so that $u$ is the unit of an adjunction between $f$ and $g$.
\end{lemma}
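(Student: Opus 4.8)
The plan is to exploit the classical "zig-zag" computation: when $v$ is upper compatible and $v'$ is lower compatible with the same unit $u$, one writes down a single composite $2$-morphism which can be simplified to $v$ using the compatibility of $v'$ with $u$, and to $v'$ using the compatibility of $v$ with $u$. This is exactly the argument that shows, in the theory of adjunctions, that a triangle identity pins down the counit uniquely given the unit.

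First I would set up the relevant composites. Consider the $2$-morphism $f \circ g \to f \circ g$ built by expanding the middle: insert $u$ into the $g \circ f$ slot to obtain
$$ f \circ g \xrightarrow{\;\id_f \times u \times \id_g\;} f \circ g \circ f \circ g \xrightarrow{\;v' \times v\;} \id_Y \circ \id_Y \simeq \id_Y,$$
or more precisely I would contemplate the composite $w\colon f\circ g \to \id_Y$ given by $f\circ g \xrightarrow{\id_f\times u\times \id_g} f\circ g\circ f\circ g \xrightarrow{v\times\id_{f\circ g}} \id_Y\circ f\circ g \simeq f\circ g \xrightarrow{v'} \id_Y$, and separately the composite obtained by applying $v'$ to the first two factors first. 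Using the interchange law in the $2$-category $\calC$ (the compatibility between horizontal and vertical composition of $2$-morphisms), these two ways of evaluating the quadruple composite agree. Now evaluating one way: the portion $f \xrightarrow{\id\times u} f\circ g\circ f \xrightarrow{v\times\id} f$ is $\id_f$ by upper compatibility of $v$, so that grouping collapses the composite to $v'$. Evaluating the other way: the portion $g \xrightarrow{u\times\id} g\circ f\circ g \xrightarrow{\id\times v'} g$ is $\id_g$ by lower compatibility of $v'$, so that grouping collapses the composite to $v$. Hence $v = v'$.

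Once $v = v'$ is established, this common $2$-morphism is simultaneously upper and lower compatible with $u$, so by Definition \ref{cughli} it is the counit of an adjunction between $f$ and $g$, and $u$ is the corresponding unit; this gives the final clause of the statement.

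The main obstacle is purely bookkeeping: making the coherence isomorphisms (the associators and unitors $f\circ\id_X\simeq f$, $\id_Y\circ f\simeq f$, etc.) explicit enough that the interchange-law argument genuinely identifies the two evaluations of the quadruple composite, rather than merely identifying them "up to coherent isomorphism." In a strict $2$-category this is immediate; for a general (weak) $2$-category one should either invoke a coherence theorem to reduce to the strict case, or carefully track that all the inserted structural isomorphisms cancel in pairs. I would handle this by working in the strictified model and remarking that coherence makes the reduction harmless — this is the step where a fully rigorous writeup would expend most of its effort, though conceptually it is routine.
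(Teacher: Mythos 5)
Your argument is essentially identical to the paper's: both form the composite $w = v' \circ (v \times \id_{f \circ g}) \circ (\id_f \times u \times \id_g)\colon f \circ g \to \id_Y$, then evaluate it in two ways using the interchange law applied to $v \times v'$, collapsing the $f$-zigzag via upper compatibility of $v$ to get $w = v'$ and collapsing the $g$-zigzag via lower compatibility of $v'$ to get $w = v$. One minor slip in the prose: where you say ``applying $v'$ to the first two factors first'' you should say the \emph{last} two factors (the second copy of $f \circ g$), since the interchange factorization you actually then use is $(v\times \id)\circ(\id\times v')$ rather than $v \circ (v'\times\id)$; the computation you carry out is the correct one.
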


Lemma \ref{simptom} can be regarded as an analogue of the following classical observation:
let $G$ be an associative monoid containing an element $f$ which admits both a left inverse
$g$ and a right inverse $g'$. Then $g = g(fg') = (gf)g' = g'$
so that $f$ is invertible. The proof of Lemma \ref{simptom} is essentially the same, but slightly more notationally involved:

\begin{proof}
Let $w: f \circ g \rightarrow \id_{Y}$ be the $2$-morphism in $\calC$ defined by the composition
$$ f \circ g \simeq f \circ \id_{X} \circ g
\stackrel{u}{\rightarrow} f \circ (g \circ f) \circ g
\simeq (f \circ g) \circ (f \circ g) \stackrel{v \times v'}{\rightarrow} \id_{Y} \circ \id_{Y} \simeq \id_{Y}.$$
The $2$-morphism $v \times v'$ can be factored as a composition
$$ (f \circ g) \circ (f \circ g) \stackrel{ v \times \id}{\rightarrow} \id_{Y} \circ (f \circ g)
\stackrel{v'}{\rightarrow} \id_{Y}.$$ It follows that $w$ agrees with the composition
$v' \circ (w' \times \id_{g})$, where $w'$ is the composition
$$ f \simeq f \circ \id_{X} \stackrel{ \id \times u}{\rightarrow} f \circ g \circ f \stackrel{v \times \id}{\rightarrow}
\id_{Y} \circ f \simeq f.$$
Since $v$ is upper compatible with $u$, we conclude that $w' = \id_{f}$ so that $w = v'$.
The same reasoning shows that $w = v$, so that $v = v'$ by transitivity.
\end{proof}

\begin{example}
Let $f: X \rightarrow Y$ be an invertible $1$-morphism in a $2$-category $\calC$, and let
$g: Y \rightarrow X$ denote its inverse. Then we can choose isomorphisms
$$\id_X \simeq g \circ f \quad \quad f \circ g \simeq \id_{Y}$$
which form the unit and counit for an adjunction between $f$ and $g$.
In particular, we can identify $g$ with a right adjoint to $f$; the same argument allows us to identify $g$ with a left adjoint to $f$.
\end{example}

Conversely, suppose that $f$ and $g$ are adjoint $1$-morphisms in $\calC$ such that the unit
and counit maps $\id_{X} \rightarrow g \circ f$ and $f \circ g \rightarrow \id_Y$ are isomorphisms.
Then these maps exhibit $g$ as an inverse to $f$, up to isomorphism. This proves the following:

\begin{proposition}\label{spink}
Let $\calC$ be a $2$-category in which every $2$-morphism is invertible, and let
$f$ be a $1$-morphism in $\calC$. The following conditions are equivalent:
\begin{itemize}
\item[$(1)$] The morphism $f$ is invertible.
\item[$(2)$] The morphism $f$ admits a left adjoint.
\item[$(3)$] The morphism $f$ admits a right adjoint.
\end{itemize}
\end{proposition}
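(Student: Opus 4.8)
The plan is to prove the chain of implications $(1) \Rightarrow (2)$, $(1) \Rightarrow (3)$, $(2) \Rightarrow (1)$, and $(3) \Rightarrow (1)$, exploiting that in $\calC$ every $2$-morphism is invertible. The implications $(1) \Rightarrow (2)$ and $(1) \Rightarrow (3)$ are already essentially recorded in the Example immediately preceding the statement: if $f$ is invertible with inverse $g$, one chooses isomorphisms $\id_X \simeq g \circ f$ and $f \circ g \simeq \id_Y$ (which exist because $g$ is a genuine inverse up to isomorphism). One must check these can be arranged to satisfy the two compatibilities of Definition \ref{cughli}; this is a standard ``triangle identity'' manipulation, and since all $2$-morphisms are invertible there is no obstruction to correcting one of the two chosen isomorphisms so that both identities hold simultaneously. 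This exhibits $g$ as both a left and a right adjoint of $f$, giving $(1) \Rightarrow (2)$ and $(1) \Rightarrow (3)$ at once.

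For the converse directions, suppose $f: X \to Y$ admits a left adjoint; that is, there is $g: Y \to X$ together with a unit $u: \id_X \to g \circ f$ and counit $v: f \circ g \to \id_Y$ satisfying the two compatibilities of Definition \ref{cughli}. The key point is that in $\calC$ the morphism $u$, being a $2$-morphism, is automatically invertible, and likewise $v$. Now I would read the two triangle identities
$$ f \simeq f \circ \id_X \stackrel{\id \times u}{\longrightarrow} f \circ g \circ f \stackrel{v \times \id}{\longrightarrow} \id_Y \circ f \simeq f$$
$$ g \simeq \id_X \circ g \stackrel{u \times \id}{\longrightarrow} g \circ f \circ g \stackrel{\id \times v}{\longrightarrow} g \circ \id_Y \simeq g$$
as asserting that certain whiskered composites equal identities. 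Whiskering the invertible $2$-morphism $v: f \circ g \to \id_Y$ on either side by $f$ or $g$ yields an invertible $2$-morphism, and similarly for $u$; combined with the triangle identities this forces both $u$ and $v$ to be not just invertible but to exhibit $g \circ f \simeq \id_X$ and $f \circ g \simeq \id_Y$ as the ``correct'' isomorphisms, so that $g$ is a two-sided inverse of $f$ up to isomorphism. Hence $f$ is invertible, giving $(2) \Rightarrow (1)$. The argument for $(3) \Rightarrow (1)$ is formally dual: a right adjoint $g$ of $f$ comes with unit $\id_Y \to f \circ g$ and counit $g \circ f \to \id_X$, again automatically invertible, and the same whiskering argument shows $g$ inverts $f$.

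The main obstacle — such as it is — is purely bookkeeping: making precise that the abstract unit/counit $2$-morphisms, once known to be invertible, genuinely serve as the structure isomorphisms witnessing $g$ as an inverse, rather than merely being isomorphisms between $g \circ f$ and $\id_X$ that fail to interact correctly with composition. This is handled by the triangle identities themselves, which are exactly the coherence one needs; no delicate computation is required because invertibility of all $2$-morphisms removes every potential failure of the relevant composites to be identities. I would present $(2) \Rightarrow (1)$ in full and remark that $(3) \Rightarrow (1)$ follows by the evident duality (reversing the direction of all $1$-morphisms, equivalently passing to $\calC^{\op}$).
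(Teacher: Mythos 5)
Your proof is correct and follows essentially the same route as the paper: the forward implications come from the observation (recorded in the example you cite) that an invertible $1$-morphism admits its inverse as a two-sided adjoint, and the converse implications come from the fact that in this $\calC$ the unit and counit $2$-morphisms are automatically invertible, hence exhibit $g$ as an inverse of $f$. One small simplification you could make: for $(2) \Rightarrow (1)$ and $(3) \Rightarrow (1)$, no whiskering argument or appeal to the triangle identities is needed at all — the mere existence of isomorphisms $\id_X \simeq g \circ f$ and $f \circ g \simeq \id_Y$ (which the invertibility of $u$ and $v$ immediately gives) is by definition what it means for $f$ to be invertible with inverse $g$. Likewise, the ``correction'' of one of the two isomorphisms so that the triangle identities hold in the $(1) \Rightarrow (2), (3)$ direction is a trick available in any $2$-category and does not rely on all $2$-morphisms being invertible.
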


\begin{definition}\label{hotone}
We will say that a $2$-category $\calE$ {\it has adjoints for $1$-morphisms}
if the following conditions are satisfied:
\begin{itemize}
\item[$(1)$] For every $1$-morphism $f: X \rightarrow Y$ in $\calE$, there
exists another $1$-morphism $g: Y \rightarrow X$ and a $2$-morphism
$u: \id_{X} \rightarrow g \circ f$ which is the unit of an adjunction.
\item[$(2)$] For every $1$-morphism $g: Y \rightarrow X$ in $\calE$, there
exists another $1$-morphism $f: X \rightarrow Y$ and a $2$-morphism
$u: \id_{X} \rightarrow g \circ f$ which is the unit of an adjunction.
\end{itemize}
\end{definition}

\begin{example}\label{list}
Let $k$ be a field, and consider the category $\Vect(k)$ of vector spaces over $k$,
endowed with the monoidal structure given by tensor products of vector spaces.
Then linear map $e: V \otimes W \rightarrow k$ is the evaluation map for a duality
if and only if it determines a perfect pairing between $V$ and $W$: that is, $V$ and
$W$ are finite-dimensional, and $e$ induces an isomorphism of $V$ with dual space of $W$.
\end{example}

We now wish to generalize Definition \ref{hotone} to the setting of higher categories.

\begin{definition}\label{adjman}
Let $\calC$ be an $(\infty,n)$-category for $n \geq 2$, and let $\htt{\calC}$ denote its {\it homotopy $2$-category}, defined as follows:
\begin{itemize}
\item The objects of $\htt{\calC}$ are the objects of $\calC$.
\item The $1$-morphisms of $\htt{\calC}$ are the $1$-morphisms of $\calC$.
\item Given a pair of objects $X,Y \in \calC$ and a pair of $1$-morphisms
$f,g: X \rightarrow Y$, we define a $2$-morphism from $f$ to $g$ in $\htt{\calC}$ to be an isomorphism
class of $2$-morphisms from $f$ to $g$ in $\calC$.
\end{itemize}
We will say that $\calC$ {\it admits adjoints for $1$-morphisms} if the homotopy $2$-category $\htt{\calC}$ admits adjoints for $1$-morphisms, in the sense of Definition \ref{hotone}. For $1 < k < n$, we will say that $\calC$ {\it admits adjoints for $k$-morphisms} if, for any pair of objects $X,Y \in \calC$, the $(\infty,n-1)$-category $\OHom_{\calC}(X,Y)$ admits
adjoints for $(k-1)$-morphisms. We will say that an $(\infty,n)$-category $\calC$ {\it has adjoints} if $\calC$ admits adjoints for
$k$-morphisms for all $0 < k < n$.
\end{definition}

\begin{remark}\label{cospin}
Let $\calC$ be an $(\infty,n)$-category. If every $k$-morphism in $\calC$ is invertible, then
$\calC$ admits adjoints for $k$-morphisms. The converse holds provided that
every $(k+1)$-morphism in $\calC$ is invertible (this follows from Proposition \ref{spink}).
\end{remark}

\begin{warning}
The condition that an $(\infty,n)$-category $\calC$ have adjoints depends on the choice
of $n$. We can always choose to view $\calC$ as an $(\infty,n+1)$-category, in which all
$(n+1)$-morphisms are invertible. However, $\calC$ will never have adjoints for
$n$-morphisms unless $\calC$ is an $\infty$-groupoid.
\end{warning}

In the case of a monoidal $(\infty,n)$-category, we can demand slightly more:

\begin{definition}
Let $\calC$ be a monoidal category. We will say that $\calC$ {\it has duals for objects}
if the $2$-category $B \calC$ admits adjoints for $1$-morphisms: in other words, if every
object $X \in \calC$ has both a left and a right dual.

More generally, suppose that $\calC$ is an $(\infty,n)$-category equipped with a monoidal structure.
Let $\h{\calC}$ denote its homotopy category: the objects of $\h{\calC}$ are the objects of
$\calC$, and given a pair of objects $X,Y \in \calC$ we let $\Hom_{ \h{\calC}}(X,Y)$ denote the
set of isomorphism classes of objects in the $(\infty,n-1)$-category $\OHom_{\calC}(X,Y)$.
The homotopy category $\h{\calC}$ inherits a monoidal structure from the monoidal structure
on $\calC$. We will say that $\calC$ {\it has duals for objects} if the ordinary category
$\h{\calC}$ has duals for objects.

We will say that a monoidal $(\infty,n)$-category {\it has duals} if $\calC$ has duals for
objects and $\calC$ has adjoints in the sense of Definition \ref{adjman}.
\end{definition}

\begin{remark}
Let $\calC$ be a monoidal $(\infty,n)$-category. The construction of Example \ref{kist} can
be generalized to produce an $(\infty,n+1)$-category $B \calC$ having only a single object
(see Remark \ref{delop}). Then $\calC$ has duals if and only if $B \calC$ has adjoints.
\end{remark}

\begin{example}\label{stayn}
Let $\calC$ be a monoidal $(\infty,n)$-category. We say that an object
$X \in \calC$ is {\it invertible} if it is invertible when regarded as a $1$-morphism
in $B \calC$: in other words, if there exists another object $X^{-1} \in \calC$ such that
the tensor products $X \otimes X^{-1}$ and $X^{-1} \otimes X$ are isomorphic to the unit object of $\calC$. Every invertible object $X \in \calC$ admits left and right duals (both given by $X^{-1}$), and the converse holds if every $1$-morphism in $\calC$ is invertible (Proposition \ref{spink}). 

A {\it Picard $\infty$-groupoid} is a symmetric monoidal $(\infty,0)$-category $\calC$ such that
every object of $\calC$ is invertible. Using Remark \ref{cospin}, we deduce that a Picard
$\infty$-groupoid has duals when regarded as an $(\infty,n)$-category for {\em any} $n \geq 0$.
Conversely, if $\calC$ is a symmetric monoidal $(\infty,n)$-category which has duals when
regarded as an $(\infty,n+1)$-category, then $\calC$ is a Picard $\infty$-groupoid.
\end{example}

\begin{claim}\label{swist}
Let $\calC$ be a symmetric monoidal $(\infty,n)$-category. Then there exists another
symmetric monoidal $(\infty,n)$-category $\calC^{\fd}$ and a symmetric monoidal functor $i: \calC^{\fd} \rightarrow \calC$
with the following properties:
\begin{itemize}
\item[$(1)$] The symmetric monoidal $(\infty,n)$-category $\calC^{\fd}$ has duals.
\item[$(2)$] For any symmetric monoidal $(\infty,n)$-category $\calD$ with duals
and any symmetric monoidal functor $F: \calD \rightarrow \calC$, there exists
a symmetric monoidal functor $f: \calD \rightarrow \calC^{\fd}$ and an isomorphism $F \simeq i \circ f$;
moreover, $f$ is uniquely determined up to isomorphism.
\end{itemize}
\end{claim}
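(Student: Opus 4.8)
The plan is to produce $\calC^{\fd}$ as the \emph{largest} symmetric monoidal sub-$(\infty,n)$-category of $\calC$ which has duals, to show that a largest one exists, and to deduce $(2)$ from maximality. Existence is formal once one has the key closure property: the symmetric monoidal sub-$(\infty,n)$-category of $\calC$ generated by a family $\{\calD_i\}$ of symmetric monoidal sub-$(\infty,n)$-categories, each of which has duals, again has duals. Granting this, the join --- in the complete lattice of symmetric monoidal sub-$(\infty,n)$-categories of $\calC$ --- of \emph{all} symmetric monoidal sub-$(\infty,n)$-categories of $\calC$ with duals is itself one with duals and is by construction the largest; I take it to be $\calC^{\fd}$, with $i$ the inclusion, and this immediately gives $(1)$.

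To establish the closure property I would unwind ``has duals'' via Definition \ref{adjman} into: every object admits a dual (a condition on the homotopy category), and every $k$-morphism admits a left and a right adjoint for $1\le k<n$ (conditions on the mapping $(\infty,n-1)$-categories). Let $\calD=\langle\bigcup_i\calD_i\rangle$. Every object of $\calD$ is built by iterated tensor products from objects of the $\calD_i$; since the unit is self-dual and a dual of $X\otimes Y$ is $Y^{\vee}\otimes X^{\vee}$ with evaluation and coevaluation assembled from those of the factors together with the symmetric monoidal coherences, every object of $\calD$ acquires a dual using only cells and coherence data already present in the $\calD_i\subseteq\calD$, and the triangle identities, holding in each $\calD_i$, continue to hold in $\calD$. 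Likewise every $k$-morphism of $\calD$ is obtained from $k$-morphisms of the various $\calD_i$ by composition and by whiskering with lower cells of the $\calD_i$; composition sends morphisms-with-adjoints to morphisms-with-adjoints (the adjoint of a composite is the reverse composite of the adjoints, and identities are self-adjoint), and whiskering is the application of a functor, which preserves adjunctions; moreover the units and counits witnessing these adjunctions are themselves built from those in the $\calD_i$ by the same operations, so they, and the triangle identities relating them, live in $\calD$. Hence $\calD$ has duals.

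Property $(2)$ is then formal. Given a symmetric monoidal $(\infty,n)$-category $\calD$ with duals and a symmetric monoidal functor $F\colon\calD\to\calC$, the symmetric monoidal sub-$(\infty,n)$-category of $\calC$ generated by the image of $F$ has duals: it is generated by the images of the duality and adjunction data of $\calD$, and a symmetric monoidal functor carries a right dual of an object to a right dual of its image and --- passing to homotopy $2$-categories and to mapping categories --- a right adjoint of a $k$-morphism to a right adjoint of its image, so the argument of the previous paragraph applies. Thus $F$ factors through $\calC^{\fd}$, say $F\simeq i\circ f$. For uniqueness, note that $\calC^{\fd}$, being the largest symmetric monoidal sub-$(\infty,n)$-category of $\calC$ with duals, contains every equivalence of $\calC$ whose source and target are cells of $\calC^{\fd}$ (adjoining such an equivalence together with its inverse and the witnessing coherences yields a symmetric monoidal sub-$(\infty,n)$-category which still has duals, hence equals $\calC^{\fd}$); consequently any isomorphism $i\circ f\simeq i\circ f'$ restricts to an isomorphism $f\simeq f'$ in $\calC^{\fd}$.

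The main obstacle is the closure property of the second paragraph: making precise, in the model of \S\ref{bigseg}, how the cells of a generated symmetric monoidal sub-$(\infty,n)$-category decompose, and verifying that dualizability, adjointness, and all the witnessing higher morphisms propagate through tensor products, composition, and whiskering simultaneously at every level. One alternative, bypassing the explicit construction, is to observe that symmetric monoidal $(\infty,n)$-categories form a presentable $\infty$-category in which the full subcategory of those with duals is accessible and closed under colimits, so the inclusion admits a right adjoint $\calC\mapsto\calC^{\fd}$ by the adjoint functor theorem; but identifying $\calC^{\fd}$ with an honest sub-$(\infty,n)$-category of $\calC$, as later applications of the cobordism hypothesis will want, comes back down to the same assembly.
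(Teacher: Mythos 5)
The paper offers no proof of this Claim; immediately after its statement it just describes $\calC^{\fd}$ in a single sentence as obtained from $\calC$ ``by repeatedly discarding $k$-morphisms which do not admit left and right adjoints (and all objects which do not admit duals).'' You compute the same object from the other side: the paper prunes $\calC$ from above, and has to re-prune (removing the adjoint $g = f^{R}$ of a surviving $f$ can leave $f$ without a right adjoint in the result), while you take the join from below of all symmetric monoidal sub-$(\infty,n)$-categories that already have duals. The two constructions produce the same maximal subcategory and hinge on the same lemma: the paper needs the transfinite pruning to converge to something with duals, and you need the join to have duals. Your bottom-up phrasing makes that lemma --- adjoints and duals are inherited along composition, tensoring, and whiskering --- cleaner to isolate, and it is the device the paper itself reaches for in the proof of Proposition \ref{swip} (``the largest subcategory of $\calC$ such that every $1$-morphism admits both a right and a left adjoint'').

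One step of your closure argument is elided in a way worth flagging. For $\calD = \langle \bigcup_i \calD_i \rangle$ to have duals, it is not enough that each generated $k$-morphism $f$ have a candidate adjoint $g$ in $\calD$ with unit $u$ and counit $v$ in $\calD$: these are $(k{+}1)$-morphisms of $\calD$, and when $k+1 < n$ they must themselves admit left and right adjoints in $\calD$ in order for $\calD$ to satisfy Definition \ref{adjman}. You observe that $u$ and $v$ ``live in $\calD$,'' but not that they again admit adjoints there. This is true --- the unit and counit of a composed or tensored adjunction are whiskered composites of units and counits coming from the $\calD_i$, which do admit adjoints, and whiskering, composing, and tensoring preserve the existence of adjoints --- but it must be fed back into the induction; it is the closure lemma applied one categorical level down, not an automatic feature. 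You have also correctly located the genuine residual obstacle: making precise, in the complete-Segal-space model, what a sub-$(\infty,n)$-category and a generated sub-$(\infty,n)$-category are. This is real work, and the paper's one-sentence sketch finesses exactly the same point.
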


It is clear that the monoidal $(\infty,n)$-category $\calC^{\fd}$ is determined up to equivalence
by the properties required by Claim \ref{swist}. 

\begin{example}
Let $\calC$ be a symmetric monoidal $(\infty,1)$-category. Then we can identify
$\calC^{\fd}$ with the full subcategory of $\calC$ spanned by the dualizable objects of $\calC$.
\end{example}

In general, the passage from a symmetric monoidal $(\infty,n)$-category $\calC$ to its
fully dualizable part $\calC^{\fd}$ can be accomplished by repeatedly discarding $k$-morphisms which do not admit left and right adjoints (and all objects which do not admit duals).

\begin{definition}\label{justic}
Let $\calC$ be a monoidal $(\infty,n)$-category. We will say that an object
$X \in \calC$ is {\it fully dualizable} if it belongs to the essential image of the 
functor $\calC^{\fd} \rightarrow \calC$. 
\end{definition}

\begin{warning}
The terminology of Definition \ref{justic} is potentially ambiguous, because the notion of a fully dualizable object of an $(\infty,n)$-category $\calC$ depends on $n$. For example, a fully dualizable object of $\calC$ will almost never remain fully dualizable if we regard $\calC$ as an symmetric monoidal $(\infty,n+1)$-category.
\end{warning}

\begin{example}\label{caperr}
For each $n \geq 0$, the symmetric monoidal $(\infty,n)$-category $\Bord_{n}$ has duals.
Every $k$-morphism $f: X \rightarrow Y$ in $\Bord_{n}$ can be identified with an oriented $k$-manifold $M$, having boundary $\overline{X} \coprod_{ \bd X = \bd Y} Y$; here $\overline{X}$ denotes the
manifold $X$ with the opposite orientation. We note that $\overline{M}$
can be interpreted as a $k$-morphism $Y \rightarrow X$, which is both a right and a left
adjoint to $f$. In the case $k=0$ the analysis is similar but easier: for every object
$M \in \Bord_{n}$, the object $\overline{M}$ is both a right and left dual of $M$.
\end{example}

\begin{example}
Let $\calC$ be the category $\Vect(k)$ of vector spaces over a field $k$
(viewed as an $(\infty,1)$-category). Then an object $V \in \calC$ is fully dualizable
if and only if $V$ is finite-dimensional. More generally, an object of a symmetric monoidal
$(\infty,1)$-category $\calC$ is fully dualizable if and only if is dualizable, in the sense of
Definition \ref{dua}.
\end{example}

\begin{remark}
If $n > 1$, then the condition that an object $C$ of a symmetric monoidal $(\infty,n)$-category $\calC$ be fully dualizable is {\em much} stronger than the condition that $C$ be dualizable. The strength of this condition grows rapidly with $n$, and tends to be quite difficult to verify if $n$ is large. In \S \ref{cost}, we will give a simple criterion for testing full dualizability in the case $n=2$ (Proposition \ref{swip}).
\end{remark}

\subsection{The Cobordism Hypothesis}\label{ONACT}

Our goal in this section is to give a more precise formulation of
the Baez-Dolan cobordism hypothesis for framed manifolds (Theorem \ref{swisher2}), and to explain
how this statement generalizes to other types of manifolds. We first establish a bit of terminology.

\begin{notation}
Let $\calC$ and $\calD$ be $(\infty,n)$-categories. There exists another $(\infty,n)$-category
$\Fun( \calC, \calD)$ of {\it functors from $\calC$ to $\calD$}. The $(\infty,n)$-category
$\Fun(\calC, \calD)$ is characterized up to equivalence by the following universal property:
for every $(\infty,n)$-categoy $\calC'$, there is a bijection between the set of isomorphism
classes of functors $\calC' \rightarrow \Fun(\calC, \calD)$ and the set of isomorphism classes
of functors $\calC' \times \calC \rightarrow \calD$.
\end{notation}

\begin{remark}
The collection of all (small) $(\infty,n)$-categories can be organized into a (large)
$(\infty,n+1)$-category $\Cat_{(\infty,n)}$, with mapping objects given by
$\OHom_{ \Cat_{(\infty,n)}}( \calC, \calD) = \Fun(\calC, \calD).$
\end{remark}

\begin{variant}
Suppose that $\calC$ and $\calD$ are {\it symmetric monoidal} $(\infty,n)$-categories. Then
we can also define an $(\infty,n)$-category $\Fun^{\otimes}(\calC, \calD)$ of {\it symmetric monoidal}
functors from $\calC$ to $\calD$.
\end{variant}

\begin{notation}\label{tzee}
Let $\calC$ be an $(\infty,n)$-category. We let $\calC^{\sim}$ denote the underlying
$(\infty,0)$-category obtained by discarding all of the noninvertible morphisms in $\calC$.
\end{notation}

\begin{remark}\label{jumpi}
In the situation of Notation \ref{tzee}, the $(\infty,0)$-category $\calC^{\sim}$ can
be characterized by the following universal property: for any $(\infty,0)$-category $\calD$,
composition with the inclusion $\calC^{\sim} \subseteq \calC$ induces an equivalence
$$ \Fun( \calD, \calC^{\sim}) \rightarrow \Fun( \calD, \calC).$$
\end{remark}

We are now ready to formulate Theorem \ref{swisher2} more precisely:

\begin{theorem}[Cobordism Hypothesis: Framed Version]\label{swisher3}
Let $\calC$ be a symmetric monoidal $n$-category with duals. Then
the evaluation functor $Z \mapsto Z( \ast)$ induces an equivalence
$$ \Fun^{\otimes}( \Bord_{n}^{\fr}, \calC) \rightarrow \calC^{\sim}.$$
In particular, $\Fun^{\otimes}( \Bord_{n}^{\fr}, \calC)$ is an
$(\infty,0)$-category.
\end{theorem}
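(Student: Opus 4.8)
The plan is to deduce the theorem from the assertion that $\Bord_{n}^{\fr}$ is the \emph{free} symmetric monoidal $(\infty,n)$-category with duals on a single generator (the positively framed point $\ast$), and to prove that assertion by induction on $n$. The formal reductions come first. Since every object of $\Bord_{n}^{\fr}$ admits both duals and adjoints (Example \ref{caperr}), and since symmetric monoidal functors of $(\infty,n)$-categories automatically preserve adjunction data, a mate-type argument in the spirit of Proposition \ref{spink} and Lemma \ref{simptom} shows that any natural transformation between two symmetric monoidal functors $\Bord_{n}^{\fr} \to \calC$ is invertible at every level; hence $\Fun^{\otimes}(\Bord_{n}^{\fr},\calC)$ is an $(\infty,0)$-category, which settles the last sentence. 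Because $\calC$ has duals we have $\calC^{\fd} = \calC$, so every object of $\calC$ is fully dualizable; it therefore suffices to prove that the evaluation functor is an equivalence of $\infty$-groupoids, and this is precisely the universal property of the free symmetric monoidal $(\infty,n)$-category with duals on one object.

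For the base case $n \leq 1$ I would carry out the computation sketched in Example \ref{1dim}, now phrased in the language of $(\infty,1)$-categories: $\Bord_{1}^{\fr}$ is generated by $\ast$, whose dual is the oppositely framed point, with the only relations being the zig-zag identities witnessed by the evaluation and coevaluation bordisms; diffeomorphism groups of $1$-manifolds contribute nothing further, so a framed $1$-dimensional theory valued in $\calC$ is exactly a choice of dualizable (hence fully dualizable) object of $\calC$. The induction then reduces the case $n$ to the case $n-1$, and the task is to show that every generating cell of $\Bord_{n}^{\fr}$, \emph{and every relation among them}, is forced by the datum of the object $Z(\ast)$ together with the canonical framing of the point.

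The engine of the inductive step is Morse theory. A generic Morse function on a framed $k$-manifold with corners decomposes it, along regular level sets, into a composite of elementary handle cobordisms, and this can be carried out in parametrized families; using the Segal-space models of \S \ref{swugg} one obtains a ``handle filtration'' of the $(\infty,n)$-category $\Bord_{n}^{\fr}$ itself. Passage through a critical point of index $i$ is controlled by the attaching data of an $i$-handle, which lives in a bordism category one dimension lower, and the inductive hypothesis identifies this elementary piece --- including its space of framings --- with something built from $\ast$ by iterating the unit and counit maps $\coev$ and $\ev$, which exist in $\calC$ because $\calC$ has duals and adjoints. Assembling the handles along the filtration then shows that a symmetric monoidal functor $Z : \Bord_{n}^{\fr} \to \calC$ is freely and uniquely (up to a contractible space of choices) determined by $Z(\ast)$.

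The main obstacle I expect is controlling the relations rather than producing the generators: one must verify that all higher cells of $\Bord_{n}^{\fr}$ --- diffeomorphisms, isotopies, and so on --- are already determined. This requires (i) a version of Morse theory uniform over parameter families, so that it records the full homotopy type of the relevant mapping spaces and not merely their sets of components; (ii) the connectivity estimates for the embedding spaces $\Emb(M,V)$ as $\dim V \to \infty$ which underlie the identification of the Segal-space model with classifying spaces of diffeomorphism groups (cf.\ Warning \ref{sablin}); and (iii) a careful bookkeeping of framings, since the space of framings of the point is equivalent to $\OO(n)$ and the universal property must single out the trivial one. It is exactly here that the hypothesis of a \emph{framing} (rather than an orientation, or nothing) is indispensable, and here that the argument is genuinely an induction on $n$ which cannot be run one dimension at a time for a fixed target category.
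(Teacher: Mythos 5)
Your overall strategy — induction on $n$ driven by Morse theory and a handle filtration, with the uniqueness coming from adjointness in $\calC$ — is genuinely the paper's strategy, and you correctly isolate the two halves of the problem: a formal argument showing that $\Fun^{\otimes}(\Bord_{n}^{\fr},\calC)$ is an $\infty$-groupoid, and an inductive/geometric argument showing the evaluation functor is an equivalence. The formal half of your sketch is fine. The geometric half, however, elides two structural points on which the paper's proof actually turns, and neither is a detail that can be absorbed into ``careful bookkeeping.''

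First, the induction cannot stay within the framed world. The paper deduces Theorem \ref{swisher3} in dimension $n$ from Theorem \ref{swisher6} (the inductive formulation), using not only Theorem \ref{swisher3} in dimension $n-1$ but Theorem \ref{swisher5} — the $(X,\zeta)$-structured version — in dimensions $n-1$ and $n-2$. This is forced by the geometry: when you restrict an $n$-framed theory to $(n-1)$-manifolds, the relevant tangential structure on those manifolds is not an $(n-1)$-framing but a map to the sphere $S^{n-1}$ (the unit sphere bundle of the trivial rank-$n$ bundle over a point). The paper then exploits a concrete decomposition of the orthogonal group $\OO(n)$ into two copies of $\OO(n-1)$ glued along $\OO(n-2)\times[0,1]$, translating it into statements about maps into $\calC^{\sim}$ that can only be unwound with the structured cobordism hypothesis already available one and two dimensions down. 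Your plan to run the induction purely among the framed categories $\Bord_{k}^{\fr}$ would stall at this step; you must carry the $(X,\zeta)$-version along as part of the inductive hypothesis.

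Second, you rightly identify ``controlling the relations'' as the main obstacle and observe that one needs Morse theory uniform over parameter families, but you do not name the tool that makes this possible, and the argument does not close without it. The paper does not filter $\Bord_{n}^{\fr}$ directly by handle index. It introduces an auxiliary $(\infty,n)$-category $\Bord_{n}^{\frun}$ in which each top-dimensional bordism carries a \emph{framed generalized Morse function} in Igusa's sense (a generalized Morse function together with a framing of the negative eigenspace of the Hessian at each critical point). It is $\Bord_{n}^{\frun}$, not $\Bord_n$, that admits the filtration $\calF_{-1} \to \calF_0 \to \cdots \to \calF_n$ by critical-point index, with generators given by handle attachments and relations by handle cancellations packaged through the Exchange Principle (Lemma \ref{swath}). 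Proving that the forgetful functor $\Bord_{n}^{\frun} \to \Bord_n$ is an equivalence (Theorem \ref{postwish}) is then a separate and essential step: Igusa's connectivity theorem (the space of framed functions is $(n-1)$-connected) only controls the truncated comparison $\tau_{\leq n+1}$, and the remaining discrepancy is killed by an obstruction-theoretic argument against multiplicative cohomology of the pair $(\Bord_n,\Bord_{n-1})$, which in turn rests on the Galatius--Madsen--Tillmann--Weiss identification of $|\Bord_n|$ with a Thom spectrum. Generic Morse or Cerf theory does not provide this: the framing of the negative eigenspaces is exactly what yields the needed connectivity of the function spaces, and the cohomological input is a genuine external ingredient that is not recoverable from the adjointness formalism.
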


\begin{remark}
Theorem \ref{swisher3} is best regarded as comprised of two separate assertions:
\begin{itemize}
\item[$(a)$] The $(\infty,n)$-category $\Fun^{\otimes}( \Bord_{n}^{\fr},\calC)$
is an $(\infty,0)$-category. Consequently, Remark \ref{jumpi} implies that the evaluation
functor $\Fun^{\otimes}( \Bord_{n}^{\fr}, \calC) \rightarrow \calC$ factors through a functor
$\phi: \Fun^{\otimes}( \Bord_{n}^{\fr}, \calC) \rightarrow \calC^{\sim}$, which is well-defined up to isomorphism.

\item[$(b)$] The functor $\phi$ is an equivalence of $(\infty,0)$-categories.
\end{itemize}
Assertion $(a)$ asserts that every $k$-morphism in $\Fun^{\otimes}( \Bord_{n}^{\fr}, \calC)$ is invertible
for $0 < k \leq n$. This statement is relatively formal. For example, suppose that $k=1$, and that
$\alpha: Z \rightarrow Z'$ is a natural transformation of field theories $Z,Z': \Bord_{n}^{\fr} \rightarrow \calC$. We wish to show that $\alpha$ is invertible; in other words, we wish to show that for every object
$M \in \Bord_{n}^{\fr}$, the induced map $\alpha_M: Z(M) \rightarrow Z'(M)$ is an isomorphism in the homotopy category $\h{\calC}$. Let $\overline{M}$ be the same manifold equipped with an $n$-framing of the opposite orientation. Then $Z(M)$ and $Z'(M)$ are dual to $Z( \overline{M})$ and $Z'( \overline{M})$ in the homotopy category $\h{\calC}$. In particular, we have a map 
$$\alpha_{ \overline{M} }^{\vee}: Z'(M) \simeq Z'( \overline{M})^{\vee}
\rightarrow Z( \overline{M})^{\vee} \simeq Z(M).$$
It is not difficult to check that $\alpha_{ \overline{M}}^{\vee}$ is the desired homotopy inverse to
$\alpha_{M}$.

Assertion $(b)$ is much less trivial, and will occupy our attention for the bulk of this paper.
\end{remark}

\begin{remark}
In the statement of Theorem \ref{swisher3}, the assumption that $\calC$ has duals entails no
real loss of generality. For any symmetric monoidal $(\infty,n)$-category $\calC$, the
canonical map
$$ \Fun^{\otimes}( \Bord_{n}^{\fr}, \calC^{\fd} ) \rightarrow \Fun^{\otimes}( \Bord_{n}^{\fr}, \calC)$$
is an equivalence of $(\infty,n)$-categories, where $\calC^{\fd}$ is defined as in Claim \ref{swist}:
this follows from the fact that $\Bord_{n}^{\fr}$ has duals (Example \ref{caperr}). 
Applying Theorem \ref{swisher3}, we deduce that
$\Fun^{\otimes}( \Bord_{n}^{\fr},\calC)$ is equivalent to the underlying
$\infty$-groupoid of $\calC^{\fd}$: in other words, $\Fun^{\otimes}( \Bord_{n}^{\fr}, \calC)$
is a classifying space for fully dualizable objects of $\calC$.
\end{remark}

\begin{remark}
Theorem \ref{swisher3} may appear to be more precise than Theorem \ref{swisher2}:
it describes the entire $(\infty,n)$-category of functors $\Fun^{\otimes}( \Bord_{n}^{\fr}, \calC)$, rather
than just its isomorphism classes of objects. However, this generality is only apparent: to prove
that a functor $\calD \rightarrow \calD'$ of $\infty$-groupoids is an equivalence, it suffices to show that for every $\infty$-groupoid $\calE$, the induced functor
$\Fun( \calE, \calD) \rightarrow \Fun( \calE, \calD')$ induces a bijection between isomorphism
classes of objects (this follows from Yoneda's lemma). To deduce Theorem \ref{swisher3} from
Theorem \ref{swisher2}, it suffices to apply this observation to the commutative diagram
$$ \xymatrix{ \Fun^{\otimes}( \Bord_{n}^{\fr}, \Fun(\calE, \calC))
\ar[r] \ar[d] & \Fun( \calE, \calC)^{\sim} \ar[d] \\
\Fun( \calE, \Fun^{\otimes}( \Bord_{n}^{\fr}, \calC) ) \ar[r] & \Fun( \calE, \calC^{\sim}), }$$
in which the vertical functors are equivalences.
\end{remark}

We now explain how Theorem \ref{swisher3} generalizes to the case of manifolds with
other structure groups. We begin with the following observation: by definition, a framing
of an $n$-manifold $M$ is an isomorphism of the tangent bundle
$T_M$ with the trivial bundle $\underline{\R}^{n}$ of rank $n$. Consequently,
the collection of all framings of $M$ carries an action of the orthogonal group
$\OO(n)$. More generally, we have an action of $\OO(n)$ on the collection of all
$n$-framings of a manifold $M$ of dimension $\leq n$. These actions together
determine an action of $\OO(n)$ on the $(\infty,n)$-category $\Bord_{n}^{\fr}$, and therefore
on the $\infty$-groupoid $\Fun^{\otimes}( \Bord_{n}^{\fr}, \calC)$, for any
symmetric monoidal $(\infty,n)$-category $\calC$. Combining this observation
with Theorem \ref{swisher3}, we obtain the following:

\begin{corollary}\label{ail}
Let $\calC$ be a symmetric monoidal $(\infty,n)$-category with duals.
Then the underlying $\infty$-groupoid $\calC^{\sim}$ carries an action of the orthogonal
group $\OO(n)$.
\end{corollary}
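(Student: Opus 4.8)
The plan is to produce the $\OO(n)$-action on $\calC^{\sim}$ by transporting, across the equivalence of Theorem \ref{swisher3}, an evident $\OO(n)$-action on the functor $\infty$-groupoid $\Fun^{\otimes}(\Bord_{n}^{\fr},\calC)$. So there are really two things to do: construct a genuine action of the topological group $\OO(n)$ on the symmetric monoidal $(\infty,n)$-category $\Bord_{n}^{\fr}$, and then push it forward.

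First I would make precise the assertion, sketched in the paragraph preceding the corollary, that $\OO(n)$ acts on $\Bord_{n}^{\fr}$. By Variant \ref{barvar}, an $n$-framing of a manifold $M$ of dimension $m \le n$ is an isomorphism $T_{M} \oplus \underline{\R}^{n-m} \simeq \underline{\R}^{n}$, and postcomposition with a (constant) orthogonal transformation of $\underline{\R}^{n}$ carries $n$-framings to $n$-framings; this operation is natural in $M$ with respect to codimension-zero embeddings and diffeomorphisms, and is compatible with disjoint unions. In the Segal-space model of \S \ref{swugg}, the framed analogue of the $n$-fold simplicial space $\untBord_{n}$ is obtained by adjoining to each embedded manifold a section of the (orthogonalized) frame bundle of its stabilized tangent bundle; $\OO(n)$ acts continuously on this section, strictly and compatibly with every face and degeneracy map and with the monoidal structure. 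Thus $\OO(n)$ acts on this $n$-fold Segal space through symmetric monoidal automorphisms, and since completion (Remark \ref{kisster}) is characterized by a universal property, hence functorial, the action descends to an action of $\OO(n)$ on $\Bord_{n}^{\fr}$ by symmetric monoidal auto-equivalences. I would package this as a map $B\OO(n) \to (\Cat_{(\infty,n)}^{\otimes})^{\sim}$ based at $\Bord_{n}^{\fr}$.

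Next, $\Fun^{\otimes}(-,\calC)$ is functorial in its first variable on equivalences, so it converts this data into an action of $\OO(n)$ on $\Fun^{\otimes}(\Bord_{n}^{\fr},\calC)$, with $g \in \OO(n)$ acting by $Z \mapsto Z \circ g^{-1}$. Because $\calC$ has duals, Theorem \ref{swisher3} identifies $\Fun^{\otimes}(\Bord_{n}^{\fr},\calC)$ with the $\infty$-groupoid $\calC^{\sim}$ via $Z \mapsto Z(\ast)$. An action of a group on an object of an $\infty$-groupoid is just a pointed map out of the classifying space, so it transports along any equivalence; transporting the $\OO(n)$-action across this one gives the asserted action on $\calC^{\sim}$. (Unwound, $g$ sends $Z(\ast)$ to $Z(g^{-1}\ast)$, i.e.\ it records the values of the field theory generated by $Z(\ast)$ on the various $n$-framings of a point; for $n=1$ this recovers the action of $\OO(1) \simeq \Z/2$ on $\calC^{\sim}$ by $V \mapsto V^{\vee}$.)

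The main obstacle is the first step: getting a genuine, coherent action of $\OO(n)$ on $\Bord_{n}^{\fr}$ rather than a mere collection of automorphisms indexed by $\OO(n)$. The Segal-space model is exactly what circumvents the coherence problem here, since $\OO(n)$ acts \emph{strictly}, on the nose, on the pre-complete $n$-fold simplicial space; all higher coherences of the action are then automatic, and one has only to check continuity of the action on each space of framed embedded manifolds, strict commutation with the simplicial structure maps, and compatibility with disjoint union, before invoking functoriality of completion and of $\Fun^{\otimes}(-,\calC)$. Everything downstream of that is formal.
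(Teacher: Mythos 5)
Your argument is precisely the one the paper gives: $\OO(n)$ acts on $n$-framings, hence on $\Bord_n^{\fr}$, hence on $\Fun^{\otimes}(\Bord_n^{\fr},\calC)$, and Theorem \ref{swisher3} transports this to $\calC^{\sim}$. Your extra care in using the pre-complete $n$-fold Segal space to obtain a strict $\OO(n)$-action (so that coherence is automatic) and then invoking functoriality of completion is a sound amplification of the step the paper leaves informal.
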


\begin{remark}\label{aile}
According to Thesis \ref{cope3}, the $\infty$-groupoid $\calC^{\sim}$ can be identified
with the fundamental $\infty$-groupoid of a topological space $X$, which is well-defined up to homotopy equivalence. Corollary \ref{ail} can be formulated more precisely as follows: it is possible to choose the space $X$ so that it carries an action of the orthogonal group $\OO(n)$.
\end{remark}

\begin{example}\label{ilser}
Let $\calC$ be a symmetric monoidal $(\infty,1)$-category. Then to say that $\calC$ has duals
is to say that every object $X \in \calC$ admits a dual $X^{\vee}$ in the homotopy category
$\h{\calC}$. In this case, Corollary \ref{ail} asserts that the underlying $\infty$-groupoid
of $\calC$ admits an action of the orthogonal group $\OO(1) \simeq \Z / 2\Z$. The action
of this group corresponds to the involution $X \mapsto X^{\vee}$ on $\calC^{\sim}$.
In this case, we can interpret Corollary \ref{ail} as saying that the dual $X^{\vee}$ of an object
$X \in \calC$ is defined not only up to isomorphism in the homotopy category $\h{\calC}$, but up to a contractible space of choices in $\calC$ itself.
\end{example}

\begin{warning}
The action of $\OO(n)$ on $\calC^{\sim}$ in Corollary \ref{ail} is not the restriction of an
action of $\OO(n)$ on $\calC$ itself. For example, when $n=1$, the construction
$X \rightarrow X^{\vee}$ is a {\em contravariant} functor from $\calC$ to itself.
We will return to this point in Remark \ref{swing}.
\end{warning}

\begin{example}
Let $\calC$ be a symmetric monoidal $(\infty,2)$-category with duals. According
to Corollary \ref{ail}, the $\infty$-groupoid $\calC^{\sim}$ carries an action of
the orthogonal group $\OO(2)$. In particular, we have a canonical map
$\SO(2) \times \calC^{\sim} \rightarrow \calC^{\sim}$, which we can think of as
an {\em automorphism} of the identity functor from $\calC^{\sim}$ to itself.
This gives rise to an automorphism $S_X$ of every object $X \in \calC$.
We will refer to $S_X$ as the {\it Serre automorphism} of $X$. See Remark
\ref{umblecrown} for further discussion.
\end{example}

\begin{example}\label{slipper}
Let $\calC$ be a Picard $\infty$-groupoid (see Example \ref{stayn}).
Using Thesis \ref{cope3}, we can identify $\calC$ with a topological space $X$. The symmetric monoidal structure
on $\calC$ endows $X$ with the structure of an {\it $E_{\infty}$-space}: that is, it is equipped
with a multiplication operation which is commutative, associative, and unital up to coherent homotopy.
The assumption that every object of $\calC$ be invertible translates into the requirement that
$X$ be {\it grouplike}: that is, the commutative monoid $\pi_0 X$ is actually an abelian group.
It follows that $X$ has the structure of an infinite loop space: that is, there is a sequence of pointed spaces $\{ X(n) \}_{n \geq 0}$ such that $X(0) \simeq X$ and $X(n)$ is equivalent to the loop space
$\Omega X(n+1)$ for all $n \geq 0$. In particular, we can identify $X(0)$ with the $n$-fold loop
space $$\Omega^{n} X(n) = \{ f: D^n \rightarrow X(n): ( \forall x \in S^{n-1}) [ f(x) = \ast ] \}.$$
Here $\ast$ denotes the base point of $X(n)$, and $D^n$ the $n$-dimensional disk
with boundary $S^{n-1} \subseteq D^n$. There is a canonical action of $\OO(n)$ on the
disk $D^n$, which gives rise to an action of $\OO(n)$ on $X$ up to homotopy; this construction recovers the $\OO(n)$-action of Corollary \ref{ail}.

We can summarize the situation more succinctly using the language of algebraic topology.
We can identify a Picard $\infty$-groupoid $\calC$ with a (connective) spectrum. This
spectrum then carries an action of the direct limit $\OO = \varinjlim \OO(n)$ (this is a version
of the classical {\it $J$-homomorphism} in stable homotopy theory). Restricting
to the subgroups $\OO(n)$ for various $n$, we recover the action of $\OO(n)$ on
$\calC = \calC^{\sim}$ guaranteed by Corollary \ref{ail} (note that $\calC$ can be regarded
as an $(\infty,n)$-category with duals; see Example \ref{stayn}).
\end{example}

We now turn to the problem of describing the analogue of Theorem \ref{swisher3} when we endow our manifolds with structures other than that of an $n$-framing. We first need a general digression about tangential structures on manifolds.

\begin{notation}\label{cusper1}
Let $X$ be a topological space and let $\zeta$ be a real vector bundle on $X$ of rank $n$.
Let $M$ be a manifold of dimension $m \leq n$. An
{\it $(X,\zeta)$-structure} on $M$ consists of the following data:
\begin{itemize}
\item[$(1)$] A continuous map $f: M \rightarrow X$.
\item[$(2)$] An isomorphism of vector bundles
$$T_M \oplus \underline{\R}^{n-m} \simeq f^{\ast} \zeta.$$
\end{itemize}
\end{notation}

\begin{protodefinition}
Let $X$ be a topological space, and let $\zeta$ be an $n$-dimensional vector bundle
on $X$. The $(\infty,n)$-category $\Bord_{n}^{(X,\zeta)}$ is defined just as
$\Bord_{n}$ (see Definition \ref{que}), except that all of the manifolds involved are required to be equipped with an $(X, \zeta)$-structure.
\end{protodefinition}

\begin{theorem}[Cobordism Hypothesis for $(X, \zeta)$-Manifolds]\label{swisher5}
Let $\calC$ be a symmetric monoidal $(\infty,n)$-category with duals, let
$X$ be a CW complex, let $\zeta$ be an $n$-dimensional vector bundle over $X$ equipped with an inner product, and let $\widetilde{X} \rightarrow X$ be the associated principal $\OO(n)$-bundle
of orthonormal frames in $\zeta$. Then there is an equivalence of $(\infty,0)$-categories
$$ \Fun^{\otimes}( \Bord_{n}^{(X, \zeta)}, \calC) \simeq \Hom_{\OO(n)}( \widetilde{X}, \calC^{\sim}),$$
Here we identify $\calC^{\sim}$ with a topological space carrying an action of the group $\OO(n)$
$($see Corollary \ref{ail} and Remark \ref{aile}$)$.
\end{theorem}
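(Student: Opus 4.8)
The plan is to deduce this from the framed case (Theorem \ref{swisher3}) by realizing $\Bord_{n}^{(X,\zeta)}$ as a \emph{twisted homotopy quotient} of $\Bord_{n}^{\fr}$ by the action of $\OO(n)$. First I would recast the notion of an $(X,\zeta)$-structure. Since $\zeta$ is equipped with an inner product, $\widetilde{X} \to X$ is a principal $\OO(n)$-bundle, and the pullback of $\zeta$ to $\widetilde{X}$ acquires a tautological trivialization. For a manifold $M$ of dimension $m \le n$, let $\mathrm{Fr}(M)$ denote the principal $\OO(n)$-bundle of orthonormal frames of $T_M \oplus \underline{\R}^{n-m}$ (for an auxiliary metric, the choice of which matters only up to a contractible space of choices). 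Unwinding Notation \ref{cusper1}, the space of $(X,\zeta)$-structures on $M$ is canonically $\Hom_{\OO(n)}(\mathrm{Fr}(M), \widetilde{X})$, while an $n$-framing of $M$ is a section of $\mathrm{Fr}(M)$, i.e. an $\OO(n)$-equivariant map $\mathrm{Fr}(M) \to \OO(n)$. When $X$ is a point and $\zeta = \underline{\R}^n$ we get $\widetilde{X} = \OO(n)$, which recovers $\Bord_{n}^{\fr}$; thus Theorem \ref{swisher3} is precisely the case $\widetilde{X} = \OO(n)$ of the present statement, since $\Hom_{\OO(n)}(\OO(n), \calC^{\sim}) \simeq \calC^{\sim}$.

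Next I would isolate the geometric input: the assignment $(X,\zeta) \mapsto \Bord_{n}^{(X,\zeta)}$ is \emph{continuous} in $\widetilde{X}$, carrying homotopy pushouts of free $\OO(n)$-spaces to the corresponding pushouts of symmetric monoidal $(\infty,n)$-categories (in a suitable relative sense) and taking $\widetilde{X} = \OO(n)$ to $\Bord_{n}^{\fr}$; concretely, $\Bord_{n}^{(X,\zeta)} \simeq \widetilde{X} \times_{\OO(n)} \Bord_{n}^{\fr}$, the homotopy quotient of $\widetilde{X} \times \Bord_{n}^{\fr}$ by the diagonal $\OO(n)$-action. Granting this, since $X$ is a CW complex the bundle $\widetilde{X}$ is a free $\OO(n)$-CW complex, built by attaching free cells $\OO(n) \times D^{k}$ along $\OO(n)\times S^{k-1}$; each free cell is $\OO(n)$-equivariantly contractible onto $\OO(n)$, so its associated bordism category is equivalent to $\Bord_{n}^{\fr}$. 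Applying $\Fun^{\otimes}(-,\calC)$ converts the colimit over $B\OO(n)$ and over the cell attachments into homotopy limits, yielding
$$ \Fun^{\otimes}( \Bord_{n}^{(X,\zeta)}, \calC) \simeq \Hom_{\OO(n)}\bigl( \widetilde{X}, \Fun^{\otimes}(\Bord_{n}^{\fr}, \calC) \bigr),$$
and Theorem \ref{swisher3} identifies the inner functor category with $\calC^{\sim}$ carrying the $\OO(n)$-action of Corollary \ref{ail} and Remark \ref{aile}. The base case of the induction over cells is Theorem \ref{swisher3} itself, and the inductive step — a pushout of $\OO(n)$-spaces becoming a pullback of mapping spaces — is formal.

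The hard part will be establishing the continuity input, i.e. the equivalence $\Bord_{n}^{(X,\zeta)} \simeq \widetilde{X} \times_{\OO(n)} \Bord_{n}^{\fr}$. For this I would return to the explicit model of \S \ref{swugg}, in which $\Bord_{n}^{(X,\zeta)}$ is an $n$-fold complete Segal space assembled from spaces of submanifolds of $V \times \R^{n}$ equipped with an $(X,\zeta)$-structure, and argue — by a parametrized general-position argument, letting $\dim V \to \infty$ — that over any reasonable base $S$, a family of such submanifolds with $(X,\zeta)$-structure is the same datum as the family together with an $n$-framing and an $\OO(n)$-equivariant map from the fiberwise frame bundle to $\widetilde{X}$, naturally in the simplicial directions and compatibly with the disjoint-union monoidal structure and with gluing of bordisms. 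A delicate point is the behavior of the stabilizing summand $\underline{\R}^{n-m}$ when $m < n$, on which $\OO(n)$ acts through its bottom corner; another is checking that passing to completions (Remark \ref{kisster}) changes nothing, which is automatic because the target $\calC$ is complete. Once this identification is secured, the formal manipulations above complete the proof.
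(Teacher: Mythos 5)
Your outer skeleton — induct over the cells of $\widetilde{X}$, with $\widetilde{X} = \OO(n)$ (Theorem~\ref{swisher3}) as the base case — is the same as the paper's, which also verifies that the collection of $Y$ for which the comparison map is an equivalence contains the point and is closed under homotopy colimits. But the substance of the argument lies in the closure-under-colimits step, and there you diverge from the paper in a way that introduces a genuine gap.

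You propose to show that $(X,\zeta) \mapsto \Bord_{n}^{(X,\zeta)}$ itself commutes with homotopy colimits, or even the stronger identity $\Bord_{n}^{(X,\zeta)} \simeq \widetilde{X} \times_{\OO(n)} \Bord_{n}^{\fr}$, by a general-position argument in the Segal-space model, and then pass to $\Fun^{\otimes}(-,\calC)$. The paper does something weaker and subtler: it shows only that the \emph{functor category} $Y \mapsto \Fun^{\otimes}(\Bord_{n}^{(Y,\zeta|Y)},\calC)$ carries homotopy colimits to homotopy limits, and it does so by invoking Theorem~\ref{swisher6}, the inductive relative form of the cobordism hypothesis, which classifies extensions from $\Bord_{n-1}^{(X_0,\zeta_0)}$ to $\Bord_{n}^{(X,\zeta)}$ by a family of nondegenerate $n$-morphisms $\{\eta_x\}_{x\in X}$; that family plainly satisfies descent in $X$, and the $Z_0$-part does by induction on $n$. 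Notice that this argument uses, essentially, that $\calC$ has duals (the nondegeneracy condition in Theorem~\ref{swisher6} is an adjointness/dualizability condition) — descent is a property of $\Fun^{\otimes}(-,\calC)$ for such $\calC$, not a property of the bordism categories. Your approach moves the colimit to the source side and thereby forfeits precisely the leverage supplied by duals.

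The paper is explicit that the statement you call the ``hard part'' is not known to admit a direct proof: Remark~\ref{exor} says outright that the excision property of $Y \mapsto \Bord_{n}^{(Y,\zeta|Y)}$ is only known as a \emph{consequence} of Theorem~\ref{swisher5}, not as an input to it. Your proposed general-position argument also runs into a concrete obstruction: the claim that ``a family of submanifolds with $(X,\zeta)$-structure is the same datum as the family together with an $n$-framing and an $\OO(n)$-equivariant map from the frame bundle to $\widetilde{X}$'' is false, because $(X,\zeta)$-manifolds need not admit $n$-framings at all (already $S^2 \hookrightarrow \Bord_2$ has no $2$-framing). The naive homotopy quotient $\widetilde{X} \times_{\OO(n)} \Bord_{n}^{\fr}$, formed in $(\infty,n)$-categories, sees only framed manifolds and hence is not $\Bord_{n}^{(X,\zeta)}$; the identification can only be expected after passing to the free symmetric monoidal $(\infty,n)$-category with duals, at which point it is no longer an elementary claim but is essentially equivalent to Theorem~\ref{swisher5} itself. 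So the proposal is circular at its crux, and the geometric input it invokes does not do the work you need it to.
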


\begin{remark}
Let $X$ and $\zeta$ be as in Theorem \ref{swisher5}. Note that every point $\widetilde{x} \in \widetilde{X}$ determines an $(X, \zeta)$-structure on the $0$-manifold $\ast$ consisting of a single point. The equivalence of Theorem \ref{swisher5} is implemented by restricting a symmetric monoidal functor $Z: \Bord_{n}^{(X, \zeta)} \rightarrow \calC$ to $(X, \zeta)$-manifolds which are obtained in this way.
\end{remark}

\begin{remark}\label{exor}
Let $X$, $\zeta$, and $\calC$ be as in Theorem \ref{swisher5}. For every map of
CW complexes $Y \rightarrow X$, let $\widetilde{Y} = Y \times_{X} \widetilde{X}$ be the associated
$\OO(n)$-bundle over $Y$. The functor $Y \mapsto \Hom_{\OO(n)}( \widetilde{Y}, \calC^{\sim})$
carries homotopy colimits in $Y$ to homotopy limits of spaces. It follows from Theorem \ref{swisher5}
that the functor $Y \mapsto \Bord_{n}^{(Y, \zeta|Y)}$ commutes with homotopy colimits in $Y$.
This can be regarded as a kind of excision property. For example, it implies that for any open
covering $\{ U_i \}$ of $X$, we can recover $\Bord_{n}^{(X, \zeta)}$ by gluing together the
$(\infty,n)$-categories $\Bord_{n}^{(U_i, \zeta|U_i)}$ in a suitable way. This is the reflection of a simple geometric idea: namely, that any manifold $M$ equipped with an $(X, \zeta)$-structure
$f: M \rightarrow X$ can be disassembled into pieces such that on each piece,
$f$ factors through one of the open sets $U_i$. We do not know a direct proof of this excision property:
we can only deduce it indirectly from Theorem \ref{swisher5}. However, the idea of decomposing
$X$ into pieces will feature in the proof of Theorem \ref{swisher5} that we present in \S \ref{forty1}.
\end{remark}

When the topological space $X$ is connected, Theorem \ref{swisher5} can be expressed in a slightly more conceptual way. To explain this, we need to introduce a bit more notation.

\begin{notation}\label{cusper2}
Suppose that $G$ is a topological group equipped with a continuous homomorphism
$\chi: G \rightarrow \OO(n)$, where $\OO(n)$ denotes the orthogonal group. We let
$EG$ denote a weakly contractible $G$-CW complex on which $G$ acts freely
(in other words, a space which is obtained by gluing together cells of the form
$G \times D^{n}$ for $n \geq 0$; such a space exists and is unique up to $G$-equivariant homotopy equivalence). Let $BG = EG/G$ denote a classifying space for $G$, and let
$\zeta_{\chi} = (\R^{n} \times EG)/G$ denote the vector bundle over $BG$ determined by $\chi$.
We will denote the $(\infty,n)$-category $\Bord_{n}^{(BG, \zeta_{\chi})}$ by $\Bord_{n}^{G}$ and
refer to an $(BG, \zeta)$-structure on a manifold $M$ as an
{\it $G$-structure on $M$}.
\end{notation}

\begin{example}
If the group $G$ is trivial, then a $G$-structure on a manifold $M$ is an
$n$-framing of $M$, as described in Variant \ref{barvar}. If $G = \SO(n)$, then giving a $G$-structure on a manifold $M$ is (up to contractible ambiguity) equivalent to choosing an orientation of $M$. If
$G = \OO(n)$, then a $G$-structure on a manifold $M$ consists of no structure at all. We therefore have equivalences 
$$ \Bord^{\fr}_{n} \simeq \Bord^{ \{1\} }_{n} \quad \Bord^{\ori}_{n} \simeq \Bord^{\SO(n)}_{n}
\quad \Bord_{n} \simeq \Bord_{n}^{\OO(n)}.$$
\end{example}

\begin{definition}\label{silk}
Let $G$ be a topological group acting continuously on a topological space $X$.
The {\it homotopy fixed set} $X^{hG}$ is defined to be the space of $G$-equivariant
maps $\Hom_{G}( EG, X)$, where $EG$ is as in Notation \ref{cusper2}.
\end{definition}

\begin{remark}
In order for Definition \ref{silk} to be sensible, we should require that $G$ and $EG$ are CW-complexes
(in practice, this is easy to arrange, since we will generally take $G$ to be a compact Lie group).
In this case, the homotopy type of $X^{hG}$ is independent of the choice of $EG$, and
the construction $X \mapsto X^{hG}$ preserves weak homotopy equivalences.
\end{remark}

\begin{notation}
Let $\calC$ be an $\infty$-groupoid carrying an action of the topological group $G$.
An analogue of Thesis \ref{cope3} asserts that $\calC$ is equivalent to the fundamental
$\infty$-groupoid of a topological space $X$ carrying an action of $G$. We let
$\calC^{hG}$ denote the fundamenal $\infty$-groupoid of the homotopy fixed set $X^{hG}$.
\end{notation}

\begin{theorem}[Cobordism Hypothesis for $G$-Manifolds]\label{swisher4}
Let $\calC$ be a symmetric monoidal $(\infty,n)$-category with duals, and let
$\chi: G \rightarrow \OO(n)$ be a continuous group homomorphism.
There is a canonical equivalence of $(\infty,n)$-categories
$$ \Fun^{\otimes}( \Bord_{n}^{G}, \calC) \rightarrow ( \calC^{\sim})^{hG}.$$
In particular, $\Fun^{\otimes}( \Bord_{n}^{G}, \calC)$ is an $\infty$-groupoid.
\end{theorem}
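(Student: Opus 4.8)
The plan is to deduce Theorem \ref{swisher4} from the $(X,\zeta)$-form of the cobordism hypothesis (Theorem \ref{swisher5}) by specializing to $X = BG$ and then carrying out a purely homotopy-theoretic identification of the resulting space of equivariant maps. First I would apply Theorem \ref{swisher5} with $X$ the classifying space $BG$ of Notation \ref{cusper2} and $\zeta = \zeta_{\chi} = (\R^{n} \times EG)/G$; this bundle is canonically equipped with an inner product because $\chi$ factors through $\OO(n)$, so the hypotheses of Theorem \ref{swisher5} are met. Since $\Bord_{n}^{G} = \Bord_{n}^{(BG,\zeta_{\chi})}$ by definition, Theorem \ref{swisher5} already supplies an equivalence of $(\infty,0)$-categories
$$ \Fun^{\otimes}(\Bord_{n}^{G}, \calC) \simeq \Hom_{\OO(n)}(\widetilde{BG}, \calC^{\sim}), $$
where $\widetilde{BG} \to BG$ is the bundle of orthonormal frames of $\zeta_{\chi}$ and $\calC^{\sim}$ is regarded as an $\OO(n)$-space via Corollary \ref{ail}. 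It therefore remains only to compute the right-hand side.

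Next I would identify the frame bundle. Writing $\zeta_{\chi}$ in its associated-bundle form $EG \times_{G} \R^{n}$, the orthonormal frame bundle is $\widetilde{BG} \simeq EG \times_{G} \OO(n)$, where $G$ acts on $\OO(n)$ through $\chi$ and left translation and the residual $\OO(n)$-action (the one appearing in Theorem \ref{swisher5}) is right translation; indeed $(EG \times_{G} \OO(n))/\OO(n) \simeq EG/G = BG$. In other words $\widetilde{BG}$ is the $\OO(n)$-space induced from the free contractible $G$-CW complex $EG$ along $\chi$. I would then invoke the induction–restriction adjunction for the homomorphism $\chi\colon G \to \OO(n)$: for any $\OO(n)$-space $Y$ one has $\Hom_{\OO(n)}(EG \times_{G} \OO(n), Y) \simeq \Hom_{G}(EG, Y)$, where on the right $G$ acts on $Y$ through $\chi$. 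Taking $Y = \calC^{\sim}$ and using Definition \ref{silk} (and the notation immediately following it), the right-hand side is precisely $(\calC^{\sim})^{hG}$. Composing the three equivalences yields
$$ \Fun^{\otimes}(\Bord_{n}^{G}, \calC) \simeq (\calC^{\sim})^{hG}, $$
and in particular $\Fun^{\otimes}(\Bord_{n}^{G}, \calC)$ is an $\infty$-groupoid since homotopy fixed sets of spaces are spaces. I would also note the parallel route that bypasses Theorem \ref{swisher5}: a $G$-structure is a framing together with descent data along $EG \to BG$, so a symmetric monoidal functor out of $\Bord_{n}^{G}$ amounts to a framed field theory (Theorem \ref{swisher3}) equipped with compatibility with the $\OO(n)$-action of Corollary \ref{ail} pulled back along $\chi$ — which is exactly the data of a point of $(\calC^{\sim})^{hG}$.

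The genuinely hard work here is upstream, not in this deduction: the real content lies in Theorem \ref{swisher5}, and ultimately in the framed case (Theorem \ref{swisher3}), whose proof occupies the bulk of the paper. Within the present argument the only point requiring care is the equivariant homotopy theory — checking that $\widetilde{BG}$ is correctly identified with the induced $\OO(n)$-space $EG \times_{G}\OO(n)$, and that the adjunction above is homotopy-invariant, so that replacing $EG$ by any contractible free $G$-CW model and $\calC^{\sim}$ by an equivalent $\OO(n)$-space does not affect the answer. With $G$ a compact Lie group and all spaces taken to be $G$-CW complexes this is standard, so I expect this step to be routine rather than an obstacle.
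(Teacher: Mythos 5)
Your argument is exactly the paper's own proof: specialize Theorem \ref{swisher5} to $(BG, \zeta_{\chi})$, identify the orthonormal frame bundle $\widetilde{BG}$ with the induced $\OO(n)$-space $(EG \times \OO(n))/G$, and use the induction--restriction equivalence $\Hom_{\OO(n)}(\widetilde{BG}, \calC^{\sim}) \simeq \Hom_{G}(EG, \calC^{\sim}) = (\calC^{\sim})^{hG}$. The extra detail you supply (why the frame bundle has that form, and why the adjunction is homotopy-invariant) is fine but does not change the route.
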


\begin{proof}
Let $\widetilde{BG}$ denote the $\OO(n)$-bundle $(EG \times \OO(n)) / G$
over $BG$ determined by the homomorphism $\chi$. 
According to Theorem \ref{swisher5}, we can identify $\Fun^{\otimes}( \Bord_{n}^{G}, \calC)$ with
$\Hom_{\OO(n)}(\widetilde{BG}, \calC^{\sim})$. The desired conclusion follows from
the evident equivalence
$$ \Hom_{ \OO(n)}( \widetilde{BG}, \calC^{\sim}) \simeq
\Hom_{G}( EG, \calC^{\sim}) \simeq (\calC^{\sim})^{hG}.$$
\end{proof}

\begin{example}
In the special case where the group $G$ is trivial, Theorem \ref{swisher4} reduces to
Theorem \ref{swisher3}.
\end{example}

\begin{example}
In the case $n=1$ and $G = \OO(1)$, Theorem \ref{swisher4} asserts that the data of a
$1$-dimensional {\em unoriented} field theory $Z: \Bord_{1} \rightarrow \calC$ is equivalent to the data of a homotopy fixed point of for the natural action of $\OO(1) \simeq \Z / 2 \Z$ on
$\calC^{\sim}$. As indicated in Example \ref{ilser}, we can think of this action as given by the involution on $\calC$ which carries every object $X \in \calC$ to its dual $X^{\vee}$. A homotopy fixed point
can therefore be identified with a {\em symmetrically} self-dual object of $\calC$: in other words,
an object $X \in \calC$ equipped with a symmetric map $X \otimes X \rightarrow {\bf 1}$ which exhibits $X$ as a dual of itself. For example, if $\calC$ is the (ordinary) category of vector spaces, then
$Z$ is determined by the finite dimensional vector space $V = Z(\ast)$, together with a nondegenerate symmetric bilinear form on $V$.
\end{example}

\begin{remark}
If $X$ is a nonempty path connected topological space, then there exists a topological group
$G$ and a weak homotopy equivalence $BG \simeq X$. A real vector bundle of rank $n$ on $X$ determines a continuous homomorphism $\chi: G \rightarrow \OO(n)$ (possibly after replacing $G$ by
a weakly equivalent topological group) so that the pairs $(BG, \zeta_{\chi})$ and $(X, \zeta)$ are weakly equivalent. The proof of Theorem \ref{swisher4} shows that Theorem \ref{swisher4} for
$\chi: G \rightarrow \OO(n)$ is equivalent to Theorem \ref{swisher5} for the pair $(X, \zeta)$. 
Consequently, we can regard Theorem \ref{swisher5} as a very mild generalization
of Theorem \ref{swisher4}: it is slightly stronger because it encompasses the situation where the space $X$ is not path connected.
\end{remark}

\begin{remark}\label{skilj}
Throughout this paper, we have considered only {\em smooth} manifolds. However,
it is possible to define analogues of the $(\infty,n)$-category $\Bord_{n}$ in the piecewise linear and topological settings. Let us denote these $(\infty,n)$-categories by $\Bord_{n}^{\PL}$ and
$\Bord_{n}^{\Top}$. These $(\infty,n)$-categories are related by functors
$$ \Bord_{n} \stackrel{\theta}{\rightarrow} \Bord_{n}^{\PL} \stackrel{\theta'}{\rightarrow} \Bord_{n}^{\Top},$$ where $\theta'$ is defined by forgetting piecewise linear structures, and $\theta$ is defined
by selecting Whitehead compatible triangulations of smooth manifolds (which are always unique
up to a contractible space of choices). These functors are related as follows:
\begin{itemize}
\item[$(1)$] For $n \leq 3$, the theories of smooth, topological, and piecewise linear manifolds
all essentially equivalent to one another, and the functors $\theta$ and $\theta'$ are equivalences.
\item[$(2)$] If we restrict our attention to $n$-framed manifolds, then the analogue of the functor
$\theta$ is an equivalence $\Bord_{n}^{\fr} \rightarrow \Bord_{n}^{\PL, \fr}$ (here the notion of
a {\em framing} in the piecewise linear setting involves trivialization of piecewise linear microbundles, rather than vector bundles): this can be proven using parametrized smoothing theory.
\item[$(3)$] Combining $(2)$ with the proof of Corollary \ref{ail}, we can obtain a stronger result: 
for any symmetric monoidal $(\infty,n)$-category $\calC$ with duals, the underlying
$\infty$-groupoid $\calC^{\sim}$ carries an action of the group $\PL(n)$ of piecewise-linear
homeomorphisms from $\R^{n}$ to itself.
\item[$(4)$] If $n \neq 4$, then assertion $(3)$ is in some sense optimal: the group $\PL(n)$
is homotopy equivalent to the automorphism group of $\Bord_{n}^{\PL, \fr} \simeq
\Bord_{n}^{\fr}$, and is therefore {\em universal} among groups which act on
$\calC^{\sim}$ for every symmetric monoidal $(\infty,n)$-category with duals $\calC$. 
We do not know if the analogous statement holds for $n=4$: it is equivalent to the
piecewise-linear Schoenflies conjecture.
\item[$(5)$] Using $(3)$, one can formulate an analogue of Theorem \ref{swisher5} for
piecewise linear $\R^{n}$-bundles $\zeta \rightarrow X$ (and Theorem \ref{swisher4} for
maps of groups $G \rightarrow \PL(n)$). This analogue is equivalent to Theorem \ref{swisher5}
if $\zeta \rightarrow X$ can be refined to a vector bundle. The proof in general is more difficult, and requires methods which we will not describe here.
\item[$(6)$] Assertion $(4)$ guarantees that the action of $\PL(n)$ on $\calC^{\sim}$ generally
does {\em not} extend to an action of the group $\Top( \R^{n})$ of topological homeomorphisms
of $\R^{n}$ with itself when $n \geq 5$ (here $\calC$ denotes a symmetric monoidal
$(\infty,n)$-category with duals). Consequently, there is no obvious way to formulate
Theorems \ref{swisher4} and \ref{swisher5} in the topological setting.

We do not know an analogue of the cobordism hypothesis which describes the topological bordism categories $\Bord_{n}^{\Top}$ for $n \geq 4$. Roughly speaking, the usual cobordism hypothesis (for smooth manifolds) can be regarded as an articulation of the idea that smooth manifolds can be constructed by a sequence of handle attachments: that is, every smooth manifold admits a handle decomposition. The handle decomposition of a smooth manifold is not unique. Nevertheless, any two handle decompositions can be related by a finite sequence of handle cancellation rules which have natural category-theoretic interpretations (we will explain this idea more precisely in \S \ref{kuji}). 
However, there are topological $4$-manifolds which do not admit handle decompositions (such as Freedman's $E_8$-manifold).
\end{itemize}
\end{remark}

\subsection{The Mumford Conjecture}\label{mumm}


Fix a closed oriented surface $\Sigma_g$ of genus $g \geq 0$.
Let $\Diff( \Sigma_g)$ denote the group
of orientation-preserving diffeomorphisms of $\Sigma_g$, let
$\EDiff(\Sigma_g)$ denote a contractible space with a free
action of $\Diff(\Sigma_g)$, and let $\BDiff( \Sigma_g ) = \EDiff(\Sigma_g)/ \Diff(\Sigma_g)$
denote a classifying space for $\Diff(\Sigma_g)$. Over the classifying space
$\BDiff(\Sigma_g)$ we have a canonical fiber bundle
$$ \pi: X = (E \Diff(\Sigma_g) \times \Sigma_g) / \Diff(\Sigma_g) \rightarrow \BDiff(\Sigma_g),$$
with fibers homeomorphic to $\Sigma_g$. Using the fact that the fibers of $\pi$ are
oriented surfaces, we deduce:

\begin{itemize}

\item[$(a)$] There is an oriented real vector bundle $V$ of rank $2$ on $X$, whose restriction
to every point $x \in X$ is given by the tangent space to the fiber $\pi^{-1}\{ \pi(x) \}$ at $x$.
This vector bundle $V$ has an Euler class $e(V) \in \HH^{2}(X; \Q)$.

\item[$(b)$] There is an integration map on cohomology
$$ \HH^{\ast+2}(X; \Q) \rightarrow \HH^{\ast}( \BDiff(\Sigma_g); \Q).$$
In particular, for each $n \geq 0$ we can evaluate this map on $e(V)^{n+1}$ to
obtain a class $$\kappa_{n} \in \HH^{2n}( \BDiff(\Sigma_g); \Q).$$

\end{itemize}

The classes $\{ \kappa_n \}_{n > 0}$ determine a homomorphism of graded rings
$$ \Q[ \kappa_1, \kappa_2, \ldots ] \rightarrow \HH^{\ast}( \BDiff(\Sigma_g); \Q),$$
where the grading on the left hand side is determined by letting each $\kappa_n$ have
degree $2n$. The following result was conjectured by Mumford:

\begin{theorem}[Mumford Conjecture]\label{camber}
Fix a positive integer $n$. Then for all sufficiently large $g$ $($depending on $n${}$)$, the map
$$\Q[ \kappa_1, \kappa_2, \ldots ] \rightarrow \HH^{\ast}( \BDiff( \Sigma_g); \Q)$$
defined above is an isomorphism in degrees $\leq n$.
\end{theorem}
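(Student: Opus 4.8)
The plan is to deduce Theorem \ref{camber} from two deep inputs: Harer's homological stability theorem for mapping class groups, and the homotopy-theoretic identification of the surface cobordism category due to Galatius, Madsen, Tillmann, and Weiss (this is the special case of the cobordism hypothesis, for field theories valued in a Picard $\infty$-groupoid, referred to in the introduction to \S \ref{glob2}). First I would carry out a reduction to the stable range. Harer's theorem asserts that the stabilization maps $\BDiff(\Sigma_{g,1}) \to \BDiff(\Sigma_{g+1,1})$ obtained by gluing on a one-holed torus, together with the maps comparing surfaces with different numbers of boundary components, induce isomorphisms on $\HH^{\ast}(-;\Q)$ in a range of degrees that grows linearly in $g$. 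Since each $\kappa_n$ is defined by fiber integration of a power of the vertical Euler class, it is preserved by all of these maps; consequently the statement ``the displayed homomorphism is an isomorphism in degrees $\leq n$ for $g$ sufficiently large'' is equivalent to the single assertion that
$$ \Q[\kappa_1, \kappa_2, \ldots] \to \HH^{\ast}(B\Gamma_{\infty}; \Q) $$
is an isomorphism, where $\Gamma_{\infty} = \varinjlim_{g} \Gamma_{g,1}$ denotes the stable mapping class group.

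Next I would replace $B\Gamma_{\infty}$ by an infinite loop space. By Tillmann's theorem, the loop space of the classifying space of the two-dimensional oriented cobordism category is homotopy equivalent to $\Z \times B\Gamma_{\infty}^{+}$, where $(-)^{+}$ is Quillen's plus construction (which does not alter homology); this is a consequence of the group-completion theorem applied to the monoid of surfaces under gluing along boundary circles. The problem therefore reduces to computing the homotopy type of this classifying space, which is the content of the next step.

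The hard part will be the theorem of Galatius--Madsen--Tillmann--Weiss: the classifying space of the two-dimensional oriented cobordism category is weakly homotopy equivalent to $\Omega^{\infty-1} \MTSO(2)$, where $\MTSO(2) = \CP^{\infty}_{-1}$ denotes the Thom spectrum of the virtual bundle $-L$, with $L$ the tautological line bundle over $\BSO(2) = \CP^{\infty}$. I would use this to identify $\Z \times B\Gamma_{\infty}^{+}$ with $\Omega^{\infty} \MTSO(2)$, compatibly with the Madsen--Tillmann map $\BDiff(\Sigma_g) \to \Omega^{\infty} \MTSO(2)$ --- built from a parametrized Pontryagin--Thom construction applied to families of embedded surfaces --- in such a way that the universal characteristic classes on the target pull back to the Miller--Morita--Mumford classes $\kappa_n$. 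Proving this equivalence is the entire substance of the matter; it can be approached either by the original argument of Madsen and Weiss (combining Harer stability with a detailed analysis of spaces of surfaces and their scanning maps) or by the cobordism-category argument of Galatius--Madsen--Tillmann--Weiss (a parametrized Pontryagin--Thom construction together with a sheaf-theoretic study of spaces of manifolds). In the body of this section I will only sketch the main ideas.

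Granting this, the final step is a routine rational computation. The Thom isomorphism gives $\HH^{\ast}(\MTSO(2);\Q) \cong \HH^{\ast+2}(\CP^{\infty};\Q)$, so the rational homotopy groups of $\MTSO(2)$ are one-dimensional in each even degree $\geq -2$ and vanish otherwise; the nonnegative part, which is all that $\Omega^{\infty}$ sees, contributes a copy of $\Q$ in degrees $0, 2, 4, \ldots$. Since the base-point component of $\Omega^{\infty}$ of a spectrum is rationally a product of Eilenberg--MacLane spaces, its rational cohomology is the free graded-commutative algebra on the positive-degree rational homotopy groups, namely the polynomial algebra $\Q[x_2, x_4, x_6, \ldots]$ with $x_{2n}$ in degree $2n$. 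Identifying $x_{2n}$ with $\kappa_n$ --- which follows from the compatibility of the Madsen--Tillmann map with characteristic classes noted above, together with the fact that $\kappa_n$ is by construction the fiber integral of $e(V)^{n+1}$ --- yields the isomorphism $\Q[\kappa_1, \kappa_2, \ldots] \cong \HH^{\ast}(B\Gamma_{\infty};\Q)$, and combined with the stability reduction of the first step this proves the theorem.
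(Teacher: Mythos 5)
Your proposal follows the same route as the paper's sketch in \S\ref{mumm}: both reduce to the stable range via Harer stability (step $(i)$ of the paper's outline), then invoke Theorem \ref{singsung} to identify the stable target with $\Omega^{\infty}\MTSO(2)$, and conclude with the rational Thom-isomorphism computation matching $\Q[\kappa_1,\kappa_2,\ldots]$ against the free graded-commutative algebra on $\pi_{\ast}(\MTSO(2))\otimes\Q$ in positive degrees. The only presentational difference is that you make Tillmann's group-completion step explicit, whereas the paper folds it together with stability into its step $(i)$ via the maps $\eta_g\colon\BDiff(\Sigma_g)\to\Omega^2|\Bord_2^{\ori}|$.
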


\begin{remark}
For each $g \geq 0$, the group of connected components
$\pi_0 \Diff( \Sigma_g )$ is called the {\it mapping class group} of $\Sigma_g$ and denoted
by $\Gamma_g$.
If $g \geq 2$, then the projection map $\Diff(\Sigma_g) \rightarrow \pi_0 \Diff(\Sigma_g)$
is a homotopy equivalence. Consequently, we can identify
$\HH^{\ast}( \BDiff( \Sigma_g);\Q)$ with the rational cohomology of the discrete group
$\Gamma_g$.
\end{remark}

Theorem \ref{camber} was proven by Madsen and Weiss in \cite{madsenweiss}. Our goal in this section
is to explain the relationship between their proof (at least in its modern incarnation) and
the cobordism hypothesis. For this, we need to introduce a bit of terminology.

\begin{notation}
Let $X_{\bigdot}$ be a simplicial space. We define a new topological space
$| X_{\bigdot} |$, the {\it geometric realization} of $X_{\bigdot}$, as the coequalizer
of the diagram
$$\xymatrix{ \coprod_{ f: [m] \rightarrow [n]} X_{n} \times \Delta^m
 \ar@<.4ex>[r] \ar@<-.4ex>[r] & \coprod_{n} X_{n} \times \Delta^n }.$$
In other words, $| X_{\bigdot} |$ is the space obtained by gluing together the products
$X_{n} \times \Delta^n$ in the pattern specified by the simplicial structure of $X_{\bigdot}$.

More generally, suppose that $X$ is a $k$-fold simplicial space. We define
the geometric realization $|X|$ of $X$ to be the coequalizer
$$\xymatrix{ \coprod_{ \{ f_i: [m_i] \rightarrow [n_i] \}_{ 1 \leq i \leq k}} X_{n_1, \ldots, n_k} \times \Delta^{m_1} \times \ldots \times \Delta^{m_k} 
 \ar@<.4ex>[r] \ar@<-.4ex>[r] & \coprod_{n_1, \ldots, n_k} X_{n_1, \ldots, n_k} \times \Delta^{n_1} \times \ldots \times \Delta^{n_k} }.$$
\end{notation}

\begin{remark}\label{aio}
Let $X$ be a $k$-fold simplicial space. Then we can view $X$ as a diagram in the category of
topological spaces. The geometric realization $|X|$ can be identified with the {\it homotopy colimit} of this diagram. In other words, the construction $X \mapsto |X|$ is left adjoint (at the level of homotopy categories) to the functor which carries a topological space $Y$ to the constant $k$-fold simplicial space taking the value $Y$.
\end{remark}

\begin{remark}\label{spiela}
According to Thesis \ref{cope3}, we can identify $(\infty,0)$-categories with topological spaces.
Suppose that $X$ is an $n$-fold Segal space, which determines an $(\infty,n)$-category
$\calC$ as explained in \S \ref{bigseg}. Then the geometric realization $|X|$ can be viewed as an $(\infty,0)$-category, and Remark \ref{aio} implies that $|X|$ is {\it universal} among $(\infty,0)$-categories
equipped with a functor $\calC \rightarrow |X|$. In other words, we can think of the geometric
realization $|X|$ as encoding the $(\infty,0)$-category obtained from $\calC$ by formally inverting
all $k$-morphisms for $1 \leq k \leq n$.
\end{remark}

\begin{example}\label{camma}
Let $X$ denote the $n$-fold Segal space $\untBord_{n}^{\ori}$ used to define
$\Bord_{n}^{\ori}$ in \S \ref{swugg}. The geometric realization $|X|$ has a canonical base point $\ast$, given by the point of $X_{0,\ldots,0}$ supplied by the empty set. 

Suppose that $M$ is a closed oriented manifold of dimension $n$. Then there exists a tangential
embedding $f$ of $M$ into $(0,1)^{n} \times \R^{\infty} \times \BSO(n)$, which determines a
point of the space $X_{1,\ldots, 1}$. This point in turn determines a map of topological spaces
$$\Delta^1 \times \ldots \times \Delta^1 \rightarrow |X|,$$
which carries the boundary of the cube $\Delta^1 \times \ldots \times \Delta^1$ to the base point
of $|X|$: this determines an element of the homotopy group $\pi_n( |X|, \ast)$ which we will
denote by $[M]$; it is not difficult to see that the homotopy class $[M]$ is independent of the choice of $f$.

The construction $M \mapsto [M]$ is completely functorial, and makes sense for families of manifolds.
In particular, we can apply this construction to the {\em universal} bundle of manifolds with fiber
$M$, whose base is the classifying space $\BDiff(M)$. This classifying space can be identified with
a suitable path component of $X_{1, \ldots, 1}$, so we get a map
$$ \Delta^1 \times \ldots \times \Delta^1 \times \BDiff(M) \rightarrow |X|,$$
which carries the product of $\BDiff(M)$ with the boundary of the cube $\Delta^1 \times \ldots \times \Delta^1$ to the base point of $|X|$. This can also be interpreted as a map
of topological spaces $\BDiff(M) \rightarrow \Omega^n |X|$, where
$\Omega^n |X|$ denotes the $n$th loop space of $|X|$. Composing with the completion map
$X \rightarrow \Bord_{n}^{\ori}$, we get a map
$\BDiff(M) \rightarrow \Omega^{n} | \Bord_{n}^{\ori} |$. 
\end{example}

Returning to the case of surfaces, we observe that for every genus $g \geq 0$ Example
\ref{camma} provides a map $\eta_{g}: \BDiff( \Sigma_g) \rightarrow \Omega^2 | \Bord_2^{\ori} |$.
Let $Y_g$ denote the path component of $\Omega^2 | \Bord_2^{\ori} |$ containing the image
of $\eta_g$. To prove Theorem \ref{camber}, it suffices to do the following: 

\begin{itemize}
\item[$(i)$] Prove that the induced map on cohomology
$$ \HH^{n}( Y_g ; \Q) \rightarrow
\HH^{n}( \BDiff(\Sigma_g); \Q)$$
is an isomorphism for all sufficiently large $g$ (depending on $n$).
\item[$(ii)$] Compute the cohomology groups $\HH^{\ast}( \Omega^2 | \Bord^{\ori}_2 | ; \Q)$
(which contain the cohomology groups of each component $Y_g \subseteq \Omega^2 | \Bord^{\ori}_2 |$
as direct factors).
\end{itemize}

Step $(i)$ involves delicate geometric arguments which are very specific to
manifolds of dimension $2$ (such as the Harer stability theorem). However, step $(ii)$ has an analogue which is true in any dimension. Moreover, it is possible to be much more precise: we can describe not just the rational cohomology of the space $\Omega^2 | \Bord^{\ori}_2 |$, but the entire homotopy type of
the classifying space $| \Bord^{\ori}_2 |$ itself:

\begin{theorem}[Galatius-Madsen-Tillmann-Weiss, \cite{soren1}]\label{singsung}
Let $n \geq 0$ be an integer. Then the geometric realization $| \Bord^{\ori}_n |$ is homotopy
equivalent to the $0$th space of the spectrum $\Sigma^{n} \MTSO(n)$. Here
$\MTSO(n)$ denotes the Thom spectrum of the virtual bundle $- \zeta$, where
$\zeta$ is the universal rank $n$-vector bundle over the classifying space $\BSO(n)$.
\end{theorem}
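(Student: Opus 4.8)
This is the Galatius–Madsen–Tillmann–Weiss theorem in its $(\infty,n)$-categorical incarnation (the case $n=1$ is \cite{soren1}; the multi-dimensional statement follows by the same methods, now phrased in the language of the $n$-fold Segal spaces of \S\ref{swugg}), so the ``proof'' is really a matter of matching models and then invoking GMTW. I would organize the argument in three stages: first, reinterpret the geometric realization $|\Bord_n^{\ori}|$ as a \emph{space of manifolds}; second, map that space to $\Omega^{\infty}\Sigma^n\MTSO(n)$ by a parametrized Pontryagin–Thom (``scanning'') construction; third, show that map is a weak homotopy equivalence. For the first stage, recall from \S\ref{swugg} that $\Bord_n^{\ori}$ is the completion of the $n$-fold Segal space $\untBord_n^{\ori} = \varinjlim_V \untBord_n^{V}$, whose $(k_1,\dots,k_n)$-simplices consist of an oriented $n$-manifold $M \subseteq V \times \R^n$ with $M \to \R^n$ proper together with a system of ``cut levels'' $t^i_j$ at which certain coordinate projections restricted to $M$ are submersive. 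By Remark~\ref{spiela}, $|\Bord_n^{\ori}| = |\untBord_n^{\ori}|$ is the homotopy colimit of this $n$-fold simplicial space --- the result of letting the $t^i_j$ slide, collide, and disappear. I would check that this homotopy colimit is weakly equivalent to the Galatius space $\Psi = \varinjlim_V \Psi(V)$, where $\Psi(V)$ is the space of all closed oriented $n$-dimensional submanifolds $M \subseteq V \times \R^n$ whose projection to $\R^n$ is proper, with no surviving submersivity constraints: the cut and submersivity data assemble into a contractible-fibered ``bookkeeping'' layer over $\Psi$, and collapsing it is exactly what geometric realization does. This reduction is essentially formal, but it is the step that pays the price of the $n$-fold Segal formalism: the Segal conditions (A1) and the ``stretch to infinity'' device of \S\ref{swugg} must be packaged into a genuine weak equivalence, and the oriented tangential structure carried coherently through $\varinjlim_V$.

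For the second stage, fix $V \cong \R^N$ finite-dimensional. There is a scanning map $\Psi(V) \to \Omega^n\bigl(\mathrm{Th}(\gamma^{\perp})\bigr)$, where $\gamma^{\perp}$ is the orthogonal complement of the tautological bundle over the oriented Grassmannian of $n$-planes in $\R^{N+n}$: given $(M,v)$ with $v \in \R^n$, one zooms in near the slice of $M$ over $v$, so that in the limit $M$ becomes its tangent $n$-plane through a nearby point (carrying its orientation), yielding a point of the Thom space, or becomes empty, yielding the basepoint. Carrying this out in all $n$ scanning directions at once and letting $N \to \infty$, the targets organize into $\Omega^{\infty}$ of the spectrum $\Sigma^n\MTSO(n)$, whose $m$-th space is the Thom space of $\gamma^{\perp}\oplus\underline{\R}^{n}$ over the oriented Grassmannian of $n$-planes in $\R^{m}$, i.e.\ of $\underline{\R}^n-\zeta$ over $\BSO(n)$. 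Composing with the first stage produces a natural map $|\Bord_n^{\ori}| \to \Omega^{\infty}\Sigma^n\MTSO(n)$.

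The third stage is the geometric core and the main obstacle. Following Galatius's reworking of \cite{soren1}, one shows that $U \mapsto \Psi(U)$, for $U$ ranging over open subsets of $\R^{\infty}\times\R^n$ (the properness condition being imposed only on the last $\R^n$), is a \emph{homotopy sheaf} --- it satisfies descent for open covers --- the essential input being that families of manifolds can be modified to realize any prescribed local-to-global gluing pattern, which one proves by parametrized surgery below the middle dimension, i.e.\ by a microflexibility/$h$-principle argument in the sense of Gromov. The target $\Omega^{\infty}\Sigma^n\MTSO(n)$, viewed through the same $n$ scanning variables, is likewise a homotopy sheaf, and the scanning map is a map of homotopy sheaves which is a weak equivalence over small balls in $\R^n$ (a ball's worth of proper manifolds degenerates to the space of tangent $n$-planes, the local model for the Thom space); descent then upgrades this local equivalence to a global one. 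Equivalently, one may run the original GMTW chain of reductions: cobordism category $\rightsquigarrow$ space of manifolds $\rightsquigarrow$ space of manifolds with a trivialized normal bundle $\rightsquigarrow$, via the Phillips--Gromov--Lees submersion theorem (which replaces ``$M$ proper over $\R^n$'' by a formal bundle map), the Grassmannian model. Either way, this stage \emph{is} the content of \cite{soren1}; everything else is Segal-space bookkeeping, so the real work of the proposal lies in the first stage, identifying the model of \S\ref{swugg} with the Galatius space. A recurring minor nuisance throughout --- keeping the oriented (rather than framed or unstructured) tangential structure coherent under scanning and descent --- is handled uniformly by carrying an $\SO(n)$-structure, in the sense of Notation~\ref{cusper2}, at every step.
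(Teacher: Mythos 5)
The paper does not actually prove Theorem \ref{singsung}: it is cited as a result of \cite{soren1} and used as input, so there is no ``paper's own proof'' to compare against. Your three-stage sketch is a faithful outline of the Galatius--Madsen--Tillmann--Weiss argument (space of manifolds, scanning/Pontryagin--Thom, homotopy-sheaf descent), and in particular the first stage --- identifying $|\Bord_n^{\ori}|$, built from the $n$-fold Segal space of \S\ref{swugg}, with the Galatius space $\Psi$ --- is precisely the place where the $(\infty,n)$-categorical formulation of the present paper has to be reconciled with the model used in \cite{soren1}. That reconciliation is real work (it is a non-formal statement that inverting all morphisms in the $n$-fold bordism category gives the same classifying space as inverting only the top-dimensional bordisms in the $(\infty,1)$-category $\tCob(n)$), and your write-up correctly flags it as the step you owe.

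It is worth noting, since you are submitting this as a proof for this paper, that the paper also offers two alternative framings that you do not take up. First, in \S\ref{mumm} it observes that Theorem \ref{singsung} is equivalent (in the case $G = \SO(n)$) to the group-completed cobordism hypothesis (Theorem \ref{swisher5.5}), which is in turn a formal consequence of Theorem \ref{swisher4}. Second, Remark \ref{ilt} explains how the unoriented analogue of Theorem \ref{singsung} can be recovered by induction on $n$ from Theorem \ref{swisher9} (the infinitesimal/cofiber-sequence version). But both of these routes are circular as \emph{proofs} of \ref{singsung} in the paper's own logic: the proof of the cobordism hypothesis in \S\ref{kumma} relies on Theorem \ref{swisher9}, and the paper says explicitly that Theorem \ref{swisher9} is ``proven using the methods developed by Galatius, Madsen, Tillmann, and Weiss.'' In other words, the geometric content you place in your third stage (parametrized surgery / microflexibility / sheaf descent, or equivalently the Phillips--Gromov--Lees $h$-principle) is genuinely the bottom of the dependency graph, and your proposal correctly locates it there rather than pretending the cobordism hypothesis machinery can replace it. One small inaccuracy: \cite{soren1} treats all $n$ at once for the $(\infty,1)$-categorical cobordism category, not just $n = 1$; what is new at the level of this theorem is the passage from the $1$-fold to the $n$-fold Segal space model, not the dimension of the manifolds.
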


\begin{remark}
The result of Galatius-Madsen-Tillmann-Weiss is actually somewhat more general than
Theorem \ref{singsung}; it can be formulated for manifolds with arbitrary structure group, as we will explain below.
\end{remark}

\begin{remark}\label{od}
Theorem \ref{singsung} can be regarded as a generalization of a classical result of 
Thom on the bordism groups of manifolds. Recall that a pair of closed oriented manifolds
$M$ and $N$ of the same dimension $d$ are said to be {\it cobordant} if there is a bordism
from $M$ to $N$: that is, an oriented manifold $B$ of dimension $(d+1)$ whose boundary is diffeomorphic with $\overline{M} \coprod N$. Cobordism is an equivalence relation on manifolds, and the set of equivalence classes $\Omega_{d}$ has the structure of an abelian group (given by disjoint unions of manifolds). In \cite{thom}, Thom showed that the calculation of the groups $\{ \Omega_{d} \}_{d \geq 0}$ could be reduced to a problem of homotopy theory. More precisely, there exists a pointed topological space $X$ and a sequence of isomorphisms $\Omega_{d} \simeq \pi_{d} X$. Moreover,
the space $X$ admits a direct construction in the language of algebraic topology: it is the $0$th space
of what is now callled the {\it Thom spectrum} $\MSO$. 

In the language of higher category theory, we might predict the existence of the space $X$ on the following grounds. Let $\calC$ be the higher category described as follows:
\begin{itemize}
\item The objects of $\calC$ are oriented $0$-manifolds.
\item The $1$-morphisms of $\calC$ are bordisms between oriented $0$-manifolds.
\item The $2$-morphisms of $\calC$ are bordisms between bordisms between oriented $0$-manifolds.
\item \ldots
\end{itemize}
Here it is sensible to view $\calC$ as an $(\infty,0)$-category: for every $k$-morphism $B: M \rightarrow N$ in $\calC$, the same manifold with the opposite orientation defines a bordism $\overline{B}: N \rightarrow M$ which can be taken as an inverse to $B$. According to Thesis \ref{cope3}, we should
expect the existence of a topological space $X$ whose fundamental $\infty$-groupoid
is equivalent to $\calC$. Unwinding the definitions, we learn that the homotopy groups $\pi_d X$
can be identified with the bordism groups $\Omega_{d}$.

The higher category $\calC$ can be viewed as the direct limit of the $(\infty,n)$-categories
$\Bord_{n}$ as $n$ grows. Consequently, the space $X$ can be constructed as the direct
limit of the classifying spaces $| \Bord_n |$. Invoking Theorem \ref{singsung}, we deduce that
$X$ is equivalent to the zeroth space of the spectrum given by the direct limit
$\varinjlim \Sigma^{n} \MTSO(n)$. This direct limit coincides with the Thom spectrum $\MSO$, essentially by definition: consequently, Thom's result can be recovered as a limiting case of
Theorem \ref{singsung}. 
\end{remark}

Our goal for the remainder of this section is to explain the relationship between Theorem \ref{singsung} and the cobordism hypothesis. To begin, suppose that we are given an $(\infty,n)$-category $\calD$. As explained in Remark \ref{spiela}, we can extract from $\calD$ a topological space $| \calD |$, whose fundamental $\infty$-groupoid can be viewed as the $(\infty,0)$-category obtained from $\calD$ by inverting all $k$-morphisms for $1 \leq k \leq n$. We can rephrase this universal property as follows: let $X$ be any topological space having fundamental $\infty$-groupoid $\calC$. Then isomorphism classes
of functors $F: \calD \rightarrow \calC$ can be identified with homotopy classes of continuous maps
$| \calD | \rightarrow X$.
 
Suppose now that the $(\infty,n)$-category $\calD$ is equipped with a symmetric monoidal tensor product operation $\otimes: \calD \times \calD \rightarrow \calD$. This operation induces a continuous map $| \calD | \times | \calD | \rightarrow | \calD |$, which is commutative, associative and unital up to coherent homotopy. Suppose that this multiplication induces a group structure on $\pi_0 | \calD |$
(in other words, that every point of $| \calD |$ has an inverse in $| \calD |$ up to homotopy): this is automatic, for example, if every object of $\calD$ has a dual. As in Example \ref{slipper}, we deduce
that $| \calD |$ is an infinite loop space: that is, there exists a sequence of pointed spaces
$Y(0) = | \calD|, Y(1), Y(2), \ldots$ together with homotopy equivalences $Y(n) \simeq \Omega Y(n+1)$.
Moreover, this infinite loop space can be again be characterized by a universal property.
Suppose that $X$ is another infinite loop space, so that the fundamental $\infty$-groupoid $\calC$
has the structure of a Picard $\infty$-groupoid (see Example \ref{stayn}). Then isomorphism
classes of {\em symmetric monoidal} functors $F: \calD \rightarrow \calC$ can be identified with homotopy classes of {\em infinite loop space} maps $| \calD | \rightarrow X$. Combining
this observation with Theorem \ref{swisher4}, we deduce the following description of the geometric realization of a bordism $(\infty,n)$-category:

\begin{theorem}[Cobordism Hypothesis, Group-Completed Version]\label{swisher5.5}
Let $G$ be a topological group equipped with a continuous homomorphism
$\chi: G \rightarrow \OO(n)$, and let $X$ be an infinite loop space $($so that $X$
carries an action of the group $G$ via the J-homomorphism, as explained in
Example \ref{slipper}$)$. Then the space of infinite loop maps
$\bHom( | \Bord_{n}^{G} |, X)$ is homotopy equivalent to the homotopy fixed point set
$X^{hG}$.
\end{theorem}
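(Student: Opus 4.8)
The plan is to deduce Theorem~\ref{swisher5.5} from Theorem~\ref{swisher4} by a judicious choice of target $(\infty,n)$-category, together with the universal property of geometric realization as a group-completion functor. Recall from Example~\ref{slipper} that an infinite loop space $X$ determines a Picard $\infty$-groupoid, which we may regard as a symmetric monoidal $(\infty,n)$-category $\calC$ with duals (Example~\ref{stayn}); since every morphism of $\calC$ is invertible, we have $\calC^{\sim} = \calC$, and the underlying space of $\calC^{\sim}$ is $X$ itself. Moreover, the $\OO(n)$-action on $\calC^{\sim}$ produced by Corollary~\ref{ail} is, by the analysis carried out in Example~\ref{slipper} (viewing $X \simeq \Omega^{n} X(n)$ and letting $\OO(n)$ act on the disk $D^{n}$), precisely the action obtained by restricting the classical $J$-homomorphism $\OO \to \operatorname{Aut}(X)$ along $\OO(n) \hookrightarrow \OO$; restricting further along $\chi$ recovers the $G$-action on $X$ fixed in the statement of the theorem.

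Granting this, I would proceed in three steps. First, apply Theorem~\ref{swisher4} to $\calC$ and $\chi\colon G \to \OO(n)$: this gives an equivalence of $\infty$-groupoids $\Fun^{\otimes}(\Bord_n^{G}, \calC) \simeq (\calC^{\sim})^{hG} = X^{hG}$. Second, identify the left-hand side with a space of infinite loop maps, using the universal property discussed in the paragraph preceding the theorem: the bordism category $\Bord_n^{G}$ has duals (the evident variant of Example~\ref{caperr} for manifolds with $G$-structure), so the commutative monoid $\pi_0 | \Bord_n^{G} |$ is a group, whence the $E_\infty$-space $| \Bord_n^{G} |$ is group-like and hence an infinite loop space; and for any infinite loop space $X$ with associated Picard $\infty$-groupoid $\calC$, symmetric monoidal functors $\Bord_n^{G} \to \calC$ correspond, naturally at the level of $\infty$-groupoids, to infinite loop maps $| \Bord_n^{G} | \to X$. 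Third, compose the two equivalences to obtain $\bHom(| \Bord_n^{G} |, X) \simeq X^{hG}$. One should also check that the equivalence produced this way is natural in $X$ and carries the $G$-action on one side to the $G$-action on the other throughout; this is automatic from the functoriality of each construction involved, given the identification of actions established in the first paragraph.

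The main obstacle is entirely contained in the universal property used in the second step: one must know that the geometric realization $| \calD |$ of a symmetric monoidal $(\infty,n)$-category $\calD$, equipped with the $E_\infty$-structure induced by $\otimes$, computes the group-completion (no completion being needed when $\pi_0$ is already a group), and that mapping \emph{out} of this $E_\infty$-space into a Picard $\infty$-groupoid, as an infinite loop map, is the same as mapping out of $\calD$ as a symmetric monoidal functor. Making this precise requires a careful comparison of the homotopy theory of symmetric monoidal $(\infty,n)$-categories with that of connective spectra — the relevant adjunction pairing ``underlying space of the maximal Picard quotient'' against ``Picard $\infty$-groupoid attached to a connective spectrum''. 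A secondary, more bookkeeping-style difficulty, already flagged above, is to pin down that the $G$-action threaded through Theorem~\ref{swisher4} (built from the $\OO(n)$-action of Corollary~\ref{ail} on $\calC^{\sim}$) literally agrees, not merely abstractly, with the $J$-homomorphism action on $X$; this is the one place where the concrete description in Example~\ref{slipper} must be invoked in earnest rather than in passing.
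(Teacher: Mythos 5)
Your proposal is correct and follows essentially the same route as the paper: the paper's argument is precisely to combine Theorem~\ref{swisher4} applied to the Picard $\infty$-groupoid of $X$ with the universal property (stated in the paragraph preceding the theorem) identifying symmetric monoidal functors $\Bord_{n}^{G} \rightarrow \calC$ with infinite loop maps $|\Bord_{n}^{G}| \rightarrow X$, using the existence of duals in $\Bord_{n}^{G}$ to conclude grouplikeness. Your extra care in matching the Corollary~\ref{ail} action with the $J$-homomorphism action is a point the paper delegates wholesale to Example~\ref{slipper}, but it is the same verification.
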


Theorem \ref{swisher5.5} completely determines the homotopy type of $| \Bord_{n}^{G} |$ as an infinite loop space. For example, if the group $G$ is trivial, then we deduce that $| \Bord_{n}^{G} |$ is
freely generated (as an infinite loop space) by a single point: this tells us that $| \Bord_{n}^{G} |$ is
equivalent to the stable sphere $Q S^0 \simeq \varinjlim_{k} \Omega^{k} S^k$. More generally,
we can identify $| \Bord_{n}^{G} |$ with the infinite loop space of homotopy {\em coinvariants}
$( Q S^0 )_{hG}$. This, in turn, can be identified with the $0$th space of a certain spectrum: namely, the $n$-fold suspension of the Thom spectrum of the virtue bundle $- \zeta_{\chi}$ on the classifying space $BG$. In the special case $G = \SO(n)$, we deduce that Theorems \ref{swisher5.5} and \ref{singsung} are equivalent to one another. In other words, Theorem \ref{singsung} can be regarded as a special case of the cobordism hypothesis.

\section{Proof of the Cobordism Hypothesis}\label{glob3}

Our objective in this section is to present a proof of the cobordism hypothesis (in its incarnation
as Theorem \ref{swisher5}). Because the argument is quite lengthy and requires a substantial amount of technology which we do not have the space to fully develop here, we will be content to give a sketch which highlights some of the main ideas; a detailed account will appear elsewhere.

For the reader's convenience, we begin by giving a basic summary of our strategy:
\begin{itemize}
\item[$(1)$] To prove the cobordism hypothesis, we need to show that the $(\infty,n)$-category
$\Bord_{n}$ and its variants can be characterized by universal properties. The first idea
is to try to establish these universal properties using induction on $n$. Roughly speaking,
instead of trying to describe $\Bord_{n}$ by generators and relations, we begin by assuming
that we have a similar presentation for $\Bord_{n-1}$; we are then reduced to describing
only the generators and relations which need to be adjoined to pass from $\Bord_{n-1}$ to $\Bord_{n}$. 
We will carry out this reduction in \S \ref{forty1}.

\item[$(2)$] Theorem \ref{swisher5} gives us a description of $\Bord_{n}^{(X,\zeta)}$ for any
topological space $X$ and any rank $n$ vector bundle $\zeta$ (with inner product) on $X$.
In \S \ref{slabun}, we will see that it suffices to treat only the universal case where $X$ is a classifying space $\BO(n)$ (and $\zeta$ is the tautological bundle on $X$). Roughly speaking, the idea
is to consider a topological field theory $Z: \Bord_{n}^{(X, \zeta)} \rightarrow \calC$ as
an unoriented topological field theory having a different target category, whose value
on a manifold $M$ is a collection of $\calC$-valued invariants parametrized by the space
of $(X, \zeta)$-structures on $M$. This reduction to the unoriented case is not logically necessary for the rest of the argument, but does result in some simplifications.

\item[$(3)$] In \S \ref{unf}, we will explain how the cobordism hypothesis (and many other assertions regarding symmetric monoidal $(\infty,n)$-categories with duals) can be reformulated entirely within the setting of $(\infty,1)$-categories. Again, this formulation is probably not logically necessary, but it does make the constructions of \S \ref{kuji} considerably more transparent.

\item[$(4)$] The bulk of the argument will be carried out in \S \ref{kuji}. Roughly speaking, we can view
$\Bord_{n}$ as obtained from $\Bord_{n-1}$ by adjoining new $n$-morphisms corresponding to
bordisms between $(n-1)$-manifolds. Using Morse theory, we can break any bordism up into a sequence of handle attachments, which give us ``generators'' for $\Bord_{n}$ relative to
$\Bord_{n-1}$. The ``relations'' are given by handle cancellations. The key geometric input for our argument is a theorem of Igusa, which asserts that the space of ``framed generalized Morse functions'' on a manifold $M$ is highly connected. This will allow us to prove the cobordism hypothesis
for a modified version of the $(\infty,n)$-category $\Bord_{n}$, which we will denote by
$\Bord_{n}^{\frun}$.

\item[$(5)$] In \S \ref{kumma}, we will complete the proof of the cobordism hypothesis by showing that
$\Bord_{n}^{\frun}$ is equivalent to $\Bord_{n}$. The key ingredients are a connectivity estimate
of Igusa (Theorem \ref{conig}) and an obstruction theoretic argument which relies on a cohomological calculation (Theorem \ref{swisher9}) generalizing the work of Galatius, Madsen, Tillmann, and Weiss.
\end{itemize}

\subsection{Inductive Formulation}\label{forty1}

Our original formulation of the cobordism hypothesis (Theorem \ref{swisher}) was stated
entirely in the setting of symmetric monoidal $n$-categories. In \S \ref{sugar}, we described a more general version (Theorem \ref{swisher2}), which describes bordism categories by a universal property
in the more general setting of $(\infty,n)$-categories. As we explained in Remark \ref{cultis}, this additional generality is crucial to our proof, which uses induction on $n$: even if we are ultimately only interested in understanding tensor functors $Z: \Bord_{n} \rightarrow \calC$ in the case where $\calC$ is an ordinary $n$-category, we will need to understand the restriction of $Z$ to $\Bord_{n-1}$, which takes values in the $(n,n-1)$-category obtained from $\calC$ by discarding the noninvertible $n$-morphisms. Our goal in this section is to outline the inductive step of the proof: namely, we will explain how to deduce the cobordism hypothesis in dimension $n$ from the cobordism hypothesis in dimension $n-1$, together with another statement (Theorem \ref{swisher6}) which describes the relationship between
$\Bord_{n}$ and $\Bord_{n-1}$. 

Throughout this section, we will fix an integer $n \geq 2$, a topological space $X$, and
a real vector bundle $\zeta$ of rank $n$ on $X$, equipped with an inner product (our discussion will apply also in the case $n=1$, but some of the notation needs to be modified). Let
$\widetilde{X} = \{ (x,f): x \in X, f: \R^n \simeq \zeta_x \}$ denote the bundle of orthonormal
frames of $\zeta$, so that $\widetilde{X}$ is a principal $\OO(n)$-bundle over $X$.
Let $X_0$ denote the unit sphere bundle
$\{ (x,v): x \in X, v \in \zeta_{x}, |v| = 1 \}$ of $\zeta$, and let
$\zeta_0 = \{ (x,v,w): (x,v) \in X_0, w \in \zeta_x, (v,w) = 0 \}$ denote the induced 
$(n-1)$-dimensional vector bundle on $X_0$. We observe that the bundle of orthonormal
frames of $\zeta_0$ can also be identified with $\widetilde{X}$, so we have a homeomorphism
$X_0 = \widetilde{X} / \OO(n-1)$.

Let $p: X_0 \rightarrow X$ denote the projection map. We have a canonical isomorphism of
vector bundles $\zeta_0 \oplus \underline{\R} \simeq p^{\ast} \zeta$. Moreover,
$(X_0, \zeta_0)$ is {\em universal} among vector bundles of rank $n-1$ with this property.
It follows that if $M$ is a manifold of dimension $< n$, then the data of an $(X_0, \zeta_0)$-structure on $M$ is equivalent to the data of an $(X, \zeta)$-structure on $M$.
We obtain a map of bordism categories
$$ i: \Bord_{n-1}^{(X_0, \zeta_0)} \rightarrow \Bord_{n}^{(X,\zeta)}.$$

\begin{remark}
Roughly speaking, we can think of $i$ as an inclusion functor: we have included the
$(\infty,n-1)$-category $\Bord_{n-1}^{(X_0, \zeta_0)}$ (in which $n$-morphisms are given by diffeomorphisms between $(n-1)$-manifolds) into a larger $(\infty,n)$-category $\Bord_{n}^{(X,\zeta)}$ (in which $n$-morphisms are given by bordisms between $(n-1)$-manifolds). It is tempting to assume that $\Bord_{n-1}^{(X_0, \zeta_0)}$ is obtained from $\Bord_{n}^{(X, \zeta)}$ by discarding the noninvertible $n$-morphisms. However, this is not always correct: for large values of $n$, the invertible $n$-morphisms in $\Bord_{n}^{(X, \zeta)}$ are given by
$h$-cobordisms between $(n-1)$-manifolds. Such an $h$-cobordism need not arise from a diffeomorphism between the underlying manifolds without assumptions of simple-connectivity.
\end{remark}

\begin{remark}
Strictly speaking, the map $i: \Bord_{n-1}^{(X_0, \zeta_0)} \rightarrow \Bord_{n}^{(X,\zeta)}$ depends
on a choice of isomorphism $\alpha: \zeta_0 \oplus \underline{\R} \simeq p^{\ast} \zeta$. Our
choice will be normalized by the following requirements:
\begin{itemize}
\item[$(i)$] The restriction of $\alpha$ to the factor $\zeta_0$ reduces to the canonical
inclusion of $\zeta_0 \simeq \{ (x,v,w): x \in X; v,w \in \zeta_x; (v,w) = 0 \}$ into
$p^{\ast} \zeta \simeq \{ (x,v,w): x \in X; v,w \in \zeta_x \}$.
\item[$(ii)$] The restriction of $\alpha$ to the factor $\underline{\R}$ is given by the global
section $(x,v) \mapsto (x,v,v)$ of $p^{\ast} \zeta$.
\end{itemize}
However, there is another canonical normalization, where $(ii)$ is replaced by the following:
\begin{itemize}
\item[$(ii')$] The restriction of $\alpha$ to the factor $\underline{\R}$ is given by the global
section $(x,v) \mapsto (x,v,-v)$ of $p^{\ast} \zeta$.
\end{itemize}
This choice determines a {\em different} functor $i': \Bord_{n-1}^{(X_0, \zeta_0)}
\rightarrow \Bord_{n}^{(X,\zeta)}$.
\end{remark}

Our goal in this section is to study the difference between $\Bord_{n}^{(X,\zeta)}$ and
$\Bord_{n-1}^{(X_0, \zeta_0)}$. More precisely, we wish to analyze the problem of extending
a symmetric monoidal functor $Z_0: \Bord_{n-1}^{(X_0, \zeta_0)} \rightarrow \calC$
to a symmetric monoidal functor $Z: \Bord_{n}^{(X,\zeta)} \rightarrow \calC$, where
$\calC$ is a symmetric monoidal $n$-category with duals. It turns out that extensions
of $Z_0$ are easy to classify: they are determined by the values of $Z$ on the class of
$n$-dimensional disks. To state this result more precisely, we need to introduce some terminology.

\begin{notation}
Let $\calC$ be a symmetric monoidal $(\infty,n)$-category, and let ${\bf 1}$ denote the unit object
of $\calC$. We let $\Omega \calC$ denote the symmetric monoidal $(\infty,n-1)$-category
$\OHom_{\calC}( {\bf 1}, {\bf 1})$. More generally, for each $k \leq n$, we let
$\Omega^{k} \calC$ denote the symmetric monoidal $(\infty,n-k)$-category $\Omega( \Omega^{k-1} \calC)$. We will refer to objects of $\Omega^{k} \calC$ as {\it closed $k$-morphisms} in $\calC$.
\end{notation}

\begin{example}
A closed $k$-morphism in $\Bord_{n}^{(X, \zeta)}$ is a closed $k$-manifold $M$ equipped with an $(X,\zeta)$-structure. In particular, for every point $x \in X$, the unit sphere $S^{\zeta_x} = \{ w \in \zeta_x : |v| = 1 \}$ comes equipped with a canonical $(X_0, \zeta_0)$-structure, and can therefore be regarded as a closed $(n-1)$-morphism in $\Bord_{n-1}^{(X_0, \zeta_0)}$. 
\end{example}

Suppose that
we are given a symmetric monoidal functor $Z_0: \Bord_{n-1}^{(X_0, \zeta_0)} \rightarrow \calC$.
The construction $x \mapsto Z_0(S^{\zeta_x})$ determines a functor from $X$ to $\Omega^{n-1} \calC$, which we will denote by $\ZS$. 

Given a point $\overline{x} = (x,v) \in X_0$, we obtain a decomposition
of the sphere $S^{\zeta_x}$ into upper and lower hemispheres
$$ S^{\zeta_{x}}_{v,+} = \{ w \in \zeta_x: (v,w) \geq 0, |w| = 1 \}
\quad \quad S^{\zeta_{x}}_{v,-} = \{ w \in \zeta_x: (v,w) \leq 0, |w| = 1 \},$$
so that $S^{\zeta_{x}} = S^{\zeta_{x}}_{v,+} \coprod_{ S^{\zeta_x}_{v,0}} S^{\zeta_x}_{v,-}$
where $S^{\zeta_x}_{v,0} = S^{\zeta_x}_{v,+} \cap S^{\zeta_x}_{v,-} = \{ w \in \zeta_x: (v,w) = 0,
|w| = 1 \}$. Consequently, the $(n-1)$-morphism $S^{\zeta_x}$ in $\Bord_{n-1}^{(X_0, \zeta_0)}$
can be written as the composition of a pair of morphisms 
$$ \emptyset \stackrel{ S^{\zeta_{x}}_{v,-}}{\longrightarrow} S^{\zeta_{x}}_{v,0}
\stackrel{ S^{\zeta_{x}}_{v,+}}{\longrightarrow} \emptyset.$$
Composing with $Z_0$, we obtain a pair of morphisms
$$ {\bf 1} \stackrel{ H_{-}( \overline{x})}{\longrightarrow} H_0( \overline{x})
\stackrel{ H_{+}( \overline{x} )}{\longrightarrow} {\bf 1}$$
in the $(\infty,2)$-category $\Omega^{n-2} \calC$, whose composition is 
$\ZS(x) \in \Omega^{n-1} \calC$.

\begin{definition}\label{nondeg}
Suppose given a symmetric monoidal functor $Z_0: \Bord_{n-1}^{(X_0, \zeta_0)} \rightarrow
\calC$, where $\calC$ is a symmetric monoidal $(\infty,n)$-category with duals.
Let $x$ be a point of $X$, and let $\overline{x} \in X_0$ be a lift of $x$. We will say that
an $2$-morphism
$\eta: {\bf 1} \rightarrow \ZS(x) = H_{+}( \overline{x}) \circ H_{-}( \overline{x} )$ in $\Omega^{n-2} \calC$ is {\it nondegenerate at $\overline{x}$} if $\eta$ exhibits $H_{+}( \overline{x})$ as a right adjoint
to $H_{-}( \overline{x})$.
\end{definition}

\begin{remark}
Let $Z_0: \Bord_{n-1}^{(X_0, \zeta_0)} \rightarrow
\calC$ and $x \in X$ be as in Definition \ref{nondeg}.
Our assumption $n \geq 2$ implies that the $(n-1)$-sphere $S^{ \zeta_x}$ is connected.
Consequently, if a map $\eta: {\bf 1} \rightarrow \ZS(x)$ is nondegenerate at
{\em any} point $\overline{x} \in S^{\zeta_x}$, then it is nondegenerate at
{\em every} point of $S^{\zeta_x}$. In this case, we will simply say that
$\eta$ is {\em nondegenerate at $x$}.

When $n=1$, these notions need to be slightly revised. In this case, the object
$\ZS(x) \in \calC$ factors as a tensor product $H_{+}( \overline{x}) \otimes H_{-}( \overline{x})$.
We will say that a $1$-morphism $\eta: {\bf 1} \rightarrow \ZS(x)$ in $\calC$ is
{\em nondegenerate} if it exhibits $H_{+}( \overline{x} )$ as a dual of $H_{-}(\overline{x})$. 
The sphere $S^{ \zeta_x}$ is disconnected in this case. However, the condition that
$\eta$ be nondegenerate is still independent of the choice of $\overline{x}$, since
a map ${\bf 1} \rightarrow X \otimes Y$ exhibits $X$ as a dual of $Y$ if and only if it exhibits
$Y$ as a dual of $X$.
\end{remark}

\begin{example}\label{higtt}
Let $\calC$ be a symmetric monoidal $(\infty,n)$-category with duals, let $Z_0: \Bord_{n-1}^{(X_0, \zeta_0)} \rightarrow \calC$ be a symmetric monoidal functor, and suppose that $Z_0$ can be extended to a symmetric monoidal functor $Z: \Bord_{n}^{(X, \zeta)} \rightarrow \calC$.
For each $x \in X$, we can regard the unit disk $D^{\zeta_x} = \{ v \in \zeta_x: |v| \leq 1 \}$ as
a bordism from the empty manifold to $S^{\zeta_x}$; it therefore defines an $n$-morphism
$D^{\zeta_x}: \emptyset \rightarrow S^{\zeta_x}$ in $\Bord_{n}^{(X,\zeta)}$. Applying the functor
$Z$, we obtain a nondegenerate $n$-morphism $\eta_{x} = Z( D^{\zeta_x}): {\bf 1}
\rightarrow \ZS(x)$. 
\end{example}

Example \ref{higtt} admits the following converse, which is the basis of our inductive approach to the cobordism hypothesis:

\begin{theorem}[Cobordism Hypothesis, Inductive Formulation]\label{swisher6}
Let $\calC$ be a symmetric monoidal $(\infty,n)$-category with duals, and let 
$Z_0: \Bord_{n-1}^{(X_0, \zeta_0)} \rightarrow \calC$ be a symmetric monoidal functor.
The following types of data are equivalent:
\begin{itemize}
\item[$(1)$] Symmetric monoidal functors $Z: \Bord_{n}^{(X, \zeta)} \rightarrow \calC$
extending $Z_0$.
\item[$(2)$] Families of nondegenerate $n$-morphisms
$\eta_x: {\bf 1} \rightarrow Z_0(S^{\zeta_x})$ in $\calC$, parametrized
by $x \in X$.
\end{itemize}
The equivalence is given by assigning a symmetric monoidal functor
$Z: \Bord_{n}^{(X, \zeta)} \rightarrow \calC$ the collection of nondegenerate
$n$-morphisms $\{ \eta_x = Z(D^{\zeta_x}) \}_{x \in X}$ of Example \ref{higtt}.
\end{theorem}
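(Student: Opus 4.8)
The plan is to recast Theorem \ref{swisher6} as a \emph{universal property} of $\Bord_{n}^{(X,\zeta)}$ relative to $\Bord_{n-1}^{(X_0,\zeta_0)}$: I would show that $\Bord_{n}^{(X,\zeta)}$ is obtained from $\Bord_{n-1}^{(X_0,\zeta_0)}$ by adjoining, for each $x \in X$, an $n$-morphism $\eta_x\colon {\bf 1}\to S^{\zeta_x}$ freely subject to the single relation that $\eta_x$ be nondegenerate (an adjunction unit, in the sense of Definition \ref{nondeg}) — and that, moreover, these adjunctions can be chosen coherently as $x$ ranges over $X$. Granting such a presentation, a symmetric monoidal functor $Z$ extending $Z_0$ is precisely a compatible choice of targets for the generators $\eta_x$, which is exactly the data in $(2)$; and by Example \ref{higtt} the disk $D^{\zeta_x}$ realizes the universal such generator, so the resulting correspondence is the one $Z\mapsto\{Z(D^{\zeta_x})\}$ asserted in the theorem. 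The implication $(1)\Rightarrow(2)$, including the nondegeneracy of $\eta_x=Z(D^{\zeta_x})$, is already contained in Example \ref{higtt}; all the content is in producing the extension from the data $(2)$ and verifying its coherence.

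First I would treat the \textbf{generators}. Given an $n$-morphism $W$ in $\Bord_{n}^{(X,\zeta)}$, i.e.\ an $n$-dimensional bordism between $(n-1)$-manifolds carrying an $(X,\zeta)$-structure, choose a Morse function $W\to[0,1]$ adapted to the boundary. This exhibits $W$ as a composite of elementary bordisms, one per critical point; an elementary bordism with a single index-$k$ critical point is, modulo $(n-1)$-dimensional bordisms already living in the image of $Z_0$, the trace of a handle attachment, and can be written as a composition built from $D^{\zeta_x}$, its adjoint, and the evaluation/coevaluation morphisms furnished by the duality data on $\Bord_{n-1}^{(X_0,\zeta_0)}$ (here the hypothesis that $\calC$ has duals is essential). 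Hence $Z(W)$ is forced to be the corresponding composite of $Z_0$-values and the chosen $\eta_x$'s: this simultaneously shows that an extension is unique if it exists and supplies a candidate formula for it.

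Next comes the \textbf{relations}: the candidate $Z(W)$ must be shown independent of the Morse function and to assemble into a symmetric monoidal functor. Two Morse functions on $W$ are joined by a path in the space of smooth functions which, after perturbation, meets only birth--death singularities and isolated coincidences of critical values; each such event is a handle cancellation or handle slide. The key point is that every such move translates into an identity in $\calC$ that holds automatically once $\eta_x$ is an adjunction unit — a handle cancellation is a zigzag identity, and the uniqueness built into Lemma \ref{simptom} guarantees there is nothing further to check. To know that these moves suffice to connect any two decompositions, and — crucially — that the decompositions can be chosen coherently as $W$ varies in families and as the tangential structure varies, I would pass to a variant bordism category $\Bord_{n}^{\frun}$ built from framed generalized Morse functions, for which the presentation is transparent, and invoke Igusa's theorem that the space of framed generalized Morse functions on a manifold is highly connected (Theorem \ref{conig}). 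One then proves Theorem \ref{swisher6} for $\Bord_{n}^{\frun}$ and separately identifies $\Bord_{n}^{\frun}$ with $\Bord_{n}$, which rests on the cohomological computation of Theorem \ref{swisher9} generalizing Galatius--Madsen--Tillmann--Weiss; this last step is carried out in \S \ref{kuji} and \S \ref{kumma}.

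The main obstacle is exactly the coherence in that last step. At the level of \emph{objects} it is essentially classical that handle decompositions give generators and handle cancellations give relations; upgrading this to an equivalence of $(\infty,n)$-categories requires controlling the full homotopy type of the space of handle decompositions, naturally in the manifold and in its $(X,\zeta)$-structure. The detour through $\Bord_{n}^{\frun}$ together with Igusa's connectivity estimate is precisely what makes this manageable, and it is there — rather than in the purely formal adjunction calculus inside $\calC$ — that the real work lies.
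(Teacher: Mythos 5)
Your proposal follows the same route as the paper's: Morse-theoretic handle decompositions supply generators, handle cancellations translate into adjunction zigzags (with Lemma \ref{simptom} providing the uniqueness), coherence across families is secured by passing to $\Bord_{n}^{\frun}$ and invoking Igusa's connectivity of the space of framed generalized Morse functions (Theorem \ref{conig}), and the final comparison $\Bord_{n}^{\frun}\simeq\Bord_{n}$ rests on the GMTW-type cohomological calculation (Theorem \ref{swisher9}) together with obstruction theory — exactly the strategy of \S\ref{kuji} and \S\ref{kumma}. The one imprecision worth noting is that the higher-index handles $\alpha_k$ for $k\geq 2$ are not literally composites of $D^{\zeta_x}$, its adjoint, and ev/coev; in the paper they are instead pinned down as the unique compatible (co)units of adjunctions, via the index filtration $\calF_{-1}\to\cdots\to\calF_n$ and the exchange principle (Lemma \ref{swath}), which carries more weight than ``a zigzag identity with nothing further to check'' suggests.
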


\begin{remark}
Theorem \ref{swisher6} can be stated a bit more simply in the case where the space
$X$ is path connected. In this case, we can assume without loss of generality that
$X$ is a classifying space $BG$, where $G$ is a topological group equipped with a continuous homomorphism $\chi: G \rightarrow \OO(n)$. Assume further that $\chi$ is a fibration, and let $G_0$
denote the subgroup $G \times_{\OO(n)} \OO(n-1) \subseteq G$ so that we can identify
$X_0$ with the classifying space $BG_0$. If $\calC$ is a symmetric monoidal $(\infty,n)$-category with duals, then Theorem \ref{swisher6} asserts that symmetric monoidal functors $Z: \Bord_{n}^{G} \rightarrow \calC$ are determined by two pieces of data:
\begin{itemize}
\item[$(i)$] The restriction $Z_0 = Z | \Bord_{n-1}^{G_0}$. In this case, we can evaluate
$Z_0$ on the $n$-sphere $S^{n-1}$ to obtain a closed $(n-1)$-morphism
$Z_0( S^{n-1} )$ of $\calC$. The orthogonal group acts by diffeomorphisms
on $S^{n-1}$, and the resulting action of $G$ on $S^{n-1}$ is compatible with the
$G$-structure on $S^{n-1}$ determined by the stable framing $T_{S^{n-1}} \oplus \underline{ \R} 
\simeq \underline{ \R}^n$. Consequently, the topological group $G$ acts on the object
$Z_0(S^{n-1}) \in \Omega^{n-1} \calC$.

\item[$(ii)$] A $G$-equivariant $n$-morphism $\eta \in \bHom_{\Omega^{n-1} \calC}( {\bf 1}, Z_0(S^{n-1}) )$ which satisfies the nondegeneracy condition described in Definition \ref{nondeg}.
This $n$-morphism is given by evaluating $Z$ on the the $n$-disk $D^{n} = \{ v \in \R^n: |v| \leq 1 \}$.
\end{itemize}
\end{remark}

We will outline the proof of Theorem \ref{swisher6} later in this paper.
Our goal for the remainder of this section is to explain how Theorem \ref{swisher6}
can be used to prove earlier incarnations of the cobordism hypothesis (Theorems
\ref{swisher3} and \ref{swisher5}). We will prove these results by a simultaneous induction on $n$.

\begin{remark}
It is necessary to discuss Theorems \ref{swisher3} and \ref{swisher5} individually, because
Theorem \ref{swisher5} cannot even be formulated without assuming Theorem \ref{swisher3}
(we need some form of the cobordism hypothesis to define the action of $\OO(n)$ on
the underlying
$\infty$-groupoid of a symmetric monoidal $(\infty,n)$-category with duals).
\end{remark}

\begin{proof}[Proof of Theorem \ref{swisher3}]
Our concern in this case is the framed bordism $(\infty,n)$-category $\Bord_{n}^{\fr}$, which
coincides with $\Bord_{n}^{(X, \zeta)}$ in the case where $X$ is a single point. Let
$\{ v_1, v_2, \ldots, v_n \}$ be an orthonormal basis for the vector space $\zeta$.
The choice of such a basis determines an identification of $\widetilde{X}$ with the orthogonal
group $\OO(n)$, and of $X_0$ with the standard $(n-1)$-sphere $S^{n-1} = \{ v \in \R^n: |v| = 1 \}$. 
Let $\calC$ be a symmetric monoidal $(\infty,n)$-category with duals.
We wish to prove that the groupoid $\Fun^{\otimes}( \Bord_{n}^{(X, \zeta)}, \calC)$ is
equivalent to $\calC^{\sim}$, the equivalence being implemented by the functor
$Z \mapsto Z(\ast)$. In proving this, we will assume that Theorem \ref{swisher6} holds in dimensions
$\leq n$, and that Theorems \ref{swisher3} and \ref{swisher5} hold in dimension $< n$. We now apply these assumptions as follows:

\begin{itemize}
\item[$(1)$] Applying Theorem \ref{swisher6}, we deduce that giving a symmetric monoidal
functor $Z: \Bord_{n}^{(X, \zeta)} \rightarrow \calC$ is equivalent to giving the following data:
\begin{itemize}
\item[$(a_0)$] A symmetric monoidal functor $Z_0: \Bord_{n-1}^{(X_0, \zeta_0)} \rightarrow \calC$.
\item[$(b_0)$] A nondegenerate $n$-morphism $\eta: {\bf 1} \rightarrow Z_0( S^{n-1} )$. 
\end{itemize} 
\item[$(2)$] Applying Theorems \ref{swisher3} and \ref{swisher5} in dimension $(n-1)$, we deduce that the $\infty$-groupoid $\calC^{\sim}$ carries an action of the orthogonal group $\OO(n-1)$. Moreover,
a symmetric monoidal functor $Z_0$ as in $(a_0)$ above is equivalent to the following data:
\begin{itemize}
\item[$(a_1)$] An $\OO(n-1)$-equivariant map $\OO(n) \rightarrow \calC^{\sim}$; here
$\OO(n-1) = \{ g \in \OO(n): g v_1 = v_1 \} \subseteq \OO(n)$, acting on $\OO(n)$ by left translations.
\end{itemize}
\item[$(3)$] Let $\gamma: [0,1] \rightarrow \OO(n)$ and $\epsilon \in \OO(n-1)$
be given by the formulas
$$ \gamma_{t} v_i = \begin{cases} \cos(\pi t) v_1 + \sin(\pi t) v_2 & \text{if } i = 1 \\
- \sin( \pi t) v_1 + \cos( \pi t) v_2 & \text{if } i = 2\\
v_i & \text{if } i > 2. \end{cases}$$
$$ \epsilon v_i = \begin{cases} - v_2 & \text{if } i = 2 \\
v_i & \text{otherwise.} \end{cases}$$
Consider the map
$\widetilde{q}: \OO(n-1) \times [0,1] \times \OO(n-1)$ defined by the formula
$\widetilde{q}(g,t,g') = g \gamma_t g'$. Let 
$K = \OO(n-1) \times \OO(n-1)$, and let $\OO(n-2) = \{ g \in \OO(n): gv_1 = v_1, gv_2 = v_2 \} \subseteq \OO(n)$ act on $K$ by the formula $h(g,g')=(gh^{-1}, hg')$. 
We observe that $\widetilde{q}$ determines an $\OO(n-2)$-equivariant map
$K \times [0,1] \rightarrow \OO(n)$ (where $\OO(n-2)$ acts trivially on $\OO(n)$), which induces a homeomorphism
$$ \OO(n-1) \coprod_{ K \times \{0\} }
(K \times [0,1]) \coprod_{ K \times \{1\} }
(\OO(n-1) \gamma_1) \simeq
 \OO(n).$$
It follows that $(a_1)$ is equivalent to the following data:
\begin{itemize}
\item[$(a_2)$] A pair of $\OO(n-1)$-equivariant maps $e_{-}, e_{+}: \OO(n-1) \rightarrow \calC^{\sim}$, together with an $\OO(n-2)$-equivariant homotopy from $e_{-}$ to the map
$g \mapsto e_{+}( \epsilon g \epsilon^{-1} )$. 
\end{itemize}

\item[$(4)$] Let $Y$ be a single point, and let $\zeta'$ be the vector bundle on
$Y$ with orthonormal basis $(v_2, v_3, \ldots, v_n)$. Let $Y_0 \simeq S^{n-2}$ denote the unit sphere bundle of $\zeta'$, and let $\zeta'_0$ be the tangent bundle of $Y_0$. Let $\calC^{op}$ denote
the $(\infty,n)$-category obtained from $\calC$ by taking the opposite category at the level
of $(n-1)$-morphisms. Applying Theorem \ref{swisher5} in dimensions $n-1$ and $n-2$, we deduce that $(a_2)$ is equivalent to the following data:
\begin{itemize}
\item[$(a_3)$] A pair of symmetric monoidal functors 
$$Z_{-}: \Bord^{(Y, \zeta')}_{n-1} \rightarrow \calC \quad \quad Z_{+}: \Bord^{(Y, \zeta')}_{n-1} \rightarrow \calC^{op}$$
together with an isomorphism an isomorphism $Z_{-} | \Bord^{(Y_0, \zeta'_0)}_{n-2} \simeq
Z_{+} | \Bord^{(Y_0, \zeta'_0)}_{n-2}$ (this data makes sense, since the underlying
$(\infty,n-2)$-categories of $\calC$ and $\calC^{op}$ are canonically equivalent). 
\end{itemize}

\item[$(5)$] Applying Theorem \ref{swisher6} in dimension $n-1$, we deduce that
$(a_3)$ is equivalent to the following data:
\begin{itemize}
\item[$(a_4)$] A functor $Z': \Bord^{(Y_0, \zeta'_0)}_{n-2} \rightarrow \calC$, together with
a pair of nondegenerate $(n-1)$-morphisms
$$ f: {\bf 1} \rightarrow Z'(S^{n-2}) \quad \quad g: {\bf 1} \rightarrow Z'( S^{n-2})$$
in $\calC$ and $\calC^{op}$, respectively.
\end{itemize}

\item[$(6)$] Suppose we are given the data of $(a_4)$. We can regard
$g$ as an $(n-1)$-morphism from $Z'(S^{n-2})$ to ${\bf 1}$ in the original
$(\infty,n)$-category $\calC$. Unwinding the definitions, we see that 
$Z_0(S^{n-1})$ is given by the composition $g \circ f$, and that an $n$-morphism
$\eta: {\bf 1} \rightarrow Z_0(S^{n-1})$ is nondegenerate if and only if it exhibits
$g$ as a right adjoint to $f$. Consequently, $(b_0)$ is equivalent to the following:
\begin{itemize}
\item[$(b_1)$] An $n$-morphism $\eta: \id_{\bf 1} \rightarrow g \circ f$ in $\calC$ which
exhibits $g$ as a right adjoint to $f$. 
\end{itemize}

\item[$(7)$] Since $\calC$ admits adjoints, there is an equivalence between the
underlying $(\infty,n-1)$-categories of $\calC$ and $\calC^{op}$, which is the identity
on $k$-morphisms for $k < n-1$ and carries each $(n-1)$-morphism of $\calC$ to its right adjoint.
Consequently, if $f$ and $g$ are adjoint $(n-1)$-morphisms as in $(a_4)$, then
$f$ is nondegenerate if and only if $g$ is nondegenerate. It follows that the data
of $(a_4)$ and $(b_1)$ together is equivalent to the following:
\begin{itemize}
\item[$(c_0)$] A symmetric monoidal functor $Z': \Bord^{(Y_0, \zeta'_0)}_{n-2} \rightarrow \calC$
together with a pair of $(n-1)$-morphisms
$$ f: { \bf 1} \rightarrow Z'(S^{n-2}) \quad \quad g: Z'( S^{n-2}) \rightarrow {\bf 1}$$
such that $f$ is nondegenerate, and an $n$-morphism $\eta: \id_{\bf 1} \rightarrow g \circ f$
which exhibits $g$ as a right adjoint to $f$.
\end{itemize}

\item[$(8)$] Let $f$ be any $(n-1)$-morphism in an $(\infty,n)$-category. If $f$ admits
a right adjoint $f^{R}$, then $f^{R}$ is determined up to canonical isomorphism.
Moreover, giving another $(n-1)$-morphism $g$ together with a map
$\eta: \id \rightarrow g \circ f$ which exhibits $g$ as a right adjoint to $f$ is equivalent
to giving an isomorphism $g \simeq f^{R}$. Consequently, we may neglect the
data of $g$ and $\eta$ and we arrive at the following reformulation of $(c_0)$:
\begin{itemize}
\item[$(c_1)$] A symmetric monoidal functor $Z': \Bord^{(Y_0, \zeta'_0)}_{n-2} \rightarrow \calC$
together with a nondegenerate $(n-1)$-morphism $ f: { \bf 1} \rightarrow Z'(S^{n-2})$.
\end{itemize}

\item[$(9)$] Invoking Theorem \ref{swisher6} again (in dimension $n-1$), we deduce
that $(c_1)$ is equivalent to the following:
\begin{itemize}
\item[$(c_2)$] A symmetric monoidal functor $Z_{-}: \Bord^{(Y, \zeta')}_{n-1} \rightarrow \calC$.
\end{itemize}
We may now invoke the cobordism hypothesis (Theorem \ref{swisher3}) in dimension
$(n-1)$ to conclude that $(c_2)$ is equivalent to the data of a single object of $\calC$,
as desired.
\end{itemize}
\end{proof}

We now prove Theorem \ref{swisher5}; the basic idea is to use Theorem \ref{swisher6}
to justify the excision principle described in Remark \ref{exor}.

\begin{proof}[Proof of Theorem \ref{swisher5}]
Let $\calC$ be a symmetric monoidal $(\infty,n)$-category with duals and $X$ a CW complex. It follows from Theorem \ref{swisher3} that $\calC^{\sim}$ carries an action of the orthogonal group $\OO(n)$, and we have a canonical map of $\infty$-groupoids
$\alpha: \Fun^{\otimes}( \Bord_{n}^{(X, \zeta)}, \calC) \rightarrow \bHom_{\OO(n)}( \widetilde{X}, \calC^{\sim})$. 
We wish to prove that this map is an equivalence. More generally, consider any continuous
map $f: Y \rightarrow X$, where $Y$ is a CW complex. Set $F(Y) = \Fun^{\otimes}( \Bord_{n}^{(Y, f^\ast \zeta)}, \calC)$ and $G(Y) = \bHom_{ \OO(n)}( \widetilde{X} \times_{X} Y, \calC^{\sim})$. We have a canonical map $\alpha_Y: F(Y) \rightarrow G(Y)$, which depends functorially on $Y$. Let
$S$ denote the collection of all CW complexes $Y$ for which $\alpha_Y$ is an equivalence,
for any map $f: Y \rightarrow X$.

The functor $Y \mapsto G(Y)$ carries homotopy colimits in $Y$ to homotopy limits
of $\infty$-groupoids. Theorem \ref{swisher6} implies that the functor $Y \mapsto F(Y)$ has the same property. It follows that the collection of spaces $S$ is closed under the formation of homotopy colimits.
Theorem \ref{swisher3} implies that $\alpha_Y$ is an equivalence when $Y$ consists of a single point, so that $\ast \in S$. Since every CW complex $Y$ can be obtained as a homotopy colimit of points,
we deduce that $S$ contains every CW complex. In particular, taking $Y = X$ and $f$ to be the identity map, we deduce that $\alpha$ is an equivalence as desired.
\end{proof}


\subsection{Reduction to the Unoriented Case}\label{slabun}

Theorem \ref{swisher4} asserts that for any continuous homomorphism of topological groups
$\chi: G \rightarrow \OO(n)$, the bordism $(\infty,n)$-category $\Bord^{G}_{n}$ of manifolds with structure group $G$ has a certain universal property. In the special case where the group $G$ is trivial,
we recover Theorem \ref{swisher3}, which describes the framed bordism $(\infty,n)$-category
$\Bord^{\fr}_{n}$ as the free symmetric monoidal $(\infty,n)$-category with duals generated by a single object. This special case is in some sense fundamental: it allows us to define an action
of the group $\OO(n)$ on the classifying space of objects for an arbitrary symmetric monoidal $(\infty,n)$-category with duals, without which we cannot even formulate the more general Theorem \ref{swisher4} (at least directly).
However, there is another special case of interest, when the map $\chi: G \rightarrow \OO(n)$ is homeomorphism. In this case, the $(\infty,n)$-category $\Bord^{G}_{n}$ can be identified with the
{\em unoriented} bordism $(\infty,n)$-category $\Bord_{n}$. While $\Bord^{\fr}_{n}$ is in some sense the ``smallest'' bordism category, $\Bord_{n}$ can be thought of as the ``largest'': for every
continuous homomorphism $\chi: G \rightarrow \OO(n)$, we have a forgetful functor $\alpha: \Bord^{G}_{n} \rightarrow \Bord_{n}$. Our goal in this section is to explain how to use this forgetful functor to deduce the cobordism hypothesis for $G$-manifolds to the cobordism hypothesis for unoriented manifolds.

Let us first outline the basic idea. Suppose we are given a symmetric monoidal functor
$Z: \Bord_{n}^{G} \rightarrow \calC$, and let $M$ be an object of $\Bord_{n}$ (that is, a finite collection of points). Let $X$ denote a classifying space for $G$-structures on $M$. For every $x \in X$,
we can apply the functor $Z$ to the pair $(M,x)$ to obtain an object $Z(M,x) \in \calC$.
We can regard the collection $\{ Z(M,x) \}_{x \in X}$ as a {\it local system} on the space $X$, taking
values in the $(\infty,n)$-category $\calC$. The collection of such local systems can itself
be regarded as an $(\infty,n)$-category which we will denote by $\Fam_n(\calC)$. 
We can regard the assignment $M \mapsto \{ Z(M,x) \}_{x \in X}$ as determining a symmetric monoidal functor $Z': \Bord_{n} \rightarrow \Fam_n(\calC)$. The functor $Z$ can be recovered from $Z'$ by extracting the fibers of the relevant local systems. This construction therefore gives a mechanism for describing the classification of symmetric monoidal functors $Z: \Bord_{n}^{G} \rightarrow \calC$ in terms of symmetric monoidal functors $Z': \Bord_{n} \rightarrow \Fam_n(\calC)$, which will allow us to reduce the proof of Theorem \ref{swisher6} to the unoriented case.

We begin by sketching the definition of the $(\infty,n)$-category $\Fam_n(\calC)$ in more detail, where
$\calC$ is an $(\infty,n)$-category. The construction uses induction on $n$.
\begin{itemize}
\item[$(1)$] The objects of $\Fam_n(\calC)$ are pairs
$(X,f)$, where $X$ is a topological space and $f$ is a functor from $X$ (regarded as an
$\infty$-groupoid) into $\calC$; we can think of $f$ as a {\em local system} on $X$ with values in $\calC$.
\item[$(2)$] If $n =0$, then a morphism from $(X,f)$ to $(X', f')$ in $\Fam_n(\calC)$ is a weak homotopy
equivalence $g: X \rightarrow X'$ and an equivalence of functors between $f' \circ g$ and $f$.
(Strictly speaking, this definition is only correct if $X$ is a CW complex; otherwise we may need to
adjoin additional morphisms corresponding to diagrams of weak homotopy equivalences
$X \leftarrow \widetilde{X} \rightarrow X'$; we will ignore this technical point.)

\item[$(3)$] Suppose that $n > 0$, and let $(X,f)$ and $(X',f')$ be objects of $\Fam_{n}(\calC)$. We then
have a local system $\calF$ of $(\infty,n-1)$-categories on $X \times X'$, given by the formula
$\calF_{x,x'} = \Fam_{n-1} \OHom_{\calC}( f(x), f'(x') )$. We let $\OHom_{\Fam_{n}(\calC)}( (X,f), (X',f') )$ denote the $(\infty,n-1)$-category of global sections of $\calF$.
\end{itemize}

\begin{notation}
Let $\ast$ denote the trivial $(\infty,n)$-category having a single object. We will denote
the $(\infty,n)$-category $\Fam_n(\ast)$ simply by $\Fam_n$. 
\end{notation}

\begin{example}\label{singal}
The $\infty$-category $\Fam_1$ can be described as follows:
\begin{itemize}
\item[$(a)$] The objects of $\Fam_1$ are topological spaces $X$.  
\item[$(b)$] Given a pair of topological spaces $X$ and $Y$, a morphism from $X$ to $Y$ in
$\Fam_1$ is another topological space $C$ equipped with a continuous map $C \rightarrow X \times Y$.
\item[$(c)$] The composition of a $1$-morphism $C: X \rightarrow Y$ and another one morphism
$C': Y \rightarrow Z$ is given by the homotopy fiber product $C \times^{R}_{Y} C'$, which is equipped with a canonical map $C \times_{Y} C' \rightarrow X \times Z$.
\end{itemize}
In other words, $\Fam_1$ is the $(\infty,1)$-category whose objects are topological spaces and whose morphisms are {\em correspondences} between them.
\end{example}

\begin{remark}\label{spitter}
Let $\calC$ be an $(\infty,n)$-category equipped with a symmetric monoidal structure.
Then the $\Fam_n(\calC)$ inherits a symmetric monoidal structure, given on objects by the formula
$(X,f) \otimes (Y,g) = (X \times Y, h)$ where $h(x,y) = f(x) \otimes g(y) \in \calC$.
One can show that if $\calC$ has duals, then $\Fam_n(\calC)$ also has duals. In particular,
for each $n > 0$, the $\infty$-category $\Fam_n$ has the structure of a symmetric monoidal
$(\infty,n)$-category with duals.

For example, suppose that $n = 1$, so that $\Fam_1$ can be identified with the $(\infty,1)$-category
of topological spaces and correspondences between them, with a symmetric monoidal structure given by the Cartesian product of spaces. Every object $X \in \Fam_1$ is dualizable:
the diagonal map $X \rightarrow X \times X$ determines correspondences
$$ \ev_{X}: {\bf 1} \rightarrow X \times X $$
$$ \coev_{X}: X \times X \rightarrow {\bf 1},$$
which exhibit $X$ as its own dual.
\end{remark}

Since $\Fam_n$ is a symmetric monoidal $(\infty,n)$-category with duals, the cobordism
hypothesis predicts that the underlying $\infty$-groupoid $\Fam_n^{\sim}$ carries an
action of the orthogonal group $\OO(n)$ (see Corollary \ref{ail}). In the case $n=1$, this action
should carry each object of $\Fam_1$ to its dual. As we saw in Remark \ref{spitter}, every object
of $\Fam_1$ is canonically self-dual, so the action of $\OO(1)$ on $\Fam_1^{\sim}$ is trivial.
The analogous statement is true for every $n$: the $\OO(n)$ action on $\Fam_n^{\sim}$ is trivial.
Since the objects of $\Fam_n^{\sim}$ are topological spaces, we can identify the objects of
$(\Fam_n^{\sim})^{h \OO(n)}$ with $\OO(n)$-equivariant spaces: that is, topological spaces
$\widetilde{X}$ equipped with a continuous action of the group $\OO(n)$. Any such 
CW complex $\widetilde{X}$ is (equivariantly) weakly homotopy equivalent to a space with
a free action of $\OO(n)$, which determines a space $X = \widetilde{X} / \OO(n)$ and
a vector bundle $\zeta = ( \widetilde{X} \times \R^n) / \OO(n)$ over $X$. Combining
this analysis with Theorem \ref{swisher4}, we arrive at the following prediction:

\begin{claim}\label{spiller}
For each $n \geq 0$, the following data are equivalent:
\begin{itemize}
\item[$(1)$] Symmetric monoidal functors $Z: \Bord_{n} \rightarrow \Fam_n$.
\item[$(2)$] Pairs $(X,\zeta)$, where $X$ is a topological space and $\zeta$ is an
$n$-dimensional vector bundle on $X$ with an inner product.
\end{itemize}
\end{claim}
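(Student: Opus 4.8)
The plan is to deduce Claim~\ref{spiller} from the cobordism hypothesis for $G$-manifolds (Theorem~\ref{swisher4}), applied to the identity homomorphism $\OO(n)\to\OO(n)$, together with a hands-on analysis of the symmetric monoidal $(\infty,n)$-category $\Fam_n$. Since $\Fam_n$ has duals (Remark~\ref{spitter}) and $\Bord_n\simeq\Bord_n^{\OO(n)}$, Theorem~\ref{swisher4} yields an equivalence of $\infty$-groupoids
\[
\Fun^{\otimes}(\Bord_n,\Fam_n)\;\simeq\;(\Fam_n^{\sim})^{h\OO(n)},
\]
so everything reduces to computing the homotopy fixed points of the $\OO(n)$-action of Corollary~\ref{ail} on $\Fam_n^{\sim}$.

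First I would prove that this $\OO(n)$-action is trivial, i.e.\ equivalent as an $\OO(n)$-action to the constant one. The action produced by Corollary~\ref{ail} is manufactured entirely out of the duality and adjunction data of $\Fam_n$ (duals of objects, adjoints of $k$-morphisms, and their coherences), so the point is that in $\Fam_n$ all of this data is \emph{canonical}: for an object $(X,f)$ the diagonal $X\to X\times X$ exhibits $(X,f)$ as its own dual, and for a correspondence the reversed correspondence is canonically both a left and a right adjoint, with every higher coherence again supplied by diagonal maps. Feeding these canonical self-dualities into (a careful version of) the construction underlying Theorem~\ref{swisher3} trivializes the resulting $\OO(n)$-action. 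I expect this to be the main obstacle: it is the only step that is not formal, and making the word ``coherent'' precise requires unwinding how the $\OO(n)$-action is extracted from the framed cobordism hypothesis.

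Granting triviality of the action, the homotopy fixed point space collapses to a mapping space,
\[
(\Fam_n^{\sim})^{h\OO(n)}\;\simeq\;\bHom(B\OO(n),\Fam_n^{\sim}).
\]
Now I would identify the target. The objects of $\Fam_n$ are topological spaces, and a correspondence is an invertible morphism precisely when both of its legs are weak homotopy equivalences; hence $\Fam_n^{\sim}$ is equivalent to the core of the $\infty$-category of spaces, and a functor $B\OO(n)\to\Fam_n^{\sim}$ is the same datum as a local system of spaces on $B\OO(n)$, i.e.\ a space $\widetilde X$ equipped with a Borel action of $\OO(n)$. Replacing $\widetilde X$ by $\widetilde X\times E\OO(n)$ with the diagonal action changes neither the Borel homotopy type nor the point of the mapping space it represents, so we may assume the action is free; then $X=\widetilde X/\OO(n)$ together with $\zeta=(\widetilde X\times\R^n)/\OO(n)$ is a space with an $n$-dimensional vector bundle, and $\zeta$ inherits an inner product since $\OO(n)$ preserves the standard one on $\R^n$. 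Conversely a pair $(X,\zeta)$ with inner product has its orthonormal frame bundle $\widetilde X$, and these two assignments are mutually inverse through the equivalences already in play. To finish I would check that these identifications are compatible with morphisms (bundle isomorphisms covering weak equivalences of base spaces, on the one side, and invertible natural transformations of symmetric monoidal functors, on the other) and with the monoidal structures (Cartesian product of pairs $(X,\zeta)$ versus the tensor product on $\Fam_n$ of Remark~\ref{spitter}), and that the composite equivalence is the one sending $Z$ to $(X,\zeta)$ via the ``family of invariants'' description $M\mapsto\{Z(M,x)\}_x$ sketched in the text. This yields the asserted equivalence of the two types of data.
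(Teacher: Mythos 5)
Your argument matches the paper's own justification of Claim~\ref{spiller} essentially step for step: apply Theorem~\ref{swisher4} with $G=\OO(n)$, observe that the canonical self-dualities in $\Fam_n$ (diagonals, reversed correspondences) trivialize the $\OO(n)$-action of Corollary~\ref{ail}, identify homotopy fixed points for the trivial action with $\OO(n)$-equivariant spaces, and trade those for pairs $(X,\zeta)$ via the frame bundle. So as a derivation this is correct and is the same approach.

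The one thing worth flagging — and it is flagged in the paper immediately after the Claim — is that this derivation is circular in the context where the Claim is actually used. Theorem~\ref{swisher4} is one of the things being proved, and even the existence of an $\OO(n)$-action on $\Fam_n^{\sim}$ requires the framed cobordism hypothesis (Theorem~\ref{swisher3}). For that reason the paper does not rely on Claim~\ref{spiller} as stated; it only uses the ``hard direction,'' namely the explicit construction of the functor $Z_{(X,\zeta)}\colon\Bord_n\to\Fam_n$ that sends a $k$-morphism $M$ to the classifying space of $(X,\zeta)$-structures on $M$, and this is produced by a direct definition with no appeal to the cobordism hypothesis. Your proposal, being a bidirectional equivalence via Theorem~\ref{swisher4}, is a legitimate way to \emph{verify} the Claim once the cobordism hypothesis is in hand, but it cannot serve as an input to the reduction carried out in \S\ref{slabun}. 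You also correctly identify the non-formal step (trivializing the $\OO(n)$-action) as the one needing real work; the paper offers no more detail there than you do, so you have not missed a lemma — you have matched the intended sketch, circularity included.
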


We can justify Claim \ref{spiller} by invoking Theorem \ref{swisher4} in the case where the
structure group $G$ is the entire orthogonal group $\OO(n)$. However, the reasoning involved would be somewhat circular, since we first need to prove Theorem \ref{swisher4} in the framed case to justify the existence of an $\OO(n)$-action on $\Fam_{n}^{\sim}$ (and would then need to argue further that this $\OO(n)$ action was trivial). However, we will not need the full strength of Claim \ref{spiller} in the arguments which follow. We will only need to know the ``hard'' direction: namely, that we can associate to every pair $(X, \zeta)$ a tensor functor $Z_{(X, \zeta)}: \Bord_{n} \rightarrow \Fam_n$. This we can produce by direct construction. Indeed, $Z_{(X, \zeta)}$ can be defined as the functor which associates to every $k$-morphism $M$ in $\Bord_{n}$ (given by a $k$-manifold with corners) a $k$-morphism $Z_{(X,\zeta)}(M)$ in $\Bord_{n}$ (given by a topological space):
namely, we let $Z_{(X,\zeta)}(M)$ be a classifying space for $(X,\zeta)$-structures on $M$
(see Notation \ref{cusper1}). In other words, $Z_{(X,\zeta)}(M)$ can be identified with the collection of pairs $\{ f: M \rightarrow X, \alpha: f^{\ast} \zeta \simeq T_X \oplus \underline{ \R}^{n-k} \}$, topologized in a natural way.

There is a canonical way to recover the $(\infty,n)$-category
$\Bord^{(X, \zeta)}_{n}$ from the functor $Z_{(X, \zeta)}: \Bord_{n} \rightarrow \Fam_n$.
To make this precise, we need to introduce another bit of notation.

\begin{variant}\label{famvar}
We can describe $\Fam_n$ informally as follows:
\begin{itemize}
\item The objects of $\Fam_n$ are topological spaces.
\item The morphisms of $\Fam_n$ are correspondences between topological spaces.
\item The $2$-morphisms of $\Fam_n$ are correspondences between correspondences.
\item \ldots 
\item The $n$-morphisms of $\Fam_n$ are correspondences between
correspondences between $\ldots$
\item The $(n+1)$-morphisms of $\Fam_n$ are homotopy equivalences of correspondences.
\item The $(n+2)$-morphisms of $\Fam_n$ are homotopies between homotopy equivalences.
\item \ldots
\end{itemize}
We can obtain a new $(\infty,n)$-category by requiring all of the topological spaces in
the above description to be equipped with base points; we will denote this $(\infty,n)$-category
by $\Fam_{n}^{\ast}$. More generally, for any $(\infty,n)$-category $\calC$, we let
$\Fam_n^{\ast}(\calC)$ denote the homotopy fiber product
$\Fam_n(\calC) \times_{ \Fam_n}^{R} \Fam_n^{\ast}$. We can think of objects of
$\Fam_n^{\ast}(\calC)$ as triples $(X,f,x)$, where $X$ is a topological space, 
$f$ is a local system on $X$ with values in $\calC$, and $x$ is a point of $X$. Note that
there is a canonical evaluation functor $\chi: \Fam_n^{\ast}(\calC) \rightarrow \calC$, given
by the formula $\chi(X,f,x) = f(x) \in \calC$.
\end{variant}

\begin{proposition}\label{coalmine}
Let $X$ be a topological space, and let $\zeta$ be an $n$-dimensional vector bundle on $X$, equipped with an inner product. Then there is a homotopy pullback diagram of $($symmetric monoidal$)$
$(\infty,n)$-categories
$$ \xymatrix{ \Bord^{(X,\zeta)}_{n} \ar[r] \ar[d] & \Fam_n^{\ast} \ar[d] \\
\Bord_{n} \ar[r]^{ Z_{(X,\zeta)}} & \Fam_n. }$$
In other words, $\Bord^{(X,\zeta)}_{n}$ can be described as the homotopy fiber product
$\Bord_{n} \times_{ \Fam_n}^{R} \Fam_{n}^{\ast}.$
\end{proposition}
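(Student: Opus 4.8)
The plan is to establish the homotopy pullback square by unwinding the construction of $Z_{(X,\zeta)}$ from the remark preceding Proposition \ref{coalmine} and comparing the resulting homotopy fiber product with the explicit model for $\Bord_{n}^{(X,\zeta)}$ from \S \ref{swugg}. Concretely, I would work at the level of $n$-fold (complete) Segal spaces rather than abstract $(\infty,n)$-categories: recall that $\Bord_{n}$ is the completion of the $n$-fold Segal space $\untBord_{n}$ whose $(k_1,\dots,k_n)$-space consists of appropriately embedded submanifolds $M \subseteq V \times \R^n$ (Definitions \ref{cabinn}, \ref{que}), and that $\Bord_{n}^{(X,\zeta)}$ is built the same way but with each $M$ equipped with an $(X,\zeta)$-structure, i.e.\ a pair $(f\colon M \to X,\ \alpha\colon f^{\ast}\zeta \simeq T_M \oplus \underline{\R}^{n-\dim M})$. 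The functor $Z_{(X,\zeta)}$ sends such an $M$ to the space $Z_{(X,\zeta)}(M)$ of all such pairs, so the fiber of $Z_{(X,\zeta)}$ over the base point of the corresponding object of $\Fam_n$ is precisely the space of $(X,\zeta)$-structures on $M$ refining the given one. This is the geometric heart of the statement.

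First I would make precise the homotopy fiber product $\Bord_{n} \times^{R}_{\Fam_n} \Fam_n^{\ast}$ using the model $\Fam_n^{\ast}(\ast) = \Fam_n^{\ast}$ from Variant \ref{famvar}: an object is a topological space together with a point, a $k$-morphism is a correspondence together with a chosen basepoint-compatible point, and the forgetful functor $\Fam_n^{\ast} \to \Fam_n$ discards the point. Pulling back along $Z_{(X,\zeta)}$, an object of the fiber product over an object $M \in \Bord_{n}$ is a point of $Z_{(X,\zeta)}(M)$, that is, an $(X,\zeta)$-structure on $M$; similarly a $k$-morphism over $M$ is a $k$-manifold-with-corners $B$ together with a point of $Z_{(X,\zeta)}(B)$, i.e.\ an $(X,\zeta)$-structure on $B$. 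This is exactly the $k$-morphism data of $\Bord_{n}^{(X,\zeta)}$. So the content reduces to checking that these identifications are compatible with the face and degeneracy maps of the multisimplicial structures, and that the comparison map is a levelwise weak homotopy equivalence of $n$-fold simplicial spaces. Because homotopy pullbacks of $n$-fold simplicial spaces are computed levelwise (the definition in \S \ref{bigseg}), and because completion (Remark \ref{kisster}) preserves homotopy pullback squares along functors between complete Segal spaces once the underlying square of Segal spaces is a homotopy pullback, it suffices to verify this before completion, at the level of $\untBord_{n}$.

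Next I would carry out the levelwise comparison. Fix $V$ and a tuple $(k_1,\dots,k_n)$. On the bordism side we have the space $(\unBord_{n}^{(X,\zeta),V})_{k_1,\dots,k_n}$ of tuples $(M, \{t^i_j\}, f, \alpha)$; on the fiber-product side we have the homotopy fiber product of $(\unBord_{n}^{V})_{k_1,\dots,k_n}$ with the space of pointed spaces over $\Fam_n$-data, which unwinds to pairs consisting of an embedded $M$ together with a point of $Z_{(X,\zeta)}(M)$. The honest fiber (not homotopy fiber) over $M$ on the right is literally the space $\{(f,\alpha)\}$ of $(X,\zeta)$-structures on $M$, so there is a tautological map from the bordism side to the honest fiber product, hence to the homotopy fiber product. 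The key point to check is that the map $Z_{(X,\zeta)}(-)$ is, fiberwise over $\Sub(V)$-type data, a fibration up to weak equivalence, so that the honest fiber agrees with the homotopy fiber; this follows because $Z_{(X,\zeta)}(M)$ is the space of sections of a bundle over $M$ (the bundle whose fiber over $m$ is the space of pairs $(x \in X, \text{iso } \zeta_x \simeq (T_M)_m \oplus \underline{\R}^{n-\dim M})$), and such section spaces depend on $M$ through a fibration over the relevant embedding space after quotienting by $\Diff(M)$ as in Remark \ref{sing}. One must also check the symmetric monoidal refinement, which is routine since disjoint union of manifolds corresponds to the product symmetric monoidal structure on $\Fam_n$ (Remark \ref{spitter}) and $(X,\zeta)$-structures are local, hence distribute over disjoint unions.

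The main obstacle, I expect, is the careful bookkeeping needed to show that the pointed-correspondence structure of $\Fam_n^{\ast}$ really does reproduce, after pullback, the full $n$-fold Segal space structure on $\Bord_{n}^{(X,\zeta)}$ — in particular that the ``stretching to infinity'' fibrancy device used to define the topology on $\SuntCob(n)^{V}_{\bigdot}$ and its higher-dimensional analogue (see Definition \ref{slamm} and Definition \ref{cabinn}) is compatible with the passage to $(X,\zeta)$-structures, and that the completion operations on both sides match up. This is where the proof is genuinely technical rather than formal, and it is the part that would be deferred to the detailed account promised elsewhere. Granting that the multisimplicial models are set up compatibly, the homotopy pullback assertion then follows from the levelwise statement together with the observation that the right-hand vertical map $\Fam_n^{\ast} \to \Fam_n$ is, by construction, a ``fibration'' in the appropriate sense (its fibers being the pointed-space data), so that the levelwise fiber products compute the homotopy fiber products.
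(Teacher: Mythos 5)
Your proposal takes essentially the same approach as the paper: the paper's proof is precisely the observation that a $k$-morphism in $\Bord_{n} \times_{\Fam_n}^{R} \Fam_n^{\ast}$ unwinds to a $k$-morphism $M$ in $\Bord_n$ together with a point of $Z_{(X,\zeta)}(M)$, which by definition is an $(X,\zeta)$-structure on $M$, hence a $k$-morphism in $\Bord_n^{(X,\zeta)}$. Your writeup supplies the additional technical scaffolding (levelwise Segal space models, fibrancy of $\Fam_n^\ast \to \Fam_n$, compatibility with completion and with the symmetric monoidal structure) that the paper silently elides under the phrase ``a simple unwinding of definitions,'' but the underlying identification is the same.
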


The proof of Proposition \ref{coalmine} is a simple unwinding of definitions.
A $k$-morphism in the homotopy fiber product $\Bord_{n} \times_{ \Fam_n}^{R} \Fam_{n}^{\ast}$
can be identified with a $k$-morphism $M$ of $\Bord_{n}$ together with a point of
$Z_{(X, \zeta)}(M)$: by definition, such a point provides an $(X, \zeta)$-structure on $M$, which allows us to view $M$ as a $k$-morphism in $\Bord_{n}^{(X, \zeta)}$. 

Our next goal is to use Proposition \ref{coalmine} to understand symmetric monoidal
functors from $\Bord^{(X,\zeta)}_{n}$ into another $(\infty,n)$-category $\calC$. Our main
tool is the following general principle:

\begin{proposition}\label{campwise}
Let $\calB$ and $\calC$ be symmetric monoidal $(\infty,n)$-categories, let
$Z: \calB \rightarrow \Fam_n$ be a symmetric monoidal functor, and let
$\calB^{\ast}$ denote the homotopy fiber product $\calB \times_{ \Fam_n}^{R} \Fam_n^{\ast}$.
The following types of data are equivalent:
\begin{itemize}
\item[$(1)$] Symmetric monoidal functors $\overline{Z}: \calB \rightarrow \Fam_n(\calC)$
lifting $Z$.
\item[$(2)$] Symmetric monoidal functors $\overline{Z}': \calB^{\ast} \rightarrow \calC$.
\end{itemize}
The equivalence is implemented by carrying a symmetric monoidal functor
$\overline{Z}$ to the composition
$$ \calB^{\ast} \simeq \calB \times_{ \Fam_n}^{R} \Fam_n^{\ast}
\rightarrow \Fam_n(\calC) \times_{\Fam_n}^{R} \Fam_n^{\ast} \simeq
\Fam_n^{\ast}(\calC) \rightarrow \calC$$
where the last functor is described in Variant \ref{famvar}.
\end{proposition}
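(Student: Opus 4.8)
The plan is to deduce Proposition~\ref{campwise} from a single structural input: a straightening/unstraightening equivalence for $\Fam_n$-valued functors. I would first discard the monoidal decorations. Since the constructions $\Fam_n(-)$, the homotopy fiber product, and the evaluation functor $\chi$ of Variant~\ref{famvar} are all compatible with symmetric monoidal structures (Remark~\ref{spitter}), a symmetric monoidal functor in either of $(1)$ or $(2)$ amounts to an underlying functor together with coherence data, and the equivalence of underlying functors established below is manifestly compatible with that data. So it suffices to prove the underlying statement, with ``symmetric monoidal functor'' replaced by ``functor'' throughout, and then reinstate the decorations at the end.

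The first step is to record the meaning of the pointed families construction: for any $(\infty,n)$-category $\calD$ and any functor $p\colon\calD\to\Fam_n$, the homotopy fiber product $\calD^{\ast}=\calD\times^{R}_{\Fam_n}\Fam_n^{\ast}$, together with its projection $\pi\colon\calD^{\ast}\to\calD$, is the \emph{total space} of the $\infty$-groupoid-valued family classified by $p$. Concretely, the fiber of $\pi$ over an object $d$ is canonically equivalent to the space $p(d)$ (regarded as an $(\infty,0)$-category), and the hom-$(\infty,n-1)$-categories of $\calD^{\ast}$ are built from those of $\calD$ by exactly the recipe used to build $\OHom_{\Fam_n^{\ast}}$ from $\OHom_{\Fam_n}$. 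This is immediate from the inductive definition of $\Fam_n$ together with the fact that $\Fam_n^{\ast}\to\Fam_n$ is, at the level of underlying $\infty$-groupoids, the universal family of pointed spaces.

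The main step is the equivalence: for any $p\colon\calD\to\Fam_n$, restriction along $\pi$ identifies functors $\calD\to\Fam_n(\calC)$ lifting $p$ with functors $\calD^{\ast}\to\calC$. Unwinding the definition of $\Fam_n(\calC)$, a functor $\calD\to\Fam_n(\calC)$ over $p$ assigns to each object $d$ a local system $p(d)\to\calC$, and to higher morphisms of $\calD$ compatible data along the correspondences dictated by $p$; assembling all of this is precisely a functor out of the total space $\calD^{\ast}$ into $\calC$. I would prove this by induction on $n$. The base case $n=0$ is the adjunction between the Grothendieck construction (the homotopy colimit $\hocolim_{d\in\calD}p(d)$, which is $\calD^{\ast}$) and the mapping-space functor, together with the universal property of homotopy colimits: sections over $\calD$ of the family $d\mapsto\bHom(p(d),\calC)$ form $\holim_{d}\bHom(p(d),\calC)\simeq\bHom(\hocolim_{d}p(d),\calC)=\bHom(\calD^{\ast},\calC)$. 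For the inductive step one uses that $\OHom_{\Fam_n(\calC)}\bigl((X,f),(X',f')\bigr)$ is by definition the $(\infty,n-1)$-category of global sections of the local system $(x,x')\mapsto\Fam_{n-1}\OHom_{\calC}(f(x),f'(x'))$, applies the $(n-1)$-dimensional case of the assertion to each such hom-$(\infty,n-1)$-category, and verifies compatibility with composition. This last verification is where the fact that morphisms in $\Fam_n$ are \emph{correspondences} (spans) rather than ordinary maps must be taken into account; since $\Fam_n(\calC)\to\Fam_n$ is not a Cartesian or coCartesian fibration but a ``bivariant'' one (Cartesian along one leg of each span, coCartesian along the other), I expect this bookkeeping to be the one genuinely technical point of the proof.

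Granting the main step with $\calD=\calB$ and $p=Z$, functors $\calB\to\Fam_n(\calC)$ lifting $Z$ are identified with functors $\calB^{\ast}\to\calC$, and tracing through the identifications shows that the equivalence is implemented by the composite $\calB^{\ast}\simeq\calB\times^{R}_{\Fam_n}\Fam_n^{\ast}\to\Fam_n(\calC)\times^{R}_{\Fam_n}\Fam_n^{\ast}\simeq\Fam_n^{\ast}(\calC)\xrightarrow{\chi}\calC$: this composite sends a point $(b,x)$ with $x\in Z(b)$ to the value at $x$ of the local system $\overline{Z}(b)$, which is precisely the unstraightening of $\overline{Z}$. Reinstating the symmetric monoidal structures as in the first paragraph then yields the statement of Proposition~\ref{campwise}.
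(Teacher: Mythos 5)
Your proposal is correct and follows essentially the same ``unwind the definitions'' approach that the paper uses; the paper's own proof consists solely of naming the objects, describing how to recover $f(x)$ from $\overline{Z}'(M,x)$, and asserting that the equivalence follows by inspection. Your organization of that unwinding by induction on $n$, with the base case coming from the universal property of homotopy colimits (identifying $\calB^{\ast}$ with a Grothendieck construction) and your explicit flag of the span/bivariant bookkeeping in the inductive step, is a faithful elaboration of exactly the argument the paper is gesturing at, not a different route.
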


The proof of this result again amounts to carefully unwinding the definitions. We must show that the values of a functor $\overline{Z}: \calB \rightarrow \Fam_n(\calC)$ can be recovered
from those of $\overline{Z}': \calB^{\ast} \rightarrow \calC$. Let $M$ be an object of $\calB$, and let
$\overline{Z}(M) = (X, f)$, where $X = Z(M)$ is a topological space and $f$ is a local system on $X$
with values in $\calC$. For every point $x \in X$, the pair $(M, x)$ determines an object of
$\calB^{\ast}$, and we have a canonical isomorphism $f(x) \simeq \overline{Z}'(M,x)$ in
the $(\infty,n)$-category $\calC$.

Combining Propositions \ref{campwise} and \ref{coalmine} in the special case where
$Z$ is the functor $Z_{(X,\zeta)}: \Bord_{n} \rightarrow \Fam_n$, we obtain the following result:

\begin{proposition}\label{cabil}
Let $\calC$ be a symmetric monoidal $(\infty,n)$-category, $X$ a topological space, and
$\zeta$ a real vector bundle of rank $n$ on $X$ with inner product. The following data are equivalent:

\begin{itemize}
\item[$(1)$] Symmetric monoidal functors $\overline{Z}: \Bord_{n} \rightarrow \Fam_n(\calC)$
lifting the functor $Z_{(X, \zeta)}: \Bord_{n} \rightarrow \Fam_n$.

\item[$(2)$] Symmetric monoidal functors $\overline{Z}': \Bord_{n}^{(X, \zeta)} \rightarrow \calC$.
\end{itemize}
\end{proposition}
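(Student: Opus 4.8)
The plan is to deduce Proposition \ref{cabil} by composing the two structural results established just above, namely Proposition \ref{coalmine} and Proposition \ref{campwise}, applied in the special case where the base category $\calB$ is $\Bord_{n}$ and the functor $Z: \calB \rightarrow \Fam_{n}$ is the functor $Z_{(X,\zeta)}$ constructed by hand earlier in this section. First I would recall what Proposition \ref{coalmine} gives us: it identifies $\Bord_{n}^{(X,\zeta)}$ with the homotopy fiber product $\Bord_{n} \times_{\Fam_{n}}^{R} \Fam_{n}^{\ast}$, where the map $\Bord_{n} \rightarrow \Fam_{n}$ is precisely $Z_{(X,\zeta)}$ and the map $\Fam_{n}^{\ast} \rightarrow \Fam_{n}$ is the forgetful functor which discards base points. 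Moreover this identification is one of symmetric monoidal $(\infty,n)$-categories, which is the compatibility we will need to carry through.

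Next I would apply Proposition \ref{campwise} with $\calB = \Bord_{n}$, with the chosen functor $Z = Z_{(X,\zeta)}: \Bord_{n} \rightarrow \Fam_{n}$, and with the target $(\infty,n)$-category $\calC$. In the notation of that proposition, $\calB^{\ast}$ is the homotopy fiber product $\Bord_{n} \times_{\Fam_{n}}^{R} \Fam_{n}^{\ast}$, which by Proposition \ref{coalmine} is $\Bord_{n}^{(X,\zeta)}$. Proposition \ref{campwise} then asserts an equivalence between (i) symmetric monoidal functors $\overline{Z}: \Bord_{n} \rightarrow \Fam_{n}(\calC)$ lifting $Z_{(X,\zeta)}$, and (ii) symmetric monoidal functors $\overline{Z}': \Bord_{n}^{(X,\zeta)} \rightarrow \calC$. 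These are exactly items $(1)$ and $(2)$ in the statement of Proposition \ref{cabil}, so the proof is simply to substitute. I would also record the explicit form of the correspondence: a lift $\overline{Z}$ is sent to the composite
$$ \Bord_{n}^{(X,\zeta)} \simeq \Bord_{n} \times_{\Fam_{n}}^{R} \Fam_{n}^{\ast} \rightarrow \Fam_{n}(\calC) \times_{\Fam_{n}}^{R} \Fam_{n}^{\ast} \simeq \Fam_{n}^{\ast}(\calC) \xrightarrow{\chi} \calC, $$
where $\chi$ is the evaluation functor of Variant \ref{famvar}; conversely a functor $\overline{Z}'$ out of $\Bord_{n}^{(X,\zeta)}$ recovers, for each $k$-morphism $M$ of $\Bord_{n}$ and each point $x$ of the classifying space $Z_{(X,\zeta)}(M)$ of $(X,\zeta)$-structures on $M$, the value $\overline{Z}'(M,x) \in \calC$, and these assemble into a local system on $Z_{(X,\zeta)}(M)$, i.e. a $k$-morphism of $\Fam_{n}(\calC)$.

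The one point that requires genuine care — and which I expect to be the main (modest) obstacle — is the compatibility of the monoidal structures: I must check that the symmetric monoidal structure on $\Fam_{n}^{\ast}(\calC)$ inherited from $\Fam_{n}(\calC)$ and $\Fam_{n}^{\ast}$ (both of which carry symmetric monoidal structures with duals, by Remark \ref{spitter} and its evident variant for the pointed version) is the one under which $Z_{(X,\zeta)}$ and the equivalences above are symmetric monoidal, so that Proposition \ref{campwise} applies verbatim. This is a matter of unwinding the definition of the tensor product on $\Fam_{n}(\calC)$, which on objects is $(X,f) \otimes (Y,g) = (X \times Y, h)$ with $h(x,y) = f(x) \otimes g(y)$, and observing that $Z_{(X,\zeta)}$ is monoidal because a classifying space for $(X,\zeta)$-structures on a disjoint union $M \coprod M'$ is the product of the classifying spaces for $M$ and $M'$ (and, on morphisms, that gluing of bordisms corresponds to homotopy fiber product of structure spaces). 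Since all of the constituent equivalences in Propositions \ref{coalmine} and \ref{campwise} have already been asserted to be symmetric monoidal, no new verification beyond bookkeeping is needed, and the result follows. As noted in the text surrounding Propositions \ref{campwise} and \ref{coalmine}, the proof in each case is a careful unwinding of the definitions, and the proof of Proposition \ref{cabil} is purely the combination of the two.
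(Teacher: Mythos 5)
Your proposal is correct and takes the same route as the paper: Proposition \ref{cabil} is obtained by substituting $\calB = \Bord_n$ and $Z = Z_{(X,\zeta)}$ into Proposition \ref{campwise} and then identifying $\calB^\ast$ with $\Bord_n^{(X,\zeta)}$ via Proposition \ref{coalmine}. The additional care you take with the symmetric monoidal compatibilities and the explicit form of the correspondence is all consistent with (and a reasonable expansion of) the paper's one-line derivation.
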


We are now ready to sketch an argument reducing the proof of Theorem \ref{swisher6} to the
unoriented case. Fix a topological space $X$, a rank $n$ vector bundle
$\zeta$ on $X$ equipped with an inner product, and a symmetric monoidal
$(\infty,n)$-category $\calC$ with duals. We wish to classify symmetric monoidal
functors $\overline{Z}': \Bord_{n}^{(X, \zeta)} \rightarrow \calC$. We argue in several steps:

\begin{itemize}
\item[$(a)$] According to Proposition \ref{cabil}, the data of a symmetric monoidal functor
$\overline{Z}': \Bord_{n}^{(X,\zeta)} \rightarrow \calC$ is equivalent to the data of a symmetric
monoidal functor $\overline{Z}: \Bord_{n} \rightarrow \Fam_{n}(\calC)$ lifting
the functor $Z_{(X, \zeta)}: \Bord_{n} \rightarrow \Fam_n$.

\item[$(b)$] Let $X_0$ be the set of pairs $(x, v)$, where $x \in X$ and
$v \in \zeta_x$ is a vector unit length, and let $\zeta_0 = \{ (x,v,w): (x,v) \in X_0,
w \in \zeta_x, (v,w) = 0 \}$ be the induced vector bundle on $X_0$. We observe that the restriction of
$Z_{(X,\zeta)}$ to $\Bord_{n-1}$ can be identified with the composition
$\Bord_{n-1} \stackrel{Z_{(X_0, \zeta_0)}}{\rightarrow} \Fam_{n-1} \rightarrow \Fam_n$. 

Let $\EO(n)$ denote a contractible space with a free action of the orthogonal
group $\OO(n)$, let $\BO(n)$ denote the classifying space $\EO(n)/\OO(n)$, and let
$\zeta'$ denote the tautological vector bundle on $\BO(n)$. For each point
$y \in \BO(n)$, the nondegenerate $n$-morphism
$\eta_y = Z_{(X,\zeta)}( D^{\zeta'_y} )$ in $\Fam_n$ can be identified with the space of all
$(X,\zeta)$-structures on the $n$-dimensional ball $D^{\zeta'_y}$. Since $D^{\zeta'_{y}}$
is contractible, this can be identified with the space of pairs $(x, \alpha)$, where
$x \in X$ and $\alpha: \zeta_x \simeq \zeta'_y$ is an isometry. Applying Theorem \ref{swisher6} in the unoriented case, we conclude that giving a symmetric monoidal functor $\overline{Z}: \Bord_{n} \rightarrow \Fam_n(\calC)$ is equivalent to giving the following data:
\begin{itemize}
\item[$(i)$] A symmetric monoidal functor $\overline{Z}_0: \Bord_{n-1} \rightarrow \Fam_{n}(\calC)$
lifting $Z_{(X_0, \zeta_0)}$. 
\item[$(ii)$] For each $y \in \BO(n)$, a nondegenerate $n$-morphism
$\overline{\eta}_y: {\bf 1} \rightarrow \overline{Z}_0( S^{\zeta'_y})$ in 
$\Fam_n(\calC)$ lying over $\eta_y$.
\end{itemize}

\item[$(c)$] Invoking Proposition \ref{cabil} in dimension $(n-1)$ (and the fact that
$\Fam_{n-1}(\calC)$ and $\Fam_{n}(\calC)$ have the same underlying $(\infty,n-1)$-category),
we can replace $(i)$ by the following data:
\begin{itemize}
\item[$(i')$] A symmetric monoidal functor $\overline{Z}'_0: \Bord_{n-1}^{(X_0, \zeta_0)}
\rightarrow \calC$.
\end{itemize}

\item[$(d)$] Let $X'$ be the collection of triples $(x,y,\alpha)$, where
$x \in X$, $y \in \BO(n)$, and $\alpha: \zeta_x \simeq \zeta'_y$ is an isometry,
and let $\phi: X' \rightarrow X$ denote the projection map. For each
$y \in \BO(n)$, let $X'_y = X' \times_{\BO(n)} \{y\}$ denote the fiber of
$X'$ over the point $y$. As we saw above, $X'_y$ is homotopy equivalent to the topological
space $\eta_y = Z_{(X,\zeta)}( D^{\zeta'_y})$ (viewed as an $n$-morphism in $\Fam_n$). 
Unwinding the definitions, we see that lifting $\eta_y$ to a nondegenerate $n$-morphism
in $\overline{\eta}_y \in \Fam_n(\calC)$ is equivalent to giving a family of nondegenerate $n$-morphisms $\overline{\eta}'_z: {\bf 1} \rightarrow \overline{Z}'_0( S^{\zeta_{\phi(z)}})$ in
$\calC$ parametrized by $z \in X'_{y}$. Allowing $y$ to vary over $Y$, we see that $(ii)$ is equivalent to the following data:
\begin{itemize}
\item[$(ii')$] A family of nondegenerate $n$-morphisms
$$ \overline{\eta}'_z: {\bf 1} \rightarrow \overline{Z}'_0( S^{\zeta_{\phi(z)}})$$
in $\calC$, parametrized by $z \in X'$.
\end{itemize}

\item[$(e)$] Since the space $\EO(n)$ is contractible, the projection map
$\phi: X' \rightarrow X$ is a homotopy equivalence. We may therefore replace
$(ii')$ by the following data:
\begin{itemize}
\item[$(ii'')$] A family of nondegenerate $n$-morphisms
$\overline{\eta}''_x: {\bf 1} \rightarrow \overline{Z}'_0( S^{\zeta_{x}})$ in
$\calC$, parametrized by $x \in X$.
\end{itemize}
This argument shows that symmetric monoidal functors 
$\Bord_{n}^{(X,\zeta)} \rightarrow \calC$ are classified by the data
of $(i')$ and $(ii'')$, which is precisely the assertion of Theorem \ref{swisher6}. 
\end{itemize}

\begin{remark}
We can summarize the main theme of this section as follows: to deduce the cobordism hypothesis for
one class of manifolds, it suffices to prove the cobordism hypothesis for any larger class of manifolds.
In particular, if we can prove the cobordism hypothesis for the $(\infty,n)$-category
$\Bord_{n}$, then it will follow for any other bordism $(\infty,n)$-category
$\Bord_{n}^{(X, \zeta)}$ of {\em smooth} $n$-manifolds.
\end{remark}

\subsection{Unfolding of Higher Categories}\label{unf}

As we explained in \S \ref{forty1}, the basic skeleton of our proof of the cobordism hypothesis uses induction on the dimension $n$. More precisely, in order to prove Theorem \ref{swisher5} for the
$(\infty,n)$-category $\Bord_n$, we consider the filtration
$$ \Bord_1 \rightarrow \Bord_2 \rightarrow \ldots \rightarrow \Bord_n$$
and apply Theorem \ref{swisher6} repeatedly. One feature of the above filtration is that it is by increasing levels of complexity: each $\Bord_k$ is an $(\infty,k)$-category, and we have seen (\S \ref{higt}) that the theory of $(\infty,k)$-categories becomes increasingly complicated as $k$ grows. Our goal in this section is to show that we can assign to this filtration an ``associated graded object'' which is considerably simpler. This will, in principle, allow us to formulate the cobordism hypothesis entirely in the language of
$(\infty,1)$-categories. In practice this formulation is not so convenient, but the ideas described in this section are nevertheless useful for the proof we will outline in \S \ref{kuji}.

Let us begin by describing an analogue of the idea we wish to implement in a much simpler setting.
Recall the oriented bordism groups $\{ \Omega_{d} \}_{d \geq 0}$ defined in Remark \ref{od}:
the elements of $\Omega_{d}$ are represented by closed oriented $d$-manifolds, and
two such manifolds $M$ and $N$ represent the same element of $\Omega_{d}$ if there is an (oriented) bordism from $M$ to $N$. We can {\em almost} realize the groups $\Omega_{d}$ as the homology
groups of a chain complex. To see this, let $C_{d}$ denote the set of all isomorphism classes of
oriented $d$-manifolds with boundary. If $M \in C_{d}$, then the boundary $\bd M$ can be regarded
as a $(d-1)$-manifold with boundary, whose boundary happens to be empty. Consequently, we have a sequence of boundary maps
$$ \cdots \rightarrow C_2 \stackrel{\bd}{\rightarrow} C_1 \stackrel{\bd}{\rightarrow} C_0.$$
Each $C_d$ has the the structure of a commutative monoid (given by the formation of disjoint unions), and each of the compositions $\bd^2$ is trivial. We can therefore think of $C_{\bigdot}$ as a
chain complex in the category of commutative monoids. Let $Z_{d}$ denote the kernel of the map
$C_d \rightarrow C_{d-1}$, so that $Z_{d}$ can be identified with the set of all isomorphism classes
of closed $d$-manifolds. Then $\Omega_{d}$ can be identified with the quotient of $Z_{d}$
by the equivalence relation which identifies a pair of objects $M, N \in Z_{d}$ if the coproduct
$M \coprod \overline{N}$ lies in the image of the boundary map $\bd: Z_{d+1} \rightarrow Z_{d}$.

We would like to construct an analogous picture in the setting of higher category theory. The first step is to introduce an appropriate analogue of the commutative monoid $C_{d}$:

\begin{protodefinition}
Let $d \geq 1$ be an integer. We let $\bdCob(d)$ denote the $(\infty,1)$-category
which can be described informally as follows:
\begin{itemize}
\item[$(1)$] The objects of $\bdCob(d)$ are $(d-1)$-manifolds with boundary.
\item[$(2)$] Given a pair of objects $M, N \in \bdCob(d)$, we let $\bHom_{ \bdCob(d)}(M,N)$ denote a
classifying space for bordisms from $M$ to $N$ $($any such bordism determines a bordism from
$\bd M$ to $\bd N$; we do not require these bordisms to be trivial$)$.
\item[$(3)$] Composition of morphisms in $\bdCob(d)$ is given by gluing of bordisms.
\end{itemize}
\end{protodefinition}

The $(\infty,1)$-categories $\bdCob(d)$ admit symmetric monoidal structures, given by disjoint unions.
Moreover, passage to the boundary induces a symmetric monoidal functors
$$ \cdots \stackrel{\bd}{\rightarrow} \bdCob(3) \stackrel{\bd}{\rightarrow} \bdCob(2)
\stackrel{\bd}{\rightarrow} \bdCob(1).$$
We note that each of the compositions $\bd^2$ is trivial: more precisely, it is isomorphic to the constant functor $\bdCob(d) \rightarrow \bdCob(d-2)$ taking the value ${\bf 1} \in \bdCob(d-2)$ (here
${\bf 1}$ denotes the unit with respect to the symmetric monoidal structure on $\bdCob(d-2)$: that is,
the empty set). The sequence $\{ \bdCob(d) \}_{d \geq 1}$ is an example of a
{\it categorical chain complex} (see Definition \ref{ka} and Example \ref{matt} below).

The main goal of this section is to prove that the data provided by the chain complex
$$ \cdots \stackrel{\bd}{\rightarrow} \bdCob(3) \stackrel{\bd}{\rightarrow} \bdCob(2)
\stackrel{\bd}{\rightarrow} \bdCob(1).$$
is equivalent to the data provided by the sequence of symmetric monoidal functors
$$ \Bord_1 \rightarrow \Bord_2 \rightarrow \Bord_3 \rightarrow \cdots$$
in the sense that either can be used to reconstruct the other. This is a special case of a
general result which comparing {\it categorical chain complexes} (Definition \ref{ka}) with
{\it skeletal sequences} (Definition \ref{swipe1}). 

\begin{remark}
The above assertion might seem surprising, since the chain complex
$$ \cdots \stackrel{\bd}{\rightarrow} \bdCob(3) \stackrel{\bd}{\rightarrow} \bdCob(2)
\stackrel{\bd}{\rightarrow} \bdCob(1)$$
consists only of $(\infty,1)$-categorical data, while each $\Bord_{n}$ is an
$(\infty,n)$-category. The $(\infty,n)$-category $\Bord_{n}$ is a fairly complicated object:
it records information not only about bordisms between manifolds but also bordisms between bordisms, bordisms between bordisms between bordisms, and so forth. Consequently, the definition of
$\Bord_{n}$ involves manifolds with corners of arbitrary codimension. By contrast, the
$(\infty,1)$-categories $\bdCob(d)$ can be defined using manifolds having corners of codimension $\leq 2$. The fact that we can dispense with corners of higher codimension can be regarded as a reflection of the following geometric idea: any manifold with corners can be regarded as a manifold with boundary by
``smoothing'' the corners.
\end{remark}

Our first step is to axiomatize the properties of the sequence of
$(\infty,1)$-categories $\{ \bdCob(d) \}_{d \geq 1}$. 
Recall that a sequence of abelian groups and group
homomorphisms
$$ \cdots \stackrel{d_3}{\rightarrow} C_2 \stackrel{d_2 }{\rightarrow} C_1 \stackrel{d_1}{\rightarrow} C_0$$ is a {\it chain complex} if each composition $d_{n-1} \circ d_{n}: C_{n} \rightarrow C_{n-2}$
is the zero map. As we already noted above, the sequence 
$$ \cdots \stackrel{\bd}{\rightarrow} \bdCob(3) \stackrel{\bd}{\rightarrow} \bdCob(2)
\stackrel{\bd}{\rightarrow} \bdCob(1)$$
has an analogous property: each of the compositions $\bd^2: \bdCob(n) \rightarrow \bdCob(n-2)$
is trivial. However, when we work in the setting of higher category theory, we need to be more precise: what we should say is that there is a canonical isomorphism $\alpha_n: \bd^2 \simeq \underline{\bf 1}$,
where $\underline{ \bf 1}$ denotes the constant functor $\bdCob(n) \rightarrow \bdCob(n-2)$ taking the value ${\bf 1}$. We then encounter secondary phenomena: specifying the isomorphisms
$\{ \alpha_n \}_{n \geq 3}$ allows us to identify the composition
$\bd^3: \bdCob(n) \rightarrow \bdCob(n-3)$ with the constant functor
(taking the value ${\bf 1} \in \bdCob(n-3)$) in two different ways, depending on whether we use
$\alpha_{n}$ or $\alpha_{n-1}$. In our example, there is a canonical homotopy which relates these two isomorphisms. To extract a good theory of chain complexes, it is necessary to take into account these homotopies as well as the isomorphisms $\{ \alpha_n \}_{n \geq 3}$, together with additional coherence conditions satisfied by higher powers of $\bd$. 

We will sidestep these issues by defining the notion of chain complex in a different way. For motivation, let us begin with classical homological algebra. Suppose we are given a chain complex of abelian groups 
$$ \cdots \stackrel{d_3}{\rightarrow} C_2 \stackrel{d_2 }{\rightarrow} C_1 \stackrel{d_1}{\rightarrow} C_0 \rightarrow \cdots $$
For every integer $n$, let $Z_{n} \subseteq C_{n}$ denote the kernel of the differential $d_{n}$. The condition $d_{n-1} \circ d_{n} = 0$ implies that $d_{n}$ maps $C_{n}$ into $Z_{n-1}$. We therefore obtain a collection of short exact sequences
$$ 0 \rightarrow Z_{n} \rightarrow C_{n} \stackrel{d'_n}{\rightarrow} Z_{n-1},$$
each of which is determined (up to canonical isomorphism) by the homomorphism
$d'_{n}: C_{n} \rightarrow Z_{n-1}$. Conversely, given sequence of maps
$\{ d'_{n}: C_{n} \rightarrow Z_{n-1} \}_{n \in \Z}$ together with isomorphisms
$Z_{n} \simeq \ker d'_{n}$, we can construct a complex of abelian groups
$( C_{\bigdot}, d_{\bigdot})$ by letting $d_{n}$ denote the composition
$$ C_{n} \stackrel{d'_{n}}{\rightarrow} Z_{n-1} \simeq \ker(d'_{n-1}) \subseteq C_{n-1}.$$
We can summarize the above discussion as follows: every chain complex of abelian groups can be obtained by splicing together short exact sequences. 

We would like to export this idea to our higher-categorical setting, replacing the abelian groups $C_{n}$ by the $(\infty,1)$-categories
$\bdCob(n)$ and the abelian groups $Z_{n}$ by the $(\infty,1)$-categories $\tunCob(n)$.
For each $n > 1$, passage to the boundary defines a forgetful functor $\pi: \bdCob(n) \rightarrow \tunCob(n-1)$. To complete the analogy with our homological algebra discussion, we should identify 
$\tunCob(n)$ with the ``kernel'' of the map $\pi$. This kernel can be defined as the homotopy
fiber product  $\bdCob(n) \times_{ \tunCob(n-1)}^{R} \{ {\bf 1} \}$: in other words, the inverse image
$\pi^{-1} \{ \bf 1 \}$. The fact that the formation of this kernel is a well-behaved operation is a reflection of a special feature of the functor $\pi$ which we now describe.

Let $\pi: \calC \rightarrow \calD$ be an arbitrary functor between $(\infty,1)$-categories.
For each object $D \in \calD$, we let $\calC_{D}$ denote the homotopy fiber product
$\calC \times_{ \calD}^{R} \{D \}$. We can think of $\{ \calC_{D} \}_{D \in \calD}$ as a family of
$(\infty,1)$-categories parametrized by the objects of $\calD$. However, this intuition can be somewhat misleading: a morphism $D \rightarrow D'$ in $\calD$ need not induce a functor between fibers $\calC_{D} \rightarrow \calC_{D'}$. We can remedy the situation by introducing an assumption on
the functor $\pi$:

\begin{definition}\label{cabber1}
Let $\pi: \calC \rightarrow \calD$ be a functor between $(\infty,1)$-categories, and let
$\overline{f}: C \rightarrow C'$ be a morphism in $\calC$. We will say that $\overline{f}$ is {\it $\pi$-coCartesian}
if the following condition is satisfied:
\begin{itemize}
\item For every object $C'' \in \calC$, the diagram of $\infty$-groupoids
$$ \xymatrix{ \OHom_{\calC}( C', C'') \ar[r] \ar[d] & \OHom_{\calC}( C, C'') \ar[d] \\
\OHom_{\calD}( \pi(C'), \pi(C'') ) \ar[r] & \OHom_{\calD}( \pi(C), \pi(C'') ) }$$
is a homotopy pullback square.
\end{itemize}
We say that the functor $\pi$ is a {\it coCartesian fibration} if the following condition is satisfied:
for every object $C \in \calC$ and every morphism $f: \pi(C) \rightarrow D$ in $\calD$, there
exists a $\pi$-coCartesian morphism $\overline{f}: C \rightarrow \overline{D}$ lifting $f$.
\end{definition}

In the situation of Definition \ref{cabber1}, the morphism $\overline{f}$ is determined up
to isomorphism by $D$ and $f$, provided that $\overline{f}$ exists. It follows that the codomain
$\overline{D}$ of $\overline{f}$ is also determined by $C$ and $f$; we will often indicate this dependence by writing $\overline{D} = f_{!} C$. 

\begin{definition}
Let $\calC$ and $\calD$ be symmetric monoidal $(\infty,1)$-categories. A {\it symmetric monoidal coCartesian fibration} from $\calC$ to $\calD$ is a symmetric monoidal functor $\pi: \calC \rightarrow \calD$ which is a coCartesian fibration and such that the collection of $\pi$-coCartesian morphisms in $\calC$ is stable under tensor products.
\end{definition}

\begin{example}
For $n \geq 2$, passage to the boundary determines a symmetric monoidal coCartesian fibration
$\bdCob(n) \rightarrow \tunCob(n-1)$. For every closed $(n-2)$-manifold $M$, we can identify
the fiber $\bdCob(n)_{M}$ with an $(\infty,1)$-category whose objects are $(n-1)$-manifolds
with boundary $M$, with $\bHom_{ \bdCob(n)_{M}}( X, X')$ is a classifying space for bordisms
from $X$ to $X'$ which are trivial along $M$. A bordism $B$ from $M$ to $M'$ determines a functor
$\bdCob(n)_{M} \rightarrow \bdCob(n)_{M'}$, given by the formula $X \mapsto X \coprod_{M} B$.
\end{example}

We are now ready to define the basic objects of interest to us in this section:

\begin{definition}\label{ka}
A {\it categorical chain complex of length $n$} consists of the following data:
\begin{itemize}

\item[$(a)$] A sequence of symmetric monoidal coCartesian fibrations
$\{ \calC_{k} \rightarrow \calZ_{k-1} \}_{1 \leq k \leq n}$ between symmetric monoidal $(\infty,1)$-categories, where $\calZ_0 \simeq \ast$ is trivial and each $\calC_{k}$ has duals.

\item[$(b)$] For $1 \leq k < n$, a symmetric monoidal equivalence
of $\calZ_{k}$ with the homotopy fiber $\calC_{k} \times^{R}_{ \calZ_{k-1} } \{ {\bf 1} \}$.
\end{itemize}
\end{definition}

\begin{example}\label{matt}
The symmetric monoidal coCartesian fibrations
$\{ \bdCob(k) \rightarrow \tunCob(k-1) \}_{1 \leq k \leq n}$ determine a categorical chain complex of length $n$. 
\end{example}

Our goal is to compare the class of categorical chain complexes with another class of mathematical objects, which we call {\it skeletal sequences}. 

\begin{definition}
Let $f: \calC \rightarrow \calD$ be a functor between $(\infty,n)$-category, and let $k \geq 0$ be an integer. We will say that $f$ is {\it $k$-connective} if the following conditions are satisfied:
\begin{itemize}
\item[$(1)$] The functor $f$ is essentially surjective. That is, for every object $D \in \calD$, there
exists an object $C \in \calC$ and an isomorphism $D \simeq f(C)$.
\item[$(2)$] If $k > 0$, then for every pair of objects $C,C' \in \calC$, the induced functor
$$\OHom_{\calC}(C,C') \rightarrow \OHom_{\calD}( F(C), F(C') )$$ is $(k-1)$-connective.
\end{itemize}
\end{definition}

\begin{example}
Let $X \subseteq Y$ be a nice inclusion of topological spaces (for example, a cellular inclusion of
CW complexes). Then the induced functor $f: \pi_{\leq \infty} X \rightarrow \pi_{\leq \infty} Y$
is $k$-connective if and only if the inclusion $X \subseteq Y$ is $k$-connected: that is, if and only if the
homotopy groups $\pi_i(Y/X)$ vanish for $i \leq k$. 
\end{example}

\begin{example}
For every integer $k$, the evident functor $\Bord_{k} \rightarrow \Bord_{k+1}$ is
$k$-connective: in fact, $\Bord_{k}$ and $\Bord_{k+1}$ have the same $j$-morphisms for
$j \leq k$.
\end{example}

\begin{definition}\label{swipe1}
A {\it skeletal sequence} $(${\it of length $n$}{}$)$ is a diagram
$$ \calB_1 \stackrel{f_1}{\rightarrow} \calB_2 \stackrel{f_2}{\rightarrow} \ldots 
\stackrel{f_{n-1}}{\rightarrow} \calB_n$$
with the following properies:
\begin{itemize}
\item[$(1)$] Each $\calB_k$ is a symmetric monoidal $(\infty,k)$-category.
\item[$(2)$] Each $f_k$ is a $(k-1)$-connective symmetric monoidal functor.
\item[$(3)$] Every object of $\calB_1$ is dualizable.
\item[$(4)$] For $2 \leq k \leq n$, every morphism $\alpha: {\bf 1} \rightarrow X$ in
$\Omega^{k-2} \calB_k$ admits a left adjoint.
\end{itemize}
\end{definition}

\begin{example}\label{kap}
The sequence
$$ \Bord_1 \rightarrow \Bord_2 \rightarrow \ldots \rightarrow \Bord_n$$
is a skeletal sequence of length $n$. More generally, suppose that $X$ is a topological space
and $\zeta$ is an $n$-dimensional vector bundle on $X$ with inner product. For
$0 \leq k \leq n$, let $X_k$ denote the set of pairs $(x,\alpha)$, where $x \in X$ and
$\alpha: \R^{n-k} \rightarrow \zeta_{x}$ is an isometric embedding, and let 
$\zeta_{k}$ denote the vector bundle on $X_k$ whose fiber at $(x,\alpha)$ is the orthogonal
complement to the image of $\alpha$. Then we have a skeletal sequence
$$ \Bord_{1}^{(X_1, \zeta_1) } \rightarrow \ldots \rightarrow \Bord_{n}^{(X_n, \zeta_n)} = \Bord_{n}^{(X,\zeta)}.$$
\end{example}

\begin{example}\label{kaq}
Let $\calB$ be a symmetric monoidal $(\infty,n)$-category with duals. For $k \leq n$, let
$\calB_k$ denote the underlying $(\infty,k)$-category of $\calB$: that is, the
$(\infty,k)$-category obtained from $\calB$ by discarding noninvertible $m$-morphisms for
$m > k$. Then
$$ \calB_1 \rightarrow \calB_2 \rightarrow \cdots \rightarrow \calB_n =\calB$$
is a skeletal sequence, which we will call the {\it canonical skeletal sequence} associated to $\calB$.
\end{example}

\begin{remark}
The skeletal sequences described in Examples \ref{kap} and \ref{kaq} have a number of additional features:
\begin{itemize}
\item[$(i)$] For $1 \leq k \leq n$, the symmetric monoidal $(\infty,k)$-category $\calC_k$ has duals.
\item[$(ii)$] Each of the functors $\calC_{k} \rightarrow \calC_{k+1}$ is $k$-connective (rather
than merely $(k-1)$-connective).
\end{itemize}
We generally be interested only in skeletal sequences satisfying these conditions.
However, we will have no need of $(i)$ or $(ii)$ in the analysis below.
\end{remark}

\begin{warning}
For $n \leq 3$, the skeletal sequence $\Bord_1 \rightarrow \ldots \rightarrow \Bord_n$ of
Example \ref{kap} coincides with the canonical skeletal sequence of Example \ref{kaq}. However, this
is not true in general, due to the failure of the parametrized $h$-cobordism theorem.
\end{warning}

The main result of this section can be summarized as follows:

\begin{claim}\label{inker}
For each integer $n \geq 1$, the following types of data are equivalent:
\begin{itemize}
\item[$(1)$] Categorical chain complexes of length $n$.
\item[$(2)$] Skeletal sequences of length $n$.
\end{itemize}
\end{claim}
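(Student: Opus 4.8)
The plan is to establish Claim~\ref{inker} by exhibiting explicit passage functors in both directions and checking that they are mutually inverse, following the homological-algebra template recalled in this section: a chain complex of abelian groups is precisely the data of maps $d'_{k}\colon C_{k}\to Z_{k-1}$ together with identifications $Z_{k}\simeq\ker(d'_{k})$, and here the ``kernel'' is replaced by the homotopy fibre over the unit object ${\bf 1}$ and a short exact sequence $0\to Z_{k}\to C_{k}\to Z_{k-1}$ by a symmetric monoidal coCartesian fibration $\calC_{k}\to\calZ_{k-1}$ in the sense of Definition~\ref{cabber1}. Both constructions proceed by induction on $n$ and rest on a single auxiliary operation: an \emph{unfolding} functor $U$, sending a symmetric monoidal $(\infty,k)$-category $\calB$ to the symmetric monoidal $(\infty,1)$-category $U(\calB)=\Omega^{k-1}\calB$ (the $(k-1)$-fold iterate of the passage $\calD\mapsto\OHom_{\calD}({\bf 1},{\bf 1})$), together with a relative form $U_{f}$ defined on a $(k-1)$-connective symmetric monoidal functor $f\colon\calB'\to\calB$: this $U_{f}$ is the unstraightening of the assignment $X\mapsto\OHom_{\Omega^{k-2}\calB}({\bf 1},X)$ (pulled back along $f$), so it comes with a projection $U_{f}\to U(\calB')$ whose homotopy fibre over ${\bf 1}$ is $U(\calB)$. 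On $(\infty,1)$-categories $U$ is the identity; applied to the inclusions $\Bord_{k-1}\to\Bord_{k}$ one recovers $U(\Bord_{k})\simeq\tunCob(k)$ and $U_{f_{k-1}}\simeq\bdCob(k)$, so the two constructions below specialize to Examples~\ref{matt} and~\ref{kap}.

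Given a skeletal sequence $\calB_{1}\xrightarrow{f_{1}}\cdots\xrightarrow{f_{n-1}}\calB_{n}$ of length $n$, I would set $\calC_{1}=\calB_{1}$, $\calC_{k}=U_{f_{k-1}}$ for $2\le k\le n$, and $\calZ_{k}=U(\calB_{k})$ for $0\le k\le n-1$ (so $\calZ_{0}=\ast$), with the coCartesian fibration being the projection $\calC_{k}\to U(\calB_{k-1})=\calZ_{k-1}$ and the equivalences $\calZ_{k}\simeq\calC_{k}\times^{R}_{\calZ_{k-1}}\{{\bf 1}\}$ being the identifications of the fibre over ${\bf 1}$ with $U(\calB_{k})$ built into $U_{f}$. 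That the projection is a symmetric monoidal coCartesian fibration with the stated fibre, and that its coCartesian morphisms are closed under tensor product, are formal consequences of the (un)straightening formalism, since covariant functoriality (in $X$) of $\OHom_{\Omega^{k-2}\calB_{k}}({\bf 1},X)$ is automatic. The one substantive point is that each $\calC_{k}$ has duals: for $k=1$ this is hypothesis~(3) of Definition~\ref{swipe1}, and for $k\ge2$ it is exactly here that hypothesis~(4) is used --- the left adjoint of a $1$-morphism ${\bf 1}\to X$ in $\Omega^{k-2}\calB_{k}$ produces the dual of the corresponding object of $U_{f_{k-1}}$ (compare the self-duality of manifolds-with-boundary in $\bdCob(k)$ by orientation reversal, Example~\ref{caperr}), and Lemma~\ref{simptom} guarantees this dual is well-defined.

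Conversely, given a categorical chain complex $\{\calC_{k}\to\calZ_{k-1}\}_{1\le k\le n}$, I would reconstruct a skeletal sequence by the inverse operation, a \emph{refolding}, built by induction. Set $\calB_{1}=\calC_{1}$; having produced $\calB_{k-1}$ with a symmetric monoidal equivalence $U(\calB_{k-1})\simeq\calZ_{k-1}$, straighten $\calC_{k}\to\calZ_{k-1}$ into a functor $\calZ_{k-1}\to\Cat_{(\infty,1)}$ and use its values to adjoin to $\calB_{k-1}$ a layer of top-dimensional ($k$-)morphisms, the pushforward functors $g_{!}$ of Definition~\ref{cabber1} supplying the composition of new $k$-morphisms against old morphisms; this yields $\calB_{k}$ and an inclusion $f_{k-1}\colon\calB_{k-1}\to\calB_{k}$. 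Since $f_{k-1}$ agrees on $j$-morphisms for $j<k$, it is $(k-1)$-connective automatically --- a $(k-1)$-connective functor of $(\infty,k)$-categories is simply one essentially surjective on $j$-morphisms for $j\le k-1$, with no condition on $k$-morphisms --- so hypothesis~(2) holds; hypothesis~(3) holds because $\calC_{1}$ has duals; and hypothesis~(4) for $\calB_{k}$ is the translation back of the statement that $\calC_{k}$ has duals. One then checks the two passages are mutually inverse: refolding $\{U_{f_{k-1}}\to U(\calB_{k-1})\}$ returns the original skeletal sequence, and unfolding the reconstructed $\{\calB_{k}\}$ returns the original categorical chain complex. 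By the inductive structure both reduce to a single assertion: that $U$ and $U_{f}$, together with refolding, are mutually inverse equivalences matching symmetric monoidal $(\infty,k)$-categories satisfying the duality hypotheses of Definition~\ref{swipe1} against categorical chain complexes.

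The main obstacle --- and essentially all of the work --- lies in making the operations $U$ and $U_{f}$ and their inverses precise in a rigorous model of higher categories (complete Segal spaces, or the $n$-fold complete Segal spaces of \S\ref{bigseg}), and in proving that the two passages really are inverse equivalences. The heart of this is showing that the correspondence between ``$\calB$ satisfies the duality hypotheses of Definition~\ref{swipe1}'' and ``$\calC=U_{f}$ has duals'' is an equivalence of data in both directions, not merely an implication --- this is a fibered, relative incarnation of the dictionary between duals and adjoints of Example~\ref{kist}. There is in addition a coherence issue: one must transport not a single equivalence $\calZ_{k}\simeq\calC_{k}\times^{R}_{\calZ_{k-1}}\{{\bf 1}\}$ but the entire compatible tower of them, which is the categorical shadow of the homotopies relating the two trivializations of $\bd^{3}$ discussed above; organizing this bookkeeping, together with recording the connectivity estimates that ensure the inductive construction of $\calB_{k}$ from $\calB_{k-1}$ and $\calC_{k}$ leaves the lower-dimensional morphisms undisturbed, is where most of the effort is concentrated.
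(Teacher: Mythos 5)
Your unfolding functor $U$ and its relative form $U_{f}$ are precisely the ingredients of the paper's Construction \ref{sabber}: $\calZ_{k}=\Omega^{k-1}\calB_{k}$ and $\calC_{k}=\calC[\Omega^{k-2}F_{k-1}]$ built via the Grothendieck construction (Notation \ref{pf}, Propositions \ref{fatvat2} and \ref{kubbus}), and your observation that hypothesis $(4)$ of Definition \ref{swipe1} translates to dualizability of $\calC_{k}$ is exactly Proposition \ref{kamp}. The paper then establishes the inverse equivalence by the same inductive scheme you sketch, reducing to the length-$2$ case (Proposition \ref{kubbis}) via the recursion of Proposition \ref{longwife}, so your proposal follows the paper's route essentially step for step.
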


We will give a more precise formulation of Claim \ref{inker} below (see Theorem \ref{sobbus}). First, we need to introduce mild generalizations of Definitions \ref{swipe1} and \ref{ka}.

\begin{definition}\label{ka2}
A {\it weak categorical chain complex of length $n$} consists of the following data:
\begin{itemize}

\item[$(a)$] A sequence of symmetric monoidal coCartesian fibrations
$\{ \calC_{k} \rightarrow \calZ_{k-1} \}_{1 \leq k \leq n}$ between symmetric monoidal $(\infty,1)$-categories, where $\calZ_0 \simeq \ast$ is trivial and each $\calC_{k}$ has duals for $k < n$.

\item[$(b)$] For $1 \leq k < n$, a symmetric monoidal equivalence
of $\calZ_{k}$ with the homotopy fiber $\calC_{k} \times^{R}_{ \calZ_{k-1} } \{ {\bf 1} \}$.

\end{itemize}

A {\it weak skeletal sequence} $(${\it of length $n$}{}$)$ is a diagram
$$ \calB_1 \stackrel{f_1}{\rightarrow} \calB_2 \stackrel{f_2}{\rightarrow} \ldots 
\stackrel{f_{n-1}}{\rightarrow} \calB_n$$
with the following properies:
\begin{itemize}
\item[$(1)$] Each $\calB_k$ is a symmetric monoidal $(\infty,k)$-category.
\item[$(2)$] Each $f_k$ is a $(k-1)$-connective symmetric monoidal functor.
\item[$(3)$] Every object of $\calB_1$ is dualizable if $n > 1$.
\item[$(4)$] For $2 \leq k < n$, every morphism $\alpha: {\bf 1} \rightarrow X$ in
$\Omega^{k-2} \calB_k$ admits a left adjoint.
\end{itemize}
\end{definition}

\begin{example}\label{kubb} 
Let $\vect{\calC}$ denote a (weak) skeletal sequence
$\calC_1 \rightarrow \calC_2 \rightarrow \ldots \rightarrow \calC_n$ of length $n$. Then the induced sequence 
$\Omega \calC_2 \rightarrow \Omega \calC_3 \rightarrow \ldots \rightarrow \Omega \calC_n$
is a (weak) skeletal sequence of length $n-1$, which we will denote by $\Omega \vect{\calC}$.
The only nontrivial point is to verify that $\Omega \vect{\calC}$ satisfies condition
$(3)$ of Definition \ref{ka2}. This follows from the observation that the dual of an object
of $\Omega \calC_2$ can be identified with a left adjoint of the corresponding endomorphism
of the unit object ${\bf 1} \in \calC_2$.
\end{example}

Let us now analyze the notion of weak skeletal sequence of length $2$.
Let $F: \calB_1 \rightarrow \calB_2$ be a symmetric monoidal functor from a symmetric monoidal $(\infty,1)$-category $\calB_1$ to
a symmetric monoidal $(\infty,2)$-category $\calB_2$. We would like to
describe $\calB_2$ in terms of $\calB_1$, together with some additional data of an $(\infty,1)$-categorical nature. To accomplish this, we will need to know the mapping objects
$\OHom_{\calB_2}( F(X), F(Y) )$ for $C, D \in \calB_1$. If we assume that every object $X \in \calB_1$ has a dual $C^{\vee}$, we obtain a canonical equivalence (of $(\infty,1)$-categories) $$ \OHom_{\calB_2}( F(X), F(Y) ) \simeq \OHom_{\calB_2}( {\bf 1}, F(X^{\vee} \otimes Y));$$
here ${\bf 1}$ denotes the unit object of $\calB_2$. 

\begin{notation}\label{goob}
Let $F: \calB_1 \rightarrow \calB_2$ be a symmetric monoidal functor from
a symmetric monoidal $(\infty,1)$-category $\calB_1$ to a symmetric monoidal
$(\infty,2)$-category $\calB_2$. For each object $X \in \calB_1$, we let
$M_{F}(X)$ denote the $(\infty,1)$-category $\OHom_{\calB_2}( {\bf 1}, F(X) )$. 
\end{notation}

What sort of an object is $M_{F}$? We first observe that $M_{F}(X)$ depends functorially on the object $X \in \calB_1$. In other words, we can regard $M_{F}$ as a
functor from $\calB_1$ into $\Cat_{(\infty,1)}$, where $\Cat_{(\infty,1)}$ denotes the
(large) $(\infty,1)$-category of $(\infty,1)$-categories and functors between them.
Moreover, the $M_{F}$ interacts with the symmetric monoidal structure on $\calC_1$:
it is an example of a {\em lax symmetric monoidal functor}, which means that there is a collection
of maps $$ M_{F}(X) \times M_{F}(Y) \rightarrow M_{F}(X \otimes Y)$$
which are suitably compatible with the commutativity and associativity properties of the tensor product $\otimes$ on $\calB_1$.
We can attempt to recover the $(\infty,2)$-category $\calB_2$ from $\calB_1$ and the functor $M_F$, using the following general construction:

\begin{construction}\label{entering}
Let $\calB_1$ be a symmetric monoidal $(\infty,1)$-category, and let $M: \calB_1 \rightarrow \Cat_{(\infty,1)}$ be a lax symmetric monoidal functor. Suppose that every object of $\calB_1$ has a dual. We can then construct a symmetric monoidal $(\infty,2)$-category $\calB[M]$ which can be described informally as follows:
\begin{itemize}
\item[$(1)$] The objects of $\calB[M]$ are the objects of $\calB_1$.
\item[$(2)$] Given a pair of objects $X,Y \in \calB[M]$, we let
$\OHom_{\calB[M]}(X,Y) = M( X^{\vee} \otimes Y)$.
\item[$(3)$] Given a triple of objects $X,Y,Z \in \calB[M]$, the composition law
$\OHom_{\calB[M]}(X,Y) \times \OHom_{\calB[M]}(Y,Z)
\rightarrow \OHom_{\calB[M]}(X,Z)$
is given by the composition
$$ M(X^{\vee} \otimes Y) \times
M(Y^{\vee} \otimes Z) \rightarrow M( X^{\vee} \otimes Y \otimes {Y}^{\vee}
\otimes Z) \rightarrow M(X^{\vee} \otimes Y),$$
where the first map is given by the lax symmetric monoidal structure on the functor $M$ and the second is induced by the evaluation map $Y^{\vee} \otimes Y \rightarrow {\bf 1}$ in $\calB_1$.
\end{itemize}
\end{construction}

The fundamental properties of Construction \ref{entering} may be summarized as follows:

\begin{proposition}\label{kooma}
Let $\calB_1$ be a symmetric monoidal $(\infty,1)$-category with duals. Then
the construction $M \mapsto \calB[M]$ determines an equivalence between the following data:
\begin{itemize}
\item[$(1)$] Lax symmetric monoidal functors $M: \calB_1 \rightarrow \Cat_{(\infty,1)}$. 
\item[$(2)$] Symmetric monoidal $(\infty,2)$-categories $\calB_2$ equipped with an essentially surjective symmetric monoidal functor $\calB_1 \rightarrow \calB_2$.
\end{itemize}
\end{proposition}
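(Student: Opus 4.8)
The plan is to exhibit the construction $M \mapsto \calB[M]$ of Construction \ref{entering} as one half of an equivalence by producing an inverse construction and checking that the two round-trips are naturally equivalent to the identity. In one direction, given a symmetric monoidal $(\infty,2)$-category $\calB_2$ together with an essentially surjective symmetric monoidal functor $F\colon \calB_1 \to \calB_2$, I would attach the lax symmetric monoidal functor $M_F\colon \calB_1 \to \Cat_{(\infty,1)}$, $X \mapsto \OHom_{\calB_2}({\bf 1}, F(X))$, whose structure maps $M_F(X) \times M_F(Y) \to M_F(X \otimes Y)$ are obtained by tensoring in $\calB_2$ and then applying the symmetric monoidal constraint $F(X) \otimes F(Y) \simeq F(X \otimes Y)$; the lax unit is the object of $M_F({\bf 1}) = \OHom_{\calB_2}({\bf 1},{\bf 1})$ corresponding to $\id_{\bf 1}$. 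Both constructions visibly land in the right classes of data, so the content is the comparison of the two composites with the respective identity functors.

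For the composite $\calB_2 \mapsto M_F \mapsto \calB[M_F]$, the key point is that $F$ carries the duality data of $\calB_1$ to duality data in $\calB_2$, so each $F(X)$ has right dual $F(X^\vee)$ with evaluation and coevaluation the images of those for $X$. This yields a natural equivalence $\OHom_{\calB_2}(F(X), F(Y)) \simeq \OHom_{\calB_2}({\bf 1}, F(X)^\vee \otimes F(Y)) \simeq \OHom_{\calB_2}({\bf 1}, F(X^\vee \otimes Y)) = M_F(X^\vee \otimes Y)$, and I would check that under this equivalence the composition law of $\calB_2$ goes over to the composition law of $\calB[M_F]$ prescribed in Construction \ref{entering} --- the evaluation map $Y^\vee \otimes Y \to {\bf 1}$ appearing there is exactly the shadow of the counit $F(Y) \otimes F(Y)^\vee \to {\bf 1}$ used to compose $1$-morphisms in $\calB_2$. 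Essential surjectivity of $F$ then promotes this to an equivalence $\calB_2 \simeq \calB[M_F]$ of symmetric monoidal $(\infty,2)$-categories compatible with the functors out of $\calB_1$. For the reverse composite $M \mapsto \calB[M] \mapsto M_{\calB[M]}$ one computes $M_{\calB[M]}(X) = \OHom_{\calB[M]}({\bf 1}, X) = M({\bf 1}^\vee \otimes X) \simeq M(X)$, using the canonical self-duality of the unit, and verifies that the lax symmetric monoidal structures correspond.

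The main obstacle is making Construction \ref{entering} rigorous: one must show that the prescriptions there genuinely assemble into a symmetric monoidal $(\infty,2)$-category, including the production of identity $1$-morphisms. An identity $\id_X \in \OHom_{\calB[M]}(X,X) = M(X^\vee \otimes X)$ is obtained by applying the functor $M(\coev_X)\colon M({\bf 1}) \to M(X^\vee \otimes X)$ to the lax unit of $M$, where $\coev_X\colon {\bf 1} \to X^\vee \otimes X$; the triangle identities for the duality $(X, X^\vee)$ in $\calB_1$, combined with the coherence of the lax monoidal structure on $M$, translate precisely into the unit and associativity coherences needed in $\calB[M]$. To avoid an unbounded hand-chase of higher coherences, I would carry out the construction inside a fixed model for $(\infty,1)$- and $(\infty,2)$-categories (for instance the complete Segal space and $\Cat_{(\infty,1)}$-enriched Segal space formalism of \S \ref{bigseg}), so that $\calB[M]$ is a concrete enriched Segal object, and I would invoke the uniqueness-of-units principle (the $(\infty,1)$-categorical analogue of Claim \ref{amax}) to reduce the verification of the unit data to a property rather than extra structure. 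Finally I would upgrade the pointwise equivalences above to natural equivalences of the relevant $\infty$-categories of data, which gives the asserted equivalence between $(1)$ and $(2)$.
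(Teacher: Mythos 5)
Your proof is correct, and it fills in essentially the argument that the paper leaves implicit: the paper states Proposition~\ref{kooma} merely as a summary of the ``fundamental properties of Construction~\ref{entering},'' without giving a proof, but the informal discussion preceding Construction~\ref{entering} already sets up precisely the key identification $\OHom_{\calB_2}(F(X),F(Y))\simeq\OHom_{\calB_2}({\bf 1},F(X^\vee\otimes Y))$ that your argument turns on. Your two round-trip checks are the right ones: for $\calB_2\mapsto M_F\mapsto\calB[M_F]$ you use that a symmetric monoidal functor preserves duals together with essential surjectivity of $F$, and for $M\mapsto\calB[M]\mapsto M_{\calB[M]}$ you use self-duality of the unit. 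The point you rightly flag as the real work --- that Construction~\ref{entering} actually produces a symmetric monoidal $(\infty,2)$-category, including unit $1$-morphisms obtained as $M(\coev_X)$ applied to the lax unit --- is the piece the paper glosses over, and your suggestion to carry this out in a fixed model (enriched Segal objects, uniqueness-of-units in the spirit of Claim~\ref{amax}) is the sensible way to make it rigorous. One small remark: when you check that the composition law of $\calB_2$ matches that of $\calB[M_F]$, it is worth noting explicitly that the evaluation $Y^\vee\otimes Y\to{\bf 1}$ in $\calB_1$ is sent by $F$ to the counit of the induced duality on $F(Y)$, which is exactly what is used to compose $\OHom_{\calB_2}({\bf 1},F(X^\vee\otimes Y))\times\OHom_{\calB_2}({\bf 1},F(Y^\vee\otimes Z))\to\OHom_{\calB_2}({\bf 1},F(X^\vee\otimes Z))$; you gesture at this but it is the crux of the coherence check.
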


\begin{warning}
Let $\calB_1$ and $M$ be as in Construction \ref{entering}. To define $\calB[M]$ as an
$(\infty,2)$-category, it is not necessary to assume that the tensor product operation on
$\calB_1$ is commutative: it suffices to assume that $\calB_1$ is a monoidal $(\infty,1)$-category, and that $M$ is a lax monoidal functor. In this case, $\calB[M]$ does {\em not} inherit a monoidal structure from the monoidal structure on $\calB_1$. However, it does inherit an action of the monoidal
category $\calB_1$. Proposition \ref{kooma} admits the following analogue: giving a lax monoidal functor $M: \calB_1 \rightarrow \Cat_{(\infty,1)}$ is equivalent to giving an $(\infty,2)$-category
$\calB_2$ with a distinguished object ${\bf 1}$ which is acted on by $\calB_1$, such that the action functor $\calB_1 \simeq \calB_1 \times \{ {\bf 1} \} \rightarrow \calB_1 \times \calB \rightarrow \calB$
is essentially surjective. 
\end{warning}

In order to apply Proposition \ref{kooma} in practice, we need a way of describing
lax symmetric monoidal functors $M: \calB_1 \rightarrow \Cat_{(\infty,1)}$. This can be achieved by a higher-categorical version of what is often called the {\it Grothendieck construction}:

\begin{construction}\label{groth}
Let $\calB$ be an $(\infty,1)$-category, and let $M: \calB \rightarrow \Cat_{(\infty,1)}$ be
a functor. Then $M$ associates to each object $X \in \calB$ an $(\infty,1)$-category
$M(X)$, and to each morphism $f: X \rightarrow Y$ in $\calB$ a functor
$f_{!}: M(X) \rightarrow M(Y)$. 

We can construct a new $(\infty,1)$-category $\Groth(\calB, M)$ which can be described informally as follows:
\begin{itemize}
\item The objects of $\Groth(\calB_1,M)$ are pairs $(X,\eta)$, where
$X$ is an object of $\calB$ and $\eta$ is an object of $M(X)$.
\item Given a pair of objects $(X,\eta), (X',\eta') \in \Groth(\calB,M)$, we define
$\OHom_{\Groth(\calB,M)}( (X,\eta), (Y',\eta') )$ to be a classifying space for pairs
$(f, \alpha)$, where $f \in \OHom_{\calB}(X,X')$ and $\alpha \in \OHom_{M(X')}( f_{!} \eta, \eta')$.
\item Composition of morphisms in $\Groth(\calB, M)$ is defined in a straightforward way.
\end{itemize}
\end{construction}

Let $\calB$ and $M$ be as in Construction \ref{groth}. There is a canonical projection functor $\pi:\Groth(\calB,M) \rightarrow \calB$, given on objects by the formula $(X,\eta) \mapsto X$.
This functor is a coCartesian fibration, and we have a canonical equivalence
$\Groth(\calB,M)_{X} \simeq M(X)$ for each $X \in \calB$. We can therefore
recover the functor $M$ from $\pi$: for example, if $f: X \rightarrow Y$ is a morphism
in $\calB$, then the induced functor $M(X) \rightarrow M(Y)$ can be identified with
the functor $f_{!}: \Groth(\calB, M)_{X} \rightarrow \Groth(\calB, M)_{Y}$ whose value
on an object $\overline{X} \in \Groth(\calB, M)_{X}$ is the codomain of a
$\pi$-coCartesian morphism $\overline{f}: \overline{X} \rightarrow \overline{Y}$ lifting
$f$. Elaborating on these ideas, one can prove the following:

\begin{proposition}\label{fatvat}
Let $\calB$ be an $(\infty,1)$-category. The construction $M \mapsto \Groth(\calB, M)$ determines an equivalence between the following types of data:
\begin{itemize}
\item[$(1)$] Functors from $\calB$ to $\Cat_{(\infty,1)}$.
\item[$(2)$] CoCartesian fibrations $\pi: \calC \rightarrow \calB$.
\end{itemize}
\end{proposition}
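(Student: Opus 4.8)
The plan is to construct explicit functors in both directions between the $\infty$-category of functors $\Fun(\calB, \Cat_{(\infty,1)})$ and the $\infty$-category $\mathrm{coCart}(\calB)$ of coCartesian fibrations over $\calB$, and then to exhibit natural equivalences witnessing that these functors are mutually inverse. In one direction we have already described the relevant construction: $M \mapsto \Groth(\calB, M)$, equipped with its projection $\pi \colon \Groth(\calB, M) \to \calB$, which Construction \ref{groth} and the remarks following it show is a coCartesian fibration. In the opposite direction, given a coCartesian fibration $\pi \colon \calC \to \calB$, we define a functor $\mathrm{St}_{\pi} \colon \calB \to \Cat_{(\infty,1)}$ on objects by $X \mapsto \calC_X := \calC \times^{R}_{\calB} \{X\}$, and on a morphism $f \colon X \to Y$ by the transport functor $f_{!} \colon \calC_X \to \calC_Y$ that carries $\overline{X} \in \calC_X$ to the codomain of a $\pi$-coCartesian lift $\overline{f} \colon \overline{X} \to f_{!}\overline{X}$ of $f$. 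The first substantive point is that $\mathrm{St}_\pi$ is well-defined as a functor: this amounts to checking that $\pi$-coCartesian morphisms compose (so that $(g \circ f)_! \simeq g_! \circ f_!$ canonically) and that a $\pi$-coCartesian lift of an equivalence is an equivalence, both of which follow formally from the homotopy-pullback characterization in Definition \ref{cabber1}. Coherence of these identifications at all higher levels should be organized by observing that the assignment $X \mapsto \calC_X$ extends to a functor out of $\calB$ in the model of complete Segal spaces by restricting $\pi$ along the evaluation maps $X_k \to X_0^{\times (k+1)}$; one simply takes fibers of the simplicial space presenting $\calC$ over the simplicial space presenting $\calB$.

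Next I would verify the two composites are equivalent to the identity. For $\Groth \circ \mathrm{St}$, given a coCartesian fibration $\pi \colon \calC \to \calB$, there is a tautological functor $\calC \to \Groth(\calB, \mathrm{St}_\pi)$ sending an object $C$ to the pair $(\pi(C), C \in \calC_{\pi(C)})$ and a morphism $u \colon C \to C'$ to the pair $(\pi(u), \alpha)$, where $\alpha$ is the canonical factorization of $u$ through the $\pi$-coCartesian lift of $\pi(u)$. This functor is compatible with the projections to $\calB$, and it is an equivalence because it induces an equivalence on each fiber over $X \in \calB$ (both fibers being $\calC_X$, by construction) and both sides are coCartesian fibrations over $\calB$ — a functor over $\calB$ between coCartesian fibrations which is a fiberwise equivalence is an equivalence, again by the pullback criterion of Definition \ref{cabber1}. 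Conversely, for $\mathrm{St} \circ \Groth$, the remarks following Construction \ref{groth} already record a canonical equivalence $\Groth(\calB, M)_X \simeq M(X)$ for each $X$, and one checks that under this identification the transport functor $f_!$ of $\Groth(\calB,M)$ agrees with $M(f)$; assembling these equivalences coherently over $\calB$ gives a natural equivalence $\mathrm{St}_{\Groth(\calB,M)} \simeq M$.

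The main obstacle is not any single geometric or combinatorial fact but the bookkeeping of higher coherence: making precise that $\mathrm{St}_\pi$ is a functor of $\infty$-categories (rather than merely an assignment on objects and morphisms of homotopy categories) requires either working systematically in a chosen model — e.g. presenting $\calB$ and $\calC$ by complete Segal spaces and taking levelwise homotopy fibers, as indicated above — or invoking a general straightening/unstraightening equivalence. In the present expository setting I would proceed by the first route: exhibit $\mathrm{St}$ and $\Groth$ at the level of the relevant model categories of simplicial spaces, check they are homotopical (preserve the appropriate weak equivalences), and observe that they descend to an equivalence of $\infty$-categories by the fiberwise-equivalence argument above. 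A fully detailed treatment — which we do not reproduce here — can be given along the lines of the analogous statement for $(\infty,1)$-categories in the literature; the symmetric-monoidal refinement needed for Proposition \ref{kooma} is then obtained by carrying the same argument through in the category of commutative algebra objects, i.e. by tracking the extra structure of the lax symmetric monoidal maps $M(X) \times M(Y) \to M(X \otimes Y)$ and the corresponding stability of coCartesian morphisms under tensor products.
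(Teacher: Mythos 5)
The paper does not actually prove Proposition~\ref{fatvat}: the remark immediately following it defers the entire argument to the cited reference \cite{htt}, so there is no in-paper proof to compare against. Your sketch reproduces the standard straightening/unstraightening correspondence from that source — the Grothendieck construction in one direction, the fiber construction $X \mapsto \calC_X$ with coCartesian transport in the other, followed by the check that the composites are equivalent to the identity via fiberwise comparison. This is exactly the approach the authors implicitly intend by their citation, so the overall shape of your argument is correct.

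One step deserves a sharper justification. You assert that a functor over $\calB$ between coCartesian fibrations which is a fiberwise equivalence is itself an equivalence, and you attribute this to the homotopy-pullback criterion of Definition~\ref{cabber1}. The correct statement is slightly stronger in hypothesis: the functor must also \emph{preserve coCartesian morphisms}. Without this assumption the fiberwise criterion fails — one can cook up functors over $\calB$ that are equivalences on each fiber but not full on total spaces, precisely because they misalign the transport. The good news is that your tautological functor $\calC \to \Groth(\calB, \mathrm{St}_\pi)$ visibly sends coCartesian lifts to coCartesian lifts (the factorization $\alpha$ through a coCartesian lift of $\pi(u)$ is an equivalence when $u$ itself is coCartesian), so the needed hypothesis is satisfied; you should just say so explicitly rather than leaning on Definition~\ref{cabber1} alone. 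A minor labeling slip: the symmetric monoidal refinement you allude to at the end is Proposition~\ref{fatvat2}, not Proposition~\ref{kooma} (the latter concerns the $\calB[M]$ construction, and the two are combined in Proposition~\ref{kubbus}).
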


\begin{remark}
For a precise formulation and proof of Proposition \ref{fatvat} in the $(\infty,1)$-categorical context, we refer the reader to \cite{htt}.
\end{remark}

Suppose now that $\calB$ is a symmetric monoidal $(\infty,1)$-category, and that
$M: \calB \rightarrow \Cat_{(\infty,1)}$ is a lax symmetric monoidal structure.
Then for every pair of objects $X, Y \in \calB$, we have a canonical functor $\beta_{X,Y}: M(X) \times M(Y) \rightarrow M(X \otimes Y)$. The $(\infty,1)$-category $\Groth(\calB, M)$ inherits a symmetric monoidal structure, which is given on objects by the formula $(X, \eta) \otimes (Y, \eta')
\simeq (X \otimes Y, \beta_{X,Y}(\eta, \eta'))$. We observe that the projection functor
$\pi: \Groth(\calB, M) \rightarrow \calB$ is a symmetric monoidal coCartesian fibration.
In fact, we have the following converse, which can be regarded as a symmetric monoidal
analogue of Proposition \ref{fatvat}:

\begin{proposition}\label{fatvat2}
Let $\calB$ be a symmetric monoidal $(\infty,1)$-category. The construction $M \mapsto \Groth(\calB, M)$ determines an equivalence between the following data:
\begin{itemize}
\item[$(1)$] Lax symmetric monoidal functors from $\calB$ to $\Cat_{(\infty,1)}$.
\item[$(2)$] Symmetric monoidal coCartesian fibrations $\pi: \calC \rightarrow \calB$.
\end{itemize}
\end{proposition}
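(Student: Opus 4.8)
The plan is to bootstrap from the non-symmetric-monoidal case (Proposition \ref{fatvat}) by carrying out the Grothendieck construction relative to the category of pointed finite sets. Recall from \cite{dag3} that a symmetric monoidal structure on an $(\infty,1)$-category $\calB$ is encoded by a coCartesian fibration $\calB^{\otimes} \to \mathrm{Fin}_{\ast}$, where $\mathrm{Fin}_{\ast}$ denotes the (nerve of the) category of pointed finite sets $\langle m\rangle = \{\ast, 1, \ldots, m\}$, subject to the Segal condition that the inert maps $\langle m\rangle \to \langle 1\rangle$ exhibit the fiber $\calB^{\otimes}_{\langle m\rangle}$ as the $m$-fold power of $\calB = \calB^{\otimes}_{\langle 1\rangle}$; the tensor product on $\calB$ is then recovered from the coCartesian pushforward along the active map $\langle 2\rangle \to \langle 1\rangle$. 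Similarly the Cartesian symmetric monoidal structure on $\Cat_{(\infty,1)}$ is encoded by a coCartesian fibration $\Cat_{(\infty,1)}^{\times} \to \mathrm{Fin}_{\ast}$, and a lax symmetric monoidal functor $M \colon \calB \to \Cat_{(\infty,1)}$ is by definition a functor $\calB^{\otimes} \to \Cat_{(\infty,1)}^{\times}$ over $\mathrm{Fin}_{\ast}$ which preserves inert morphisms.

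First I would record two dictionary entries. On the source side: since the target carries the \emph{Cartesian} structure, an inert-preserving functor $\calB^{\otimes} \to \Cat_{(\infty,1)}^{\times}$ over $\mathrm{Fin}_{\ast}$ is the same datum as an ordinary functor $\widehat{M} \colon \calB^{\otimes} \to \Cat_{(\infty,1)}$ with the following \emph{monoid property}: for each object of $\calB^{\otimes}$ over $\langle m\rangle$, corresponding under the Segal equivalence to a tuple $(X_1, \ldots, X_m)$ of objects of $\calB$, the inert morphisms exhibit $\widehat{M}(X_1, \ldots, X_m)$ as the product $\prod_{i} \widehat{M}(X_i)$ (this is the standard identification of lax algebras valued in a Cartesian structure with Segal-type monoid objects; see \cite{dag3}), so that $\widehat M$ encodes $M = \widehat M|_{\calB}$ together with its lax monoidal structure. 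On the target side: a symmetric monoidal coCartesian fibration $\pi \colon \calC \to \calB$ in the sense of the definition above --- a symmetric monoidal functor which is a coCartesian fibration and whose $\pi$-coCartesian edges are stable under $\otimes$ --- is the same datum as a coCartesian fibration of $\infty$-categories $\calC^{\otimes} \to \calB^{\otimes}$ for which the composite $\calC^{\otimes} \to \mathrm{Fin}_{\ast}$ is again a symmetric monoidal $\infty$-category, with underlying category $\calC = \calC^{\otimes}_{\langle 1\rangle}$; indeed, given that $\calC^{\otimes} \to \mathrm{Fin}_{\ast}$ encodes a symmetric monoidal $\infty$-category, stability of the $\pi$-coCartesian edges under $\otimes$ is precisely what makes $\calC^{\otimes} \to \calB^{\otimes}$ a coCartesian fibration, and conversely.

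With these dictionaries in place I would apply Proposition \ref{fatvat} with $\calB$ replaced by $\calB^{\otimes}$: it yields an equivalence $\widehat{M} \mapsto \Groth(\calB^{\otimes}, \widehat{M})$ between functors $\calB^{\otimes} \to \Cat_{(\infty,1)}$ and coCartesian fibrations over $\calB^{\otimes}$. One then checks that this equivalence matches the two distinguished subclasses: the fiber of $\Groth(\calB^{\otimes}, \widehat{M})$ over $(X_1, \ldots, X_m)$ is $\widehat{M}(X_1, \ldots, X_m)$, and (using that the plain Grothendieck construction carries product functors to products) the monoid property for $\widehat{M}$ holds precisely when the Segal condition holds for $\Groth(\calB^{\otimes}, \widehat{M}) \to \mathrm{Fin}_{\ast}$, the two implications being obtained by tracking coCartesian edges through the projections to $\calB^{\otimes}$ and to $\mathrm{Fin}_{\ast}$. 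Restricting the whole picture to the fiber over $\langle 1\rangle$ then gives back the statement: $\Groth(\calB^{\otimes}, \widehat{M})_{\langle 1\rangle} \simeq \Groth(\calB, M)$, with the symmetric monoidal coCartesian fibration structure over $\calB$ recorded above, and the two assignments are mutually inverse because they descend from the equivalence of Proposition \ref{fatvat}.

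The genuinely technical content --- and the point I expect to be the main obstacle --- lies entirely in making the two dictionary entries of the second paragraph precise at the $\infty$-categorical level, together with the bookkeeping that shows the relative Grothendieck construction respects them; this requires the full coherence machinery for $\infty$-operads and relative (co)Cartesian fibrations, for which we refer to \cite{htt} and \cite{dag3}, and a complete treatment is deferred to the detailed account. As an alternative route, one can instead put the Day convolution symmetric monoidal structure on $\Fun(\calB, \Cat_{(\infty,1)})$ --- whose commutative algebra objects are exactly the lax symmetric monoidal functors $\calB \to \Cat_{(\infty,1)}$ --- transport it along the equivalence of Proposition \ref{fatvat}, and identify commutative algebra objects in coCartesian fibrations over $\calB$ with symmetric monoidal coCartesian fibrations; the obstacle there is the explicit computation of the transported tensor product, which is of comparable difficulty.
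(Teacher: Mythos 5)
The paper itself offers no proof of this proposition: it informally describes the forward direction (how a lax monoidal structure on $M$ equips $\Groth(\calB, M)$ with a symmetric monoidal structure) and then simply asserts the equivalence, so there is no proof in the text for you to match. Your strategy of relativizing over $\mathrm{Fin}_{\ast}$ is the natural way to bootstrap from Proposition \ref{fatvat}, and it is sound: apply the unstructured Grothendieck equivalence with $\calB^{\otimes}$ in place of $\calB$, and then check that the two Segal-type conditions correspond. Both of your ``dictionary entries'' are correct. The first --- lax symmetric monoidal functors into a Cartesian symmetric monoidal $\infty$-category are the same as $\calB^{\otimes}$-monoid objects --- is standard in the theory of $\infty$-operads. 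The second --- that a symmetric monoidal functor $\pi: \calC \rightarrow \calB$ which is a coCartesian fibration with $\pi$-coCartesian edges stable under $\otimes$ is the same datum as a coCartesian fibration $\calC^{\otimes} \rightarrow \calB^{\otimes}$ whose composite to $\mathrm{Fin}_{\ast}$ is again symmetric monoidal --- is the less automatic of the two and deserves an explicit lemma; it rests on the description (in \cite{htt}) of coCartesian edges for a composite of coCartesian fibrations, which in particular shows that such a $\calC^{\otimes} \rightarrow \calB^{\otimes}$ automatically carries edges that are coCartesian over $\mathrm{Fin}_{\ast}$ to edges that are coCartesian over $\mathrm{Fin}_{\ast}$, i.e.\ is a symmetric monoidal functor, rather than this being an extra hypothesis. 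Your observation that the Segal comparison reduces to a fiberwise statement, because the Segal map for $\calC^{\otimes}$ lies over the Segal map for $\calB^{\otimes}$ which is already an equivalence, is exactly the right reduction. The Day-convolution alternative you mention is also viable, though, as you say, it trades one identification for another of comparable difficulty. Given the informal level at which the paper operates, your proposal is a complete and correct proof sketch.
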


\begin{notation}\label{pf}
Let $F: \calB_1 \rightarrow \calB_2$ be a symmetric monoidal functor from a symmetric monoidal
$(\infty,1)$-category $\calB_1$ to a symmetric monoidal $(\infty,2)$-category $\calB_2$. We let
$\calC[F]$ denote the symmetric monoidal $(\infty,1)$-category
$\Groth(\calB_1, M_F)$, where $M_F$ is defined in Notation \ref{goob}.
More concretely, $\calC[F]$ is an $(\infty,1)$-category whose objects are pairs $(X, \eta)$, where
$X \in \calB_1$ and $\eta: {\bf 1} \rightarrow F(X)$ is a $1$-morphism in $\calB_2$.
\end{notation}

Combining Propositions \ref{kooma} and \ref{fatvat2}, we obtain the following result:

\begin{proposition}\label{kubbus}
Let $\calB_1$ be a symmetric monoidal $(\infty,1)$-category with duals.
The construction 
$$ (F: \calB_1 \rightarrow \calB_2) \mapsto (\pi: \calC[F] \rightarrow \calB_1)$$
determines an equivalence between the following types of data:
\begin{itemize}
\item[$(1)$] Essentially surjective symmetric monoidal functors $F: \calB_1 \rightarrow \calB_2$, where
$\calB_2$ is a symmetric monoidal $(\infty,2)$-category.

\item[$(2)$] Symmetric monoidal coCartesian fibrations $\calC \rightarrow \calB_1$.
\end{itemize}
\end{proposition}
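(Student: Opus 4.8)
The plan is to obtain Proposition \ref{kubbus} by splicing together two equivalences that are already in hand. Proposition \ref{kooma} identifies essentially surjective symmetric monoidal functors $F: \calB_1 \rightarrow \calB_2$, for $\calB_1$ a symmetric monoidal $(\infty,1)$-category with duals, with lax symmetric monoidal functors $M: \calB_1 \rightarrow \Cat_{(\infty,1)}$, the two directions being $F \mapsto M_F$ (Notation \ref{goob}) and $M \mapsto \calB[M]$ (Construction \ref{entering}). Proposition \ref{fatvat2} identifies lax symmetric monoidal functors $M: \calB_1 \rightarrow \Cat_{(\infty,1)}$ with symmetric monoidal coCartesian fibrations $\pi: \calC \rightarrow \calB_1$, via $M \mapsto \Groth(\calB_1, M)$. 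Since Notation \ref{pf} defines $\calC[F] = \Groth(\calB_1, M_F)$, the assignment $F \mapsto \calC[F]$ is, essentially by definition, the composite of the functor $F \mapsto M_F$ of Proposition \ref{kooma} with the functor $M \mapsto \Groth(\calB_1, M)$ of Proposition \ref{fatvat2}. A composite of equivalences is an equivalence, and that is the assertion to be proved.

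First I would pin down what ``equivalence between the following types of data'' means: in each case the data form an $\infty$-groupoid (a space), and the claimed equivalences are homotopy equivalences of these spaces. In Proposition \ref{kubbus}(1) this is the $\infty$-groupoid of pairs $(\calB_2, F)$ with $\calB_2$ a symmetric monoidal $(\infty,2)$-category and $F: \calB_1 \rightarrow \calB_2$ essentially surjective and symmetric monoidal; in (2) it is the $\infty$-groupoid of symmetric monoidal coCartesian fibrations over $\calB_1$; and the common intermediate object appearing in both Propositions \ref{kooma} and \ref{fatvat2} is the $\infty$-groupoid of lax symmetric monoidal functors $\calB_1 \rightarrow \Cat_{(\infty,1)}$. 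Two small verifications then suffice: that the hypotheses are compatible — Proposition \ref{kooma} needs $\calB_1$ to have duals, which is the standing assumption of Proposition \ref{kubbus}, while Proposition \ref{fatvat2} imposes no extra condition — and that the two equivalences are genuinely composable, i.e. that the target of the first is the source of the second; both are the $\infty$-groupoid of lax symmetric monoidal functors into $\Cat_{(\infty,1)}$, so composition is legitimate.

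The one point that will require actual care — and where I expect the real work of the proof to sit — is upgrading the identity $\calC[F] = \Groth(\calB_1, M_F)$ from a statement about objects to an equivalence of the $\infty$-groupoids of data, i.e. checking that $F \mapsto \calC[F]$ agrees with the composite $(\text{Prop.~}\ref{fatvat2}) \circ (\text{Prop.~}\ref{kooma})$ as a map of spaces, not merely on points. This reduces to the functoriality of $F \mapsto M_F$ in the appropriate $\infty$-categorical sense — a morphism of essentially surjective symmetric monoidal functors induces, through $\OHom_{\calB_2}({\bf 1}, F(-))$, a morphism of lax symmetric monoidal functors $\calB_1 \rightarrow \Cat_{(\infty,1)}$ — together with the functoriality of $\Groth(\calB_1, -)$; both are part of the very packaging of Propositions \ref{kooma} and \ref{fatvat2}. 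As a sanity check that needs no separate argument, one records the fiberwise description: for $X \in \calB_1$ one has $\calC[F]_X \simeq \OHom_{\calB_2}({\bf 1}, F(X))$, and in particular $\calC[F]_{\bf 1} \simeq \OHom_{\calB_2}({\bf 1}, {\bf 1}) = \Omega \calB_2$, which is the ``kernel'' picture underlying the discussion of categorical chain complexes. No Morse theory, handle decompositions, or other geometric input enters here; the entire content of the proposition is the composability of two previously established abstract equivalences.
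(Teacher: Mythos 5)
Your proof is correct and matches the paper's argument exactly: the paper's entire justification for Proposition \ref{kubbus} is the one-line remark ``Combining Propositions \ref{kooma} and \ref{fatvat2}, we obtain the following result,'' which is precisely your observation that $F \mapsto \calC[F] = \Groth(\calB_1, M_F)$ is, by Notation \ref{pf}, the composite of the inverse of $M \mapsto \calB[M]$ with $M \mapsto \Groth(\calB_1, M)$. The additional care you take in spelling out the $\infty$-groupoids involved and the functoriality needed to compose the two equivalences is a faithful unpacking of what the paper leaves implicit.
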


It follows from Propositino \ref{kubbus} that properties of a symmetric monoidal coCartesian fibration
$\pi: \calC \rightarrow \calB_1$ can be translated into properties of the corresponding
weak skeletal sequence $\calB_1 \rightarrow \calB_2$. In particular, we have the following result:

\begin{proposition}\label{kamp}
Let $\calB_1$ be a symmetric monoidal $(\infty,1)$-category with duals, let
$\calB_2$ be a symmetric monoidal $(\infty,2)$-category, and let
$F: \calB_1 \rightarrow \calB_2$ be an essentially surjective symmetric monoidal functor.
The following conditions are equivalent:
\begin{itemize}
\item[$(1)$] Every morphism ${\bf 1} \rightarrow X$ in $\calB_2$ admits a left adjoint
$($in other words, $F: \calB_1 \rightarrow \calB_2$ is a skeletal sequence$)$.
\item[$(2)$] The symmetric monoidal $(\infty,1)$-category $\calC[F]$ has duals.
\end{itemize}
\end{proposition}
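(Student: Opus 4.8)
The plan is to exploit the dictionary set up by Proposition \ref{kubbus}, which lets us pass freely between the symmetric monoidal functor $F\colon\calB_1\to\calB_2$ and the symmetric monoidal coCartesian fibration $\pi\colon\calC[F]\to\calB_1$. Recall that an object of $\calC[F]$ is a pair $(X,\eta)$ with $X\in\calB_1$ and $\eta\colon{\bf 1}\to F(X)$ a $1$-morphism in $\calB_2$, and that the fiber $\calC[F]_X$ is the $(\infty,1)$-category $M_F(X)=\OHom_{\calB_2}({\bf 1},F(X))$. Since $\calB_1$ already has duals (by hypothesis), the only thing at issue in condition $(2)$ is whether $\calC[F]$ has duals for \emph{objects}; adjoints for $1$-morphisms of $\calC[F]$ are automatic once we know it is a symmetric monoidal $(\infty,1)$-category. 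So the statement to prove is: every object $(X,\eta)$ of $\calC[F]$ is dualizable if and only if every $1$-morphism ${\bf 1}\to Y$ in $\calB_2$ admits a left adjoint.

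First I would reduce the dualizability of $(X,\eta)$ to a question purely about $\eta$. Because $X$ has a dual $X^\vee$ in $\calB_1$ and $\pi$ is a \emph{symmetric monoidal} coCartesian fibration, the tensor product in $\calC[F]$ is computed fiberwise via the lax structure maps $\beta_{X,Y}\colon M_F(X)\times M_F(Y)\to M_F(X\otimes Y)$; in particular a candidate dual of $(X,\eta)$ must lie over $X^\vee$, i.e.\ have the form $(X^\vee,\theta)$ for some $\theta\colon{\bf 1}\to F(X^\vee)$. Using the equivalence $\OHom_{\calB_2}(F(X),F(X))\simeq\OHom_{\calB_2}({\bf 1},F(X^\vee\otimes X))$ recorded just before Notation \ref{goob}, one checks that the evaluation and coevaluation data $\ev_{(X,\eta)},\coev_{(X,\eta)}$ in $\calC[F]$ unwind, via $\beta$ and the $\calB_1$-evaluation $X^\vee\otimes X\to{\bf 1}$, into exactly a unit/counit pair exhibiting $\eta$ (viewed as a $1$-morphism in $\calB_2$) as a left adjoint to $\theta$ (and $\theta$ as a right adjoint to $\eta$). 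Conversely, any left adjoint to the $1$-morphism $\eta\colon{\bf 1}\to F(X)$ can be transported along the same equivalences to produce such a $\theta$ together with the required triangle identities. Thus $(X,\eta)$ is dualizable in $\calC[F]$ if and only if $\eta$ admits a left adjoint in $\calB_2$. (This is where Lemma \ref{simptom} is useful: it guarantees that the counit is determined by the unit, so the adjunction data we produce on either side is coherent and unambiguous.)

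Granting that equivalence, the proposition falls out. If $(2)$ holds, then every $(X,\eta)$ is dualizable, hence every $\eta\colon{\bf 1}\to F(X)$ has a left adjoint; since $F$ is essentially surjective, every object $Y\in\calB_2$ is isomorphic to some $F(X)$, and every $1$-morphism ${\bf 1}\to Y$ is thereby identified with some $\eta\colon{\bf 1}\to F(X)$, which has a left adjoint — this is condition $(1)$. Conversely, if $(1)$ holds, then for each object $(X,\eta)$ of $\calC[F]$ the $1$-morphism $\eta\colon{\bf 1}\to F(X)$ has a left adjoint, so by the paragraph above $(X,\eta)$ is dualizable; since $\calB_1$ has duals, $\calC[F]$ has duals for objects and (being symmetric monoidal) adjoints for its $1$-morphisms, i.e.\ $\calC[F]$ has duals. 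The main obstacle I anticipate is the bookkeeping in the middle step: carefully matching the coCartesian-transport maps $f_!$ and the lax monoidal coherences $\beta$ against the zig-zag identities of an ambidextrous-looking adjunction, and checking that ``left adjoint of $\eta$ in $\calB_2$'' and ``right dual of $(X,\eta)$ in $\calC[F]$'' really correspond under the equivalence $\OHom_{\calB_2}(F(X),F(X))\simeq M_F(X^\vee\otimes X)$ with the correct variance. Everything else is formal manipulation of the definitions in Construction \ref{entering}, Construction \ref{groth}, and Proposition \ref{kubbus}.
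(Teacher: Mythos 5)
Your proposal is correct and follows essentially the same route as the paper: reduce condition $(1)$ via essential surjectivity to a statement about morphisms $\eta\colon{\bf 1}\to F(X)$, then unwind the definitions of Construction \ref{entering} and \ref{groth} to identify dualizability of $(X,\eta)$ in $\calC[F]$ with left-adjointability of $\eta$ in $\calB_2$. (The paper's proof is a terse two-liner doing exactly this.) One small handedness slip to watch: in your middle paragraph you write that the evaluation/coevaluation data exhibit ``$\eta$ as a left adjoint to $\theta$,'' but $\eta\colon{\bf 1}\to F(X)$ and $\theta\colon{\bf 1}\to F(X^\vee)$ are not morphisms between the same objects; what you mean --- and what your final sentence and the paper both say --- is that the dual $\theta^\vee\colon F(X)\to{\bf 1}$ is the \emph{left} adjoint of $\eta$, so that $\eta$ is the one admitting a left adjoint, as condition $(1)$ requires.
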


\begin{proof}
Since $F$ is essentially surjective, condition $(1)$ is equivalent to the following:
\begin{itemize}
\item[$(1')$] Let $X \in \calB_1$ be an object. Then every morphism $\eta: {\bf 1} \rightarrow F(X)$ in $\calB_2$ admits a left adjoint.
\end{itemize}
In the situation of $(1')$, the pair $(X, \eta)$ determines an object of $\calC[F]$. Unwinding the definitions, we see that $(X^{\vee}, \eta')$ is dual to $(X, \eta)$ if and only if
${\eta'}^{\vee}: X \rightarrow {\bf 1}$ is a left adjoint to $\eta$.
\end{proof}

Combining Propositions \ref{kubbus} and \ref{kamp}, we obtain the following result
(which proves Claim \ref{inker} in the case $n=2$):

\begin{proposition}\label{kubbis}
Let $\calB_1$ be a symmetric monoidal $(\infty,1)$-category with duals. The construction
$(F: \calB_1 \rightarrow \calB_2) \mapsto ( \pi: \calC[F] \rightarrow \calB_1)$ determines an equivalence
between the following types of data:
\begin{itemize}
\item[$(1)$] Skeletal sequences $F: \calB_1 \rightarrow \calB_2$ of length $2$.
\item[$(2)$] Symmetric monoidal coCartesian fibrations $\pi: \calC \rightarrow \calB_1$, where
$\calC$ is a symmetric monoidal $(\infty,1)$-category with duals.
\end{itemize} 
\end{proposition}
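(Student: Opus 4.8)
The plan is to deduce Proposition \ref{kubbis} directly from Propositions \ref{kubbus} and \ref{kamp}, after unwinding what a skeletal sequence of length $2$ amounts to once we impose the standing hypothesis that $\calB_1$ has duals.

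First I would observe that, with $\calB_1$ fixed, the data of a skeletal sequence $\calB_1 \stackrel{f_1}{\to} \calB_2$ of length $2$ is precisely the data of a symmetric monoidal $(\infty,2)$-category $\calB_2$ together with a symmetric monoidal functor $F\colon \calB_1 \to \calB_2$ satisfying: $F$ is $0$-connective, i.e.\ essentially surjective (condition $(2)$ of Definition \ref{swipe1} for $k=1$); every object of $\calB_1$ is dualizable (condition $(3)$), which is automatic since $\calB_1$ has duals; and every morphism ${\bf 1} \to X$ in $\Omega^{0}\calB_2 = \calB_2$ admits a left adjoint (condition $(4)$ for $k=2$). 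So a skeletal sequence of length $2$ out of $\calB_1$ is exactly an essentially surjective symmetric monoidal functor $F\colon \calB_1 \to \calB_2$ with the property that every $1$-morphism ${\bf 1} \to X$ in $\calB_2$ has a left adjoint.

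Next I would invoke Proposition \ref{kubbus}: the construction $F \mapsto (\pi\colon \calC[F] \to \calB_1)$ is an equivalence between essentially surjective symmetric monoidal functors $F\colon \calB_1 \to \calB_2$ (with $\calB_2$ a symmetric monoidal $(\infty,2)$-category) and symmetric monoidal coCartesian fibrations $\calC \to \calB_1$. It remains only to match the extra condition on the two sides, and this is exactly Proposition \ref{kamp}: for such an $F$, every morphism ${\bf 1} \to X$ in $\calB_2$ admits a left adjoint if and only if $\calC[F]$ has duals. Combining the two statements cuts the equivalence of Proposition \ref{kubbus} down to an equivalence between skeletal sequences of length $2$ out of $\calB_1$ and symmetric monoidal coCartesian fibrations $\calC \to \calB_1$ with $\calC$ having duals, which is the assertion of Proposition \ref{kubbis} (and of Claim \ref{inker} in the case $n=2$).

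I do not expect a genuine mathematical obstacle here, since all of the substantive work is already contained in Propositions \ref{kooma}, \ref{fatvat2}, \ref{kubbus}, and \ref{kamp}; Proposition \ref{kubbis} is purely a repackaging. The only points where I would be careful in the writeup are bookkeeping ones: first, confirming that restricting the equivalence of Proposition \ref{kubbus} to the matching conditions on either side again yields an equivalence (it does, because both conditions are \emph{properties} and hence carve out full sub-$\infty$-groupoids of the respective classifying spaces); and second, verifying that the naturality implicit in the phrase ``determines an equivalence'' is inherited from Propositions \ref{kubbus} and \ref{kamp} rather than being something new that must be checked. Neither of these is hard, but they are the steps that deserve explicit mention.
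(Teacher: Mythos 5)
Your argument is correct and coincides with the paper's, which literally states the proposition as the result of ``Combining Propositions \ref{kubbus} and \ref{kamp}.'' Your unwinding of Definition \ref{swipe1} at length $2$ (essential surjectivity from $0$-connectivity, duals in $\calB_1$ rendering condition $(3)$ automatic, and condition $(4)$ being exactly the hypothesis of Proposition \ref{kamp}) and your closing bookkeeping remarks are precisely the content that the paper leaves implicit.
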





We now outline the modifications to the above constructions which are needed to
justify Claim \ref{inker} for $n > 2$. Consider a weak skeletal sequence
$\calB_1 \rightarrow \calB_2 \rightarrow \ldots \rightarrow \calB_n$ of length $n > 2$.
For each $i \geq 2$, let $F_{i}: \calB_1 \rightarrow \calB_i$ denote the induced functor. We can then define a lax symmetric monoidal functor $M_i: \calB_1 \rightarrow \Cat_{(\infty,i-1)}$ by the formula $M_i(X) = \OHom_{\calB_i}( {\bf 1}, F_i(X) )$, where $\Cat_{(\infty,i-1)}$ denotes the $(\infty,1)$-category of $(\infty,i-1)$-categories and functors between them. Applying a generalization of Construction \ref{groth}, we can convert the functors $M_i$ into symmetric monoidal
coCartesian fibrations $\pi[i]: \calC_i \rightarrow \calB_1$, where $\calC_i$ is an $(\infty,i-1)$-category.
We have a sequence of essentially surjective functors
$\calC_2 \rightarrow \calC_3 \rightarrow \ldots \rightarrow \calC_{n}.$
For $i > 2$, let $G_i: \calC_2 \rightarrow \calC_i$ denote the induced functor. We can then define
a new functor $N_i: \calC_2 \rightarrow \Cat_{(\infty,i-2)}$ by the formula $N_i(X) = 
\OHom_{ \calC_i}( {\bf 1}, G_i(X) )$. Since $\calC_2$ has duals (Proposition \ref{kamp}), 
we can recover $\calC_{i}$ from $N_i$, at least up to canonical equivalence.
For $i > 2$ and $X= (C,\eta) \in \calC_2$, define $N(X)$ to be the $\infty$-groupoid $\OHom_{ \calB_1}( {\bf 1}, C )$. Then $N$ is a functor from $\calC_2$ to the $(\infty,1)$-category $\Cat_{(\infty,0)}$ (whose objects we can view as topological spaces). For each $i > 2$, we have a natural transformation of functors $N_i \rightarrow N$. Given a morphism $f: {\bf 1} \rightarrow C$ in $N(X)$, the homotopy fiber product $N_i(X) \times^{R}_{N(X)} \{ f \}$ is given by
\begin{eqnarray*}
\OHom_{ \calC_i}( {\bf 1}, G_i(X)) \times^{R}_{ \OHom_{ \calB_1}( {\bf 1}, C)} \{f\} &
\simeq & \OHom_{ \calC_i }( G_i(X^{\vee}), {\bf 1}) 
\times^{R} _{ \OHom_{\calB_1}( C^{\vee}, {\bf 1}) } \{f^{\vee} \} \\
& \simeq & \OHom_{ \Omega \calB_i}( G_i(f^{\vee}_{!} X^{\vee}), {\bf 1}) \end{eqnarray*}
and can therefore be entirely reconstructed from the functor
$\overline{G_i}: \Omega \calB_2 \rightarrow \Omega \calB_i$ by passing to the fiber
over the unit object ${\bf 1} \in \calB_1$. Elaborating
on this reasoning, one can prove the following:

\begin{proposition}\label{longwife}
Fix a skeletal sequence $F: \calB_1 \rightarrow \calB_2$ of length $2$, and let $n > 2$. The construction
above establishes an equivalence between the following types of data:
\begin{itemize}
\item[$(1)$] Weak skeletal sequences $\calB_1 \rightarrow \calB_2 \rightarrow \calB_3 \rightarrow \cdots \rightarrow \calB_n$ of length $n$ which begin with $F$.
\item[$(2)$] Weak skeletal sequences $\Omega \calB_2 \rightarrow \calC_2 \rightarrow \ldots
\rightarrow \calC_{n-1}$ of length $(n-1)$ which begin with $\Omega \calB_2$.
\end{itemize}
\end{proposition}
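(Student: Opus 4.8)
I plan to verify that the construction preceding the statement — which sends a weak skeletal sequence $\vec{\calB}=(\calB_1\to\calB_2\to\cdots\to\calB_n)$ beginning with $F$ to the functors $\overline{G_i}\colon\Omega\calB_2\to\Omega\calB_i$, i.e. to the sequence $\Phi(\vec{\calB})\colon\Omega\calB_2\to\Omega\calB_3\to\cdots\to\Omega\calB_n$ — is an equivalence. That this map lands in data of type $(2)$ is immediate from Example \ref{kubb}: $\Phi(\vec{\calB})$ is a weak skeletal sequence of length $n-1$, and its first term $\Omega\calB_2=\OHom_{\calB_2}({\bf 1},{\bf 1})$ is the fixed object determined by $F$. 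The point is that fixing $F$ rigidifies enough — it pins down $\calB_1$, $\calB_2$ and the unit object — that, in the presence of duals, $\calB_3,\dots,\calB_n$ can be reconstructed from $\Omega\calB_3,\dots,\Omega\calB_n$ via the Grothendieck-construction machinery. I will do this in two steps, each reversing part of the construction.

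\textbf{Step 1: re-expression over $\calB_1$.} For $2\le i\le n$ let $F_i\colon\calB_1\to\calB_i$ be the composite; since each $f_k$ is $(k-1)$-connective it is in particular essentially surjective, hence so is $F_i$. Set $M_i(X)=\OHom_{\calB_i}({\bf 1},F_i(X))$. As $\calB_1$ has duals, Proposition \ref{kooma} identifies $M_i$ with a lax symmetric monoidal functor $\calB_1\to\Cat_{(\infty,i-1)}$ for which $\calB_i\simeq\calB_1[M_i]$ and $F_i$ is the canonical functor $\calB_1\to\calB_1[M_i]$ (here essential surjectivity of $F_i$ is used). By the evident $(\infty,i-1)$-categorical generalization of Proposition \ref{fatvat2}, $M_i$ is the same datum as a symmetric monoidal coCartesian fibration $\pi[i]\colon\calC_i\to\calB_1$ with $\calC_i$ an $(\infty,i-1)$-category; in particular $\calC_2=\calC[F]$ in the notation of \ref{pf}, which is fixed by $F$ and has duals by Proposition \ref{kamp} (using that $F$ is a skeletal sequence of length $2$). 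The $f_k$ assemble into a tower $\calC_2\to\calC_3\to\cdots\to\calC_n$ of symmetric monoidal coCartesian fibrations over $\calB_1$, and conversely such a tower recovers $\vec{\calB}$. Using Proposition \ref{kamp}, together with the fact that applying $\Omega$ lowers connectivity by exactly one, one checks that the remaining axioms of Definition \ref{ka2} (connectivity of the $f_k$; left-adjointability of maps out of ${\bf 1}$ in each $\Omega^{k-2}\calB_k$) correspond to matching conditions on this tower of fibrations.

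\textbf{Step 2: descent to the fibre over ${\bf 1}$.} Since $\calC_2$ has duals, a second application of Proposition \ref{kooma}, now over the base $\calC_2$, shows that the tower $\calC_2\to\calC_3\to\cdots\to\calC_n$ is equivalent to the tower of lax symmetric monoidal functors $N_i\colon\calC_2\to\Cat_{(\infty,i-2)}$, $N_i(X)=\OHom_{\calC_i}({\bf 1},G_i(X))$ with $\calC_i\simeq\calC_2[N_i]$, where $G_i\colon\calC_2\to\calC_i$ is the induced functor. The fibrations $\pi[i]$ induce a compatible family of natural transformations $N_i\to N$, where $N\colon\calC_2\to\Cat_{(\infty,0)}$ is the functor $N(C,\eta)=\OHom_{\calB_1}({\bf 1},C)$; crucially $N$ depends only on the fixed fibration $\calC_2\to\calB_1$. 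Over a point $f\colon{\bf 1}\to C$ of $N(X)$, the homotopy fibre $N_i(X)\times^{R}_{N(X)}\{f\}$ is computed — passing to duals in $\calC_2$ and in $\calB_1$ — to be $\OHom_{\Omega\calB_i}(\overline{G_i}(f^{\vee}_! X^{\vee}),{\bf 1})$, and so depends only on the restriction $\overline{G_i}\colon\Omega\calB_2\to\Omega\calB_i$ of $G_i$ to the fibres over ${\bf 1}$. The remaining point is the converse: that each $N_i$, equipped with its map to $N$, is reconstructed from $\overline{G_i}$ given the fixed data $\calC_2\to\calB_1$ and $N$. Granting this, the tower $N_3\to\cdots\to N_n$ is equivalent to the tower $\overline{G_3},\dots,\overline{G_n}$, i.e. to $\Omega\calB_2\to\Omega\calB_3\to\cdots\to\Omega\calB_n=\Phi(\vec{\calB})$, and chaining the equivalences of Steps 1 and 2 proves Proposition \ref{longwife}.

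\textbf{Main obstacle.} The heart of the argument is the reconstruction claim at the end of Step 2: recovering a lax symmetric monoidal functor valued in $(\infty,i-2)$-categories from its ``germ along the unit'', i.e. from the induced functor between the relevant $\Omega$'s. This is the exact analogue, in the relative and iterated setting, of the case $n=2$ (Proposition \ref{kubbis}), and it is where the full coherence of the symmetric monoidal Grothendieck construction (Propositions \ref{fatvat} and \ref{fatvat2}) must be used in an essential — and technically heavy — way, together with the observation from the proof of Proposition \ref{kamp} (and the identity $\calB_1[M_i]\simeq\calB_i$, valid precisely because $F_i$ is essentially surjective) that duality in $\calC_2$ forces the mapping categories of every $\calC_i$ to be controlled by their ${\bf 1}$-pointed fibres. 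By contrast, the matching of connectivity and adjointability conditions and the compatibility of all constructions with the tensor structures are comparatively mechanical once Proposition \ref{kamp} and the behaviour of connectivity under $\Omega$ are in hand.
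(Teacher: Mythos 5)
Your proposal reproduces the paper's own sketch-argument almost verbatim: it passes from the tower of $\calB_i$ to the symmetric monoidal coCartesian fibrations $\calC_i\to\calB_1$ via the lax functors $M_i(X)=\OHom_{\calB_i}(\mathbf{1},F_i(X))$, then iterates over the base $\calC_2$ (using Proposition \ref{kamp} for duals) to obtain $N_i$ with the fixed reference functor $N$, and computes the homotopy fibre of $N_i(X)\to N(X)$ over $f\colon\mathbf{1}\to C$ exactly as in the paper, obtaining $\OHom_{\Omega\calB_i}(\overline{G_i}(f^{\vee}_!X^{\vee}),\mathbf{1})$ and hence dependence only on $\overline{G_i}\colon\Omega\calB_2\to\Omega\calB_i$. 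You also correctly flag that the remaining reconstruction step — recovering the tower of $N_i$'s from the $\overline{G_i}$'s — is precisely the part the paper itself leaves to ``elaborating on this reasoning''.
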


\begin{construction}\label{sabber}
Suppose we are given a weak skeletal sequence
$$ \calB_1 \stackrel{F_2}{\rightarrow} \calB_2 \stackrel{F_3}{\rightarrow} \calB_3
\rightarrow \cdots \rightarrow \calB_n$$
of length $n$. We can associate to this weak skeletal sequence a weak categorical chain complex of
of length $n$ $\{ \calC_i \rightarrow \calZ_{i-1} \}_{1 \leq i \leq n}$ as follows:
\begin{itemize}
\item For $1 \leq k \leq n$, set $\calZ_k = \Omega^{k-1} \calB_{k}$.
\item Let $\calC_1 = \calZ_1$, and for $2 \leq k \leq n$ let
$\calC_k = \calC[ \Omega^{k-2} F_{k-1}]$.
\end{itemize}
\end{construction}

Claim \ref{inker} can now be formulated more precisely as follows:

\begin{theorem}\label{sobbus}
Construction \ref{sabber} determines an equivalence between the following types of data:
\begin{itemize}
\item[$(1)$] Weak skeletal sequences of length $n$.
\item[$(2)$] Weak categorical chain complexes of length $n$.
\end{itemize}
Under this equivalence, skeletal sequences of length $n$ correspond to categorical
chain complexes of length $n$.
\end{theorem}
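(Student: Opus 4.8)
The plan is to prove Theorem \ref{sobbus} by induction on the length $n$, peeling off one stage of the skeletal sequence at a time via the Grothendieck construction. The base case $n = 1$ is essentially vacuous: a (weak) skeletal sequence of length $1$ and a (weak) categorical chain complex of length $1$ both reduce to a single symmetric monoidal $(\infty,1)$-category (with duals in the non-weak case, since $\calZ_0 \simeq \ast$ imposes no condition on $\calZ_1 = \calC_1 = \calB_1$), and Construction \ref{sabber} is visibly the identity.

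For $n = 2$, unwinding Definition \ref{ka2} shows that a weak skeletal sequence of length $2$ is precisely an essentially surjective symmetric monoidal functor $F\colon \calB_1 \to \calB_2$ from a symmetric monoidal $(\infty,1)$-category with duals to a symmetric monoidal $(\infty,2)$-category, while (using $\calZ_0 \simeq \ast$, which forces $\calZ_1 \simeq \calC_1 \times^{R}_{\calZ_0} \{{\bf 1}\} \simeq \calC_1$) a weak categorical chain complex of length $2$ is a symmetric monoidal $(\infty,1)$-category $\calC_1$ with duals equipped with a symmetric monoidal coCartesian fibration $\calC_2 \to \calC_1$. Under these identifications Construction \ref{sabber} carries $F$ to the fibration $\calC[F] \to \calB_1$ of Notation \ref{pf}, so the equivalence in length $2$ is exactly Proposition \ref{kubbus}; the refinement that \emph{skeletal} sequences of length $2$ correspond to \emph{categorical chain complexes} of length $2$ — i.e.\ that condition $(4)$ of Definition \ref{swipe1} (left adjoints in $\calB_2$) is equivalent to $\calC[F]$ having duals — is the content of Propositions \ref{kamp} and \ref{kubbis}.

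For the inductive step, assume $n > 2$ and that the theorem holds in all lengths $< n$. Given a weak skeletal sequence $\vect{\calB}\colon \calB_1 \xrightarrow{F_2} \calB_2 \xrightarrow{F_3} \cdots \xrightarrow{F_n} \calB_n$ of length $n$, its truncation $\calB_1 \xrightarrow{F_2} \calB_2$ is a \emph{genuine} skeletal sequence of length $2$ (condition $(4)$ of Definition \ref{ka2}, applied with $k = 2 < n$, supplies the left adjoints in $\calB_2$), so the $n=2$ case produces the initial segment $\calC_1 = \calZ_1 = \calB_1$, $\calC_2 = \calC[F_2]$, $\calZ_2 \simeq \calC_2 \times^{R}_{\calZ_1}\{{\bf 1}\} \simeq \Omega\calB_2$ of the associated chain complex. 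Proposition \ref{longwife} then identifies the remaining data of $\vect{\calB}$ — the extension of $\calB_1 \xrightarrow{F_2} \calB_2$ to length $n$ — with a weak skeletal sequence of length $n-1$ beginning with $\Omega\calB_2 \simeq \calZ_2$, and the inductive hypothesis converts this into a weak categorical chain complex of length $n-1$ whose ``$\calZ_0$'' is $\ast$ and whose first category is $\calZ_2$. Because Construction \ref{sabber} is built stagewise — forming it and then truncating agrees with truncating $\vect{\calB}$ first, and it is compatible with the $\Omega$-shift of Example \ref{kubb} — splicing this length-$(n-1)$ chain complex onto the stage $\calC_2 \to \calC_1$ recovers precisely $\{\calC_k \to \calZ_{k-1}\}_{1 \le k \le n}$. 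Since Propositions \ref{kubbus}, \ref{longwife}, \ref{kamp} and the inductive hypothesis are all equivalences, so is Construction \ref{sabber} in length $n$; and tracing the condition ``$\calC_n$ has duals'' through Propositions \ref{longwife} and \ref{kamp} identifies it with the left-adjoint condition on $\Omega^{n-2}\calB_n$, which is exactly what distinguishes a skeletal sequence from a weak one in length $n$.

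The main obstacle is concentrated in Proposition \ref{longwife} and the Grothendieck-type comparisons (Propositions \ref{fatvat}, \ref{fatvat2}) underlying it: one must verify that the iterated passage from the $(\infty,i)$-categories $\calB_i$ to the lax (symmetric) monoidal functors $M_i(X) = \OHom_{\calB_i}({\bf 1}, F_i(X))$ and then to their Grothendieck constructions genuinely assembles into a categorical chain complex — in particular that the successive homotopy fibers over ${\bf 1}$ reproduce the categories $\Omega^{k-1}\calB_k$ demanded by Definition \ref{ka2}$(b)$ — and that every symmetric monoidal structure and every coCartesian-fibration condition survives each step. Alongside this, the bookkeeping of the weak/strict distinction (the duals and left-adjoint hypotheses are imposed only on the first $n-1$ stages) must be arranged carefully so that the inductive reduction via Proposition \ref{longwife} lands in the right class of objects and the induction closes.
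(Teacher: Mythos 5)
Your proof matches the paper's intended argument: the paper's proof of Theorem \ref{sobbus} is simply the one-line statement ``Combine Propositions \ref{longwife} and \ref{kubbis},'' which is exactly the induction on $n$ you spell out, with Proposition \ref{kubbis} (resting on \ref{kubbus} and \ref{kamp}) handling length $2$ and Proposition \ref{longwife} supplying the inductive step that peels off one stage of the sequence at a time.
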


\begin{proof}
Combine Propositions \ref{longwife} and \ref{kubbis}.
\end{proof}

\begin{example}\label{saymot}
Under the equivalence of Theorem \ref{sobbus}, the skeletal sequence
$$ \Bord_{1} \rightarrow \Bord_{2} \rightarrow \cdots \rightarrow \Bord_{n}$$
corresponds to the categorical chain complex
$\{ \bdCob(k) \rightarrow \tunCob(k-1) \}_{ 1 \leq k \leq n}$ of 
Example \ref{matt}.
\end{example}

Example \ref{saymot} and Theorem \ref{sobbus} allow us to reformulate the cobordism hypothesis
as a statement about the symmetric monoidal $(\infty,1)$-categories
$\{ \bdCob(k) \}_{1 \leq k \leq n}$. We will not make use of this maneuver directly. Nevertheless, in \S \ref{kuji} we will exploit the following consequence of Theorem \ref{sobbus}:

\begin{corollary}\label{isa}
Let $\calB_{n-1}$ be a symmetric monoidal $(\infty,n-1)$-category with duals.
The following types of data are equivalent:
\begin{itemize}
\item[$(1)$] Symmetric monoidal functors $\calB_{n-1} \rightarrow \calB_{n}$ which
are $(n-2)$-connective, where $\calB_n$ is a symmetric monoidal $(\infty,n)$-category.
\item[$(2)$] Lax symmetric monoidal functors $M: \Omega^{n-2} \calC_{n-1} \rightarrow
\Cat_{(\infty,1)}$.
\item[$(3)$] Symmetric monoidal coCartesian fibrations $\pi: \calC \rightarrow \Omega^{n-2} \calB_{n-1}$ . 
\end{itemize}
\end{corollary}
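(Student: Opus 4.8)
The plan is to deduce Corollary \ref{isa} by combining the equivalence of Theorem \ref{sobbus} with the Grothendieck-type dictionary of Propositions \ref{kooma}, \ref{fatvat2}, and \ref{kubbus}. The key observation is that a symmetric monoidal functor $\calB_{n-1} \to \calB_n$ which is $(n-2)$-connective and whose source has duals extends the canonical skeletal sequence of $\calB_{n-1}$ (Example \ref{kaq}) by one term, producing a weak skeletal sequence of length $n$. Applying Theorem \ref{sobbus}, such a sequence corresponds to a weak categorical chain complex of length $n$; but since we fix $\calB_{n-1}$ at the start, all the lower terms of the corresponding categorical chain complex are already determined, and the only genuinely new datum is the top map $\calC_n \to \calZ_{n-1}$, i.e. a symmetric monoidal coCartesian fibration over $\calZ_{n-1} = \Omega^{n-2}\calB_{n-1}$. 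This gives the equivalence of $(1)$ and $(3)$. The equivalence of $(2)$ and $(3)$ is then immediate from Proposition \ref{fatvat2}, which identifies lax symmetric monoidal functors into $\Cat_{(\infty,1)}$ with symmetric monoidal coCartesian fibrations.

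In more detail, I would proceed as follows. First, I would reduce to the two-term case: given the canonical skeletal sequence $\calB_1 \to \cdots \to \calB_{n-1}$ of $\calB_{n-1}$ (Example \ref{kaq}), a functor $F_n \colon \calB_{n-1} \to \calB_n$ as in $(1)$ produces, by concatenation, a weak skeletal sequence $\calB_1 \to \cdots \to \calB_{n-1} \to \calB_n$ of length $n$ whose first $(n-1)$ terms are fixed. Conversely, since $\calB_{n-1}$ has duals, the $(n-1)$-term initial segment of any such weak skeletal sequence is, up to canonical equivalence, the canonical skeletal sequence of its last category $\calB_{n-1}$; so the data of $(1)$ is precisely the data of a weak skeletal sequence of length $n$ extending the fixed canonical skeletal sequence of $\calB_{n-1}$. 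Second, I would apply Theorem \ref{sobbus} to translate this into the data of a weak categorical chain complex $\{ \calC_i \to \calZ_{i-1} \}_{1 \le i \le n}$ of length $n$ whose truncation $\{ \calC_i \to \calZ_{i-1}\}_{1 \le i \le n-1}$ is the one associated (via Construction \ref{sabber}) to the canonical skeletal sequence of $\calB_{n-1}$; in particular $\calZ_{n-1} = \Omega^{n-2}\calB_{n-1}$ is fixed. The only free datum is then the symmetric monoidal coCartesian fibration $\calC_n \to \calZ_{n-1} = \Omega^{n-2}\calB_{n-1}$, yielding $(3)$. Finally, I would invoke Proposition \ref{fatvat2} to pass between $(3)$ and $(2)$.

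The main obstacle I anticipate is the bookkeeping involved in the first reduction step: one must verify that extending the \emph{canonical} skeletal sequence of $\calB_{n-1}$ by an arbitrary $(n-2)$-connective symmetric monoidal functor $F_n$ indeed yields a \emph{weak skeletal sequence} in the sense of Definition \ref{ka2}, i.e. that conditions $(1)$--$(4)$ there are met. Conditions $(1)$ and $(2)$ are immediate, $(3)$ holds because $\calB_{n-1}$ has duals (hence so does $\calB_1$), and $(4)$ for $2 \le k < n$ is automatic since the relevant mapping objects $\Omega^{k-2}\calB_k$, for $k < n$, are underlying truncations of the $(\infty,n-1)$-category $\calB_{n-1}$ with duals, where every morphism ${\bf 1} \to X$ admits a left adjoint by the definition of ``has duals'' (Definition \ref{adjman}, via Lemma \ref{simptom}). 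Note that no adjointness hypothesis is imposed on $F_n$ itself in part $(1)$, which is why the corollary produces \emph{weak} skeletal sequences and hence arbitrary (not necessarily dualizable) coCartesian fibrations in $(3)$; one should not inadvertently strengthen the statement. The remaining subtlety is the uniqueness clause hidden in ``the data is equivalent'' — tracking that the equivalences of Theorem \ref{sobbus} and Proposition \ref{fatvat2} are compatible with the fixed initial segment — but this is formal once the correspondence of Construction \ref{sabber} is unwound as in Proposition \ref{longwife}.
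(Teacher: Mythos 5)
Your proposal is correct and follows essentially the same route as the paper: identify each of the three types of data with weak skeletal sequences of length $n$ extending the canonical skeletal sequence $\calB_1 \to \cdots \to \calB_{n-1}$ of $\calB_{n-1}$, then pass through Theorem \ref{sobbus} (i.e.\ Propositions \ref{longwife} and \ref{kubbis}) and Proposition \ref{fatvat2}. Your explicit verification of conditions $(1)$--$(4)$ of Definition \ref{ka2} and the observation that condition $(4)$ only involves $k < n$ are exactly the bookkeeping the paper leaves implicit.
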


\begin{proof}
Let $\vect{\calB}: \calB_1 \rightarrow \calB_2 \rightarrow \ldots \rightarrow \calB_{n-1}$ be the canonical
skeletal sequence of length $(n-1)$ associated to $\calB_{n-1}$ (Example \ref{kaq}). Then
the data of described in any of $(1)$, $(2)$, of $(3)$ can be identified with
that of a weak skeletal sequence of length $n$ extending $\vect{\calB}$.
\end{proof}

\subsection{The Index Filtration}\label{kuji}

Our goal in this section is to present the core geometric arguments
underlying our proof of the cobordism hypothesis. Using the methods of
\S \ref{forty1} and \ref{slabun}, we are reduced to analyzing the symmetric monoidal
functor $i: \Bord_{n-1} \rightarrow \Bord_{n}$. According to Corollary \ref{isa},
the functor $i$ is classified by a lax symmetric monoidal functor
$\calB: \Omega^{n-2} \Bord_{n-1} \rightarrow \Cat_{(\infty,1)}$. We will therefore proceed by analyzing the functor $\calB$.

We begin by observing that $\Omega^{n-2} \Bord_{n-1}$ is a familiar object: it can
be identified with the $(\infty,1)$-category $\tunCob(n-1)$ described in \S \ref{swugg}, whose objects
are closed $(n-2)$-manifolds and morphism spaces are classifying spaces for bordisms between
closed $(n-2)$-manifolds. For every closed $(n-2)$-manifold $M$, the $(\infty,1)$-category
$\calB(M)$ can be described informally as follows:

\begin{itemize}
\item[$(i)$] The objects of $\calB(M)$ are $(n-1)$-manifolds $X$ equipped with a diffeomorphism
$\bd X \simeq M$.
\item[$(ii)$] Given a pair of objects $X, X' \in \calB(M)$, the $\infty$-groupoid $\OHom_{\calB(M)}(X,X')$
is a classifying space for bordisms $B$ from
$X$ to $X'$. We require such bordisms to be trivial along the common boundary
$\bd X \simeq M \simeq \bd X'$, so that we have an identification
$$\bd B \simeq X \coprod_{ M} (M \times [0,1] ) \coprod_{M} X'.$$ 
\end{itemize}

We regard $\calB(M)$ as a functor of $M$: for every bordism $Y: M \rightarrow M'$ 
of closed $(n-2)$-manifolds, $Y$ defines a functor $\calB(M) \rightarrow \calB(M')$ which is given on objects by the formula $X \mapsto X \coprod_{M} Y$. 

The cobordism hypothesis (see Theorem \ref{swisher6}) asserts roughly that $\Bord_{n}$ is freely generated from $\Bord_{n-1}$ by adjoining an $\OO(n)$-equivariant $n$-morphism
$\eta: \emptyset \rightarrow S^{n-1}$, corresponding to an $n$-dimensional disk. In the present terms, this amounts to a description of the functor $\calB$ by ``generators and relations.'' In this section, we will explain how to obtain such a description using Morse theory. We begin by recalling some basic definitions.

Let $M$ be a closed $(n-2)$-manifold, and let $B: X \rightarrow X'$ be a $1$-morphism in $\calB(M)$.
We will identify $X$, $X'$, and $M \times [0,1]$ with their images in $B$. A smooth function
$f: B \rightarrow \R$ is said to have a {\it critical point} at $b \in B$ if each of the first derivatives of
$f$ vanish at the point $b$; in this case, we also say that $f(b)$ is a {\it critical value} of $f$. 
If $b \in B$ is a critical point of $f$, the second derivatives of $f$ determine a symmetric bilinear
on the tangent space $T_{B,b}$, called the {\it Hessian} of $f$. A critical point $b$ of $f$ is said to be {\it nondegenerate} if the Hessian of $f$ at $b$ is a nondegenerate bilinear form.

Let $\Fun(B)$ denote the collection of all smooth functions $f: B \rightarrow [0,1]$ such that $f^{-1} \{0\} = X$, $f^{-1} \{1\} = X'$, $f(m,t) = t$ for $(m,t) \in M \times [0,1]$, and such that $f$ has no critical points on $\bd B$. A generic element of $f \in \Fun(B)$ has the following properties:
\begin{itemize}
\item[$(a)$] The function $f: B \rightarrow [0,1]$ is {\it Morse}: that is, every critical point of $f$ is nondegenerate.  
\item[$(b)$] The critical values of $f$ are distinct. That is, for every pair of distinct critical points
$b \neq b'$ of $f$, the we have $f(b) \neq f(b')$. 
\end{itemize}
More precisely, the set $\Fun(B)$ carries a natural topology and the collection of functions
$f \in \Fun(B)$ satisfying both $(a)$ and $(b)$ is open and dense. Let $f: B \rightarrow [0,1]$ be such a function. If $f$ has no critical points then it exhibits $B$ as a fiber bundle over the interval $[0,1]$, which determines an identification of $X$ with $X'$ (well-defined up to isotopy). Under this identification,
$B$ corresponds to the identity morphism $\id_{X}$ in $\calB(M)$. If $f$ has at least one critical point, then we can choose a sequence of real numbers $0 = t_0 < t_1 < \ldots < t_k = 1$ with the following properties:
\begin{itemize}
\item None of the real numbers $t_i$ is a critical value of $f$.

\item Each of the inverse images $f^{-1} [t_{i-1}, t_{i} ]$ contains exactly one critical point of
$f$.
\end{itemize}

This allows us to express $B$ as a composition $B_k \circ \ldots \circ B_1$ of $1$-morphisms in $\calB(M)$, where $B_i \simeq f^{-1} [t_{i-1}, t_i]$ is a morphism from $X_{i-1} = f^{-1} \{t_{i-1} \}$ to
$X_i = f^{-1} \{t_i \}$. The advantage of this representation is that each
of the morphisms $B_i$ is very simple, thanks to the following fundamental result:

\begin{lemma}[Morse Lemma]\label{ml}
Let $f: B \rightarrow [0,1]$ be a smooth function with a nondegenerate critical point
at a point $b \in B$. Then there exists a system of local coordinates $x_1, \ldots, x_n$ for
$B$ at $b$ such that $f$ can be written
$$ f(x_1, \ldots, x_n) = f(b) - x_1^2 - \ldots - x_m^2 + x_{m+1}^{2} + \ldots + x_n^2.$$
Here $m \leq n$ is a nonnegative integer, called the {\it index} of the critical point $b$.
\end{lemma}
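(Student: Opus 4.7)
The plan is the classical Morse-theoretic argument: reduce to a Taylor-expansion statement and then carry out a smooth version of Sylvester's law of inertia in a neighborhood of the critical point. I will work out each step in turn.

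First, I would pass to local coordinates. Choosing a chart around $b$ and translating, we may assume $B$ is an open neighborhood of $0\in\R^n$, that $b=0$, and (by subtracting a constant) that $f(0)=0$. Since $Df(0)=0$, Taylor's theorem with integral remainder gives
$$f(x)=\int_0^1(1-t)\,\tfrac{d^2}{dt^2}f(tx)\,dt=\sum_{i,j}x_ix_j\,h_{ij}(x),$$
where $h_{ij}(x)=\int_0^1(1-t)\,\partial_i\partial_j f(tx)\,dt$ is smooth and symmetric in $i,j$, with $h_{ij}(0)=\tfrac{1}{2}\partial_i\partial_j f(0)$. Thus $(h_{ij}(0))$ is (up to a factor of $2$) the Hessian of $f$ at $0$, which by assumption is nondegenerate. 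In particular the symmetric matrix $H(x):=(h_{ij}(x))$ remains nondegenerate for $x$ in a sufficiently small neighborhood $U$ of $0$.

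Next, I would diagonalize $H(x)$ smoothly one coordinate at a time. After an initial constant linear change of variables, I may assume $h_{11}(0)\neq 0$. On a possibly smaller neighborhood, define
$$y_1=\sqrt{|h_{11}(x)|}\Bigl(x_1+\sum_{j>1}\frac{h_{1j}(x)}{h_{11}(x)}x_j\Bigr),\qquad y_i=x_i\text{ for }i>1,$$
where the square root is well-defined since $h_{11}$ has constant sign near $0$. A direct computation shows that in the coordinates $(y_1,\ldots,y_n)$,
$$f=\pm y_1^2+\sum_{i,j\geq 2}y_iy_j\,\tilde h_{ij}(y),$$
for some new smooth symmetric matrix $(\tilde h_{ij})$ which is again nondegenerate at $0$. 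Moreover, because the Jacobian of the change of variables at $0$ is a nonzero multiple of the identity (in block form), the inverse function theorem guarantees that $y=(y_1,\ldots,y_n)$ is a genuine coordinate system on some smaller neighborhood of $0$. Iterating this procedure $n$ times, I obtain coordinates $(z_1,\ldots,z_n)$ in which $f=\sum_i\epsilon_iz_i^2$ with each $\epsilon_i=\pm1$. A final permutation of coordinates groups the $-1$'s first and the $+1$'s last, yielding the stated normal form with $m$ equal to the number of negative eigenvalues of the Hessian (which agrees with the definition of the index, and is an invariant of $f$ at $b$ by Sylvester's law of inertia).

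The main technical point, and the only step that requires genuine care, is verifying that the inductive change of coordinates at each stage is smoothly invertible near $0$; this is where the nondegeneracy of the Hessian is used, ensuring that at each step we can find some diagonal entry of the residual quadratic form which is nonzero at the origin and hence of constant sign nearby, so that the square root appearing in the formula for $y_1$ is smooth. Everything else is bookkeeping: Taylor's theorem produces the quadratic-form representation with smooth coefficients, and the inverse function theorem upgrades the infinitesimal linear change of variables to an honest diffeomorphism of neighborhoods.
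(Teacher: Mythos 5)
The paper states the Morse Lemma without proof, treating it as a classical result from differential topology (it is due to Morse; the standard reference is Milnor's \emph{Morse Theory}). Your argument is the standard textbook proof and is correct: the integral form of Taylor's theorem gives the representation $f(x)=\sum_{i,j}x_ix_jh_{ij}(x)$ with $h_{ij}$ smooth and symmetric and $H(0)$ equal to half the Hessian, and the inductive completion-of-square step --- arranging $h_{11}(0)\neq 0$ by a preliminary constant linear change, defining $y_1=\sqrt{|h_{11}|}\,(x_1+\sum_{j>1}h_{1j}x_j/h_{11})$, and checking that the Jacobian at the origin is invertible so the inverse function theorem applies --- is exactly the device used in Milnor's proof. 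Two small points worth keeping explicit, which you touch on but could spell out: (1) the claim that one can always arrange a nonzero diagonal entry $h_{11}(0)$ uses that a nonzero symmetric bilinear form either has a nonzero diagonal entry or, failing that, has some off-diagonal $h_{ij}(0)\neq 0$, in which case the change $x_i\mapsto x_i+x_j$, $x_j\mapsto x_i-x_j$ produces one; and (2) the residual form $(\tilde h_{ij})$ remains nondegenerate at $0$ because the diffeomorphism carries the nondegenerate Hessian of $f$ to a block-diagonal matrix with $\pm 2$ in the first slot, so the lower block must also be nondegenerate --- this is what licenses the induction. The identification of $m$ with the number of negative eigenvalues of the Hessian, hence its invariance, via Sylvester's law of inertia is also correctly invoked.
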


Suppose that $B$ is a morphism in $\calB(M)$ and that $f: B \rightarrow [0,1]$ is a Morse function
with exactly one critical point $b$, which has index $m$. Choose local coordinates 
$x_1, \ldots, x_n$ for $B$ at $b$ as described in Lemma \ref{ml}. For a sufficiently small real number $\epsilon$, we can regard
$$U = \{ (x_1, \ldots, x_n) \in \R^{n}: (x_1^2 + \ldots + x_n^2 < 2 \epsilon)
\wedge ( - \epsilon < -x_1^2 - \ldots - x_m^2 + x_{m+1}^{2} + \ldots + x_n^2
< {\epsilon} ) \}$$
as an open subset of $B$. Let $B' = f^{-1} [ f(b) - \epsilon, f(b) + \epsilon ]$;
since $f$ is submersive outside of $B'$, we can identify $B$ with $B'$ as morphisms
in $\calB(M)$. We observe that $B' - U \simeq [ f(b) - \epsilon, f(b) + \epsilon] \times Y$, where
$Y$ is a bordism from $M$ to $S^{m-1} \times S^{n-m-1}$. The closure $\overline{U}$ of $U$
determines a morphism $\gamma_m$ from
$D^{m} \times S^{n-m-1}$ to $S^{m-1} \times D^{n-m}$ in the $(\infty,1)$-category
$\calB( S^{m-1} \times S^{n-m-1})$, and we can identify $B$ with the image of
$\eta$ under the functor $\calB( S^{m-1} \times S^{n-m-1}) \rightarrow \calB(M)$
determined by $Y$.

The upshot of this discussion is that we can regard the morphisms $\{ \gamma_m \}_{0 \leq m \leq n}$
as ``generators'' for the functor $\calB$ in the following sense: for every closed $(n-2)$-manifold $M$,
every morphism $B$ in $\calB(M)$ can be obtained as a composition of images of
the morphisms of $\gamma_m$ under functors $\calB( S^{m-1} \times S^{n-m-1}) \rightarrow \calB(M)$
induced by bordisms from $S^{m-1} \times S^{n-m-1}$ to $M$. This is just a categorical reformulation
of the classical fact that every $n$-manifold $B$ admits a handle decomposition: in other words, 
$B$ can be built by a finite sequence of handle attachments. 

Unfortunately, the above analysis is not nearly refined enough for our purposes. In order to prove the cobordism hypothesis, we need a more precise understanding $(\infty,1)$-categories $\calB(M)$. Every morphism in $\calB(M)$ corresponds to a bordism $B$ between
$(n-1)$-manifolds, which admits a handle decomposition. However, this handle decomposition is not
unique: it depends on a choice of Morse function $f: B \rightarrow [0,1]$. Two different Morse functions $f_0,f_1: B \rightarrow [0,1]$ will generally give rise to very different handle decompositions: for example, they can have different numbers of critical points. However, {\it Cerf theory} guarantees that $f_0$ and $f_1$ must be related to one another in a reasonably simple way. To see this, one chooses a smooth family of functions $\{ f_t \}_{0 \leq t \leq 1}$ in $\Fun(B)$ which interpolate between $f_0$ and $f_1$. It is generally impossible to ensure that all of the functions $f_t$ are Morse. However,
if $\{ f_t \}_{0 \leq t \leq 1}$ is suitably generic, then $f_{t}$ will be fail to be Morse for only finitely many values of $t$; moreover, each $f_t$ will fail to be Morse in a very mild (and well-understood) way.

\begin{definition}\label{swell}
Let $B$ be an $n$-manifold. A smooth function $f: B \rightarrow \R$ is a {\it generalized Morse function}
if, for every point $b \in B$, one of the following conditions holds:
\begin{itemize}
\item[$(1)$] The point $b$ is a regular point of the function $f$: in other words, the derivative
of $f$ does not vanish at $b$.
\item[$(2)$] The point $b$ is a nondegenerate critical point of the function $f$.
\item[$(3)$] The function $f$ has a {\it birth-death singularity} at $b$: that is, there exists
a system of local coordinates $x_1, \ldots, x_n$ for $B$ at $b$ such that $f$ can be written
in the form
$$ f(x_1, \ldots, x_n) = f(b) - x_1^2 - \ldots - x_{m}^2 + x_{m+1}^2 + \ldots + x_{n-1}^2 + x_n^3.$$
Here $m < n$ is an integer, called the {\it index} of the critical point $b \in B$.
\end{itemize}
Moreover, if $B$ is a manifold with boundary (or with corners), then we require all critical
points of $f$ to lie in the interior of $B$.
\end{definition}

\begin{remark}
It is possible to formulate Definition \ref{swell} in more invariant terms. A smooth function
$f: B \rightarrow \R$ has a birth-death singularity at $b \in B$ if and only if $b$ is
a critical point for $f$, the Hessian of $f$ at $b$ has a one-dimensional nullspace 
$V \subseteq T_{B,b}$, and the third derivative of $f$ along $V$ does not vanish.
\end{remark}

The theory of generalized Morse functions describes the generic behavior of $1$-parameter families of smooth functions on a manifold $B$. For our purposes, this is still not good enough: in order to understand $\calB(M)$ as an $(\infty,1)$-category, as opposed to an ordinary category, we need to be able to contemplate families with an arbitrary number of parameters.
One approach to the problem is to try to explicitly understand the generic behavior of several
parameter families of functions on $B$. This is feasible for small numbers of parameters
(and in fact this is sufficient for our purposes: we can use the methods of \S \ref{kumma} to
reduce to the problem of understanding the $(2,1)$-categories $\tau_{\leq 2} \calB(M)$)
using Thom's theory of {\it catastrophes}. However, it will be more convenient to address the issue in a different way, using Igusa's theory of {\it framed functions}.

\begin{definition}\label{cablus}
Let $M$ be a closed $(n-2)$-manifold, and let $B$ be a morphism in $\calB(M)$. A
{\it framed function} on $B$ consists of the following data:
\begin{itemize}
\item[$(1)$] A function $f \in \Fun(B)$ which is a generalized Morse function.
\item[$(2)$] For every critical point $b$ of $f$ having index $m$, a collection
of tangent vectors $v_1, \ldots, v_m \in T_{B,b}$ satisfying
$$ H(v_i, v_j) = \begin{cases} 0 & \text{if } i \neq j \\
-2 & \text{if } i = j,\end{cases}$$
where $H$ denotes the Hessian of $f$ at $b$.
\end{itemize}
\end{definition}

In the situation of Definition \ref{cablus}, we will generally abuse terminology and simply refer to
$f$ as a framed function on $B$; the data of type $(2)$ is implicitly understood to be specified as well.
In \cite{igusa}, Igusa explains how to introduce a topological space $\FrFun(B)$ of framed
functions on $B$ (more precisely, he defines a simplicial set whose vertices are framed functions; one can then define $\FrFun(B)$ to be the geometric realization of this simplicial set). We can use the spaces
$\FrFun(B)$ to assemble a new version of the $(\infty,n)$-category
$\Bord_{n}$. For every closed $(n-2)$-manifold $M$, let $\calB_n(M)$
denote the $(\infty,1)$-category which we may describe informally as follows:

\begin{itemize}
\item[$(i')$] The objects of $\calB_n(M)$ are $(n-1)$-manifolds $X$ equipped with
an identification $\bd X \simeq M$.
\item[$(ii')$] For every pair of objects $X, X' \in \calB_n(M)$, we let
$\OHom_{\calB_n}(X,X')$ be a classifying
space for pairs $(B, f)$, where $B$ is a bordism from $X$ to $X'$ (which is trivial along $M$,
as in the description of $\calB(M)$), and $f \in \FrFun(B)$ is a framed function.
\end{itemize}

We can regard $\calB_n$ as a lax symmetric monoidal functor from $\Omega^{n-2} \Bord_{n-1}$
to $\Cat_{(\infty,1)}$. In view of Corollary \ref{isa}, this functor determines an $(n-2)$-connective
symmetric monoidal functor $\Bord_{n-1} \rightarrow \Bord_{n}^{\frun}$. We can think of
$\Bord_{n}^{\frun}$ as a variation on the $(\infty,n)$-category $\Bord_{n}$, where we require that every $n$-manifold be decorated by a framed function. In particular, there is a forgetful functor
$j: \Bord_{n}^{\frun} \rightarrow \Bord_{n}$. 

\begin{warning}
The reader should not confused the framed bordism $(\infty,n)$-category
$\Bord_{n}^{\fr}$ with the $(\infty,n)$-category $\Bord_{n}^{\frun}$ introduced above.
In the former case, we endow all $n$-manifolds with framings; in the latter, we endow all
$n$-manifolds with generalized Morse functions together with framings on the negative eigenspaces of the Hessian at each critical point.
\end{warning}

There is an evident functor $\Bord_{n}^{\frun} \rightarrow \Bord_{n}$, which is obtained by forgetting the framed functions. In order to prove the cobordism hypothesis, it will suffice to verify the following results:

\begin{theorem}[Cobordism Hypothesis, Framed Function Version]\label{swisher11}
Let $\calC$ be a symmetric monoidal $(\infty,n)$-category with duals, and let 
$Z_0: \Bord_{n-1} \rightarrow \calC$ be a symmetric monoidal functor.
The following types of data are equivalent:
\begin{itemize}
\item[$(1)$] Symmetric monoidal functors $Z: \Bord_{n}^{\frun} \rightarrow \calC$
extending $Z_0$.
\item[$(2)$] Nondegenerate $\OO(n)$-equivariant $n$-morphisms
$\eta: {\bf 1} \rightarrow Z_0(S^{n-1})$ in $\calC$.
\end{itemize}
\end{theorem}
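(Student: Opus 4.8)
The plan is to reduce Theorem \ref{swisher11} to a purely combinatorial statement about the lax symmetric monoidal functor $\calB_n\colon \Omega^{n-2}\Bord_{n-1} \simeq \tunCob(n-1) \to \Cat_{(\infty,1)}$ built from framed functions, and to prove that $\calB_n$ admits an explicit presentation by generators and relations. First I would invoke Corollary \ref{isa}: a symmetric monoidal functor $Z\colon \Bord_n^{\frun} \to \calC$ extending $Z_0$ is the same data as a lax symmetric monoidal natural transformation $\calB_n \to \calB_{\calC}$, where $\calB_{\calC}\colon \tunCob(n-1) \to \Cat_{(\infty,1)}$ is the functor classifying the $(n-2)$-connective functor $\Omega^{n-2}\calC_{n-1}$-part of the canonical skeletal sequence of $\calC$ (using that $\calC$ has adjoints, so the relevant morphism objects rewrite as $\OHom_\calC(\mathbf 1, F(X))$). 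So the theorem becomes: transformations $\calB_n \to \calB_{\calC}$ are classified by a single nondegenerate $\OO(n)$-equivariant $n$-morphism $\eta\colon \mathbf 1 \to Z_0(S^{n-1})$.

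Next I would establish the key geometric input — a presentation of the functor $\calB_n$. Morse theory (the Morse Lemma \ref{ml}) shows that every $1$-morphism of $\calB_n(M)$ factors, via a choice of framed generalized Morse function, as a composition of the elementary handle morphisms $\gamma_m\colon D^m\times S^{n-m-1} \to S^{m-1}\times D^{n-m}$ pulled back along bordisms from $S^{m-1}\times S^{n-m-1}$ to $M$; these are the generators. The relations come from Igusa's theorem that the space $\FrFun(B)$ of framed functions is highly connected (indeed contractible in the stable range after the appropriate stabilization) — this is what upgrades the classical ``any two handle decompositions are related by handle moves'' to a statement valid at the level of $(\infty,1)$-categories, namely that the homotopy type of $\OHom_{\calB_n(M)}(X,X')$ is computed by the evident simplicial object built from framed functions with prescribed numbers of critical points. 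I would then translate the handle-cancellation moves (birth-death of a canceling pair, handle slides, the $\OO(n)$-action rotating framings) into the statement that all the generators $\gamma_m$, and all the relations among them, are \emph{formal consequences} — in the universal sense of freeness — of the single generator $\gamma_0 = \eta$ corresponding to a $0$-handle (an $n$-disk), together with its adjoint and the unit/counit $2$-morphisms that exist because $\calC$ has adjoints (this is where Lemma \ref{simptom} on uniqueness of adjoints is used: once $\eta$ is nondegenerate, the ``dual'' handles and their gluing data are determined up to contractible choice). Concretely: a $\gamma_m$-handle is an $m$-fold ``bending'' of the basic $\eta$, obtained by applying $\Omega^{m}$ and composing with evaluation/coevaluation $k$-morphisms of the sphere stratification $S^{\zeta_x} = S_{v,+}\coprod_{S_{v,0}} S_{v,-}$ already analyzed in \S\ref{forty1}; the framing data on the negative eigenspace of the Hessian is exactly what makes this bending $\OO(n)$-equivariantly canonical.

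The argument then runs by induction on $n$. Having the presentation of $\calB_n$, the induced-transformation classification is: choose, $\OO(n)$-equivariantly and compatibly with the bordism functoriality in $M$, an image for each generator $\gamma_m$ subject to the handle-move relations. The contractibility of $\FrFun$ (Igusa, Theorem \ref{conig}) guarantees the relation-complex is itself contractible once the generator $\gamma_0 \mapsto \eta$ is fixed and $\eta$ is nondegenerate, so the classifying space of such choices is equivalent to the classifying space of nondegenerate $\OO(n)$-equivariant $\eta$'s. I expect the main obstacle to be precisely the passage from Igusa's connectivity estimate to a genuine contractibility statement for the space of ``compatible systems of handle images'': one must carefully set up the $(\infty,1)$-categorical bar construction so that the framed-function moves exhaust \emph{all} higher coherences, not merely the $1$- and $2$-dimensional ones, and check that the stabilization (the passage from $\Bord_n^{\frun}$ built on $\FrFun$ to a version where the connectivity is unbounded) does not lose information — this is the reason the companion Theorem \ref{conig} and the cohomological calculation of \S\ref{kumma} are needed to finally identify $\Bord_n^{\frun}$ with $\Bord_n$. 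The rest — equivariance bookkeeping, checking $\gamma_0 \mapsto \eta$ forces the nondegeneracy condition of Definition \ref{nondeg}, and verifying naturality in $M$ — is routine unwinding of the definitions.
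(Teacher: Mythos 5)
Your high-level picture is right in several respects: the reduction via Corollary~\ref{isa}, the identification of handle attachments as generators for $\calB_n$, and the recognition that the higher-index handles should be ``formal consequences'' of the $0$-handle $\eta$ once one has adjoints. You also eventually place Igusa's Theorem~\ref{conig} and the cohomological input correctly (needed for Theorem~\ref{postwish}, i.e.\ $\Bord_n^{\frun}\simeq\Bord_n$, not for Theorem~\ref{swisher11}). But there is a real gap between the assertion and the argument.

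First, a misstatement: you say ``the argument then runs by induction on $n$.'' Induction on the ambient dimension is the skeleton of \S\ref{forty1}; the proof of Theorem~\ref{swisher11} itself is an induction on the \emph{Morse index}. The paper introduces the filtration $\calF_{-1}\to\calF_0\to\cdots\to\calF_n=\Bord_n^{\frun}$ by the type ($k$-typical) of the framed function, with $\calF_{-1}\simeq\Bord_{n-1}$ (no critical points) and $\calF_k\simeq\Bord_n^{\frun}$ for $k\geq n$. Corollary~\ref{preswish} (derived from Claim~\ref{caboose}) handles the first step: $\calF_0$ is freely generated by $\Bord_{n-1}$ and the $\OO(n)$-equivariant disk morphism, with \emph{no} nondegeneracy condition and \emph{no} assumption that $\calC$ has duals. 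The content is then entirely in the passages $\calF_{k-1}\to\calF_k$ for $k\geq 1$.

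Second, the key mechanism is missing. You assert that each $\gamma_m$ and all relations among them are determined from $\eta$ ``up to contractible choice,'' and you gesture at Lemma~\ref{simptom}, but you never show it. What the paper actually proves is Claim~\ref{laboose}: $\calB_k$ is obtained from $\calB_{k-1}$ by adjoining exactly one new $\OO(n-k)$-equivariant generator $\alpha_k$ and exactly one relation $\beta_k$ (birth-death cancellation of a $(k{-}1,k)$-pair). The translation (via the $\Omega^{n-2}_K$ deloopings of Notation~\ref{scaz}) identifies this data as a $2$-morphism $u\colon\id_x\to g\circ f$, and Proposition~\ref{simp} converts the extension problem $\calF_{k-1}\rightsquigarrow\calF_k$ into the condition that $u$ be the unit of an adjunction. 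For $k=1$ this is precisely the nondegeneracy of $\eta$ (Lemma~\ref{lak}). For $k\geq 2$ one has to show the adjunction is \emph{automatic} once $\calC$ has duals, and this is the part your sketch elides: it is not a formal consequence of uniqueness of adjoints, but relies on the Exchange Principle (Lemma~\ref{swath}), which shows that the upper-compatibility of $\beta_k$ established at level $k{-}1$ produces, after passing through a pair of adjunctions, the lower-compatibility needed at level $k$. Without Lemma~\ref{swath} the induction does not close. Lemma~\ref{simptom} enters at one point in this chain, but it is not the driver.

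Third, there is a confusion in the middle of your argument. You invoke Igusa's connectivity to ``upgrade handle moves to $(\infty,1)$-categorical coherences'' inside the proof of Theorem~\ref{swisher11}. That is backwards: the entire point of replacing $\Bord_n$ with $\Bord_n^{\frun}$ is that the morphism spaces of $\calB_n$ are \emph{by definition} classifying spaces for pairs (bordism, framed function), so the handle decomposition and all of its higher coherences are tautologically built in — no connectivity estimate is needed to establish Claim~\ref{laboose}. Igusa's theorem is used only afterward, for the completely separate Theorem~\ref{postwish}. Your last sentences get this right, but the paragraph before contradicts them.

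In short: you have named the right ingredients but left out the two that carry the weight — the index filtration $\{\calF_k\}$ and the Exchange Principle — and the passage from ``the higher handles should be adjoint'' to ``the extension is unique and always exists for $k\geq 2$'' is exactly where the duals hypothesis on $\calC$ must actually be used, which your sketch does not do.
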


\begin{theorem}\label{postwish}
The forgetful functor $\Bord_{n}^{\frun} \rightarrow \Bord_{n}$ is an equivalence of
$(\infty,n)$-categories.
\end{theorem}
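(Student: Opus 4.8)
The plan is to show that the forgetful functor $j \colon \Bord_{n}^{\frun} \rightarrow \Bord_{n}$ is an equivalence by exhibiting it as $(n-1)$-connective and then ruling out any obstruction to its being essentially surjective and fully faithful at the top level. The key observation is that $\Bord_{n}^{\frun}$ and $\Bord_{n}$ agree on all $j$-morphisms for $j \leq n-1$: both are built by the same construction (as in \S \ref{swugg}) on manifolds of dimension $< n$, and the decoration by framed functions only enters at the level of $n$-morphisms. So $j$ is automatically an isomorphism on the underlying $(\infty,n-1)$-categories, and the entire content of the theorem is concentrated in the claim that, for every closed $(n-1)$-manifold $M$ and every pair of objects (i.e. closed $(n-1)$-manifolds with the appropriate boundary data, or more precisely closed $(n-1)$-manifolds regarded as objects when we work relative to $\Bord_{n-1}$), the map of mapping spaces
$$ \OHom_{\Bord_{n}^{\frun}}(X,X') \longrightarrow \OHom_{\Bord_{n}}(X,X') $$
is a weak homotopy equivalence. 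Using the Segal-space model of \S \ref{swugg}, this reduces to a statement about a single bordism $B$: the space $\FrFun(B)$ of framed functions on $B$ should be weakly contractible, or at least the natural map from the classifying space of pairs $(B,f)$ to the classifying space of $B$ alone should be a weak equivalence fiber-by-fiber.

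This is exactly where Igusa's theorem on framed functions enters, and it is the main obstacle — or rather, it is the place where all the geometric difficulty has been deliberately localized. First I would recall the precise statement of Igusa's connectivity estimate (Theorem \ref{conig}, referenced in the outline of \S \ref{kumma}): the space $\FrFun(B)$ of framed functions on a manifold $B$ of dimension $n$ is $(n{-}?)$-connected, with the connectivity growing without bound as one stabilizes. The point is that, while $\FrFun(B)$ is not literally contractible for a fixed $B$, the defect is controlled, and in the bordism category one is effectively taking a colimit over embeddings into $\R^\infty$ (the direct limit construction of Definition \ref{cabinn}), so that the connectivity estimate becomes, in the limit, an assertion of genuine contractibility. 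Concretely, I would argue: (a) the forgetful map $\FrFun(B) \to \ast$ is highly connected with a bound depending only on the codimension of the relevant stratum; (b) assembling these maps over the semi-simplicial space defining $\SuntCob$, and passing to $\R^\infty$, the fibers become weakly contractible; hence (c) the induced map of Segal spaces $\untBord_{n}^{\frun} \to \untBord_{n}$ is a levelwise weak equivalence, so after completion (Remark \ref{kisster}) the functor $j$ is an equivalence of $(\infty,n)$-categories.

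There is one subtlety I would want to handle carefully. The connectivity of $\FrFun(B)$ as stated by Igusa is a bound that is \emph{finite} for each fixed ambient dimension, whereas $\Bord_{n}^{\frun}$ involves only $n$-manifolds (the dimension does not go to infinity — only the embedding codimension does). So the argument cannot simply invoke a stable contractibility statement; one must check that the parametrized family of framed-function spaces, indexed by the strata of the Segal space and the embedding data, has connectivity tending to infinity even with the manifold dimension held at $n$. This is precisely the cohomological input alluded to in the outline of \S \ref{kumma} — an obstruction-theoretic argument built on the generalization of the Galatius–Madsen–Tillmann–Weiss calculation (Theorem \ref{swisher9}). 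The shape of that argument: any potential obstruction to $j$ being an equivalence lives in a cohomology group of a classifying space of $n$-framed bordisms with coefficients in the homotopy fibers of $\FrFun(-)\to\ast$; Igusa's estimate shows these fibers are highly connected, and the cohomological calculation shows the relevant obstruction group vanishes in the range needed. Combining the purely formal reduction (everything below dimension $n$ is unchanged, so only the top mapping spaces matter) with Igusa's geometric input and this cohomological vanishing yields the theorem. I expect writing out the obstruction-theoretic bookkeeping — tracking which connectivity of $\FrFun$ suffices against which cohomological degree — to be the genuinely delicate part; the rest is formal manipulation of Segal spaces and completions.
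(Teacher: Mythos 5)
Your high-level outline has the right ingredients --- Igusa's connectivity estimate, a GMTW-style cohomological computation, and an obstruction-theoretic assembly --- but the way you combine them doesn't match the actual mechanism, and the naive version of your argument does not work.

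The direct attempt (a)(b)(c) fails. Igusa's Theorem \ref{conig} asserts that $\FrFun(B)$ is $(n-1)$-connected, where $n = \dim B$ is fixed. Passing to the colimit over embeddings $V \subseteq \R^{\infty}$ does nothing to improve this: the embedding stabilization affects the classifying spaces $\BDiff$, not the connectivity of the space of framed functions on a given $B$. So the induced map of Segal spaces $\untBord_n^{\frun} \to \untBord_n$ is emphatically \emph{not} a levelwise weak equivalence on the nose, and you cannot conclude (c). Indeed, the paper explicitly points out that contractibility of $\FrFun(B)$ is a \emph{consequence} of Theorem \ref{postwish} and is not accessible by direct geometric means; the whole point of \S \ref{kumma} is that the argument is forced to be indirect.

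You then back off and invoke obstruction theory, which is the right move, but you misdescribe where Igusa's estimate enters and you omit a crucial input. The paper's argument runs through a Whitehead-type criterion (Proposition \ref{korder}): a symmetric monoidal functor is an equivalence iff it induces an equivalence on $(n+1)$-truncations and an isomorphism on multiplicative cohomology for \emph{every} multiplicative local system. Igusa's connectivity serves \emph{only} the truncation clause --- since $\FrFun(B)$ is $(n-1)$-connected (in particular simply connected), the forgetful functor is $(n+2)$-connective (Corollary \ref{sussk}), so the $\tau_{\leq n+1}$ condition holds. Igusa plays no role in the cohomological clause, and the obstructions you gesture at do not live in cohomology with coefficients in the fibers of $\FrFun(-) \to \ast$; they live in relative multiplicative cohomology $\HH^{m}_{\otimes}(\Bord_n, \Bord_n^{\frun}; \calA)$, which must vanish in \emph{all} degrees, not ``in the range needed.'' The vanishing is established by showing that \emph{both} of the maps
$$ \HH^{m}_{\otimes}(\Bord_n, \Bord_{n-1}; \calA) \longrightarrow \HH^{m-n}(\BO(n); \calB), \qquad
 \HH^{m}_{\otimes}(\Bord_n^{\frun}, \Bord_{n-1}; f^{\ast}\calA) \longrightarrow \HH^{m-n}(\BO(n); \calB)$$
are isomorphisms: the first is the infinitesimal GMTW-type computation (Theorem \ref{swisher9}), while the second uses the cobordism hypothesis for $\Bord_n^{\frun}$ (Theorem \ref{swisher11}, the main output of \S \ref{kuji}). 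Your proposal never mentions Theorem \ref{swisher11}, but without it there is no way to identify $\HH^{m}_{\otimes}(\Bord_n^{\frun}, \Bord_{n-1}; -)$ with cohomology of $\BO(n)$, and the argument does not close. This is the genuine missing piece.
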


The remainder of this section is devoted to a proof of Theorem \ref{swisher11}; we will
defer the proof of Theorem \ref{postwish} until \S \ref{kumma}.

\begin{remark}
Theorem \ref{postwish} is equivalent to the assertion that for every morphism
$B \in \calB(M)$, the space $\FrFun(B)$ of framed functions on $B$ is contractible.
This was conjectured by Igusa, who proved the weaker result that $\FrFun(B)$ is highly connected
(Theorem \ref{conig}). In \S \ref{kumma} we will deduce Theorem \ref{postwish} by combining
Igusa's connectivity result with deformation-theoretic arguments. This provides a proof that
each $\FrFun(B)$ is contractible, but the proof is very indirect: it uses in an essential way that
the spaces $\FrFun(B)$ can be packaged together (as $M$ and $B$ vary) to form an
$(\infty,n)$-category $\Bord^{\frun}_{n}$. It is likely possible to verify the contractibility
of $\FrFun(B)$ in a more direct way (the contractibility can be regarded as an instance of
Gromov's {\it $h$-principle}), which would eliminate the need to consider the cohomological formalism
described in \S \ref{kumma}.
\end{remark}

The advantage of $\Bord_{n}^{\frun}$ over $\Bord_{n}$ is that the former $(\infty,n)$-category evidently admits a description by ``generators and relations'', corresponding to the possible behaviors of a generalized Morse function near a critical point. However, the situation is still fairly complicated because critical points can appear with arbitrary Morse index. It is therefore convenient to consider these indices one at a time.

\begin{definition}
Let $M$ be a closed $(n-2)$-manifold, let $B$ be a $1$-morphism in $\calB(M)$, and let $k$ be an integer. We will say that a framed function $f$ on $B$ is {\it $k$-typical} if, for every critical
point $b$ of $f$, one of the following conditions holds:
\begin{itemize}
\item[$(1)$] The function $f$ has a nondegenerate critical point at $b$ of index $\leq k$.
\item[$(2)$] The function $f$ has a birth-death singularity at $b$ of index $< k$.
\end{itemize}
\end{definition}

We now define an $(\infty,1)$-category $\calB(M)$ informally as follows: 
\begin{itemize}
\item[$(i)$] The objects of $\calB_k(M)$ are $(n-1)$-manifolds $X$ equipped with an identification
$\bd X \simeq M$.
\item[$(ii)$] For every pair of objects $X,X' \in \calB(M)$, the mapping object
$\OHom_{\calB_k(M)}(X,X')$ is a classifying space for pairs $(B,f)$, where $B$ is a bordism from $X$ to $X'$ (which is trivial on $M$) and $f$ is a $k$-typical framed function on $B$.
\end{itemize}
For every integer $k$, we can view $\calB_k$ as a lax symmetric monoidal functor
$\Omega^{n-2} \Bord_{n-1} \rightarrow \Cat_{(\infty,1)}$, which (by virtue of Corollary \ref{isa})
determines a symmetric monoidal functor $\Bord_{n-1} \rightarrow \calF_k$.
Since every $k$-typical framed function is also $k'$-typical for $k' \geq k$, we obtain a sequence of
symmetric monoidal $(\infty,n)$-categories and functors 
$$ \ldots \rightarrow \calF_{-1} \rightarrow \calF_0 \rightarrow \ldots \rightarrow \calF_{n-1}
\rightarrow \calF_{n} \rightarrow \ldots$$

\begin{example}\label{fab}
If $k \geq n$, then every framed function on an $n$-manifold $B$ is $k$-typical. Consequently, 
we obtain a canonical equivalence $\calF_{k} \simeq \Bord^{\frun}_{n}$.
\end{example}

\begin{example}\label{fba}
If $k < 0$, then a framed function $f: B \rightarrow [0,1]$ is $k$-typical if and only if $f$ has no critical points. In this case, $f$ exhibits $B$ as a fiber bundle over the interval $[0,1]$, and determines
a diffeomorphism of $X = f^{-1} \{0\}$ with $X' = f^{-1} \{1\}$ (which is well-defined up to isotopy).
It follows that $\calF_{k}$ can be identified with the $(\infty,n-1)$-category $\Bord_{n-1}$, in which
$n$-morphisms are given by diffeomorphisms.
\end{example}

We begin the proof of Theorem \ref{swisher11} by analyzing the $(\infty,n)$-category
$\calF_0$. We observe that a framed function $f: B \rightarrow [0,1]$ is
$0$-typical if and only if it is a Morse function, and every critical point of $f$ has index $0$.
The Morse Lemma implies that near each critical point $b$ of $B$, we can choose local coordinates
$x_1, \ldots, x_n$ such that $f$ is given by the formula
$$ f(x_1, \ldots, x_n ) = b + x_1^2 + \ldots + x_n^2.$$
This choice of coordinates is not unique: however, it is unique up to a contractible space of choices,
once we fix an orthonormal frame for the tangent space $T_{B,b}$. One can use this reasoning
to prove the following result:

\begin{claim}\label{caboose}
The functor $\calB_0: \Omega^{n-2} \Bord_{n-1} \rightarrow \Cat_{(\infty,1)}$ is freely
generated $($as a lax symmetric monoidal functor$)$ by a single $\OO(n)$-equivariant
$1$-morphism $\emptyset \rightarrow S^{n-1}$ in $\calB_0(\emptyset)$, corresponding
to the pair $(D^n, f)$ where $D^n = \{ (x_1, \ldots, x_n) \in \R^n: x_1^2 + \ldots + x_n^2 \leq \frac{1}{2} \}$
and $f: D^n \rightarrow [0,1]$ is given by the formula $f(x_1, \ldots, x_n) = \frac{1}{2} + x_1^2 + \ldots + x_n^2$.
\end{claim}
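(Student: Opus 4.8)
The plan is to verify the universal property directly by unwinding the definitions of $\calB_0(M)$ and the free lax symmetric monoidal functor generated by a single equivariant $1$-morphism. First I would describe explicitly the lax symmetric monoidal functor $\calF: \Omega^{n-2}\Bord_{n-1} \to \Cat_{(\infty,1)}$ which is freely generated by an $\OO(n)$-equivariant $1$-morphism $\emptyset \to S^{n-1}$: its value on a closed $(n-2)$-manifold $M$ is the $(\infty,1)$-category whose objects are $(n-1)$-manifolds $X$ with $\bd X \simeq M$, and whose morphisms are obtained by formally composing images of the generating morphism (and its identity) under the functors induced by bordisms $S^{n-1} \coprod \cdots \coprod S^{n-1} \to M$, subject to the ``relations'' dictated by freeness (coherence of the $\OO(n)$-action and of the lax monoidal structure). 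I would then construct a comparison map $\calF \to \calB_0$ of lax symmetric monoidal functors classifying the pair $(D^n, f)$ described in the statement, noting that this pair is canonically $\OO(n)$-equivariant once we observe that $\OO(n)$ acts on $D^n$ preserving $f$ (since $f(x) = \tfrac12 + |x|^2$ is rotation-invariant).

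The heart of the argument is then to show that this comparison map is an equivalence, i.e. that for every closed $(n-2)$-manifold $M$ the induced functor $\calF(M) \to \calB_0(M)$ is an equivalence of $(\infty,1)$-categories. Essential surjectivity on objects is immediate, since both sides have the same objects. The key point is the statement about morphism spaces: given a bordism $B$ from $X$ to $X'$ equipped with a $0$-typical framed function $f$, I would use the fact that $f$ is Morse with all critical points of index $0$ to slice $B$ along regular level sets $f^{-1}(t_i)$, each slab $f^{-1}[t_{i-1}, t_i]$ containing exactly one critical point. By the Morse Lemma (Lemma \ref{ml}), near each such critical point $b$ the function $f$ takes the normal form $f(b) + x_1^2 + \cdots + x_n^2$; the framing data of Definition \ref{cablus} is vacuous for index-$0$ critical points, so the only remaining choice is an orthonormal frame of $T_{B,b}$, which is exactly the data needed to rigidify the coordinate system. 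One shows that each slab is, up to a contractible space of choices, a pushout of the generating morphism $(D^n, f)$ along a bordism into $M$ — in other words, an index-$0$ handle attachment — and that the contractibility of the space of compatible coordinate systems (with fixed frame) means that $\calB_0(M)$ is built out of the generators with no extra relations beyond those imposed by freeness.

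The main obstacle I anticipate is the bookkeeping of the $\infty$-categorical structure: it is not enough to check that $\calB_0(M)$ and $\calF(M)$ have the same objects and $\pi_0$ of morphism spaces; one must match the full homotopy types of the classifying spaces of framed Morse functions with the mapping spaces in the freely generated functor, and verify that the matching is compatible with composition, with the symmetric monoidal structure, and with the functoriality in $M$. Concretely this amounts to showing that the space of $0$-typical framed functions on a fixed bordism $B$ is homotopy equivalent to the space of ``formal handle decompositions'' using index-$0$ handles — a statement whose proof requires a parametrized version of the Morse-theoretic slicing above, keeping track of how the cut points $t_i$ and the local coordinates vary in families. I would handle this by a standard argument: the space of pairs (Morse function of index $0$, choice of regular slicing values) deformation retracts onto the space of Morse functions, and over this space the assignment of the handle decomposition is a fibration with contractible fibers (the fibers parametrizing the auxiliary coordinate and framing choices). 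Granting these parametrized refinements, which are of the same character as the connectivity estimates used elsewhere in the paper, the equivalence $\calF(M) \simeq \calB_0(M)$ follows, and hence so does the claim.
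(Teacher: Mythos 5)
Your proposal is correct and follows essentially the same route the paper sketches just before Claim~\ref{caboose}: observe that a $0$-typical framed function is a Morse function with all critical points of index~$0$, slice along regular values, apply the Morse Lemma near each critical point, and note that the space of normal-form coordinate systems is a torsor for $\OO(n)$ (contractible once an orthonormal frame of $T_{B,b}$ is fixed), which precisely accounts for the $\OO(n)$-equivariance of the generating morphism $(D^n,f)$. The paper does not spell out the parametrized (family) version of the slicing argument either, so your identification of that as the technical bookkeeping to be supplied matches the level of detail the paper leaves to the reader.
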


\begin{corollary}\label{preswish}
The $(\infty,n)$-category $\calF_0$ is freely generated $($as a symmetric monoidal $(\infty,n)$-category$)$
by $\Bord_{n-1}$ together with a single $\OO(n)$-equivariant $n$-morphism $\emptyset \rightarrow S^{n-1}$, corresponding to the disk $D^n$ $($equipped with the framed function
described in Claim \ref{caboose}$)$. In other words, if $\calC$ is any symmetric monoidal $(\infty,n)$-category, then giving a symmetric monoidal functor $Z: \calF_0 \rightarrow \calC$ is equivalent
to giving a symmetric monoidal functor $Z_0: \Bord_{n-1} \rightarrow \calC$ together
with an $\OO(n)$-equivariant $n$-morphism $\eta: {\bf 1} \rightarrow Z_0(S^{n-1})$.
\end{corollary}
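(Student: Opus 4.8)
The plan is to deduce Corollary \ref{preswish} from Claim \ref{caboose} by translating the statement about the lax symmetric monoidal functor $\calB_0 : \Omega^{n-2}\Bord_{n-1} \to \Cat_{(\infty,1)}$ into the corresponding statement about the $(\infty,n)$-category $\calF_0$, using the dictionary established in \S\ref{unf}. The key point is Corollary \ref{isa}: since $\Bord_{n-1}$ is a symmetric monoidal $(\infty,n-1)$-category with duals, there is an equivalence between $(n-2)$-connective symmetric monoidal functors $\Bord_{n-1} \to \calB_n$ (with $\calB_n$ a symmetric monoidal $(\infty,n)$-category) and lax symmetric monoidal functors $M : \Omega^{n-2}\Bord_{n-1} \to \Cat_{(\infty,1)}$. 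Under this equivalence, the functor $\calB_0$ corresponds precisely to the structural functor $\Bord_{n-1} \to \calF_0$, so to prove the universal property of $\calF_0$ it suffices to prove the matching universal property of $\calB_0$, which is exactly the content of Claim \ref{caboose}.

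First I would make the reduction explicit. Fix a symmetric monoidal $(\infty,n)$-category $\calC$ and consider the data of a symmetric monoidal functor $Z : \calF_0 \to \calC$. Restricting along $\Bord_{n-1} \to \calF_0$ gives a symmetric monoidal functor $Z_0 : \Bord_{n-1} \to \calC$; I would observe that this restriction functor is $(n-2)$-connective (it agrees with $\Bord_{n-1}$ on $j$-morphisms for $j \le n-2$, and more generally one sees from Example \ref{fba} and the definition of the filtration $\calF_{\bigdot}$ that $\calF_{-1} \simeq \Bord_{n-1}$, with each successive $\calF_k$ obtained by adjoining higher morphisms). Applying Corollary \ref{isa} with $\calB_{n-1} = \Bord_{n-1}$, the datum of $Z$ together with its restriction $Z_0$ is equivalent to the datum of a lax symmetric monoidal transformation out of $\calB_0$, i.e.\ we may compute $\Fun^{\otimes}_{Z_0}(\calF_0, \calC)$ as the space of lax symmetric monoidal natural transformations realizing $\calB_0$ inside the lax symmetric monoidal functor $X \mapsto \OHom_{\calC}(\mathbf{1}, \overline{Z_0}(X))$, where $\overline{Z_0}$ is the corresponding functor on $\Omega^{n-2}\Bord_{n-1} = \tunCob(n-1)$.

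Next I would invoke Claim \ref{caboose}: $\calB_0$ is freely generated, as a lax symmetric monoidal functor, by a single $\OO(n)$-equivariant $1$-morphism $\emptyset \to S^{n-1}$ in $\calB_0(\emptyset)$, namely the pair $(D^n, f)$. By the universal property of a free object, lax symmetric monoidal transformations from $\calB_0$ into any lax symmetric monoidal functor $N$ are in bijection with $\OO(n)$-equivariant morphisms $\mathbf{1} \to N(S^{n-1})$ in $N(\emptyset)$. Taking $N(X) = \OHom_{\calC}(\mathbf{1}, \overline{Z_0}(X))$, an $\OO(n)$-equivariant morphism $\mathbf{1} \to N(S^{n-1})$ is exactly an $\OO(n)$-equivariant $n$-morphism $\eta : \mathbf{1} \to Z_0(S^{n-1})$ in $\calC$ (here $Z_0(S^{n-1}) \in \Omega^{n-1}\calC$, since $S^{n-1}$ is a closed $(n-1)$-manifold and hence a closed $(n-1)$-morphism of $\Bord_{n-1}$). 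This is precisely the asserted classification. I would then check that, unwinding the equivalences, the generating $n$-morphism $\eta$ really is $Z(D^n)$ for the disk equipped with the framed function of Claim \ref{caboose}, which is immediate from how the free generator is identified in Claim \ref{caboose}.

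The main obstacle is Claim \ref{caboose} itself, but that is assumed; the residual work here is bookkeeping of two kinds. The first is verifying the precise connectivity and the identification $\calF_{-1} \simeq \Bord_{n-1}$ needed to apply Corollary \ref{isa} in the form stated, together with the compatibility of the $\OO(n)$-equivariance appearing in Claim \ref{caboose} with the $\OO(n)$-action on $\calC^{\sim}$ supplied by Corollary \ref{ail} (used to make sense of ``$\OO(n)$-equivariant $n$-morphism $\eta$'' on the target side). The second is the purely formal manipulation translating ``free lax symmetric monoidal functor on a generating $1$-morphism'' into ``free symmetric monoidal $(\infty,n)$-category on a generating $n$-morphism relative to $\Bord_{n-1}$'' via the Grothendieck construction of \S\ref{unf}; this is where one must be careful that the symmetric monoidal structures, the passage to $\Omega^{n-2}$, and the adjunction/duality hypotheses all line up, but no genuinely new idea is required beyond Propositions \ref{kubbus} and \ref{kamp} and Corollary \ref{isa}.
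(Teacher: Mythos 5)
Your proposal is correct and follows the approach the paper evidently intends: deduce the universal property of $\calF_0$ from the generators-and-relations description of $\calB_0$ in Claim \ref{caboose} by transporting across the equivalence of Corollary \ref{isa} (which identifies $(n-2)$-connective symmetric monoidal extensions of $\Bord_{n-1}$ with lax symmetric monoidal functors out of $\Omega^{n-2}\Bord_{n-1}$). The paper does not spell out the translation but labels the statement a corollary of Claim \ref{caboose} after the machinery of \S\ref{unf} was set up precisely to make this bookkeeping routine; your explicit identification of the target lax functor $N(X) = \OHom_\calC(\mathbf{1}, \overline{Z_0}(X))$ and the connectivity check via Example \ref{fba} fill in exactly the steps the paper leaves implicit.
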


Corollary \ref{preswish} bears a strong resemblance to Theorem \ref{swisher11}. However,
it differs in two important respects:
\begin{itemize}
\item[$(a)$] We do not need to assume that the target $(\infty,n)$-category $\calC$ has duals.
\item[$(b)$] The morphism $\eta: {\bf 1} \rightarrow Z_0( S^{n-1} )$
need not be nondegenerate.
\end{itemize}

To deduce Theorem \ref{swisher11} from Corollary \ref{preswish}, we will prove the following:

\begin{lemma}\label{lak}
Let $\calC$ be a symmetric monoidal $(\infty,n)$-category. Then the forgetful functor
$\Fun^{\otimes}( \calF_1, \calC) \rightarrow \Fun^{\otimes}(\calF_0, \calC)$ is fully faithful,
and its essential image consists of precisely those functors $Z: \calF_0 \rightarrow \calC$
such that the $n$-morphism $Z( D^n) : Z(\emptyset) \rightarrow Z(S^{n-1})$ is nondegenerate.
\end{lemma}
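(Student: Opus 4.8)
The plan is to pass to the categorical-chain-complex picture of \S\ref{unf} and reinterpret the functors $\calF_0$ and $\calF_1$ in terms of the $(\infty,1)$-categories $\calB_0$ and $\calB_1$ that classify them. By Corollary \ref{isa}, a symmetric monoidal functor $\Bord_{n-1}\to\calF_k$ is the same data as a symmetric monoidal coCartesian fibration over $\Omega^{n-2}\Bord_{n-1}\simeq\tunCob(n-1)$, equivalently a lax symmetric monoidal functor $\calB_k\colon\tunCob(n-1)\to\Cat_{(\infty,1)}$; and a symmetric monoidal functor $\calF_k\to\calC$ is, via Corollary \ref{isa} applied to the skeletal sequence extending the canonical skeletal sequence of $\calC$, the same as a morphism of lax symmetric monoidal functors $\calB_k\to M_{\calC}$, where $M_{\calC}$ is the functor classifying $\Bord_{n-1}\to\calC$. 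So the claim reduces to a statement purely about the inclusion $\calB_0(M)\hookrightarrow\calB_1(M)$ of $(\infty,1)$-categories, for each closed $(n-2)$-manifold $M$: namely that the functor $\calB_0\to\calB_1$ is a localization in the sense that mapping out of $\calB_1$ into an $(\infty,1)$-category $\calD$ is the same as mapping out of $\calB_0$ while requiring that the images of certain distinguished $1$-morphisms be sent to equivalences, er, more precisely to morphisms admitting the appropriate adjoint. I will make this precise below.

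First I would identify exactly what new generators and relations appear when passing from $\calB_0$ to $\calB_1$. A $1$-typical framed function has critical points of index $0$ and $1$ and birth-death singularities of index $0$. Analyzing a neighborhood of an index-$1$ critical point via the Morse Lemma (and the framing), exactly as in the analysis preceding Claim \ref{caboose}, shows that $\calB_1$ is obtained from $\calB_0$ by freely adjoining one new $\OO(n)$-equivariant $1$-morphism $\gamma_1$ (the index-$1$ handle) in $\calB_1(S^0\times S^{n-2})$, together with one new $2$-morphism witnessing the index-$0$ birth-death singularity. The key geometric input is that a birth-death singularity of index $0$ is precisely a one-parameter family interpolating between the identity and the composite of an index-$0$ handle $D^n\colon\emptyset\to S^{n-1}$ (regarded suitably, after gluing) with its ``reverse'' index-$1$ handle; in the language of the $(\infty,1)$-category this relation is exactly the statement that $\gamma_1$, read as a $1$-morphism, together with an index-$0$ handle, form the unit and counit of an adjunction. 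Thus I expect to prove: $\calB_1$ is obtained from $\calB_0$ by freely adjoining, for the distinguished $1$-morphism $D^n\in\calB_0(\emptyset)(\emptyset,S^{n-1})$, a left adjoint and the associated unit/counit $2$-morphisms. Given this, fully-faithfulness of $\Fun^\otimes(\calF_1,\calC)\to\Fun^\otimes(\calF_0,\calC)$ and the identification of the essential image follow from the universal property of freely adjoining an adjoint: a functor $Z\colon\calB_0\to M_\calC$ extends to $\calB_1$ if and only if $Z(D^n)$ admits a left adjoint in the relevant $(\infty,1)$-category of $(n{-}1)$-morphisms of $\calC$, and the extension, if it exists, is unique up to contractible choice because adjoints are unique up to canonical equivalence (Lemma \ref{simptom} and its higher-categorical elaborations). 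Unwinding, ``$Z(D^n)$ admits a left adjoint'' is exactly the condition that the $n$-morphism $Z(D^n)\colon Z(\emptyset)\to Z(S^{n-1})$ be nondegenerate in the sense of Definition \ref{nondeg}.

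The steps, in order, are: (1) reduce, via Corollary \ref{isa}, to the statement about the lax symmetric monoidal functors $\calB_0,\calB_1\colon\tunCob(n-1)\to\Cat_{(\infty,1)}$; (2) prove the presentation of $\calB_1$ as obtained from $\calB_0$ by freely adjoining an adjoint to $D^n$ — this is the geometric heart, carried out by a local analysis of framed functions near index-$1$ critical points and index-$0$ birth-death singularities, together with a Cerf-theoretic argument (using that the relevant spaces of framed functions with constrained critical behavior have the expected homotopy type) showing there are no further relations; (3) invoke the universal property of freely adjoining an adjoint in an $(\infty,1)$-category to conclude fully-faithfulness and compute the essential image; (4) translate the adjointness condition back through Definition \ref{nondeg} to get the nondegeneracy condition on $Z(D^n)$, using the hemispherical decomposition $S^{n-1}=S^{n-1}_{v,+}\coprod_{S^{n-2}}S^{n-1}_{v,-}$ recalled before Definition \ref{nondeg}.

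The main obstacle will be step (2): establishing that the only relation introduced is the adjunction relation, with no hidden higher coherences beyond those forced by the notion of adjunction. Concretely this requires knowing that the space of $1$-typical framed functions on a bordism $B$, relative to its boundary behavior, is homotopy equivalent to the space built combinatorially out of handle attachments of index $\le 1$ and their index-$0$ cancellations — an $h$-principle-type statement which, in this restricted-index setting, should be accessible by direct deformation arguments (one can cancel or create index-$0$ and index-$1$ handles at will, and there is enough room to move singularities past one another) rather than by the full force of Igusa's connectivity theorem. I would isolate this as a separate lemma about the homotopy type of $\FrFun_{\le 1}(B)$; everything else is formal manipulation of the universal properties already assembled in \S\ref{unf} and \S\ref{capertown}.
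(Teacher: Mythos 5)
The overall route is the paper's: reduce via \S\ref{unf} (Corollary \ref{isa}) to the lax symmetric monoidal functors $\calB_0,\calB_1\colon\Omega^{n-2}\Bord_{n-1}\to\Cat_{(\infty,1)}$, obtain a free presentation of $\calB_1$ over $\calB_0$ by local analysis of $1$-typical framed functions (this is the paper's Claim \ref{laboose} in the case $k=1$, proved alongside Claim \ref{caboose}), reinterpret the generators and relations in terms of adjunction data, and read off fully faithfulness and the essential image from the universal property of adjunctions. That is also how the paper derives Lemma \ref{lak}, as the $k=1$ case of Proposition \ref{simp}.

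There is, however, a confusion of categorical levels running through your write-up that you should fix before this can become a proof. You repeatedly phrase the condition as ``$Z(D^n)$ admits a left adjoint.'' But $Z(D^n)$ is an $n$-morphism of $\calC$, equivalently a $2$-morphism of the $(\infty,2)$-category $\Omega^{n-2}\calC$, and $2$-morphisms of an $(\infty,2)$-category are not the kind of thing that has adjoints. The correct condition --- the one encoded in Definition \ref{nondeg} and in Proposition \ref{simp} --- is that $Z(D^n)$ is the \emph{unit} of an adjunction between the two hemispheres $H_-$ and $H_+$ of $S^{n-1}$, both of which are $D^{n-1}$'s viewed as $1$-morphisms $f\colon x\to y$ and $g\colon y\to x$ in the $(\infty,2)$-category $\Omega^{n-2}\calC$ (for $k=1$, $x=Z(S^{n-2})$ and $y=Z(\emptyset)$). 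Your own earlier sentence --- ``$\gamma_1$\dots together with an index-$0$ handle form the unit and counit of an adjunction'' --- has this right; but ``freely adjoining, for the distinguished $1$-morphism $D^n$, a left adjoint'' and the final ``unwinding'' claim both switch $D^n$ from the role of unit to the role of a morphism being adjoined, which is not what Definition \ref{nondeg} asserts, and the asserted equivalence with nondegeneracy would need an argument you don't supply. Relatedly, the geometry does not hand you both triangle identities for free: the birth-death relation $\beta_1$ in Claim \ref{laboose} gives only one compatibility (upper compatibility of $v$ with $u$ in the terminology preceding Lemma \ref{simptom}); the paper gets the lower compatibility by running the same analysis with $x$ and $y$ exchanged and then invokes Lemma \ref{simptom} to conclude that the two candidate counits agree, upgrading to a genuine adjunction. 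So ``freely adjoining an adjoint'' overstates what the presentation directly yields; the precise statement is ``freely adjoining $\alpha_1$ and one triangle identity,'' plus the $x\leftrightarrow y$ symmetry and Lemma \ref{simptom} to finish. Once these two points are corrected the argument goes through as you outline it.
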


\begin{lemma}\label{lak2}
Let $\calC$ be a symmetric monoidal $(\infty,n)$-category with duals.
For $2 \leq k \leq n$, the forgetful functor $\Fun^{\otimes}( \calF_k, \calC)
\rightarrow \Fun^{\otimes}( \calF_{k-1}, \calC)$ is an equivalence.
\end{lemma}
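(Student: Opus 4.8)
Looking at Lemma \ref{lak2}, the statement says that for $2 \le k \le n$, passing from $\calF_{k-1}$ to $\calF_k$ does not change the space of symmetric monoidal functors into a symmetric monoidal $(\infty,n)$-category $\calC$ with duals. Geometrically, $\calF_k$ is built from $\calF_{k-1}$ by allowing critical points of index $k$ (and birth-death singularities of index $< k$), so I must show that adjoining these new ``generators and relations'' to $\Bord_{n-1}$ imposes no new constraint once $\calC$ has all adjoints. The plan is to identify the new generators and relations explicitly and then check that they are automatically satisfied.

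\textbf{Step 1: Describe the new generators.} First I would analyze, exactly as in Claim \ref{caboose} and Corollary \ref{preswish}, the difference between the lax symmetric monoidal functors $\calB_{k-1}$ and $\calB_{k}$. A $k$-typical framed function differs from a $(k-1)$-typical one by (i) critical points of index exactly $k$, and (ii) birth-death singularities of index exactly $k-1$, which cancel an index $(k-1)$ critical point against an index $k$ critical point. Using the Morse Lemma \ref{ml} and the framing data, each index-$k$ critical point contributes a single $\OO(k)\times\OO(n-k)$-equivariant generator: a handle attachment $\gamma_k\colon D^k\times S^{n-k-1}\to S^{k-1}\times D^{n-k}$, realized as an $n$-morphism in the appropriate $\calB(S^{k-1}\times S^{n-k-1})$. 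So $\calF_k$ is obtained from $\calF_{k-1}$ by freely adjoining this family of $n$-morphisms, subject to the relations coming from birth-death singularities (which assert that a certain composite of two handle attachments equals an identity).

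\textbf{Step 2: Existence — the handle generator exists in $\calC$.} Given a symmetric monoidal functor $Z\colon\calF_{k-1}\to\calC$, I need to produce the image of $\gamma_k$. Here is where I use that $\calC$ has duals (Definition \ref{adjman}): the handle $\gamma_k$, viewed through Morse theory, is built from evaluation/coevaluation-type data for the $k$-disk, which exists because $\calC$ admits adjoints for $k$-morphisms. Concretely, I would express $\gamma_k$ in terms of the data already present: the disk $D^n$ and its boundary sphere decompose via the hemisphere decomposition (as in the proof of Theorem \ref{swisher3}, steps (3)--(8)) into a pair of morphisms that are adjoint to one another, and the handle attachment is obtained by composing with the unit/counit of this adjunction, which exists and is determined up to contractible choice by Lemma \ref{simptom}. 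This shows the forgetful functor is essentially surjective.

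\textbf{Step 3: Fully faithfulness and the relations.} For full faithfulness, and to check that the birth-death relations are automatic, I would argue by induction on $k$ using the $\Omega$-reduction of Example \ref{kubb}: the passage $\calF_{k-1}\to\calF_k$ at the level of the classifying functors $\calB_{k-1}\to\calB_k$, after looping down via $\Omega^{n-2}$, becomes a statement about $(2,1)$-categories or even $(\infty,1)$-categories, where adjoints are unique (Lemma \ref{simptom}, Proposition \ref{spink}) — so any two extensions of a given functor agree up to canonical isomorphism, and the birth-death relation (cancellation of an adjoint pair of handles) holds automatically in any target with the relevant adjoints. The main obstacle, and the step requiring real care, is Step 3: verifying that the \emph{higher} coherences among the birth-death relations — not just the relation itself but the homotopies between the ways of imposing it, and so on up the tower — are all automatically satisfied. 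This is precisely the content of Igusa's framed-function formalism; the reason we work with $\Bord_n^{\frun}$ rather than $\Bord_n$ is that $\FrFun(B)$ organizes exactly these coherences, and the ``uniqueness of adjoints'' principle (suitably homotopy-coherent) kills all of them at once once $\calC$ has duals. I would phrase this via the filtration $\calF_{-1}\to\calF_0\to\cdots$ and reduce to checking that at each stage the relevant space of ``adjunction data'' in $\calC$ is contractible, which follows from the contractibility of the space of choices witnessing an adjunction in an $(\infty,n)$-category with duals.
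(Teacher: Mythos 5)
Your overall shape is correct — identify the new generators ($\alpha_k$, a handle of index $k$) and relations (birth-death cancellation against $\alpha_{k-1}$), then argue that in a target $\calC$ with duals these constraints are automatically satisfied — and this matches Claims~\ref{caboose}, \ref{laboose} and Proposition~\ref{simp}. But you've located the difficulty in the wrong place and consequently missed the key technical ingredient the paper introduces to resolve it.

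After Proposition~\ref{simp}, the entire content of Lemma~\ref{lak2} collapses to a single assertion: given $Z_0\colon\calF_{k-1}\to\calC$, the $2$-morphism $u = Z_0(\alpha_{k-1})\colon\id_x\to g\circ f$ in the $(\infty,2)$-category $\calC' = \Omega^{n-2}_{Z_0(S^{k-2}\times S^{n-k-1})}\calC$ is \emph{automatically} the unit of an adjunction between $f$ and $g$ (this is Proposition~\ref{lak3}). Given that, the extension to $\calF_k$ exists and is unique. Your Step~3 tries to handle this by invoking ``uniqueness of adjoints'' and ``contractibility of the space of adjunction data,'' but this addresses the wrong question: the issue is not whether $f$ \emph{has} a right adjoint (it does, since $\calC$ has duals), nor whether the compatible counit is unique once we know $u$ is a unit (Lemma~\ref{simptom} handles that). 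The issue is that $u$ is a fixed $2$-morphism handed to us by $Z_0$, and we must prove that this particular $u$ is the unit — not that some unit exists. Contractibility of ``adjunction data'' doesn't help if the candidate $u$ is simply the wrong morphism.

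The paper resolves this with the \emph{Exchange Principle} (Lemma~\ref{swath}), a $3$-categorical statement that lets one trade an adjunction condition at index $k-1$ for one at index $k$. Concretely, because $Z_0$ is already defined on $\calF_{k-1}$ (and $k\geq 2$), a related $2$-morphism $\beta$ — encoding the cancellation of handles of indices $k-2$ and $k-1$ — is \emph{known} to be the counit of an adjunction; the Exchange Principle, together with the existence of left adjoints for $2$-morphisms (here is where ``$\calC$ has duals'' actually enters), converts this into the statement that $\gamma$ (encoding the cancellation of handles of indices $k-1$ and $k$) is a unit. This bootstrapping from the $(k-1)$-level relation to the $k$-level relation is exactly why the hypothesis $k\geq 2$ appears and why the case $k=1$ (Lemma~\ref{lak}) requires a genuine nondegeneracy assumption. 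Your $\Omega$-reduction intuition points in roughly the right direction — the paper does work inside $\tau_{\leq 3}\Omega^{n-3}_C\calC$ — but landing in a $3$-category rather than a $(2,1)$- or $(\infty,1)$-category is essential, since the Exchange Principle is irreducibly a $3$-categorical fact. Without it, or something equivalent to it, your Step~3 is a gap.
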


In other words, a functor $Z: \calF_0 \rightarrow \calC$ can be extended (in an essentially
unique fashion) to $\calF_1$ if and only if corresponds to a nondegenerate
$n$-morphism $\eta: {\bf 1} \rightarrow Z(S^{n-1})$. Then, if $\calC$ has duals, we
can extend $Z$ uniquely over each successive step of the filtration
$\calF_1 \rightarrow \calF_2 \rightarrow \ldots \rightarrow \calF_n$.

To prove Lemmas \ref{lak} and \ref{lak2}, we need an analogue of Claim \ref{caboose}
which describes the passage from $\calF_{k-1}$ to $\calF_{k}$ for $0 <  k \leq n$.
In other words, we want to describe how to obtain the functor $\calB_k: \Omega^{n-2} \Bord_{n-1} \rightarrow \Cat_{(\infty,1)}$ is obtained from $\calB_{k-1}$ by adjoining ``generators and relations''.
Once again, this question can be addressed by describing the behavior
of a framed function $f: B \rightarrow [0,1]$ near points $b \in B$ where $f$ is $k$-typical,
but not $(k-1)$-typical. There are two possibilities for the behavior of $f$:
\begin{itemize}
\item[$(1)$] The function $f$ can have a nondegenerate critical point of index
$k$ near the point $b \in B$. In this case, we can choose local coordinates
$x_1, \ldots, x_n$ for $b$ at $B$ so that $f$ admits an expression
$$ f(x_1, \ldots, x_n) = f(b) - x_1^2 - \ldots - x_k^2 + x_{k+1}^{2} + \ldots + x_n^2.$$
Moreover, since $f$ is a {\it framed} function, it comes equipped with a collection
of tangent vectors $$v_1, \ldots, v_k \in T_{B,b}$$ satisfying
$$ H(v_i, v_j) = \begin{cases} 0 & \text{if } i \neq j \\
-2 & \text{if } i = j,\end{cases}$$
where $H$ denotes the Hessian of $f$ at $b$. Without loss of generality, we may assume
that $v_i = \frac{\bd}{\bd x_i}$. The choice of coordinates $x_1, \ldots, x_n$ is not unique.
For example, any element of $\OO(n-k)$ determines a linear coordinate change (fixing each $x_i$ for $i \leq k$) which leaves the function $f$ invariant. However, this is essentially the only source of nonuniqueness: one can show that the group of local coordinate changes which respect both
$f$ and the vectors $v_i$ is homotopy equivalent to $\OO(n-k)$.

\item[$(2)$] The function $f$ can have a birth-death singularity of index
$(k-1)$ near the point $b \in B$. In this case, we can choose local coordinates $x_1, \ldots, x_n$
near $b$ so that $f$ can be written
$$ f(x_1, \ldots, x_n) = f(b) - x_1^2 - \ldots - x_{k-1}^2 + x_k^3 + x_{k+1}^2 + \ldots + x_n^2.$$
Because $f$ is a framed function, it comes equipped with a collection of tangent vectors
$v_1, \ldots, v_{k-1} \in T_{B,b}$ satisfying
$$ H(v_i, v_j) = \begin{cases} 0 & \text{if } i \neq j \\
-2 & \text{if } i = j,\end{cases}$$
where $H$ denotes the Hessian of $f$ at $b$. Without loss of generality, we may assume that
these tangent vectors are given by $v_i = \frac{\bd}{\bd x_i}$. The choice of coordinates is
not unique, but the relevant group of coordinate changes is again homotopy equivalent
to $\OO(n-k)$.
\end{itemize}

We now translate the local pictures described in $(1)$ and $(2)$ into categorical terms.
If $f$ has a nondegenerate critical point at $b \in B$ then we can locally identify $(B,f)$ with the pair
$(U,f_0)$, where $U = \{ (x_1, \ldots, x_n) \in \R^{n}: (x_1^2 + \ldots + x_n^2 \leq 1)
\wedge ( \frac{-1}{2} \leq -x_1^2 - \ldots - x_k^2 + x_{k+1}^{2} + \ldots + x_n^2
\leq \frac{1}{2} ) \}$ and $f_0(x_1, \ldots, x_n) = \frac{1}{2} - x_1^2 - \ldots - x_k^2
+ x_{k+1}^2 + \ldots x_{n}^2$. The pair $(U, f_0)$ determines a $1$-morphism
from $S^{k-1} \times D^{n-k}$ to $D^k \times S^{n-k-1}$ in the $(\infty,1)$-category $\calB_k( S^{k-1} \times S^{n-k-1})$, which we will denote by $\alpha_k$. We can informally summarize the situation as follows: the existence of nondegenerate critical points of index $k$ contributes a $1$-morphism
$\alpha_k$ to $\calB_{k}( S^{k-1} \times S^{n-k-1})$, which is equivariant with respect to the orthogonal
group $\OO(n-k)$. 

\begin{notation}\label{swin}
To avoid unnecessarily cumbersome notation in the arguments which follow, we will identify
$\alpha_{k}$ with its image in $\calB_{k'}( S^{k-1} \times S^{n-k-1} )$ for $k' \geq k$.
We will use this notation also in the case $k=0$, in which case
$\alpha_k$ corresponds to the $1$-morphism described in Claim \ref{caboose}.
\end{notation}

The analogous assertion for birth-death critical points is more complicated. First of all, a generic
generalized Morse function $f: B \rightarrow \R$ will not admit any birth-death critical points at all. Instead birth-death critical points appear generically at isolated points $(b,t)$ for a {\em family} of
generalized Morse functions $\{ f_t: B \rightarrow \R \}_{t \in [-1,1]}$. Suppose we are given such
a family which has a birth-death singularity of index $(k-1)$ at the point $(b,0)$. One can show that
for a suitable choice of local coordinates, we have the formula
$$ f_{t}(x_1, \ldots, x_n) = f_0(b) - x_1^2 - x_2^2 - \ldots - x_{k-1}^2 + x_k^3 - t x_k
+ x_{k+1}^2 + \ldots + x_{n}^2.$$
For $t < 0$, this function has no critical points. For $t > 0$, it has two critical points:
namely, the points where $x_k = \pm \sqrt{ \frac{t}{3}}$ and the other coordinates vanish.
These critical points have index $(k-1)$ (when $x_k$ is positive and the value of the function
$f$ is less than $f(b)$) and index $k$ (when $x_k$ is negative and the value of the function
$f$ is greater than $f(b)$) respectively. We can interpret this family of functions as giving us a
$2$-morphism in $\calB_{k}(S^{n-2})$. To describe the situation more precisely, we need to introduce a bit of notation.

Choose small disjoint open balls $V_{+}, V_{-} \subseteq S^{k-1}$ and
$W_{+}, W_{-} \subseteq S^{n-k}$. Set $$X = (S^{k-1} \times S^{n-k}) - (V_{-} \times W_{-}),$$
so we can regard $X$ as an object of $\calB_{k}( S^{n-2})$ (here we implicitly smooth the corners
of the product disk $\overline{V}_{-} \times \overline{W}_{-}$). We note that
$X - (S^{k-1} \times W_{+})$ can be regarded as a bordism from $\bd X = S^{n-2}$ to
$\bd( S^{k-1} \times \overline{W}_{+}) \simeq S^{k-1} \times S^{n-k-1}$, which we identify with a
$1$-morphism $\delta: S^{k-1} \times S^{n-k-1} \rightarrow S^{n-2}$ in $\Omega^{n-2} \Bord_{n-1}$. This $1$-morphism induces a functor $\delta_{!}: \calB_{k}( S^{k-1} \times S^{n-k-1}) \rightarrow
\calB_{k}( S^{n-2})$. Similarly, we can regard $X - ( V_{+} \times S^{n-k})$ as defining
a $1$-morphism $\epsilon: S^{k-2} \times S^{n-k} \rightarrow S^{n-2}$ in $\Omega^{n-2} \Bord_{n-1}$, which determines a functor $\epsilon_{!}: \calB_{k}( S^{k-2} \times S^{n-k-1})
\rightarrow \calB( S^{n-2})$. In particular, we can apply $\delta_{!}$ to $\alpha_{k-1}$ and
$\epsilon_{!}$ to $\alpha_{k}$, to obtain a diagram
$$ D^{n-1} \stackrel{\delta_{!} \alpha_{k-1}}{\longrightarrow} X \stackrel{\epsilon_{!} \alpha_{k}}{\longrightarrow} D^{n-1}$$
in the $(\infty,1)$-category $\calB_{k}( S^{n-2})$. The composition of these $1$-morphisms
corresponds to a $1$-morphism $(D^{n}, g): D^{n-1} \rightarrow D^{n-1}$ in
$\calB_k( S^{n-2})$, where $g$ is a framed function which is given locally by  
$$ f_{t}(x_1, \ldots, x_n) = f_0(b) - x_1^2 - x_2^2 - \ldots - x_{k-1}^2 + x_k^3 - t x_k
+ x_{k+1}^2 + \ldots + x_{n}^2$$
for positive values of $t$. The existence of such a family connecting $g$ to a function without critical points implies that the composition $\epsilon_{!}(\alpha_{k}) \circ \delta_{!}(\alpha_{k-1})$
is isomorphic to the identity $\id_{ D^{n-1} }$ in $\calB_{k}( S^{n-2})$.

We can now state the higher-index analogue of Claim \ref{caboose}:

\begin{claim}\label{laboose}
For $0 < k \leq n$, the lax symmetric monoidal functor $\calB_{k}: \Omega^{n-2} \Bord_{n-1} \rightarrow \Cat_{(\infty,1)}$ is freely generated from $\calB_{k-1}$ by the following data:
\begin{itemize}
\item[$(1)$] An $\OO(n-k)$-equivariant $1$-morphism
$\alpha_{k}: S^{k-1} \times D^{n-k} \rightarrow D^{k} \times S^{n-k-1}$ in
$\calB_{k}( S^{k-1} \times S^{n-k-1})$ $($corresponding to a handle attachment of index $k${}$)$.
\item[$(2)$] An $\OO(n-k)$-equivariant $2$-morphism
$\beta_{k}: \id_{ D^{n-1}} \simeq \epsilon_{!}(\alpha_k) \circ \delta_{!}( \alpha_{k-1})$
in $\calB_{k}(S^{n-2})$ $($corresponding to cancellation of handles of indices $k$ and $k-1${}$)$.
\end{itemize}
\end{claim}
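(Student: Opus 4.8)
\textbf{Proof proposal for Claim \ref{laboose}.}

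The plan is to establish the universal property by combining the local Morse-theoretic normal forms described above with a parametrized version of Cerf theory adapted to framed functions. Concretely, I would first verify that the assignment $X \mapsto \OHom_{\calB_k(M)}(X,X')$ is built from $\calB_{k-1}$ by freely adjoining $\alpha_k$ and imposing the relation $\beta_k$, and then check that the lax symmetric monoidal structure is itself the free one compatible with this description. The key input is that near a critical point of index $k$ (respectively a birth--death singularity of index $k-1$), a $k$-typical framed function admits a system of local coordinates which is unique up to a space of choices homotopy equivalent to $\OO(n-k)$; this is exactly the rigidity statement recorded in the two enumerated cases preceding the Claim, and it is what forces the generators $\alpha_k$, $\beta_k$ to carry precisely an $\OO(n-k)$-equivariant structure (and no more).

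The argument proceeds in four steps. First I would set up the relevant mapping problem: given a symmetric monoidal $(\infty,1)$-category valued lax functor $\calG$ on $\Omega^{n-2}\Bord_{n-1}$ equipped with a map from $\calB_{k-1}$, an $\OO(n-k)$-equivariant $1$-morphism $\widetilde{\alpha}_k$ lifting $\alpha_k$, and an $\OO(n-k)$-equivariant $2$-morphism $\widetilde{\beta}_k$ lifting $\beta_k$, I want to produce an essentially unique extension $\calB_k \to \calG$. Second, for each bordism $B \in \calB_k(M)$ with a chosen $k$-typical framed function $f$, I would stratify $B$ by the ordered critical values of $f$ and use the normal forms to write $B$ (up to a contractible space of choices) as a composite of handle attachments of index $\le k$, each of which is the image of $\alpha_m$ ($m \le k$) under a functor induced by a bordism of $(n-2)$-manifolds; this produces the required map on $1$-morphisms, well-defined because the space $\Fun(B)$ of admissible functions satisfying the genericity conditions is contractible among its $k$-typical points (using that birth--death singularities of index $<k$ are the only codimension-one degenerations allowed). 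Third, I would handle the higher simplices of the classifying space $\OHom_{\calB_k(M)}(X,X')$: a family of framed functions over a simplex generically acquires isolated birth--death singularities of index $\le k-1$, and each such event is governed, via the explicit family $f_t(x) = f_0(b) - x_1^2 - \cdots - x_{k-1}^2 + x_k^3 - t x_k + x_{k+1}^2 + \cdots + x_n^2$, by the relation $\beta_k$ (for index exactly $k-1$) or by relations already present in $\calB_{k-1}$ (for lower index); this shows the assignment extends over all of $\calB_k(M)$ and is unique. Fourth, I would check compatibility with composition in $M$, with the lax symmetric monoidal structure (disjoint unions of framed functions), and with the $\OO(n-k)$-actions coming from rotating the $x_{k+1},\dots,x_n$ coordinates.

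The main obstacle is the third step: controlling \emph{all} higher homotopies in the mapping spaces $\OHom_{\calB_k(M)}(X,X')$, not merely the $\pi_0$ (handle decompositions) and $\pi_1$ (handle cancellations) levels. This requires knowing that the generic behavior of an arbitrary-parameter family of framed functions is exhausted, up to the relations $\alpha_k$ and $\beta_k$ and the relations inherited from $\calB_{k-1}$, by the two local models above — equivalently, that Igusa's simplicial set of framed functions has the expected local structure, with no new generators or relations appearing in higher codimension. This is precisely the content of Igusa's framed function theory (\cite{igusa}), and the honest proof of the Claim amounts to organizing that theory into the statement that $\FrFun(B)$, filtered by typicality, is assembled cell-by-cell from the generators $\alpha_k$ and relations $\beta_k$. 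I would isolate this as the single geometric input and treat the rest of the argument — the translation into the language of Construction \ref{groth} and Corollary \ref{isa}, and the bookkeeping of equivariance and monoidality — as formal consequences.
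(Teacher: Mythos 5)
The paper does not actually supply a proof of Claim \ref{laboose}; it records the two local normal-form analyses (nondegenerate critical point of index $k$, birth--death singularity of index $k-1$) immediately beforehand and then asserts the Claim and uses it. Your four-step outline is the right skeleton for what such a proof must do, and it matches what the paper's preliminary discussion is gesturing at: read a single $k$-typical framed function as a composite of images of $\alpha_m$ for $m \le k$ by stratifying on critical values, interpret codimension-one birth--death events of index $k-1$ as instances of $\beta_k$ (lower-index ones already live in $\calB_{k-1}$), derive the $\OO(n-k)$-equivariance from the rigidity of the local coordinate system, and then verify compatibility with the lax symmetric monoidal structure.

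The place I would push back is on your final paragraph's attribution. The result of Igusa quoted in this paper (Theorem \ref{conig}) is a \emph{connectivity} estimate for the full space $\FrFun(B)$, and the paper uses it in \S\ref{kumma} to prove Theorem \ref{postwish} -- not to establish Claim \ref{laboose}. What your step (3) actually requires is a different (though related) ingredient from Igusa's framework: a stratification and jet-transversality statement for \emph{parametrized families} of $k$-typical framed functions over simplices, guaranteeing that the generic strata in every positive codimension are governed by the two local models and that no new normal forms (swallowtails, higher cusps, or relations among birth--death trajectories, which do arise for unframed generalized Morse functions in Cerf theory) appear. It is the framing data that is supposed to rule these out, and your phrasing slightly conflates this structural input with the connectivity theorem. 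Relatedly, the formulation ``$\FrFun(B)$, filtered by typicality, is assembled cell-by-cell from $\alpha_k$ and $\beta_k$'' is not quite what the Claim demands: what must be controlled is the lax functor $\calB_k$ -- i.e.\ the spaces of $k$-typical framed functions on varying bordisms, together with composition and disjoint union -- not a single space $\FrFun(B)$ for fixed $B$. With these adjustments the outline is sound, but it shares with the paper the property of leaving the essential geometric input asserted rather than proved.
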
 

We can regard Claim \ref{laboose} as giving descriptions of the lax symmetric monoidal
functors $\calB_k$ by generators and relations. Our next step is to translate these into
descriptions of the $(\infty,n)$-category $\calF_{k}$ by generators and relations. Of course,
there is a naive way to go about this: by construction, $1$-morphisms from $X$ to $X'$ in
$\calB_k(M)$ can be identified with $n$-morphisms from $X: {\bf 1} \rightarrow M$
to $X': {\bf 1} \rightarrow M$ in the $(\infty,n)$-category $\calF_k$. However, other translations
are also possible. The results described in \S \ref{unf}, which assert that
$\calF_k$ is completely determined by the lax symmetric monoidal functor
$\calB_{k}: \Omega^{n-2} \Bord_{n-1} \rightarrow \Cat_{(\infty,1)}$, are possible
precisely because the $(\infty,n)$-category $\calF_k$ packages a great deal of information
in a redundant way. For example, if $K$ is any closed $(n-3)$-manifolds bounding a pair
of $(n-2)$-manifolds $M_{+}$ and $M_{-}$, then $\calB_k(M_{-} \coprod_{K} M_{+})$ can
be identified with the $(\infty,1)$-category $\OHom_{\calC}( M_{-}, M_{+} )$
where $\calC= \bHom_{ \Omega^{n-2} \calF_k}(\emptyset, K)$. We will apply this observation to obtain a more subtle interpretation of the presentation of Claim \ref{laboose}. First, we need to introduce a bit more terminology.

\begin{notation}\label{scaz}
Let $\calC$ be a symmetric monoidal $(\infty,n)$-category, and let
$K$ be an object of $\Omega^{m-1} \calC$, where $m < n$. We
let $\Omega^{m}_{K} \calC$ denote the $(\infty, n-m)$-category
$\OHom_{\Omega^{m-1} \calC}( {\bf 1}, K )$. Note that if
$K$ is the unit object of $\Omega^{m-1} \calC$, then 
$\Omega^{m}_{K} \calC = \Omega^{m} \calC$. We will sometimes use this notation
even when $m = 0$: in this case, we will implicitly assume that $K$ is the unique object of
the delooping $B \calC$ and define $\Omega^{m}_{K} \calC = \calC$.

In particular, if $K$ is a closed $(n-3)$-manifold, then we can consider an
$(\infty,2)$-category $\Omega^{n-2}_{K} \calF_k$. 
We can identify objects of $\Omega^{n-2}_{K} \calF_k$ with $(n-2)$-manifolds having
boundary $K$, and morphisms of $\Omega^{n-2}_{K} \calF_k$ with bordisms between such $(n-2)$-manifolds.
\end{notation}

For $k \geq 1$, we have in particular two objects $$S^{k-2} \times D^{n-k},
D^{k-1} \times S^{n-k-1} \in \Omega^{n-2}_{ S^{k-2} \times S^{n-k-1} } \calF_k$$ which we will
denote by $x$ and $y$, respectively. The disk $D^{n-1} \simeq D^{k-1} \times D^{n-k}$ can be interpreted as both a morphism $f: x \rightarrow y$ and as a morphism $g: y \rightarrow x$ in $\Omega^{n-2}_{S^{k-2} \times S^{n-k-1}} \calF_k$. The $(\infty,1)$-category
$\OHom_{ \Omega^{n-2}_{S^{k-2} \times S^{n-k-1}} \calF_k }(x, x)$ can be identified with
$\calB_k( S^{k-2} \times S^{n-k})$. Under this identification, the identity $1$-morphism
$\id_{x}$ corresponds to the object $S^{k-2} \times D^{n-k+1} \in \calB_k( S^{k-2} \times S^{n-k-1})$, while the composition $g \circ f$ corresponds to the object $D^{k-1} \times S^{n-k}
\in \calB_{k}( S^{k-2} \times S^{n-k-1})$. We may therefore interpret the handle-attachment
$1$-morphism $\alpha_{k-1}: S^{k-2} \times D^{n-k+1} \rightarrow D^{k-1} \times S^{n-k}$
in $\calB_k( S^{k-2} \times S^{n-k-1})$ as giving us a $2$-morphism
$u: \id_{x} \rightarrow g \circ f$ in the $(\infty,2)$-category $\Omega^{n-2}_{S^{k-2} \times S^{n-k-1}} \calF_k$.

Using the same reasoning, we obtain an equivalence of $(\infty,1)$-categories
$$\OHom_{ \Omega^{n-2}_{S^{k-2} \times S^{n-k-1}} \calF_k }(y, y) \simeq \calB_k( S^{k-1} \times S^{n-k}).$$ Under this equivalence, the identity map
$\id_y$ corresponds to the object $D^{k} \times S^{n-k-1}$, while the
composition $f \circ g$ corresponds to the object $S^{k-1} \times D^{n-k}$.
The $1$-morphism $\alpha_k: S^{k-1} \times D^{n-k} \rightarrow
D^{k} \times S^{k-1}$ in $\calB_k( S^{k-1} \times S^{n-k})$ corresponds to
a $2$-morphism $v: f \circ g \rightarrow \id_{y}$ in the $(\infty,2)$-category
$\Omega^{n-2}_{ S^{k-2} \times S^{n-k-1}} \calF_k$. 

Finally, we observe that there is an equivalence of $(\infty,1)$-categories
$$\OHom_{ \Omega^{n-2}_{S^{k-2} \times S^{n-k-1}} \calF_k }(x, y) \simeq \calB_{k}( S^{n-2}).$$ Under this equivalence, the map $f$ corresponds to the
$(n-1)$-disk $D^{n-1} \in \calB_{k}( S^{n-2})$, while the composition $f \circ g \circ f$
corresponds to the $(n-1)$-manifold $X = S^{k-1} \times S^{n-k} - V_{-} \times W_{-}$.
The $1$-morphisms $\epsilon_{!}( \alpha_k)$ and $\delta_{!}( \alpha_{k-1})$ appearing
in the statement of Claim \ref{laboose} correspond to the $2$-morphisms
$$f \stackrel{\id_f \times u}{\longrightarrow} f \circ g \circ f \quad \quad f \circ g \circ f \stackrel{v \times \id_f}{\longrightarrow} f$$
induces by $u$ and $v$. Consequently, the $2$-morphism $\beta_{k}: \id_{ D^{n-1}} \simeq \epsilon_{!}(\alpha_k) \circ \delta_{!}( \alpha_{k-1})$ can be regarded as an isomorphism 
$\gamma: \id_{f} \simeq (v \times \id_f) \circ (\id_f \times u)$ between $2$-morphisms of 
$\Omega^{n-2}_{S^{k-2} \times S^{n-k-1}} \calF_k$. In particular, the existence of
$\gamma$ implies that $v$ is upper compatible with $u$ in the homotopy $2$-category
$\tau_{\leq 2} \Omega^{n-2}_{ S^{k-2} \times S^{n-k-1}} \calF_k$ (see the discussion preceding
Lemma \ref{simptom}). 

Applying the same reasoning with the roles of $x$ and $y$ switched, we
deduce that there exists another $2$-morphism $v': f \circ g \rightarrow \id_y$ which is lower
compatible with $u$. It is not clear immediately that $v = v'$ (this is a bit subtle if we keep careful track of the framed functions), but Lemma \ref{simptom} implies that $v$ is isomorphic to $v'$, so
that $u$ is the unit of an adjunction between $u$ and $v$ in $\Omega^{n-2}_{ S^{k-2} \times S^{n-k-1}} \calF_k$.

Now, if we assume that $u: \id_{x} \rightarrow g \circ f$ is the unit of an adjunction
in $\Omega^{n-2}_{ S^{k-2} \times S^{n-k-1}} \calF_{k}$, then it
has a compatible counit $v_0: f \circ g \rightarrow \id_{y}$. For any map
$v: f \circ g \rightarrow \id_{y}$, giving an isomorphism 
$\gamma: \id_{f} \simeq (v \times \id_f) \circ (\id_f \times u)$ is equivalent to giving an
isomorphism $v_0 \simeq v$. Consequently, the pair $(v,\gamma)$ is uniquely determined
up to isomorphism. We can summarize our discussion as follows:

\begin{proposition}\label{simp}
Let $\calC$ be a symmetric monoidal $(\infty,n)$-category, let $1 \leq k \leq n$ and let $Z_0: \calF_{k-1} \rightarrow \calC$ be a symmetric monoidal functor. Let $C = Z_0( S^{k-2} \times S^{n-k-1})$
and let $\calC'$ denote the $(\infty,2)$-category $\Omega^{n-2}_{C} \calC$. Applying
$Z_0$ to $D^{k-1} \times S^{n-k-1}$, $S^{k-2} \times D^{n-k}$, and $D^{n-1}$, we obtain
objects $x,y \in \calC'$ and $1$-morphisms $f: x \rightarrow y$, $g: y \rightarrow x$. Moreover,
applying $Z_0$ to $\alpha_{k-1}$, we obtain a $2$-morphism $u: \id_{x} \rightarrow g \circ f$.
The functor $Z_0$ can be extended to a symmetric monoidal functor $Z: \calF_k \rightarrow \calC$
if and only if $u$ is the unit for an adjunction between $g$ and $f$ in $\calC'$. Moreover, if this
extension exists, then it is unique up to canonical isomorphism.
\end{proposition}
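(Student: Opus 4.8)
The plan is to deduce Proposition \ref{simp} from Claim \ref{laboose} together with the interpretive dictionary built up in the paragraphs preceding the statement. First I would record the setup: by Claim \ref{laboose}, the functor $\calB_k$ is freely generated from $\calB_{k-1}$ by the data $(\alpha_k, \beta_k)$, and by Corollary \ref{isa} this is the same as saying that $\calF_k$ is freely generated from $\calF_{k-1}$ by the corresponding $n$-morphisms and $(n+1)$-morphisms. Hence giving a symmetric monoidal functor $Z \colon \calF_k \to \calC$ extending $Z_0 \colon \calF_{k-1} \to \calC$ is equivalent to giving images in $\calC$ of the two generators $\alpha_k$ and $\beta_k$, subject to no further relations. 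The task is then to translate ``a choice of image of $\alpha_k$ and $\beta_k$'' into the single condition that $u$ be the unit of an adjunction.

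The key step is the change-of-viewpoint already carried out in the excerpt: working inside $\calC' = \Omega^{n-2}_{C} \calC$, with $x = Z_0(D^{k-1}\times S^{n-k-1})$, $y = Z_0(S^{k-2}\times D^{n-k})$, $f = Z_0(D^{n-1})\colon x\to y$, $g = Z_0(D^{n-1})\colon y\to x$, and $u = Z_0(\alpha_{k-1})\colon \id_x \to g\circ f$ (all of which are determined by $Z_0$ alone, since $\alpha_{k-1}$ already lives in $\calF_{k-1}$). Under the equivalences $\OHom_{\calC'}(y,y)\simeq$ (image of $\calB_k(S^{k-1}\times S^{n-k})$) and $\OHom_{\calC'}(x,y)\simeq$ (image of $\calB_k(S^{n-2})$), the image of the generator $\alpha_k$ is precisely a $2$-morphism $v\colon f\circ g\to \id_y$ in $\calC'$, and the image of the generator $\beta_k$ is precisely an isomorphism $\gamma\colon \id_f \simeq (v\times\id_f)\circ(\id_f\times u)$, i.e.\ a witness that $v$ is upper compatible with $u$. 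So an extension $Z$ of $Z_0$ to $\calF_k$ is the same as a pair $(v,\gamma)$ where $v\colon f\circ g\to\id_y$ and $\gamma$ exhibits $v$ as upper compatible with $u$.

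Next I would run the symmetry argument. Switching the roles of $x$ and $y$ in the same geometric picture (equivalently, using the alternative description of $X$ as $f\circ g\circ f$ read from the other side), the same data $(\alpha_k,\beta_k)$ also produces a $2$-morphism $v'\colon f\circ g\to\id_y$ which is \emph{lower} compatible with $u$. By Lemma \ref{simptom}, applied in the homotopy $2$-category $\htt{\calC'}$, the existence of a lower-compatible $v'$ and an upper-compatible $v$ forces $v\simeq v'$ and shows $u$ is the unit of an adjunction between $f$ and $g$ in $\calC'$. This proves the ``only if'' direction: the existence of an extension $Z$ implies $u$ is an adjunction unit. For the ``if'' direction, suppose $u$ is the unit of an adjunction in $\calC'$; then there is a compatible counit $v_0\colon f\circ g\to\id_y$, uniquely determined up to canonical isomorphism, and for any $v$ an isomorphism $\gamma\colon \id_f\simeq (v\times\id_f)\circ(\id_f\times u)$ is the same as an isomorphism $v\simeq v_0$; hence the pair $(v,\gamma)$ exists and is unique up to isomorphism, which by the universal property of the presentation in Claim \ref{laboose} gives an extension $Z$, unique up to canonical isomorphism.

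The main obstacle I expect is bookkeeping rather than conceptual: making the dictionary between the generators $(\alpha_k,\beta_k)$ of Claim \ref{laboose} and the $2$-morphisms $(u,v,\gamma)$ of $\calC' = \Omega^{n-2}_{S^{k-2}\times S^{n-k-1}}\calF_k$ genuinely watertight, including keeping track of the $\OO(n-k)$-equivariance (which is carried along but plays no role in the adjunction statement itself) and, as the excerpt flags, the framed-function data attached to the relevant disks — it is ``a bit subtle'' to see directly that $v = v'$ on the nose, which is exactly why Lemma \ref{simptom} is invoked to get $v\simeq v'$ instead. A secondary point requiring care is the passage from the lax-monoidal-functor presentation (Claim \ref{laboose}, about $\calB_k$) to the $(\infty,n)$-categorical presentation (about $\calF_k$) via Corollary \ref{isa}, and checking that ``free generation'' is preserved and that mapping into an arbitrary symmetric monoidal $\calC$ (not assumed to have duals) is legitimate at this stage — the hypotheses on $\calC$ enter only when one later wants the adjunction to actually exist, which is precisely the content of Lemmas \ref{lak} and \ref{lak2} downstream.
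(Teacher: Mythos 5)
Your proposal is correct and follows essentially the same route as the paper: the paper's ``proof'' of Proposition~\ref{simp} is the discussion immediately preceding it, which assembles exactly the dictionary you describe — $\alpha_k \mapsto v$, $\beta_k \mapsto \gamma$ witnessing upper compatibility, the symmetry argument producing a lower-compatible $v'$, Lemma~\ref{simptom} to conclude $u$ is a unit, and the uniqueness of $(v,\gamma)$ given an adjunction unit for the converse. You have also correctly flagged the genuine subtleties (the $\OO(n-k)$-equivariance, the framed-function bookkeeping underlying ``$v$ is only isomorphic to $v'$,'' and the passage through Corollary~\ref{isa} with $\calC$ not assumed to have duals), all of which match the paper's treatment.
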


In the special case where $k=1$, Proposition \ref{simp} reduces to the statement of Lemma \ref{lak}.
Lemma \ref{lak2} follows immediately from Proposition \ref{simp} and the following:

\begin{proposition}\label{lak3}
Let $\calC$ be a symmetric monoidal $(\infty,n)$-category with duals, let
$2 \leq k \leq n$, and let $Z_0: \calF_{k-1} \rightarrow \calC$ be a symmetric monoidal
functor. Let $\calC'$, $x$, $y$, $f$, $g$, and $u: \id_{x} \rightarrow g \circ f$ be as
in Proposition \ref{simp}. Then $u$ is the unit of an adjunction between $g$ and $f$ in the
$(\infty,2)$-category $\calC'$. 
\end{proposition}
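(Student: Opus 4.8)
The key point is that in an $(\infty,n)$-category with duals, the $2$-morphism $u\colon \id_x \to g\circ f$ exhibited by a handle-attachment can always be upgraded to an adjunction, because the relevant manifolds admit \emph{their own} handle decompositions which supply the missing counit. The plan is to reduce the claim to Lemma \ref{simptom}: it suffices to produce a $2$-morphism $v\colon f\circ g \to \id_y$ which is upper compatible with $u$, together with a $2$-morphism $v'\colon f\circ g\to\id_y$ which is lower compatible with $u$, all computed inside the homotopy $2$-category $\htt{\calC'}$. The discussion preceding this statement already carries out most of this program inside $\calF_k$ itself: applying $Z_0$ to $\alpha_k$ (a handle attachment of index $k$, which exists inside $\calF_{k-1}$ as soon as $k-1\geq 1$, i.e. $k\geq 2$) produces a candidate counit $v$, and applying $Z_0$ to $\beta_k$ (the handle-cancellation $2$-morphism of indices $k$ and $k-1$) produces an isomorphism $\gamma\colon \id_f \simeq (v\times\id_f)\circ(\id_f\times u)$, which says precisely that $v$ is upper compatible with $u$. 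Running the same construction with the roles of $x$ and $y$ interchanged (equivalently, using the handle-cancellation read from the opposite side) yields $v'$ lower compatible with $u$. By Lemma \ref{simptom}, $v\simeq v'$ and $u$ is the unit of an adjunction.

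The work, then, is mostly bookkeeping: verifying that $\alpha_k$ and $\beta_k$ genuinely live in $\calF_{k-1}$ (not just in $\calF_k$) when $k\geq 2$. This is where the hypothesis $k\geq 2$ is essential and where the hypothesis that $\calC$ \emph{has duals} does real work. The morphism $\alpha_k$ is a handle attachment of index $k$, but under the identifications of Notation \ref{scaz} — passing to the $(\infty,2)$-category $\Omega^{n-2}_{S^{k-2}\times S^{n-k-1}}\calF_k$ — the object $\alpha_k$ gets reinterpreted as a \emph{lower-index} generator relative to a shifted basepoint, and in particular as a generator already present in $\calB_{k-1}$ of a suitable looping. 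Concretely, I would argue that $\alpha_k\colon S^{k-1}\times D^{n-k}\to D^k\times S^{n-k-1}$, when viewed relative to the $(n-3)$-manifold $S^{k-2}\times S^{n-k-1}$, is the $2$-morphism $v$ dual to the index-$(k-1)$ handle $\alpha_{k-1}$, and the existence of its image under $Z_0$ requires exactly that the relevant $1$-morphism of $\calC'$ admit an adjoint — which is guaranteed because $\calC$ has adjoints for all $j$-morphisms, $0<j<n$, hence $\Omega^{n-2}_C\calC$ has adjoints for $1$-morphisms and duals for objects. I would make this precise by chasing through the definitions of $\delta_!$ and $\epsilon_!$ in Claim \ref{laboose} and observing that the composite $\epsilon_!(\alpha_k)\circ\delta_!(\alpha_{k-1})$ is built entirely from index-$\leq(k-1)$ data once we allow ourselves to dualize, since dualizing converts an index-$k$ handle (reading "upward") into an index-$(n-k)$ handle, but the relevant categorical direction is governed by $k-1$.

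The main obstacle, and the step I expect to require the most care, is the precise matching of adjunctions and orientations in Notation \ref{scaz}: I must check that the $2$-morphism $v$ produced from $\alpha_k$ is genuinely a morphism $f\circ g\to\id_y$ in $\calC'$ (with the correct variance) and that $\beta_k$ really witnesses upper — as opposed to lower — compatibility. There is a genuine subtlety here, flagged already in the surrounding text ("this is a bit subtle if we keep careful track of the framed functions"): the framed function decorating $D^n$ in the composite $\epsilon_!(\alpha_k)\circ\delta_!(\alpha_{k-1})$ may not literally equal the framed function on the identity $\id_{D^{n-1}}$, but $\beta_k$ supplies an isotopy between them in $\FrFun(D^n)$, and I would need to confirm that this isotopy is exactly the data of the isomorphism $\gamma$. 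Once the variance bookkeeping is pinned down, the argument is complete: $v$ upper compatible and $v'$ lower compatible with $u$ force, via Lemma \ref{simptom}, that $u$ is a unit, and by Proposition \ref{simp} the extension $Z$ of $Z_0$ over $\calF_k$ exists and is unique up to canonical isomorphism, which is what Lemma \ref{lak2} needs.
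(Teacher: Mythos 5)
Your proposal fails at the decisive step: the claim that $\alpha_k$ and $\beta_k$ ``genuinely live in $\calF_{k-1}$ (not just in $\calF_k$) when $k \geq 2$'' is false, and no bookkeeping through Notation \ref{scaz} can repair it. By Claim \ref{laboose}, $\alpha_k$ and $\beta_k$ are exactly the generators \emph{freely adjoined} in passing from $\calB_{k-1}$ to $\calB_k$; equivalently, the underlying framed functions have critical points of index $k$ (resp.\ birth--death of index $k-1$), so they are $k$-typical but not $(k-1)$-typical. Reinterpreting $\alpha_k$ as a $2$-morphism of $\Omega^{n-2}_{S^{k-2}\times S^{n-k-1}}\calF_k$ via Notation \ref{scaz} changes which mapping object it lives in, not which bordism it is; it remains outside the image of $\calF_{k-1}\to\calF_k$. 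The inequality you invoke is the wrong one: $\alpha_k$ would belong to $\calF_{k-1}$ only if $k\leq k-1$, which never holds. Consequently $Z_0(\alpha_k)$ and $Z_0(\beta_k)$ are undefined, and the candidate counit $v$ and the witness $\gamma$ of upper compatibility simply cannot be produced by applying $Z_0$. The Lemma \ref{simptom} argument you describe is exactly the discussion preceding Proposition \ref{simp}, and it does succeed inside $\calF_k$; but that is an argument about $\calF_k$, not about an arbitrary target $\calC$ with duals, and it cannot be transported through a functor whose domain is only $\calF_{k-1}$. Your secondary suggestion --- construct $v$ by dualizing in $\calC$ rather than by applying $Z_0$ --- produces \emph{some} counit for $f$, but gives you no control over whether it is upper or lower compatible with the specific $u = Z_0(\alpha_{k-1})$, which is precisely what $\beta_k$ was supposed to witness.

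The paper's proof takes a genuinely different route: it descends one categorical level and invokes the Exchange Principle (Lemma \ref{swath}) rather than Lemma \ref{simptom}. Working in the homotopy $3$-category $\calD = \tau_{\leq 3}\Omega^{n-3}_{Z_0(S^{k-3}\times S^{n-k-1})}\calC$, the inputs it needs --- the index-$(k-1)$ and index-$(k-2)$ handle data giving $u$ and $u'$, and the $3$-morphism $\alpha$ coming from $\alpha_{k-1}$ --- all already live in $\calF_{k-1}$, so $Z_0$ applies. The hypothesis $k\geq 2$ guarantees that index $k-2\geq 0$ makes sense; the hypothesis that $\calC$ has duals is used to verify condition $(3)$ of Lemma \ref{swath}, namely that every $2$-morphism in $\calD$ admits a left adjoint. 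The Exchange Principle then trades the counit property of $\beta$ (which follows from adjunction data already present in $\calF_{k-1}$, namely the cancellation of handles of indices $k-2$ and $k-1$) for the desired unit property of $\gamma$. In short, the counit for $u$ cannot be imported through $Z_0$ because it does not exist in $\calF_{k-1}$; it has to be manufactured from the duals in $\calC$, and matching that manufactured counit to the specific $u$ is exactly what the Exchange Principle accomplishes.
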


The proof of Proposition \ref{lak3} rests on the following bit of category theory:

\begin{lemma}[Exchange Principle]\label{swath}
Let $f: x \rightarrow y$ and $f^{\dag}: y \rightarrow x$ be $1$-morphisms in a $3$-category $\calD$.
Let $u: \id_{x} \rightarrow f^{\dag} \circ f$ and $u': \id_{y} \rightarrow f \circ f^{\dag}$ be
$2$-morphisms in $\calD$, and let $\alpha: (\id_{f^{\dag}} \times u') \rightarrow (u \times \id_{f^{\dag}})$
be a $3$-morphism between the $2$-morphisms
$(\id_{f^{\dag}} \times u'), (u \times \id_{f^{\dag}}): f^{\dag} \rightarrow f^{\dag} \circ f \circ f^{\dag}$. Assume that:
\begin{itemize}
\item[$(1)$] The $2$-morphism $u$ exhibits $f^{\dag}$ as a right adjoint of $f$. In particular, there
exists a compatible counit map $v: f \circ f^{\dag} \rightarrow \id_{y}$ which determines an
bijection $\Hom(\id_{f^{\dag}} \times u', u \times \id_{f^{\dag}}) \simeq
\Hom( u' \circ v, \id_{f \circ f^{\dag}})$; we let $\beta$ denote the image of $\alpha$ under this bijection.

\item[$(2)$] The $2$-morphism $u'$ exhibits $f^{\dag}$ as a left adjoint of $f$. In particular, there exists a compatible counit map $v': f^{\dag} \circ f \rightarrow \id_{x}$ which determines a bijection $\Hom(  \id_{f^{\dag}} \times u', u \times \id_{f^{\dag}}) \simeq
\Hom( \id_{f^{\dag} \circ f}, u \circ v')$; let $\gamma$ denote the image of $\alpha$ under this bijection.

\item[$(3)$] The $2$-morphisms $u$ and $v$ both admit left adjoints.
\end{itemize}
Then $\beta: u' \circ v \rightarrow \id_{f \circ f^{\dag}}$ is the counit of an adjunction between
$u'$ and $v$ if and only if $\gamma: \id_{ f^{\dag} \circ f} \rightarrow u \circ v'$ is the unit
of an adjunction between $u$ and $v'$. 
\end{lemma}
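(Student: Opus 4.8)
## Proof Proposal for the Exchange Principle (Lemma \ref{swath})

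The plan is to exploit the symmetry inherent in the data by passing to a higher-categorical ``swap'' operation, in the spirit of the proof of Lemma \ref{simptom} but one level up. Observe first that the hypotheses are almost symmetric under simultaneously exchanging $(u,v,v') \leftrightarrow (u',v',v)$ and $f \leftrightarrow f^{\dag}$; the asymmetry in condition $(3)$ (where we demand left adjoints for $u$ and $v$ rather than for $u'$ and $v'$) will need to be accounted for, and I expect this to be the crux. The strategy is: (i) rephrase the claim ``$\beta$ is the counit of an adjunction between $u'$ and $v$'' and ``$\gamma$ is the unit of an adjunction between $u$ and $v'$'' as statements about invertibility of certain $3$-morphisms in $\calD$, using the general principle (Lemma \ref{simptom}, applied in the appropriate $2$-category of $2$-morphisms) that once a candidate unit/counit is specified, being an adjunction is equivalent to the triangle identities holding, which in turn—given that enough adjoints exist—reduces to invertibility; (ii) show that both invertibility statements are governed by one and the same $3$-morphism.

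First I would set up the bookkeeping carefully. Fix the adjunction data: $u: \id_x \to f^{\dag} f$ with counit $v: f f^{\dag} \to \id_y$ (from $(1)$), and $u': \id_y \to f f^{\dag}$ with counit $v': f^{\dag} f \to \id_x$ (from $(2)$). The $3$-morphism $\alpha: (\id_{f^{\dag}} \times u') \Rightarrow (u \times \id_{f^{\dag}})$ transports, via the two adjunction bijections, to $\beta: u' \circ v \Rightarrow \id_{f f^{\dag}}$ and $\gamma: \id_{f^{\dag} f} \Rightarrow u \circ v'$. The key computational point is that the adjunction bijection $\Hom(\id_{f^{\dag}} \times u',\, u \times \id_{f^{\dag}}) \simeq \Hom(u' \circ v,\, \id_{f f^{\dag}})$ is implemented by whiskering with $u$ and $v$ and invoking the triangle identities for the adjunction $(u,v)$, and similarly for the other bijection using $(u',v')$; thus $\beta$ and $\gamma$ are both obtained from $\alpha$ by explicit (invertible) whiskering operations. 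Consequently $\beta$ is invertible as a $3$-morphism iff $\alpha$ is invertible iff $\gamma$ is invertible. This is the ``exchange'' at the level of $3$-morphisms, and it is where the hypothesis that $\calD$ is a $3$-category (so that $3$-morphisms can be composed and inverted freely) is used.

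Then I would argue that, under the adjointness hypotheses $(3)$, ``$\beta$ is the counit of an adjunction between $u'$ and $v$'' is equivalent to ``$\beta$ is invertible,'' and likewise ``$\gamma$ is the unit of an adjunction between $u$ and $v'$'' is equivalent to ``$\gamma$ is invertible.'' Here is the mechanism: given that $u$ and $v$ admit left adjoints (condition $(3)$), the composites $u' \circ v$ and $\id_{f f^{\dag}}$ live in a $2$-category of $2$-morphisms in which the relevant $1$-morphism ($u'\circ v$, say) has a left adjoint; by Lemma \ref{simptom} (applied in that $2$-category), a $3$-morphism such as $\beta$ specifying a candidate counit is part of an adjunction precisely when it satisfies both triangle-type compatibilities, and since $\beta$ is a map \emph{to} an identity $2$-morphism, this forces $\beta$ to be invertible; conversely an invertible $\beta$ is trivially the counit of an adjunction (Proposition \ref{spink}-style reasoning in the $2$-category of $2$-morphisms). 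The analogous statement for $\gamma$—a map \emph{from} an identity $2$-morphism—follows symmetrically, again using $(3)$ to guarantee the needed left adjoints exist after composing. Combining the three equivalences (``$\beta$ adjunction'' $\Leftrightarrow$ ``$\beta$ invertible'' $\Leftrightarrow$ ``$\alpha$ invertible'' $\Leftrightarrow$ ``$\gamma$ invertible'' $\Leftrightarrow$ ``$\gamma$ adjunction'') yields the lemma.

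The main obstacle, as flagged, is the asymmetry in hypothesis $(3)$: to run the Lemma \ref{simptom} argument on the $\gamma$ side I need left adjoints for the $2$-morphisms $u'$ and $v'$, but we are only handed left adjoints for $u$ and $v$. The resolution should be that left adjoints for $u'$ and $v'$ can be \emph{constructed} from the data we do have—indeed, $v'$ is already a counit making $f^{\dag}$ the \emph{left} adjoint of $f$ via $u'$, and an adjunction $(u',v')$ automatically endows $v'$ with a right adjoint, while the left adjoint of $v'$ can be produced by a further application of the exchange once we know $\calD$ has enough adjoints; alternatively, one checks that the left adjoint of $u$ (which exists by $(3)$) whiskers to give the left adjoint of $u'$ under the identification supplied by $\alpha$, provided $\alpha$ is invertible—but that is circular, so care is needed. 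I would therefore spend the bulk of the write-up isolating exactly which adjoints are needed at each step and verifying that $(3)$, together with the structural adjunctions $(u,v)$ and $(u',v')$, suffices to produce them; I expect this to require one genuinely delicate diagram chase (the passage between the two whiskering descriptions of $\alpha$), with everything else being formal manipulation in a $3$-category.
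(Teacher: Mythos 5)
Your strategy---reducing both sides to the invertibility of a single common datum---is the right idea, and it is also how the paper's proof proceeds; but you have chosen the wrong datum, and this is a genuine gap. The central error is the assertion that the adjunction bijections $\Hom(\id_{f^{\dag}} \times u',\, u \times \id_{f^{\dag}}) \simeq \Hom(u' \circ v,\, \id_{f \circ f^{\dag}})$ (and its analogue producing $\gamma$) preserve invertibility of $3$-morphisms. These bijections change both the source and the target of the morphisms they act on, so an isomorphism on one side need not transpose to an isomorphism on the other (compare the ordinary adjunction bijection $\Hom(FX, Y) \simeq \Hom(X, GY)$: an isomorphism $FX \to Y$ almost never transposes to an isomorphism $X \to GY$). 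The chain ``$\beta$ iso $\Leftrightarrow$ $\alpha$ iso $\Leftrightarrow$ $\gamma$ iso'' therefore fails. The subsidiary claim that ``$\beta$ is a counit of an adjunction iff $\beta$ is invertible'' is also false: a counit $u' \circ v \to \id_{f \circ f^{\dag}}$ need not be invertible just because its target is an identity $2$-morphism, and in the intended applications (e.g.\ inside $\Bord_{n}$) such counits are genuine noninvertible bordisms.

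What the paper does instead, still within your broad strategy, is identify the correct common datum. Using hypothesis $(3)$, the left adjoints $u^L$, $v^L$ of $u$, $v$ furnish a second adjunction exhibiting $f^{\dag}$ as a left adjoint of $f$, with unit $v^L$; comparing with hypothesis $(2)$, the two units differ by an automorphism $S$ of $f$, so that $u' \simeq (S \times \id_{f^{\dag}}) \circ v^{L}$ and dually $v' \simeq u^L \circ (\id_{f^{\dag}} \times S^{-1})$. Because $v$ already has the canonical left adjoint $v^L$, a counit $\beta$ for an adjunction $u' \dashv v$ is equivalent to the datum of an \emph{isomorphism} $u' \simeq v^L$, hence (via the factorization) to an isomorphism $\beta' \colon S \to \id_f$; symmetrically $\gamma$ is encoded by a $3$-morphism $\gamma' \colon \id_f \to S^{-1}$. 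These are the $3$-morphisms whose invertibility governs the two sides, and $\beta'$ and $\gamma'$ correspond under the self-equivalence of $\OHom_{\calD}(f,f)$ given by composition with $S$, which does preserve invertibility. This is the step that plays the role you assigned to the adjunction bijections, and it works precisely because it is an equivalence of $(\infty,2)$-categories rather than a mere bijection of hom-sets.
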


\begin{remark}
In the statement of Lemma \ref{swath}, the assumption that $f^{\dag}$ is both a right and a left adjoint to $f$ is not as strong as it might first appear. Suppose that $f$ admits a right adjoint
$f^{R}$, so we have unit and counit maps $u: \id_{x} \rightarrow f^{R} \circ f$
and $v: f \circ f^{R} \rightarrow \id_{y}$. If $u$ and $v$ admit left adjoints $u^{L}$ and
$v^{L}$, then $u^{L}$ and $v^{L}$ exhibit $f^{R}$ also as a left adjoint to $f$.
\end{remark}

\begin{proof}[Proof of Lemma \ref{swath}]
Let $u^{L}: f^{\dag} \circ f \rightarrow \id_{x}$ and $v^{L}: \id_{y} \rightarrow f \circ f^{\dag}$ be
left adjoints to $u$ and $v$, respectively. Assumption $(1)$ guarantees that $u$ and $v$ are compatible unit and counit maps which exhibit $f^{\dag}$ as a right adjoint to $f$, we conclude that $v^{L}$ and $u^{L}$ are compatible unit and counit maps which exhibit $f^{\dag}$ as left adjoint to $f$. 
We can therefore factor the map $u'$ as a composition
$$ \id_{y} \stackrel{ v^{L} }{\longrightarrow} f \circ f^{\dag} \stackrel{S \times \id_{f^{\dag}}}{\longrightarrow}
f \circ f^{\dag},$$ where $S: f \rightarrow f$ is a $2$-morphism in $\calD$ (which is well-defined up to canonical isomorphism). Assumption $(2)$ guarantees that $S$ is an isomorphism.
The identification $u' \simeq (S \times \id_{f^{\dag}}) \circ v^{L}$ induces an identification
$v' \simeq u^{L} \circ (\id_{f^{\dag}} \times S^{-1})$.

The $3$-morphism $\beta: u' \circ v \rightarrow \id_{ f \circ f^{\dag}}$ is classified by a $3$-morphism
$(S \times \id_{f^{\dag}}) \circ v^{L} \simeq u' \rightarrow v^{L}$, which is in turn determined by a
$3$-morphism $\beta': S \rightarrow \id_{f}$. Similarly, $\gamma: \id_{ f^{\dag} \circ f} \rightarrow u \circ v'$
is classified by a $3$-morphism $u^{L} \rightarrow v' \simeq u^{L} \circ ( \id_{f^{\dag}} \times S^{-1} )$, which is turn determined by a $3$-morphism $\gamma': \id_{f} \rightarrow S^{-1}$. We wish to prove
that $\beta'$ is an isomorphism if and only if $\gamma'$ is an isomorphism. To see this, it suffices
to observe that $\beta'$ is the image of $\gamma'$ under the equivalence of $2$-categories
$\OHom_{\calD}(f,f) \rightarrow \OHom_{\calD}(f,f)$ given by composition with $S$.
\end{proof}

\begin{proof}[Proof of Proposition \ref{lak3}]
Let $\calC$ be an $(\infty,n)$-category with duals, and let $Z_0: \calF_{k-1} \rightarrow \calC$
be a symmetric monoidal functor, where $2 \leq k \leq n$. For the sake of simplicity, we will assume
that $n \geq 4$ (the cases $n=2$ and $n=3$ can be handled by the same method, but require slight changes of notation). Let $C = Z_0( S^{k-3} \times S^{n-k-1}) \in \Omega^{n-4} \calC$, and let
$\calD = \tau_{\leq 3} \Omega^{n-3}_{C} \calC$. Let $x = Z_0( D^{k-2} \times S^{n-k-1})$ and $y = Z_0( S^{k-3} \times D^{n-k})$, regarded as objects of $\calD$. We observe that
$D^{k-2} \times D^{n-k}$ determines morphisms $f: x \rightarrow y$ and $f^{\dag}: y \rightarrow x$
in $\calD$. We have canonical identifications $\id_{x} \simeq Z_0( D^{k-1} \times S^{n-k-1})$
and $f^{\dag} \circ f \simeq Z_0( S^{k-2}, D^{n-k})$, so the product $D^{k-1} \times D^{n-k}$
determines a $2$-morphism $u: \id_{x} \rightarrow f^{\dag} \circ f$ in $\calD$. This
$2$-morphism is the unit an adjunction (the analogous statement is already true in the $3$-category $\Omega^{n-3}_{ S^{k-3} \times S^{n-k-1}} \calF_{k-1}$); let $v$ denote a compatible counit. Similarly, the product $D^{k-2} \times D^{n-k+1}$ determines a $2$-morphism $u': \id_{y} \rightarrow f \circ f^{\dag}$, which is again the unit of an adjunction and therefore has a compatible counit $v'$. Finally, we note that the $1$-morphism $\alpha_{k-1}$ in $\calB_{k-1}( S^{k-2} \times S^{n-k})$ (see the discussion preceding Notation \ref{swin}) determines a $3$-morphism $\alpha: (\id_{f^{\dag}} \times u') \rightarrow (u \times \id_{f^{\dag}} )$ in $\calD$, which induces $2$-morphisms
$\beta: u' \circ v \rightarrow \id_{f \circ f^{\dag}}$ and $\gamma: \id_{f^{\dag} \circ f} \rightarrow u \circ v'$ as in the statement of Lemma \ref{swath}. Since $\calC$ has duals,
every $2$-morphism in $\calD$ has a left adjoint. The discussion preceding Proposition \ref{simp} shows that $\beta$ is the counit of an adjunction (the analogous statement is already true
in the $(\infty,2)$-category $\Omega^{n-2}_{ S^{k-3} \times S^{n-k}} \calF_{k-1}$). It follows from Lemma \ref{swath} that $\beta$ is the unit of an adjunction, as desired. 
\end{proof}

\subsection{Obstruction Theory}\label{kumma}

Our goal in this section is to complete the proof of the cobordism hypothesis by
establishing Theorem \ref{postwish}, which asserts that the forgetful functor
$\Bord_{n}^{\frun} \rightarrow \Bord_{n}$ is an equivalence of (symmetric monoidal)
$(\infty,n)$-categories. 
To prove this, we need a method for testing when a functor between
$(\infty,n)$-categories is an equivalence. In the case $n=0$, we have the following criterion for detecting homotopy equivalences:

\begin{proposition}\label{curwise}
Let $f: X \rightarrow Y$ be a continuous map of CW complexes. Then
$f$ is a homotopy equivalence if and only if the following conditions are satisfied:
\begin{itemize}
\item[$(i)$] The map $f$ induces an equivalence of fundamental groupoids
$\pi_{\leq 1} X \rightarrow \pi_{\leq 1} Y$.
\item[$(ii)$] For every local system of abelian groups $\calA$ on $Y$ and every $n \geq 0$, the induced map on cohomology $\HH^{n}(Y; \calA) \rightarrow \HH^{n}(X; f^{\ast} \calA)$ is an isomorphism.
\end{itemize}
\end{proposition}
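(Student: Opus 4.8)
The statement is a standard fact from obstruction theory, so the goal is to organize the classical ingredients rather than discover anything new; I would present it as a consequence of the relative Hurewicz theorem together with the comparison of singular cohomology and local-coefficient cohomology. First I would reduce to the case where $f$ is a cofibration. Replacing $Y$ by the mapping cylinder $M_f$ and $X$ by its image, we may assume $X \subseteq Y$ is a CW subcomplex; the hypotheses and conclusion are invariant under this replacement since $M_f \simeq Y$ and the inclusion $X \hookrightarrow M_f$ models $f$ up to homotopy equivalence. The ``only if'' direction is immediate: a homotopy equivalence induces an equivalence of fundamental groupoids and isomorphisms on cohomology with arbitrary local coefficients. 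So the content is the ``if'' direction.

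For the ``if'' direction, I would argue that conditions $(i)$ and $(ii)$ force the pair $(Y,X)$ to have trivial relative homotopy groups. Condition $(i)$ gives $\pi_0 X \simeq \pi_0 Y$ and an isomorphism on $\pi_1$ at every basepoint, so $f$ is $1$-connected. Working one path component at a time and passing to universal covers, let $\widetilde{Y} \to Y$ be the universal cover with $\pi = \pi_1(Y)$ acting by deck transformations, and let $\widetilde{X}$ be the preimage of $X$; since $f$ is $1$-connected, $\widetilde{X}$ is connected and is the universal cover of $X$. The key step is to show $\widetilde{X} \hookrightarrow \widetilde{Y}$ is a homology isomorphism. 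Cohomology of $Y$ with coefficients in a $\Z[\pi]$-module, viewed as a local system $\calA$, computes the equivariant (co)homology of $\widetilde{Y}$; dualizing, condition $(ii)$ applied to the family of local systems arising from $\Z[\pi]$-modules (in particular, to $\Hom_{\Z}(C_n(\widetilde{Y}/\widetilde{X};\Z),\Q/\Z)$ and to $\Z[\pi]$ itself via a universal-coefficients/Eilenberg--Zilber argument) yields that $\HH_*(\widetilde{Y},\widetilde{X};\Z) = 0$. Here I would be slightly careful: one uses condition $(ii)$ for coefficients in the modules $\calA = \Hom(H_k(\widetilde{Y},\widetilde{X}),\Q/\Z)$ and a colimit argument over finitely generated subgroups, or invokes the algebraic fact that a chain map of free $\Z[\pi]$-complexes inducing an iso on $H^*(-;\calA)$ for all coefficient modules $\calA$ is a chain homotopy equivalence, hence induces an iso on $H_*$.

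Once $\HH_*(\widetilde{Y},\widetilde{X};\Z) = 0$ and the pair $(\widetilde{Y},\widetilde{X})$ is $1$-connected (indeed simply connected), the relative Hurewicz theorem gives $\pi_k(\widetilde{Y},\widetilde{X}) = 0$ for all $k$, hence $\pi_k(Y,X) = 0$ for all $k \geq 2$ as well (and $\pi_1(Y,X) = 0$, $\pi_0$ being bijective), by the long exact sequence of the pair together with condition $(i)$. Therefore the inclusion $X \hookrightarrow Y$ is a weak homotopy equivalence, and since $X$ and $Y$ are CW complexes, Whitehead's theorem upgrades this to a genuine homotopy equivalence, which proves that the original map $f$ is a homotopy equivalence.

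The main obstacle, and the only place real care is needed, is the passage in the second paragraph from ``isomorphism on $H^*(-;\calA)$ for all local systems $\calA$'' to ``isomorphism on integral homology of the universal covers.'' Naively one wants to plug in $\calA = \Z[\pi]$, but that module is typically infinitely generated, so one must either restrict to finitely generated $\calA$ and take a limit, or phrase the input purely at the level of the cellular $\Z[\pi]$-chain complexes of $\widetilde{X}$ and $\widetilde{Y}$ and use the algebraic lemma that a map of bounded-below complexes of free modules inducing an isomorphism on $\Ext^0$ into every module (equivalently, on $\Hom$ into every module after passing to cohomology) is a homotopy equivalence. I would choose the latter, cleaner route: express $C_*(\widetilde{X})$, $C_*(\widetilde{Y})$ as complexes of free $\Z[\pi]$-modules, note $H^n(Y;\calA) = H^n(\Hom_{\Z[\pi]}(C_*(\widetilde{Y}),\calA))$, deduce from $(ii)$ that the induced map of complexes $C_*(\widetilde{Y}) \to C_*(\widetilde{X})$ becomes a quasi-isomorphism after $\Hom_{\Z[\pi]}(-,\calA)$ for every $\calA$, and conclude it is itself a chain homotopy equivalence of free complexes — hence an integral homology isomorphism on the covers. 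Everything after that is formal.
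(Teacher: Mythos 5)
Your proof is correct, but it follows a genuinely different route from the paper's. The paper argues by corepresentability: it reduces to showing that $f^\ast\colon \bHom(Y,Z) \to \bHom(X,Z)$ is a weak equivalence for every target $Z$, and proves this by induction up the Postnikov tower of $Z$, using hypothesis $(i)$ at the bottom stage and, at each inductive step, the long exact sequence of the fibration $\tau_{\leq n} Z \to \tau_{\leq n-1} Z$ (with fibers Eilenberg--MacLane spaces $K(A,n)$) together with the five lemma, which is where hypothesis $(ii)$ enters. You instead replace $f$ by a cellular inclusion via the mapping cylinder, pass to universal covers one component at a time, convert hypothesis $(ii)$ -- via the algebraic fact that a map of bounded-below complexes of free $\Z[\pi]$-modules inducing a quasi-isomorphism after $\Hom_{\Z[\pi]}(-,\calA)$ for every module $\calA$ is a chain homotopy equivalence -- into an isomorphism $H_\ast(\widetilde{X};\Z) \to H_\ast(\widetilde{Y};\Z)$, and finish with relative Hurewicz and Whitehead. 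Your route is more self-contained and sidesteps the low-degree subtleties the paper flags in its own sketch. The reason the paper nevertheless prefers the Postnikov-tower argument is that it is the one that carries over to the $(\infty,n)$-categorical setting of Proposition \ref{korder}, which is the actual destination of the section: Postnikov truncations (Definition \ref{sap}), cohomology with local coefficients (Definition \ref{swu}), and the small-extension structure of the tower (Claim \ref{kanter}) all have direct higher-categorical analogues, whereas ``universal cover,'' ``relative Hurewicz,'' and ``Whitehead'' do not.
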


In this section we will describe an analogue of Proposition \ref{curwise} in the $(\infty,n)$-categorical setting and apply this analogue to prove Theorem \ref{postwish}.
We begin by sketching a proof of Proposition \ref{curwise} itself.

Suppose that $f: X \rightarrow Y$ is a map of CW complexes satisfying conditions $(i)$ and
$(ii)$ of Proposition \ref{curwise}. We wish to prove that $f$ is a homotopy equivalence. To prove this, we will show the following: for every topological space $Z$, composition with $f$ induces a weak homotopy equivalence of mapping spaces $\phi: \bHom( Y, Z) \rightarrow \bHom(X,Z)$
(in particular, it will follow that $\pi_0 \bHom(Y,Z) \simeq \pi_0 \bHom(X,Z)$, so that
$X$ and $Y$ corepresent the same functor on the homotopy category of topological spaces).
The idea is to break $Z$ up into simple pieces for which the map $\phi$ is easy to analyze.
First, we need to review a few basic ideas from homotopy theory.

\begin{definition}\label{toptrunk}
Let $K$ be a topological space and $n$ a nonnegative integer. We say that
$K$ is {\it $n$-truncated} if $\pi_{i}(K,x)$ vanishes, for every point $x \in K$ and
all $i > n$. We will say that a continuous map $p: Z \rightarrow K$ {\it exhibits $K$ as an $n$-truncation
of $X$} if $K$ is $n$-truncated, and $p$ induces isomorphisms $\pi_i(Z,z) \rightarrow \pi_i(K,f(z))$
for all $z \in Z$ and all $i \leq n$.
\end{definition}

For every topological space $Z$ and every $n \geq 0$, there exists an $n$-truncation
$p: Z \rightarrow K$ of $Z$. Moreover, $K$ is uniquely determined up to weak homotopy equivalence.
We can construct $K$ functorially in $Z$: it can be obtained by successively gluing on
cells of each dimension $m > n+1$ to kill all of the higher homotopy groups of $Z$.
We will generally denote an $n$-truncation of $Z$ by $\tau_{\leq n} Z$.

\begin{example}
Let $X$ be a CW complex. Then there is a continuous map $f: X \rightarrow \pi_0 X$, which collapses every connected component of $X$ to a point. This map exhibits $\pi_0 X$ as a $0$-truncation of $X$.
\end{example}

Allowing the integer $n$ to vary, we can associate to every topological space $Z$ its
{\it Postnikov tower}
$$ \ldots \rightarrow \tau_{\leq n} Z \rightarrow \tau_{\leq n-1} Z \rightarrow \ldots \rightarrow \tau_{\leq 0} Z.$$
The truncations appearing in this tower can be regarded as successively better approximations to the space $Z$. The space $Z$ itself can be recovered (up to weak homotopy equivalence) by forming the homotopy inverse limit of the tower. If $X$ and $Y$ are CW complexes, then the mapping spaces
$\bHom(X,Z)$ and $\bHom(Y,Z)$ can also be recovered (again up to weak homotopy equivalence)
as the homotopy inverse limits of the towers $\{ \bHom(X, \tau_{\leq n} Z) \}_{n \geq 1}$ and
$\{ \bHom(Y, \tau_{\leq n} Z) \}_{n \geq 1}$. Consequently, to prove that a map $f: X \rightarrow Y$
induces a weak homotopy equivalence $\bHom(Y,Z) \rightarrow \bHom(X,Z)$, it will suffice to show that
$f$ induces a weak homotopy equivalence $\bHom(Y, \tau_{\leq n} Z) \rightarrow \bHom( X, \tau_{\leq n}  Z)$ for each $n \geq 1$.

The proof of Proposition \ref{curwise} now proceeds by induction on $n$. When $n=1$, the space
$\tau_{\leq n} Z$ is $1$-truncated: in other words, it is completely determined (up to weak homotopy equivalence) by its fundamental groupoid. The mapping spaces $\bHom(X, \tau_{\leq 1} Z)$ is likewise $1$-truncated: it can be identified with the classifying space of the groupoid of functors 
$\pi_{\leq 1} X \rightarrow \pi_{\leq 1} Z$. Similarly, $\bHom( Y, \tau_{\leq 1} Z)$ is equivalent to the
classifying space of the groupoid of functors from $\pi_{\leq 1} Y$ into $\pi_{\leq 1} Z$. 
Hypothesis $(i)$ of Proposition \ref{curwise} guarantees that $f$ induces an equivalence of fundamental groupoids $\pi_{\leq 1} X \rightarrow \pi_{\leq 1} Y$; it follows that the induced map
$\bHom( Y, \tau_{\leq 1} Z) \rightarrow \bHom(X, \tau_{\leq 1} Z)$ is a weak homotopy equivalence.

Now suppose that $n > 1$, and suppose that the map $\bHom( Y, \tau_{\leq n-1} Z) \rightarrow
\bHom(X, \tau_{\leq n-1} Z)$ is a weak homotopy equivalence. We would like to prove that the map
$\bHom(Y, \tau_{\leq n} Z) \rightarrow \bHom(X, \tau_{\leq n} Z)$ is also a weak homotopy equivalence.
The idea is to take advantage of the fact that the spaces $\tau_{\leq n} Z$ and $\tau_{\leq n-1} Z$
are very similar. Without loss of generality, we may suppose that the map
$\tau_{\leq n} Z \rightarrow \tau_{\leq n-1} Z$ is a fibration; let $F_z$ denote the fiber of this map
taken over a point $z \in \tau_{\leq n-1} Z$. For every point $\overline{z} \in \tau_{\leq n} Z$ lying
over $z$, we obtain a long exact sequence of homotopy groups
$$ \ldots \rightarrow \pi_{k}(F_z, \overline{z}) \rightarrow \pi_{k}( \tau_{\leq n} Z, \overline{z} )
\rightarrow \pi_{k}( \tau_{\leq n-1} Z, z) \rightarrow \pi_{k-1}( F_z, \overline{z}) \rightarrow \ldots$$
It follows that the homotopy groups of $F_z$ are given by the formula
$$ \pi_{k}( F_z, \overline{z} ) \simeq \begin{cases} 0 & \text{if } k \neq n \\
\pi_n( \tau_{\leq n} Z, \overline{z} ) & \text{if } k = n. \end{cases}$$
In particular, each fiber $F_z$ is an Eilenberg-MacLane space $K(A_z,n)$ for some abelian group
$A_z$. The abelian group $A_z$ generally depends on the choice of point $z \in \tau_{\leq n-1} Z$.
However, this dependence is functorial: the construction $z \mapsto A_z$ determines a functor
from the fundamental groupoid $\pi_{\leq 1}(\tau_{\leq n-1} Z) \simeq \pi_{\leq 1} Z$
into the category of abelian groups. In other words, we can view the collection of abelian groups
$\{ A_z \}_{ z \in \tau_{\leq n-1} Z}$ as defining a {\em local system} of abelian groups on
the space $\tau_{\leq n-1} Z$.

The fibration $\tau_{\leq n} Z \rightarrow \tau_{\leq n-1} Z$ induces a fibration
$q: \bHom(Y, \tau_{\leq n} Z) \rightarrow \bHom( Y, \tau_{\leq n-1} Z)$. We can try to use this fibration
to compute the homotopy groups of the mapping space $\bHom( Y, \tau_{\leq n} Z)$. 
However, we must be careful in doing so, because the fibers of $q$ over different elements
of $\bHom(Y, \tau_{\leq n-1} Z)$ are generally different from one another. Let
$\overline{g}: Y \rightarrow \tau_{\leq n} Z$ be a continuous map, and let
$g: Y \rightarrow \tau_{\leq n-1} Z$ be the induced map. Then the fibration $q$ determines
a long exact sequence of homotopy groups
$$ \cdots \rightarrow \pi_{k+1}( \bHom(Y, \tau_{\leq n-1} Z),g) \rightarrow \HH^{n-k}(Y; \calA') \rightarrow
\pi_{k}(\bHom(Y, \tau_{\leq n} Z), \overline{g})
\rightarrow \pi_{k}( \bHom(Y, \tau_{\leq n-1} Z), g) \rightarrow \cdots$$
where $\calA' = g^{\ast} \calA$ denotes the pullback of the local system $\calA$ along
the map $g$. 

The morphism $f: X \rightarrow Y$ induces a map of long exact sequences
$$ \xymatrix{ \pi_{k+1}( \bHom(Y, \tau_{\leq n-1} Z), g) \ar[d] \ar[r]^{\phi_1} & \pi_{k+1}( \bHom(X, \tau_{\leq n-1} Z), g \circ f) \ar[d] \\
\HH^{n-k}(Y; \calA') \ar[d] \ar[r]^{\phi_2} & \HH^{n-k}(X; f^{\ast} \calA') \ar[d] \\
\pi_{k}(\bHom(Y, \tau_{\leq n} Z), \overline{g}) \ar[d] \ar[r]^{\phi_3} & \pi_{k}(\bHom(X, \tau_{\leq n} Z), \overline{g} \circ f) \ar[d] \\
\pi_{k}( \bHom(Y, \tau_{\leq n-1} Z), g) \ar[d] \ar[r]^{\phi_4} & \pi_{k}( \bHom(X, \tau_{\leq n-1} Z), g \circ f) \ar[d]  \\
\HH^{n+1-k}(Y; \calA') \ar[r]^{\phi_5}  & 
\HH^{n+1-k}(X; f^{\ast} \calA').} $$
The inductive hypothesis guarantees that $\phi_1$ and $\phi_4$ are isomorphisms, and
hypothesis $(ii)$ guarantees that $\phi_2$ and $\phi_5$ are isomorphisms. It follows
from the ``five lemma'' that $\phi_3$ is an isomorphism, so that the map
$\bHom( Y, \tau_{\leq n} Z) \rightarrow \bHom(X, \tau_{\leq n} Z)$ is a weak homotopy equivalence as desired.

\begin{warning}
The argument sketched above is not quite complete: we need to take special care with the above long exact sequence for small values of $k$ (where the relevant homotopy groups do not admit group structures). 
\end{warning}

We would now like to prove an analogue of Proposition \ref{curwise} in the setting of
higher category theory. The first step is to find the appropriate generalization of the theory of Postnikov towers.

\begin{definition}\label{sap}
Let $\calC$ be an $(\infty,n)$-category, and let $m \geq n$. We will say that a functor
$f: \calC \rightarrow \calD$ {\it exhibits $\calD$ as an $m$-truncation of $\calC$} if
$\calD$ is an $(m,n)$-category and the following condition is satisfied:
\begin{itemize}
\item[$(\ast)$] For any $(m,n)$-category $\calE$, composition with
$f$ induces an equivalence
$$ \Fun( \calD, \calE) \rightarrow \Fun( \calC, \calE).$$
\end{itemize}
\end{definition}

In other words, Definition \ref{sap} requires that $\calD$ be {\em universal} among
$(m,n)$-categories which admit a functor $\calC \rightarrow \calD$. It is clear that
if an $(\infty,n)$-category $\calC$ admits an $m$-truncation $\calD$, then $\calD$ is uniquely determined up to equivalence. We will denote this $m$-truncation by $\tau_{\leq m} \calC$.
To verify the existence of $\tau_{\leq m} \calC$, we use the following recursive construction:

\begin{construction}
Let $\calC$ be an $(\infty,n)$-category, and let $m \geq n$ be an integer. We will define
an $(m,n)$-category $\tau_{\leq m} \calC$ as follows:
\begin{itemize}
\item[$(a)$] The objects of $\tau_{\leq m} \calC$ are the objects of $\calC$.
\item[$(b)$] Given a pair of objects $X,Y \in \calC$, we define
$\OHom_{\tau_{\leq m} \calC}(X,Y) = \tau_{\leq m-1} \OHom_{\calC}(X,Y).$
\item[$(c)$] The composition of morphisms in $\tau_{\leq m} \calC$ is induced by the
composition of morphisms in $\calC$.
\end{itemize}
\end{construction}

\begin{remark}
More informally, we can describe the truncation $\tau_{\leq m} \calC$ of an $(\infty,n)$-category $\calC$ as follows. For $k < m$, the $k$-morphisms in $\tau_{\leq m} \calC$ are the same as the $k$-morphisms in $\calC$. For $k = m$, the $k$-morphisms in $\tau_{\leq m} \calC$ are {\em isomorphism classes} of
$k$-morphisms in $\calC$. For $k > m$, $\tau_{\leq m} \calC$ has only identity $k$-morphisms.
\end{remark}

\begin{remark}
In the case $n=0$, the notion of $m$-truncation of a topological space (Definition \ref{toptrunk})
and the notion of $m$-truncation of an $(\infty,n)$-category (Definition \ref{sap}) correspond
to one another, under the equivalence of Thesis \ref{cope3}.
\end{remark}

\begin{example}
Let $\calC$ be an $(\infty,n)$-category. Then the truncation $\tau_{\leq n} \calC$ coincides with
the homotopy $n$-category $\hn{\calC}$ described in Remark \ref{camblis}.
\end{example}

It follows from the above discussion that every $(\infty,n)$-category $\calC$ determines a 
{\it Postnikov tower}
$$ \ldots \rightarrow \tau_{\leq n+2} \calC \rightarrow \tau_{\leq n+1} \calC
\rightarrow \tau_{\leq n} \calC \simeq \hn{\calC}.$$ 
As in the topological case, we can recover $\calC$ (up to equivalence) as the homotopy
inverse limit of this tower. In practice, this Postnikov tower is a useful tool because it allows us to reduce questions about the $(\infty,n)$-category $\calC$ to questions about
the $n$-category $\hn{\calC}$ (a much less sophisticated object) and questions about the individual maps $\psi_m: \tau_{\leq m} \calC \rightarrow \tau_{\leq m-1} \calC$. To address the questions of the latter type, we would like to articulate a sense in which $\psi_m$ is close to being an isomorphism. In the case $n=0$, we saw that for $m \geq 2$, the homotopy fibers of $\psi_m$ were Eilenberg-MacLane spaces $K(A_z, m)$, where the functor $z \mapsto A_z$ determines a local system of abelian groups on the
base $\tau_{\leq m-1} \calC$. Our next goal is to formulate the appropriate higher categorical generalizations of these statements.

We will define, for each $(\infty,n)$-category $\calC$, an abelian category
$\Loc(\calC)$ of {\it local systems} (of abelian groups) on $\calC$. This construction will be
functorial in $\calC$: every functor $f: \calC \rightarrow \calD$ between $(\infty,n)$-categories
will induce a pullback functor $f^{\ast}: \Loc(\calD) \rightarrow \Loc(\calC)$. The definition
uses induction on $n$.

\begin{definition}
Let $\calC$ be an $(\infty,n)$-category. If $n=0$, then a {\it local system of abelian groups}
on $\calC$ is a functor from $\calC$ to the $($ordinary$)$ category of abelian groups.
If $n > 0$, then a {\it local system of abelian groups} on $\calC$ consists of the following data:
\begin{itemize}
\item[$(i)$] For every pair of objects $x,y \in \calC$, a local system $\calA_{x,y}$ of abelian
groups on the $(\infty,n-1)$-category $\OHom_{\calC}(x,y)$.
\item[$(ii)$] For every triple of objects $x,y,z \in \calC$, a map of local systems
$$ m_{x,y,z}: p_0^{\ast} \calA_{x,y} \times p_1^{\ast} \calA_{y,z} \rightarrow c^{\ast} \calA_{x,z}.$$
Here $p_0$ and $p_1$ denote the projection maps of 
$\OHom_{\calC}(x,y) \times \OHom_{\calC}(y,z)$ onto $\OHom_{\calC}(x,y)$ and
$\OHom_{\calC}(y,z)$, respectively, and $c$ the composition map
$\OHom_{\calC}(x,y) \times \OHom_{\calC}(y,z) \rightarrow \OHom_{\calC}(x,z)$.
The collection of maps $\{ m_{x,y,z} \}_{x,y,z \in \calC}$ is required to satisfy some natural
associativity conditions which we will not make explicit.
\end{itemize}
\end{definition}

Our next step is to define the cohomology of an $(\infty,n)$-category $\calC$ with coefficients in a local system $\calA \in \Loc(\calC)$. We begin by reviewing the classical case. If $X$ is a CW complex and
$A$ is an abelian group, then the cohomology group $\HH^{m}(X;A)$ can be described as the set
$[ X, K(A,m) ]$ of homotopy classes of maps from $X$ into an Eilenberg-MacLane space $K(A,m)$.
Equivalently, we can describe $\HH^{m}(X;A)$ as the set of homotopy classes of sections of the projection map $p: X \times K(A,m) \rightarrow X$. There is a generalization of this assertion
to the case of cohomology with coefficients in a local system $\calA$ on $X$: in this case, we
need to replace $p$ by a twisted fibration $q: K( \calA, m) \rightarrow X$ with the following properties:

\begin{itemize}
\item[$(a)$] There exists a section $s$ of $q$.
\item[$(b)$] For each $x \in X$, let $F_x$ denote the fiber of the map $q$ over the point $x$.
There exists a collection of isomorphisms
$$ \pi_{k}( F_x, s(x)) \simeq \begin{cases} 0 & \text{if } k \neq m \\
\calA_x & \text{if } k = m,\end{cases}$$
depending naturally on $x$.
\end{itemize}
The fibration $q$ always exists and is determined up to homotopy equivalence by these requirements, and the twisted cohomology group $\HH^{m}(X; \calA)$ can be identified with the set of homotopy classes of sections of $q$. We now present a generalization of this picture to higher category theory:

\begin{definition}\label{swu}
Let $\calC$ be an $(\infty,n)$-category, and let $\calA$ be a local system of abelian groups on $\calC$.
For each $m \geq n$, we will define a new $(\infty,n)$-category $K( \calA, m)$.
Our construction proceeds by induction on $n$. In the case $n=0$, we let $K(\calA, m)$ be defined
as in the preceding discussion. If $n > 0$, then we define $K( \calA, m)$ as follows:
\begin{itemize}
\item[$(1)$] The objects of $K( \calA, m)$ are the objects of $\calC$.
\item[$(2)$] Let $x$ and $y$ be objects of $\calC$, so that $\calA$ determines a local
system of abelian groups $\calA_{x,y}$ on the $(\infty,n-1)$-category
$\OHom_{\calC}(x,y)$. We now define $\OHom_{ K(\calA, m)}(x,y) = K( \calA_{x,y}, m-1)$.
\item[$(3)$] The composition law for morphisms in $K(\calA,m)$ is determined by the composition
of morphisms in $\calC$ (and the structure of $\calA$ as a local system).
\end{itemize}

By construction, the $(\infty,n)$-category $K(\calA, m)$ comes equipped with a forgetful functor
$q: K( \calA, n) \rightarrow \calC$. We let $\HH^{m}( \calC; \calA)$ denote the set of isomorphism
classes of sections of $q$; we refer to $\HH^{m}(\calC; \calA)$ as the {\it $m$th cohomology group of $\calC$ with values in $\calA$}.
\end{definition}

\begin{remark}
It is also possible to define the cohomology groups $\HH^{m}(\calC; \calA)$ for $m < n$.
For example, $\HH^{m-k}(\calC; \calA)$ can be identified with the $k$th homotopy group
of the classifying space for sections of the projection $K( \calA, M) \rightarrow \calC$.
\end{remark}

As the terminology suggests, the set $\HH^{m}(\calC; \calA)$ admits a natural (commutative) group structure, which is induced by the map of local systems $\calA \times \calA \rightarrow \calA$.
In particular, there is a canonical zero object $\HH^{m}(\calC; \calA)$, which corresponds to a
section $s_0: \calC \rightarrow K(\calA, m)$ of the projection map $q: K(\calA, m) \rightarrow \calC$.

\begin{variant}
Let $\calC$ be a symmetric monoidal $(\infty,n)$-category, so that we have a tensor product functor
$T: \calC \times \calC \rightarrow \calC$ which is commutative and associative, up to isomorphism.
We will say that a local system of abelian groups $\calA$ on $\calC$ is {\it multiplicative} if we are
provided with a map of local systems
$$ m: p_0^{\ast} \calA \times p_1^{\ast} \calA \rightarrow T^{\ast} \calA$$
on $\calC \times \calC$, which satisfies some natural commutativity and associativity properties (here $p_0, p_1: \calC \times \calC \rightarrow \calC$ denote the projection
maps). If $\calA$ is a multiplicative local system, then the $(\infty,n)$-categories
$K( \calA, m)$ inherit a symmetric monoidal structure, and the forgetful functor
$q: K(\calA, m) \rightarrow \calC$ preserves this symmetric monoidal structure. We let
$\HH^{m}_{\otimes}(\calC; \calA)$ denote the collection of all isomorphism classes of
{\em symmetric monoidal} sections of $q$. We will refer to $\HH^{m}_{\otimes}(\calC; \calA)$
as the {\it $m$th multiplicative cohomology group of $\calC$ with values in $\calA$}. 
By forgetting the symmetric monoidal structure, we obtain a canonical map of cohomology groups
$\HH^{m}_{\otimes}(\calC; \calA) \rightarrow \HH^{m}(\calC; \calA)$; this map is generally not an isomorphism.
\end{variant}

\begin{example}\label{swinging}
Let $A$ be an abelian group. Then we can regard $A$ as a local system $\calA$ of abelian groups on the trivial $(\infty,n)$-category $\ast$, having only a single object. For each $m \geq n$, the associated local system $K(\calA, m)$ can be identified with the fundamental groupoid of
an Eilenberg-MacLane space $K(A,m)$. 

For any symmetric
monoidal $(\infty,n)$-category $\calC$, we have a canonical functor $f: \calC \rightarrow \ast$, so that
$A$ determines a (multiplicative) local system $f^{\ast} \calA$ on $\calC$. We will refer to local systems on $\calC$ that arise via this construction as {\em constant} local systems on $\calC$. Unwinding the definitions, we deduce that the cohomology groups $\HH^{m}( \calC; f^{\ast} \calA)$ can be identified with the set of isomorphism classes of functors from $\calC$ into $K( \calA, m)$: in other words, the set of homotopy classes of maps of topological spaces from the geometric
realization $| \calC |$ into the Eilenberg-MacLane space $K(A,m)$. Consequently, we recover a canonical isomorphism
$$ \HH^{m}( \calC; f^{\ast} \calA) \simeq \HH^{m}( | \calC |; A).$$

If $\calC$ is endowed with a symmetric monoidal structure, then $f^{\ast} \calA$ is a multiplicative local system on $\calC$, and we can also consider the multiplicative cohomology
$\HH^{m}_{\otimes}( \calC; f^{\ast} \calA)$. If we suppose that every object in $\calC$ is dualizable, then this symmetric monoidal structure allows us to realize the geometric realization
$| \calC |$ as an infinite loop space, so that there is a sequence of topological spaces
$\{ X(n) \}_{n \geq 0}$ such that $X(0) \simeq | \calC |$, $X(n)$ is homotopy equivalent to the loop space of $X(n+1)$ for each $n \geq 0$, and each $X(n)$ is $(n-1)$-connected. Then
$\HH^{m}_{\otimes}( \calC; f^{\ast} \calA)$ can be identified with the set of homotopy classes of
infinite loop maps from $| \calC |$ into $K(A,m)$: in other words, the cohomology groups of
the {\em spectrum} $\{ X(n) \}_{n \geq 0}$ with coefficients in $A$.
\end{example}

Let us now return to our discussion of the Postnikov tower
$$ \ldots \rightarrow \tau_{\leq n+2} \calC \rightarrow \tau_{\leq n+1} \calC
\rightarrow \tau_{\leq n} \calC \simeq \hn{\calC}$$ 
of an $(\infty,n)$-category $\calC$. The maps
$q: \tau_{\leq m} \calC \rightarrow \tau_{\leq m-1} \calC$ bear a resemblance to the projection maps $K( \calA, m) \rightarrow \tau_{\leq m-1} \calC$ described in Definition \ref{swu}: for example, the fiber of
$q$ over any object of $\tau_{\leq m-1} \calC$ can be identified with an Eilenberg-MacLane space
$K(A,m)$, for some abelian group $A$. However, there is one crucial difference: the map $q$
does not necessarily admit a section. We can account for this discrepancy by introducing a
``twisted'' variant on Definition \ref{swu}:

\begin{definition}\label{swu2}
Let $\calC$ be an $(\infty,n)$-category, let $\calA$ be a local system of abelian groups on
$\calC$, and let $m \geq n$. Suppose we are given a pair of sections
$s,s': \calC \rightarrow K( \calA, m+1)$ of the projection map $K(\calA, m+1) \rightarrow \calC$.
We define a new $(\infty,n)$-category $\widetilde{\calC}$ by forming a homotopy pullback square
$$ \xymatrix{ \widetilde{\calC} \ar[r] \ar[d] & \calC \ar[d]^{s} \\
\calC \ar[r]^-{s'} & K(\calA, m+1). }$$
Note that $s$ and $s'$ determine cohomology classes $[s], [s'] \in \HH^{m+1}( \calC; \calA)$.
Up to equivalence, the fiber product $\widetilde{\calC}$ depends only on the difference
$\eta = [s] - [s'] \in \HH^{m+1}(\calC; \calA)$. We will refer to $\widetilde{\calC}$ as the
{\it small extension of $\calC$ determined by $\eta \in \HH^{m+1}(\calC; \calA)$}.
Note that $\widetilde{\calC}$ comes equipped with a canonical forgetful functor
$\widetilde{\calC} \rightarrow \calC$.
\end{definition}

\begin{example}
Let $\calC$ be an $(\infty,n)$-category and $\calA$ a local system of abelian groups on $\calC$, and let $m \geq n$. The zero element $0 \in \HH^{m+1}(\calC; \calA)$ determines a small extension
$\widetilde{\calC}$ of $\calC$, which can be identified with the $(\infty,n)$-category
$K(\calA, m)$ described in Definition \ref{swu}.
\end{example}

\begin{variant}
Suppose that $\calC$ is a symmetric monoidal $(\infty,n)$-category, and that $\calA$ is
a multiplicative local system of abelian groups on $\calC$. Let $m \geq n$, let
$\eta \in \HH^{m+1}_{\otimes}(\calC; \calA)$ be a multiplicative cohomology class, and let
$\overline{\eta}$ denote the image of $\eta$ in $\HH^{m+1}(\calC; \calA)$. Let
$\widetilde{\calC}$ denote the small extension of $\calC$ determines by $\overline{\eta}$. Then
$\widetilde{\calC}$ can be described as a homotopy fiber product of {\em symmetric monoidal}
$(\infty,n)$-categories, and therefore inherits a symmetric monoidal structure (which depends on
$\eta$). 
\end{variant}

The following result guarantees a sufficiently large class of small extensions:

\begin{claim}\label{kanter}
Let $\calC$ be an $(\infty,n)$-category, and let 
$$ \ldots \rightarrow \tau_{\leq n+2} \calC \rightarrow \tau_{\leq n+1} \calC
\rightarrow \tau_{\leq n} \calC \simeq \hn{\calC}$$ 
be its Postnikov tower. Then for each $m > n$, there exists a local system
of abelian groups $\calA_m$ on $\tau_{\leq m} \calC$ and a cohomology class
$\eta_m \in \HH^{m+2}( \calC; \calA_m)$ such that $\tau_{\leq m+1} \calC$ can be identified
with the small extension of $\tau_{\leq m} \calC$ determined by $\eta_m$.

Suppose furthermore that $\calC$ is equipped with a symmetric monoidal structure, and let $m > n$.
Then:
\begin{itemize}
\item[$(1)$] The truncation $\tau_{\leq m} \calC$ inherits a symmetric monoidal structure.
\item[$(2)$] The local system $\calA_m$ inherits a multiplicative structure.
\item[$(3)$] The cohomology class $\eta_m$ has a natural lift to multiplicative cohomology class $\widetilde{\eta}_m \in \HH^{m+2}_{\otimes}( \calC; \calA_m)$.
\item[$(4)$] The identification of $\tau_{\leq m+1} \calC$ with the small extension of 
$\tau_{\leq m} \calC$ determined by $\eta_m$ is compatible with the symmetric monoidal
structure provided by the multiplicative lift $\widetilde{\eta}_m$.
\end{itemize}
\end{claim}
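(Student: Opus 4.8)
The plan is to construct the local system $\calA_m$ and the cohomology class $\eta_m$ by working one hom-$(\infty,n-1)$-category at a time, using induction on $n$ to reduce to the case $n=0$, which is the classical Postnikov-tower construction recalled in the discussion preceding Definition \ref{swu2}. Recall that for a space $Z$ the fibration $\tau_{\leq n} Z \to \tau_{\leq n-1} Z$ has a canonical $k$-invariant living in $\HH^{n+1}(\tau_{\leq n-1} Z; \pi_n Z)$; the claim is the $(\infty,n)$-categorical analogue, with the shift in degrees ($\eta_m \in \HH^{m+2}$ rather than $\HH^{m+1}$) accounting for the fact that an $m$-truncation of an $(\infty,n)$-category has its ``top'' nontrivial homotopy at the level of $m$-morphisms, which, relative to the base $(\infty,n)$-category, is $(m-n)$-dimensional data spread over an $n$-dimensional category.

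First I would define $\calA_m$. For a pair of objects $x,y$ in $\calC$, the mapping object $\OHom_{\calC}(x,y)$ is an $(\infty,n-1)$-category whose $m$-truncation is $\OHom_{\tau_{\leq m}\calC}(x,y)$, and by the inductive hypothesis applied to $\OHom_{\calC}(x,y)$ we obtain a local system $(\calA_m)_{x,y}$ on $\tau_{\leq m-1}\OHom_{\calC}(x,y) = \OHom_{\tau_{\leq m}\calC}(x,y)$ together with a class in $\HH^{(m-1)+2}$ classifying the next stage. The composition maps $m_{x,y,z}$ required in the definition of a local system come from functoriality of the truncation and composition in $\calC$; checking the associativity coherences is routine but tedious, and it is cleanest to package the whole construction as the statement that the functor $\calC \mapsto \tau_{\leq m+1}\calC$ is, fiberwise over $\tau_{\leq m}\calC$, classified by a map into the ``moduli of Eilenberg--MacLane objects,'' which is precisely the data $(\calA_m, \eta_m)$. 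Concretely: the homotopy fiber of $\tau_{\leq m+1}\calC \to \tau_{\leq m}\calC$ over any object is a $K(A, m+1)$-type $(\infty,n)$-category (an Eilenberg--MacLane object in the sense of Definition \ref{swu}, with $A = \pi_{m+1}$ at the relevant morphism level), so the map is a torsor under $K(\calA_m, m+1)$, hence classified by $\eta_m \in \HH^{m+2}(\tau_{\leq m}\calC; \calA_m)$ via Definition \ref{swu2}. (One must check the degree bookkeeping matches: a $K(\calA_m, m+1)$-torsor over an $(\infty,n)$-category is classified by a class in cohomological degree $m+2$, exactly as in the topological case where a $K(A,m)$-fibration over $X$ has $k$-invariant in $\HH^{m+1}(X;A)$.)

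For the symmetric monoidal refinement, the point is that every construction above is natural with respect to the tensor product $T\colon \calC\times\calC\to\calC$. The truncation functor $\tau_{\leq m}$ preserves finite products (it is a localization), so $\tau_{\leq m}\calC$ inherits a symmetric monoidal structure and $T$ induces $\tau_{\leq m}T$; pulling back $\calA_m$ along $T$ and comparing with the external product $p_0^{\ast}\calA_m \times p_1^{\ast}\calA_m$ via the Künneth-type map on Eilenberg--MacLane objects gives the multiplicative structure of part $(2)$. The $k$-invariant $\eta_m$ is then natural in the same sense, which is exactly the assertion that it refines to $\widetilde\eta_m \in \HH^{m+2}_{\otimes}(\calC;\calA_m)$ — one builds $\widetilde\eta_m$ as the obstruction to a symmetric monoidal section of $K(\calA_m, m+1) \to \tau_{\leq m}\calC$ arising from the symmetric monoidal functor $\tau_{\leq m+1}\calC \to \tau_{\leq m}\calC$ — and part $(4)$ is the tautology that the homotopy fiber product defining the small extension, computed in symmetric monoidal $(\infty,n)$-categories, recovers $\tau_{\leq m+1}\calC$ with its tensor structure.

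The main obstacle is making precise the claim that the fiber of $\tau_{\leq m+1}\calC \to \tau_{\leq m}\calC$ is genuinely an Eilenberg--MacLane object and that the whole fibration is a torsor under $K(\calA_m, m+1)$ in a way that is coherent across all morphism levels simultaneously — i.e.\ that the inductively-defined $(\calA_m)_{x,y}$ assemble into an honest local system with all higher associativity data, not just level by level. In the model of $n$-fold complete Segal spaces (Definition \ref{swish2}) this is a statement about simplicial spaces, where $\tau_{\leq m}$ is applied spacewise and one must track that the Segal and completeness conditions are preserved and that the spacewise $k$-invariants are compatible with the simplicial structure maps; the honest verification is a somewhat involved exercise in obstruction theory internal to the category of ($n$-fold simplicial) spaces, and is the reason this is stated as a Claim rather than proved in detail here. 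The symmetric monoidal enhancement adds the further bookkeeping of tracking an $E_\infty$-structure (equivalently a $\Gamma$-object or a functor out of a category of finite pointed sets) through the truncation, but introduces no essentially new difficulty beyond naturality.
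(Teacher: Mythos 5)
The paper states Claim \ref{kanter} without proof (consistent with the expository character of the draft: the claim is used to establish Proposition \ref{korder} but is itself left as an assertion), so there is no argument in the text to compare your proposal against. I will therefore assess your sketch on its own terms.

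Your overall strategy --- reduce to the classical case $n=0$ by working one level of mapping $(\infty,n-1)$-category at a time, using the recursive definitions of local system (which, in the paper, is literally a family of local systems on hom-categories with composition maps $m_{x,y,z}$), of $K(\calA,m)$, and of truncation --- is the natural one and the degree bookkeeping is correct: a section of $K(\calA_m, m+2) \to \tau_{\leq m}\calC$ restricts on $\OHom_{\tau_{\leq m}\calC}(x,y) = \tau_{\leq m-1}\OHom_{\calC}(x,y)$ to a section of $K\bigl((\calA_m)_{x,y}, m+1\bigr) \to \tau_{\leq m-1}\OHom_{\calC}(x,y)$, which is exactly the $\HH^{(m-1)+2}$-class supplied by applying the inductive hypothesis at stage $m-1$ to the $(\infty,n-1)$-category $\OHom_{\calC}(x,y)$. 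You also correctly isolate the genuine content of the claim, namely that these level-by-level $k$-invariants and local systems cohere into a single class $\eta_m$ and a single local system $\calA_m$ across all composition directions and (in the multiplicative case) across the tensor product; this is precisely the part that requires a model and a nontrivial obstruction-theoretic argument rather than a formal induction, and it is almost certainly why the statement is a Claim in the paper rather than a Proposition.

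Two smaller points are worth flagging. First, the parenthetical ``it is a localization'' does not justify that $\tau_{\leq m}$ preserves finite products; arbitrary localizations do not. The fact that truncation preserves finite products of spaces (and, inductively, of $(\infty,n)$-categories) is a separate standard fact, but it needs a direct argument. Second, the description of truncation in the $n$-fold complete Segal space model as being ``applied spacewise'' is suspect: $\tau_{\leq m}$ of a space does not in general commute with the homotopy fiber products appearing in the Segal conditions, so spacewise truncation of a complete $n$-fold Segal space need not model the recursively defined $\tau_{\leq m}\calC$, and one has to be more careful about which model of the truncation one works with. Neither issue affects the soundness of the overall strategy, but both are exactly the kind of thing that would have to be nailed down in a complete proof.
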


Using Claim \ref{kanter}, one can mimic our proof of Proposition \ref{curwise} to obtain the following result:

\begin{proposition}\label{korder}
Let $f: \calD \rightarrow \calD'$ be a symmetric monoidal functor between symmetric monoidal $(\infty,n)$-categories. Then $f$ is an equivalence if and only if the following conditions are satisfied:
\begin{itemize}
\item[$(i)$] The functor $f$ induces an equivalence $\tau_{\leq n+1} \calD \rightarrow \tau_{\leq n+1} \calD'$.  
\item[$(ii)$] For every local system of abelian groups $\calA$ on $\calD'$ and every integer $m$, the functor $f$ induces an isomorphism of multiplicative cohomology groups $\HH^{m}_{\otimes}( \calD'; \calA) \rightarrow \HH^{m}_{\otimes}( \calD; f^{\ast} \calA)$.
\end{itemize}
\end{proposition}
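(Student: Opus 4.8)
The plan is to mimic, step by step, the argument sketched above for Proposition \ref{curwise}, replacing the Postnikov tower of a space by the Postnikov tower of an $(\infty,n)$-category and using Claim \ref{kanter} to control the successive stages. One direction is trivial: if $f$ is an equivalence then it certainly induces an equivalence on $(n+1)$-truncations and an isomorphism on every (multiplicative) cohomology group, since all of these invariants are functorial and equivalence-invariant. So the content is the converse: assuming $(i)$ and $(ii)$, we must show $f: \calD \rightarrow \calD'$ is an equivalence.

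To do this I would show that for every symmetric monoidal $(\infty,n)$-category $\calE$, composition with $f$ induces an equivalence $\Fun^{\otimes}(\calD', \calE) \rightarrow \Fun^{\otimes}(\calD, \calE)$; taking $\calE$ to range over a suitable class (or invoking a Yoneda-type argument as in the remarks following Theorem \ref{swisher3}) then forces $f$ to be an equivalence. Fix such an $\calE$. Using the Postnikov tower $\ldots \rightarrow \tau_{\leq n+2} \calE \rightarrow \tau_{\leq n+1}\calE \rightarrow \hn{\calE}$ and the fact that $\calE$ is recovered as the homotopy inverse limit of its truncations, it suffices to prove that $f$ induces an equivalence $\Fun^{\otimes}(\calD', \tau_{\leq m}\calE) \rightarrow \Fun^{\otimes}(\calD, \tau_{\leq m}\calE)$ for every $m \geq n$. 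The base case $m = n+1$ is exactly hypothesis $(i)$ (together with the observation that a symmetric monoidal functor into an $(m,n)$-category factors through the $m$-truncation of its source). For the inductive step, I would invoke Claim \ref{kanter}: the map $\tau_{\leq m+1}\calE \rightarrow \tau_{\leq m}\calE$ exhibits $\tau_{\leq m+1}\calE$ as the small extension of $\tau_{\leq m}\calE$ determined by a multiplicative cohomology class $\widetilde{\eta}_m \in \HH^{m+2}_{\otimes}(\calE; \calA_m)$, realized by a homotopy pullback square of symmetric monoidal $(\infty,n)$-categories. Applying $\Fun^{\otimes}(\calD, -)$ and $\Fun^{\otimes}(\calD', -)$ to this square produces two homotopy pullback squares, hence (via the associated long exact sequences in homotopy, as in the sketch for Proposition \ref{curwise}) two long exact sequences whose terms are built out of $\pi_{\ast}\Fun^{\otimes}(\calD, \tau_{\leq m}\calE)$, $\pi_{\ast}\Fun^{\otimes}(\calD, K(\calA_m, m+1))$, and the analogues for $\calD'$. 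The map $f$ induces a map between these sequences; the inductive hypothesis handles the $\tau_{\leq m}$ terms, and hypothesis $(ii)$ — applied to the multiplicative cohomology groups $\HH^{\ast}_{\otimes}(-; \calA_m)$ and the relevant pullbacks of $\calA_m$ — handles the $K(\calA_m, m+1)$ terms. The five lemma then gives the equivalence at level $m+1$.

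A few technical points need care. First, $\calA_m$ is a local system on $\tau_{\leq m}\calE$, so to make sense of the cohomology groups $\HH^{\ast}_{\otimes}(\calD; f^{\ast}(\ldots))$ one composes with the functor $\calD \rightarrow \tau_{\leq m}\calE$ classified by the point of $\Fun^{\otimes}(\calD, \tau_{\leq m}\calE)$ under consideration; hypothesis $(ii)$ must be applied fiberwise over that base point, exactly as in the sketch where $\calA' = g^{\ast}\calA$. Second, as flagged in the Warning after the proof of Proposition \ref{curwise}, the long exact sequence argument degenerates for small homotopical degree, where the relevant mapping objects are not groups; there one argues directly with $\pi_0$ and $\pi_1$ of the relevant classifying spaces, using surjectivity and injectivity statements rather than exactness, again copying the $n=0$ template. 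Third, the passage from "$f$ induces an equivalence on $\Fun^{\otimes}(-, \calE)$ for all $\calE$" to "$f$ is an equivalence" should be spelled out: one may either take $\calE$ to be $\calD$ and $\calD'$ themselves and chase identity functors, or — more robustly — observe that the $(\infty,n+1)$-category $\Cat_{(\infty,n)}$ (with its symmetric monoidal refinement) detects equivalences via its mapping objects.

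The main obstacle will be Step (the inductive step) in its homotopical bookkeeping: converting the homotopy pullback square of Claim \ref{kanter} into honest long exact sequences of homotopy groups of symmetric monoidal functor categories, and verifying that the resulting five-lemma diagram commutes with the identifications coming from $(ii)$. This is where the multiplicative refinement of cohomology genuinely matters — an arbitrary (non-monoidal) cohomology isomorphism would not suffice, because the extension classes $\widetilde{\eta}_m$ live in $\HH^{\ast}_{\otimes}$ and the squares we apply $\Fun^{\otimes}$ to are squares of symmetric monoidal $(\infty,n)$-categories. Establishing that $\Fun^{\otimes}(\calD, -)$ carries the homotopy pullback square of Claim \ref{kanter} to a homotopy pullback square, with the fiber correctly identified as a space of multiplicative cocycle data, is the crux; everything else is a faithful transcription of the classical Postnikov/obstruction-theory argument recalled at the start of the section.
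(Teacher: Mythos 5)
The paper gives no detailed proof of Proposition \ref{korder}; it simply states that ``Using Claim \ref{kanter}, one can mimic our proof of Proposition \ref{curwise} to obtain the following result,'' and your proposal is precisely a careful unfolding of that instruction --- the reduction to equivalences of symmetric monoidal functor categories, the induction up the Postnikov tower with base case $m=n+1$, the small extensions from Claim \ref{kanter} feeding a five-lemma argument, and the caveats about why multiplicative cohomology is needed and about low homotopical degree all coincide with what the paper intends. This is the same approach, executed correctly.
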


\begin{remark}
It is convenient to restate hypothesis $(ii)$ of Proposition \ref{korder} in terms of 
{\em relative} cohomology groups. Suppose given a symmetric monoidal functor
$f: \calD \rightarrow \calD'$ between symmetric monoidal $(\infty,n)$-categories, and let
$\calA$ be a multiplicative local system on $\calD$. For $m \geq n$, we let
$\HH^{m}_{\otimes}( \calD', \calD; \calA)$ denote the set of isomorphism classes of symmetric monoidal sections $s$
of the projection $K( \calA, m) \rightarrow \calD'$ such that $s \circ f$ is identified with the zero section.
These relative cohomology groups can in fact be defined for {\em all} integers $m$, and fit into a long exact sequence
$$ \ldots \rightarrow \HH^{m-1}_{\otimes}( \calD; f^{\ast} \calA)
\rightarrow \HH^{m}_{\otimes}( \calD', \calD; \calA) \rightarrow
\HH^{m}_{\otimes}( \calD'; \calA) \rightarrow \HH^{m}_{\otimes}( \calD; f^{\ast} \calA)
\rightarrow \HH^{m+1}_{\otimes}( \calD', \calD; \calA) \rightarrow \ldots$$
Consequently, hypothesis $(ii)$ of Proposition \ref{korder} is equivalent to the vanishing
of the relative cohomology groups $\HH^{m}_{\otimes}( \calD', \calD; \calA)$, for every
integer $m$ and every multiplicative local system $\calA$ on $\calD'$.
\end{remark}

To prove Theorem \ref{postwish} from Proposition \ref{korder}, we need two things: a connectivity
estimate for the forgetful functor $\Bord^{\frun}_{n} \rightarrow \Bord_{n}$, and a calculation
of the relevant (multiplicative) cohomology groups. We will obtain the estimate
from the following theorem of Igusa (see \cite{igusa}):

\begin{theorem}[Igusa]\label{conig}
Let $M$ be a closed $(n-2)$-manifold and let $B$ be a $1$-morphism in $\calB(M)$. 
If $n = 1$, then $\FrFun(B)$ is contractible. For $n > 1$, the space $\FrFun(B)$ is $(n-1)$-connected.
In particular, the spaces $\FrFun(B)$ are always simply connected.
\end{theorem}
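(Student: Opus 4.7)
The plan is to reduce the statement to a parametrized $h$-principle type result, which will be established by a combination of parametrized Morse theory and obstruction theory. Concretely, to show that $\FrFun(B)$ is $(n-1)$-connected, it suffices to show that for each $0 \leq k \leq n-1$, every continuous map $\phi: S^k \to \FrFun(B)$ extends to a continuous map $\widetilde{\phi}: D^{k+1} \to \FrFun(B)$. The case $n=1$ will be handled separately: here $B$ is a compact $1$-manifold with boundary, every framed function on $B$ has only index-$0$ critical points (minima), and a direct deformation argument using linear interpolation of functions together with the standard contractibility of frames in one dimension shows $\FrFun(B)$ is contractible.

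For $n>1$, I would first establish a parametrized version of the elementary fact that generalized Morse functions are generic. Given a family $\{f_t\}_{t \in S^k}$ of framed functions, one chooses a smooth extension $\widetilde{f}: D^{k+1} \to \Fun(B)$ to the interior, and then applies parametrized transversality (in the style of the Thom--Boardman stratification of jet space) to put $\widetilde{f}$ in general position. Since the stratum of functions with an $A_3$ singularity (swallowtail) has codimension $\geq 3$, and more generally $A_{r}$-singularities have codimension $\geq r$, one arranges that on a $(k{+}1)$-parameter family with $k+1 \leq n$, the only singularities of $\widetilde{f}_t$ that occur generically are nondegenerate critical points and isolated $A_2$ birth-death points. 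Thus the \emph{underlying function} of the desired extension can always be found.

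The main obstacle is the extension of the framings. Over the critical locus $\Sigma \subset B \times D^{k+1}$ of $\widetilde{f}$, the negative eigenbundle $\nu^{-} \to \Sigma$ is a real vector bundle whose rank jumps by one as one crosses a birth-death locus. A framing of $\widetilde{f}$ amounts to a trivialization of $\nu^{-}$ away from the birth-death locus, satisfying a matching condition there. The prescribed framings on $S^k$ give a trivialization over $\Sigma \cap (B \times S^k)$, and one wishes to extend across the interior. The obstructions lie in cohomology groups $H^{i+1}(\Sigma, \Sigma \cap (B \times S^k); \pi_i O(m))$ where $m$ is the local index; by the stratification of $\Sigma$ by the index, these groups can be nonzero. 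Igusa's key insight is the \emph{framing principle}: by introducing an additional pair of cancelling critical points (a birth-death loop in parameter space), one can modify the framing on $\nu^{-}$ by any element of $\pi_i O(m)$ at the cost of creating a nullhomotopic birth-death stratum. In other words, every framing obstruction can be cancelled geometrically. I would implement this by doing induction on the maximum index of critical points appearing in the family, at each stage using the framing principle to kill obstructions at the top index before reducing to a lower-index problem.

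The hard part is carrying out the framing principle in families: one must choose the birth-death modifications coherently over $D^{k+1}$, and ensure that the modifications introduced to cancel one obstruction do not create new obstructions at other critical points. This requires a careful analysis of the stratified geometry of the parameter space $D^{k+1}$ together with a delicate induction on the skeleton of a CW decomposition, where the connectivity bound $k \leq n-1$ enters as the exact dimensional constraint making the obstruction groups vanish in the relevant range. This inductive scheme (culminating in the main technical result of \cite{igusa}) is what yields $(n-1)$-connectivity; simple connectivity for all $n > 1$ is the special case $k = 1$, which is considerably easier and can be established by a direct cancellation argument for isolated bad fibers in a $1$-parameter family.
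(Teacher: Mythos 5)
The paper does not prove Theorem \ref{conig}: it cites \cite{igusa} and uses the statement as a black box, so there is no internal proof to compare against. Your reconstruction, however, contains a gap that undermines the entire first step, and it is precisely the step where the real difficulty of Igusa's theorem lives.

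You claim that after choosing a smooth extension $\widetilde{f}\colon D^{k+1} \to \Fun(B)$ of a given family over $S^k$, parametrized Thom--Boardman transversality ensures that ``the only singularities of $\widetilde{f}_t$ that occur generically are nondegenerate critical points and isolated $A_2$ birth-death points,'' supported by the codimension estimate that $A_r$-singularities have codimension $\geq r$. Both claims are off. The $A_r$ (Morin) locus has codimension $r-1$ inside the critical locus, equivalently codimension $n + r - 1$ in the $\infty$-jet space of functions on an $n$-manifold: corank one of the Hessian is one condition, vanishing of the cubic term along the kernel line is another, and so on. For a generic $p$-parameter family over $D^p$, the $A_r$-locus in $D^p \times B$ therefore has dimension $(p+n) - (n+r-1) = p - r + 1$ and is nonempty whenever $r \leq p + 1$. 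With $p = k+1$, a generic extension over $D^{k+1}$ unavoidably contains $A_r$ points for every $r \leq k+2$; only for $k = 0$ (Cerf theory) does transversality confine you to folds and cusps. Since the theorem requires $k$ up to $n-1$, swallowtails and worse appear from $k = 1$ onward, and no amount of genericity in the Thom--Boardman sense removes them. Even granting your more generous codimension $\geq r$, the same count would limit you to $k \leq 1$, far short of $n-1$.

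Eliminating the higher singularities is the hard content of the theorem, and it is an $h$-principle, not a transversality statement: the higher $A_r$-strata must be destroyed by an explicit geometric surgery of the singular locus which trades each higher-order degeneration for a controlled configuration of paired birth-death loci (Igusa's own handle-manipulation constructions; the later Eliashberg--Mishachev wrinkling theorem is a close cousin). This surgery is where the dimensional bound $k \leq n-1$ genuinely enters, because it consumes room in the manifold. The framing principle you describe is indeed the second ingredient of the proof, but it applies only after the underlying family has already been coerced into the class of generalized Morse functions. As written, for $k \geq 1$ your proposal never produces such a family, so the obstruction-theoretic framing step has nothing to attach to.
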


\begin{corollary}\label{sussk}
The forgetful functor $f: \Bord_{n}^{\frun} \rightarrow \Bord_{n}$ is $(n+2)$-connective. 
In particular, the induced map $\tau_{\leq n+1} \Bord _{n}^{\frun}
\rightarrow \tau_{\leq n+1} \Bord_{n}$ is an equivalence of $(n+1, n)$-categories.
\end{corollary}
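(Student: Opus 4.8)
The plan is to derive Corollary \ref{sussk} directly from Igusa's connectivity theorem (Theorem \ref{conig}) by unwinding the definition of $(n+2)$-connectivity and the way $\Bord_n^{\frun}$ and $\Bord_n$ are built out of the $(\infty,1)$-categories $\calB_n(M)$ and $\calB(M)$. First I would recall that a functor of $(\infty,n)$-categories $f\colon \calC \to \calD$ is $k$-connective if it is essentially surjective and, for every pair of objects, the induced functor on mapping $(\infty,n-1)$-categories is $(k-1)$-connective; iterating, $f$ is $(n+2)$-connective precisely when $f$ is essentially surjective, bijective on isomorphism classes of $j$-morphisms for $j \le n+1$, and surjective on isomorphism classes of $(n+2)$-morphisms. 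Since $\Bord_n^{\frun}$ and $\Bord_n$ have literally the same $j$-morphisms for $j < n$ (a $j$-morphism in either is a $j$-manifold with corners, carrying no framed-function data until $j=n$), the content is entirely concentrated at the level of $n$- and $(n+1)$-morphisms.

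The key geometric input is the comparison of mapping objects. For a closed $(n-2)$-manifold $M$, the $(\infty,1)$-category of $n$- and higher morphisms of $\Bord_n^{\frun}$ lying over $M$ (in the relevant sense) is $\calB_n(M)$, whose mapping space $\OHom_{\calB_n(M)}(X,X')$ is a classifying space for pairs $(B,f)$ with $B$ a bordism from $X$ to $X'$ and $f \in \FrFun(B)$; the corresponding object for $\Bord_n$ is $\calB(M)$, whose mapping space is a classifying space for bordisms $B$ alone. The forgetful functor induces the map $\OHom_{\calB_n(M)}(X,X') \to \OHom_{\calB(M)}(X,X')$ which fibers over the space of bordisms $B$ with fiber $\FrFun(B)$. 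Theorem \ref{conig} says each such fiber is $(n-1)$-connected (and contractible when $n=1$), so this map is $n$-connected. Tracing through how $\Bord_n^{\frun} \to \Bord_n$ is assembled from these maps via Corollary \ref{isa} — i.e. via the lax symmetric monoidal functors $\calB_n, \calB\colon \Omega^{n-2}\Bord_{n-1} \to \Cat_{(\infty,1)}$ — the $n$-connectivity of the mapping-space maps upgrades, upon the shift built into the $(\infty,n)$-categorical structure, to $(n+2)$-connectivity of $f$ itself: the two extra degrees come from the fact that a morphism of $\calB_n(M)$ is an $n$-morphism of $\Bord_n^{\frun}$, and a $1$-connected map of classifying $(\infty,1)$-categories induces a $3$-connected map after this re-indexing. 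I would make this bookkeeping precise by induction, peeling off one $\Omega$ at a time and invoking Corollary \ref{isa} at each stage.

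Finally, the last sentence of the corollary is immediate from the first together with the definition of truncation: an $(n+2)$-connective functor is in particular essentially surjective, bijective on $j$-morphisms up to equivalence for $j \le n+1$, hence induces an equivalence $\tau_{\le n+1}\Bord_n^{\frun} \to \tau_{\le n+1}\Bord_n$ (since the truncation only remembers $j$-morphisms for $j \le n+1$, identifying $(n+1)$-morphisms up to isomorphism). The main obstacle is not any single hard estimate — Igusa supplies that — but rather the careful degree-counting: one must verify that the passage from "the fibers $\FrFun(B)$ are $(n-1)$-connected" to "$f$ is $(n+2)$-connective" correctly accounts for the connectivity shifts introduced by the classifying-space construction and by the iterated $\OHom$'s in the definition of the $(\infty,n)$-categories. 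I expect the cleanest route is to phrase everything in terms of the semiSegal/$n$-fold Segal models of \S\ref{swugg}, where "$k$-connective" becomes a statement about the individual spaces in the multisimplicial diagram and the shift is transparent, rather than trying to track connectivity through the informal description of morphisms. A secondary subtlety worth flagging is the $n=1$ edge case, where $\FrFun(B)$ is actually contractible and the conclusion is correspondingly stronger ($f$ is an equivalence outright), but this is consistent with and subsumed by the general statement.
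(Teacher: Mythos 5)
Your approach is the right one and matches what the paper intends (the corollary is stated without an explicit proof, immediately following Igusa's theorem, so the deduction you sketch is what's being invoked). You correctly isolate the two ingredients: (i) the forgetful functor is a literal isomorphism on $j$-morphisms for $j<n$, so all the content is at level $n$; (ii) at level $n$ the map of $n$-morphism spaces fibers over the bordism classifying space with fiber $\FrFun(B)$, so Igusa's connectivity estimate (Theorem~\ref{conig}) applies. The derivation of the second sentence of the corollary from the first via the definition of $\tau_{\leq n+1}$ is also correct.

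The degree-count in your middle paragraph, however, is muddled in a way worth repairing. You write that ``the $n$-connectivity of the mapping-space maps upgrades \ldots to $(n+2)$-connectivity of $f$,'' attributing ``two extra degrees'' to a re-indexing, and then assert that ``a $1$-connected map of classifying $(\infty,1)$-categories induces a $3$-connected map after this re-indexing.'' Neither claim corresponds to any uniform shift: an $n$-connective map at the level of the $n$-fold iterated $\OHom$'s, together with the isomorphisms at levels $<n$, yields $2n$-connectivity of $f$, not $n+2$ (these agree only when $n=2$, and for $n=1$ the bound $2n=2$ would actually be too weak). The clean and uniform version is this. Igusa's theorem gives that $\FrFun(B)$ is $1$-connected for \emph{every} $n\geq 1$ --- contractible for $n=1$, and $(n-1)$-connected with $n-1\geq 1$ for $n\geq 2$; this is exactly the ``in particular'' clause of Theorem~\ref{conig}. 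A fibration with $1$-connected fibers is a $2$-connective map of spaces. Hence the maps of $n$-morphism spaces are $2$-connective, and since $f$ is an isomorphism (in particular essentially surjective) on $j$-morphisms for each $j<n$, unwinding the recursive definition of $k$-connectivity through $n$ levels of $\OHom$ gives precisely that $f$ is $(n+2)$-connective. Thus the corollary in fact uses only the simple-connectedness clause of Igusa's theorem; the full $(n-1)$-connectivity would give the stronger bound $2n$ for $n\geq 2$, but $(n+2)$ is the bound that holds uniformly in $n$ and is exactly what the obstruction-theoretic argument of \S\ref{kumma} requires.

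A secondary point, which you gesture at with your remark about the $n$-fold Segal space model: the passage from ``each $\calB_n(M)\to\calB(M)$ is a $3$-connective functor of $(\infty,1)$-categories'' to ``$f$ is $(n+2)$-connective'' relies on the compatibility of the unfolding equivalence of Corollary~\ref{isa} with connectivity. As you say, this is most transparent working directly with the multisimplicial models of \S\ref{swugg}, where the forgetful map is a levelwise fibration with fibers products of framed-function spaces and is an isomorphism whenever the last simplicial index is zero, so the connectivity statement becomes a levelwise claim about those fibrations rather than something to be chased through the $\OHom$ recursion.
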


To apply Proposition \ref{korder} to our situation, we also need to know that the
relative cohomology groups
$\HH^{m}_{\otimes}( \Bord_{n}, \Bord_{n}^{\frun}; \calA)$ vanish for every
integer $m$ and every multiplicative local system $\calA$ on $\Bord_{n}$. 
These relative cohomology groups fit into a long exact sequence
$$ \xymatrix{ 
\HH^{m}_{\otimes}( \Bord_{n}, \Bord_{n}^{\frun}; \calA) \ar[r] &  
\HH^{m}_{\otimes}( \Bord_{n}, \Bord_{n-1}; \calA) \ar[r]^{\theta_m} & \HH^{m}_{\otimes}( \Bord_{n}^{\frun}, \Bord_{n-1}; f^{\ast} \calA) \ar[dll] \\
\HH^{m+1}_{\otimes}( \Bord_{n}, \Bord_{n}^{\frun}; \calA) \ar[r] &  
\HH^{m+1}_{\otimes}( \Bord_{n}, \Bord_{n-1}; \calA) \ar[r]^{\theta_{m+1}} & \HH^{m+1}_{\otimes}( \Bord_{n}^{\frun}, \Bord_{n-1}; f^{\ast} \calA) }$$
Consequently, the vanishing of the cohomology groups
$\HH^{m}_{\otimes}( \Bord_{n}, \Bord_{n}^{\frun}; \calA)$ is equivalent to the assertion
that each of the maps $\theta_{m}$ is an isomorphism.
The relative cohomology group
$\HH^{m}_{\otimes}( \Bord_{n}^{\frun}, \Bord_{n-1}, f^{\ast} \calA)$ can be
identified with the collection of isomorphism classes of symmetric monoidal
sections $s$ of the projection $K( f^{\ast} \calA, m) \rightarrow \Bord_{n}^{\frun}$
which restrict to the zero section on $\Bord_{n-1}$. According to Theorem \ref{swisher11},
such sections are classified by their restriction to the $\OO(n)$-equivariant
$n$-morphism $D^{n}: {\bf 1} \rightarrow S^{n-1}$. For each $x \in \BO(n)$, we can
evaluate the local system $\calA$ on the corresponding $n$-morphism to obtain
an abelian group $\calB_x$. The collection of abelian groups
$\eta_x$ to obtain an abelian group $\calB_x$. The collection of abelian groups
$\{ \calB_{x} \}_{x \in \BO(n)}$ and the value of $s$ on the $n$-morphism $\eta_x$ can be identified
with a point of the Eilenberg-MacLane space $K( m-n, \calB_x)$. Allowing $x$ to vary, we obtain a canonical isomorphism
$$ \HH^{m}_{\otimes}( \Bord_{n}^{\frun}, \Bord_{n-1}, f^{\ast} \calA) \simeq
\HH^{m-n}( \BO(n); \calB).$$
The requisite cohomological calculation can therefore be formulated as follows:

\begin{theorem}[Cobordism Hypothesis, Infinitesimal Version]\label{swisher9}
Let $\calA$ be a multiplicative local system of abelian groups on $\Bord_{n}$, and
let $\calB$ be the induced local system of abelian groups on $\BO(n)$. Then for every integer $m$, the canonical map
$$ \HH^{m}_{\otimes}( \Bord_{n}, \Bord_{n-1}; \calA)
\rightarrow \HH^{m-n}(\BO(n); \calB)$$
is an isomorphism.
\end{theorem}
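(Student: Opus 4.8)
The plan is to prove Theorem \ref{swisher9} by relating both sides to the decomposition of $\Bord_n$ out of $\Bord_{n-1}$ which was established in \S\ref{forty1} and \S\ref{kuji}. The starting point is the description of multiplicative cohomology classes as symmetric monoidal sections of the projection $q : K(\calA, m) \rightarrow \Bord_n$, so that a relative class in $\HH^m_{\otimes}(\Bord_n, \Bord_{n-1}; \calA)$ is exactly a section which trivializes over $\Bord_{n-1}$. Since $q$ is itself a symmetric monoidal functor with duals (the fibers are Eilenberg--MacLane $(\infty,n)$-categories, which are Picard $\infty$-groupoids and hence have duals by Example \ref{stayn}), Theorem \ref{swisher6} applies to classify extensions of a functor $\Bord_{n-1} \rightarrow K(\calA, m)$ to $\Bord_n^{(X,\zeta)}$: they are given by families of nondegenerate $n$-morphisms $\eta_x : \mathbf{1} \rightarrow Z_0(S^{\zeta_x})$ parametrized by $x \in X$. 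Applied to the universal case $X = \BO(n)$ with $Z_0$ the zero section, this says that a relative cohomology class is the same data as an $\OO(n)$-equivariant choice, over $\BO(n)$, of an $n$-morphism lying in the fiber of $q$ over the disk $D^n : \mathbf{1} \rightarrow S^{n-1}$; the nondegeneracy condition is automatically satisfied in an Eilenberg--MacLane target since all relevant morphisms there are invertible. This fiber, by the computation sketched just before the statement, is the Eilenberg--MacLane space $K(\calB_x, m-n)$, so the space of such equivariant choices is $\Hom_{\OO(n)}(\widetilde{\BO(n)}, K(\calB, m-n))$, which by the argument of Theorem \ref{swisher4} (or directly, since $\widetilde{\BO(n)} = \EO(n)$) is $\Hom(\BO(n), K(\calB, m-n))$, computing $\HH^{m-n}(\BO(n); \calB)$.

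First I would make precise the identification of the homotopy fiber of $q : K(\calA,m) \rightarrow \Bord_n$ over the $n$-morphism $D^n$. The fiber is the mapping space of the local system $\calA$ evaluated at $D^n$, shifted into degree $m-n$; here the shift by $n$ reflects the fact that $D^n$ is an $n$-morphism and $K(\calA,m)$ is built by iterating the Eilenberg--MacLane construction $n$ times in Definition \ref{swu}. I would then carefully verify that the symmetric monoidal / multiplicative structures match up: a multiplicative local system $\calA$ on $\Bord_n$ restricts to a local system $\calB$ on $\BO(n)$ with a compatible multiplicative structure (coming from disjoint union of manifolds, which induces the $E_\infty$-structure on the classifying space of objects), and the equivalence of Theorem \ref{swisher6} respects these structures because the disk $D^n$ is the monoidal generator. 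The role of Corollary \ref{ail} is to endow $\BO(n)$, or rather the frame bundle $\widetilde{\BO(n)} = \EO(n)$, with its $\OO(n)$-action, and one needs that the $\OO(n)$-action on the fiber $K(\calB,m-n)$ induced by rotating the disk $D^n$ agrees with the action built into the definition of the relative cohomology group — this is essentially the statement that the Serre-type automorphisms act compatibly, and follows from naturality of the construction in Theorem \ref{swisher6}.

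The main obstacle I expect is the bookkeeping around equivariance and nondegeneracy simultaneously. Theorem \ref{swisher6} as stated classifies extensions by families of \emph{nondegenerate} $n$-morphisms, and one must argue that when the target is an Eilenberg--MacLane $(\infty,n)$-category every $n$-morphism ${\mathbf 1} \rightarrow Z_0(S^{n-1})$ of the appropriate ``degree'' is automatically nondegenerate, so that no condition is lost and the classification becomes a plain mapping space; this uses that adjoints in an $\infty$-groupoid are just inverses (Proposition \ref{spink}), together with the fact that the morphism $Z_0(S^{n-1})$ is itself an identity up to the relevant shift. A second, more technical point: one has to be sure that passing to the universal pair $(\BO(n), \zeta_{\mathrm{univ}})$ is legitimate here even though Theorem \ref{swisher6} was only proved for smooth bordism categories, and that $K(\calA,m) \rightarrow \Bord_n$ genuinely satisfies all the hypotheses of that theorem (symmetric monoidal, has duals, and the coherence data from Claim \ref{kanter} is of this form). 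I would handle the latter by invoking Claim \ref{kanter}, which exhibits precisely such twisted Eilenberg--MacLane extensions as the building blocks of the Postnikov tower, so the hypotheses are met by construction.

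Finally, I would assemble the pieces: the chain of natural identifications
$$ \HH^m_{\otimes}(\Bord_n, \Bord_{n-1}; \calA) \simeq \Fun^{\otimes}_{\mathrm{rel}}(\Bord_n, K(\calA,m)) \simeq \Hom_{\OO(n)}\bigl(\EO(n), K(\calB, m-n)\bigr) \simeq \HH^{m-n}(\BO(n); \calB), $$
where the first equivalence is the definition of multiplicative relative cohomology, the second is Theorem \ref{swisher6} applied to $Z_0 = 0$ over $X = \BO(n)$ (with the fiber computation feeding in), and the third is the collapse of the homotopy fixed points of a free action. I would check that the composite is the canonical map asserted in the theorem by tracing through what ``evaluate $s$ on the $n$-morphism $D^n$'' does, which is exactly the restriction map described in the paragraph preceding the statement. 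This establishes the isomorphism for all $m$ (including $m < n$, where both sides are homotopy groups of the respective section spaces), completing the proof.
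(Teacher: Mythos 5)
Your argument is circular. The identification you propose would deduce Theorem \ref{swisher9} from Theorem \ref{swisher6} applied with target $K(\calA,m)$; but the logical structure of \S\ref{glob3} runs exactly the other way. Theorem \ref{swisher6} in dimension $n$ is proven by combining Theorem \ref{swisher11} (the framed-function version) with Theorem \ref{postwish} (the equivalence $\Bord^{\frun}_{n} \simeq \Bord_{n}$), and the proof of Theorem \ref{postwish} in \S\ref{kumma} uses Theorem \ref{swisher9} as an essential input: it is precisely the cohomological calculation needed to run the obstruction theory of Proposition \ref{korder}. At the point where Theorem \ref{swisher9} is invoked, we do not yet know that $\Bord^{\frun}_{n}$ and $\Bord_{n}$ agree, so you cannot apply the cobordism hypothesis for $\Bord_{n}^{(X,\zeta)}$ to compute sections of $K(\calA,m) \rightarrow \Bord_{n}$.

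What you have actually reproduced (modulo the $\frun$ decoration) is the \emph{other} half of the long-exact-sequence argument in \S\ref{kumma}: the paper does use Theorem \ref{swisher11} in exactly the way you describe to compute
$$\HH^{m}_{\otimes}( \Bord_{n}^{\frun}, \Bord_{n-1}; f^{\ast} \calA) \simeq \HH^{m-n}( \BO(n); \calB),$$
since $\Bord^{\frun}_{n}$ has the generators-and-relations description by construction, independently of Theorem \ref{postwish}. Theorem \ref{swisher9} is the corresponding statement for $\Bord_{n}$ itself, and the whole point of \S\ref{kumma} is to compare the two. The paper does not prove Theorem \ref{swisher9} categorically at all: it observes (Remark \ref{ilt}) that the statement is purely homotopy-theoretic, and that for constant $\calA$ it is a restatement of the cofiber sequence of Thom spectra
$$ \Sigma^{n-1} \MTO(n-1) \rightarrow \Sigma^{n} \MTO(n) \rightarrow \Sigma^{\infty+n}_{+} \BO(n)$$
underlying the unoriented Theorem \ref{singsung} of Galatius--Madsen--Tillmann--Weiss, with the general case following by the same scanning-type methods. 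Any valid proof of Theorem \ref{swisher9} at this point in the paper must proceed along those lines, and in particular must not invoke any form of the cobordism hypothesis for $\Bord_n$. Your argument, read as a \emph{re-derivation} of Theorem \ref{swisher9} once the cobordism hypothesis is already established, is correct; but as a proof inside the paper's inductive scheme it begs the question.
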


\begin{remark}\label{ilt}
In the situation of Theorem \ref{swisher9}, suppose that $\calA$
is a constant local system associated to an abelian group $A$ (see Example \ref{swinging}). 
According to the unoriented version of Theorem \ref{singsung}, the classifying spaces $| \Bord_{n} |$ and $| \Bord_{n-1} |$ can be identified with the zeroth spaces of the (connective) spectra $\Sigma^n \MTO(n)$ and $\Sigma^{n-1} \MTO(n-1)$, respectively.
As explained in Example \ref{swinging}, the relative multiplicative cohomology groups
$\HH^{m}_{\otimes}( \Bord_{n}, \Bord_{n-1}; \calA)$ can in this case be realized as the spectrum cohomology $\HH^{m}( \Sigma^{n} \MTO(n), \Sigma^{n-1} \MTO(n-1); A)$. 
The isomorphism of Theorem \ref{swisher9} in this case results from the existence of a cofiber sequence of spectra
$$ \Sigma^{n-1} \MTO(n-1) \rightarrow \Sigma^{n} \MTO(n) \rightarrow
\Sigma^{\infty+n}_{+} \BO(n).$$

If we assume Theorem \ref{swisher9} holds for every constant local system $\calA$, then we can deduce the unoriented version of Theorem \ref{singsung} using induction on $n$. For each $n$, the geometric realization $| \Bord_n |$ can be identified with
the zeroth space of {\em some} connective spectrum $Y(n)$ equipped with a canonical map
$f_n: Y(n) \rightarrow \Sigma^{n} \MTO(n)$. Theorem \ref{singsung} asserts that $f_n$
is a homotopy equivalence of spectra. If we assume that $f_{n-1}$ is a homotopy equivalence,
then Theorem \ref{swisher9} implies that $f_n$ induces an isomorphism on cohomology groups
$\HH^{m}( \Sigma^{n} \MTO(n); A) \rightarrow \HH^{m}( Y(n); A)$ for every abelian group $A$ and every integer $m$. Since the domain and codomain of $f_n$ are connective, this implies that
$f_n$ is a homotopy equivalence.
\end{remark}

We can summarize Remark \ref{ilt} as follows: the unoriented version of Theorem \ref{singsung} is equivalent to a special case of Theorem \ref{swisher9}, in which we assume that the local system $\calA$ is constant. It is possible to prove the general case Theorem \ref{swisher9} using the methods developed by Galatius, Madsen, Tillmann, and Weiss to prove Theorem \ref{singsung} (note that Theorem \ref{swisher9} is essentially calculational in nature; it can therefore be formulated in a purely homotopy-theoretic way that makes no mention of higher category theory). We will not describe the details any further here.

\section{Beyond the Cobordism Hypothesis}\label{glob4}

In this section, we will present some applications and extensions of the ideas developed earlier in this paper. We will begin in \S \ref{topchir} by describing a class of topological field theories which can
be produced by a very explicit homotopy-theoretic construction which we call {\it topological chiral homology}. In \S \ref{cost} we will discuss out the cobordism hypothesis in detail in dimensions $\leq 2$. In particular, we will formulate a ``noncompact'' analogue of the cobordism hypothesis
(Theorem \ref{swisher17}) and explain its relationship to earlier work of Costello (\cite{costello}) and to the string topology operations introduced by Chas and Sullivan (\cite{chassullivan}).
In \S \ref{mans}, we will describe a generalization of the cobordism hypothesis in which we work with bordism categories of (stratified) singular spaces, rather than smooth manifolds. We will
apply this generalization in \S \ref{tangus} to sketch a proof of a version of the Baez-Dolan {\it tangle hypothesis}, which characterizes $(\infty,n)$-categories of embedded bordisms and can be regarded as an ``unstable'' version of the cobordism hypothesis.

\subsection{Topological Chiral Homology}\label{topchir}

Let $\calC$ be a symmetric monoidal $(\infty,n)$-category with duals.
According to Theorem \ref{swisher3}, every object $C \in \calC$ determines
an symmetric monoidal functor $Z_C: \Bord_{n}^{\fr} \rightarrow \calC$, which is characterized
by the existence of an isomorphism $Z_C( \ast) \simeq C$. Though these invariants are formally determined by $C$ in principle, they can be very difficult to compute in practice. In this section,
we would like to illustrate the cobordism hypothesis in a special case where $Z_C$ can be described in completely explicit terms.

Let $\bfS$ be a symmetric monoidal $(\infty,1)$-category. It is sensible to talk about
{\it associative algebra objects} of $\bfS$: that is, objects $A \in \bfS$ which are endowed with a unit map and a multiplication
$${\bf 1} \rightarrow A \quad \quad A \otimes A \rightarrow A$$
which satisfy all of the usual associativity properties up to coherent isomorphism. The collection
of such algebra objects can itself be organized into an $(\infty,1)$-category, which we will denote by
$\Alg(\bfS)$. The tensor product $\otimes$ on $\bfS$ determines a tensor product on
$\Alg(\bfS)$, and endows $\Alg(\bfS)$ with a symmetric monoidal structure.

\begin{definition}\label{slapp}
Let $\bfS$ be a symmetric monoidal $(\infty,1)$-category. We will define a sequence
of symmetric monoidal $(\infty,1)$-categories $\Alg^{(n)}( \bfS)$ using induction as follows:
\begin{itemize}
\item If $n=1$, we let $\Alg^{(n)}(\bfS) = \Alg(\bfS)$ be the $(\infty,1)$-category of associative algebra objects of $\bfS$.
\item If $n > 1$, we let $\Alg^{(n)}(\bfS) = \Alg( \Alg^{(n-1)}(\bfS))$ be the $(\infty,1)$-category
of associative algebra objects in $\Alg^{(n-1)}(\bfS)$.
\end{itemize}
We will refer to objects of $\Alg^{(n)}(\bfS)$ as {\it $E_{n}$-algebras} in $\bfS$.
\end{definition}

\begin{remark}
It is convenient to extend Definition \ref{slapp} to the case $n=0$; we will agree to the convention
that an $E_0$-algebra in $\bfS$ is an object $A \in \bfS$ equipped with a unit map
${\bf 1} \rightarrow A$.
\end{remark}

\begin{remark}\label{kilrow}
More informally, we can think of an $E_n$-algebra in $\bfS$ as an object $A \in \bfS$ equipped
with $n$ associative algebra structures $\{ m_{i}: A \otimes A \rightarrow A \}_{1 \leq i \leq n}$,
which are compatible with one another in the following sense: if $i \neq j$, then
$m_{i}: A \otimes A \rightarrow A$ is a homomorphism with respect to the algebra structures determined by $m_{j}$.
\end{remark}

\begin{example}\label{slip}
Suppose that $\bfS$ is an ordinary symmetric monoidal category. In that case, Definition
\ref{slapp} reduces to the following:
\begin{itemize}
\item If $n = 1$, then $\Alg^{(n)}( \bfS)$ is the category of associative algebra objects of
$\bfS$.
\item If $n > 1$, then $\Alg^{(n)}(\bfS)$ is the category of commutative algebra objects of $\bfS$.
\end{itemize}
This is a consequence of the following general observation: let $m_{1}$ and
$m_{2}$ be associative multiplications on an object $A \in \bfS$ which are
compatible in the sense described in Remark \ref{kilrow}. Then $m_{1} = m_{2}$, and
both products are commutative. For example, suppose that $\bfS$ is the category of sets
(with symmetric monoidal structure given by the Cartesian product). Then we can
view $A$ as a set endowed with two associative multiplications $\times_{1}$ and
$\times_{2}$, having identity elements $e_1$ and $e_2$. We observe that
$$ e_2 = e_2 \times_2 e_2 = (e_2 \times_1 e_1)
\times_2 (e_1 \times_1 e_2) = (e_2 \times_2 e_1) \times_1 (e_1 \times_2 e_2)
= e_1 \times_1 e_1 = e_1,$$
so that $e_1$ and $e_2$ are equal to a common element $e \in A$.
The chain of equalities
$$ x \times_1 y = (x \times_2 e) \times_1 (e \times_2 y)
= (x \times_1 e) \times_2 (e \times_1 y) = x \times_2 y$$
shows that $\times_1 = \times_2$. The same argument shows that
$\times_1$ is the opposite of the multiplication given by $\times_2$, so that
the product $\times_1 = \times_2$ is commutative.
\end{example}

\begin{example}\label{swik}
Let $\bfS$ be the (large) $(\infty,1)$-category $\Cat_{(\infty,k)}$, whose objects are
(small) $(\infty,k)$-categories and whose morphisms are given by functors (here we discard
information about noninvertible natural transformations of functors). Then
$\bfS$ admits a symmetric monoidal structure, given by the Cartesian product.
We will refer to an $E_{n}$-algebra in $\Cat_{(\infty,k)}$ as an
{\it $E_{n}$-monoidal $(\infty,k)$-category}.
When $n = 1$, we recover the notion of a monoidal $(\infty,k)$-category; 
in the limiting case $n = \infty$, we recover the notion of a symmetric monoidal
$(\infty,k)$-category.
\end{example}

\begin{remark}
In order to define the notion of an $E_{n}$-algebra in an $(\infty,1)$-category $\bfS$, 
it suffices to assume that $\bfS$ has an $E_{n}$-monoidal structure: we do not need $\bfS$ to
be symmetric. For example, we can talk about associative algebra objects of an arbitrary monoidal $(\infty,1)$-category.
\end{remark}

Let $\bfS$ be a symmetric monoidal $(\infty,1)$-category, and let $A$ and $B$ be algebra objects of $\bfS$. We
can then define a new $(\infty,1)$-category $\Bimod_{A,B}(\bfS)$ of
$A$-$B$ bimodules in $\bfS$: that is, an $(\infty,1)$-category whose objects are objects
of $\bfS$ equipped with a left action of $A$ and a commuting right action of $B$.
We would like to regard $\Bimod_{A,B}(\bfS)$ as a collection of $1$-morphisms
in an $(\infty,2)$-category, where composition of bimodules is given by the formation of relative tensor products $(M,N) \mapsto M \otimes_{B} N$. To define this $(\infty,2)$-category, we need to introduce a technical assumption on $\bfS$.

\begin{definition}\label{bgood}
We will say that a monoidal $(\infty,1)$-category $\bfS$ is {\it good} if
$\bfS$ admits small sifted colimits, and the tensor product functor
$\otimes: \bfS \times \bfS \rightarrow \bfS$ preserves small sifted colimits
(see \cite{htt} for an explanation of this terminology).
\end{definition}

\begin{remark}
Let $\bfS$ be a monoidal $(\infty,1)$-category, let $A$ be an algebra object
of $\bfS$, let $M$ be a right $A$-module and $N$ a left $A$-module.
We would like to define the relative tensor product $M \otimes_{A} N$.
If $\bfS$ is an ordinary category, we can define this tensor product to be the
coequalizer of a pair of maps $f,g: M \otimes A \otimes N \rightarrow M \otimes N$.
In the general case, we need a more elaborate definition using the {\it two-sided bar construction}.
The assumption that $\bfS$ is good guarantees that this construction exists and is well-behaved;
we refer the reader to \cite{dag2} for more details.
\end{remark}

\begin{remark}
More generally, we will say that an $E_{n}$-monoidal $(\infty,1)$-category $\bfS$ is {\it good}
if it is good when regarded as a monoidal $(\infty,1)$-category, by neglecting
all but one of the $n$ compatible monoidal structures on $\bfS$ (an elaboration of the argument presented in Example \ref{slip} can be used to show that these monoidal structures are all equivalent
to one another, so it does not matter which one we choose). By convention, we will say that
an $E_{0}$-monoidal $(\infty,1)$-category is good if it admits sifted colimits.
\end{remark}

The construction $(A,B) \mapsto \Bimod_{A,B}(\bfS)$ can be regarded as a monoidal
functor of $A$ and $B$. For example, if we are given maps of algebra objects $A \otimes A' \rightarrow A''$ and $B \otimes B' \rightarrow B''$, then there is an induced bifunctor
$$ \Bimod_{A,B}(\bfS) \times \Bimod_{A',B'}(\bfS) \rightarrow \Bimod_{A'',B''}(\bfS).$$
In particular, if $A$ and $B$ are algebra objects of $\Alg(\bfS)$, then 
$\Bimod_{A,B}(\bfS)$ inherits a monoidal structure. Amplifying on this observation, we obtain the following:

\begin{claim}
Let $\bfS$ be a good $E_{n}$-monoidal $(\infty,1)$-category for $n \geq 1$, and let $A$ and $B$ be $E_{n}$-algebras in $\bfS$. Then the $(\infty,1)$-category $\Bimod_{A,B}(\bfS)$ admits the structure of
an $E_{n-1}$-category.
\end{claim}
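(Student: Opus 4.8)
The plan is to bootstrap the $E_{n-1}$-structure on $\Bimod_{A,B}(\bfS)$ from the $E_n$-structure on $\bfS$ by the standard ``Dunn additivity'' philosophy, exactly in the inductive spirit already used throughout this paper. Recall that an $E_n$-monoidal structure on $\bfS$ is the same data as an $E_{n-1}$-algebra in the $(\infty,1)$-category of monoidal $(\infty,1)$-categories; concretely, $\bfS$ carries $n$ pairwise-compatible associative multiplications, and we may single out one of them (say $\otimes_1$) to define relative tensor products of bimodules, while the remaining $(n-1)$ multiplications are the ones we expect to descend to $\Bimod_{A,B}(\bfS)$. First I would make precise the functoriality of the bimodule construction: the assignment $(A,B) \mapsto \Bimod_{A,B}(\bfS)$ is lax monoidal in each variable, so that a pair of algebra maps $A \otimes A' \to A''$, $B \otimes B' \to B''$ induces a bifunctor $\Bimod_{A,B}(\bfS) \times \Bimod_{A',B'}(\bfS) \to \Bimod_{A'',B''}(\bfS)$ given by $(M,M') \mapsto M \otimes M'$, where the relative tensor product over $B''$ is implicit in the composition. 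The goodness hypothesis on $\bfS$ is exactly what guarantees that the two-sided bar construction computing $M \otimes_B N$ exists and preserves the sifted colimits needed for these bifunctors to be well-defined and coherently associative (cf.\ \cite{dag2}).

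Next I would organize these bifunctors into the required algebraic structure. If $A$ and $B$ are now $E_n$-algebras in $\bfS$, then in particular they are $E_{n-1}$-algebras in the monoidal $(\infty,1)$-category $(\Alg(\bfS), \otimes_1)$; that is, each of $A$ and $B$ comes with $(n-1)$ compatible multiplications $A \otimes_i A \to A$ for $2 \leq i \leq n$ which are maps of $\otimes_1$-algebras. Feeding the $i$th multiplication into the bifunctor above yields, for each $i \in \{2,\dots,n\}$, a monoidal product $\Bimod_{A,B}(\bfS) \times \Bimod_{A,B}(\bfS) \to \Bimod_{A,B}(\bfS)$ together with a unit given by $A$ itself regarded as an $(A,B)$-bimodule (via the chosen algebra maps). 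The compatibility of the multiplications on $A$ and $B$ with one another then translates, by functoriality, into compatibility of the resulting $(n-1)$ monoidal structures on $\Bimod_{A,B}(\bfS)$, which is precisely the content of an $E_{n-1}$-algebra structure in $\Cat_{(\infty,1)}$, i.e.\ an $E_{n-1}$-monoidal $(\infty,1)$-category (Example \ref{swik}).

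The cleanest way to carry this out, and the one I would actually write up, is not to juggle the $n$ multiplications by hand but to use the Dunn-type equivalence: an $E_n$-monoidal $(\infty,1)$-category is an associative algebra object in the $(\infty,1)$-category $\Alg^{(n-1)}(\Cat_{(\infty,1)})$ of $E_{n-1}$-monoidal $(\infty,1)$-categories, and likewise an $E_n$-algebra in $\bfS$ is an associative algebra object in the $E_{n-1}$-monoidal $(\infty,1)$-category $\Alg^{(n-1)}(\bfS)$. Then the $(n=1)$-case of the construction, namely that for a good monoidal $\bfS$ and associative algebras $A, B$ the $(\infty,1)$-category $\Bimod_{A,B}(\bfS)$ is well-defined and that $(A,B)\mapsto \Bimod_{A,B}(\bfS)$ is itself a monoidal functor from $\Alg(\bfS) \times \Alg(\bfS)^{\mathrm{rev}}$ to $\Cat_{(\infty,1)}$, can be applied \emph{internally} to the $E_{n-1}$-monoidal context: replace $\bfS$ by $\Alg^{(n-1)}(\bfS)$-as-an-$E_{n-1}$-monoidal-category and $A, B$ by their underlying associative algebras therein. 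Functoriality of the base case then promotes $\Bimod_{A,B}(\bfS)$ to an $E_{n-1}$-algebra object of $\Cat_{(\infty,1)}$. Checking that $\Alg^{(n-1)}(\bfS)$ remains good when $\bfS$ is good (so the internal bar construction converges) is a routine colimit-preservation verification.

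The main obstacle is the base case itself, and in particular the bookkeeping of coherences: showing that $(A,B) \mapsto \Bimod_{A,B}(\bfS)$ is lax monoidal (indeed monoidal, when relative tensor products are available) requires comparing two iterated bar constructions, $(M \otimes_B N) \otimes M'$ and $M \otimes_B (N \otimes M')$ and their higher analogues, up to coherent homotopy; this is where the goodness hypothesis does real work, since it is what lets one commute the relevant geometric realizations past the tensor product. Making these coherences fully precise is exactly the sort of $(\infty,1)$-categorical input deferred to \cite{dag2}, so in this expository account I would state the monoidality of the bimodule construction as a known fact from that reference and derive the claim formally from it by the inductive argument above. Everything downstream of the base case is then a mechanical unwinding of Definition \ref{slapp} and Example \ref{swik}.
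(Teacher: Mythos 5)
Your first two paragraphs match the paper's own indication exactly: use the bifunctoriality of $\Bimod$ with respect to algebra maps $A \otimes A' \to A''$, $B \otimes B' \to B''$, feed in the $(n-1)$ residual compatible multiplications on the $E_n$-algebras $A$ and $B$, and obtain $(n-1)$ compatible monoidal structures, hence an $E_{n-1}$-structure, on $\Bimod_{A,B}(\bfS)$. Two cautions, though. First, the unit of the resulting $E_{n-1}$-structure is not $A$ but the free $(A,B)$-bimodule on ${\bf 1}_{\bfS}$, namely $A \otimes B$: for general $B$ there is no algebra map $B \to A$, so $A$ alone carries no right $B$-action. Second, the ``internal Dunn'' reformulation in your third paragraph does not type-check as written. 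Replacing $\bfS$ by $\Alg^{(n-1)}(\bfS)$ and applying the base case would produce $\Bimod_{A,B}(\Alg^{(n-1)}(\bfS))$, whose objects are $E_{n-1}$-algebras in $\bfS$ with compatible $A$- and $B$-actions, not the desired $\Bimod_{A,B}(\bfS)$, whose objects are arbitrary objects of $\bfS$ with such actions. Also note that the monoidal functoriality of $\Bimod_{-,-}(\bfS)$ on $\Alg(\bfS) \times \Alg(\bfS)^{\mathrm{rev}}$ is only available once $\bfS$ is at least $E_2$, so it is not really an $n=1$ ``base case.'' The clean way to package what you actually want is to observe that $(A,B) \mapsto \Bimod_{A,B}(\bfS)$ is a lax $E_{n-1}$-monoidal functor with respect to the residual $E_{n-1}$-structure on $\Alg(\bfS)$, and therefore carries $E_{n-1}$-algebra objects to $E_{n-1}$-algebra objects of $\Cat_{(\infty,1)}$ --- but this is precisely your explicit argument, not a genuinely different route.
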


\begin{definition}\label{hut}
Let $\bfS$ be a good $E_{n}$-monoidal $(\infty,1)$-category.
We can construct a new $(\infty,n+1)$-category $\Alg_{(n)}(\bfS)$ using induction on $n$ as follows:
\begin{itemize}
\item If $n=0$, then $\Alg_{(n)}(\bfS) = \bfS$.
\item If $n > 0$, then the objects of $\Alg_{(n)}(\bfS)$ are $E_{n}$-algebras in $\bfS$.
\item If $n > 0$ and $A,B \in \Alg_{(n)}(\bfS)$, then we set
$$ \OHom_{ \Alg_n(\bfS)}( A,B) = \Alg_{(n-1)}( \Bimod_{A,B}(\bfS)).$$
\end{itemize}
We let $\Alg^{\degree}_{(n)}(\bfS)$ denote the $(\infty,n)$-category obtained from $\Alg_{n}(\bfS)$ by discarding the noninvertible $(n+1)$-morphisms.
\end{definition}

\begin{example}
Let $\bfS$ be a good monoidal $(\infty,1)$-category. Then
$\Alg_{(1)}(\bfS)$ can be regarded as an $(\infty,2)$-category whose objects are 
algebras in $\bfS$ and whose $1$-morphisms are given by bimodules, with composition
given by tensor product of bimodules.
\end{example}

\begin{remark}
Let $\bfS$ be a good symmetric monoidal $(\infty,1)$-category.
Then for each $n \geq 0$, the $(\infty,n+1)$-category $\Alg_{(n)}(\bfS)$ and
the $(\infty,n)$-category $\Alg_{(n)}^{\degree}(\bfS)$ inherit symmetric monoidal structures.
\end{remark}

Let $\bfS$ be a good symmetric monoidal $(\infty,1)$-category. For every algebra object
$A \in \Alg_{(1)}(\bfS)$, the opposite algebra $A^{op}$ can be regarded as a dual of
$A$ in the symmetric monoidal $(\infty,1)$-category $\Alg_{(1)}^{\degree}$: we have evaluation and coevaluation maps 
$$ A \otimes A^{op} \rightarrow {\bf 1} \quad \quad {\bf 1} \rightarrow A \otimes A^{op}$$
given by $A$ itself, regarded as an $A \otimes A^{op}$-module. It follows that
the symmetric monoidal $(\infty,1)$-category $\Alg_{(1)}(\bfS)$ has duals. This observation admits the following generalization:

\begin{claim}\label{jilk}
Let $\bfS$ be a good symmetric monoidal $(\infty,1)$-category.
Then the symmetric monoidal $(\infty,n)$-category $\Alg_{(n)}^{\degree}(\bfS)$ has duals.
\end{claim}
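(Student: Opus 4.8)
The plan is to argue by induction on $n$, exploiting the recursive structure of Definition \ref{hut}. The base case $n=0$ is immediate: $\Alg^{\degree}_{(0)}(\bfS) = \bfS$ is a symmetric monoidal $(\infty,0)$-category, which vacuously ``has duals'' in the sense required (there are no $k$-morphisms to equip with adjoints for $0 < k < 0$, and ``duals for objects'' at the $(\infty,0)$ level is no condition since every $1$-morphism is invertible, cf. Example \ref{stayn}). For the inductive step, suppose the result holds for good symmetric monoidal $(\infty,m)$-categories for all $m < n$, and let $\bfS$ be a good symmetric monoidal $(\infty,1)$-category. I must verify two things about $\calC := \Alg^{\degree}_{(n)}(\bfS)$: that $\calC$ has duals for objects, and that $\calC$ admits adjoints in the sense of Definition \ref{adjman}.

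First I would handle duals for objects. Given an $E_n$-algebra $A \in \Alg_{(n)}(\bfS)$, I claim its dual is $A^{\op}$, the $E_n$-algebra obtained by reversing (any one of) the multiplications. The evaluation and coevaluation $1$-morphisms $A \otimes A^{\op} \to {\bf 1}$ and ${\bf 1} \to A \otimes A^{\op}$ should both be represented by $A$ itself, viewed as an $E_{n-1}$-algebra object of $\Bimod_{A \otimes A^{\op}, {\bf 1}}(\bfS)$ (respectively $\Bimod_{{\bf 1}, A \otimes A^{\op}}(\bfS)$) via the regular bimodule structure; that such a structure exists uses that $\Bimod$ is a monoidal functor of the algebra arguments and that $A$ carries commuting left and right $A$-actions. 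The triangle identities of Definition \ref{frogman} then reduce to standard statements about the two-sided bar construction, namely that $A \otimes_{A \otimes A^{\op}} A \simeq A$ as $A\otimes A^{\op}$-bimodules, which is where goodness of $\bfS$ (guaranteeing sifted colimits compatible with $\otimes$) enters. This is essentially the $n=1$ computation recalled just before the Claim, bootstrapped one categorical level at a time. Next, for adjoints: by Definition \ref{adjman}, $\calC$ admits adjoints for $1$-morphisms iff its homotopy $2$-category does, and admits adjoints for $k$-morphisms with $1 < k < n$ iff each $\OHom_{\calC}(A,B)$ admits adjoints for $(k-1)$-morphisms. But $\OHom_{\Alg_{(n)}(\bfS)}(A,B) = \Alg_{(n-1)}(\Bimod_{A,B}(\bfS))$, and $\Bimod_{A,B}(\bfS)$ is a good $E_{n-1}$-monoidal $(\infty,1)$-category (inheriting the finite-colimit hypothesis and the $n-1$ remaining compatible tensor structures). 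So the entire adjunction-existence question for higher morphisms is handled by the inductive hypothesis applied to $\Alg^{\degree}_{(n-1)}$ of these bimodule categories — provided one checks that passing to the underlying $(\infty,n)$-category (discarding noninvertible $(n+1)$-morphisms) does not disturb the relevant adjoints, which it does not since adjoints of $k$-morphisms for $k < n$ are witnessed by lower-dimensional data. The only genuinely new input at level $n$ is adjoints for $1$-morphisms, i.e. for bimodules: given an $A$-$B$ bimodule $M$ (an object of $\Alg_{(n-1)}(\Bimod_{A,B}(\bfS))$ underlying a $1$-morphism), I would produce a left and a right adjoint by the usual ``linear dual'' recipe, $M^{R} = \OHom_{\text{right-}B}(M, B)$ and similarly for $M^L$, with unit and counit supplied by evaluation/coevaluation of modules, again relying on goodness to make the relevant $\OHom$'s and relative tensor products behave.

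The main obstacle I anticipate is not any single triangle identity but the coherence bookkeeping: verifying that the adjunction data at level $n$ (units, counits, and their compatibilities) can be chosen $E$-multiplicatively compatible with the symmetric monoidal structures, and that the inductively-supplied adjoints for higher morphisms assemble into a genuine statement about $\calC$ rather than merely its individual Hom-categories in isolation. Concretely, the delicate point is that ``$\calC$ has duals'' is a structure on $\calC$ as a whole, and one must check that the dualizing operations are stable under tensor product and interact correctly with composition of bimodules — i.e. that $(M \otimes_B N)^R \simeq N^R \otimes_B M^R$ and so on, up to coherent homotopy. I would organize this using Remark \ref{cospin} to reduce as many adjunction claims as possible to invertibility statements one level up, and would cite \cite{dag2} for the requisite facts about the two-sided bar construction and the behavior of module categories over good algebras; these references carry the analytic weight, leaving the argument here at the level of a structured induction.
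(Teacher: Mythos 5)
The paper does not supply a proof of Claim \ref{jilk}; it only sketches the $n=1$ case in the paragraph immediately preceding (dual of $A$ is $A^{\op}$, with evaluation and coevaluation both given by $A$ itself as a bimodule) and asserts the generalization. Your overall inductive strategy --- reduce via $\OHom_{\Alg_{(n)}(\bfS)}(A,B) \simeq \Alg_{(n-1)}(\Bimod_{A,B}(\bfS))$, treat duals for objects by the opposite-algebra construction, and feed the bimodule categories back into the induction --- is the right skeleton and is consistent with the paper's $n=1$ sketch.

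However, there is a genuine gap in the step you single out as ``the only genuinely new input,'' namely adjoints for $1$-morphisms. You propose to produce the right adjoint of a $1$-morphism $M: A \to B$ by the linear-dual formula $M^{R} = \OHom_{\text{right-}B}(M,B)$, ``with unit and counit supplied by evaluation/coevaluation of modules.'' This is the formula for the adjoint of an \emph{arbitrary} bimodule, and it only succeeds when $M$ is dualizable as a right $B$-module (roughly, finitely generated projective). If this recipe sufficed for all $1$-morphisms, the same reasoning applied to the top layer would show that \emph{every} $n$-morphism of $\Alg_{(n)}(\bfS)$ has an adjoint --- but the paper explicitly states (in \S\ref{cost}) that $\Alg_{(n)}(\bfS)$ does \emph{not} have duals, precisely because not every bimodule is adjointable, and that full dualizability of an $E_n$-algebra is a strong finiteness condition for exactly this reason. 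The reason Claim \ref{jilk} is nonetheless true is that the $1$-morphisms of $\Alg^{\degree}_{(n)}(\bfS)$ for $n \geq 2$ are not arbitrary bimodules: they are $E_{n-1}$-\emph{algebra} objects of $\Bimod_{A,B}(\bfS)$. The correct right adjoint uses this algebra structure, not the linear dual: it is (a version of) $M$ itself, regarded as an $E_{n-1}$-algebra in $\Bimod_{B,A}(\bfS)$ with the multiplication reversed --- exactly the opposite-algebra construction bootstrapped one level, with unit and counit again given by ``$M$ as a bimodule,'' in parallel with the $n=1$ picture where $\ev_A$ and $\coev_A$ are both $A$. Your appeal to goodness of $\bfS$ to make $\OHom$'s and relative tensor products ``behave'' does not rescue the linear-dual route: Definition \ref{bgood} only guarantees compatibility with sifted colimits, and says nothing about the existence of internal homs or dualizability of bimodules.

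A smaller issue: your base case is misstated. Since $\Alg_{(0)}(\bfS) = \bfS$ is an $(\infty,1)$-category, the definition gives $\Alg^{\degree}_{(0)}(\bfS) = \bfS^{\sim}$, the maximal $\infty$-groupoid inside $\bfS$, not $\bfS$ itself; and ``has duals for objects'' is a nontrivial condition on an $(\infty,0)$-category --- by Proposition \ref{spink} and Example \ref{stayn} it forces a Picard $\infty$-groupoid, which $\bfS^{\sim}$ generally is not. The claim should be read as beginning at $n=1$, where the paper's sketch applies directly; start your induction there.
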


Combining Claim \ref{jilk} with Theorem \ref{swisher3}, we conclude that
every $E_{n}$-algebra $A$ in $\bfS$ determines a symmetric monoidal
functor $Z_{A}: \Bord_{n}^{\fr} \rightarrow \Alg_{(n)}^{\degree}(\bfS) \rightarrow \Alg_{(n)}(\bfS)$
such that $Z_A(\ast) \simeq A$. In particular,
we get an induced functor $$\Omega^{n} \Bord_{n}^{\fr} \rightarrow \Omega^{n} \Alg_{(n)}(\bfS) \simeq \bfS,$$ which associates to every closed framed $n$-manifold $M$ an invariant $Z_A(M) \in \bfS$. 
Our goal in this section is to give an explicit construction of these invariants. We first review another approach to the theory of $E_{n}$-algebras.

\begin{notation}\label{diskop}
Fix $n \geq 0$, and let $D^{n}$ denote the (open) unit disk in $\R^{n}$. We will say that
an open embedding $D \rightarrow D$ is {\it rectilinear} if it can be extended to a linear map
$\R^{n} \rightarrow \R^{n}$, given by the formula $v \mapsto \lambda v + v_0$
for some $\lambda > 0$ and some $v_0 \in \R^{n}$. For each $k \geq 0$, we let
$\calE_{n}(k)$ denote the space of all $k$-tuples of rectilinear embeddings
$e_1, \ldots, e_k: D \rightarrow D$ whose images are disjoint. 

The collection of spaces $\{ \calE_{n}(k) \}$ can be organized into an {\it operad}:
that is, there are natural composition maps
$$ \calE_{n}(m) \times \calE_{n}(k_1) \times \cdots \times \calE_{n}(k_m) \rightarrow
\calE_{n}(k_1 + \ldots + k_m)$$
satisfying an appropriate associativity formula (we refer the reader to \cite{may} for
a more careful definition, and for a discussion of operads in general). We will refer to this
operad as the {\it little $n$-disks operad}, and denote it by $\calE_{n}$. 
\end{notation}

If $\bfS$ is any symmetric monoidal $(\infty,1)$-category, then it makes sense to talk about
$\calE_n$-algebras in $\bfS$: that is, objects $A \in \bfS$ which are equipped with
maps $\calE_{n}(k) \rightarrow \OHom_{\bfS}( A^{\otimes k}, A)$ for each $k \geq 0$, 
which are compatible with the composition on $\calE_{n}$ (up to coherent homotopy).

If $n=1$, then $\calE_{1}(k)$ is homotopy equivalent to a discrete space for every
$k$: namely, the discrete space of all linear orderings of a $k$-element set.
It follows that $\calE_1$ is homotopy equivalent to the usual associative operad, and
$\calE_1$-algebras can be identified with associative algebras in $\bfS$.
This observation admits the following amplification:

\begin{claim}\label{jilker}
Let $\bfS$ be a symmetric monoidal $(\infty,1)$-category. Then $\calE_{n}$-algebras in
$\bfS$ can be identified with $E_{n}$-algebras in $\bfS$.
\end{claim}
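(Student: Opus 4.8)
The plan is to prove Claim \ref{jilker} by comparing the two notions of $E_n$-algebra through a chain of intermediate operads, rather than directly. First I would recall that Definition \ref{slapp} builds $E_n$-algebras by iterating the construction $\bfS \mapsto \Alg(\bfS)$ of associative (i.e.\ $\calE_1$-) algebras $n$ times, and that by Claim \ref{jilker} in the case $n=1$ (the observation that $\calE_1$ is equivalent to the associative operad) an $\Alg(\bfS)$-algebra structure is the same as an $\calE_1$-algebra structure. So the statement will follow by induction once I establish a single "tensor product" or "additivity" formula for the little disks operads, namely that an $\calE_n$-algebra in $\bfS$ is the same data as an $\calE_1$-algebra in the $(\infty,1)$-category of $\calE_{n-1}$-algebras in $\bfS$.

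The key geometric input is the Dunn additivity theorem: the operad $\calE_n$ is equivalent, as an operad, to the tensor product $\calE_1 \otimes \calE_{n-1}$ (more generally $\calE_{p+q} \simeq \calE_p \otimes \calE_q$), where the tensor product of operads is characterized by the property that an $\calE_p \otimes \calE_q$-algebra in $\bfS$ is an $\calE_p$-algebra in $\calE_q$-algebras. The geometric idea behind this is simply that a rectilinear embedding of a disjoint union of $n$-cubes into an $n$-cube can be built by first partitioning in the first coordinate (an $\calE_1$-configuration) and then, in each slab, partitioning in the remaining $n-1$ coordinates (an $\calE_{n-1}$-configuration), and the space of ways to do this is contractible onto the space of genuine $\calE_n$-configurations. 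I would either cite this (for instance \cite{may} or the $\infty$-categorical treatment in \cite{dag2}) or sketch the contraction.

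Granting Dunn additivity, the induction runs as follows. For $n=1$, an $\calE_1$-algebra in $\bfS$ is an associative algebra in $\bfS$, i.e.\ an object of $\Alg(\bfS) = \Alg^{(1)}(\bfS)$, which is the definition of an $E_1$-algebra. For the inductive step, suppose the equivalence holds in dimension $n-1$: that is, $\calE_{n-1}$-algebras in any symmetric monoidal $(\infty,1)$-category $\bfT$ are the same as $E_{n-1}$-algebras in $\bfT$, so the $(\infty,1)$-category of $\calE_{n-1}$-algebras in $\bfS$ is equivalent, as a symmetric monoidal $(\infty,1)$-category, to $\Alg^{(n-1)}(\bfS)$. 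Then
$$ \{\calE_n\text{-algebras in }\bfS\} \simeq \{\calE_1\text{-algebras in }\calE_{n-1}\text{-algebras in }\bfS\} \simeq \Alg(\Alg^{(n-1)}(\bfS)) = \Alg^{(n)}(\bfS),$$
where the first equivalence is Dunn additivity and the second is the inductive hypothesis together with the $n=1$ case applied to the symmetric monoidal $(\infty,1)$-category $\Alg^{(n-1)}(\bfS)$. This identifies $\calE_n$-algebras with $E_n$-algebras, as desired; one should also check that the identification is compatible with the symmetric monoidal structures, which is automatic since all the equivalences in the chain are symmetric monoidal.

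The main obstacle is the precise $\infty$-categorical formulation and proof of Dunn additivity $\calE_p \otimes \calE_q \simeq \calE_{p+q}$: stating what the tensor product of $\infty$-operads is, checking that $\calE_1 \otimes \calE_{n-1}$-algebras really are $\calE_1$-algebras in $\calE_{n-1}$-algebras (rather than merely having matching homotopy types of algebra spaces), and verifying the geometric contraction at the level of spaces of embeddings with all coherences. Since this is an expository article I would not reproduce that argument; I would invoke it as a known result and reference \cite{may} and \cite{dag2}, treating the inductive bookkeeping above as the content of the proof.
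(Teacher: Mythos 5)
The paper does not actually prove Claim \ref{jilker}: it is stated as one of several unproved ``Claims'' in the expository text, with no argument supplied. So there is no in-paper proof to compare against. Your proposal via Dunn additivity is the standard (and correct) way to establish the statement, and your inductive bookkeeping is right: base case from $\calE_1 \simeq$ associative operad, inductive step from $\calE_n \simeq \calE_1 \otimes \calE_{n-1}$ together with the universal property of the operadic tensor product, and the recursive definition $\Alg^{(n)}(\bfS) = \Alg(\Alg^{(n-1)}(\bfS))$. You also correctly flag the one nontrivial point one must not elide: the inductive hypothesis must produce an equivalence of \emph{symmetric monoidal} $(\infty,1)$-categories, not merely an equivalence of underlying $(\infty,1)$-categories or of spaces of algebras, since the next iteration of $\Alg(-)$ and the formation of $\calE_1$-algebras both require the monoidal structure. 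Your instinct to invoke Dunn additivity as a citation rather than reprove it matches the level of detail the paper aims for; the precise $\infty$-categorical statement and proof appear in Lurie's treatment of $\infty$-operads (what the paper would cite as \cite{dag2}), and the classical topological antecedent is in \cite{may}.
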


\begin{remark}
It follows from Claim \ref{jilk} and Corollary \ref{ail} that if $\bfS$ is a good symmetric monoidal $(\infty,1)$-category, then the $\infty$-groupoid $\Alg_{n}^{\sim}(\bfS)$ carries
an action of the group $\OO(n)$. Claim \ref{jilker} makes this action more evident, since the group $\OO(n)$ acts
naturally on the little disks operad $\calE_{n}$ itself.
\end{remark}

We now come to the main idea of this section:

\begin{construction}\label{seaward}
We define an $(\infty,1)$-category $\calA$ as follows:
\begin{itemize}
\item The objects of $\calA$ are finite disjoint unions
$D^{n} \coprod D^n \coprod \cdots \coprod D^{n}$, where $D^n$ denotes the open unit
disk in $\R^n$.
\item Given a pair of objects $X, Y \in \calA$, we let $\OHom_{\calA}(X,Y)$ be the space of open embeddings $X \rightarrow Y$ which are rectilinear on each connected component component.
\end{itemize}

If $M$ is a framed $n$-manifold (not necessarily compact), we can define a functor $f_{M}$ from $\calA$ into the $(\infty,1)$-category of topological spaces as follows: for every object $X \in \calA$, we let
$f_M(X)$ denote the space of framed open embeddings $X \rightarrow M$: that is, the space
of pairs $(j, h)$ where $j: X \rightarrow M$ is an open embedding and $h$ is a homotopy
between the canonical framing on $X$ and the framing obtained by pulling back the framing of $M$.
Let $\calA_{M}$ denote the $(\infty,1)$-category obtained by applying the Grothendieck
construction (Construction \ref{groth}) to $f_{M}$: in other words, $\calA_{M}$ is the
$(\infty,1)$-category whose objects are pairs $(X, \eta)$ where $X \in \calA$ and
$\eta: X \rightarrow M$ is a framed embedding. We observe that there is a canonical
forgetful functor $\calA_{M} \rightarrow \calA$.

Let $\bfS$ be a good symmetric monoidal $(\infty,1)$-category, and let $A$ be an
$E_{n}$-algebra in $\bfS$. Then $A$ carries an action of the little disks operad
$\calE_{n}$, and in particular determines a functor $g: \calA \rightarrow \bfS$
which carries a disjoint union of $k$ copies of $D^n$ into the tensor power $A^{\otimes k}$.
For every framed $n$-manifold $M$, we let $\int_{M} A$ denote a homotopy colimit of the composite functor $$ \calA_{M} \rightarrow \calA \stackrel{g}{\rightarrow} \bfS.$$
(Such a colimit always exists, provided that $\bfS$ is good.)
We will refer to $\int_{M} A$ as the {\it topological chiral homology of $M$ with coefficients in $A$}.
\end{construction}

\begin{remark}
One can think of the topological chiral homology $\int_{M} A$ as a kind of continuous tensor product
$\otimes_{x \in M} A$ indexed by points of the manifold $M$.
\end{remark}

\begin{remark}
The terminology of Construction \ref{seaward} is intended to invoke an analogy with the theory
of chiral homology introduced by Beilinson and Drinfeld (see \cite{beilinson}). The basic idea of our construction is the same, except that we use constant $\bfS$-valued sheaves on framed manifolds
in place of $\calD$-modules on algebraic varieties.
\end{remark}

\begin{example}
Suppose that $\bfS$ is the $(\infty,1)$-category of topological spaces. 
Let $X$ be a pointed topological space which is $n$-connective (that is, the homotopy groups
$\pi_i X$ vanish for $i < n$), and let $A$ denote the $n$th loop space $\Omega^{n} X$. Then
$A$ carries an action of the little disks operad $\calE_{n}$, and can therefore be regarded
as an $E_{n}$-algebra. For any framed (possibly noncompact) framed $n$-manifold $M$, the
integral $\int_{M} A$ can be identified with the space $C_c(M,X)$ of compactly supported
functions $M \rightarrow X$ (that is, functions from $M$ to $X$ which carry $M - K$ to the base point of $M$ for some compact subset $K \subseteq M$). 
\end{example}

\begin{example}\label{hochhom}
Let $\bfS$ be a good symmetric monoidal $(\infty,1)$-category, and let
$A$ be an associative algebra object of $\bfS$. Then $\int_{S^1} A$ can be identified with
the {\it Hochschild homology} of $A$, which is given by the relative tensor product
$$ A \otimes_{ A \otimes A^{op} } A.$$
\end{example}

\begin{example}
Let $\bfS$ be a good symmetric monoidal $(\infty,1)$-category, and suppose that
$A$ is a commutative algebra object of $\bfS$.
In this case, the topological chiral homology $\int_{M} A$ is again a commutative
algebra, and can be characterized by the following universal mapping property:
$$ \OHom_{ \CAlg(\bfS)}( \int_{M} A, B) \simeq \OHom_{ \Cat_{(\infty,0)}}( M, \OHom_{ \CAlg(\bfS)}(A,B) )$$
Here $\CAlg(\bfS)$ denotes the $(\infty,1)$-category of commutative algebra objects in $\bfS$.
\end{example}

Fix a good symmetric monoidal $(\infty,1)$-category $\bfS$ and
an $E_{n}$-algebra $A$ in $\bfS$.
The topological chiral homology $\int_{M} A$ is covariant with respect to
open inclusions of (framed) $n$-manifolds: an open inclusion $M_0 \subseteq M$
induces a functor $\calA_{M_0} \rightarrow \calA_{M}$, which in turn determines a map
of homotopy colimits $\int_{M_0} A \rightarrow \int_{M} A$. Moreover, one can show that the functor
$M \mapsto \int_{M} A$ carries disjoint unions of framed $n$-manifolds to tensor products in $\bfS$.

Suppose now that $M$ is an $n$-framed manifold of dimension $m \leq n$, and let
$D^{n-m}$ denote the open unit disk in $\R^{n-m}$. Then $M \times D^{n-m}$ can
be regarded as a framed $n$-manifold, and we define
$\int_{M} A$ to be $\int_{M \times D^{n-m}} A$. It follows from the above remarks that
$\int_{M} A$ carries an action of the operad $\calE_{n-m}$, and can therefore be regarded as
an $E_{n-m}$-algebra in $\bfS$. In the special case where $M$ consists of a single point, we have a canonical isomorphism $\int_{M} A \simeq A$ of $E_{n}$-algebras in $\bfS$. 

Let $M$ be a framed $n$-manifold with boundary. Let $M^{0}$ denote the interior of $M$.
Choosing a collar of the boundary, we obtain a bijection $M \simeq M^{0} \coprod ([0,1] \times \bd M)$,
which induces an open embedding $M^{0} \coprod ( D^1 \times \bd M) \rightarrow M^{0}$.
Passing to topological chiral homology, we obtain a map
$$ (\int_{M^{0}} A) \otimes ( \int_{\bd M} A) \rightarrow \int_{M^{0}} A,$$
which exhibits the object $\int_{M^{0}} A$ as a right module over the associative algebra object $\int_{\bd M} A$. More generally, if $M$ is a bordism from an $n$-framed $(n-1)$-manifolds
$N$ and $N'$, then we can regard $\int_{M^{0}} A$ as an $(\int_{N} A,\int_{N'} A)$-bimodule.
Elaborating on these constructions, one can prove the following:

\begin{theorem}\label{explicit}
Let $A$ be an $E_{n}$-algebra in a good symmetric monoidal $(\infty,1)$-category $\bfS$. Then the construction $M \mapsto \int_{M} A$ can be extended to a symmetric monoidal functor
$Z: \Bord_{n}^{\fr} \rightarrow \Alg^{\degree}_{(n)}(\bfS).$ In particular, we have an isomorphism
of $E_{n}$-algebras $Z(\ast) \simeq A$.
\end{theorem}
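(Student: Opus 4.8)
Let $A$ be an $E_n$-algebra in a good symmetric monoidal $(\infty,1)$-category $\bfS$. Then $M \mapsto \int_M A$ extends to a symmetric monoidal functor $Z: \Bord_n^{\fr} \rightarrow \Alg^{\degree}_{(n)}(\bfS)$ with $Z(\ast) \simeq A$.

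The plan is to combine the abstract classification furnished by the cobordism hypothesis with a direct construction of the topological chiral homology functor. By Claim \ref{jilk}, the symmetric monoidal $(\infty,n)$-category $\Alg^{\degree}_{(n)}(\bfS)$ has duals, so Theorem \ref{swisher3} already guarantees a symmetric monoidal functor $Z_A \colon \Bord_n^{\fr} \rightarrow \Alg^{\degree}_{(n)}(\bfS)$, unique up to isomorphism, with $Z_A(\ast) \simeq A$. The content of Theorem \ref{explicit} is that this functor is computed by $M \mapsto \int_M A$; it therefore suffices to produce \emph{any} symmetric monoidal functor $Z \colon \Bord_n^{\fr} \rightarrow \Alg^{\degree}_{(n)}(\bfS)$ built from topological chiral homology with $Z(\ast) \simeq A$, since the uniqueness clause of Theorem \ref{swisher3} will then identify $Z$ with $Z_A$.

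To build $Z$, I would set $Z(M) = \int_{M^{0}} A$ on a $k$-morphism $M$ of $\Bord_n^{\fr}$ (an $n$-framed $k$-manifold with corners), where $M^{0}$ is the interior and, following the discussion preceding the theorem, $\int_{M^{0}} A$ denotes $\int_{M^{0} \times D^{n-k}} A$ and hence carries an $E_{n-k}$-algebra structure; for $k=0$ this recovers $A^{\otimes m}$ on a framed $m$-point set, and for $k=n$ it is an object of $\bfS$. The structural properties of topological chiral homology recalled above supply precisely the data of a functor into $\Alg_{(n)}(\bfS)$: covariance along open framed embeddings makes $Z$ invariant under diffeomorphisms, so that it descends to a functor of $(\infty,n)$-categories landing in the truncation $\Alg^{\degree}_{(n)}(\bfS)$; the equivalences $\int_{M \sqcup M'} A \simeq \int_M A \otimes \int_{M'} A$ and $\int_{\emptyset} A \simeq {\bf 1}$ provide the symmetric monoidal structure; and for a bordism $M$ from $N$ to $N'$ the module structures on $\int_{M^{0}} A$ over $\int_{N} A$ and $\int_{N'} A$ exhibit $Z(M)$ as a $1$-morphism of $\Alg_{(n)}(\bfS)$ with the correct source and target.

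The main obstacle is functoriality: composition of $k$-morphisms in $\Bord_n^{\fr}$ is gluing of bordisms along a common boundary face, and one must show that $Z$ sends this to composition in $\Alg_{(n)}(\bfS)$, which is relative tensor product of bimodules. This is a $\otimes$-excision (pushforward) statement for topological chiral homology: for a composite $M = M_{1} \cup_{N} M_{2}$, collapsing a collar $N \times D^{1}$ of the gluing locus and pushing forward yields a canonical equivalence $\int_{M^{0}} A \simeq \int_{M_{1}^{0}} A \otimes_{\int_{N \times D^{1}} A} \int_{M_{2}^{0}} A$, and these equivalences must be shown to be compatible with iterated gluings, with the disjoint-union structure, and with the higher coherences required of a symmetric monoidal functor of $(\infty,n)$-categories; the two-sided bar construction in a good $(\infty,1)$-category (cf.\ \cite{dag2}) is the natural tool for organizing this. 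Establishing the excision property and its coherences is where essentially all the work lies. Once it is in place, $Z(\ast) \simeq A$ is immediate from $\int_{\ast} A \simeq A$, and an appeal to Theorem \ref{swisher3} identifies $Z$ with $Z_A$, completing the proof.
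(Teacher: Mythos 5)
Your construction is essentially the one the paper has in mind: define $Z$ on a $k$-morphism $M$ by $\int_{M^{0}} A$, use covariance for open framed embeddings and compatibility with disjoint unions to obtain the symmetric monoidal and $(\infty,n)$-categorical structure, use the collar-induced module structures to get the correct source/target data, and recognize that the remaining content is $\otimes$-excision for topological chiral homology (that gluing of bordisms passes to relative tensor products of bimodules) together with its higher coherences — exactly the point the paper glosses over with ``elaborating on these constructions.'' One small remark: the opening detour through Claim \ref{jilk} and Theorem \ref{swisher3} is a nice consistency check but is logically redundant, since once you have carried out the direct construction and verified excision and coherence you have already produced the functor the theorem asks for; the appeal to uniqueness identifies it with the one abstractly furnished by the cobordism hypothesis but does not save you any of the work.
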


Theorem \ref{explicit} provides an explicit construction of the topological field theory
$Z: \Bord_{n}^{\fr} \rightarrow \Alg_{n}(\bfS)$ associated to an object
$A \in \Alg_{n}(\bfS)$. Namely, the value of $Z$ on a framed $n$-manifold $M$ is
given by $\int_{M} A$, which can in turn be described as a certain homotopy colimit.

\begin{remark}
Construction \ref{seaward} actually gives quite a bit more than the field theory
$\Bord_{n}^{\fr} \rightarrow \Alg_n(\bfS)$: the topological chiral homology
$\int_{M} A$ can be defined for {\em any} framed $n$-manifold $M$, whether or not $M$ is compact.
It can also be defined on a larger class of manifolds: for example, in dimension $4$, we can use topological manifolds equipped with a trivialization of their tangent microbundles.
\end{remark}

\begin{remark}
Theorem \ref{explicit} can be regarded a concrete version of the cobordism hypothesis for
framed manifolds. One can use similar ideas to produce concrete analogues of the more
exotic forms of the cobordism hypothesis. For example, suppose we are given a continuous homomorphism of topological groups $G \rightarrow \OO(n)$. Then $G$ acts on the
operad $\calE_{n}$, and it makes sense to talk about a $\calE_{n}$-algebra in $\bfS$ with
a compatible action of $G$. In this case, we obtain a symmetric monoidal functor
$Z: \Bord_{n}^{G} \rightarrow \Alg_{n}(\bfS)$ which we can think of as carrying
a $G$-manifold $M$ to the {\em twisted} topological chiral homology
$\otimes_{x \in M} A'_{x}$, where $A'$ denotes the bundle of $E_{n}$-algebras
on $M$ determined by the $G$-structure on $M$ and the action of $G$ on $A$.
\end{remark}

\begin{remark}
Theorem \ref{explicit} is usually not very satisfying, because it describes a functor
$Z: \Bord_{n}^{\fr} \rightarrow \Alg^{\degree}_{(n)}(\bfS)$ whose values on closed framed $n$-manifolds
are objects of an $(\infty,1)$-category $\bfS$, rather than concrete invariants like numbers.
We might attempt to remedy this by contemplating $(n+1)$-dimensional topological field theories taking values in the $(\infty,n+1)$-category $\Alg_{(n)}(\bfS)$. This turns out to be somewhat more difficult, because
$\Alg_{(n)}(\bfS)$ does not have duals in general (in other words, there are
$n$-morphisms in $\Alg_{(n)}(\bfS)$ which do not admit left or right adjoints). Consequently,
not every object of $\Alg_{(n)}(\bfS)$ is fully dualizable. In general, the condition that
an $E_{n}$-algebra $A \in \bfS$ be fully dualizable as an object of $\Alg_{(n)}(\bfS)$ amounts
to a very strong finiteness condition on $A$. By unwinding the proof of the cobordism hypothesis, one can formulate this finiteness condition in reasonably concrete terms: it amounts to the requirement that $A \simeq \int_{ D^{k} } A$ be dualizable as a module over $\int_{ S^{k-1} } A$ for
$0 \leq k \leq n$. For example, when $n = 1$, we must require that $A$ admits a dual
both as an object of $\bfS$ and as an $A \otimes A^{op}$-module. When
$\bfS$ is the $(\infty,1)$-category $\tChain(k)$ described in Definition \ref{spout2}, then we
can identify algebra objects $A$ of $\bfS$ with differential graded algebras over $k$; such an
object is fully dualizable in $\Alg_{(1)}(\bfS)$ if and only if $A$ is a {\it smooth and proper} differential graded algebra (see, for example, \cite{toensmooth}).
\end{remark}

\subsection{The Cobordism Hypothesis in Low Dimensions}\label{cost}

Our goal in this section is to discuss some consequences of the cobordism hypothesis and related results in the case of manifolds of dimension $1$ and $2$. In particular, we will relate the contents of this paper to the work of Costello (\cite{costello}) and to the Chas-Sullivan theory of string topology operations on the homology of loop spaces of manifolds (\cite{chassullivan}).

We begin by studying topological field theories in dimension $1$. Let $\calC$ be a symmetric monoidal $(\infty,1)$-category, and let $X \in \calC$ be a dualizable object. According to Theorem \ref{swisher3}, there is an essentially unique symmetric monoidal functor $Z: \Bord_{1}^{\ori} \simeq \Bord_{1}^{\fr} \rightarrow \calC$ satisfying
$Z(\ast) \simeq X$. In Example \ref{1dim}, we sketched a direct proof of this fact in the special case
where $\calC$ is the ordinary category of vector spaces. This proof exploits the fact that
manifolds of dimension $1$ are very simple (consisting only of intervals and circles), and
can be applied more generally whenever $\calC$ is an ordinary category. However, the
$(\infty,1)$-categorical case is substantially subtle. The $(\infty,1)$-category
$\Bord_{1}^{\ori}$ is {\em not} equivalent to an ordinary category. For example, the mapping space
$\OHom_{ \Bord_{1}^{\ori} }( \emptyset, \emptyset)$ can be identified with a classifying space
for oriented closed $1$-manifolds. In particular, it contains as a connected component
a classifying space $\CP^{\infty} \simeq \BSO(2)$ for oriented circle bundles. If
$Z: \Bord_{1}^{\ori} \rightarrow \calC$ is a symmetric monoidal functor, then
$Z$ induces a map $f: \CP^{\infty} \rightarrow \OHom_{\calC}( {\bf 1}, {\bf 1})$.
Roughly speaking, $f$ is determined by the values of $Z$ on circles. Given $X = Z(\ast)$, we can
compute $Z(S^1)$ as in Example \ref{1dim}, by breaking the circle $S^1$ into two half-circles.
The result is that we can identify $Z(S^1)$ with a $1$-morphism $\dim(X): {\bf 1} \rightarrow {\bf 1}$
which is given by composing the evaluation map $\ev_{X}: X \otimes X^{\vee} \rightarrow {\bf 1}$ with the coevaluation map $\coev_{X}: {\bf 1} \rightarrow X \otimes X^{\vee}$. We refer to
$\dim(X)$ as the {\it dimension of $X$} (this is motivated by the example where
$\calC$ is the category of vector spaces over a field, where we recover the classical notion of the
dimension of a vector space). In the $(\infty,1)$-categorical case, this above
argument does not determine the map $f$: it only determines the value of $f$ on a single
point of the classifying space $\CP^{\infty}$. The map $f$ encodes the idea that the object $Z(S^1) \in \OHom_{\calC}( {\bf 1}, {\bf 1})$ carries an action of the symmetry group $\SO(2)$. However, our calculation of $Z(S^1)$ proceeds by breaking the circle into two pieces, and thereby destroying its symmetry. Consequently, Theorem \ref{swisher3} has an interesting consequence even in dimension $1$:

\begin{proposition}\label{kij}
Let $\calC$ be a symmetric monoidal $(\infty,1)$-category, and let $X$ be a dualizable object
of $\calC$. Then the object $\dim_{X} \in \OHom_{\calC}( {\bf 1}, {\bf 1})$ carries a canonical
action of the circle group $S^1 = \SO(2)$.
\end{proposition}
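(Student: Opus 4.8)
The plan is to deduce Proposition \ref{kij} directly from the cobordism hypothesis in dimension $1$ (Theorem \ref{swisher3}), together with the computation of the value of a $1$-dimensional field theory on the circle carried out in Example \ref{1dim}. Given a dualizable object $X \in \calC$, Theorem \ref{swisher3} produces an essentially unique symmetric monoidal functor $Z_X: \Bord_{1}^{\fr} \rightarrow \calC$ with $Z_X(\ast) \simeq X$. Since $\Bord_{1}^{\ori} \simeq \Bord_{1}^{\fr}$ (the orthogonal group $\OO(1)$ is discrete and the relevant tangential structures agree; see the discussion following Variant \ref{barvar} and the examples in \S \ref{ONACT}), we may equally regard $Z_X$ as a functor on $\Bord_{1}^{\ori}$. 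The circle $S^1$, with its standard orientation, is a closed oriented $1$-manifold, hence a closed $1$-morphism in $\Bord_{1}^{\ori}$; that is, an object of $\Omega \Bord_{1}^{\ori} = \OHom_{ \Bord_1^{\ori}}(\emptyset, \emptyset)$. Applying $Z_X$ and using the analysis of Example \ref{1dim}, the object $Z_X(S^1) \in \OHom_{\calC}({\bf 1}, {\bf 1})$ is identified with $\dim_X = \ev_X \circ \coev_X$.

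The key point is then to exhibit the $\SO(2)$-action. The group $\SO(2)$ acts on $S^1$ by rotations; more precisely, this is the identity component of the action of $\OO(1) \times \OO(1)$-or really of the diffeomorphism group-on the circle, but for our purposes what matters is that the diffeomorphism group of the oriented circle has $\SO(2)$ as a deformation retract, so rotations generate a canonical $\SO(2)$-worth of automorphisms of $S^1$ as an object of $\Omega \Bord_1^{\ori}$. In the language of \S \ref{sugar} and \S \ref{swugg}, these automorphisms are encoded by the space $\OHom_{\Omega \Bord_1^{\ori}}(S^1, S^1)$, whose relevant component has the homotopy type of $\BSO(2)^{?}$-more concretely, the self-equivalences of $S^1$ in the underlying $\infty$-groupoid $(\Omega \Bord_1^{\ori})^{\sim}$ form a space equivalent to $\SO(2)$ (with $\BSO(2) \simeq \CP^\infty$ appearing as the classifying space of circle bundles, exactly as in the discussion preceding the proposition). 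Functoriality of $Z_X$ then transports this $\SO(2)$-action on $S^1$ to an $\SO(2)$-action on $Z_X(S^1) \simeq \dim_X$ in $\OHom_{\calC}({\bf 1}, {\bf 1})$. More formally, $Z_X$ restricts to a map of $\infty$-groupoids $(\Omega \Bord_1^{\ori})^{\sim} \rightarrow \OHom_{\calC}({\bf 1},{\bf 1})^{\sim}$, hence a map of automorphism spaces $\Omega_{S^1}(\Omega \Bord_1^{\ori})^{\sim} \rightarrow \Omega_{\dim_X} \OHom_{\calC}({\bf 1},{\bf 1})^{\sim}$; precomposing with the inclusion $\SO(2) \hookrightarrow \Omega_{S^1}(\Omega \Bord_1^{\ori})^{\sim}$ yields the desired action.

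I would organize the argument in three steps: (1) invoke Theorem \ref{swisher3} to obtain $Z_X$ and identify $\Omega \Bord_1^{\ori}$ with the relevant bordism $(\infty,1)$-category whose objects are closed $1$-manifolds; (2) identify $Z_X(S^1)$ with $\dim_X$ via the half-circle decomposition of Example \ref{1dim}; (3) observe that $\SO(2)$ acts on $S^1$ by rotations, producing a map $\SO(2) \rightarrow \mathrm{Aut}(S^1)$ in $\Omega \Bord_1^{\ori}$, and apply $Z_X$ to obtain the action on $\dim_X$. The main obstacle-and the reason this is not entirely trivial-is step (3) combined with checking that the resulting action is genuinely nontrivial and canonical: one must make precise that the space of automorphisms of the oriented circle in $\Bord_1^{\ori}$ (equivalently, the loop space of its classifying space of circle bundles $\CP^\infty \simeq \BSO(2)$) really does contain a canonical copy of $\SO(2)$, and that the functoriality map respects it; this requires being careful about the complete-Segal-space model of $\Bord_1^{\ori}$ from \S \ref{swugg} rather than working with the homotopy category. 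The subtlety is exactly the one flagged in the paragraph preceding the proposition: the naive computation of $Z(S^1)$ by cutting destroys the symmetry, so the content of the proposition lies in the fact that $Z_X$, as a functor of $(\infty,1)$-categories, sees the symmetry that the cut-and-glue computation cannot.
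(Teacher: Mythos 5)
Your proposal is correct and follows essentially the same route as the paper: identify the path component of $S^1$ in $\OHom_{\Bord_1^{\ori}}(\emptyset,\emptyset)$ with $\CP^\infty \simeq \BSO(2)$, then use functoriality of $Z_X$ to transport the resulting $\SO(2)$-action to $\dim_X$. The only cosmetic difference is that the paper phrases this as the induced map $\CP^\infty \rightarrow \OHom_{\calC}({\bf 1},{\bf 1})$ rather than as its looping (a homomorphism from $\SO(2)$ into the automorphism space of $\dim_X$); the two formulations are equivalent.
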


\begin{example}
Let $\bfS$ be a symmetric monoidal $(\infty,1)$-category, and let
$A$ be an associative algebra object of $\bfS$. Then we can regard
$A$ as a (dualizable) object of $\Alg_{(1)}^{\degree}(\bfS)$, and thereby obtain
an object $\dim(A) \in \Omega \Alg_{(1)}^{\degree}(\bfS) \subseteq \Omega \Alg_{(1)}(\bfS) \simeq \bfS$.
In this case, we can identify $\dim(A)$ with the Hochschild homology
$\int_{S^1} A \simeq A \otimes_{ A \otimes A^{op} } A$ (see Example \ref{hochhom}), and the circle action of Proposition \ref{kij} recovers the classical circle action on Hochschild homology
(which can be obtained by computing the relative tensor product using a cyclic bar resolution).
\end{example}

Let us now analyze the cobordism hypothesis in dimension $2$. Our first step is to give
a simple criterion for full dualizability.

\begin{proposition}\label{swip}
Let $\calC$ be a symmetric monoidal $(\infty,2)$-category, and let $X \in \calC$ be an object.
Then $X$ is fully dualizable if and only if the following conditions are satisfied:
\begin{itemize}
\item[$(1)$] The object $X$ admits a dual $X^{\vee}$.
\item[$(2)$] The evaluation map $\ev_X: X \otimes X^{\vee} \rightarrow {\bf 1}$ admits both
a right and a left adjoint.
\end{itemize}
\end{proposition}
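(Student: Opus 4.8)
The plan is to reduce full dualizability of $X$ to the concrete conditions $(1)$ and $(2)$ by unwinding the definition of $\calC^{\fd}$ (Claim \ref{swist} and Definition \ref{justic}), using the fact that for an $(\infty,2)$-category the process of passing to the fully dualizable part only involves two stages: first discarding objects without duals, then discarding $1$-morphisms without left and right adjoints. Concretely, $X$ is fully dualizable if and only if (a) $X$ is dualizable in the sense of Definition \ref{dua}, and (b) the entire $(\infty,1)$-category $\OHom_{\calC}({\bf 1},{\bf 1})$ built out of $X$ and its structure maps survives the operation of throwing away morphisms without adjoints — but since in $\calC^{\fd}$ we only keep morphisms admitting both adjoints, condition (b) amounts to requiring that all the $1$-morphisms appearing in the duality data for $X$ admit left and right adjoints. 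So the real content is to show that requiring $\ev_X$ (equivalently $\coev_X$) to admit both adjoints is \emph{sufficient} to generate all the needed adjoints.

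The necessity of $(1)$ is immediate: $\calC^{\fd}$ has duals, so any object in its essential image has a dual. The necessity of $(2)$ follows because in $\calC^{\fd}$ every $1$-morphism admits both a left and a right adjoint (this is part of what ``has adjoints'' means, Definition \ref{adjman}), and $\ev_X$ is a $1$-morphism between objects $X \otimes X^\vee$ and ${\bf 1}$ that lie in $\calC^{\fd}$ once $X$ does; adjoints are detected in the homotopy $2$-category, and the inclusion $i: \calC^{\fd} \to \calC$ preserves them. For sufficiency, I would argue as follows. Assume $(1)$ and $(2)$. The key observation is that in a symmetric monoidal $2$-category, once $X$ has a dual $X^\vee$, \emph{every} $1$-morphism in $\calC$ can be ``rotated'' using $\ev_X$, $\coev_X$, $\ev_{X^\vee}$, $\coev_{X^\vee}$ to be expressed in terms of $\ev_X$ (this is the standard string-diagram / ``partial trace'' manipulation, and is essentially the $2$-categorical incarnation of the identity $\OHom_{\calB_2}(F(X),F(Y)) \simeq \OHom_{\calB_2}({\bf 1}, F(X^\vee \otimes Y))$ used in Notation \ref{goob}). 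Thus it suffices to check that $X$, together with the single $1$-morphism $\ev_X$ and its adjoints, generates a sub-$(\infty,2)$-category of $\calC$ which has duals; equivalently, that the smallest symmetric monoidal $(\infty,2)$-category with duals containing $X$ maps to $\calC$. One can then invoke Theorem \ref{swisher3} (the cobordism hypothesis in dimension $2$): $\Bord_2^{\fr}$ is the free symmetric monoidal $(\infty,2)$-category with duals on one object, so producing a functor $Z: \Bord_2^{\fr} \to \calC$ with $Z(\ast) = X$ is equivalent to exhibiting the relevant duality and adjunction data. I would check by hand that the generating data of $\Bord_2^{\fr}$ — the point, the two ``elbow'' $1$-morphisms (evaluation and coevaluation for a point), and the cusp $2$-morphisms witnessing the adjunctions — can all be realized in $\calC$ from $X$, $X^\vee$, $\ev_X$, its left adjoint $\ev_X^L$, its right adjoint $\ev_X^R$, and the zigzag identities; the adjunctions of the cusps are exactly the statement that $\ev_X \dashv \ev_X^R$ and $\ev_X^L \dashv \ev_X$.

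The main obstacle will be step (b)'s sufficiency claim: verifying that the two adjoints of $\ev_X$ really do suffice to produce \emph{all} the adjoints required by ``$\calC$ has adjoints for $1$-morphisms'' in the sub-$(\infty,2)$-category generated by $X$. The subtlety is that $\calC^{\fd}$ is constructed by an iterative discarding procedure, and a priori one might worry that removing some $1$-morphism forces the removal of an object or creates a cascade; one must show the procedure stabilizes after controlling $\ev_X$. The cleanest way around this is precisely to bypass the iterative description and instead use the universal property: by Theorem \ref{swisher3}, $X$ being in the essential image of $\calC^{\fd} \to \calC$ is equivalent to the existence of $Z: \Bord_2^{\fr} \to \calC$ with $Z(\ast) \simeq X$, and I would reduce the construction of such a $Z$ to an explicit finite check that the handle/cusp generators of $\Bord_2^{\fr}$ (as described via Morse theory in \S\ref{kuji}, or more simply via the known generators-and-relations presentation of the free symmetric monoidal $2$-category with duals on one object) lift to $\calC$. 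Each generator corresponds to a piece of data among $\{X, X^\vee, \ev_X, \coev_X, \ev_X^L, \ev_X^R, \text{zigzags}\}$, and each relation is a standard adjunction/duality identity, so the verification is mechanical once the bookkeeping is set up. I would present this bookkeeping carefully since it is the crux, but not expand the routine identities.
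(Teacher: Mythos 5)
Your strategy — invoke the cobordism hypothesis to reduce full dualizability of $X$ to constructing a symmetric monoidal functor $Z: \Bord_2^{\fr} \to \calC$ with $Z(\ast) = X$, then lift a generators-and-relations presentation of $\Bord_2^{\fr}$ — is genuinely different from the paper's, and it is substantially heavier.

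The paper's proof is elementary and self-contained. It introduces the Serre automorphism $S: X \to X$ by writing $\ev_X^R = (S \otimes \id_{X^\vee}) \circ \coev_X$ and $\ev_X^L = (T \otimes \id_{X^\vee}) \circ \coev_X$, observes that $T = S^{-1}$, and then shows by explicit formulas that the $\Z$-indexed family $\{\ev_X \circ (S^n \otimes \id_{X^\vee})\}$ and $\{(S^n \otimes \id_{X^\vee}) \circ \coev_X\}$ is closed under passing to left and right adjoints: the (left or right) adjoint of each member is again a member. Consequently $\ev_X$ and $\coev_X$ lie in the largest subcategory $\calC_0 \subseteq \calC$ in which every $1$-morphism admits both adjoints, and $X$ is dualizable in $\calC_0$, hence fully dualizable. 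This ``rotate by the Serre automorphism'' bookkeeping — the crux of the paper's argument — is entirely absent from your proposal, which is worth noting since Proposition \ref{swip} is precisely where the Serre automorphism is introduced (Remark \ref{umblecrown}).

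Beyond being a different route, your approach has a near-circularity you should be more careful about. You say that producing $Z: \Bord_2^{\fr} \to \calC$ with $Z(\ast) \simeq X$ ``is equivalent to exhibiting the relevant duality and adjunction data'' — but that equivalence \emph{is} the content of the cobordism hypothesis in dimension $2$. To construct $Z$ from $(X, X^\vee, \ev_X, \ev_X^L, \ev_X^R)$ directly, you would need an independent finite presentation of $\Bord_2^{\fr}$ by generators and relations. Such a presentation exists (this is essentially the content of \cite{csp}), but it is a substantial input, not a ``routine'' bookkeeping step; and your list of generators (``the point, the two elbows, the cusp $2$-morphisms'') is incomplete even schematically — you are missing the index-$1$ handle (saddle), and in the framed (as opposed to oriented) case the generating morphisms and relations are more elaborate because framed circles are indexed by $\Z$. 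The main deficit in your proposal is thus that the ``crux'' you defer to at the end is actually a second, harder theorem; the paper's proof avoids it entirely.
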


\begin{proof}
Conditions $(1)$ and $(2)$ are obviously necessary. To prove the converse, let us suppose
that $(1)$ and $(2)$ are satisfied. Then $\ev_{X}$ admits right and left adjoints
$$\ev_{X}^{R}, \ev_{X}^{L}: {\bf 1} \rightarrow X \otimes X^{\vee}.$$
Then there exist $1$-morphisms $S,T: X \rightarrow X$ such that
$\ev_{X}^{R} = (S \otimes \id_{X^{\vee}}) \circ \coev_{X}$ and
$\ev_{X}^{L} = (T \otimes \id_{X^{\vee}}) \circ \coev_{X}$. It is not difficult to show that the endomorphisms
$S$ and $T$ are inverse to each other, and in particular adjoints of one another.
Consequently, we deduce that for every integer $n$, the morphism
$\ev_{X} \circ (S^{n} \otimes \id_{X^{\vee}})$ has a right adjoint
(given by $(S^{1-n} \otimes \id_{X^{\vee}}) \circ \coev_{X}$) and
a left adjoint (given by $(S^{-1-n} \otimes \id_{X^{\vee}}) \circ \coev_{X}$). 
These formulas show that $(S^{n} \otimes \id_{X^{\vee}}) \circ \coev_{X}$ also
admits both right and left adjoints. Let $\calC_0$ denote the largest subcategory
of $\calC$ such that every $1$-morphism in $\calC_0$ admits both a right and a left adjoint.
Then $\ev_{X}$ and $\coev_{X}$ belong to $\calC_0$, so that $X$ is dualizable in
$\calC_0$ and therefore a fully dualizable object of $\calC$.
\end{proof}

\begin{remark}\label{umblecrown}
In the situation of Proposition \ref{swip}, we will refer to the map
$S: X \rightarrow X$ as the {\it Serre automorphism} of $X$. This terminology is motivated by the situation
where $\calC$ is the $(\infty,2)$-category of cocomplete differential graded categories over a field $k$ (suitably defined). If $\calD$ is a fully dualizable object of $\calC$ which is generated by compact objects, then there is a canonical endofunctor
$S: \calD \rightarrow \calD$ (called the {\it Serre functor} on $\calD$), which is characterized
by the existence of natural quasi-isomorphisms
$$ \bHom_{\calD}( C, S(D)) \simeq \bHom_{\calD}( D, C)^{\vee}$$ 
for every pair of compact objects $C$ and $D$. In the special case where
$\calD$ is the differential graded category of quasi-coherent complexes on
a smooth projective variety $X$, the Serre functor $S: \calD \rightarrow \calD$ is given by tensoring
with the canonical line bundle $\omega_{X}$ on $X$ and shifting by the dimension of $X$, and the quasi-isomorphism is provided by Serre duality.
\end{remark}

\begin{remark}\label{cui}
Let $\calC$ be a symmetric monoidal $(\infty,2)$-category. According to
Corollary \ref{ail}, there is an action of the group $\OO(2)$ on the $\infty$-groupoid of fully
dualizable objects of $\calC$. In particular, for every fully dualizable object $X \in \calC$,
we obtain a map $S^1 \simeq \SO(2) \times \{X\} \rightarrow \calC^{\sim}$ which carries the base
point of $S^{1}$ to the object $X \in \calC$, which gives rise to an automorphism of $X$. This automorphism coincides with the Serre automorphism constructed more explicitly in Proposition \ref{swip}. To prove this, it suffices to consider the universal case where $\calC$ is freely generated by a fully dualizable object $X$: that is, we may assume that $\calC = \Bord_{2}^{\fr}$; we leave this as an elementary exercise for the reader.
\end{remark}

It follows from Remark \ref{cui} that if $X$ is an $\SO(2)$-fixed point in the $\infty$-groupoid
of fully dualizable objects of a symmetric monoidal $(\infty,2)$-category $\calC$, then the
Serre automorphism $S: X \rightarrow X$ is the identity. However, we can formulate the condition
of being an $\SO(2)$-fixed point without the full strength of the assumption that $X$ is fully dualizable.

\begin{definition}
Let $\calC$ be a symmetric monoidal $(\infty,2)$-category. A {\it Calabi-Yau} object of
$\calC$ consists of the following data:
\begin{itemize}
\item[$(1)$] A dualizable object $X \in \calC$.
\item[$(2)$] A morphism $\eta: \dim(X) = \ev_{X} \circ \coev{X} \rightarrow {\bf 1}$ in $\Omega \calC$,
which is equivariant with respect to the action of $\SO(2)$ on $\dim(X)$ (see Proposition \ref{kij}) and
is the counit for an adjunction between $\ev_{X}$ and $\coev_{X}$.
\end{itemize}
\end{definition}

\begin{remark}
If $\calC$ is a symmetric monoidal $(\infty,2)$-category with duals, then
Theorem \ref{swisher5} and Theorem \ref{swisher6} together imply that
Calabi-Yau objects of $\calC$ can be identified with (homotopy) fixed points for the action
of $\SO(2)$ on $\calC^{\sim}$ (because both can be identified with symmetric monoidal
functors $\Bord_{2}^{\ori} \rightarrow \calC$). 
\end{remark}

\begin{example}
Let $\bfS$ be a good symmetric monoidal $(\infty,1)$-category (see Definition \ref{bgood}), and let
$\Alg_{(1)}(\bfS)$ be the $(\infty,2)$-category of Definition \ref{hut}. The objects of
$\Alg_{(1)}(\bfS)$ are associative algebras $A \in \bfS$, and are all dualizable objects of
$\Alg_{(1)}(\bfS)$ (the dual of an algebra $A$ is the opposite algebra $A^{op}$). By definition,
a Calabi-Yau object of $\Alg_{(1)}(\bfS)$ consists of an associative algebra $A$ together
with an $\SO(2)$-equivariant map
$$ \tr: \int_{S^1} A \rightarrow {\bf 1}$$
satisfying the following condition: the composite map
$$A \otimes A \simeq \int_{S^0} A \rightarrow \int_{S^1} A \stackrel{\tr}{\rightarrow} {\bf 1}$$
induces an identification of $A$ with its dual $A^{\vee}$ in $\bfS$.
We will refer to such a structure as a {\it Calabi-Yau algebra} in $\bfS$.
\end{example}

\begin{remark}
The notion of a Calabi-Yau algebra makes sense in an arbitrary symmetric monoidal
$(\infty,1)$-category $\bfS$ (in other words, $\bfS$ need not be good). Although
the Hochschild homology $\int_{S^1} A \simeq A \otimes_{ A \otimes A^{op} } A$
is generally not well-defined as an object of $\bfS$, it can be defined formally
as a colimit of objects of $\bfS$ so it still makes sense to talk about a map
$\tr: \int_{S^1} A \rightarrow {\bf 1}$.
\end{remark}

If $\calC$ is a general symmetric monoidal $(\infty,2)$-category, then Calabi-Yau objects of $\calC$ need not be fully dualizable. In fact, Calabi-Yau objects fail to be fully dualizable in a number
of interesting cases (see Example \ref{stringtop} below). Consequently, it will be convenient to
characterize Calabi-Yau objects in terms of topological field theories. We can extract
such a characterization from our proof of the cobordism hypothesis. Recall that our proof
of the cobordism hypothesis for $\Bord_{n}$ proceeds by analyzing a filtration
$$ \calF_{-1} \rightarrow \calF_{0} \rightarrow \ldots \rightarrow \calF_{n} = \Bord_{n}$$
of the $(\infty,n)$-category $\Bord_{n}$; roughly speaking, we can think of $\calF_{k}$ as
an $(\infty,n)$-category of bordisms where all $n$-manifolds are equipped with a decomposition into handles of index $\leq k$. As a by-product of the proof, we obtain a characterization of
each $\calF_{i}$ by a universal property. In particular, when $n=2$, we deduce that the oriented
version of $\calF_{1}$ can be described as the free symmetric monoidal $(\infty,2)$-category generated by a single Calabi-Yau object. It turns out that this $(\infty,2)$-category can be described more concretely, without making reference to the theory of framed functions.

\begin{protodefinition}
We define a symmetric monoidal $(\infty,2)$-category $\Bord_{2}^{\non}$ informally as follows:
\begin{itemize}
\item The objects of $\Bord_{2}^{\non}$ are oriented $0$-manifolds.
\item Given a pair of objects $X, Y \in \Bord_{2}^{\non}$, a $1$-morphism
from $X$ to $Y$ is an oriented bordism $B: X \rightarrow Y$.
\item Given a pair of $1$-morphsims $B, B': X \rightarrow Y$ in $\Bord_{2}^{\ori, \non}$, a
$2$-morphism from $B$ to $B'$ in $\Bord_{2}^{\non}$ is an oriented
bordism $\Sigma: B \rightarrow B'$ (which is trivial along $X$ and $Y$) with the following property: 
every connected component of $\Sigma$ has nonempty intersection with $B$.
\item Higher morphisms in $\Bord_{2}^{\non}$ are given by (orientation preserving) diffeomorphisms, isotopies between diffeomorphisms, and so forth.
\item The symmetric monoidal structure on $\Bord_{2}^{\non}$ is given by the formation of disjoint unions.
\end{itemize}
\end{protodefinition}

The $(\infty,2)$-category $\Bord_{2}^{\non}$ is characterized by the following analogue of the cobordism hypothesis:

\begin{theorem}[Cobordism Hypothesis, Noncompact Version]\label{swisher17}
Let $\calC$ be a symmetric monoidal $(\infty,2)$-category. The following types of data are equivalent:
\begin{itemize}
\item[$(1)$] Symmetric monoidal functors $Z: \Bord_{2}^{\non} \rightarrow \calC$.
\item[$(2)$] Calabi-Yau objects of $\calC$.
\end{itemize}
The equivalence is implemented by carrying a functor $Z$ to the Calabi-Yau object
$Z(\ast)$.
\end{theorem}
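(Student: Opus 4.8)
The plan is to deduce Theorem \ref{swisher17} from the machinery already developed in \S\ref{kuji}, specifically from the description of the $(\infty,n)$-categories $\calF_k$ by universal properties. Recall that our proof of the cobordism hypothesis analyzes the filtration $\calF_{-1} \to \calF_0 \to \cdots \to \calF_n = \Bord_n^{\frun}$, and (after invoking Theorem \ref{postwish}) identifies $\calF_n$ with $\Bord_n$. The key point is that the intermediate stage $\calF_1$ also admits a universal property: by Corollary \ref{preswish} and Lemma \ref{lak}, a symmetric monoidal functor $\calF_1 \to \calC$ (for $\calC$ an arbitrary symmetric monoidal $(\infty,n)$-category, with \emph{no} duals assumption) is the same as a symmetric monoidal functor $Z_0 : \Bord_{n-1} \to \calC$ together with a \emph{nondegenerate} $\OO(n)$-equivariant $n$-morphism $\eta : {\bf 1} \to Z_0(S^{n-1})$. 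Specializing to $n=2$ and passing to the oriented variants throughout, this gives a universal description of the oriented version of $\calF_1$: namely, a symmetric monoidal functor out of it is a symmetric monoidal functor $Z_0 : \Bord_1^{\ori} \to \calC$ (equivalently, by Theorem \ref{swisher3} in dimension $1$, a dualizable object $X = Z_0(\ast) \in \calC$) together with an $\SO(2)$-equivariant nondegenerate $2$-morphism $\eta : {\bf 1} \to Z_0(S^1) = \dim(X)$. Unwinding the definitions, this is precisely the data of a Calabi-Yau object of $\calC$: condition $(1)$ of the definition of Calabi-Yau object is the dualizable object $X$, and condition $(2)$ is the $\SO(2)$-equivariant counit $\eta$. (Note the variance: the ``nondegenerate'' $2$-morphism ${\bf 1} \to \ev_X \circ \coev_X$ of Lemma \ref{lak} corresponds, after dualizing, to a counit $\dim(X) \to {\bf 1}$, matching the convention in the definition of Calabi-Yau object.)

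So the core of the argument is to establish that the oriented $\calF_1$ is equivalent, as a symmetric monoidal $(\infty,2)$-category, to $\Bord_2^{\non}$. First I would set up the comparison functor: the construction of $\calF_1$ presents it as $\Bord_1^{\ori}$ freely augmented by an $\SO(2)$-equivariant nondegenerate $2$-morphism corresponding to a disk-with-index-$\le 1$-handle-decomposition, and one checks directly that $\Bord_2^{\non}$ receives the analogous data: the oriented $0$-manifold $\ast$, its evaluation and coevaluation $1$-morphisms (the oriented intervals), and the \emph{disk} $D^2$, regarded as a $2$-morphism ${\bf 1} \to S^1$, which is nondegenerate because its complement in $S^1 \times [0,1]$ exhibits the adjunction, and which is $\SO(2)$-equivariant because the disk carries its rotational symmetry. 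This produces a symmetric monoidal functor $\Phi : \calF_1^{\ori} \to \Bord_2^{\non}$. To see that $\Phi$ is an equivalence, I would argue geometrically: a $2$-morphism in $\Bord_2^{\non}$ is an oriented bordism $\Sigma$ between bordisms, each of whose connected components meets the incoming piece $B$. Choosing a generic Morse function on $\Sigma$ adapted to $B$, such a $\Sigma$ can be built from $0$- and $1$-handles only --- there are no $2$-handles precisely because every component touches $B$, so we never need to ``cap off'' a circle --- and $0$- and $1$-handles are exactly the generators of $\calF_1$. The Morse-theoretic uniqueness (Cerf theory, birth-death singularities of index $0$) supplies the relations, and these are precisely the relations built into $\calF_1$ via $\beta_1$ in Claim \ref{laboose}. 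Hence $\Phi$ is essentially surjective and fully faithful.

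The main obstacle, I expect, will be making the last step --- the geometric identification of $\Bord_2^{\non}$ with $\calF_1^{\ori}$ --- genuinely precise at the level of $(\infty,2)$-categories rather than just ordinary $2$-categories. The homotopical content (that the space of Morse functions with only index $0$ and $1$ critical points on a surface-with-the-non-capping-property, together with the relevant framings, is ``contractible enough'') is exactly the kind of statement that in the main argument is handled via Igusa's framed-function theorem (Theorem \ref{conig}) and the obstruction-theoretic package of \S\ref{kumma}. In dimension $2$ this is considerably more elementary --- the relevant spaces of functions are low-dimensional and one can appeal to classical Cerf theory rather than the general $h$-principle --- but one still must verify that the framing data does not obstruct, i.e., that $\Bord_2^{\non}$ agrees with the analogous ``framed-function'' model on the nose and not merely up to the discrepancy measured by $\FrFun$. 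I would handle this by observing that for surfaces all the relevant framed-function spaces are contractible by direct inspection (equivalently, by the $n=2$ case of Igusa's theorem, which is classical), so that the framed-function model and the naive model coincide, and the argument of \S\ref{kuji} goes through verbatim. Once $\calF_1^{\ori} \simeq \Bord_2^{\non}$ is in hand, Theorem \ref{swisher17} follows immediately by combining this equivalence with the universal property of $\calF_1^{\ori}$ established via Corollary \ref{preswish} and Lemma \ref{lak}, and matching that universal property term-by-term with the definition of a Calabi-Yau object, as described above; the equivalence is evidently implemented by $Z \mapsto Z(\ast)$.
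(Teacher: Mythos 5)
Your approach is genuinely different from the paper's. The paper does not prove Theorem~\ref{swisher17} via the internal filtration $\calF_{-1} \to \calF_0 \to \cdots$ of~\S\ref{kuji}; instead it unfolds the statement to the $(\infty,1)$-categorical assertion of Theorem~\ref{swisher18} using the machinery of~\S\ref{unf}, and then invokes Costello's Theorems~\ref{costelloA} and~\ref{costelloB}, which classify symmetric monoidal functors out of the $(\infty,1)$-categories $\calO$ and $\OC$. The paper explicitly describes that route as ``completely independent of the methods presented earlier in this paper.'' Your route --- use Corollary~\ref{preswish} and Lemma~\ref{lak} to get a universal property for $\calF_1^{\ori}$, then argue geometrically that $\calF_1^{\ori} \simeq \Bord_2^{\non}$ --- is exactly the alternative that the paper gestures at in the remarks just before $\Bord_2^{\non}$ is introduced (``In particular, when $n=2$, we deduce that the oriented version of $\calF_1$ can be described as the free symmetric monoidal $(\infty,2)$-category generated by a single Calabi-Yau object. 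It turns out that this $(\infty,2)$-category can be described more concretely\ldots''), but which the paper does not carry out. Costello's route buys you something the filtration route does not: it yields the stronger ``open-closed'' statements of Theorems~\ref{costelloA}--\ref{costelloB}, which have independent interest; your route stays entirely inside the architecture already built for the cobordism hypothesis. Both are viable.

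There is, however, a genuine error in the handle-theoretic step of your proposal, and it propagates through the dictionary you set up. You assert that the disk $D^2$, regarded as a $2$-morphism ${\bf 1} \to S^1$, is a morphism of $\Bord_2^{\non}$ --- but it is not: the definition of $\Bord_2^{\non}$ requires every component of a $2$-morphism $\Sigma : B \to B'$ to have nonempty intersection with the \emph{incoming} bordism $B$, and for $D^2 : \emptyset \to S^1$ the incoming boundary is empty. The admissible disk is the one running the other way, $D^2 : S^1 \to {\bf 1}$, i.e.\ a trace $\dim(X) \to {\bf 1}$, which is of course exactly the variance appearing in the definition of a Calabi-Yau object. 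The corresponding error in the Morse-theoretic sentence: with the paper's (standard) index convention and $f^{-1}\{0\} = B$, an index-$0$ critical point is a local minimum, i.e.\ a $0$-handle, i.e.\ the birth of a new component; so the $\Bord_2^{\non}$ condition (every component meets $B$) forbids $0$-handles, not $2$-handles. The $(\infty,2)$-category $\calF_1$ as defined in \S\ref{kuji}, by contrast, allows indices $\leq 1$ and so forbids $2$-handles, which corresponds to every component meeting the \emph{outgoing} boundary. Thus $\calF_1^{\ori}$ and $\Bord_2^{\non}$ are not literally the same: one is obtained from the other by reversing the direction of $2$-morphisms (equivalently, replacing $f$ by $1-f$, which swaps indices $0$ and $2$ and swaps $B$ with $B'$). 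This reversal likewise swaps the unit $\eta : {\bf 1} \to Z_0(S^{n-1})$ appearing in Corollary~\ref{preswish} and Lemma~\ref{lak} with the counit $\dim(X) \to {\bf 1}$ appearing in the Calabi-Yau definition. Since the bordism categories admit a canonical time-reversal self-equivalence, the two flips are consistent and the argument can be repaired, but as written your proposal identifies the wrong generator and then rationalizes it with a handle count that is backwards; both statements need to be corrected in tandem before the term-by-term matching with the Calabi-Yau definition is legitimate.
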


\begin{remark}
In particular, the $0$-manifold consisting of a single point can be regarded as a Calabi-Yau object of $\Bord_{2}^{\non}$.
\end{remark}

Using the methods of \S \ref{unf}, we can translate Theorem \ref{swisher17} into a statement
in the language of symmetric monoidal $(\infty,1)$-categories. Let
$\calC$ be a symmetric monoidal $(\infty,2)$-category, and let $\calC_{1}$ be the
symmetric monoidal $(\infty,1)$-category obtained by discarding the noninvertible $2$-morphisms in $\calC$. Using Proposition \ref{kubbus}, we can convert the inclusion $\calC_{1} \rightarrow \calC$
into a symmertic monoidal coCartesian fibration $\widetilde{\calC}_{1} \rightarrow \calC_1$ of $(\infty,1)$-categories. Similarly, we can convert the inclusion $\Bord_{1}^{\ori} \rightarrow \Bord_{2}^{\non}$ into a coCartesian fibration $\pi: \OC \rightarrow \Bord_{1}^{\ori}$. Here
$\OC$ is a symmetric monoidal $(\infty,1)$-category which can be described as follows:
\begin{itemize}
\item The objects of $\OC$ are oriented $1$-manifolds with boundary.
\item Given a pair of objects $I, J \in \OC$, a $1$-morphism from $I$ to $J$ in
$\OC$ is an oriented bordism $B$ from $I$ to $J$, satisfying the following condition:
every connected component of $B$ has nonempty intersection with $J$. 
\item Higher morphisms in $\OC$ are given by (orientation-preserving) diffeomorphisms, isotopies between diffeomorphisms, and so forth.
\end{itemize}
The forgetful functor $\pi: \OC \rightarrow \Bord_{1}^{\ori}$ is given by sending a $1$-manifold
$J$ to its boundary $\bd J$.

In the above situation, we can identify symmetric monoidal functors $Z: \Bord_{2}^{\non} \rightarrow \calC$ with diagrams of symmetric monoidal functors
$$ \xymatrix{ \OC \ar[r]^{Z_2} \ar[d] & \widetilde{\calC}_1 \ar[d] \\
\Bord_{1}^{\ori} \ar[r]^{Z_1} & \calC_1 }$$
satisfying the technical condition that $Z_2$ preserves coCartesian morphisms.
Applying the cobordism hypothesis in dimension $1$, we deduce that giving the symmetric monoidal functor $Z_1$ is equivalent to giving a dualizable object $X \in \calC$. Consequently, we may reformulate Theorem \ref{swisher17} as follows:

\begin{theorem}[Cobordism Hypothesis, Noncompact Unfolded Version]\label{swisher18}
Let $\calC$ be a symmetric monoidal $(\infty,2)$-category, let $\widetilde{\calC}_1 \rightarrow \calC_1$ be the coCartesian fibration defined as above, and let $X \in \calC$ be a dualizable object.
Let $Z_0$ denote the composition
$$ \OC \rightarrow \Bord_{1}^{\ori} \stackrel{Z_1}{\rightarrow} \calC_1$$
where $Z_1$ is the symmetric monoidal functor determined by $X$. The following
types of data are equivalent:
\begin{itemize}
\item[$(1)$] Symmetric monoidal functors $Z_2: \OC \rightarrow \widetilde{\calC}_1$
which are {\it coCartesian} in the following sense: they carry coCartesian morphisms for the projection $\OC \rightarrow \Bord_{1}^{\ori}$ to coCartesian morphisms for the projection $\widetilde{\calC}_1 \rightarrow \calC_1$.
\item[$(2)$] Morphisms $\eta: \dim(X) \rightarrow {\bf 1}$ in $\Omega \calC$ which are the
counit for an adjunction between $\ev_{X}$ and $\coev_{X}$ in $\calC$.
\end{itemize}
\end{theorem}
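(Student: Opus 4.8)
The plan is to obtain Theorem~\ref{swisher18} as a pure translation of Theorem~\ref{swisher17} into the language of $(\infty,1)$-categories via the unfolding formalism of \S\ref{unf}, with no new geometric input. First I would assemble the two unfolding inputs already set up in the discussion preceding the theorem. Applying Proposition~\ref{kubbus} to the symmetric monoidal inclusion $\Bord_1^{\ori}\hookrightarrow\Bord_2^{\non}$ (which is essentially surjective, both sides having oriented $0$-manifolds as objects, and $\Bord_1^{\ori}$ having duals) presents $\Bord_2^{\non}$ by the symmetric monoidal coCartesian fibration $\OC\to\Bord_1^{\ori}$, and likewise the inclusion $\calC_1\hookrightarrow\calC$ is presented by $\widetilde{\calC}_{1}\to\calC_1$. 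The functoriality of this correspondence yields the key identification quoted in the excerpt: a symmetric monoidal functor $Z\colon\Bord_2^{\non}\to\calC$ is the same datum as a commutative square of symmetric monoidal functors with $Z_1\colon\Bord_1^{\ori}\to\calC_1$ along the bottom and $Z_2\colon\OC\to\widetilde{\calC}_{1}$ along the top, subject to the condition that $Z_2$ carry $\pi$-coCartesian morphisms to coCartesian morphisms of $\widetilde{\calC}_{1}\to\calC_1$. I would then check that this identification is compatible with restriction along the inclusions, so that $Z_1$ is canonically $Z|_{\Bord_1^{\ori}}$ (which lands in $\calC_1$ because $\Bord_1^{\ori}$ has only invertible $2$-morphisms) and in particular $Z_1(\ast)\simeq Z(\ast)$; this compatibility is the one genuinely bookkeeping point.

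Next I would invoke the cobordism hypothesis in dimension~$1$ (Theorem~\ref{swisher3}): a dualizable object $X\in\calC$ determines, uniquely up to a contractible space of choices, a symmetric monoidal functor $Z_1\colon\Bord_1^{\ori}\simeq\Bord_1^{\fr}\to\calC_1$ with $Z_1(\ast)\simeq X$ (only $X$, not all of $\calC_1$, need be dualizable; see \S\ref{capertown}). Combining this with the previous paragraph, the space of coCartesian symmetric monoidal lifts $Z_2\colon\OC\to\widetilde{\calC}_{1}$ of the fixed functor $Z_0=Z_1\circ\pi$ is identified with the space of symmetric monoidal functors $Z\colon\Bord_2^{\non}\to\calC$ with $Z(\ast)\simeq X$. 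By Theorem~\ref{swisher17}, that space is the space of Calabi--Yau structures on $X$: $2$-morphisms $\eta\colon\dim(X)=\ev_X\circ\coev_X\to{\bf 1}$ in $\Omega\calC$ which are the counit of an adjunction between $\ev_X$ and $\coev_X$ \emph{and} are equivariant for the $\SO(2)$-action of Proposition~\ref{kij} on $\dim(X)$.

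It then remains to see that forgetting the $\SO(2)$-equivariance is an equivalence onto the space of plain counits in clause~$(2)$, i.e.\ that the fiber of this forgetful map over any counit $\eta$ is contractible. The mechanism is the Serre automorphism: by the proof of Proposition~\ref{swip}, $\coev_X$ is a right adjoint of $\ev_X$ exactly when $S_X\colon X\to X$ is coherently trivial, so the datum of an adjunction between $\ev_X$ and $\coev_X$ already contains a coherent trivialization of $S_X$; and by Remark~\ref{cui} the $\SO(2)$-action on $\dim(X)$ is generated by $S_X$, so an $\SO(2)$-equivariance structure on $\eta$ is precisely such a trivialization, hence uniquely recovered from the adjunction. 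This is the step I expect to take the most care: making it precise means matching the homotopy-fixed-point structure coming out of the cobordism hypothesis (via Corollary~\ref{ail}) with $2$-categorical adjunction data, which is the $n=2$ instance of the identification of Calabi--Yau objects with $\SO(2)$-fixed points. One can reduce this to the universal case $\calC=\Bord_2^{\non}$ (equivalently, the oriented analogue of the category $\calF_1$ of \S\ref{kuji}), where it becomes an explicit if fiddly verification.
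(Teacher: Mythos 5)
Your main reduction is sound, and it follows the route the paper itself signals when it says Theorem~\ref{swisher18} is obtained by ``reformulating Theorem~\ref{swisher17}'' via the unfolding formalism of \S\ref{unf}: apply Proposition~\ref{kubbus} to present $\Bord_1^{\ori}\hookrightarrow\Bord_2^{\non}$ by the coCartesian fibration $\pi\colon\OC\to\Bord_1^{\ori}$ and likewise $\calC_1\hookrightarrow\calC$ by $\widetilde{\calC}_1\to\calC_1$, fix $Z_1$ by the dimension-$1$ cobordism hypothesis, and then identify coCartesian lifts with symmetric monoidal extensions $Z\colon\Bord_2^{\non}\to\calC$. Up to that point you and the paper agree on strategy, though the paper's \emph{featured} proof is actually a genuinely different one, built on Costello's Theorems~\ref{costelloA} and \ref{costelloB}: it restricts to the open-interval subcategory $\calO\subset\OC$, classifies symmetric monoidal functors out of $\calO$ by Calabi--Yau algebras, identifies the relevant algebra as $\End(X)=X\otimes X^{\vee}$, lifts its Calabi--Yau structure along $\widetilde{\calC}_1\to\calC_1$, and closes the argument with a relative left Kan extension. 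That route stays entirely inside the world of $(\infty,1)$-categories, replaces the appeal to the inductive cobordism hypothesis by an explicit generators-and-relations result for $\calO$, and comes with the bonus that it is independent of the machinery in \S\S\ref{forty1}--\ref{kumma}. Your route buys brevity but leans on Theorem~\ref{swisher17} as a black box.

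The gap is in your final paragraph. You correctly notice that a Calabi--Yau structure (the datum classified by Theorem~\ref{swisher17}) consists of a counit $\eta$ \emph{together with} an $\SO(2)$-equivariance structure, while clause~$(2)$ of Theorem~\ref{swisher18} as printed mentions only the counit; you then claim the $\SO(2)$-equivariance is \emph{recoverable} from the adjunction. That claim is not established by the argument you give. The trivialization of the Serre automorphism $S_X$ extracted from Proposition~\ref{swip} is a $1$-morphism $\id_X\simeq S_X$ in $\calC$, whereas the $\SO(2)$-equivariance demanded of $\eta$ is a homotopy fixed-point structure on a point of the mapping space $\bHom_{\Omega\calC}(\dim(X),{\bf 1})$ for the circle action of Proposition~\ref{kij}; Remark~\ref{cui} relates $S_X$ to the restriction of the $\OO(2)$-action on $\calC^{\sim}$ to the orbit through $X$, not to the circle action on $\dim(X)$, and in any case a fixed-point structure for $S^1\simeq\SO(2)$ involves coherences over all the even cells of $B\SO(2)\simeq\CP^{\infty}$, not just the $2$-cell that your trivialization of $S_X$ addresses. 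Nothing you cite controls those higher layers, and the paper's own sketch of the Costello route confirms that what is classified on the categorical side is a \emph{Calabi--Yau structure on $X$}, equivariance included. The economical fix is therefore not to argue the equivariance away but to read it as part of clause~$(2)$ (as the Costello sketch implicitly does); with that reading your first two paragraphs already give a complete reduction of Theorem~\ref{swisher18} to Theorem~\ref{swisher17}, and the third paragraph should be dropped or replaced by a careful verification that genuinely tracks the $\CP^{\infty}$-tower of coherences.
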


It is possible to give a proof of Theorem \ref{swisher18} which is completely independent of the methods presented earlier in this paper. Instead, it relies on a classification of symmetric monoidal
functors with domain $\OC$ which arises from the work of Kevin Costello. In order to state this classification, we need a bit of notation: let $\calO$ denote the full subcategory of $\OC$ whose objects
are finite unions of intervals (in other words, we disallow any components which are circles).

\begin{theorem}[Costello \cite{costello}]\label{costelloA}
Let $\bfS$ be a symmetric monoidal $(\infty,1)$-category.
The following types of data are equivalent:
\begin{itemize}
\item[$(1)$] Symmetric monoidal functors $Z: \calO \rightarrow \bfS$.
\item[$(2)$] Calabi-Yau algebras in $\bfS$.
\end{itemize}
The equivalence is implemented by carrying a functor $Z: \calO \rightarrow \bfS$
to the Calabi-Yau algebra $Z( [0,1])$.
\end{theorem}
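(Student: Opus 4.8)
The plan is to exhibit $\calO$ as a symmetric monoidal $(\infty,1)$-category presented by generators and relations, so that it becomes the universal recipient of a Calabi--Yau algebra object; Theorem \ref{costelloA} then follows formally from this universal property, for an arbitrary target symmetric monoidal $(\infty,1)$-category $\bfS$ (in particular without any goodness or colimit hypothesis). First I would record the elementary observation that every object of $\calO$ is a finite disjoint union of copies of the interval $I = [0,1]$, so that a symmetric monoidal functor $Z\colon \calO \to \bfS$ is determined by the object $A = Z(I)$ together with the induced maps of mapping spaces $\OHom_{\calO}(I^{\sqcup p}, I^{\sqcup q})$. By construction of $\OC$, these mapping spaces are classifying spaces $\BDiff$ of oriented surfaces with boundary and corners carrying $p$ incoming and $q$ outgoing marked arcs on the boundary, every connected component meeting an outgoing arc --- that is, moduli spaces of bordered surfaces with open-string inputs and outputs. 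The entire content of the theorem is a description of the homotopy types of these moduli spaces, functorially in the gluing maps.

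Next I would identify the generating morphisms geometrically: the disk with two incoming and one outgoing arc yields a multiplication $\mu\colon I \sqcup I \to I$; the disk with a single outgoing arc yields a unit $u\colon \emptyset \to I$; the disk with two outgoing arcs yields a ``Calabi--Yau coevaluation'' $\mathbf{c}\colon \emptyset \to I \sqcup I$; and the rotational symmetry of these disks, together with the one-in-one-out disk representing $\id_I$, pins down the $\SO(2)$-equivariance and the nondegeneracy of $\mathbf{c}$. The assertion to be proven is then that every morphism in $\calO$ is obtained from $\mu$, $u$, $\mathbf{c}$ by composition and disjoint union, and that the space of relations (and of higher relations among relations, and so on) is exactly the coherence data making $(A,\mu,u,\mathbf{c})$ an $A_\infty$-algebra equipped with an $\SO(2)$-invariant nondegenerate pairing --- i.e. a Calabi--Yau algebra in $\bfS$. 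Unwinding this, such data is precisely an associative algebra $A$ in $\bfS$ together with an $\SO(2)$-equivariant trace $\tr\colon \int_{S^1}A \to {\bf 1}$ (a map out of the formal colimit computing Hochschild homology) whose associated pairing $A\otimes A \to {\bf 1}$ exhibits $A$ as self-dual; chasing through the constructions shows the resulting correspondence is inverse to $Z \mapsto Z([0,1])$.

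The main obstacle is establishing the \emph{completeness} of this presentation: that the moduli spaces of bordered surfaces carry no homotopy beyond what the Calabi--Yau relations produce. This is the geometric heart of Costello's theorem and is in no way formal; the standard route is via the cell decomposition of the moduli space of Riemann surfaces with boundary by ribbon graphs, identifying the ribbon-graph complex with the cyclic $A_\infty$ operad, together with a Harer-type connectivity estimate controlling the low-dimensional cells. An alternative, parallel to the arguments of \S\ref{kuji} and \S\ref{kumma}, would be to prove the analogues of Theorem \ref{postwish} and Theorem \ref{swisher9} for bordered surfaces --- reducing completeness to the cohomology of $\BSO(2)$ and a boundary version of the Madsen--Weiss theorem --- but this amounts to the same calculation and is no shorter. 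Finally I would note that the $\SO(2)$-equivariance of the trace requires no separate argument: it is forced by the presentation, reflecting the rotational symmetry of the disk exactly as the circle action of Proposition \ref{kij} is forced by the one-dimensional cobordism hypothesis.
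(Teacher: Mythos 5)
The paper does not supply a proof of Theorem \ref{costelloA}: it attributes the result to Costello and records only that Costello's original argument (which he carried out when $\bfS$ is the $(\infty,1)$-category of chain complexes over a field of characteristic zero) extends without essential change to an arbitrary symmetric monoidal $(\infty,1)$-category. There is therefore no ``paper's own proof'' against which to compare your sketch; what can be assessed is whether it faithfully reconstructs the shape of Costello's argument.

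On that score your sketch is broadly accurate. Identifying $\calO$ as freely generated (as a symmetric monoidal $(\infty,1)$-category) by the multiplication, the unit, and the two-outputs disk, subject to relations encoded by the homotopy types of the moduli spaces of bordered surfaces, and isolating the ribbon-graph cell decomposition of those moduli spaces as the essential geometric input, is in line with Costello's paper (which works with a graph-complex chain model of the moduli of open-string surfaces). The alternative route you mention, via analogues of Theorems \ref{postwish} and \ref{swisher9} and a bordered version of the Madsen--Weiss theorem, is also viable and is in the same spirit as \S\ref{kuji}--\S\ref{kumma}, though, as you note, it does not avoid the hard computation.

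Two points deserve more care than you give them. First, the sentence asserting that the space of relations ``is exactly the coherence data making $(A,\mu,u,\mathbf{c})$ an $A_\infty$-algebra equipped with an $\SO(2)$-invariant nondegenerate pairing'' is a restatement of the theorem, not an argument; the claim that the identity disk ``pins down the nondegeneracy of $\mathbf{c}$'' is in tension with the fact that the evaluation $A \otimes A \to {\bf 1}$ witnessing that nondegeneracy is \emph{not} a morphism of $\calO$ (a disk with two incoming arcs and no outgoing arc fails the noncompactness constraint), so nondegeneracy must be extracted indirectly --- from the annulus moduli and the cyclic structure --- rather than read off as a generating relation. Second, the $\SO(2)$-equivariant trace on $\int_{S^1}A$, which is the datum appearing in the definition of Calabi--Yau algebra, is not the value of $Z$ on any object of $\calO$ (circles are excluded), and its construction from $(A,\mu,u,\mathbf{c})$ is itself part of what needs to be established. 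You are honest in flagging that ``completeness'' is where the content lives, but the dictionary between the geometric presentation and the algebraic definition of Calabi--Yau algebra is where a careful writeup would have to spend most of its effort.
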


\begin{theorem}[Costello \cite{costello}]\label{costelloB}
Let $\bfS$ be a good symmetric monoidal $(\infty,1)$-category, and let $Z_0:
\calO \rightarrow \bfS$ be a symmetric monoidal functor. Then
there is another symmetric monoidal functor
$Z: \OC \rightarrow \bfS$ such that $Z_0 = Z | \calO$, and $Z$ is universal with respect to this property $($more precisely, $Z$ is obtained from $Z_0$ by left Kan extension; see \cite{htt}$)$.
\end{theorem}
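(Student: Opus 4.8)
The plan is to realize $Z$ as the left Kan extension $\mathrm{Lan}_i Z_0$ of $Z_0$ along the inclusion $i\colon \calO \hookrightarrow \OC$, and then to verify that this left Kan extension (a) exists, (b) restricts back to $Z_0$, (c) is symmetric monoidal, and (d) is initial among symmetric monoidal extensions. Step (b) is essentially free: $\calO$ is by definition a full subcategory of $\OC$, so $i$ is fully faithful, whence the counit $Z_0 \to (\mathrm{Lan}_i Z_0)\circ i$ is an equivalence as soon as the pointwise left Kan extension exists. Step (d) then follows formally from the universal property of left Kan extension, once one checks that the comparison transformation to any other symmetric monoidal extension is itself a map of symmetric monoidal functors — which reduces to the corresponding (tautological) assertion over the generating subcategory $\calO$. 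So the real content lies in (a) and (c).

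For (a), I would analyze the comma categories $\calO_{/c}$ for $c \in \OC$ and use the pointwise formula
$$ Z(c) \;\simeq\; \varinjlim_{(o,\, B\colon i(o)\to c)\,\in\,\calO_{/c}} Z_0(o). $$
When $c$ is already a finite disjoint union of intervals, $\calO_{/c}$ has a terminal object (the identity bordism), so $Z(c)\simeq Z_0(c)$. The interesting case is $c=S^1$ (and, by disjoint union, the case of several circles together with some intervals): here I would exhibit a cofinal functor into $\calO_{/S^1}$ out of (the opposite of) a cyclic/simplicial indexing category, under which the composite with $Z_0$ is the cyclic bar construction $n \mapsto Z_0([0,1])^{\otimes(n+1)}$ on the interval algebra. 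Geometrically this is the statement that a bordism from a union of intervals into a circle can always be refined by cutting the circle into arcs; the resulting diagram computes $\int_{S^1} Z_0([0,1])$, the Hochschild homology of $Z_0([0,1])$. This is a sifted colimit, hence exists because $\bfS$ is good; assembling over the circle and interval components of a general $c$ yields existence of the pointwise left Kan extension in general.

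The main obstacle is (c), symmetric monoidality of $Z=\mathrm{Lan}_i Z_0$. Since $\otimes$ on $\bfS$ commutes with sifted colimits (again by goodness) and $Z_0$ is symmetric monoidal, the natural map $Z(c)\otimes Z(c') \to Z(c\sqcup c')$ is induced by the functor $\calO_{/c}\times\calO_{/c'}\to\calO_{/c\sqcup c'}$, $(o\to c,\,o'\to c')\mapsto(o\sqcup o'\to c\sqcup c')$, and is an equivalence precisely when that functor induces an equivalence on the colimits of $Z_0$. This is the geometric crux: a connected surface from interval-objects into $c\sqcup c'$ may have boundary meeting both $c$ and $c'$, so the comparison functor is not cofinal in the naive sense, and one must show that such "mixing" surfaces are nonetheless resolved, after passing to the diagram that computes the colimit, by the cyclic-bar refinements of the circle components — so that the colimit is insensitive to them. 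I would carry this out following Costello, working with an explicit small combinatorial model for the relevant part of $\OC$ in which the decomposition of a surface along a maximal system of disjoint properly embedded arcs produces the required contractibility of the relevant comma categories; combined with $\otimes$-compatibility of sifted colimits this upgrades $Z$ to a symmetric monoidal functor. The delicate point throughout is keeping track of the interaction between the noncompactness condition — every surface component must meet the outgoing boundary — and the colimit presentations, since this is exactly what forces the left Kan extension to land in $\OC$ rather than in some larger category.
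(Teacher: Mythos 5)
The paper does not itself prove this statement: it attributes the theorem to Costello and remarks only that his methods extend ``without essential change,'' so there is no in-paper argument to compare yours against line by line. Your outline is a plausible reconstruction of the expected structure --- terminal objects in $\calO_{/c}$ when $c$ is a union of intervals, identification of the colimit over $\calO_{/S^1}$ with the cyclic bar construction (hence $\int_{S^1} Z_0([0,1])$, a sifted colimit that exists because $\bfS$ is good), and the formal steps (b) and (d). However, the gap you flag yourself in step (c) is genuine, and it is where the whole theorem lives. The claim that connected surfaces with boundary on both $c$ and $c'$ ``are nonetheless resolved'' after passing to cyclic-bar refinements is not a formal consequence of anything earlier in your argument: it is precisely the geometric input (contractibility of arc-system complexes, in the spirit of Harer/Strebel/Costello) that makes the \emph{ordinary} left Kan extension come out symmetric monoidal, and deferring to Costello's combinatorial model for it means your proof is not self-contained on exactly the point that carries the content. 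Since the paper itself defers the entire theorem to Costello, this is a defensible expository choice, but it should be labeled as such rather than presented as if steps (a)--(d) together close the argument. One reorganization worth noting: if one instead computes an operadic left Kan extension, taking the colimit over active morphisms in the symmetric monoidal envelope of $\calO$ rather than over the naive comma category $\calO_{/c}$, then the tensor compatibility is built into the indexing category from the start and the geometric input is concentrated into a single cofinality/siftedness statement, eliminating the separate monoidality check entirely; this is arguably what the phrase ``left Kan extension'' in the statement is gesturing toward.
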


Theorems \ref{costelloA} and \ref{costelloB} were proven by Costello in the special case where $\bfS$ is
the $(\infty,1)$-category $\tChain(k)$ of Definition \ref{spout2} when $k$ is a field of characteristic zero. However, his methods are quite general, and can be adapted without essential change to prove the versions given above.

Let us briefly sketch how Theorems \ref{costelloA} and \ref{costelloB} can be used to prove
Theorem \ref{swisher18}. Consider first the symmetric monoidal functor
$Z_0: \OC \rightarrow \calC_1$ determined by a dualizable object $X \in \calC$. According to Theorem \ref{costelloA}, the restriction $Z_0| \calO$ is classified
by a Calabi-Yau algebra in $\calC_1$. Unwinding the definitions, we learn that
this algebra is $\End(X) \simeq X \otimes X^{\vee}$, with Calabi-Yau structure determined by
the evaluation map $\End(X) \rightarrow {\bf 1}$. Using Theorem \ref{costelloA}, we deduce that
lifting $Z_0 | \calO$ to a symmetric monoidal functor
$Z': \calO \rightarrow \widetilde{\calC}_1$
is equivalent to lifting $\End(X)$ to a Calabi-Yau algebra in $\widetilde{\calC}_1$.
Recall that the objects of $\widetilde{\calC}_{1}$ can be identified with morphisms
$\eta: {\bf 1} \rightarrow C$ in $\calC$. The requirement that $Z'$ preserve coCartesian
morphisms determines the lift $\widetilde{\End(X)}$ of $\End(X)$ as an algebra: it must be given by
the coevaluation map ${\bf 1} \rightarrow \End(X)$. Unwinding the definitions, one can show that 
a Calabi-Yau structure on the algebra $\widetilde{\End(X)}$ lifting the Calabi-Yau structure on
$\End(X)$ is equivalent to a Calabi-Yau structure on the object $X \in \calC$. It remains to prove that there is an essentially unique symmetric monoidal functor $Z_2: \OC \rightarrow \widetilde{\calC}_1$
which preserves coCartesian morphisms and is compatible with both $Z_0$ and $Z'$.
This can be deduced from a {\it relative} version of Theorem \ref{costelloB}
(where the notion of left Kan extension is replaced by the notion of relative left Kan extension with respect to the projection $\widetilde{\calC}_1 \rightarrow \calC_1$; see \cite{htt}). We will not describe the details here.


\begin{example}[String Topology (see \cite{chassullivan})]\label{stringtop}
Let $M$ be an oriented manifold of even dimension $2k$ and assume that $M$ is simply-connected. Let $R$ denote the graded algebra $\Q[x,x^{-1}]$ where $x$ has degree $2k$, and regard $R$ as a differential graded algebra with trivial differential. The collection of differential graded
$R$-modules can be organized into a symmetric monoidal $(\infty,1)$-category, which we will
denote by $\bfS$. The cochain complex $C^{\ast}( M; R)$ can be regarded as an algebra object
(even a commutative algebra object) of $\bfS$. Using the fact that $M$ is simply connected, one can show that the Hochschild homology $\int_{S^{1}} C^{\ast}(M;R)$ is quasi-isomorphic to the cochain
complex $C^{\ast}(LM, R)$, where $LM = M^{S^1}$ denotes the free loop space of $M$; moreover,
this identification is $\SO(2)$-equivariant. In particular, we have a canonical $\SO(2)$-equivariant map
$$ \tr: \int_{S^{1}} C^{\ast}(M;R) \simeq C^{\ast}( LM;R) \stackrel{t'}{\rightarrow} C^{\ast}(M;R)
\stackrel{t''}{\rightarrow} R,$$
where $t'$ is induced by the diagonal embedding $M \rightarrow LM$ and
$t''$ is given by evaluation on the fundamental cycle of $M$. The pair
$( C^{\ast}(LM;R), \tr)$ is a Calabi-Yau object of the symmetric monoidal $(\infty,2)$-category $\Alg_1( \bfS)$ (the nondegeneracy of $\tr$ follows from Poincare duality) and therefore determines
a topological field theory $Z: \Bord_{2}^{\non} \rightarrow \Alg_1(\bfS)$. We can identify $Z(S^1)$ with the complex of $R$-valued cochains $C^{\ast}( LM; R)$ on the free loop space of $M$. We can view surfaces with boundary as giving rise to operations on $C^{\ast}(LM;R)$, called {\it string topology operations} (see \cite{chassullivan}).
\end{example}

\begin{remark}
Example \ref{stringtop} can be refined in various ways. First, we can replace the cochain complex
$C^{\ast}(M; R)$ of $M$ with the chain complex $C_{\ast}( \Omega M;R)$ of the based loop space
$\Omega M$, which has the structure of an associative (but not commutative) algebra in
$\bfS$. The Hochschild homology $\int_{S^1} C_{\ast}(\Omega M; R)$ can be identified with
the chain complex $C_{\ast}(LM; R)$.
The algebra $C_{\ast}( \Omega M; R)$ is generally not a Calabi-Yau object of $\Alg_1(\bfS)$,
because it is not even dualizable as an object of $\bfS$ (the loop space $LM$ generally has homology in infinitely many degrees). However, it can be regarded as a Calabi-Yau object in the $(\infty,2)$-category $\Alg_1(\bfS)^{op}$ obtained by reversing the direction of $2$-morphisms in $\Alg_1(\bfS)$:
the fundamental cycle of $M$ gives rise to a nondegenerate, $\SO(2)$-invariant {\it cotrace}
$R \rightarrow C_{\ast}( LM; R)$. It therefore determines a symmetric monoidal functor
$Z: \Bord_{2}^{\ori, \non} \rightarrow \Alg_1(\bfS)^{\op}$ whose value on a circle
can be identified with the chain complex $C_{\ast}( LM; R)$. Evaluating $Z$ on manifolds of higher dimension, we obtain operations on $C_{\ast}(LM; R)$ which are {\em preduals} of the operations
of Example \ref{stringtop}, and are defined even if we do not assume that $M$ is simply connected.

It is also possible to drop the assumption that $M$ is even-dimensional, and to work over
the field $\Q$ (or other coefficient rings) rather than the periodic algebra $R$. However,
we encounter a new complication: the fundamental cycle of $M$ gives rise to a trace map
$\tr: C^{\ast}(LM; \Q) \rightarrow \Q$ which is not of degree zero, but involves a shift by the dimension of $M$. Nevertheless, we can view the pair $( C^{\ast}(M; \Q), \tr)$ as a Calabi-Yau object of an
appropriately defined elaboration of the $(\infty,2)$-category $\Alg_1(\bfS)$, where we allow twistings
by $2$-gerbes over $\Q$. In concrete terms, this means that the operations
$$C_{\ast}(LM; \Q)^{\otimes m} \rightarrow C_{\ast}(LM; \Q)^{\otimes n}$$
associated to a surface $\Sigma$ with $m$ incoming and $n$ outgoing boundary circles
is not of degree zero, but involves a homological shift whose magnitude depends on the
dimension of $M$ and the genus of $\Sigma$.
\end{remark}

\subsection{Manifolds with Singularities}\label{mans}

The cobordism hypothesis (Theorem \ref{swisher2}) asserts that
the higher category of framed bordisms $\Bord^{\fr}_{n}$ is freely generated, as a symmetric monoidal
$(\infty,n)$-category with duals, by a single object (corresponding to the $0$-manifold with a single point). In this section, we will describe a generalization of the cobordism hypothesis, which gives a geometric description of symmetric monoidal $(\infty,n)$-categories (again assumed to have duals) having more complicated presentations. 

To explain the basic idea, suppose that we are given an object $Y \in \Omega^{k-1} \Bord_{n}$, corresponding to a closed $(k-1)$-manifold. By definition, giving a $k$-morphism
$\emptyset \rightarrow Y$ in $\Bord_{n}$ is equivalent to giving a $k$-manifold whose boundary is identified with $Y$. Suppose that we wish to enlarge the $(\infty,n)$-category $\Bord_{n}$, to obtain a new $(\infty,n)$-category $\calC$ which contains a $k$-morphism $\alpha: \emptyset \rightarrow Y$.
We might then try to think of the $k$-morphisms in $\calC$ as given by some kind of ``generalized
$k$-manifolds''; in particular, we can try to think of $\alpha$ as a ``generalized $k$-manifold''
with boundary $Y$. In general, it is not possible to realize $Y$ as the boundary of a
smooth manifold of dimension $k$. However, there is always a canonical way to realize
$Y$ as the ``boundary'' of a $k$-dimensional topological space $Y'$. Namely, let $Y'$
denote the cone $C(Y) = ( Y \times [0,1] ) \coprod_{ Y \times \{1\} } \{v \}$, and set
$\bd Y' = Y \times \{0\} \subseteq Y'$. Then $Y'$ is a $k$-dimensional topological space
containing $Y$ as a closed subset, which is a manifold except possibly at a single point:
the vertex $v$ of the cone. The space $C(Y)$ is an example of a {\it manifold with singularities}:
it admits a decomposition $C(Y) = (C(Y) - \{v\}) \coprod \{v\}$ into locally closed subsets
which are manifolds and which fit together in a reasonably nice way. 

More generally, we can consider pairs $(M, M_0)$ where $M$ is a topological space of dimension $m$, 
$M_0 \subseteq M$ is a closed subset which is a smooth $(n-k)$-framed manifold of
dimension $(m-k)$ (so that $M_0$ is empty if $m < k$), the complement $M - M_0$ is a smooth manifold of dimension $m$, and we have a homeomorphism $U \simeq M_0 \times C(Y)$ for
some open neighborhood $U$ of $M_0$.
We can think of the pair $(M, M_0)$ as a kind of generalized $m$-manifold. Using these generalized manifolds in place of ordinary smooth manifolds, we can define an analogue of the $(\infty,n)$-category
$\Bord_{n}$; let us denote this analogue by $\Bord'_{n}$. As in the smooth case, one can show that
$\Bord'_{n}$ is a symmetric monoidal $(\infty,n)$-category with duals (the symmetric monoidal structure is given, as usual, by disjoint union). Every smooth $m$-manifold $M$ can be
regarded as a generalized $m$-manifold by taking $M_0 = \emptyset$. This construction determines
a symmetric monoidal functor $\Bord_{n} \rightarrow \Bord'_{n}$. In particular, we can regard $X$ and $\emptyset$ as objects of $\Omega^{k-1} \Bord'_{n}$. By construction, $C(Y)$ defines a $k$-morphism
$\alpha: \emptyset \rightarrow Y$ in $\Bord'_{n}$. In fact, $\Bord'_{n}$ is universal with respect
to these properties:

\begin{proposition}\label{cooba}
Let $\calC$ be a symmetric monoidal $(\infty,n)$-category with duals, let
$Z_0: \Bord_{n} \rightarrow \calC$ be a symmetric monoidal functor, and let $Y$ be a closed
$(k-1)$-manifold. The following types of data are equivalent:
\begin{itemize}
\item[$(1)$] Symmetric monoidal functors $Z: \Bord'_{n} \rightarrow \calC$ extending
$Z_0$, where $\Bord'_{n}$ is defined as above.
\item[$(2)$] Morphisms $\alpha: {\bf 1} \rightarrow Z_0(Y)$ in $\Omega^{k-1} \calC$.
\end{itemize}
\end{proposition}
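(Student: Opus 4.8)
The plan is to deduce Proposition~\ref{cooba} from the inductive formulation of the cobordism hypothesis (Theorem~\ref{swisher6}), exactly as Theorems~\ref{swisher3} and~\ref{swisher5} were deduced in \S\ref{forty1}. The key observation is that $\Bord'_{n}$ sits in a filtration by ``number of singular strata'' analogous to the filtration $\Bord_{1} \to \cdots \to \Bord_{n}$, and that adjoining the single cone $C(Y)$ is precisely the move that Theorem~\ref{swisher6} is designed to analyze. First I would set up the relevant geometry: a generalized $m$-manifold $(M, M_0)$ with $M_0$ a link-$Y$ singular stratum is, near $M_0$, modeled on $M_0 \times C(Y)$, so the data of such an object over the smooth locus amounts to a smooth manifold $M - M_0$ together with a smooth $(n-k)$-framed manifold $M_0$ glued in via the cone. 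The cone point contributes one new cell; everything else is smooth bordism theory, already controlled by $Z_0$.

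The main steps, in order, would be: (1) Identify $\Bord'_{n}$ as the $(\infty,n)$-category obtained from $\Bord_{n}$ by ``freely adjoining'' a $k$-morphism $\alpha \colon \emptyset \to Y$, i.e.\ verify that $\Bord'_{n}$ enjoys the universal property claimed, at the level of the bordism categories themselves (before mapping into $\calC$). This is the geometric heart of the matter and uses a Morse-theoretic / handle-decomposition argument for generalized manifolds: any generalized $m$-manifold can be built from smooth handles together with copies of the cone $C(Y)$ attached along framed $Y$-bundles, and the relations among these gluings reduce, via the link structure, to the relations already present in $\Bord_{n}$ plus those forced by the adjunction structure on $\alpha$. (2) Having established the universal property of $\Bord'_{n}$ as a symmetric monoidal $(\infty,n)$-category generated over $\Bord_{n}$ by a single $k$-morphism $\emptyset \to Y$, apply the standard mapping-in argument: a symmetric monoidal functor $Z \colon \Bord'_{n} \to \calC$ extending $Z_0$ is the same as a choice of $k$-morphism $\alpha' \colon {\bf 1} \to Z_0(Y)$ in $\calC$, since $\alpha'$ automatically inherits the adjunction data it needs from the fact that $\calC$ has duals (duals of objects, and adjoints for $k$-morphisms, $1 \le k < n$). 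Here the role of Theorem~\ref{swisher6} is to supply, inductively, the nondegeneracy/adjunction constraints that such a $k$-morphism must satisfy, and to show those constraints are automatically met in a category with duals --- precisely as in steps $(6)$--$(9)$ of the proof of Theorem~\ref{swisher3}. (3) Check that the equivalence is implemented by $Z \mapsto Z(C(Y)) = \alpha'$, which is a matter of unwinding the construction.

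The hard part, as usual in this subject, is step (1): establishing that the higher category of generalized bordisms really is generated over $\Bord_{n}$ by the single cone, with no hidden relations. This is the analogue for manifolds-with-singularities of the statement ``every manifold admits a handle decomposition, and any two are related by handle moves,'' and proving it rigorously requires a stratified Morse theory (or framed-function theory in the style of Igusa, \cite{igusa}) adapted to spaces with a single cone-like singular stratum. One expects to proceed as in \S\ref{kuji}: filter $\Bord'_{n}$ by the index of the Morse data on the smooth part together with the presence of the singular stratum, and identify the associated graded steps with free adjunctions of $1$- and $2$-morphisms, then invoke the same connectivity input to push through. A subtlety worth flagging is the $(n-k)$-framing carried by $M_0$: one must check it propagates correctly through the gluing $U \simeq M_0 \times C(Y)$ so that the resulting presentation has $\alpha$ as an $\OO$-equivariant generator of the expected type, matching the form in which Theorem~\ref{swisher6} delivers its conclusion. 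Modulo this geometric input, the passage to an arbitrary target $\calC$ with duals is formal and follows the template already laid out in \S\ref{forty1}.
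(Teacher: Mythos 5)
Your proposal diverges from the paper's argument in a fundamental way: the paper does \emph{not} prove Proposition~\ref{cooba} by developing a stratified Morse theory or framed-function theory for spaces with cone singularities. Instead, Proposition~\ref{cooba} is subsumed into Theorem~\ref{swisher12}, and the paper's proof of that theorem is entirely formal once the smooth cobordism hypothesis is in hand. Concretely: one uses Theorem~\ref{swisher5} to package the assignment ``link $\mapsto E_{\widetilde{x}}$'' as a symmetric monoidal functor out of an ordinary smooth bordism category, deloops to get $f: B^{k-1}\Bord^{(X,\zeta)}_{n-k} \rightarrow \Bord_n^{\vect{X}'}$, observes that the singular bordism category $\Bord_n^{\vect{X}}$ (here $\Bord'_n$) is canonically equivalent to the cone $\Cone(f)$ --- the paper remarks that the definition of $\vect{X}$-manifold ``is essentially rigged to produce this result'' --- and then invokes the relative cobordism hypothesis, Theorem~\ref{swisher13}, to read off the universal property. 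The entire point of this maneuver is to avoid what you identify as the hard geometric step: once the smooth case is known, adjoining a singular stratum modeled on $C(Y)$ is a categorical operation (the cone $\Cone(f)$ is precisely the ``shadow'' of coning off the link $Y$), and no new handle-decomposition or Igusa-type connectivity input for stratified spaces is required. Your step (1), by contrast, proposes to rebuild the machinery of \S\ref{kuji} from scratch in a stratified setting; this is not a reduction to known facts but a substantial independent undertaking, and it is not what the paper does.

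There is also a more localized misreading. You cast Theorem~\ref{swisher6} as the tool that ``supplies the nondegeneracy/adjunction constraints'' on $\alpha$ and ``shows those constraints are automatically met in a category with duals,'' by analogy with steps $(6)$--$(9)$ of the proof of Theorem~\ref{swisher3}. But the situations are disanalogous. In Theorem~\ref{swisher6}, the generator is the smooth disk $D^{\zeta_x}$, whose boundary is the full sphere $S^{\zeta_x}$; nondegeneracy of $\eta$ is a genuine constraint there (it is the assertion that $\eta$ is the unit of an adjunction), reflecting the fact that the disk participates in handle-cancellation moves. In Proposition~\ref{cooba}, by contrast, the generator is the singular cone $C(Y)$, which interacts with the rest of the bordism category only through its boundary $Y$, and the corresponding morphism $\alpha: {\bf 1} \rightarrow Z_0(Y)$ in clause $(2)$ is completely unconstrained --- $\Bord'_n$ is freely generated over $\Bord_n$ (in the category of symmetric monoidal $(\infty,n)$-categories with duals) by a single $k$-morphism. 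The tool calibrated to this free-adjunction situation is Theorem~\ref{swisher13}, not Theorem~\ref{swisher6}.
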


In view of the description of $\Bord_{n}$ given by Theorem \ref{swisher4}, we can restate
Proposition \ref{cooba} more informally as follows: as a symmetric monoidal $(\infty,n)$-category with
duals, $\Bord'_{n}$ is freely generated by a single $\OO(n)$-equivariant object (corresponding
to a point) together with a single $k$-morphism (corresponding to the cone $C(Y)$). Our goal in this section is to explain a more general form of Proposition \ref{cooba}, which describes the
free $(\infty,n)$-category with duals generated by an arbitrary collection of objects, $1$-morphisms,
$2$-morphisms, and so forth, stopping at the level of $n$-morphisms. The description will be given in geometric terms: roughly speaking, the free $(\infty,n)$-category in question can be described
in terms of bordisms between manifolds with singularities, where we allow singularities whose local structure is determined by the pattern of generators. In order to make a more precise statement, we need to introduce a somewhat elaborate definition.

\begin{protodefinition}\label{singdat}
Fix an integer $n \geq 0$. Using a simultaneous induction on $0 \leq k \leq n$, we will define
the following:
\begin{itemize}
\item The notion of an {\it $n$-dimensional singularity datum of length $k$}.
\item If $\vect{X}$ is a singularity datum of length $k$ and $V$ is a real vector
space of dimension $\leq n-k$, the notion of a {\it $\vect{X}$-manifold of codimension $V$}.
\end{itemize}

The definitions are given as follows:
\begin{itemize}

\item[$(a)$] An $n$-dimensional singularity datum of length $0$ consists of
a pair $(X_0, \zeta_0)$, where $X_0$ is a topological space and $\zeta_0$ is a real vector
bundle of dimension $n$ on $X_0$ which is endowed with an inner product.

\item[$(b)$] If $0 < k \leq n$, then an $n$-dimensional singularity datum of length
$k$ is given by a quadruple $( \vect{X}, X_k, \zeta_k, p: E_k \rightarrow X_k)$, where
$\vect{X}$ is an $n$-dimensional singularity datum of length $k-1$, 
$X_k$ is a topological space, $\zeta_k$ is a real vector bundle of dimension $n-k$
on $X_k$ endowed with an inner product, and $p: E_k \rightarrow X_{k}$ is a fiber bundle whose fiber over each point $x \in X_{k}$ is a compact $\vect{X}$-manifold of codimension $\zeta_x \oplus \R$.

\item[$(c)$] Let $\vect{X}$ be an $n$-dimensional singularity datum of length
$k$, given by a quadruple $( \vect{X}', X_{k}, \zeta_k, p: E_k \rightarrow X_k)$.
Let $V$ be a real vector space of dimension $m \leq n-k$. A {\it $\vect{X}$-manifold of codimension $V$} consists of the following data:
\begin{itemize}
\item[$(i)$] A topological space $M$.
\item[$(ii)$] A closed subspace $M_k \subseteq M$, which is endowed with the structure of a smooth
manifold of dimension $n-m-k$, having tangent bundle $T$. 
\item[$(iii)$] A map $q: M_k \rightarrow X_k$ and an isomorphism of vector bundles
$T \oplus \underline{V} \simeq q^{\ast} \zeta$, where $\underline{V}$ denotes the constant
vector bundle on $M_k$ associated to $V$. This data endows the pullback
$q^{\ast} E = E \times_{ X_k} M_k$ with the structure of a $\vect{X}'$-manifold of
codimension $V \oplus \R$, so that $q^{\ast} E \times (0,1)$ has the structure of
a $\vect{X}'$-manifold of codimension $V$.
\item[$(iv)$] A structure of $\vect{X}'$-manifold of codimension $V$ on the open subset
$M - M_k \subseteq M$.
\item[$(v)$] An open neighborhood $U$ of $M_k$ and a continuous
quotient map $f: (0,1] \times q^{\ast} E \rightarrow U$ whose
restriction to $(0,1) \times q^{\ast} E$ is an open embedding of
$(0,1) \times q^{\ast} E \rightarrow M - M_k$ of $\vect{X}'$-manifolds of codimension
$V$ and whose restriction to $\{1\} \times q^{\ast} E$ coincides with the projection
$q^{\ast} E \rightarrow M_k$.
\end{itemize}
\end{itemize}
We will refer to an $n$-dimensional singularity datum of length $n$ simply as an
{\it $n$-dimensional singularity datum}. If $\vect{X}$ is an $n$-dimensional singularity datum and
$m \leq n$, then we define a {\it $\vect{X}$-manifold of dimension $m$} to be a $\vect{X}$-manifold
of codimension $\R^{n-m}$. 
\end{protodefinition}

\begin{remark}
Unwinding the induction, we see that an $n$-dimensional singularity datum
of length $k$ consists of a sequence of topological spaces $\{ X_i \}_{0 \leq i \leq k}$, 
a sequence of vector bundles $\{ \zeta_i \}_{0 \leq i \leq k}$ where each $\zeta_i$ has 
rank $n-i$ on $X_i$, and a sequence of fiber bundles $\{ E_i \rightarrow X_i \}_{0 \leq i \leq k}$.
\end{remark}

\begin{remark}
Any $n$-dimensional singularity datum $\vect{X}'$ of length $k$ can be completed to an
$n$-dimensional singularity datum $\vect{X}$ by taking the spaces $X_{i}$ to be
empty for $i > k$. In this situation, we will not distinguish between $\vect{X}'$ and
$\vect{X}$. In other words, we will think of $n$-dimensional singularity data of length
$k$ as $n$-dimensional singularity data for which the spaces $X_i$ are empty for $i > k$.
\end{remark}

\begin{remark}
Part $(a)$ of Definition \ref{singdat} can be regarded as a special case of
part $(b)$ if we make use the following conventions: 
\begin{itemize}
\item There is a unique $n$-dimensional singularity datum $\vect{X}$ of length $-1$.
\item Every $\vect{X}$-manifold is empty.
\end{itemize}
\end{remark}

\begin{remark}
Let $\vect{X} = ( \{ X_i \}_{0 \leq i \leq n}, \{ \zeta_i \}_{0 \leq i \leq n}, \{ p_i: E_i \rightarrow X_i \}_{0 \leq i \leq n})$ be an $n$-dimensional singularity datum. A $\vect{X}$-manifold of dimension $m$
consists of a topological space $M$ equipped with a stratification
$$ M_n \subseteq M_{n-1} \subseteq M_{n-2} \subseteq \ldots \subseteq M_0 = M,$$
where each open stratum $M_{k} - M_{k-1}$ is a smooth manifold of dimension $m-k$ (which
is empty if $m < k$) equipped with an $(X_k, \zeta_k)$-structure. Moreover, these smooth manifolds are required to ``fit together'' in a manner which is prescribed by the fiber bundles $p_i: E_i \rightarrow X_i$. 
\end{remark}

\begin{remark}
In Definition \ref{singdat}, we did not include any requirement that an $\vect{X}$-manifold
$M$ be compact. However, all of the $\vect{X}$-manifolds which we subsequently discuss will be assumed compact unless otherwise specified.
\end{remark}

\begin{example}
An $n$-dimensional singularity datum of length $0$ consists of a pair $(X, \zeta)$, where
$X$ is a topological space and $\zeta$ is a vector bundle of rank $n$ on $X$. The notion of
$(X, \zeta)$-manifold of dimension $m \leq n$ (appearing in Definition \ref{singdat}) agrees with
the notion of a smooth manifold with $(X, \zeta)$-structure (Notation \ref{cusper2}).
\end{example}

Let $\vect{X}$ be an $n$-dimensional singularity datum.
By elaborating on Definition \ref{singdat}, one can define the notions
{\it $\vect{X}$-manifold with boundary} and {\it bordism between $\vect{X}$-manifolds}.
Using $\vect{X}$-manifolds in place of ordinary manifolds, we can define an analogue of
the $(\infty,n)$-category $\Bord_n$, which we will denote by $\Bord_{n}^{\vect{X}}$.

\begin{remark}
The notion of a $\vect{X}$-manifold with boundary is described very naturally in the language of Definition \ref{singdat}: it is just a $\vect{X}'$-manifold, where $\vect{X}'$ is a singularity datum which can be extracted from $\vect{X}$. For example, the usual notion of a manifold with boundary 
(or, more precisely, of a manifold with a {\em collared} boundary) arises as a special case of
Definition \ref{singdat}; see Example \ref{bra} below.
\end{remark}

\begin{example}
Let $k \leq n$ be positive integers, and let $Y$ be a closed $(k-1)$-manifold.
We define an $n$-dimensional singularity datum 
$\vect{X} = ( \{ X_i \}_{0 \leq i \leq n}, \{ \zeta_i \}_{0 \leq i \leq n}, \{p_i: E_i \rightarrow X_i \}_{0 \leq i \leq n })$ as follows:
\begin{itemize}
\item[$(1)$] The topological space $X_0$ is a classifying space $\BO(n)$, the
topological space $X_k$ consists of a single point, and the topological space
$X_i$ is empty for $i \notin \{ 0, k \}$. 
\item[$(2)$] The vector bundle $\zeta_0$ is the tautological vector bundle of rank $n$
on $\BO(n)$, and the vector bundle $\zeta_k$ corresponds to the vector space $\R^{n-k}$.
\item[$(3)$] The fiber bundle $p_k: E_k \rightarrow X_k$ is given by the projection $Y \rightarrow \ast$.
\end{itemize}
The $(\infty,n)$-category $\Bord_{n}^{\vect{X}}$ can be identified with
the $(\infty,n)$-category $\Bord'_n$ appearing in Proposition \ref{cooba}.
\end{example}

We would now like to generalize Proposition \ref{cooba} to obtain a description of
the $(\infty,n)$-category $\Bord_{n}^{\vect{X}}$ for {\em any} $n$-dimensional singularity
datum $\vect{X}$. First, we need to introduce a bit of additional terminology. 
Recall that if $\calC$ is a symmetric monoidal $(\infty,n)$-category with duals, then
the underlying $(\infty,0)$-category $\calC^{\sim}$ carries an action of the orthogonal group
$\OO(n)$ (Corollary \ref{ail}). The group $\OO(n)$ does not act on the $(\infty,n)$-category
$\calC$ itself. For example, if $n=1$, then the nontrivial element in $\OO(n)$ acts by carrying
every object $X$ in $\calC$ to its dual $X^{\vee}$. A morphism $f: X \rightarrow Y$ does not
generally induce a morphism $X^{\vee} \rightarrow Y^{\vee}$ (unless $f$ is an isomorphism);
instead, it induces a dual map $f^{\vee}: Y^{\vee} \rightarrow X^{\vee}$. Nevertheless, the subgroup
$\OO(n-1) \subseteq \OO(n)$ naturally acts on the collection of $1$-morphisms in $\calC$.
To see this, let $[1]$ denote the ordinary category associated to the linearly ordered set
$\{ 0 < 1 \}$. One can endow the collection $\Fun( [1], \calC)$ of functors
$[1] \rightarrow \calC$ with the structure of a symmetric monoidal $(\infty,n-1)$-category with
duals (the appropriate construction is a bit subtle, since it is not an internal $\Hom$-object with respect
to the Cartesian product of higher categories), so that the $\infty$-groupoid $\Fun( [1], \calC)^{\sim}$
carries an action of the orthogonal group $\OO(n-1)$. More generally, the $\infty$-groupoid of $1$-morphisms in $\Omega^{k-1} \calC$ carries an action of the orthogonal group $\OO(n-k)$, which is
compatible with the action of $\OO(n+1-k)$ on the $\infty$-groupoid of objects of $\Omega^{k-1} \calC$.

\begin{theorem}[Cobordism Hypothesis with Singularities]\label{swisher12}
Let $0 < k \leq n$ be integers. Suppose we are given
an $n$-dimensional singularity datum $\vect{X}$ of length $k$, corresponding
to a quadruple $( \vect{X}', X, \zeta, p: E \rightarrow X)$ as in Definition
\ref{singdat}. Let $\widetilde{X} \rightarrow X$ denote the bundle of orthonormal frames in
$\zeta$ $($so that $\widetilde{X}$ is a principal $\OO(n-k)$-bundle over $X${}$)$.
For every point $\widetilde{x} = (x, \alpha: \zeta_{x} \simeq \R^{n-k})$, we 
can use $\alpha$ to view the fiber $p^{-1} \{x\}$ as a
$\vect{X}'$-manifold of codimension $\R^{n+1-k}$, which determines an object
$E_{ \widetilde{x} } \rightarrow \Omega^{k-1} \Bord_{n}^{\vect{X}'}$. 
We note that the assignment $\widetilde{x} \mapsto E_{ \widetilde{x} }$ is equivariant
with respect to the action of the orthogonal group $\OO(n-k)$.

Let $\calC$ be a symmetric monoidal $(\infty,n)$-category with duals.
Then giving a symmetric monoidal functor $Z: \Bord_{n}^{\vect{X}} \rightarrow \calC$
is equivalent to giving the following data:
\begin{itemize}
\item[$(1)$] A symmetric monoidal functor $Z_0: \Bord_{n}^{\vect{X}'} \rightarrow \calC$.
\item[$(2)$] A family of $1$-morphisms $\eta_{ \widetilde{x} }: {\bf 1} \rightarrow Z_0( E_{ \widetilde{x}})$
in $\Omega^{k-1} \calC$ parametrized by $\widetilde{x} \in \widetilde{X}$, such that
the assignment $\widetilde{x} \mapsto \eta_{\widetilde{x}}$ is $\OO(n-k)$-equivariant.
\end{itemize}
\end{theorem}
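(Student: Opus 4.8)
The plan is to deduce Theorem~\ref{swisher12} from the machinery already developed for the ordinary cobordism hypothesis, following the same three‑step pattern used to prove Theorem~\ref{swisher6}: first reduce to a ``universal'' case by the $\Fam_n$‑trick of \S\ref{slabun}; then, in that case, identify $\Bord_n^{\vect{X}}$ as the symmetric monoidal $(\infty,n)$‑category with duals obtained from $\Bord_n^{\vect{X}'}$ by freely adjoining a single $\OO(n-k)$‑equivariant $k$‑morphism, namely the cone $C(E_0)$; and finally bootstrap back up to the general datum $(\vect{X}',X,\zeta,p)$. Once the free‑generation statement is in place the theorem is essentially a restatement: a symmetric monoidal functor out of a free construction is determined by, and can be arbitrarily prescribed on, the generating data, which here is precisely a functor $Z_0\colon \Bord_n^{\vect{X}'}\to\calC$ together with the image of the generating $k$‑morphism; the $\OO(n-k)$‑equivariance of the generator, measured against the action on $1$‑morphisms of $\Omega^{k-1}\calC$ recalled just before the statement (and ultimately coming from Corollary~\ref{ail} applied to suitable auxiliary higher categories), matches the equivariance of the family $\{\eta_{\widetilde x}\}$.

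For the reduction step: the only structure in $\vect{X}$ not shared by $\vect{X}'$ is the pair $(X,\zeta)$ on the new stratum together with the fibre bundle $p\colon E\to X$. Exactly as Propositions~\ref{coalmine}, \ref{campwise} and \ref{cabil} strip an $(X,\zeta)$‑structure off the top‑dimensional manifolds, I would realize $\Bord_n^{\vect{X}}$ as a homotopy fibre product of $\Bord_n^{\vect{X}''}$ with a category of $\Fam_n^\ast$‑type local systems --- and likewise on the level of functors into $\calC$ --- where $\vect{X}''$ is the universal singularity datum: $\vect{X}'$ unchanged, $X$ replaced by $\BO(n-k)$, $\zeta$ by the tautological bundle, and $p$ pulled back so that over $\EO(n-k)$ it becomes the constant bundle with fibre a single $\OO(n-k)$‑equivariant $\vect{X}'$‑manifold $E_0$ of dimension $k-1$. (The case $X=\ast$, $p=(Y\to\ast)$ of this is exactly Proposition~\ref{cooba}.) This reduces the theorem to the assertion that $\Bord_n^{\vect{X}''}$ is freely generated over $\Bord_n^{\vect{X}'}$ by the single $\OO(n-k)$‑equivariant $k$‑morphism $C(E_0)\colon\emptyset\to E_0$ in $\Omega^{k-1}\Bord_n^{\vect{X}'}$.

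The geometric heart is establishing this free generation. The forgetful functor $\Bord_n^{\vect{X}'}\to\Bord_n^{\vect{X}''}$ is $(k-1)$‑connective, since the two $(\infty,n)$‑categories have the same $j$‑morphisms for $j<k$; by the skeletal‑sequence and Grothendieck‑construction formalism of \S\ref{unf} (the analogue of Corollary~\ref{isa} for $(k-1)$‑connective functors) such extensions are classified by a lax symmetric monoidal functor $\calB^{\vect{X}''}\colon\Omega^{k-1}\Bord_n^{\vect{X}'}\to\Cat_{(\infty,n-k)}$, and I must show $\calB^{\vect{X}''}$ is freely generated from $\calB^{\vect{X}'}$ by one $\OO(n-k)$‑equivariant $1$‑morphism. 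For a closed $(k-1)$‑dimensional $\vect{X}'$‑manifold $M$, the $(\infty,n-k)$‑category $\calB^{\vect{X}''}(M)$ has objects the $\vect{X}''$‑manifolds‑with‑boundary $M$ and higher morphisms given by bordisms; I would obtain a presentation by a relative Morse/Cerf theory for the functions stratifying such bordisms, running the argument of \S\ref{kuji} (Claims~\ref{caboose} and \ref{laboose}). The decisive simplification is that near the new stratum $M_k$ a $\vect{X}''$‑manifold is canonically a product of a framed manifold with the cone $C(E_0)$, and the space of such product structures is contractible once an orthonormal frame of $\zeta_k$ is fixed; in other words the cone generator is \emph{rigid} --- it carries no Morse index and admits no birth--death deformation of its own, any such phenomena for the ambient $\vect{X}'$‑strata being already accounted for in $\calB^{\vect{X}'}$. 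Hence, in contrast with Claim~\ref{laboose}, the presentation acquires only the generator $\alpha=C(E_0)$ and \emph{no} cancellation $2$‑morphism, which is exactly free generation. The family version would use a framed‑function space $\FrFun$ adapted to $\vect{X}''$‑bordisms, the analogue of Igusa's connectivity theorem (Theorem~\ref{conig}), and the obstruction theory of \S\ref{kumma}; the relevant cohomological input reduces, as in Theorem~\ref{swisher9}, to the cohomology of $\BO(n-k)$ with values in the pulled‑back local system, which is precisely the target of the equivariant family in clause~(2).

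The hard part will be this last step --- the full adaptation of the index filtration of \S\ref{kuji} and the obstruction‑theoretic argument of \S\ref{kumma} to bordisms of manifolds with singularities modeled on $C(E_0)$, and in particular the verification that no hidden relations beyond the equivariance constrain $\eta$. Everything else (the $\Fam_n$‑reduction, the translation into lax functors, the final excision/bootstrap along the lines of \S\ref{slabun}) is formal once that is granted. A secondary point, which I would handle by careful unwinding rather than a new idea, is the bookkeeping of the $\OO(n-k)$‑action throughout: one must track it on the links $E_{\widetilde x}$, on the cone generators, and on the relevant $1$‑morphism groupoids of $\Omega^{k-1}\calC$, and check that the whole equivalence of data is equivariant, so that specializing from the universal $\vect{X}''$ to an arbitrary datum $(\vect{X}',X,\zeta,p)$ in the bootstrap step yields exactly the parametrized family $\{\eta_{\widetilde x}\}_{\widetilde x\in\widetilde X}$ with the stated equivariance.
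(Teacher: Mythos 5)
The paper's proof of Theorem~\ref{swisher12} is a short, purely categorical reduction and does not re-run any Morse theory or obstruction theory for singular manifolds. It proceeds as follows: the $\OO(n-k)$-equivariant assignment $\widetilde{x}\mapsto E_{\widetilde{x}}$ is a map $\widetilde{X}\to\Omega^{k-1}\Bord_n^{\vect{X}'}$ which, by the already-established Theorem~\ref{swisher5}, is classified by a symmetric monoidal functor $\Bord_{n-k}^{(X,\zeta)}\to\Omega^{k-1}\Bord_n^{\vect{X}'}$. Delooping gives $f\colon B^{k-1}\Bord_{n-k}^{(X,\zeta)}\to\Bord_n^{\vect{X}'}$, and then the entire content of Theorem~\ref{swisher12} is the observation that $\Bord_n^{\vect{X}}\simeq\Cone(f)$ essentially by unwinding Definition~\ref{singdat}. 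After that one simply invokes the relative cobordism hypothesis (Theorem~\ref{swisher13}), itself a formal consequence of Theorem~\ref{swisher5} and properties of the $\Cone$ construction. Your proposal does not use the $\Cone$ construction at all and therefore misses the step that makes this a two-line reduction.

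Instead you propose to establish directly that $\Bord_n^{\vect{X}''}$ is freely generated over $\Bord_n^{\vect{X}'}$ by re-running the index-filtration argument of~\S\ref{kuji} and the obstruction theory of~\S\ref{kumma}, adapted to bordisms of stratified spaces with cone singularities, together with a postulated analogue of Igusa's Theorem~\ref{conig} for a suitably defined $\FrFun$ space on $\vect{X}''$-bordisms and a cohomological calculation analogous to Theorem~\ref{swisher9}. This is where the gap lies: none of that adapted machinery is established in the paper, and none of it is a formal consequence of what has come before. The assertion that the cone generator is ``rigid'' so that ``no cancellation $2$-morphism'' is acquired is plausible-sounding but unsubstantiated --- the generic behaviour of generalized Morse functions near a cone stratum (indeed, even the correct definition of such functions) is precisely the kind of geometric input one would have to develop from scratch, and the analogue of Igusa's connectivity estimate in the singular setting is a genuinely new theorem, not an observation. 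The $\Fam_n$-reduction you invoke to get to the universal case $\vect{X}''$ is fine and parallels~\S\ref{slabun}, but it does not help with the hard geometric step. The elegance of the paper's argument is precisely that by packaging the singularity structure as a $\Cone$ of a functor $f$ that is \emph{already} classified by Theorem~\ref{swisher5}, one never has to touch Morse theory, framed functions, or GMTW-type calculations for singular manifolds at all; all the geometry is confined to the smooth case. If you wish to pursue your route you should be explicit that you are proposing (and would need to prove) a singular-manifold analogue of Theorem~\ref{conig} and of the cohomological computation underlying Theorem~\ref{swisher9}, neither of which follows from the results in this paper.
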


\begin{remark}
With the appropriate conventions, we can regard Theorem \ref{swisher5} as corresponding to the degenerate case of Theorem \ref{swisher12} where we take $k=0$.
\end{remark}

\begin{remark}
Let $\vect{X}$ be an $n$-dimensional singularity datum. Very roughly, Theorem \ref{swisher12} can be stated as follows: as a symmetric monoidal $(\infty,n)$-category with duals, 
$\Bord_{n}^{\vect{X}}$ is freely generated by adjoining a $k$-morphism for
every point of $X_k$, for $0 \leq k \leq n$. The source and target of these $k$-morphisms
are dictated by the details of the singularity datum $\vect{X}$.
\end{remark}

\begin{remark}
Theorem \ref{swisher12} is more general than it might first appear: it implies that
{\em any} symmetric monoidal $(\infty,n)$-category with duals which is freely generated by
adjoining $k$-morphisms for $0 \leq k \leq n$ can be realized as $\Bord^{\vect{X}}_{n}$ for a suitable
singularity datum $\vect{X}$. This is a consequence of the following general observation:
if $\calC$ is a symmetric monoidal $(\infty,n)$-category with duals and $k \leq n$, then for
any pair of $(k-1)$-morphisms $f$ and $g$ with the same source and target, the data of
a $k$-morphism from $f$ to $g$ is equivalent to the data of a $k$-morphism from
${\bf 1}$ to $h$ in $\Omega^{k-1} \calC$, for an appropriately chosen closed $(k-1)$-morphism
$h$. For example, when $k=1$, we note that $\bHom_{\calC}(f,g) \simeq \bHom_{\calC}( {\bf 1}, f^{\vee} \otimes g)$. The general assertion reflects the geometric idea that any $k$-manifold with corners can be regarded as a $k$-manifold with boundary by smoothing the corners in an appropriate way.
\end{remark}

\begin{remark}
Let $\vect{X} = ( \{ X_i \}_{0 \leq i \leq k}, \{ \zeta_i \}_{0 \leq i \leq k}, \{ p_i: E_i \rightarrow X_i \}_{0 \leq i \leq k})$ be an $n$-dimensional singularity datum of length $k$.
For every integer $m$, the triple
$( \{ X_i \}_{0 \leq i \leq k}, \{ \zeta_i \oplus \underline{ \R^{m} } \}_{0 \leq i \leq k}, \{ p_i: E_i \rightarrow X_i \}_{0 \leq i \leq k })$ corresponds to an $(n+m)$-dimensional singularty datum of length $k$,
which we will denote by $\vect{X}[\R^{m}]$. Implicit in this assertion is the following observation:
every $\vect{X}$-manifold can be regarded as a $\vect{X}[\R^{m}]$-manifold in a natural way.
Passing to the limit as $m \mapsto \infty$, we obtain the notion of a {\it stable} $\vect{X}$-manifold.
For example, suppose that $X_0$ consists of a single point and $X_i = \emptyset$ for $i > 0$.
In this case, a $\vect{X}$-manifold is a manifold equipped with an $n$-framing (see Variant \ref{barvar}), and a stable $\vect{X}$-manifold is a manifold $M$ equipped with a stable framing (that is, a trivialization of $T_M \oplus \underline{\R}^{m}$ for $m \gg 0$).

We can also consider the direct limit of the bordism categories $\Bord^{\vect{X}[\R^{m}]}_{n+m}$
as $m \rightarrow \infty$ to obtain a bordism theory of stable $\vect{X}$-manifolds. The classifying
space $\varinjlim | \Bord^{\vect{X}[\R^{m}]}_{n+m}|$ is an infinite loop space, and therefore represents a cohomology theory. The corresponding homology theory admits a geometric interpretation in terms of the bordism theories of manifolds with singularities. These homology theories were originally introduced by Baas and Sullivan (see \cite{baas}). We can therefore regard the $(\infty,n)$-categories $\Bord^{\vect{X}}_{n}$ as a providing a refinement of the Baas-Sullivan theory.
\end{remark}

The proof of Theorem \ref{swisher12} uses a general categorical construction.
Fix an integer $n > 0$, and suppose that $f: \calC \rightarrow \calD$ is a symmetric monoidal
functor, where $\calC$ is a symmetric monoidal $(\infty,n-1)$-category and $\calD$ a symmetric
monoidal $(\infty,n)$-category. We can associate to this data a new symmetric monoidal $(\infty,n)$-category $\Cone(f)$, which we will call the {\it cone} of $f$. This $(\infty,n)$-category can be described informally as follows:
\begin{itemize}
\item[$(a)$] The objects of $\Cone(f)$ are objects $D \in \calD$.
\item[$(b)$] Given a pair of objects $D, D' \in \calD$, a $1$-morphism from
$D$ to $D'$ is given by a pair $(C, \eta)$, where $C \in \calC$ and
$\eta \in \OHom_{\calD}( f(C) \otimes D, D')$ are objects (the collection of such pairs
$(C, \eta)$ can be organized into an $(\infty,n-1)$-category in a natural way which we will not describe in detail). 
\end{itemize}

\begin{remark}\label{usi}
Let $f: \calC \rightarrow \calD$ be as above. There is a symmetric monoidal
functor $\calD \rightarrow \Cone(f)$ which is the identity on objects, and carries
a morphism $\eta: D \rightarrow D'$ in $\calD$ to the morphism $( {\bf 1}, \eta)$ in
$\Cone(f)$.
\end{remark}

\begin{remark}
Suppose that $n = 2$, let $f: \calC \rightarrow \calD$ be as above. 
Then $\Omega \Cone(f)$ is a symmetric monoidal $(\infty,1)$-category
whose objects are pairs $(C, \eta)$, where $C \in \calC$ and $\eta: f(C) \rightarrow {\bf 1}$ is
a $1$-morphism in $\calD$. This is a mild variation on the $(\infty,1)$-category
$\calC[f]$ described in Notation \ref{pf}.
\end{remark}

\begin{remark}\label{corem}
Let $\calC$ be a symmetric monoidal $(\infty,n)$-category, let
$\ast$ denote the trivial $(\infty,n)$-category comprised of a single object, and let
$f: \calC \rightarrow \ast$ be the unique (symmetric monoidal) functor. We
will denote the symmetric monoidal $(\infty, n+1)$-category $\Cone(f)$ by $B \calC$,
and refer to it as the {\it connected delooping} of $\calC$. It is characterized up to equivalence
by the following properties:
\begin{itemize}
\item[$(i)$] The $(\infty,n+1)$-category $B \calC$ has only a single object (up to isomorphism).
\item[$(ii)$] There is a symmetric monoidal equivalence $\Omega( B \calC) \simeq \calC$.
\end{itemize}

More generally, if $k \geq 0$, we let $B^{k} \calC$ denote the $(\infty,n+k)$-category obtained by iterating the above construction $k$ times.
\end{remark}

The basic ingredient needed for the proof of Theorem \ref{swisher12} is the following:

\begin{theorem}[Cobordism Hypothesis, Relative Version]\label{swisher13}
Fix integers $0 < k \leq n$. Let $X$ be a topological space, $\zeta$ a vector
bundle of rank $n-k$ on $X$ endowed with an inner product, and
$\widetilde{X}$ the corresponding $\OO(n-k)$-bundle on $X$. Every point
$\widetilde{x} \in \widetilde{X}$ determines an object of $\Bord^{(X, \zeta)}_{n-k}$, which
we will denote by $P_{ \widetilde{x}}$. 

Let $Z_0: \calD \rightarrow \calC$ be a symmetric monoidal functor between symmetric monoidal
$(\infty,n)$-categories, and let $f: B^{k-1} \Bord_{n-k}^{(X, \zeta)} \rightarrow \calD$ be
a symmetric monoidal functor. Assume that $\calC$ has duals. The following types of data are equivalent:
\begin{itemize}
\item[$(1)$] Symmetric monoidal functors $Z: \Cone(f) \rightarrow \calC$ extending
$Z_0$ $($see Remark \ref{usi}$)$.
\item[$(2)$] A family of $1$-morphisms
$\eta_{ \widetilde{x}}: {\bf 1} \rightarrow Z_0( P_{\widetilde{x}})$ in
$\Omega^{k-1} \calC$ indexed by
$\widetilde{x} \in \widetilde{X}$, such that the assignment
$\widetilde{x} \rightarrow \eta_{ \widetilde{x} }$ is $\OO(n-k)$-equivariant.
\end{itemize}
\end{theorem}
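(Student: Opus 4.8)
The plan is to treat Theorem~\ref{swisher13} (the technical engine behind Theorem~\ref{swisher12}) as a relative, singular refinement of the inductive cobordism hypothesis (Theorem~\ref{swisher6}), and to establish it by running the same machine: the inductive step of \S\ref{forty1}, the reduction to a universal case of \S\ref{slabun}, the unfolding of \S\ref{unf}, the Morse-theoretic analysis of \S\ref{kuji}, and the obstruction-theoretic comparison of \S\ref{kumma} --- now with the $n$-disk handle generators supplemented by a single ``cone $k$-cell'', and with the already-established cobordism hypothesis for $(X,\zeta)$-manifolds (Theorem~\ref{swisher5}) used to repackage the resulting data as the $\OO(n-k)$-equivariant family over the frame bundle $\widetilde{X}$ appearing in clause $(2)$. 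The guiding geometric picture is that a $k$-morphism ${\bf 1}\to Z_0(P_{\widetilde{x}})$ in $\Omega^{k-1}\calC$ is the value of the prospective functor $Z$ on the cone $C(P_{\widetilde{x}})$, viewed as a generalized $k$-manifold with boundary (``link'') $P_{\widetilde{x}}$, and that $\Cone(f)$ is precisely $\calD$ with such cone cells freely adjoined, coherently over the objects of $\Bord_{n-k}^{(X,\zeta)}$.

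First I would carry out two formal reductions. (a) As in \S\ref{slabun}, the construction $\Cone(-)$ commutes with the homotopy colimits that arise when one decomposes $X$ into cells, so both sides of the claimed equivalence send homotopy colimits in $X$ to homotopy limits of $\infty$-groupoids; since the statement is trivial for $X=\emptyset$, it suffices to treat the case in which $X$ is a point and $\zeta$ is the trivial bundle of rank $n-k$, so that $\widetilde{X}=\OO(n-k)$ and $\Bord_{n-k}^{(X,\zeta)}=\Bord_{n-k}^{\fr}$. (b) The functor $\calD\to\Cone(f)$ is an isomorphism on $j$-morphisms for $j<k$ (because $B^{k-1}\Bord_{n-k}^{(X,\zeta)}$ has a single object, trivial $j$-morphisms for $j<k-1$, and is carried by $f$ to the unit), so by the unfolding of \S\ref{unf} the extension problem is governed by the $(k-1)$-fold loop data; after passing to with-duals completions --- which does not change $\Fun^\otimes(-,\calC)$, by Claim~\ref{swist} --- the ``free'' object $\Cone(\id)$ with its adjoined cone cells is identified with a bordism-with-singularities $(\infty,n)$-category whose smooth part is $\Bord_n^{\fr}$ and whose single singular stratum records the location of the cone cell, and a pushout/base-change argument (again as in \S\ref{slabun}) reduces a general $f$ to this universal one.

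The remaining and substantive task is then the universal property of this singular bordism $(\infty,n)$-category: symmetric monoidal functors into a target $\calC$ with duals, extending a given functor on the smooth part, should be classified by an $\OO(n-k)$-equivariant family of (not necessarily nondegenerate) cone cells over $\widetilde{X}$. I would prove this by reproducing the argument of \S\ref{kuji}: introduce a framed-function--decorated version of the singular bordism category, filter it by the index filtration $\calF_{\bullet}$ whose successive quotients are controlled by local normal forms near critical points (the Morse Lemma, the birth--death singularity) and near the cone stratum, and then establish the analogues of Claims~\ref{caboose} and~\ref{laboose}, identifying the generators (handle attachments $\alpha_{m}$ of each index together with one cone cell) and the relations (handle cancellations, plus compatibility of the cone cell with an adjacent handle). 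The frame bundle $\widetilde{X}$ and the $\OO(n-k)$-equivariance then enter exactly as in the derivation of Theorem~\ref{swisher3} in \S\ref{forty1}: by Theorem~\ref{swisher5}, a coherent family of cone cells indexed by all objects of $\Bord_{n-k}^{(X,\zeta)}$ is the same datum as an $\OO(n-k)$-equivariant family over $\widetilde{X}$. Finally, to pass from the framed-function model back to the original singular bordism category one runs the obstruction-theoretic comparison of \S\ref{kumma}, using Igusa's connectivity estimate (Theorem~\ref{conig}) and a relative cohomology computation extending Theorem~\ref{swisher9}.

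The step I expect to be the main obstacle is the construction of the correct Morse-theoretic local model for a cone singularity and the verification of the resulting generators-and-relations presentation (the singular analogue of Claim~\ref{laboose}). Unlike an honest handle attachment, the cone $C(P_{\widetilde{x}})$ is not smooth at its vertex, so the framed-function formalism must be set up relative to the singular stratum, and one must check that the handle-cancellation combinatorics still closes up in its presence; similarly, extending the cohomological input of Theorem~\ref{swisher9} to the singular setting requires a singular refinement of the cofiber-sequence computation of Galatius--Madsen--Tillmann--Weiss. Everything else --- the excision reduction, the reduction to a universal $\calD$, the passage through \S\ref{unf}, and the obstruction-theoretic endgame --- is essentially bookkeeping parallel to the smooth case already carried out in \S\S\ref{forty1}--\ref{kumma}.
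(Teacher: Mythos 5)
Your proposal is quite far from the paper's approach, and I believe it has genuine gaps. The paper establishes Theorem~\ref{swisher13} by purely categorical means, reducing it to the already-established Theorem~\ref{swisher5} via ``formal properties of the cone construction''; it does not re-enter the Morse-theoretic/framed-function/obstruction-theoretic machinery of \S\S\ref{kuji}--\ref{kumma}. In fact the singular bordism categories $\Bord_n^{\vect{X}}$ appear only as a \emph{consequence} of Theorem~\ref{swisher13} (via the identification $\Bord_n^{\vect{X}} \simeq \Cone(f)$), not as an ingredient in its proof. You invert this logical order: you propose to identify $\Cone(f)$ with a singular bordism category first and then establish its universal property by re-running \S\ref{kuji}. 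Beyond the inversion, this would demand substantial new mathematics---a framed-function formalism relative to a cone stratum, singular analogues of Claims~\ref{caboose} and~\ref{laboose}, and a singular refinement of Theorem~\ref{swisher9}---which you flag as the ``main obstacle'' but do not supply, and which the paper's route deliberately avoids.

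There is also a concrete error in your step~(b). You identify the ``universal'' object $\Cone(\id_{B^{k-1}\Bord_{n-k}^{\fr}})$ with a singular bordism $(\infty,n)$-category whose smooth part is $\Bord_n^{\fr}$. That is not right: in the proof of Theorem~\ref{swisher12}, it is the \emph{codomain} $\calD = \Bord_n^{\vect{X}'}$ of $f$ that is a bordism $(\infty,n)$-category of $n$-manifolds, while the domain $B^{k-1}\Bord^{(X,\zeta)}_{n-k}$ is a formal delooping with no such geometric interpretation. Setting $\calD = B^{k-1}\Bord^{\fr}_{n-k}$ leaves no manifold-theoretic target to apply Morse theory to. Your appeal to Claim~\ref{swist} to ``pass to with-duals completions'' of the domain is also a misreading: that Claim concerns replacing the \emph{target} $\calC$ by its fully dualizable part $\calC^{\fd}\subseteq\calC$, not freely adjoining duals in the \emph{domain}. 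The universal case the paper actually isolates is $\calD = \ast$, where $\Cone(f) \simeq B^k\Bord^{(X,\zeta)}_{n-k}$ and the statement is immediate from Theorem~\ref{swisher5} applied to $\Omega^k\calC$; the content a complete proof must supply is how to bootstrap from there to general $\calD$ by formal manipulation of $\Cone$, which your proposal does not do.
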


\begin{example}
Suppose that $\calD$ is the trivial
$(\infty,n)$-category consisting of a single object. Then we can identify
$\Cone(f)$ with the $k$-fold delooping $B^{k} \Bord_{n-k}^{(X, \zeta)}$ and
Theorem \ref{swisher13} follows by applying Theorem \ref{swisher5} to
$\Omega^{k} \calC$.
\end{example}

The general case of Theorem \ref{swisher13} can also be reduced to Theorem \ref{swisher5}, using
formal properties of the cone construction $f \mapsto \Cone(f)$. We will omit the details, since they
involve a more detailed excursion into higher category theory. Instead, we explain how
Theorem \ref{swisher13} can be applied to the study of manifolds with singularities:

\begin{proof}[Proof of Theorem \ref{swisher12}]
Let $\vect{X} = ( \vect{X}', X, \zeta, p: E \rightarrow X)$ be an $n$-dimensional singularity datum
of length $k > 0$. Assume that $\zeta$ is equipped with an inner product, and let
$\widetilde{X} \rightarrow X$ be the $\OO(n-k)$-bundle of orthonormal frames associated to $\zeta$.
As explained in the statement of Theorem \ref{swisher12}, we can view the assignment
$\widetilde{x} \mapsto E_{\widetilde{x}}$ as defining an $\OO(n-k)$-equivariant map from
$\widetilde{X}$ into $\Omega^{k-1} \Bord_{n}^{\vect{X}'}$. According to Theorem \ref{swisher5}, such
a map is classified by a symmetric monoidal functor
$\Bord_{n-k}^{(X, \zeta)} \rightarrow \Omega^{k-1} \Bord_{n}^{\vect{X}'}$.
which can be ``delooped'' to obtain another symmetric monoidal functor
$f: B^{k-1} \Bord_{n-k}^{(X, \zeta)} \rightarrow \Bord_{n}^{\vect{X}'}$. To deduce
Theorem \ref{swisher12} from Theorem \ref{swisher13}, it suffices to observe that
the cone $\Cone(f)$ is canonically equivalent to $\Bord_{n}^{\vect{X}}$
(the definition of a $\vect{X}$-manifold is essentially rigged to produce this result).
\end{proof}

\begin{example}[Feynman Diagrams]
Let $\vect{X}$ be a $1$-dimensional singularity datum. Then $\vect{X}$ consists of the following data:
\begin{itemize}
\item A pair of topological spaces $X_0$ and $X_1$.
\item A rank $1$ vector bundle $\zeta_0$ on $X_0$, equipped with an inner product. Let $\widetilde{X}_0$ denote the associated double cover of $X_0$.
\item A covering space $E \rightarrow X_1$ with finite fibers, equipped with a continuous map
$E \rightarrow \widetilde{X}_0$.
\end{itemize}

For simplicity, let us assume that the homotopy groups $\pi_i \widetilde{X}_0$ vanish for
$i > 0$. Then the (weak) homotopy type of $\widetilde{X}_0$ is determined by the set
$P = \pi_0 \widetilde{X}_0$. We will refer to the elements of $P$ as {\it particles}. Since
$\widetilde{X}_0$ is a double covering of $X_0$, the set $P$ is equipped with a canonical involution
$p \mapsto \overline{p}$, which carries each particle to its corresponding {\it antiparticle}.

We will refer to the points of $X_1$ as {\it interactions}. For every point $x \in X_1$, the fiber
$E_x = E \times_{X_1} \{x\}$ is a finite set equipped with a map $\sigma_x: E_x \rightarrow P$. In other words,
we can think of $E_x$ as a finite set of particles (some of which might appear with multiplicity); these are the particles which participate in the relevant interaction.

By definition, a closed $\vect{X}$-manifold is given by the following:
\begin{itemize}
\item A compact topological space $G$, which is a smooth $1$-manifold away
from a specified finite subset $G_0 \subseteq G$.
\item Every oriented connected component $C$ of $G - G_0$ is labelled by a particle
$p \in P$. This particle depends on a choice of orientation of $C$, and is replaced by the corresponding antiparticle if the orientation is changed.
\item Every point $g \in G_0$ is labelled by an interaction $x \in X_1$.
Moreover, $g$ has a neighborhood in $G$ which is can be identified with the open cone
$( E_{x} \times (0,1] ) \coprod_{ E_x \times \{1\} } \{g \}$. Moreover, if $e \in E_x$ and
$C \subseteq G - G_0$ is the (oriented) connected component containing the interval $\{e\} \times (0,1)$
with its standard orientation, then $C$ is labelled with the particle $\sigma_x(e)$.
\end{itemize}

More informally, a (closed) $\vect{X}$-manifold consists of a graph $G$ (possibly with loops)
whose edges are labelled by elements of $P$ and whose vertices are labelled by points of $X_1$.
Such a graph is often called a {\it Feynman diagram}. We can informally summarize the situation by saying that there is an $(\infty,1)$-category $\Bord_{1}^{\vect{X}}$ whose objects are given
by finite sets labelled by elements of $P$ (in other words, finite collections of particles) and whose morphisms are given by Feynman diagrams. Theorem \ref{swisher12} asserts that this
$(\infty,1)$-category can be described by a universal mapping property. In particular, let
$\Vect(k)$ denote the category of vector spaces over a field $k$.
Using Theorem \ref{swisher12}, we deduce that symmetric monoidal functors
$Z: \Bord_{1}^{\vect{X}} \rightarrow \Vect_{\C}^{\fd}$ are classified by the following data:
\begin{itemize}
\item[$(i)$] A vector space $V_{p}$ for each particle $p \in P$. These vector
spaces should be endowed with a perfect pairing $V_{p} \otimes V_{ \overline{p} } \rightarrow k$, which is symmetric in the case where $p$ is its own antiparticle (in particular, each $V_p$ is finite-dimensional).
\item[$(ii)$] A vector $v_{x} \in \bigotimes_{ e \in E_x} V_{ \sigma(e) }$ for every interaction
$x \in X_1$ (which is continuous in the sense that it depends only on the path component of $x$ in $X_1$).
\end{itemize}
Given the data of $(i)$ and $(ii)$, we can construct a symmetric monoidal functor
$Z: \Bord_{1}^{\vect{X}} \rightarrow \Vect(k)$. 
In particular, if $G$ is a closed $\vect{X}$-manifold, we can evaluate
$Z$ on $G$ to obtain an invariant $Z(G) \in k$. It is easy to describe this invariant by a concrete procedure. Assume for simplicity that $G$ has no loops, let $G_0 \subseteq G$ be the finite subset
labelled by interactions $\{ x(g) \}_{g \in G_0}$, and set 
$$v = \otimes_{g \in G_0} v_{x(g)} \in \bigotimes_{g \in G_0, e \in E_{x(g)}} V_{\sigma(e)}.$$
Then $Z(G) \in k$ is the image of $v$ under the map
$$ \bigotimes_{g \in G_0, e \in E_{x(g)}} V_{\sigma(e)} \rightarrow k$$
induced by the pairings $V_{p} \otimes V_{ \overline{p}} \rightarrow k$ given by $(i)$
(note that the set of pairs $\{ (g,e): g \in G_0, e \in E_{x(g)} \}$ can be identified with
the collection of oriented connected components of $G - G_0$; in particular, this set
has two elements for each component of $G - G_0$, and the corresponding vector spaces are canonically dual to one another). 
\end{example}

\begin{example}[Boundary Conditions]\label{bra}
For any integer $n$, we can define an $n$-dimensional singularity datum
$\vect{X} = ( \{ X_i \}_{0 \leq i \leq n}, \{ \zeta_i \}_{0 \leq i \leq n}, \{ E_i \rightarrow X_i \}_{0 \leq i \leq n})$
as follows:
\begin{itemize}
\item The space $X_0$ is a classifying space $\BO(n)$, the space
$X_1$ is a classifying space $\BO(n-1)$, and the spaces $X_i$ are empty for $i > 0$.
\item The vector bundles $\zeta_0$ and $\zeta_1$ are the tautological vector bundles
on $\BO(n)$ and $\BO(n-1)$.
\item The map $E_1 \rightarrow X_1$ is a homeomorphism.
\end{itemize}
Unwinding the definitions, we see that a $\vect{X}$-manifold $M$ is just a manifold with boundary (more precisely, it is a manifold with boundary together with a specified collar of the boundary). It is sensible to talk about bordisms between manifolds with boundary (here we do not require our bordisms to be trivial on the boundary: a bordism from a manifold with boundary $M$ to another manifold with boundary $M'$ determines, in particular, a bordism from $\bd M$ to $\bd M'$ in the usual sense), bordisms between
bordisms between manifolds with boundary, and so forth: we thereby obtain a symmetric monoidal
$(\infty,n)$-category $\Bord_{n}^{\vect{X} }$. There is a canonical functor 
$:\Bord_{n} \rightarrow \Bord_{n}^{\vect{X}}$ which reflects the fact that any closed manifold can be regarded as a manifold with boundary by taking the boundary to be empty.

Let $\calC$ be a symmetric monoidal $(\infty,n)$-category and let
$Z_0: \Bord_{n} \rightarrow \calC$ be a symmetric monoidal functor. We can think
of $Z_0$ as a topological field theory; in particular, it assigns to
every closed $n$-manifold $M$ an invariant $Z_0(M) \in \Omega^{n} \calC$. In practice, the
target $(\infty,n)$-category $\calC$ will often have a linear-algebraic flavor, and 
$\Omega^{n} \calC$ can be identified with the set of complex numbers. In this case, 
we can think of $Z_0(M)$ as a complex number, which is often given heuristically
as the value of some integral over a space of maps from $M$ into a target $T$.
If $Z_0$ can be extended to a functor $Z: \Bord_{n}^{\vect{X}} \rightarrow \calC$, then the assignment
$M \mapsto Z_0(M)$ can be extended to assign invariants not only to closed $n$-manifolds but also $n$-manifolds with boundary. In this case, we can think of $Z(M) \in \Omega^{n} \calC$ as again
given by an integral: this time not over the space of all maps from $M$ into $T$, but instead
over the collection of maps which have some specified behavior at the boundary $\bd M$. 
It is typical to speak of the extension $Z$ of $Z_0$ as corresponding to a
{\it boundary condition}, or a {\it D-brane}.

If $\calC$ has duals, then Theorem \ref{swisher12} provides a classification for symmetric monoidal
functors $Z: \Bord_{n}^{\vect{X}} \rightarrow \calC$. They are determined (up to canonical isomorphism) by the following data:
\begin{itemize}
\item[$(1)$] An object $C \in \calC$, which is a (homotopy) fixed point with respect to the action of
$\OO(n)$ on $\calC^{\sim}$ (and determines the restriction $Z_0: \Bord_{n} \rightarrow \calC$ of
$Z$). 
\item[$(2)$] A $1$-morphism ${\bf 1} \rightarrow C$ in $\calC$, which is equivariant with
respect to the action of the group $\OO(n-1)$ (which encodes the relevant boundary condition).
\end{itemize}

Of course, there are a number of variations on this example. For example, we could replace
each of the spaces $\BO(n)$ and $\BO(n-1)$ by a single point in the definition of $\vect{X}$. In this case, a $\vect{X}$-manifold would consist of an $n$-framed manifold with an $(n-1)$-framed boundary, and we 
should drop the equivariance requirements in $(1)$ and $(2)$ above.
\end{example}

\begin{example}[Domain Walls]
If $\vect{X}$ is an $n$-dimensional singularity datum, we have been loosely referring to
$\vect{X}$-manifolds as ``manifolds with singularities''. However, this description is sometimes misleading: 
it is possible to give nontrivial examples of singularity data $\vect{X}$ such that the underlying
topological space of every $\vect{X}$-manifold is a smooth manifold. Roughly speaking, this corresponds to the situation where the fibers of each bundle $E_i \rightarrow X_i$ are spheres
$S^{i-1}$ (and the structure group of the bundle can be reduced to $\OO(i)$). We will discuss the simplest nontrivial example here; a more sophisticated variation will appear in our discussion of the tangle hypothesis in \S \ref{tangus}.

Consider the $n$-dimensional singularity datum
$\vect{X} = ( \{ X_i \}_{0 \leq i \leq n}, \{ \zeta_i \}_{0 \leq i \leq n}, \{ E_i \rightarrow X_i \}_{0 \leq i \leq n})$
defined as follows:
\begin{itemize}
\item The space $X_0$ is a disjoint union $\BO(n) \coprod \BO(n)$, the space
$X_1$ is a classifying space $\BO(n-1)$, and the spaces $X_i$ are empty for $i > 0$.
\item The vector bundles $\zeta_0$ and $\zeta_1$ are the tautological vector bundles
on $\BO(n)$ and $\BO(n-1)$. Note that an $(X_0, \zeta_0)$-structure on a manifold
$M$ of dimension $m \leq n$ consists of a decomposition $M \simeq M_{-} \coprod M_{+}$ of
$M$ into two disjoint open subsets.
\item We have $E_1 = X_1 \times \{ x,y \}$, where we regard $\{ x, y\}$ as a $0$-dimensional
$(X_0, \zeta_0)$-manifold via the decomposition $\{ x,y \} = \{x \} \coprod \{y\}$. 
\end{itemize}

Unwinding the definitions, we see that a closed $\vect{X}$-manifold of dimension $m \leq n$ consists
of a closed $m$-manifold $M$ equipped with a decomposition $M \simeq M_{-} \coprod_{M_0} M_{+}$, where $M_{-}$ and $M_{+}$ are codimension $0$ submanifolds of $M$ which meet along their
$M_0 = \bd M_{-} = \bd M_{+} = M_{-} \cap M_{+}$.

Every manifold $M$ of dimension $\leq n$ admits the structure of a $\vect{X}$-manifold in several different ways. For example, we can obtain a decomposition $M \simeq M_{-} \coprod_{M_0} M_{+}$ by
taking either $M_{-}$ or $M_{+}$ to be empty. These two recipes give rise to symmetric monoidal functors $j_{+}, j_{-}: \Bord_{n} \rightarrow \Bord_{n}^{\vect{X}}$. In particular, every symmetric monoidal
functor $Z: \Bord_{n}^{\vect{X}} \rightarrow \calC$ determines {\em two} topological field
theories $Z_{+}, Z_{-}: \Bord_{n} \rightarrow \calC$ via composition with $j_{+}$ and $j_{-}$, respectively.
A choice of symmetric monoidal functor $Z: \Bord_{n}^{\vect{X}} \rightarrow \calC$ giving rise
to $Z_{+} = Z \circ j_{+}$ and $Z_{-} = Z \circ j_{-}$ reflects a certain relationship between
$Z_{+}$ and $Z_{-}$. Theorem \ref{swisher12} allows us to describe the nature of this relationship in reasonably simple terms. Assuming that $\calC$ has duals, it asserts that symmetric monoidal functors
$Z: \Bord_{n}^{\vect{X}} \rightarrow \calC$ are classified by the following data:
\begin{itemize}
\item[$(1)$] A pair of objects $C, D \in \calC$, each of which is a (homotopy) fixed point for the action of
$\OO(n)$ on $\calC^{\sim}$ (these objects determine the topological field theories
$Z_{+}$ and $Z_{-}$, respectively).
\item[$(2)$] A $1$-morphism ${\bf 1} \rightarrow C \otimes D$ in $\calC$, which is equivariant
with respect to the action of $\OO(n-1)$.
\end{itemize}
Note that if $C$ and $D$ are as in $(1)$, then the $\OO(1) \times \OO(n-1)$
invariance of $C$ implies that there is an $\OO(n-1)$-equivariant equivalence
$C \simeq C^{\vee}$, so that the data of $(2)$ is equivalent to the data of an
$\OO(n-1)$-equivariant $1$-morphism $C \rightarrow D$ in $\calC$. 
\end{example}

\subsection{The Tangle Hypothesis}\label{tangus}

Fix an integer $n \geq 1$, and recall that $\tunCob(n)$ denotes the
$(\infty,1)$-category which may be described informally as follows:

\begin{itemize}
\item[$(i)$] The objects of $\tunCob(n)$ are closed manifolds of dimension
$(n-1)$.
\item[$(ii)$] Given a pair of closed $(n-1)$-manifolds $M$ and $N$, 
$\OHom_{ \tunCob(n)}(M,N)$ is a classifying
space for bordisms from $M$ to $N$.
\end{itemize}

In \S \ref{swugg}, we gave a more precise description of $\tunCob(n)$ using the language of Segal
spaces. In order to do so, we introduced a bit of auxiliary data: rather than taking arbitrary
$(n-1)$-manifolds as objects, we instead considered $(n-1)$-manifolds $M$ equipped with an embedding $M \hookrightarrow \R^{\infty}$. There is essentially no cost in doing so, since the space of embeddings $M \hookrightarrow \R^{\infty}$ is contractible (by general position arguments).
Nevertheless, it gives a bit of additional information: namely, it allows us to realize
$\tunCob(n)$ as the direct limit of $(\infty,1)$-categories $\tunCob(n)^{V}$, where
$V$ ranges over the finite dimensional subspaces of $\R^{\infty}$. These $(\infty,1)$-categories
are {\em not} equivalent to $\tunCob(n)$, because the space of embeddings $M \hookrightarrow V$
is generally not contractible when $V$ has finite dimension (it can even be empty if the dimension of $V$ is too small). Our goal in this section is to discuss the analogue of the cobordism hypothesis for
these (higher) categories of {\em embedded} cobordisms.

Our first step is to define a more elaborate version of the $(\infty,1)$-category
$\tunCob(n)^{V}$, which includes information about all manifolds of dimension $\leq n$.
In order to simplify the discussion, we will restrict our attention to the {\em framed} case.

\begin{definition}\label{kfr}
Let $0 \leq k \leq n$ be integers, and let $M$ be an $m$-manifold equipped with an $n$-framing.
A {\it $k$-framed submanifold} of $M$ consists of the following data:
\begin{itemize}
\item[$(1)$] A submanifold $M_0 \subseteq M$ of codimension $(n-k)$. Together
with the $n$-framing of $M$, the normal bundle to $M_0$ in $M$ determines
a ``Gauss map'' $g: M_0 \rightarrow \Gr_{n-k, n}$, where $\Gr_{n-k,n}$ denote the
real Grassmannian $\OO(n)/( \OO(k) \times \OO(n-k))$.
\item[$(2)$] A nullhomotopy of the map $g$.
\end{itemize}
If $M$ is equipped with boundary (or with corners), then we will assume that $M_0$ intersections
the boundary $\bd M$ (or the corners) transversely.
\end{definition}

\begin{protodefinition}
Fix integers $0 \leq k \leq n$, and let $V$ be a framed $(n-k)$-manifold.
We define an $(\infty,k)$-category $\Tang^{V}_{k,n}$ as follows:
\begin{itemize}
\item[$(a)$] The objects of $\Tang^{V}_{k,n}$ are (compact) $k$-framed submanifolds of $V$.
\item[$(b)$] Given a pair of objects $M_0, M_1 \in \Tang^{V}_{k,n}$, a $1$-morphism
from $M_0$ to $M_1$ is a (compact) $k$-framed submanifold $M \subseteq V \times [0,1]$ whose
intersection with $V \times \{i\}$ coincides with $M_i$.
\item[$(c)$] More generally, if $j \leq k$, we can identify $j$-morphisms $f$ in $\Tang^{V}_{k,n}$ with
$k$-framed submanifolds of $V \times [0,1]^{j}$, satisfying certain boundary conditions (corresponding to a choice of domain and codomain for $f$). If $j = k$, we regard the collection of such $j$-morphisms
as a topological space (so that higher morphisms in $\Tang^{V}_{k,n}$ correspond to homotopies, paths between homotopies, and so forth).
\end{itemize}
In the case where $V$ is the open unit disk in $\R^{n-k}$, we will simply denote $\Tang^{V}_{k,n}$ by $\Tang_{k,n}$.
\end{protodefinition}

\begin{remark}
For every pair of integers $k \leq n$, a linear inclusion $\R^{n-k} \subseteq \R^{n-k+1}$
induces a functor $\Tang_{k,n} \rightarrow \Tang_{k,n+1}$ between $(\infty,k)$-categories.
In particular, we can form the direct limit $\varinjlim_{n} \Tang_{k,n}$: this direct limit
is canonically equivalent to the framed bordism $(\infty,k)$-category $\Bord^{\fr}_{k}$.
\end{remark}

According to the cobordism hypothesis, $\Bord^{\fr}_{k}$ can be regarded as the free symmetric monoidal $(\infty,k)$-category with duals generated by a single object. We would like to formulate an analogous assertion for the $(\infty,k)$-categories $\Tang_{k,n}$. Our first observation
is that the $(\infty,k)$-categories $\Tang_{k,n}^{V}$ are generally not symmetric monoidal:
given a pair of submanifolds $M, M' \subseteq V$, there is generally no natural way to embed the disjoint union $M \coprod M'$ into $V$. Nevertheless, there is a good substitute for the symmetric monoidal
structure in the case where $V$ is an open disk. Given an open embedding
$$ V \coprod \cdots \coprod \cdots \coprod V \rightarrow V$$
which is rectilinear on each component, we get a functor
$$ \Tang_{k,n} \times \cdots \times \Tang_{k,n} \rightarrow \Tang_{k,n}.$$
In other words, $\Tang_{k,n}$ carries an action of the little $(n-k)$-disks
operad (Notation \ref{diskop}) and can therefore be regarded as an $E_{n-k}$-monoidal
$(\infty,k)$-category. This $E_{n-k}$-monoidal $(\infty,k)$-category can be characterized by
a universal property (a version of which was conjectured by Baez and Dolan in \cite{baezdolan}):

\begin{theorem}[Baez-Dolan Tangle Hypothesis]\label{swasher}
Fix integers $0 \leq k \leq n$. Let $\calC$ be an $E_{n-k}$-monoidal $(\infty,k)$-category with duals
$($if $k < n${}$)$ or with adjoints $($if $k=n${}$)$, and let $\ast$ denote the object of
$\Tang_{k,n}$ corresponding to the origin $0 \in \R^{n-k}$ $($regarded as a framed submanifold of the unit disk$)$. Evaluation at $\ast$ induces an equivalence of $(\infty,k)$-categories
$$ \Fun^{\otimes}( \Tang_{k,n}, \calC) \rightarrow \calC^{\sim}.$$
\end{theorem}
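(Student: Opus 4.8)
The plan is to prove Theorem \ref{swasher} by induction on $k$, reducing at each stage to the cobordism hypothesis (in its relative form, Theorem \ref{swisher13}) and to the analysis of manifolds with singularities developed in \S\ref{mans}. The key observation is that $\Tang_{k,n}$ should be regarded as the $(\infty,k)$-analogue of a bordism category whose objects are $k$-framed submanifolds of the open $(n-k)$-disk; and a $k$-framed submanifold is, locally, nothing more than a smooth framed manifold together with a nullhomotopy of its Gauss map into $\Gr_{n-k,n} = \OO(n)/(\OO(k)\times\OO(n-k))$. Thus $\Tang_{k,n}$ is governed by an $n$-dimensional singularity datum $\vec{X}$ whose strata record the Grassmannians $\Gr_{n-k,n}$ (with their tautological bundles) together with the ``linking sphere'' fiber bundles that prescribe how a submanifold of codimension $n-k$ sits inside an ambient framed manifold. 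The first step is therefore to set up this singularity datum precisely, exploiting the ``Domain Walls'' style construction at the end of \S\ref{mans}: one checks that the underlying topological spaces of the relevant $\vec{X}$-manifolds are smooth (the fibers of the bundles $E_i \to X_i$ are spheres $S^{i-1}$ with structure group $\OO(i)$), so that $\Bord_{k}^{\vec{X}}$, suitably interpreted with the $E_{n-k}$-monoidal structure coming from rectilinear embeddings of disks, is canonically equivalent to $\Tang_{k,n}$.

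Second, I would invoke the relative cobordism hypothesis. By Theorem \ref{swisher13} (together with the deloopings $B^j$ introduced in Remark \ref{corem} and the cone construction $\Cone(f)$), a symmetric monoidal functor out of $\Bord_{k}^{\vec{X}}$ into an $(\infty,k)$-category $\calC$ with duals (or adjoints, when $k=n$) is determined by: a symmetric monoidal functor out of the ``lower'' singularity stratum, together with an $\OO$-equivariant family of morphisms indexed by the orthonormal-frame bundle of the top stratum. Unwinding this, the data collapses to a single object $C = Z(\ast) \in \calC$ together with higher coherence data expressing the nullhomotopies of Gauss maps — and the point is that, because $\calC$ has all duals and adjoints, this coherence data is \emph{canonically} supplied (it amounts to choosing adjoints, which by Lemma \ref{simptom} and the uniqueness of adjoints are determined up to contractible choice). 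The $E_{n-k}$-monoidal structure must be tracked throughout: one uses the little $(n-k)$-disks operad acting on both $\Tang_{k,n}$ and (via the $\OO(n)$-action of Corollary \ref{ail}, restricted along $\OO(n-k) \hookrightarrow \OO(n)$) on $\calC^{\sim}$, and one verifies that the equivalence of Theorem \ref{swisher13} is $E_{n-k}$-monoidal. This yields an equivalence $\Fun^{\otimes}(\Tang_{k,n},\calC) \to \calC^{\sim}$.

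Third, the inductive structure: the case $k=0$ is essentially trivial (a $0$-framed submanifold of a framed $(n)$-manifold is a finite collection of points with framings, and $\Tang_{0,n}$ is an $\infty$-groupoid; the statement reduces to the classification of $E_n$-algebra objects which are grouplike, i.e. points of $\calC^{\sim}$ with its $\OO(n)$-equivariant $E_n$-structure). For the inductive step from $k-1$ to $k$, one writes $\Tang_{k,n}$ in terms of $\Tang_{k-1,n}$ via the ``unfolding'' machinery of \S\ref{unf} (Corollary \ref{isa}): the passage from $(\infty,k-1)$-categorical data to $(\infty,k)$-categorical data is governed by a lax monoidal functor into $\Cat_{(\infty,1)}$, exactly as in the analysis of $\calB(M)$ in \S\ref{kuji}. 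One then applies the inductive hypothesis to identify the relevant mapping $(\infty,k-1)$-categories of $\Tang_{k,n}$ with those predicted by evaluation at $\ast$, and checks compatibility with composition (which, geometrically, is gluing of tangles, and categorically is the composition of adjoint-laden morphisms).

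The main obstacle I expect is the bookkeeping of the $E_{n-k}$-monoidal structure and its interaction with the $\OO(n-k)$-equivariance, precisely in the region where $k$ is close to but not equal to $n$. When $k = n$ the statement requires $\calC$ merely to have adjoints (not duals), reflecting the fact that there is no ambient space left to embed into and hence no monoidal structure — so the proof must degenerate gracefully, and the equivariance data must be seen to evaporate. Conversely, for $k$ small, the $\OO(n-k)$-action is highly nontrivial and the nullhomotopy-of-Gauss-map data is substantial; showing that this data is freely and canonically generated (rather than merely parametrized) by the single object $\ast$, with no additional relations beyond those forced by the existence of adjoints, is the crux. Concretely, the hard technical point is verifying that the cone construction $\Cone(f)$ of \S\ref{mans}, when iterated to build $\Bord_k^{\vec{X}}$ from the Grassmannian strata, produces exactly $\Tang_{k,n}$ with its correct $E_{n-k}$-monoidal structure and not some variant — this is the step where ``the definition is essentially rigged to produce this result,'' but making the rigging precise for embedded tangles (as opposed to abstract manifolds with singularities) requires care with the linking-sphere bundles and their framings. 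I would isolate this as a separate geometric lemma and defer its detailed verification, as the paper does elsewhere.
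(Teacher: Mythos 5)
Your geometric setup (the singularity datum whose top stratum is a point with sphere-bundle fiber, the reading of $k$-framed submanifolds as Gauss-map nullhomotopies, and the idea of feeding this into the cobordism hypothesis with singularities) is in the right direction and agrees with the paper's construction of the singularity datum $\vect{X}$ used in the proof of Theorem \ref{swasher2}. But there is a genuine gap in how you propose to actually \emph{apply} the singular cobordism hypothesis, and it is precisely the point on which the paper's proof turns.

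The gap is the target category. Theorems \ref{swisher12} and \ref{swisher13} classify symmetric monoidal functors $\Bord_{n}^{\vect{X}} \rightarrow \calC$ where $\calC$ is a \emph{symmetric monoidal $(\infty,n)$-category with duals}. But in Theorem \ref{swasher}, the target is only an $E_{n-k}$-monoidal $(\infty,k)$-category with duals. After passing to the delooping $B^{n-k}\Tang_{k,n}$ (Remark \ref{delop}), the target becomes an $(\infty,n)$-category with adjoints --- but \emph{not} a symmetric monoidal one; the monoidal structure has been absorbed into the single object. You write as though Theorem \ref{swisher13} applies directly with $\calC$ as the target, and you invoke the $\OO(n)$-action of Corollary \ref{ail} on $\calC^{\sim}$, but that corollary also requires $\calC$ to be a symmetric monoidal $(\infty,n)$-category with duals, which it is not. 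The paper's resolution is the $\Cat^{\Adj}_{(\infty,n)}$ maneuver: one proves that $\Cat^{\Adj}_{(\infty,n)}$ (the $(\infty,n)$-category of $(\infty,n)$-categories-with-adjoints and correspondences) is itself a symmetric monoidal $(\infty,n)$-category with duals, regards $\calC$ as a fully dualizable \emph{object} of it, applies Theorem \ref{swisher12} with target $\Cat^{\Adj}_{(\infty,n)}$ to produce a commuting square over $\Bord_{n}^{\fr} \rightarrow \Cat^{\Adj}_{(\infty,n)}$, and recovers a functor $B^{n-k}\Tang_{k,n} \rightarrow \calC$ by passing to the homotopy fiber over $\calC$. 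This last step is exactly where Morita completeness of $\calC$ enters (Warning \ref{threefa}); your proposal never confronts it, and without it you cannot extract a functor into $\calC$ from a functor into $\Cat^{\Adj,\ast}_{(\infty,n)}$. Absent the $\Cat^{\Adj}$ detour, the statement you want to prove is not in a form to which the cobordism hypothesis applies, and the apparent canonicity of the ``freely generated coherence data'' has nothing to anchor it.

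Two further, smaller discrepancies: you organize the argument by induction on $k$, whereas the paper does not induct on $k$ at all --- its proof of the tangle hypothesis is a single reduction to Theorems \ref{swisher3} and \ref{swisher12}, and the only induction in sight is the one already buried inside the proof of the cobordism hypothesis itself. And you repeatedly write $\Bord_{k}^{\vect{X}}$ where the relevant object is the $(\infty,n)$-category $\Bord_{n}^{\vect{X}}$ of $\vect{X}$-manifolds of dimension up to $n$; the passage back down to an $(\infty,k)$-categorical statement is accomplished not by restricting the bordism dimension but by the delooping equivalence of Remark \ref{delop}, which is a different mechanism and is the reason the target must be treated as an $(\infty,n)$-category with adjoints rather than an $(\infty,k)$-category.
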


If $\calC$ is {\em any} $(\infty,k)$-category with an $E_{n-k}$-structure, then there
exists a maximal subcategory $\calC_0 \subseteq \calC$ satisfying the hypothesis of Theorem \ref{swasher}. We will say that an object $C \in \calC$ is {\it fully dualizable} if it belongs to this subcategory (if $n > k$, this is equivalent to the notion introduced in Definition \ref{justic};
if $n = k$ the condition is vacuous). We can informally summarize Theorem \ref{swasher}
by saying that $\Tang_{k,n}$ is freely generated as an $E_{n-k}$-monoidal $(\infty,k)$-category by a single fully dualizable object.

\begin{example}
Suppose that $k = n$. Then $\Tang_{k,n}$ is an $\infty$-groupoid which we can realize as the
fundamental $\infty$-groupoid of a topological space: namely, the configuration space of finite
subsets of the open unit disk in $\R^{n}$. Theorem \ref{swasher} asserts that this configuration space
is freely generated by a single point, as a representation of the $n$-disks operad $\calE_{n}$.
\end{example}

Let $\calC$ be a symmetric monoidal $(\infty,k)$-category. Using the fact that $\Bord_{k}^{\fr}$ can be obtained as a direct limit $\varinjlim_{n} \Tang_{k,n}$, we deduce that the $(\infty,k)$-category
of symmetric monoidal functors
$\Fun^{\otimes}( \Bord_{k}^{\fr}, \calC)$ is equivalent to the (homotopy) inverse limit of
the $(\infty,k)$-categories $\Fun^{\otimes}( \Tang_{k,n}, \calC)$ appearing in Theorem \ref{swasher}
(note that the symmetric monoidal $(\infty,k)$-category $\calC$ can be regarded as endowed with
an $E_{m}$-structure for every $m \geq 0$). Consequently, Theorem \ref{swasher} immediately
implies Theorem \ref{swisher3}. In other words, we can regard the tangle hypothesis as a
{\em refinement} of the cobordism hypothesis. Our goal in this section is to show that the converse is true as well: we can deduce the tangle hypothesis from the cobordism hypothesis, provided that the cobordism hypothesis is formulated in a sufficiently general form (namely, Theorem \ref{swisher12}). 

The first step is to rephrase Theorem \ref{swasher} in a way that does not mention $E_{n-k}$-monoidal structures. For this, we need the following general observation (which already appears implicitly in
\S \ref{mans}; see Remark \ref{corem}):

\begin{remark}\label{delop}
Let $\calC$ be a an $(\infty,n)$-category containing a distinguished object ${\bf 1}$.
We let $\Omega \calC$ denote the $(\infty,n-1)$-category $\OHom_{\calC}( {\bf 1}, {\bf 1})$.
Composition in $\calC$ endows the $(\infty,n-1)$-category $\Omega \calC$ with a monoidal
structure. Conversely, suppose that $\calD$ is a monoidal $(\infty,n-1)$-category. We can then construct
an $(\infty,n)$-category $B \calD$ having a single object ${\bf 1}$, with
$\OHom_{ B \calD}( {\bf 1}, {\bf 1}) \simeq \calD$ and with composition in 
$B \calD$ given by the monoidal structure on $\calD$.
These two constructions are adjoint to one another, and determine an equivalence between the following types of data:
\begin{itemize}
\item[$(1)$] Monoidal $(\infty,n-1)$-categories $\calD$.
\item[$(2)$] $(\infty,n)$-categories $\calC$ having only a single (distinguished) object,
up to isomorphism.
\end{itemize}

More generally, if $k \leq n$, then by applying the above constructions iteratively we obtain an equivalence between the following types of data:
\begin{itemize}
\item[$(1')$] $E_{n-k}$-monoidal $(\infty,k)$-categories $\calD$.
\item[$(2')$] $(\infty,n)$-categories $\calC$ having only a single (distinguished) $j$-morphism for
$j < n-k$.
\end{itemize}
Under this equivalence, an $E_{n-k}$ $(\infty,k)$-category $\calD$ has duals
if and only if the $(\infty,n)$-category $\calC$ has adjoints.
\end{remark}

We can use Remark \ref{delop} to reformulate Theorem \ref{swasher} as follows:

\begin{theorem}[Tangle Hypothesis, Framed $(\infty,n)$-Category Version]\label{swasher2}
Fix integers $0 \leq k \leq n$ and let $\calC$ be an $(\infty,n)$-category with adjoints.
Then $\Fun( B^{n-k} \Tang_{k,n}, \calC)$ is an $\infty$-groupoid which classifies pairs
$( {\bf 1}, \eta)$ where ${\bf 1}$ is an object of $\calC$ and $\eta$ is an object of
$\Omega^{n-k} \calC$.
\end{theorem}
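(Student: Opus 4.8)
Looking at the final statement, Theorem \ref{swasher2}, this is the reformulation of the Baez-Dolan Tangle Hypothesis (Theorem \ref{swasher}) obtained by applying the delooping equivalence of Remark \ref{delop}.

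\textbf{The plan} is to derive Theorem \ref{swasher2} as a consequence of Theorem \ref{swisher12} (the Cobordism Hypothesis with Singularities), using the observation that the embedded tangle $(\infty,k)$-categories $\Tang_{k,n}$, after delooping $(n-k)$ times, can be realized as a bordism category of manifolds with singularities $\Bord_{n}^{\vect{X}}$ for a suitable singularity datum $\vect{X}$. First I would record the translation afforded by Remark \ref{delop}: since $\calC$ is an $(\infty,n)$-category with adjoints, specifying a functor $B^{n-k} \Tang_{k,n} \to \calC$ is the same as specifying an $E_{n-k}$-monoidal functor $\Tang_{k,n} \to \Omega^{n-k}\calC$, where $\Omega^{n-k}\calC$ inherits an $E_{n-k}$-monoidal structure with duals from the adjoints in $\calC$. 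Thus Theorem \ref{swasher2} is equivalent to Theorem \ref{swasher} applied to the $E_{n-k}$-monoidal $(\infty,k)$-category $\Omega^{n-k}\calC$, and it suffices to prove Theorem \ref{swasher} itself.

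\textbf{The core step} is to identify $B^{n-k}\Tang_{k,n}$ with $\Bord_{n}^{\vect{X}}$ for an appropriate $n$-dimensional singularity datum $\vect{X}$ of length $n-k$. Geometrically: a $k$-framed submanifold of a framed $(n-k)$-manifold $V$ (Definition \ref{kfr}) is precisely the data of a codimension-$(n-k)$ stratum sitting inside an $n$-dimensional object, with the normal data recorded by a nullhomotopy of the Gauss map into $\Gr_{n-k,n}$. I would build $\vect{X}$ so that $X_i = \ast$ (a point) for $i < n-k$, with the bundles $\zeta_i$ the appropriate framings and the fiber bundles $E_i \to X_i$ encoding the local cone structure of a codimension-one wall, while $X_{n-k}$ is again a point with $E_{n-k}$ the empty manifold (this is the ``new'' stratum being adjoined). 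A $\vect{X}$-manifold of dimension $m$ is then a smooth $m$-manifold with a collared codimension-$(n-k)$ framed submanifold — exactly a $k$-framed submanifold in the sense of Definition \ref{kfr}, packaged so that the $(\infty,n)$-category $\Bord_{n}^{\vect{X}}$ has a single $j$-morphism for $j < n-k$ (all lower-dimensional bordisms are ``trivial'' because the only lower stratum is a point), matching clause $(2')$ of Remark \ref{delop}. Under this identification the $E_{n-k}$-monoidal structure on $\Tang_{k,n}$ coming from rectilinear embeddings of disks matches the disjoint-union symmetric monoidal structure on $\Bord_{n}^{\vect{X}}$ after delooping.

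With this identification in hand, I would apply Theorem \ref{swisher12} with $k$ replaced by $n-k$ and the singularity datum $\vect{X}$: it asserts that a symmetric monoidal functor $Z\colon \Bord_{n}^{\vect{X}} \to \calC$ is the same as a symmetric monoidal functor $Z_0\colon \Bord_{n}^{\vect{X}'} \to \calC$ (where $\vect{X}'$ has length $n-k-1$) together with an $\OO(k)$-equivariant family of $1$-morphisms ${\bf 1} \to Z_0(E_{\widetilde{x}})$ in $\Omega^{n-k-1}\calC$. Iterating down the tower $\vect{X} \to \vect{X}' \to \cdots$ — equivalently, peeling off strata one codimension at a time — collapses all the data to a single object of $\calC$ (the generating point $\ast$) together with its descent data, which by the equivariance bookkeeping of Corollary \ref{ail} assembles exactly into an object of $\calC^{\sim}$. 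Translating back through Remark \ref{delop} gives the pair $({\bf 1},\eta)$ with ${\bf 1}\in\calC$ and $\eta\in\Omega^{n-k}\calC$, and shows the mapping $(\infty,k)$-category is an $\infty$-groupoid.

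\textbf{The main obstacle} I anticipate is the precise construction of the singularity datum $\vect{X}$ realizing $B^{n-k}\Tang_{k,n}$: Definition \ref{singdat} is an intricate simultaneous induction, and one must check carefully that the fiber bundles $E_i \to X_i$ can be chosen so that a $\vect{X}$-manifold is genuinely a \emph{smooth} manifold with a framed submanifold (as in the ``domain wall'' examples of \S\ref{mans}, where the fibers are spheres $S^{i-1}$), rather than a space with honest singularities, and so that the nullhomotopy data of Definition \ref{kfr}$(2)$ is correctly reproduced by the Grassmannian-valued structure maps. A secondary subtlety is keeping track of the $\OO(k)$-equivariance through the iterated application of Theorem \ref{swisher12}, and verifying that the limiting statement ``$\varinjlim_n \Tang_{k,n} \simeq \Bord_k^{\fr}$'' is compatible with these identifications — but this last point is reassuring rather than problematic, since it shows Theorem \ref{swasher} recovers Theorem \ref{swisher3} in the limit.
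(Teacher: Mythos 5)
Your overall instinct — use the cobordism hypothesis with singularities (Theorem \ref{swisher12}) and a singularity datum $\vect{X}$ whose manifolds are framed $n$-manifolds with a framed $k$-submanifold — matches the paper. But there are two genuine gaps in the middle that the paper handles quite differently, and the details of the singularity datum are wrong.

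\textbf{The singularity datum is misdescribed.} The paper takes $X_i$ \emph{empty} for $i \notin \{0, n-k\}$, with $X_0 \simeq X_{n-k} \simeq \ast$ and $E_{n-k}$ the unit sphere $S^{n-k-1} \subseteq \R^{n-k}$, regarded as an $n$-framed manifold. Your version ($X_i = \ast$ for all $i < n-k$, $E_{n-k}$ empty) doesn't encode a codimension-$(n-k)$ submanifold: the link of a codimension-$(n-k)$ stratum in a smooth $n$-manifold is $S^{n-k-1}$, and taking $E_{n-k}$ empty would describe an isolated open stratum with no cone structure, which is not the geometry of Definition \ref{kfr}. There is also no need for the intermediate strata $X_1, \ldots, X_{n-k-1}$: a $\vect{X}$-manifold in the paper's sense already consists of an $n$-framed ambient manifold with a $k$-framed submanifold, with nothing in between.

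\textbf{The identification $B^{n-k}\Tang_{k,n} \simeq \Bord_n^{\vect{X}}$ is not correct, and the rest of the argument collapses without the fix.} These $(\infty,n)$-categories are genuinely different: $B^{n-k}\Tang_{k,n}$ has only a single $j$-morphism for $j < n-k$ (tangles embedded in the fixed ambient disk $D^{n-k}$), while $\Bord_n^{\vect{X}}$ allows arbitrary framed $n$-manifolds as ambient space at every level. What the paper establishes instead is a commutative square
$$ \xymatrix{ B^{n-k} \Tang_{k,n} \ar[r] \ar[d] & \Bord_{n}^{\vect{X}} \ar[d] \\ \ast \ar[r] & \Bord_{n}^{\fr} }$$
so that $B^{n-k}\Tang_{k,n}$ sits over the basepoint of $\Bord^{\fr}_n$, not as the whole of $\Bord_n^{\vect{X}}$. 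Because of this, one cannot simply apply Theorem \ref{swisher12} to functors $\Bord_n^{\vect{X}} \to \calC$ and read off the answer for $\Tang_{k,n}$: the universal property of $\Bord_n^{\vect{X}}$ concerns symmetric monoidal functors, which is the wrong mapping space. The paper's crucial move, which your proposal omits, is to regard $\calC$ as an object of the auxiliary symmetric monoidal $(\infty,n)$-category with duals $\Cat^{\Adj}_{(\infty,n)}$, produce via Theorem \ref{swisher3} a functor $Z_0 \colon \Bord_n^{\fr} \to \Cat^{\Adj}_{(\infty,n)}$ with $Z_0(\ast) \simeq \calC$, and then use Theorem \ref{swisher12} \emph{once} (not iteratively) to lift to $\Bord_n^{\vect{X}} \to \Cat^{\Adj,\ast}_{(\infty,n)}$ using the data $({\bf 1}, \eta)$. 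Pasting with the square above yields a functor $B^{n-k}\Tang_{k,n} \to \Cat^{\Adj,\ast}_{(\infty,n)} \times_{\Cat^{\Adj}_{(\infty,n)}} \{\calC\}$, and the final ingredient — which your proposal also never confronts — is that the latter fiber product agrees with $\calC$ only when $\calC$ is Morita complete; the general case requires a separate argument at the level of the underlying $n$-fold Segal spaces. The iterated ``peel off strata'' process you describe, and the appeal to Corollary \ref{ail} to assemble ``descent data,'' are not what happens here: Theorem \ref{swisher12} removes the single new stratum at once, and the equivariance is absorbed internally by working in the framed setting.
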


\begin{remark}\label{cabbis}
Given a functor $Z: B^{n-k} \Tang_{k,n} \rightarrow \calC$, it is easy to extract the corresponding pair
$( {\bf 1}, \eta)$ in Theorem \ref{swasher2}: we obtain ${\bf 1} \in \calC$ by evaluating
$Z$ on the (unique) distinguished object $B^{n-k} \Tang_{k,n}$, and $\eta$ by
evaluating $\Omega^{n-k} Z$ on the object $\{ 0 \} \subseteq \R^{n-k}$.
\end{remark}

The difficult part is to go in the other direction: that is, to construct a functor $Z: B^{n-k} \Tang_{k,n} \rightarrow \calC$ given the pair $({ \bf 1}, \eta)$. We would like to obtain such a construction
applying the cobordism hypothesis with singularities (Theorem \ref{swisher12}). First, we need
to embark on a brief digression to describe the appropriate symmetric monoidal $(\infty,n)$-category
to use as a target.

Let $\Cat_{(\infty,1)}$ denote the (large) $(\infty,1)$-category whose objects are (small)
$(\infty,1)$-categories and whose morphisms are given by functors. This $(\infty,1)$-category
admits a symmetric monoidal structure given by the formation of Cartesian products. Moreover,
there is a canonical involution on $\Cat_{(\infty,1)}$, which carries each $(\infty,1)$-category
$\calC$ to its opposite $\calC^{op}$. This involution resembles a duality functor. However,
it does not correspond to duality in $\Cat_{(\infty,1)}$, because there are no candidates for evaluation and coevaluation functors
$$ \ev: \calC \times \calC^{op} \rightarrow {\bf 1} \quad \quad \coev: {\bf 1} \rightarrow \calC \times \calC^{op}$$
We can remedy the situation by passing to an enlargement of $\Cat_{(\infty,1)}$ which
has a more general class of morphisms. More precisely, we can introduce a new $(\infty,1)$-category $\Cat_{(\infty,1)}^{\Adj}$ whose objects are (small) $(\infty,1)$-categories and whose morphisms are given by {\it correspondences} between $(\infty,1)$-categories:
that is, we define a $1$-morphism from $\calC$ to $\calD$ in $\Cat^{\Adj}_{(\infty,1)}$ to be
a functor $\calC \times \calD^{op} \rightarrow \Cat_{(\infty,0)}$. In this setting, there is a natural candidate for the maps $\ev$ and $\coev$ indicated above: namely, we can take both to be correspondences
described by the functor $\calC^{op} \times \calC \rightarrow \Cat_{(\infty,0)}$ given by the formula
$(C,D) \mapsto \OHom_{\calC}(C,D)$. It is not difficult to check that the evaluation and coevaluation
maps are compatible with one another, which shows that $\Cat^{\Adj}_{(\infty,1)}$ is a
symmetric monoidal $(\infty,1)$-category with duals.

The above construction can be generalized (with some effort) to the setting of
$(\infty,n)$-categories, for every $n \geq 0$. More precisely, it is possible to introduce
a symmetric monoidal $(\infty,n)$-category $\Cat_{(\infty,n)}^{\Adj}$ with duals whose objects
are $(\infty,n)$-categories with adjoints and whose $k$-morphisms for $1 \leq k \leq n$
are given by a suitably general notion of correspondence.

\begin{warning}\label{twofa}
There is some danger of confusion when thinking of an $(\infty,n)$-category with adjoints
$\calC$ as an object of $\Cat_{(\infty,n)}^{\Adj}$, because there are morphisms
in $\Cat_{(\infty,n)}^{\Adj}$ which do not correspond to actual functors. It is possible for two
$(\infty,n)$-categories with adjoints to be equivalent as objects of $\Cat_{(\infty,n)}^{\Adj}$ without being equivalent as $(\infty,n)$-categories. For example, if $\calC$ and $\calD$ are $(\infty,1)$-categories,
then $\calC$ and $\calD$ are equivalent in $\Cat_{(\infty,1)}^{\Adj}$ if and only if they
are {\it Morita equivalent}: that is, if and only if they have equivalent idempotent completions
(see \cite{htt}).
\end{warning}   

\begin{remark}\label{swing}
The assertion that $\Cat^{\Adj}_{(\infty,n)}$ has duals has a remarkable consequence
when combined with Corollary \ref{ail}: it implies that there is an action of the orthogonal
group $\OO(n)$ on the theory of $(\infty,n)$-categories with adjoints. This action has the following features:
\begin{itemize}
\item[$(i)$] When restricted to the subgroup $\OO(1) \times \cdots \times \OO(1) \subseteq \OO(n)$,
the action of $\OO(n)$ on $\Cat^{\Adj, \sim}_{(\infty,n)}$ is given by a collection of involutions, each of which acts by replacing an $(\infty,n)$-category $\calC$ by the opposite $(\infty,n)$-category at the level of $k$-morphisms for $1 \leq k \leq n$. In particular, this restricted action can be defined without the
assumption that our $(\infty,n)$-categories have adjoints.

\item[$(ii)$] Let $n=2$, and let $\calC$ be an $(\infty,2)$-category with adjoints.
Then the action of the circle $S^1 \simeq \SO(2)$ on the object $\calC \in \Cat^{\Adj, \sim}_{(\infty,n)}$ determines a self-functor $A: \calC \rightarrow \calC$. It is possible to describe this
self-functor in concrete terms: it is the identity on objects, and carries every
$1$-morphism $f \in \calC$ to $f^{LL}$, the left adjoint of the left adjoint of $f$.

\item[$(iii)$] Let $\calC$ be a symmetric monoidal $(\infty,n)$-category with duals.
As we observed in \S \ref{mans}, the action of $\OO(n)$ on $\calC^{\sim}$ provided
by Corollary \ref{ail} does not extend to an action of $\OO(n)$ on $\calC$ itself.
However, it does extend to a {\em twisted} action of $\OO(n)$ on $\calC$ where
the twist is provided by the action of $\OO(n)$ on $\Cat^{\Adj, \sim}_{(\infty,n)}$ itself.
For example, when $n = 1$, the action of the nontrivial element $\eta \in \OO(1)$
on $\calC^{\sim}$ is given by carrying every object $X$ to its dual $X^{\vee}$.
The construction $X \mapsto X^{\vee}$ is a contravariant functor from $\calC$ to itself, and determines
an equivalence $\calC \simeq \calC^{op}$, where $\calC^{op}$ is the $(\infty,1)$-category
obtained by applying $\eta$ to $\calC$ (by virtue of $(i)$).
\end{itemize}
\end{remark}

\begin{remark}
The existence of an action of $\OO(n)$ on $\Cat^{\Adj,\sim}_{(\infty,n)}$ can be phrased another way:
the notion of an $(\infty,n)$-category with adjoints can be taken to depend not on a choice
of nonnegative integer $n$, but instead on a choice of finite dimensional inner product space
$V$ having dimension $n$.
\end{remark}

\begin{remark}
The relationship between $\Cat_{(\infty,n)}$ and $\Cat_{(\infty,n)}^{\Adj}$ is
analogous to the relationship between the higher categories $\Alg^{(n)}(\bfS)$ and
$\Alg_{(n)}^{\degree}(\bfS)$ introduced in \S \ref{topchir}. In fact, $\Alg_{(n)}^{\degree}(\bfS)$ and
$\Cat_{(\infty,n)}^{\Adj}$ admit a common generalization.
If $\bfS$ is a good symmetric monoidal $(\infty,1)$-category, then we can introduce a theory
of {\it $\bfS$-enriched $(\infty,n)$-categories}: that is, $(\infty,n)$-categories in which the
collections of $n$-morphisms are regarded as objects of $\bfS$. When $\bfS$ is the $(\infty,1)$-category of spaces, we recover the usual notion of $(\infty,n)$-category; when $\bfS$ is the ordinary category of sets, we recover the notion of $n$-category. The collection of $\bfS$-enriched $(\infty,n)$-categories
can be organized into a symmetric monoidal $(\infty,n)$-category $\Cat_{(\infty,n)}^{\Adj, \bfS}$ with
duals, which reduces to $\Cat_{(\infty,n)}^{\Adj}$ when $\bfS$ is the $(\infty,1)$-category of spaces.
We can regard $\Cat_{(\infty,n)}^{\Adj, \bfS}$ as an {\em enlargment} of the
$(\infty,n)$-category $\Alg_{(n)}^{\degree}$, because an $E_{n}$-algebra in $\bfS$
can be viewed as a $\bfS$-enriched $(\infty,n)$-category which has only a single $k$-morphism
for $k < n$.
\end{remark}

The $(\infty,n)$-category $\Cat^{\Adj}_{(\infty,n)}$ can also be regarded as an enlargement of
the $(\infty,n)$-category $\Fam_{n}$ described in \S \ref{slabun}: instead of merely considering
topological spaces and correspondences between topological spaces, we consider
$(\infty,n)$-categories and correspondences between $(\infty,n)$-categories. There
is also an analogue of the $(\infty,n)$-category $\Fam_n^{\ast}$ of Variant \ref{famvar},
which we will denote by $\Cat^{\Adj, \ast}_{(\infty,n)}$, whose objects can be viewed
as $(\infty,n)$-categories $\calC$ which have adjoints and are equipped with a distinguished
object ${\bf 1} \in \calC$. There is a forgetful functor 
$\pi: \Cat^{\Adj,\ast}_{(\infty,n)} \rightarrow \Cat^{\Adj}_{(\infty,n)}$.

\begin{warning}\label{threefa}
Let $\calC$ be an $(\infty,n)$-category with adjoints. There is a canonical functor from
$\calC$ to the (homotopy) fiber product $\Cat^{\Adj,\ast}_{(\infty,n)} \times^{R}_{ \Cat^{\Adj}_{(\infty,n)}} \{ \calC \},$ but it
is not generally an equivalence. An object of the fiber product on the right hand side can be identified
with a triple $(\calD, D, \phi)$ where $\calD$ is an $(\infty,n)$-category with adjoints,
$D \in \calD$ is an object, and $\phi: \calD \simeq \calC$ is an isomorphism in
$\Cat^{\Adj}_{(\infty,n)}$. As noted in Warning \ref{twofa}, $\phi$ need not arise from an
equivalence of $(\infty,n)$-categories, so we cannot necessarily regard $D$ as an object of $\calC$.

For example, when $n=1$, the fiber product $\Cat^{\Adj,\ast}_{(\infty,n)} \times_{ \Cat^{\Adj}_{(\infty,n)}} \{ \calC \}$ can be identified with the {\it idempotent completion} of the $(\infty,1)$-category $\calC$
(in other words, the $(\infty,1)$-category obtained from $\calC$ by adjoining an ``image'' for
every coherently idempotent $1$-morphism $f: C \rightarrow C$ in $\calC$; see \cite{htt} for a more detailed discussion). We will say that an $(\infty,n)$-category with adjoints $\calC$ is
{\it Morita complete} if the functor $\calC \rightarrow \Cat^{\Adj,\ast}_{(\infty,n)} \times_{ \Cat^{\Adj}_{(\infty,n)}} \{ \calC \}$ is an equivalence of $(\infty,n)$-categories.
\end{warning}

We are now ready to sketch the proof of Theorem \ref{swasher2}, at least in the case where
$\calC$ is Morita complete.

\begin{proof}[Proof of Theorem \ref{swasher2}]
Let $\vect{X} = ( \{ X_i \}_{0 \leq i \leq n}, \{ \zeta_i \}_{0 \leq i \leq n}, \{p_i: E_i \rightarrow X_i \}_{0 \leq i \leq n})$ denote the singularity datum characterized by the fact that
$X_i$ is empty for $i \notin \{0, n-k\}$, $X_0 \simeq X_{n-k} \simeq \{ \ast \}$, and
the space $E_{n-k}$ is equivalent (as an $n$-framed manifold) to the unit sphere in
$\R^{n-k}$. Unwinding the definitions, we see that a $\vect{X}$-manifold consists
of a pair $M_0 \subseteq M$, where $M$ is an $n$-framed manifold and
$M_0$ is a $k$-framed submanifold of $M$ (in the sense of Definition \ref{kfr}).
In particular, every $\vect{X}$-manifold can be regarded as an $n$-framed manifold by forgetting the submanifold $M_0$; this construction determines a symmetric monoidal functor
$\Bord^{\vect{X}}_{n} \rightarrow \Bord^{\fr}_{n}$. By construction, we have a diagram of
$(\infty,n)$-categories
$$ \xymatrix{ B^{n-k} \Tang_{k,n} \ar[r] \ar[d] & \Bord_{n}^{\vect{X}} \ar[d] \\
\ast \ar[r] & \Bord_{n}^{\fr} }$$
which commutes up to canonical isomorphism.

Let $\calC$ be an $(\infty,n)$-category with adjoints, which we regard
as an object of $\Cat^{\Adj}_{(\infty,n)}$. Let ${\bf 1} \in \calC$ and $\eta \in \Omega^{n-k} \calC$ be objects. Applying Theorem \ref{swisher3},
we deduce that this object determines a symmetric monoidal functor $Z_0: \Bord_{n}^{\fr}
\rightarrow \Cat^{\Adj}_{(\infty,n)}$, which is characterized up to equivalence by the
existence of an equivalence $Z_0(\ast) \simeq \calC$. Let $Z'_0$ denote the composition
$\Bord^{\vect{X}}_{n} \rightarrow \Bord^{\fr}_{n} \stackrel{Z_0}{\rightarrow} \Cat^{\Adj}_{(\infty,n)}.$
Applying Theorem \ref{swisher12}, we see that the pair $({\bf 1}, \eta)$ allow us to lift
$Z'_0$ to a symmetric monoidal functor $Z_1: \Bord^{\vect{X}}_{n} \rightarrow
\Cat^{\Adj}_{(\infty,n)}$. We obtain a rectangular diagram
$$ \xymatrix{ B^{n-k} \Tang_{k,n} \ar[r] \ar[d] & \Bord^{ \vect{X}}_{n} \ar[r] \ar[d] & \Cat^{\Adj, \ast}_{(\infty,n)} \ar[d] \\
\ast \ar[r] & \Bord^{\fr}_{n} \ar[r] & \Cat^{\Adj}_{(\infty,n)} }$$
which commutes up to isomorphism. This induces a functor
$$Z: B^{n-k} \Tang_{k,n} \rightarrow \Cat^{\Adj,\ast}_{(\infty,n)} \times_{
\Cat^{\Adj}_{(\infty,n)} } \{ \calC \}.$$
If $\calC$ is Morita complete (see Warning \ref{threefa}), we can regard
$Z$ as a functor with codomain $\calC$; it then suffices to check that the construction
$( {\bf 1}, \eta) \mapsto Z$ is homotopy inverse to the more evident construction described in Remark \ref{cabbis}. If $\calC$ is not assumed to be Morita complete, then a more elaborate argument (working directly with the $n$-fold simplicial spaces described in \S \ref{swugg}, rather than their underlying $(\infty,n)$-categories) is needed;
we will not present the details here.
\end{proof}

Like the cobordism hypothesis, the tangle hypothesis can be generalized in many ways.
For example, one can consider embedded submanifolds with tangential structure more complicated than that of a $k$-framing and embedded submanifolds with singularities. In these cases, one can still use the argument sketched above to establish a universal property of the relevant $(\infty,n)$-category.

\begin{example}[The Graphical Calculus]
According to Remark \ref{swing}, the group $\OO(3)$ acts on the $\infty$-groupoid of $(\infty,3)$-categories with adjoints. We can combine this with Remark \ref{delop} to obtain an $\OO(3)$ action on the $\infty$-groupoid $\Brd$ of braided monoidal $(\infty,1)$-categories $\calC$ which are rigid, in the sense that
every object has a dual. In particular, it makes sense to talk about a (homotopy) fixed point for
the action of $\SO(3)$ on $\Brd$: we will refer to such a fixed point as a {\it ribbon $(\infty,1)$-category}.
If we restrict our attention to ordinary categories (rather than $(\infty,1)$-categories), this recovers the usual notion of a ribbon structure on a braided monoidal category.

If we take $n=3$ and replace $\Bord_{n}^{\fr}$ by $\Bord_{n}^{\ori}$ in the proof of Theorem \ref{swasher2}, then we obtain a description of the free ribbon $(\infty,1)$-category $\calC$ on a single
generator. Namely, $\calC$ can be described as an $(\infty,1)$-category where the morphisms
are given by framed tangles (that is, $1$-dimensional submanifolds embedded in $\R^{3}$ together with a trivialization of their normal bundles). Passing to the truncation $\tau_{\leq 1} \calC$, we obtain
a well-known description of the free (ordinary) ribbon category on one generator in terms of framed tangles; see, for example, \cite{kirillov}.
\end{example}

\end{document}